\theoremstyle{definition}
\newtheorem{standing}{Standing Assumptions}[section]
\crefname{standing}{Standing}{Standings}
\Crefname{standing}{Standing}{Standings}
\numberwithin{equation}{section}
\newtheorem{theorem}{Theorem}[section]
\newtheorem{lemma}[theorem]{Lemma}
\newtheorem{proposition}[theorem]{Proposition}
\newtheorem{corollary}[theorem]{Corollary}
\theoremstyle{definition}
\newtheorem{definition}[theorem]{Definition}
\newtheorem{remark}[theorem]{Remark}
\newtheorem{example}[theorem]{Example}
\newtheorem{question}{Question}[section]
\newcommand{\N}{\mathbb{N}}
\newcommand{\R}{\mathbb{R}}
\newcommand{\Z}{\mathbb{Z}}
\newcommand{\1}{\mathbf{1}}
\newcommand{\eps}{\varepsilon}
\newcommand{\diam}{\operatorname{diam}}
\newcommand{\edge}{\operatorname{edge}}
\newcommand{\Gen}{\mathcal S}
\newcommand{\Iexact}{I^{\circ}}
\newcommand{\Iincr}{I^{\circ}_{\mathrm{incr}}}
\newcommand{\dimsym}{d}
\newcommand{\Qdim}{Q}
\newcommand{\Per}{\operatorname{Per}}
\newcommand{\Vol}{\operatorname{Vol}}
\newcommand{\eBd}[2][]{\partial_{#1}^{\mathrm{e}} #2}
\newcommand{\Wphi}{W_{\varphi}}
\newcommand{\UseWulffPhi}{\renewcommand{\Wulff}{\Wphi}}  
\newcommand{\inj}{\operatorname{inj}}
\DeclareMathOperator{\Cay}{Cay}
\DeclareMathOperator{\Fol}{Fol}
\DeclareMathOperator{\supp}{supp}
\DeclareMathOperator{\Gap}{Gap}
\newcommand{\Div}{\operatorname{div}}
\newcommand{\dist}{\operatorname{dist}}
\newcommand{\tors}{\operatorname{tors}}
\newcommand{\cH}{\mathcal{H}}
\newcommand{\Shear}{\mathrm{S}}
\newcommand{\TruncCurl}{\mathrm{TruncCurl}}
\newcommand{\CurlCost}{\mathrm{CurlCost}}
\newcommand{\rank}{\operatorname{rank}}
\declaretheoremstyle[
  headfont=\normalfont\bfseries,
  bodyfont=\itshape,
  headpunct={.},
  spaceabove=6pt, spacebelow=6pt,
  notebraces={(}{)},
  headformat=\NAME\NOTE   
]{introthm_nonum}
\declaretheorem[name=Theorem A, style=introthm_nonum, within=section]{theoremA}
\declaretheorem[name=Theorem B, style=introthm_nonum, within=section]{theoremB}
\declaretheorem[name=Theorem C, style=introthm_nonum, within=section]{theoremC}
\declaretheorem[name=Theorem D, style=introthm_nonum, within=section]{theoremD}
\declaretheorem[name=Theorem E, style=introthm_nonum, within=section]{theoremE}
\declaretheorem[name=Theorem F, style=introthm_nonum, within=section]{theoremF}
\declaretheorem[name=Theorem G, style=introthm_nonum, within=section]{theoremG}
\declaretheorem[name=Theorem H, style=introthm_nonum, within=section]{theoremH}
\crefname{theoremA}{Theorem~A}{Theorem~A}
\Crefname{theoremA}{Theorem~A}{Theorem~A}
\crefname{theoremB}{Theorem~B}{Theorem~B}
\Crefname{theoremB}{Theorem~B}{Theorem~B}
\crefname{theoremC}{Theorem~C}{Theorem~C}
\Crefname{theoremC}{Theorem~C}{Theorem~C}
\crefname{theoremD}{Theorem~D}{Theorem~D}
\Crefname{theoremD}{Theorem~D}{Theorem~D}
\crefname{theoremE}{Theorem~E}{Theorem~E}
\Crefname{theoremE}{Theorem~E}{Theorem~E}
\crefname{theoremF}{Theorem~F}{Theorem~F}
\Crefname{theoremF}{Theorem~F}{Theorem~F}
\crefname{theoremG}{Theorem~G}{Theorem~G}
\Crefname{theoremG}{Theorem~G}{Theorem~G}
\crefname{theoremH}{Theorem~H}{Theorem~H}
\Crefname{theoremH}{Theorem~H}{Theorem~H}
\providecommand{\Per}{\operatorname{Per}}     
\newcommand{\cWulff}{c_{\mathrm{Wulff}}}
\let\frak\mathfrak
  \def\Gamma{Gamma}
  \def\Per{Per}
\newcommand{\opentheoremname}{Open Theorem}
\newcounter{opentheorem}[section]
\renewcommand{\theopentheorem}{\thesection.\arabic{opentheorem}}
\newcommand{\opentheorem}[2][]{%
  \refstepcounter{opentheorem}%
  \par\medskip
  {\noindent\bfseries \opentheoremname~\theopentheorem%
    \if\relax\detokenize{#1}\relax\else\ (#1)\fi. }#2\par\medskip
}
\DeclareRobustCommand{\hointerval}[2]{\ensuremath{\mathopen{}\left[#1,#2\right)}}
\title{Wulff Isoperimetry on Cayley Graphs: Submodular BV, Tempered F{\o}lner, and Profile Ratio Bounds}
\author{Mayukh Mukherjee}
\date{}
\begin{document}

\begin{abstract}

We develop a discrete \textsc{BV} framework on Cayley graphs  that is tailored to anisotropic geometric analysis. The core analytic outputs are:
\emph{(i)} a sharp discrete Wulff isoperimetric inequality with the best constant tied to the generating set/stencil $S$ and asymptotic attainment by Wulff samplers;
\emph{(ii)} a quantitative $\Gamma$-convergence of discrete total variation to the continuum anisotropic perimeter (with algebraic rates in crystalline regimes); and
\emph{(iii)} a gauge construction in the Heisenberg group that neutralizes shear at the level of the discrete energy, yielding an \emph{exact per-edge} BV + shear identity, scale-sharp compactness, and matched $\Gamma$-limsup/liminf bounds.

As an application we revisit a question of Gromov (2008) on the ratio between the exact isoperimetric profile and its greatest nondecreasing minorant. We show that this ratio is uniformly bounded on non-amenable groups and on two-ended groups, and using our sharp Wulff inequality and $\Gamma$-convergence, we give a self-contained, constant-tracked proof of profile ratio boundedness for all virtually nilpotent groups. We isolate a \emph{Tempered F{\o}lner} criterion (TF) (an exhaustion principle   with controlled increment size), which forces bounded profile ratio in general. We verify \emph{full} (TF) with explicit constants in two large families: finite-lamp wreath products over (TF) bases and lamplighters over amenable bases, where we build nested exhaustions with global (TF)(ii)  and a cofinal subsequence of near-minimizers  (TF)(i$_e$) (hence full (TF)). For semidirect products $\mathbb Z^d\rtimes_A\mathbb Z$ we construct layer-nested sets of logarithmic height that are ($A$-covariantly) nested, F{\o}lner, and satisfy (TF)(ii) with a constant independent of $A\in\mathrm{GL}(d,\mathbb Z)$; within the class of layer-nested sets they are near-optimal up to constants, and when $A$ is hyperbolic (no eigenvalue on the unit circle) known $L^1$-isoperimetric lower bounds give (TF)(i$_e$) along these sets, hence \emph{full} (TF) in this case. Thus Gromov's problem is settled in broad  regimes, and elsewhere reduced to a clean structural hypothesis; in this context, we also formulate \emph{gap conjectures} that would settle the question for all amenable Cayley graphs. We also present an amenable ``cursor-lamplighter'' \emph{graph model} that exhibits an unbounded ratio when (TF) fails, clarifying the sharpness of our assumptions.

The BV/variational viewpoint has direct spectral and analytic consequences. From the same inputs we derive Cheeger-type, Faber-Krahn, and Nash inequalities and obtain quantitative heat-kernel/mixing bounds on finite quotients, with constants depending explicitly on $S$ and the Wulff constant and tracked throughout \cite{Cheeger1970LowerBound,Buser1982Isoperimetric,Dodziuk1984Graphs,Mohar1989IsoperimetricGraphs,VaropoulosSaloffCosteCoulhon1992,SaloffCoste2002Aspects}. The (TF) control further yields a tempered form of Property~A with scale calibration, leading to explicit coarse embeddings of Cayley graphs (and box spaces) into Hilbert space with compression $\rho(t) \gtrsim t^{1/2}/\log t$ and tracked constants. Finally, we show that {\rm (TF)} is a robust reflection of bi\mbox{-}equivariant geometry, and encodes a two-sided regularity principle: on the $(\Gamma\times\Gamma)$-homogeneous-space graph the boundary energy splits \emph{exactly} as the sum of the left and right cuts (Dirichlet forms of the two regular representations), and in virtually nilpotent classes this \emph{doubles} the sharp Wulff constant asymptotically--refining and answering another question of Gromov.

Our techniques-calibration-style proofs for the discrete Wulff bound (cf.\ \cite{Taylor1978Crystalline}), a combinatorial Hodge/curl-fitting lemma with explicit constants, a gauge-based shear split on Heisenberg (in the sub-Riemannian spirit of \cite{Pansu1982CRAS,LM05,LR03,MR09}), and a padding/gluing scheme behind (TF) (cf.\ quasi-tilings \cite{OrnsteinWeiss1987Quasitiling}, \cite{DownarowiczHuczekZhang2017DQ}, \cite{ConleyKechrisTuckerDrobSewardFolner})--might be of independent interest.

\end{abstract}

\maketitle

\tableofcontents

\section{Introduction}\label{sec:intro}

\subsection{Motivation}
Isoperimetry on finitely generated groups sits at the meeting point of geometric analysis, probability, and combinatorics. In the continuum, anisotropic surface energies are minimized by Wulff shapes; sharp constants follow from calibrations and BV methods \cite{Taylor1978Crystalline}. On Cayley graphs one seeks a discrete counterpart that
\begin{enumerate}
    \item is \emph{sharp in the optimal constant determined by $S$};
    \item is stable under sampling; and
    \item interfaces cleanly with spectral and heat-kernel inequalities.
\end{enumerate}
The tensions are most pronounced in sub-Riemannian geometries (e.g.\ Heisenberg), where the relevant exponent is the homogeneous dimension $\Qdim$ and non-Abelian features such as shear frustrate naive discretizations of continuum heuristics \cite{Pansu1982CRAS,LM05,MR09,AmbrosioKleinerLeDonne2009JGA}. Our approach replaces smooth BV by a \emph{submodular BV calculus} on Cayley graphs and introduces a discrete calibration adapted to the anisotropy $\tau_S$ induced by $S$ \emph{(the support function of the horizontal zonotope $Z_S:=\sum_{s\in S_+}[-X_s,X_s]$)}.

A second, more group-theoretic thread concerns the \emph{exact isoperimetric profile} $I^\circ$ and its greatest nondecreasing minorant $I^\circ_{\mathrm{incr}}$. Gromov asked whether the ratio $I^\circ/I^\circ_{\mathrm{incr}}$ is uniformly bounded on every infinite finitely generated group \cite[p.~4]{Gromov2008AuthorPDF}. We prove boundedness in several large and natural families and identify a structural criterion that implies it in general.

\medskip

\subsection{Main contributions} We develop a submodular BV calculus on Cayley graphs and use it to obtain the following results.

\begin{enumerate}
\item \textbf{Asymptotically sharp discrete Wulff isoperimetry.}
Let $(\Gamma,S)$ be either $\mathbb{Z}^{\dimsym}$ or a lattice in a Carnot group of homogeneous dimension $\Qdim$.
There exists a constant $h_S>0$, determined explicitly by the anisotropy $\tau_S$ (see Definition \ref{def:CI}), such that for all finite $Y\subset\Gamma$,
\[
|\eBd[S]{Y}|\ \ge\ h_S |Y|^{(\Qdim-1)/\Qdim},
\]
with exponent $(\dimsym-1)/\dimsym$ in the Abelian case ($\Qdim=\dimsym$). Moreover, there are \emph{Wulff samplers} $Y_k$ for which
\[
|\eBd[S]{Y_k}| \le (1+o(1)) h_S |Y_k|^{(\Qdim-1)/\Qdim}\qquad\text{as }|Y_k|\to\infty.
\]
This yields a discrete counterpart of the continuum Wulff theory with explicit constant and asymptotic attainment.
\emph{(See \Cref{thm:WulffCarnot,thm:WulffAbelian}.)}

\item \textbf{Quantitative $\Gamma$-convergence with an algebraic rate.}
For discrete total variation $\Per_{\tau_S,h}$ on $\mathbb{Z}^{\dimsym}$ sampled at scale $h\downarrow 0$, we prove
\[
\Per_{\tau_S,h}\ \xrightarrow[\Gamma]{L^1}\ \Per_{\tau_S}
\qquad\text{with error } \mathcal{O}(h^\alpha),
\]
for crystalline anisotropies (including orthotropic), with explicit $\alpha>0$ and rate-tracked recovery sequences.
A new \emph{oblique-counting lemma} isolates the sampling error arising from facet orientations relative to the lattice.
\emph{(See \Cref{thm:gamma-quant,thm:quant-stencil}.)}

\item \textbf{Heisenberg shear: constructive gauge and exact BV + shear split.}
On the discrete Heisenberg group we introduce a quadratic gauge $g$ (defined in Definition \ref{def:Heis-gauge}) that neutralizes horizontal shear and prove the exact per-edge identity
\[
\Per_{\mathrm{Heis}}(Y)\ =\ \Per_{\mathrm{BV}}(Y)\ +\ \mathrm{Shear}_g(Y),
\]
valid for all finite $Y$, with a quantified cap-loss correction at truncation scales.
This yields compactness at the correct scaling and matching $\Gamma$-$\limsup/\liminf$ bounds.
\emph{(See \Cref{thm:heis-shear-constructive-corrected,thm:gamma-Heis-bounds-corrected} and Corollary \ref{cor:trunc-liminf-corrected-final}.)}

\item \textbf{Gromov's profile question: bounded ratios in large classes, a Tempered F{\o}lner criterion, and permanence.}
Let $\Iexact(n):=\min_{|Y|=n}|\eBd[S]{Y}|$ and $\Iincr(n):=\inf_{m\ge n}\Iexact(m)$ (the right monotone minorant; it is nondecreasing in $n$). We prove the bounded ratio property (BRP), $\sup_n \Iexact(n)/\Iincr(n)<\infty$, for all virtually nilpotent groups via our discrete Wulff two-sided asymptotics. We introduce the \emph{Tempered F{\o}lner} (TF) property (Definition~\ref{def:TF}) and show it \emph{quantitatively} forces BRP:
\[
\text{\rm(TF)} \ \Longrightarrow\ \sup_n \frac{\Iexact(n)}{\Iincr(n)} \ \le\ A+\Delta AB \qquad(\Delta=|S|),
\]
(see \Cref{thm:TF-bound-corrected}). Beyond a criterion, we give a constructive route: assuming only \emph{nested near-minimizers} (NNM) in edge normalization, our interleaving scheme produces a single nested chain with full {\rm(TF)} (under amenability), hence BRP, with explicit constants (\Cref{def:NNM,thm:NNM-alone-to-TF,thm:new-TF-equivalence}). We verify this program in three canonical amenable families with tracked constants: $\Z^d$ and Carnot lattices (nested Wulff samplers; \Cref{thm:interleave-cases}), finite-lamp lamplighters $H^{(\Gamma)}\rtimes\Gamma$ (explicit tempered expansion + Erschler checkpoints; \Cref{thm:lamplighter-TFii,thm:ers-checkpoints,thm:lamplighter-decoupled-TF}), and hyperbolic semidirects $\Z^d\rtimes_A\Z$ (logarithmic layer-nested sets + $L^1$-profile lower bounds; \Cref{thm:TFii-Folner-semidirect,thm:new-hyp-global}). Permanence is established under finite index and finite extensions, and we give a product tensoring principle that preserves NNM and yields (TF) on direct products with controlled constants (\Cref{prop:TF-finite-index,prop:TF-finite-extension,thm:TF-product,cor:TF-product-full}). At the level of minorants, we prove a clean quasi-isometry comparison (no additive loss) and a scaled transfer of BRP; under a mild doubling of $\Iincr$ this recovers full QI-invariance (\Cref{thm:new-QI-invariance}).

\emph{Limits and an open problem.}
We build an amenable (non-vertex-transitive) ``balloon-chain'' graph with $\sup_r \Iexact(r)/\Iincr(r)=\infty$, and we explain why this single-edge-plateau mechanism cannot occur verbatim in Cayley graphs (a left-Lipschitz trimming obstruction; \Cref{thm:graph-blowup,cor:nogo-plateau}). This leaves a  question: does there exist an amenable \emph{Cayley} graph with unbounded ratio? (\Cref{q:amenable-cayley-ratioblowup}.)

\end{enumerate}

\medskip

We now state informal versions of the main results.

Throughout Theorem~A, $\varphi$ denotes the directed-edge integrand from the CI/zonoid model (Definition~\ref{def:CI}), specialized to the split-pair normalization $a_s=a_{s^{-1}}=\tfrac12$, i.e.\ $\varphi=\tau_S$ in Remark~\ref{rmk:canonical-phi}. 
\begin{theoremA}[Discrete Wulff isoperimetry: sharp constants in the cases of interest]\label{thm:A}
Let $\Per_{S,1}$ be the \emph{directed} edge perimeter (Definition~\ref{def:directed-perimeter}) and set
\[
h_S\ :=\ \lim_{n\to\infty}\ n^{-\frac{D-1}{D}}\ \inf\{\Per_{S,1}(Y):\ Y\subset\Gamma,\ |Y|=n\},
\]
where $D$ is the relevant dimension ($D=d$ for $\Gamma\cong\Z^d$, $D=Q$ for a lattice in a Carnot group).
With $\varphi$ as in Definition~\ref{def:CI} and Remark~\ref{rmk:canonical-phi}, the limit exists and
\[
h_S\ =\ \cWulff(\varphi).
\]
In particular:
\begin{enumerate}
\item[\textup{(i)}] \emph{Unimodular axis stencil on $\Z^d$.} If $S=\{\pm b_1,\dots,\pm b_d\}$ with $B=[b_1\cdots b_d]\in GL(d,\Z)$, then $\varphi(\xi)=\sum_{i=1}^d|\langle b_i,\xi\rangle|$ and $h_S=2d$.

\item[\textup{(ii)}] \emph{Uniform lattices in Carnot groups (horizontal anisotropy).} If $\Gamma$ is a uniform lattice in a Carnot group of homogeneous dimension $Q$ with symmetric $S$, then
\[
\varphi(\xi)\ =\ \tau_S(\xi)\ :=\ \sum_{s\in S_+}\big|\langle \xi, X_s\rangle\big|,\qquad
h_S\ =\ \cWulff(\tau_S).
\]

\item[\textup{(iii)}] \emph{Virtually Abelian with basis fiber.} If there is a surjection $\pi:\Gamma\to\Z^d$ with finite kernel $K$ of size $m$ and $S$ contains a subset mapping bijectively to a unimodular basis of $\Z^d$, then
\[
h_S\ =\ m^{1/d} \cWulff(\tau_S)\ =\ 2d m^{1/d}.
\]
\end{enumerate}
Moreover, there exist Wulff samplers $Y_k$ such that
$
\Per_{S,1}(Y_k) \le (1+o(1)) h_S |Y_k|^{(D-1)/D}.
$
\end{theoremA}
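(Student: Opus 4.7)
The strategy is to prove the asymptotic identity $h_S=\cWulffOf{\varphi}$ by a matched pair of inequalities: an asymptotic lower bound obtained by comparing the discrete directed perimeter with the continuum anisotropic perimeter $\Perphi$ and invoking the continuum Wulff inequality, and an asymptotic upper bound realized by an explicit family of Wulff samplers $Y_k$. Once both hold, existence of the limit defining $h_S$ and its identification with $\cWulffOf{\varphi}$ follow automatically, and the sampler construction doubles as the attainment statement.

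\textbf{Lower bound via calibration and continuum comparison.} First I would embed $\Gamma$ in the ambient simply connected Lie group $G$ (namely $\R^d$ for $\Gamma\cong\Z^d$, and the Carnot group for a uniform lattice) and fix a measurable fundamental domain $F$ of unit co-volume. To a finite $Y\subset\Gamma$ I associate the continuum set $A_Y:=\bigcup_{y\in Y} yF$. Using the zonoid decomposition $\varphi(\xi)=\sum_{s\in S_+}|\langle\xi,X_s\rangle|$ from Definition~\ref{def:CI} (with the split-pair normalization of Remark~\ref{rmk:canonical-phi}), a directed-edge count shows that each boundary edge of $Y$ in direction $s$ corresponds to a facet of $A_Y$ whose outer normal $\nu$ contributes exactly $|\langle\nu,X_s\rangle|$ per unit of $(D-1)$-Hausdorff area of $\Perphi(A_Y)$; after matching interior facet cancellations this yields
\[
\Per_{S,1}(Y)\;\ge\;(1-o(1))\,\Perphi(A_Y)\qquad\text{as }|Y|\to\infty.
\]
Applying the continuum anisotropic Wulff inequality $\Perphi(A)\ge\cWulffOf{\varphi}\,\Vol(A)^{(D-1)/D}$ together with $\Vol(A_Y)=|Y|\,\Vol(F)$ then gives $h_S\ge\cWulffOf{\varphi}$, with the co-volume $\Vol(F)$ absorbed into the constant in case (iii).

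\textbf{Upper bound via Wulff samplers.} Next I would construct near-optimal samplers by intersecting the lattice with a dilation of the Wulff body: in the Abelian case $Y_k:=\Z^d\cap kW_\varphi$, in the Carnot case $Y_k:=\Gamma\cap \delta_k(W_\varphi)$ for the anisotropic dilation $\delta_k$. Lattice-point counting gives $|Y_k|=\Vol(W_\varphi)\,k^D+O(k^{D-1})$. Boundary edges concentrate in a tubular neighbourhood of $\partial \delta_k(W_\varphi)$, and coarea along the outer normal combined with the zonoid identity produces
\[
\Per_{S,1}(Y_k)\;=\;k^{D-1}\!\int_{\partial W_\varphi}\varphi(\nu)\,d\cH^{D-1}+o(k^{D-1})\;=\;\Perphi(W_\varphi)\,k^{D-1}+o(k^{D-1}).
\]
Dividing by $|Y_k|^{(D-1)/D}$ yields both the matching upper bound $h_S\le \cWulffOf{\varphi}$ and the $(1+o(1))$ sampler statement in the final clause.

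\textbf{Specialization and main obstacle.} For (i), with $B=[b_1\,\cdots\,b_d]\in GL(d,\Z)$ unimodular, $\varphi(\xi)=\sum_i|\langle b_i,\xi\rangle|$ is the support function of the zonotope $\sum_i[-b_i,b_i]$; its polar Wulff body is (up to $B^{-\top}$) the unit $\ell^\infty$-cube of volume $2^d$ and $\varphi$-perimeter $d\cdot 2^d$, yielding $\cWulffOf{\varphi}=2d$. Case (ii) is the direct output with $\varphi=\tauS$. Case (iii) follows by lifting the $\Z^d$-sampler through the finite fiber $K$: each $\Z^d$-cell carries $m=|K|$ lattice points of $\Gamma$, so the co-volume rescaling shifts $\cWulffOf{\varphi}$ by a factor of $m^{1/d}$, giving $h_S=2dm^{1/d}$. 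The main obstacle I expect is controlling the $o(k^{D-1})$ boundary-layer error in the Carnot setting: $\partial A_Y$ is a union of translates of $\partial F$, and for a Mal'cev fundamental domain one must check that only horizontal faces contribute to leading order (vertical faces cancel between adjacent cells via Pansu-style integration by parts) and that the resulting discrete-to-continuum error is strictly of order below $|Y|^{(D-1)/D}$. This is exactly what the quantitative $\Gamma$-convergence of \Cref{thm:gamma-quant,thm:quant-stencil} supplies, with an algebraic rate more than sufficient for the $(1+o(1))$ conclusion; the remaining work is bookkeeping of unit-cell volumes and calibration sign-checks case by case.
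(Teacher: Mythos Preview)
Your upper-bound route via samplers is essentially the paper's (phase-averaged samplers, Proposition~\ref{prop:verif_CI}(ii)), and for (i) your voxel-identity idea even gives an exact equality $\Per_{S,1}(Y)=\Per_\varphi(A_Y)$ in the axis/orthotropic case (cf.\ Lemma~\ref{lem:face-identity}), which is a clean alternative to the paper's combinatorial Loomis--Whitney route. But there are two genuine gaps.

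\textbf{Lower bound in the Carnot case (ii).} Your claimed inequality $\Per_{S,1}(Y)\ge(1-o(1))\Per_\varphi(A_Y)$ does not hold at scale $1$ in a non-Abelian Carnot group. By Lemma~\ref{lem:edge-to-integral}, the discrete perimeter equals a continuum integral of left-translations by the \emph{group element} $s$, whereas $\Per_\varphi(A_Y)$ integrates $|\nu\cdot X_s|$, i.e.\ corresponds to translation by $\exp(X_s)$. These differ by BCH higher-layer tails that are $O(1)$ at unit scale and do \emph{not} vanish without rescaling. The paper therefore does not compare $\Per_{S,1}(Y)$ to $\Per_\varphi(A_Y)$ directly: it rescales by the Carnot dilation $\delta_{|Y|^{-1/Q}}$, translates to gain tightness, invokes BV compactness to extract a limit set $E$, and then uses the directional difference-quotient limit (Lemma~\ref{lem:tail-negl}) together with lower semicontinuity to obtain $\liminf |Y|^{-(Q-1)/Q}\Per_{S,1}(Y)\ge \Per_\varphi(E)\ge \cWulff$ (Proposition~\ref{prop:BV-liminf}). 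Your invocation of \Cref{thm:gamma-quant,thm:quant-stencil} does not help here: those results are stated and proved only on $\Z^d$ and concern the \emph{recovery} (limsup) side.

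\textbf{Lower bound in the virtually Abelian case (iii).} Your ``co-volume rescaling'' argument is only the upper bound (fiber-saturated lifts). The lower bound is the substantial part: one must show that sets with partially filled fibers cannot beat $2d\,m^{1/d}$. There is no ambient Lie group to embed into here (the kernel is finite), so your continuum comparison does not apply; indeed the paper notes that without vertical generators in $S$ the sharp constant \emph{fails}. The paper's proof splits $\Per_{S,1}$ into horizontal and vertical parts, bounds the horizontal part via a layer-cake/fiber-permutation reduction to the base $\Z^d$ (Lemma~\ref{lem:fiber-permutation}), controls the deficit from partial fibers by a Karamata/majorization argument (Lemma~\ref{lem:deficit-corrected}), and uses the Cheeger constant of the finite kernel Cayley graph (Lemma~\ref{lem:kernel-cheeger}) to penalize partial fibers. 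Your outline does not contain any of this mechanism, so case (iii) as written has a real gap.
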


\noindent\emph{Proof pointers.}
Theorem~A is proved in Section~\ref{sec:sharp-wulff} via a submodular BV compactness/liminf argument together with a sampler/phase-averaging limsup (a calibration-style step), yielding the sharp Wulff constant. 
Section~\ref{sec:gamma-quant} then \emph{quantifies} the attainment: for orthotropic and finite-stencil crystalline anisotropies one obtains an algebraic rate $O(h^\beta)$ (with $\beta\in(0,1]$ tied to the boundary regularity) of approach to the Wulff value.

\begin{theoremB}[Quantitative $\Gamma$-convergence for canonical/crystalline energies]\label{thm:B-final}
On $\Z^d$ we treat two cases.

\smallskip\noindent\emph{(i) Orthotropic/canonical axis stencil.}
If $\varphi(\xi)=\sum_{i=1}^d\omega_i|\xi_i|$ and $E_h$ is the axis-face energy from~\S\ref{sec:gamma-quant}, then $E_h\ \xrightarrow[\Gamma]{L^1_{\mathrm{loc}}}\ TV_\varphi$, and for $E$ with $C^{1,\beta}$ boundary and half-open samplers $A_h$,
\[
\big|E_h(A_h)-TV_{\varphi}(\chi_E)\big|\ \le\ C h^\beta,
\]
with $\beta\in(0,1]$ the boundary regularity ($\beta=1$ for $C^{1,1}$) and $C$ depending on $d,\omega,\|\partial E\|_{C^{1,\beta}},\mathcal H^{d-1}(\partial E)$.

\smallskip\noindent\emph{(ii) Finite-stencil crystalline integrands.}
Let $\mathcal P\subset\Z^d\setminus\{0\}$ be finite with weights $\alpha_p>0$. Fix a set of representatives $\mathcal P_{+}$ containing exactly one vector from each antipodal pair in $\mathcal P\cup(-\mathcal P)$ and set $\tilde\alpha_p:=\alpha_p+\alpha_{-p}$ (with $\alpha_{-p}:=0$ if $-p\notin\mathcal P$). Define
\[
\phi_{\mathcal P}(\nu)\ :=\ \sum_{p\in\mathcal P_{+}}\tilde\alpha_p |\nu \cdot p|,\qquad
E_h^{\mathcal P}(A_h)\ :=\ h^{d-1}\sum_{p\in\mathcal P_{+}}\tilde\alpha_p\sum_{\substack{x\in \Z_h^d\\ x, x+hp\in \Z_h^d}} \big|\chi_{A_h}(x+hp)-\chi_{A_h}(x)\big|.
\]
(\emph{Primitivity of $p$ is not required; any length factor is absorbed into $\tilde\alpha_p$.})
Then $E_h^{\mathcal P}\ \xrightarrow[\Gamma]{L^1_{\mathrm{loc}}}\ TV_{\phi_{\mathcal P}}$. Moreover, if $\partial E\in C^{1,\beta}$ and $A_h$ are the half-open samplers, then
\[
\big|E_h^{\mathcal P}(A_h)-TV_{\phi_{\mathcal P}}(\chi_E)\big|\ \le\ C_{\mathcal P,E}h^\beta,
\]
with $C_{\mathcal P,E}$ depending only on $d,\{\tilde\alpha_p\}_{p\in\mathcal P_+},\max_{p\in\mathcal P_+}|p|,\|\partial E\|_{C^{1,\beta}}$, and $\mathcal H^{d-1}(\partial E)$. In particular, $\beta=1$ for $C^{1,1}$ boundaries.
\end{theoremB}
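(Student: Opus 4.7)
The whole theorem reduces to a single-direction statement. Both the stencil energy and the integrand split linearly:
\[
E_h^{\mathcal P}(A_h)=\sum_{p\in\mathcal P_+}\tilde\alpha_p\,E_h^{(p)}(A_h),\qquad \phi_{\mathcal P}(\nu)=\sum_{p\in\mathcal P_+}\tilde\alpha_p|\nu\cdot p|,
\]
where $E_h^{(p)}$ is the finite-difference energy along a single $p$. Since $|\mathcal P_+|<\infty$ and the weights are positive, I would prove the $\Gamma$-limit and the $\mathcal O(h^\beta)$ sampler error for a fixed $p$ and then assemble by summation. Part (i) is the axis special case $\mathcal P_+=\{e_1,\ldots,e_d\}$ with $\tilde\alpha_{e_i}=\omega_i$, and the primitivity comment becomes transparent: a non-primitive $p=kq$ merely rescales the $p$-slice spacing by a constant, absorbed into $\tilde\alpha_p$ by the stated convention.

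\textbf{Qualitative $\Gamma$-limit by 1D slicing.} Fix $p$ and foliate $\mathbb R^d$ by lines $L_{y,p}=y+\mathbb R p$, $y\in p^\perp$. The lattice $h\mathbb Z^d$ then splits into discrete lines $\ell\parallel p$ indexed by a full-rank sublattice of $p^\perp$, and
\[
E_h^{(p)}(A_h)=h^{d-1}\sum_{\ell\parallel p}\#\{\text{sign-changes of }\chi_{A_h}\text{ along }\ell\},
\]
while on the continuum side the Cauchy--Crofton/area formula gives $\int_{\partial^* E}|\nu\cdot p|\,d\mathcal H^{d-1}=|p|\int_{p^\perp}\#(L_{y,p}\cap\partial^* E)\,dy$. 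For the $\Gamma$-$\liminf$ against any $L^1_{\mathrm{loc}}$-converging test sequence, I would apply 1D BV lower-semicontinuity of the jump count on each line together with Fatou in the transverse variable. For the $\Gamma$-$\limsup$ on smooth indicators the half-open sampler matches the continuum crossing count per line (up to an $\mathcal O(1)$ end-of-line defect on a compact window), and the transverse Riemann sum converges to the Cauchy--Crofton integral; the extension to general BV limits then follows from the standard density/diagonal argument.

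\textbf{Quantitative rate under $C^{1,\beta}$.} Set $\mathcal R_h^{(p)}:=E_h^{(p)}(A_h)-\int_{\partial E}|\nu\cdot p|\,d\mathcal H^{d-1}$. I would split $\partial E=\Sigma^{\mathrm{bulk}}\sqcup\Sigma^{\mathrm{tan}}$ at the threshold $|\nu\cdot p|=h^\gamma$, $\gamma\in(0,1]$ to be tuned. On the bulk zone the $C^{1,\beta}$ control on the Gauss map linearizes $\partial E$ as a graph over its approximate tangent with fluctuation $\mathcal O(h^{1+\beta})$ inside each cube of side $h$; the discrete sign-changes then match the continuum crossing count per cube, and the transverse Riemann-sum error evaluates to $\mathcal O(h^\beta)$ per unit of $\mathcal H^{d-1}(\partial E)$. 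The tangential zone has $\mathcal H^{d-1}$-measure $\mathcal O(h^\gamma)$ by a standard projection/co-area estimate (polynomial in the $C^{1,\beta}$-norm), contributing at most $\mathcal O(h^\gamma)$ to $|\mathcal R_h^{(p)}|$. Balancing the two at $\gamma=\beta$ yields $|\mathcal R_h^{(p)}|\le C_{p,E}h^\beta$; summing with weights $\tilde\alpha_p$ gives the bound in (ii), and the axis specialization $p=e_i$ gives (i); at $\beta=1$ this reproduces the $C^{1,1}$ rate $\mathcal O(h)$.

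\textbf{Main obstacle.} The slicing reduction and the $\liminf$/density arguments are routine. The real work sits in the oblique-counting step for the $\limsup$ rate: the transverse slice spacing, the Riemann-sum coefficient, and the measure of the tangential zone must all be tracked with a constant depending only on $d$, $\max_{p\in\mathcal P_+}|p|$, $\{\tilde\alpha_p\}$, $\|\partial E\|_{C^{1,\beta}}$, and $\mathcal H^{d-1}(\partial E)$, and uniformly across the finitely many $p\in\mathcal P_+$. Controlling near-tangencies ($|\nu\cdot p|$ small) without losing the $h^\beta$ rate is the delicate estimate; this is exactly the job of the oblique-counting lemma announced in the main contributions and stated in \S\ref{sec:gamma-quant}.
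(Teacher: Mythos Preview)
Your overall architecture matches the paper's: reduce by linearity to a single direction $p$, then slice along lines parallel to $p$, use 1D BV lower semicontinuity plus Fatou for the $\Gamma$-liminf, and verify the limsup via the half-open sampler with a transverse Riemann sum. The paper organizes this as Lemma~\ref{lem:line-density} (the projection of $h\Z^d$ onto $p^\perp$ is a lattice of covolume $h^{d-1}/|p|$, which supplies the $|p|$ factor you mention), Lemma~\ref{lem:dir-diff-quot}, and Theorem~\ref{thm:quant-stencil}. One difference worth noting for part~(i): the paper does not treat the orthotropic case as a special case of (ii) but instead uses the \emph{exact} voxel identity $E_h(A_h)=TV_\varphi(u_h^\#)$ (Lemma~\ref{lem:face-identity}), after which the $\liminf$ is just Reshetnyak lower semicontinuity. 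This is cleaner than slicing when it is available.

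The one substantive issue is your quantitative step. You split $\partial E$ at the threshold $|\nu\cdot p|=h^\gamma$ and assert that the tangential zone has $\mathcal H^{d-1}$-measure $\mathcal O(h^\gamma)$ ``by a standard projection/co-area estimate.'' That measure bound is false in general: a $C^\infty$ convex body can have a region of $\mathcal H^{d-1}$-measure of order one where $\nu\cdot p$ is arbitrarily small (think of a smoothed cylinder with axis $p$). A Sard-type smallness of $\{|\nu\cdot p|<\varepsilon\}$ would require curvature nondegeneracy, which $C^{1,\beta}$ does not give. The paper sidesteps this entirely by \emph{not} splitting: it uses the coarea identity $\int_{\partial E}|\nu\cdot p|\,d\mathcal H^{d-1}=\int_{\R}\mathcal H^{d-2}\big(\partial E\cap\{x\cdot\hat p=t\}\big)\,dt$ (times $|p|$), observes that under $C^{1,\beta}$ the slice measure $t\mapsto \mathcal H^{d-2}(\partial E\cap\{x\cdot\hat p=t\})$ is $C^{0,\beta}$, and bounds the Riemann-sum error at spacing $h$ by $O(h^\beta)$ directly (Lemmas~\ref{lem:grid-quadrature} and~\ref{lem:dir-diff-quot}). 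The weight $|\nu\cdot p|$ is already built into the coarea formula, so near-tangential regions automatically contribute little to both the discrete and continuum quantities and need no separate accounting. Your balancing $\gamma=\beta$ is therefore unnecessary, and dropping the threshold split both repairs the gap and shortens the argument.
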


\begin{theoremC}[Heisenberg: exact BV$+$shear identity and quantified bounds]\label{thm:C-final-patched}
In the discrete Heisenberg group with generators $S=\{a^{\pm1},b^{\pm1}\}$, right Cayley action, and the gauge $T(x,y,z)=(x,y,h)$ with $h=z-xy$, every finite \emph{gauge-column-convex} $Y$ with footprint $E$ admits the exact \emph{undirected-edge} identity
\[
B_S(Y)\ =\ \underbrace{\sum_{e\in\mathcal E_{\rm bd}(E)} \alpha(e)\ +\ \sum_{e\in\mathcal E_{\rm int}(E)}\delta(e)}_{:=\ \Per_{\mathrm{BV}}(Y)}\ \ +\ \
\underbrace{2\sum_{e\in\mathcal E_{\rm int}(E)} \min\{d(t(e)),m(e)\}}_{:=\ \Shear(Y)}.
\]
Here $\alpha(e)=\ell(u)$ on boundary edges $e=(u\to u+v)$, while on internal edges
$
\delta(e)=|\ell(u)-\ell(u+v)|,\quad m(e)=\min\{\ell(u),\ell(u+v)\},\quad 
t(e)=h(u+v)-h(u)+\sigma_v(u),
$
with the alignment shifts $\sigma_v$ from Lemma~\ref{lem:shift-signs} and half-open columns 
$I_u=[h(u),h(u)+\ell(u))$. The function $d(\cdot)$ is as in Lemma~\ref{lem:interval-sd-exact}.
Moreover, along gauge-column-convex stacks $Y_\rho$ with $|Y_\rho|\asymp\rho^4$ one has compactness and two-sided bounds at the scale $\rho^3$ as in Theorem~\ref{thm:gamma-Heis-bounds-corrected}, with the cap-loss error vanishing by Proposition~\ref{prop:caploss}.
\end{theoremC}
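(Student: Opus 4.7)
The plan is to decompose $B_S(Y)$ along the horizontal footprint projection $\pi\colon\Gamma\to\Z^2$ and reduce every per-footprint-edge contribution to a one-dimensional symmetric-difference computation on half-open intervals via the gauge $T$. I would proceed in four steps.

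\emph{Step 1 (footprint partition of the boundary).} Since $S=\{a^{\pm1},b^{\pm1}\}$, every undirected Cayley edge of $Y$ projects to a horizontal step $v\in\{a^{\pm1},b^{\pm1}\}$ between footprint vertices $u,u+v\in\Z^2$. Grouping the boundary edges by the oriented footprint pair $(u,u+v)$ with $u\in E$, the set of boundary edges sitting above $(u,u+v)$ is in natural bijection with the set-theoretic symmetric difference of the two columns $I_u,I_{u+v}\subset\Z$ viewed in gauge coordinates. Gauge-column-convexity guarantees that each $I_u$ is a single half-open interval $[h(u),h(u)+\ell(u))$, so the per-footprint-edge count is exactly the cardinality of an interval symmetric difference.

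\emph{Step 2 (gauge alignment and the shift $\sigma_v$).} Right multiplication by a horizontal generator acts as $(x,y,z)\cdot v=(x',y',z+f_v(x,y))$ with explicit linear shear polynomials $f_v$. Transforming through $T(x,y,z)=(x,y,z-xy)$ converts this into a pure translation on the vertical coordinate by an alignment shift $\sigma_v(u)=f_v(u)-(x'y'-xy)$ depending only on $u$ and $v$ --- this is exactly the content of Lemma~\ref{lem:shift-signs}. In gauge coordinates the column above $u+v$ therefore appears as the translate $[h(u+v)+\sigma_v(u),\,h(u+v)+\sigma_v(u)+\ell(u+v))$, and the per-footprint-edge boundary count becomes the cardinality of the symmetric difference of two explicit half-open integer intervals.

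\emph{Step 3 (assembling the identity).} Now I would apply Lemma~\ref{lem:interval-sd-exact}: the symmetric difference of two half-open integer intervals of lengths $\ell,\ell'$ whose starts differ by $t$ has cardinality $|\ell-\ell'|+2\min\{d(t),\min(\ell,\ell')\}$. For boundary footprint edges $I_{u+v}=\emptyset$, so the contribution collapses to $\ell(u)=\alpha(e)$; for internal footprint edges the relative shift is $t(e)=h(u+v)-h(u)+\sigma_v(u)$ and $m(e)=\min\{\ell(u),\ell(u+v)\}$, yielding the term $\delta(e)+2\min\{d(t(e)),m(e)\}$. Summing over all footprint edges gives the claimed exact identity $B_S(Y)=\PerBV(Y)+\Shear(Y)$.

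\emph{Step 4 (compactness and matched $\Gamma$-bounds at scale $\rho^3$).} For the stacks $Y_\rho$ with $|Y_\rho|\asymp\rho^4$ the footprint $E_\rho$ has area $\asymp\rho^2$ and perimeter $\asymp\rho$ while columns are of length $\asymp\rho^2$; thus the $\PerBV$ part contributes $O(\rho^3)$, and the $\Shear$ part contributes the same order since the shifts $t(e)$ grow linearly across the footprint but are capped by $m(e)=O(\rho^2)$. Combining the exact identity with Proposition~\ref{prop:caploss} (whose cap-loss correction vanishes in the limit) produces compactness at scale $\rho^3$ and the matched $\Gamma$-$\limsup/\liminf$ bounds of Theorem~\ref{thm:gamma-Heis-bounds-corrected}. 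The chief obstacle is convention coherence: the explicit form of $\sigma_v$ must line up with the right-action convention underlying $B_S$, and $d(\cdot)$ must be the exact half-open version from Lemma~\ref{lem:interval-sd-exact} (no off-by-one at the column caps). Once these two bookkeeping checks are in place, the rest of the argument is a purely combinatorial summation and the $\rho^3$ scaling is immediate.
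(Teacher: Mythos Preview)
Your proposal is correct and follows essentially the same route as the paper: the exact identity is obtained by the edge--height bijection over the footprint (your Steps~1--2 are precisely Lemma~\ref{lem:edge-height-bijection}) together with the interval symmetric-difference formula of Lemma~\ref{lem:interval-sd-exact}, and the quantified bounds at scale $\rho^3$ are then read off from Theorem~\ref{thm:gamma-Heis-bounds-corrected} and Proposition~\ref{prop:caploss} exactly as you indicate. Your closing caveat about aligning the sign of $\sigma_v$ with the right-action convention and using the half-open version of $d(\cdot)$ is also the bookkeeping the paper flags (Remark~\ref{rmk:heis-orientation-lock}).
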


\begin{remark}[Normalization for  \Cref{thm:C-final-patched}]
Throughout  \cref{thm:C-final-patched} and its proof we use the \emph{undirected} horizontal edge boundary $B_S(\cdot)$ (each broken undirected edge counted once). If one prefers the directed normalization, multiply all $B_S$ identities by $2$; cf.\ Remark~\ref{rmk:dir-undir-conversion}.
\end{remark}

\begin{theoremD}[Bounded profile ratio in large classes; (TF) as a sharp criterion]\label{thm:D-final}
Let $I^\circ(r)$ be the exact vertex profile and $I^{\mathrm{incr}\circ}(r)$ its greatest nondecreasing minorant.
Then
\[
\sup_{r\ge1}\frac{I^\circ(r)}{I^{\mathrm{incr}\circ}(r)}<\infty
\]
for each of the following classes:
\begin{enumerate}
\item  non-amenable groups (with bound $\le \Delta/h$; here $h:=\inf_{\emptyset\neq Y\subset_{\mathrm{fin}}\Gamma}|\partial_{\mathcal S}Y|/|Y|$ is the vertex Cheeger constant);
\item two-ended groups (virtually $\mathbb Z$). In this case there exist constants
$0<c_-(\mathcal S)\le c_+(\mathcal S)<\infty$ such that
$c_-(\mathcal S)\le I^\circ(r)\le c_+(\mathcal S)$ for all $r$; see
Remark~\ref{rem:two-ended-explicit} for explicit choices of $c_\pm(\mathcal S)$;
\item virtually nilpotent groups (polynomial growth of homogeneous dimension $Q$; $I^\circ(r)\asymp r^{(Q-1)/Q}$ and the ratio is bounded by a constant depending only on $(\Gamma,\mathcal S)$).
\end{enumerate}
More generally, if $(\Gamma,\mathcal S)$ satisfies \emph{Tempered F{\o}lner} (TF), then
\[
\sup_{r\ge1}\frac{I^\circ(r)}{I^{\mathrm{incr}\circ}(r)}\ \le\ A+\Delta AB,
\]
with $(A,B)$ the {\rm(TF)} constants. 
\end{theoremD}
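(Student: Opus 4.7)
I would handle the four cases in increasing generality. For \textbf{item (1)}, combine the universal upper bound $I^\circ(r)\le\Delta r$ (valid for any set of size $r$, since each vertex contributes at most $\Delta$ to the edge boundary) with the Cheeger bound $I^\circ(m)\ge hm$; since the latter is nondecreasing in $m$, it passes to the monotone minorant: $I^{\mathrm{incr}\circ}(r)\ge hr$, giving ratio at most $\Delta/h$. For \textbf{item (2)}, I would use virtual cyclicity: lifts of intervals from a finite-index $\mathbb{Z}$-subgroup produce finite sets in $\Gamma$ whose edge boundary is absolutely bounded by some $c_+(\mathcal S)$, and two-endedness forces every finite set to disconnect the graph with at least $c_-(\mathcal S)>0$ boundary edges; both constants can be read off from $\mathcal S$ and the virtual-cyclic index, yielding ratio at most $c_+/c_-$.

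For \textbf{item (3)}, I would apply Theorem~A in form~(iii) (finite-kernel lift of the Carnot-lattice statement) to extract both the sharp lower bound $I^\circ(r)\ge (h_S-o(1))\, r^{(Q-1)/Q}$ and the sharp asymptotic upper bound along the Wulff samplers $Y_k$. To promote the latter from a subsequence to every $r$, interpolate: for $|Y_k|\le r<|Y_{k+1}|$, adjoin $r-|Y_k|$ vertices from $Y_{k+1}\setminus Y_k$ to $Y_k$, each increasing the boundary by at most $\Delta$. Polynomial growth bounds $|Y_{k+1}|/|Y_k|$, keeping the overhead of size $O(r^{(Q-1)/Q})$ and giving $I^\circ(r)\le C\, r^{(Q-1)/Q}$ uniformly. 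Since $I^{\mathrm{incr}\circ}(r)\ge c\, r^{(Q-1)/Q}$ by the lower bound, the ratio is bounded by $C/c$.

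For \textbf{item (4)}, I would use the (TF) chain directly. Given $r\ge 1$, the clause (TF)(i$_e$) provides a near-minimizer $F$ with $|F|\ge r$ whose boundary tracks the monotone minorant: $|\partial F|\le A\, I^{\mathrm{incr}\circ}(r)$. The clause (TF)(ii) then bounds the size overshoot: $|F|-r\le B\,|\partial F|\le AB\,I^{\mathrm{incr}\circ}(r)$. Trim $F$ to a set $Y$ of size exactly $r$ by deleting these $|F|-r$ vertices in an order compatible with the nested chain, noting that each deletion alters the edge boundary by at most $\Delta$. Assembling the pieces,
\[
I^\circ(r)\ \le\ |\partial Y|\ \le\ |\partial F|+\Delta(|F|-r)\ \le\ A\, I^{\mathrm{incr}\circ}(r)+\Delta AB\, I^{\mathrm{incr}\circ}(r)\ =\ (A+\Delta AB)\, I^{\mathrm{incr}\circ}(r).
\]

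The \textbf{main obstacle} lies in item~(4): one must show that, among the cofinal nested (TF) sets, one can always pick an $F$ whose perimeter is calibrated against the \emph{monotone} minorant $I^{\mathrm{incr}\circ}(r)$, not merely the oscillating exact profile $I^\circ(|F|)$. This is precisely the content of Definition~\ref{def:TF}(i$_e$) (nested near-minimizers in edge normalization threaded to track the right-monotone envelope), and is what yields the multiplicative form $A+\Delta AB$ rather than the weaker additive $A+\Delta B$. Items (1)--(3) bypass this subtlety because their profiles are linear, constant, or already polynomially monotone-comparable to their minorant, so the comparison reduces to direct substitution.
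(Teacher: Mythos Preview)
Your treatment of items (1)--(3) matches the paper's approach (Proposition~\ref{prop:ratio-basic}) in spirit and detail.

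For item~(4) there is a genuine gap in the direction of monotonicity. You anchor at an $F$ with $|F|\ge r$ and claim $|\partial F|\le A\,I^{\mathrm{incr}\circ}(r)$, but (TF)(i) gives only $|\partial F|\le A\,I^{\mathrm{incr}\circ}(|F|)$; since $I^{\mathrm{incr}\circ}$ is \emph{nondecreasing} and $|F|\ge r$, you get $I^{\mathrm{incr}\circ}(|F|)\ge I^{\mathrm{incr}\circ}(r)$, not $\le$. The same conflation appears in your use of (TF)(ii): that clause bounds $|F_n\setminus F_{n-1}|$ by $B\,|\partial F_{n-1}|$, not $B\,|\partial F_n|$, so ``$|F|-r\le B|\partial F|$'' is not what (TF)(ii) says.

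The paper's fix is simply to reverse the direction: choose $n$ minimal with $|F_n|\ge r$ and anchor at $F_{n-1}$ (so $|F_{n-1}|<r$). Now (TF)(i) plus monotonicity in the correct direction give $|\partial F_{n-1}|\le A\,I^{\mathrm{incr}\circ}(|F_{n-1}|)\le A\,I^{\mathrm{incr}\circ}(r)$, and (TF)(ii) gives $r-|F_{n-1}|\le |F_n|-|F_{n-1}|\le B\,|\partial F_{n-1}|$. One then \emph{pads $F_{n-1}$ up} to size $r$ (Lemma~\ref{lem:padding}) rather than trimming $F_n$ down, obtaining
\[
I^\circ(r)\ \le\ |\partial F_{n-1}|+\Delta\,(r-|F_{n-1}|)\ \le\ (1+\Delta B)\,|\partial F_{n-1}|\ \le\ (A+\Delta AB)\,I^{\mathrm{incr}\circ}(r).
\]
Your trim-down route can be repaired by invoking the right-Lipschitz bound for $I^{\mathrm{incr}\circ}$ (Lemma~\ref{lem:incr-Lip-right}) to control $I^{\mathrm{incr}\circ}(|F|)$ in terms of $I^{\mathrm{incr}\circ}(r)$, but this yields a strictly worse constant than $A+\Delta AB$.
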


\begin{remark}[Normalization for  ~\cref{thm:D-final}]
Throughout  \cref{thm:D-final} we work with the \emph{vertex} boundary
$\partial_{\mathcal S}Y=\mathcal SY\setminus Y$ and the corresponding exact/increasing
profiles $I^\circ,I^{\mathrm{incr}\circ}$. The directed-edge profiles satisfy
$I^\circ \le I^\circ_{\edge} \le \Delta I^\circ$ and
$I^{\mathrm{incr}\circ} \le I^{\mathrm{incr}\circ}_{\edge} \le \Delta I^{\mathrm{incr}\circ}$
(Lemma~\ref{lem:vertex-edge-profiles}); hence all ratio bounds are invariant up to the
factor~$\Delta$ when switching normalization.
\end{remark}

\begin{theoremE}[Stability of \textup{(TF)}]\label{thm:E-final}
Work in the \emph{directed-edge} normalization; the vertex version follows from Lemma~\ref{lem:TF-norm-eq}.

\smallskip
\noindent{\bf (i) Finite index and finite extensions.}
\emph{(TF)} is preserved under finite index passage and finite extensions.
Quantitatively: if $H\le G$ has finite index or fits in $1\to F\to G\to Q\to1$ with $|F|<\infty$, then
\emph{(TF)} holds on $G$ iff it holds on $H$ (resp.\ $Q$), with constants changing by a factor that depends only on the indices and on a bounded change of generators (Lemma~\ref{lem:chg-gen}).

\smallskip
\noindent{\bf (ii) Direct products (product generating set).}
Let $(G_i,\mathcal S_i)$, $i=1,2$, satisfy \emph{(TF)} with edge constants $(A^{(i)}_e,B^{(i)})$.
Equip $G_1\times G_2$ with $\mathcal S:=\big(\mathcal S_1\times\{e\}\big)\cup\big(\{e\}\times\mathcal S_2\big)$.
Then:
\begin{itemize}
\item \textup{(TF)(ii)} holds on $G_1\times G_2$ \emph{unconditionally} with
\[
B\ \le\ B^{(1)}+B^{(2)}\qquad(\text{up to a factor depending only on }\Delta_1,\Delta_2).
\]
\item If, in addition, both factors admit \emph{nested near-minimizers} \textup{(NNM)} at large volumes (Definition~\ref{def:NNM}), then \textup{(TF)(i$_e$)} holds along a cofinal subsequence for $G_1\times G_2$; hence full \textup{(TF)} holds on $G_1\times G_2$ with
\[
A_e\ \le\ C\big(A^{(1)}_e+A^{(2)}_e\big),\qquad
B\ \le\ C\big(B^{(1)}+B^{(2)}\big),
\]
where $C$ depends only on $(\Delta_1,\Delta_2)$.
\end{itemize}

\smallskip
\noindent{\bf (iii) Finite-lamp wreath products over a \textup{(TF)} base.}
Let $G=H^{(\Gamma)}\rtimes\Gamma$ with $H$ finite, nontrivial, and equip $G$ with the standard ``toggle$+$base'' generating set.
If $(\Gamma,\mathcal S_\Gamma)$ satisfies \textup{(TF)} with constants $(A^{\Gamma}_e,B_\Gamma)$, then there exists a nested chain in $G$ with global \textup{(TF)(ii)} and checkpoints obeying \textup{(TF)(i$_e$)} with constants depending only on $H$ and $(A^{\Gamma}_e,B_\Gamma)$. If, in addition, $G$ admits nested near-minimizers at large volumes, then full \textup{(TF)} holds on $G$.
\end{theoremE}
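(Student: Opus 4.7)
The plan is to prove the three permanence statements by explicit constructions of nested chains, using the given (TF) data in each factor as a template and relying on \Cref{lem:TF-norm-eq} to switch between vertex and directed-edge normalizations and on \Cref{lem:chg-gen} to absorb bounded changes of generators.

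For part~(i), fix a right transversal $T=\{t_1,\dots,t_m\}$ of $H\le G$ with $t_1=e$, and let $\{E_n\}$ be a nested (TF) chain in $(H,\mathcal S_H)$. The candidate chain $F_n:=E_n\cdot T$ is nested with $|F_n|=m|E_n|$. The $\mathcal S_G$-edges incident to $F_n$ split into intra-$H$ edges (dominated by the push-forward $\pi(\mathcal S_G)\subseteq H$ via \Cref{lem:chg-gen}) and transversal edges which are bounded per vertex by $m\cdot|\mathcal S_G|$, so $|\partial^e F_n|\le C_m|\partial^e E_n|$. The increment $|F_{n+1}|-|F_n|=m(|E_{n+1}|-|E_n|)$ transfers (TF)(ii) directly, and near-minimality transfers through the same factor, yielding (TF)(i$_e$). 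The reverse direction and the finite-extension case $1\to F\to G\to Q\to 1$ are handled symmetrically by either intersecting a $G$-chain with $H$ or by lifting a (TF) chain in $Q$ via a set-theoretic section and saturating with the finite kernel.

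For part~(ii), with product generators $\mathcal S=(\mathcal S_1\times\{e\})\cup(\{e\}\times\mathcal S_2)$, I set $F_n:=F_n^{(1)}\times F_n^{(2)}$ from the two given chains; nestedness is immediate. Each generator acts on exactly one coordinate, so the directed edge boundary splits exactly:
\[
|\partial^e F_n|\ =\ |\partial^e_{\mathcal S_1} F_n^{(1)}|\cdot|F_n^{(2)}|+|F_n^{(1)}|\cdot|\partial^e_{\mathcal S_2} F_n^{(2)}|.
\]
Telescoping $|F_{n+1}|-|F_n|$ across factors and invoking (TF)(ii) in each yields the unconditional bound $B\le B^{(1)}+B^{(2)}$ up to a factor depending on $\Delta_1,\Delta_2$. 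The subtler (TF)(i$_e$) step must use NNM: the product of two near-minimizers is generally not a near-minimizer of the product at every volume (optimal reshaping across factors is unknown a priori). Under NNM on both factors, however, there is a cofinal subsequence of volumes $V_k=|F_{n_k}^{(1)}|\cdot|F_{n_k}^{(2)}|$ at which both components simultaneously realize nested near-minimizers; the split above then certifies $|\partial^e F_{n_k}|\le C(A_e^{(1)}+A_e^{(2)})\cdot I^{\circ,e}(V_k)$, closing full (TF) with the claimed constants.

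For part~(iii) with $G=H^{(\Gamma)}\rtimes\Gamma$, $|H|=q\ge 2$, and standard toggle$+$base generators, let $\{F_n\}$ be the given nested (TF) chain in $\Gamma$. Define the configuration boxes
\[
B_n\ :=\ \{(f,g):\supp(f)\subseteq F_n,\ g\in F_n\},\qquad |B_n|\ =\ q^{|F_n|}|F_n|.
\]
Under the standard generators, toggle edges preserve $\supp(f)\subseteq F_n$, while a base edge exits $B_n$ precisely when the cursor traverses $\partial^e F_n$; summing over the $q^{|F_n|}$ lamp configurations at each such cursor position gives $|\partial^e B_n|\le C_q|\partial^e F_n|\cdot q^{|F_n|}$. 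The increment ratio obeys
\[
\frac{|B_{n+1}|}{|B_n|}\ =\ q^{|F_{n+1}|-|F_n|}\cdot\frac{|F_{n+1}|}{|F_n|}\ \le\ q^{B_\Gamma}\cdot A_\Gamma,
\]
establishing global (TF)(ii) on $G$ with constants depending only on $H$ and $(A_e^\Gamma,B_\Gamma)$. At the checkpoint volumes $|B_n|$ the Erschler-style lower envelopes of \Cref{thm:ers-checkpoints} match $|\partial^e B_n|$ up to bounded factors, giving (TF)(i$_e$) along the checkpoint subsequence; if $G$ admits NNM at large volumes, the interpolation argument from part~(ii) upgrades these to full (TF).

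The main obstacle is constant tracking in (iii): matching the combinatorial upper bound $|\partial^e F_n|\cdot q^{|F_n|}$ to the Erschler lower envelope at the checkpoints with constants depending only on $H$ and the $\Gamma$-data requires balancing the exponential lamp-entropy $q^{|F_n|}$ against the linear $\Gamma$-profile contribution, and any slack there would propagate into (and potentially amplify) the near-minimization ratio. By comparison, parts~(i)-(ii) are more mechanical, though the vertex/edge conversion (the $\Delta$-factor in \Cref{lem:TF-norm-eq}) must be applied consistently at every step for the final constants to come out clean.
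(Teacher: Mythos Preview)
Parts (i) and (ii) match the paper's approach (Propositions~\ref{prop:TF-finite-index}--\ref{prop:TF-finite-extension} and Theorem~\ref{thm:TF-product}/Lemma~\ref{lem:rect-minorant-product}): transversal lift with bounded generator change, and product sets with the exact boundary split plus record-volume tensoring for checkpoints.

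Part (iii), however, has a genuine gap. Your full-box chain $B_n=\{(f,g):\supp(f)\subseteq F_n,\ g\in F_n\}$ does \emph{not} satisfy \textup{(TF)(ii)}. You write $q^{|F_{n+1}|-|F_n|}\le q^{B_\Gamma}$, but \textup{(TF)(ii)} on $\Gamma$ only gives $|F_{n+1}|-|F_n|\le B_\Gamma\,|\partial^e F_n|$, so the exponent is unbounded. More fundamentally, \textup{(TF)(ii)} requires $|B_{n+1}\setminus B_n|\le B\,|\partial^e B_n|$, not a bounded \emph{ratio} $|B_{n+1}|/|B_n|$; since $|\partial^e B_n|/|B_n|\to 0$ along any F{\o}lner chain, a bounded ratio is strictly weaker and insufficient. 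Concretely, $|B_{n+1}|-|B_n|\ge (q^{|F_{n+1}|-|F_n|}-1)\,q^{|F_n|}|F_n|$ while $|\partial^e B_n|\asymp |\partial^e F_n|\,q^{|F_n|}$, so the required bound would force $(q^{|F_{n+1}|-|F_n|}-1)\,|F_n|\lesssim |\partial^e F_n|$, which fails whenever $|F_{n+1}|-|F_n|\ge 1$ and $|F_n|\gg|\partial^e F_n|$.

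The paper avoids this by \emph{not} jumping between full boxes. It works with budgeted sets $F(U,M)=\{(f,x):\supp f\subset U,\ \|f\|_0\le M,\ x\in U\}$ and builds a chain by \emph{single-step} increments: one base step (enlarge $U$ by one $\mathcal S_\Gamma$-neighborhood with new-ring lamps forbidden) followed by ring steps that raise the lamp budget on the new ring one unit at a time (Lemmas~\ref{lem:base-step-tempered}, \ref{lem:ring-step-tempered}, Theorem~\ref{thm:lamplighter-TFii}). Each such step adds at most the exiting-edge set, hence $|F_{m+1}\setminus F_m|\le \Delta\,|\partial^e F_m|$ globally, and the Erschler checkpoints (Theorem~\ref{thm:ers-checkpoints}) sit at the budgeted volumes $F(U_k,\lfloor\theta|U_k|\rfloor)$, not at the full boxes. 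Your intuition that the combinatorics must ``balance the exponential lamp-entropy against the linear $\Gamma$-profile contribution'' is exactly right; the resolution is to slice the lamp growth into unit increments rather than to bound a single jump.
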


\begin{corollary}[Finite-lamp wreaths over a \textup{(TF)} base: NNM (cofinal) and full \textup{(TF)}]\label{cor:Eiii-NNM-fullTF}
Under the hypotheses of \Cref{thm:E-final}\textup{(iii)} (with base $(\Gamma,\mathcal S_\Gamma)$ satisfying \textup{(TF)}), the constructed nested chain in $G=H^{(\Gamma)}\rtimes\Gamma$ admits infinitely many checkpoints obeying \textup{(i$_e$)} with constants depending only on $H$ and the base \textup{(TF)} data. Hence, by Lemma~\ref{lem:checkpoint-to-nnm}, $G$ has \emph{NNM (cofinal)}. In particular, full \textup{(TF)} follows with constants depending only on $H$ and the base \textup{(TF)} constants.
\end{corollary}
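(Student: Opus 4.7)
The plan is to observe that \Cref{thm:E-final}\textup{(iii)} has already built the two structural ingredients the corollary requires, and then mechanically invoke the packaging lemmas cited in the statement. First I would unpack \Cref{thm:E-final}\textup{(iii)}: its output is an ascending chain $F_1\subset F_2\subset\cdots$ in $G=H^{(\Gamma)}\rtimes\Gamma$ that satisfies global \textup{(TF)(ii)} with a constant $B$ depending only on $H$ and the base data $(A^{\Gamma}_e,B_\Gamma)$, together with a distinguished cofinal subsequence of \emph{checkpoints} $F_{n_k}$ at which the directed-edge boundary obeys $|\eBd[S]{F_{n_k}}|\le A^{(\mathrm{iii})}_e\,\Iexact(|F_{n_k}|)$, i.e.\ \textup{(TF)(i$_e$)} holds along the checkpoints with $A^{(\mathrm{iii})}_e$ of the stated dependence.

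Next I would feed this data into Lemma~\ref{lem:checkpoint-to-nnm}. Its hypothesis is precisely a nested chain carrying a cofinal subsequence of edge near-minimizers; its conclusion is \emph{NNM (cofinal)} in the sense of Definition~\ref{def:NNM}. Since nestedness, cofinality, and the checkpoint bound are all in hand from \Cref{thm:E-final}\textup{(iii)}, the lemma applies verbatim and yields NNM (cofinal) for $G$ with constant $A_e=A^{(\mathrm{iii})}_e$, whose dependence only on $H$, $A^{\Gamma}_e$, and $B_\Gamma$ is preserved.

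Finally, I would promote NNM (cofinal) to full \textup{(TF)} by invoking \Cref{thm:new-TF-equivalence} (equivalently \Cref{thm:NNM-alone-to-TF}). The hypothesis there is amenability of $G$ together with NNM (cofinal); amenability comes for free because the base $\Gamma$ is amenable (being \textup{(TF)}, it is F{\o}lner) and $H$ is finite, so $H^{(\Gamma)}\rtimes\Gamma$ is amenable as an iterated extension of amenable groups. The theorem then produces a single nested chain satisfying full \textup{(TF)} with interleaved constants $(A_e,B)$ that are bounded functions of the NNM constant and the initial \textup{(TF)(ii)} constant; both inputs were constrained to depend only on $H$ and the base \textup{(TF)} data, so the output inherits the same dependence.

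The main (and only genuine) obstacle is constant bookkeeping across the interleaving in \Cref{thm:new-TF-equivalence}: one must verify that no hidden dependence on the ambient geometry of $G$ creeps in beyond what is already packaged into $A^{(\mathrm{iii})}_e$ and $B$. This is quantitative but essentially arithmetic, since the interleaving scheme's constant tracking is explicit in \Cref{thm:new-TF-equivalence}; the rest of the argument is a direct transcription of \Cref{thm:E-final}\textup{(iii)}'s output through the two packaging lemmas, with no additional wreath-product geometry entering at the corollary stage.
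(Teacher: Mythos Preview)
Your proposal is correct and follows the same route the paper takes (the paper gives no separate proof block; the argument is embedded in the corollary statement via the citation of Lemma~\ref{lem:checkpoint-to-nnm} and the implicit appeal to the interleaving framework). One minor slip: the checkpoint bound in \textup{(i$_e$)} should read $B_{\mathcal S}(F_{n_k})\le A^{(\mathrm{iii})}_e\,I^{\mathrm{incr}\,\circ}_{\edge}(|F_{n_k}|)$, not $\Iexact(|F_{n_k}|)$---the increasing minorant is what is required and what Theorem~E(iii) actually delivers.
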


\begin{theoremF}[Lamplighters: tempered increments, checkpoint control, and sharpness]\label{thm:F-final}
Let $G=H^{(\Gamma)}\rtimes\Gamma$ with $H$ finite, nontrivial, and $\Gamma$ amenable, equipped with the standard ``toggle$+$base'' generating set.
Then there exists a \emph{nested} chain $(F_n)$ with
\[
|F_{n+1}\setminus F_n|\ \le\ \Delta|\partial_{\mathcal S}F_n|\qquad\text{for all $n$},
\]
i.e.\ \textup{(TF)(ii)} holds globally with $B=\Delta$. Moreover, there is an infinite subsequence of \emph{checkpoints} $(F_{n_j})$ along which \textup{(TF)(i$_e$)} holds with a constant $A_e=A_0(H,\mathcal S_\Gamma,\mathcal S)$ (Erschler-type near-minimizers). In particular, $(F_{n_j})$ is a F{\o}lner sequence in $G$.
Conversely, there exists a finitely generated amenable ``cursor lamplighter'' with $\sup_r I^\circ(r)/I^{\mathrm{incr}\circ}(r)=\infty$ when \textup{(TF)} fails.
\end{theoremF}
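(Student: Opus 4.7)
The statement bundles three deliverables, which I would treat in sequence: a nested exhaustion $(F_n)$ of $G$ achieving (TF)(ii) globally with $B=\Delta$; a cofinal subsequence of checkpoints $(F_{n_j})$ satisfying (TF)(i$_e$) with an Erschler-type constant $A_0=A_0(H,\mathcal S_\Gamma,\mathcal S)$; and a converse graph-level counter-model (the ``cursor lamplighter'') showing that amenability alone is not enough.

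\textbf{Chain and checkpoints.} Since $\Gamma$ is amenable, fix a nested F{\o}lner exhaustion $\Gamma_1\subset\Gamma_2\subset\cdots\uparrow\Gamma$. Define the Erschler checkpoints
\[
F^{(m)}\ :=\ \{(f,g)\in G \ :\ \operatorname{supp}(f)\subset\Gamma_m,\ g\in\Gamma_m\},
\]
so that $|F^{(m)}|=|H|^{|\Gamma_m|}\,|\Gamma_m|$ and the $F^{(m)}$ are themselves nested and exhaust $G$. With the toggle-at-cursor convention a lamp generator sends $(f,g)\mapsto(f+h\delta_g,g)$, which keeps the support inside $\Gamma_m$ whenever $g\in\Gamma_m$; hence only base generators can cross out of $F^{(m)}$, giving
\[
|\partial_{\mathcal S}F^{(m)}|\ \le\ |H|^{|\Gamma_m|}\,|\partial_{\mathcal S_\Gamma}\Gamma_m|.
\]
To weave these checkpoints into a single nested chain I enumerate $G=\{u_1,u_2,\dots\}$ in such a way that for every $m$, $F^{(m)}=\{u_1,\dots,u_{|F^{(m)}|}\}$; this is possible precisely because the $(F^{(m)})$ are nested and exhaust $G$. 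Setting $F_n:=\{u_1,\dots,u_n\}$ one has $|F_{n+1}\setminus F_n|=1$, and the vertex boundary of any proper finite subset of the infinite Cayley graph $G$ is nonempty, so $|F_{n+1}\setminus F_n|\le \Delta|\partial_{\mathcal S}F_n|$ holds trivially. This gives (TF)(ii) globally with $B=\Delta$, and the checkpoints $n_j:=|F^{(j)}|$ satisfy $F_{n_j}=F^{(j)}$.

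\textbf{Near-minimality at the checkpoints.} The substantive step is the Erschler-type near-minimizer bound
\[
|\partial_{\mathcal S}F^{(m)}|\ \le\ A_0\,I^\circ_{\edge}\!\big(|F^{(m)}|\big),
\]
where the numerator is already controlled by the display above. For the denominator I would combine the Coulhon--Saloff-Coste/Pittet--Saloff-Coste profile lower bound for wreath products (which at scale $N_m=|H|^{|\Gamma_m|}|\Gamma_m|$ matches the $|H|^{|\Gamma_m|}|\partial_{\mathcal S_\Gamma}\Gamma_m|$ estimate above, up to a group-theoretic constant) with the vertex--edge profile comparison of Lemma~\ref{lem:vertex-edge-profiles} to land in the edge normalization demanded by (TF)(i$_e$). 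I expect the main obstacle to lie here: one must carry \emph{explicit} $(H,\mathcal S_\Gamma,\mathcal S)$-dependent constants through the chain of inequalities---not asymptotic $\asymp$ bounds---so that $A_0$ is uniform in $m$ and survives the $|H|^{|\Gamma_m|}$ combinatorial factor coupled with the base F{\o}lner ratio. This is the one place where the general ``upper bound $\le$ profile'' cannot be outsourced: the lower matching requires either a coupling/transport argument (random-walk return times) or an explicit expansion argument on the wreath metric, both of which must be normalized carefully to avoid implicit constants.

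\textbf{Sharpness.} For the converse I construct the cursor lamplighter $X_{\mathrm{cur}}$ as a graph over an amenable base graph $Y$ whose profile has engineered plateaus, e.g., the balloon-chain base underlying \Cref{thm:graph-blowup}. Placing lamp-toggle edges only adjacent to the cursor vertex preserves amenability of $X_{\mathrm{cur}}$ while transplanting the plateau structure of $Y$'s profile into $X_{\mathrm{cur}}$; a direct computation along the plateau gives $\sup_r I^\circ(r)/I^{\mathrm{incr}\circ}(r)=+\infty$, while (TF) necessarily fails at the same scales. The left-Lipschitz trimming obstruction (\Cref{cor:nogo-plateau}) is precisely what prevents this construction from being exported to a vertex-transitive Cayley graph, which is consistent with the positive half of the theorem and with \Cref{thm:D-final}.
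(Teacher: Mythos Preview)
Your approach is correct for the literal statement but routes differently from the paper in two places.

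For (TF)(ii), your single-vertex enumeration between checkpoints trivially gives $|F_{n+1}\setminus F_n|=1\le|\partial_{\mathcal S}F_n|$ (indeed $B=1$ would suffice); this works in \emph{any} infinite connected graph and uses nothing about lamplighters. The paper instead builds the chain by explicit combinatorial moves tailored to the wreath structure: a ``base step'' moving the cursor into the new ring $U_{k+1}\setminus U_k$, followed by ``ring steps'' raising the lamp budget on the ring one unit at a time (Lemmas~\ref{lem:base-step-tempered}, \ref{lem:ring-step-tempered}, Theorem~\ref{thm:lamplighter-TFii}), each step tempered because it adds precisely the targets of exiting directed edges. Your route is shorter; the paper's yields exact edge-split formulas (Lemma~\ref{lem:split-exact-proof}, Lemma~\ref{lem:ring-count}) and the ``explicit expansion mechanism'' the authors emphasize in their remark on constructivity. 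For the checkpoints, you use the unbudgeted Erschler sets $F^{(m)}=\{(f,g):\supp f\subset\Gamma_m,\ g\in\Gamma_m\}$ (toggle boundary vanishes identically), while the paper uses the budgeted $F(U_k,M_k)$ with $M_k=\lfloor\theta|U_k|\rfloor$, $\theta\in(0,1)$, and kills the toggle term in the F{\o}lner ratio via a local-CLT/Stirling estimate at the binomial median (Theorem~\ref{thm:lamplighter-folner-checkpoints}); both families ultimately rest on the same Erschler profile input (Theorem~\ref{thm:ers-checkpoints}). One point to tighten in your write-up: the (TF)(i$_e$) matching needs the base sets $\Gamma_m$ to be \emph{near-optimal} F{\o}lner sets in $\Gamma$, not an arbitrary F{\o}lner exhaustion.

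For the converse, the paper's ``cursor lamplighter'' is not a wreath product over a balloon base; it is simply the balloon-chain graph of Theorem~\ref{thm:graph-blowup} (expander blocks joined by single bridge edges), which is already amenable with unbounded profile ratio. Your lamplighter-over-balloon-base would also produce a counterexample but is more elaborate than what the paper actually does.
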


\begin{remark}[Normalization and scope for  \cref{thm:F-final}]
Throughout the proof of  \cref{thm:F-final} we will:
\begin{itemize}
\item work with the \emph{directed} edge boundary
$B_{\mathcal S}(Y):=\#\{(y,s)\in Y\times\mathcal S:\ ys\notin Y\}$;
\item pass to the vertex boundary via the general comparison
$
|\partial_{\mathcal S}Y|\le B_{\mathcal S}(Y)\le |\mathcal S|\cdot |\partial_{\mathcal S}Y|
$
(see Lemma~\ref{lem:vertex-edge-profiles});
\item use the standard ``toggle$+$base'' generating set $\mathcal S=\mathcal S_\Gamma\cup\mathsf T$
for $G=H^{(\Gamma)}\rtimes\Gamma$, with $t:=|\mathsf T|$ and $q:=|H|-1$.
\end{itemize}
All constants in (TF) stated in vertex normalization are obtained from the directed version by the factor $ |\mathcal S| $.
\end{remark}

\begin{corollary}[Lamplighters: NNM (cofinal) and full \textup{(TF)}]\label{cor:lamplighter-NNM-fullTF}
In the setting of \Cref{thm:F-final} (lamplighter $G=H^{(\Gamma)}\rtimes\Gamma$ with $H$ finite, nontrivial), the nested chain $(F_n)$ produced there admits an infinite subsequence of checkpoints $(F_{n_j})$ with
\[
B_{\mathcal S}(F_{n_j})\ \le\ A_0 I^{\mathrm{incr}\circ}_{\edge}\big(|F_{n_j}|\big)\qquad\text{for all }j,
\]
hence $G$ has \emph{NNM (cofinal)} with constant $A_e=A_0$ by Lemma~\ref{lem:checkpoint-to-nnm}.
So $G$ satisfies full \textup{(TF)} with constants $(A_e,B)=(A_0,\Delta)$ in vertex normalization, and the profile ratio is uniformly bounded by \Cref{thm:D-final}.
\end{corollary}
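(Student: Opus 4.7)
The plan is to unpack the two outputs of Theorem~\ref{thm:F-final} --- the global tempered chain and the cofinal Erschler-type checkpoints --- into the language of NNM and (TF) via Lemma~\ref{lem:checkpoint-to-nnm}, and then close with Theorem~\ref{thm:D-final}. No new construction is needed; the work is in matching normalizations and in verifying that the checkpoint bound produced by Theorem~\ref{thm:F-final} is genuinely against the right monotone minorant $I^{\mathrm{incr}\circ}_{\edge}$ rather than only against the exact profile $I^\circ_{\edge}$.

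First I would invoke Theorem~\ref{thm:F-final} for the lamplighter $G=H^{(\Gamma)}\rtimes\Gamma$ to obtain a nested chain $(F_n)$ with global tempered increments $|F_{n+1}\setminus F_n|\le \Delta|\partial_{\mathcal S}F_n|$, together with a cofinal subsequence $(F_{n_j})$ of Erschler-type checkpoints obeying the $A_0$-near-minimizer estimate with $A_0=A_0(H,\mathcal S_\Gamma,\mathcal S)$. To promote this to the displayed form $B_{\mathcal S}(F_{n_j})\le A_0 I^{\mathrm{incr}\circ}_{\edge}(|F_{n_j}|)$, I would use that the Erschler near-minimizers on lamplighters are built from product boxes (base F{\o}lner set $\times$ lamp configurations) whose volumes form a cofinal set of sizes at which $I^\circ_{\edge}$ and its right monotone minorant $I^{\mathrm{incr}\circ}_{\edge}$ agree up to an absorbed constant (Theorem~\ref{thm:ers-checkpoints}). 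Since the checkpoints $(n_j)$ are chosen at precisely such sizes, the inequality transfers from $I^\circ_{\edge}$ to $I^{\mathrm{incr}\circ}_{\edge}$ without further loss.

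Second, Lemma~\ref{lem:checkpoint-to-nnm} is designed to read exactly this input --- a nested chain with a cofinal subsequence obeying a near-minimizer bound against $I^{\mathrm{incr}\circ}_{\edge}$ --- and to return NNM (cofinal) for $G$ with constant $A_e=A_0$. Combining this with the global (TF)(ii) bound from Theorem~\ref{thm:F-final} (constant $B=\Delta$), the same chain $(F_n)$ witnesses full (TF) for $G$ with constants $(A_e,B)=(A_0,\Delta)$ in the directed-edge normalization. The vertex-normalization form in the corollary then follows from the directed-vs-vertex comparison in Lemma~\ref{lem:vertex-edge-profiles}, as flagged in the normalization remark following Theorem~\ref{thm:F-final}. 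Finally, Theorem~\ref{thm:D-final} converts these (TF) constants into the uniform ratio bound $\sup_r I^\circ(r)/I^{\mathrm{incr}\circ}(r)\le A_0+\Delta^2 A_0$.

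The main obstacle I expect, and the step I would verify most carefully, is the claim that the cofinal subsequence $(n_j)$ produced by Theorem~\ref{thm:F-final} lands on volumes where $I^\circ_{\edge}$ and $I^{\mathrm{incr}\circ}_{\edge}$ coincide up to constants, as opposed to giving a bound only against $I^\circ_{\edge}$ pointwise. In the lamplighter setting this is guaranteed by the explicit product structure of the Erschler near-minimizers, but one must align the choice of $(n_j)$ with the cofinal set of sizes at which the monotonization of the profile costs only an absorbed factor; otherwise one would need a separate monotonization pass inside Lemma~\ref{lem:checkpoint-to-nnm}, which would enlarge the NNM constant in a way not visible in the final statement.
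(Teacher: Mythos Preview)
Your overall route matches the paper's: Theorem~\ref{thm:F-final} supplies the nested chain with global \textup{(TF)(ii)} and cofinal checkpoints obeying \textup{(TF)(i$_e$)}; Lemma~\ref{lem:checkpoint-to-nnm} reads this as NNM (cofinal); Theorem~\ref{thm:D-final} closes the ratio bound. Two comments on the details.

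First, the ``obstacle'' you flag is not one. The output of Theorem~\ref{thm:F-final} is stated as \textup{(TF)(i$_e$)} at checkpoints, and by Definition~\ref{def:TF-edge} that \emph{is} the inequality $B_{\mathcal S}(F_{n_j})\le A_0\,I^{\mathrm{incr}\,\circ}_{\edge}(|F_{n_j}|)$ against the minorant, not against $I^\circ_{\edge}$. The alignment of checkpoints to record minima that you describe happens inside the proof of Theorem~\ref{thm:ers-checkpoints} (and hence of Theorem~\ref{thm:F-final}); by the time you invoke Theorem~\ref{thm:F-final} there is nothing further to verify, and no extra monotonization pass is needed in Lemma~\ref{lem:checkpoint-to-nnm}.

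Second, your sentence ``the same chain $(F_n)$ witnesses full \textup{(TF)}'' is the one step that is not literally justified. Full \textup{(TF)} in the sense of Definition~\ref{def:TF}/\ref{def:TF-edge} requires clause (i) at \emph{every} $n$, whereas Theorem~\ref{thm:F-final} gives it only along the cofinal subsequence $(n_j)$. The paper's corollary makes the same abbreviated assertion; the honest route from NNM (cofinal) to a chain with full \textup{(TF)} is the budgeted interleaving of Definition~\ref{def:budget-levels}/Proposition~\ref{prop:tfi-every-step} (as used explicitly in the proof of Corollary~\ref{cor:TF-product-full}), which produces a possibly different nested chain and adjusts the $A$-constant by a tracked $\vartheta$-dependent factor. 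This does not affect the qualitative conclusion or the appeal to Theorem~\ref{thm:D-final}, but you should either invoke that machinery or phrase the conclusion as the paper's ``decoupled'' \textup{(TF)} (global \textup{(ii)} plus cofinal \textup{(i$_e$)}) rather than asserting that the original chain satisfies clause (i) at every step.
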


\begin{theoremG}[$\Z^d\rtimes_A\Z$: logarithmic layer-nested sets are nested, F{\o}lner, satisfy {\normalfont(TF)(ii)} with an explicit constant, and are near-minimizers within the layer-nested class]\label{thm:G-final}
Let $G=\Z^d\rtimes_A \Z$ with the left generating set $\mathcal S$ described in \S\ref{sec:TF-semidirect}, and let $K\subset\R^d$ be a convex body with piecewise $C^1$ boundary. Fix any $\Lambda(A)>\rho(\operatorname{cof}A)=\rho(A^{-1})$ and set
\[
E_R\ :=\ F_{K,R,T(R)},\qquad T(R):=\big\lfloor \alpha\log R\big\rfloor,
\quad 0<\alpha<\frac{1}{\log\Lambda(A)}.
\]
Then:
\begin{enumerate}
\item The family $(E_R)_{R\ge2}$ is \emph{nested} (Lemma~\ref{lem:nested-stacks}) and \emph{F{\o}lner}:
\[
\frac{B_{\mathcal S}(E_R)}{|E_R|}\ \xrightarrow[R\to\infty]{}\ 0,
\qquad
B_{\mathcal S}(E_R)=B_{\rm hor}(E_R)+B_{\rm vert}(E_R),\quad
B_{\rm vert}(E_R)=2|X_0(R)|\asymp R^d.
\]
\item There exist constants $c_1,c_2,c_3\in(0,\infty)$ depending only on $(K,A,\mathcal S)$ such that for all $R\ge2$,
\[
c_1R^d\ \le\ |X_0(R)|\ \le\ c_2 R^d,\qquad
|X_0(R{+}1)\setminus X_0(R)|\ \le\ c_3 R^{d-1}.
\]
\item \textup{(TF)(ii) with an explicit $B$}. For all sufficiently large $R$,
\[
|E_{R+1}\setminus E_R|\ \le\ B\cdot B_{\mathcal S}(E_R),
\]
with
\[
B\ :=\ \frac{4c_2+c_3}{2c_1}.
\]
Indeed, if $T(R{+}1)=T(R)$ then
$
|E_{R+1}\setminus E_R|\le (2T(R){+}1) c_3 R^{d-1}\le \tfrac{c_3}{2c_1} B_{\mathcal S}(E_R)
$
since $B_{\mathcal S}(E_R)\ge B_{\rm vert}(E_R)=2|X_0(R)|\ge 2c_1 R^d$ and, for $R$ large, $2T(R)+1\le R$ (recall $T(R)=\lfloor \alpha\log R\rfloor$);
and if $T(R{+}1)=T(R){+}1$ then
$
|E_{R+1}\setminus E_R|\le 2 |X_0(R{+}1)|+(2T(R){+}1) c_3 R^{d-1}\le \frac{4c_2+c_3}{2c_1} B_{\mathcal S}(E_R).
$
\item \textup{(Layer-nested near-minimality).} In the \emph{layer-nested} subclass (Definition~\ref{def:layer-nested}), there exists a constant $A_e(A,\mathcal S)\in(0,\infty)$ such that, for all sufficiently large $R$,
\[
B_{\mathcal S}(E_R)\ \le\ A_e\ \inf\Big\{B_{\mathcal S}(Y):\ Y\ \text{layer-nested},\ |Y|=|E_R|\Big\}.
\]
Equivalently, $\{E_R\}_{R\to\infty}$ is a nested sequence of near-minimizers within the layer-nested class. (See Proposition~\ref{prop:G-NNM-layer}.)
\end{enumerate}
In particular, clause \textup{(TF)(ii)} holds \emph{unconditionally} (no spectral assumptions beyond $A\in GL(d,\Z)$), and the F{\o}lner convergence uses only $\alpha\log\Lambda(A)<1$ so that $B_{\rm hor}(E_R)=O(R^{d-1+\alpha\log\Lambda(A)})=o(|E_R|)$ while $B_{\rm vert}(E_R)\asymp R^d$ and $|E_R|\asymp R^d\log R$.
\end{theoremG}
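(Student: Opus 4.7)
The plan is to decompose $B_{\mathcal{S}}(E_R) = B_{\mathrm{hor}}(E_R) + B_{\mathrm{vert}}(E_R)$ and estimate each piece as a polynomial in $R$, exploiting the semidirect structure: a horizontal generator at height $t$ translates the $\mathbb{Z}^d$-coordinate by (a conjugate of) $A^{\pm t} e_i$, while the vertical generators preserve the horizontal coordinate. I would lean on \cref{lem:nested-stacks} for nestedness and on \cref{prop:G-NNM-layer} for the layer-nested lower bound, and carry out the remaining geometric estimates directly.

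I would first obtain $c_1, c_2, c_3$ (item~(2)) by classical lattice-point counting for the convex body $RK$ with piecewise $C^1$ boundary: $|X_0(R)| = \operatorname{vol}(K) R^d + O(R^{d-1})$ gives $c_1, c_2$, and a thick-shell count around $\partial(RK)$ yields $|X_0(R{+}1) \setminus X_0(R)| \le c_3 R^{d-1}$. Nestedness (item~(1)) is then immediate from \cref{lem:nested-stacks} via $RK \subset (R{+}1)K$ and $T(R) \le T(R{+}1)$.

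Next I would establish the boundary decomposition and F{\o}lner property. Because the slices are constant in $t$ for $|t| \le T(R)$, the vertical boundary sits entirely at the two end caps, giving $B_{\mathrm{vert}}(E_R) = 2|X_0(R)| \asymp R^d$. For the horizontal part at height $t$, a layer-by-layer thick-boundary inequality for $RK$ yields $B_{\mathrm{hor}}(E_R, t) \lesssim R^{d-1} \|A^{\pm t}\|$; summing over $|t| \le T(R)$ and using the spectral bound $\|A^{\pm t}\| \le C \Lambda(A)^{|t|}$ afforded by the choice of $\Lambda(A)$ together with $\mathcal{S}$-symmetry yields $B_{\mathrm{hor}}(E_R) \lesssim R^{d-1} \Lambda(A)^{T(R)} \le R^{d-1 + \alpha \log \Lambda(A)}$. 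Since $\alpha \log \Lambda(A) < 1$ and $|E_R| \asymp (2T(R){+}1) R^d \asymp R^d \log R$, the F{\o}lner ratio tends to $0$.

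For item~(3) I follow the two-case split already indicated in the statement. When $T(R{+}1) = T(R)$, the increment is a horizontal fattening of $2T(R){+}1$ slices, of total size $\le (2T(R){+}1) c_3 R^{d-1}$; since $2T(R){+}1 \le R$ for $R$ large and $B_{\mathcal{S}}(E_R) \ge 2 c_1 R^d$, the ratio is $\le c_3 / (2 c_1)$. When $T(R{+}1) = T(R){+}1$, two new slices of size $\le c_2 (R{+}1)^d \le 2 c_2 R^d$ are added; combining with the horizontal term and dividing by $2 c_1 R^d$ yields $B = (4 c_2 + c_3) / (2 c_1)$. For item~(4), I would invoke \cref{prop:G-NNM-layer}: within the layer-nested class, the volume-constrained minimum boundary reduces to a one-dimensional trade-off between the number of layers $T'$ and the per-layer size $|X'|$, optimized near $(T', |X'|) \asymp (\log R, R^d)$, which matches our upper bound up to an explicit constant $A_e(A, \mathcal{S})$.

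The main obstacle will be the horizontal boundary estimate $B_{\mathrm{hor}}(E_R, t) \lesssim R^{d-1} \|A^{\pm t}\|$ with a constant uniform in $t$. My plan is to decompose $\partial K$ into finitely many smooth faces and bound the lattice-point count crossing each face in direction $A^{\pm t} e_i$ by (face area)$\cdot \|A^{\pm t}\|$; uniformity in $t$---independent of the rotating eigenvector geometry of $A^{\pm t}$---is what forces the piecewise $C^1$ (not merely Lipschitz) hypothesis on $\partial K$, and is where $\Lambda(A)$ enters the F{\o}lner rate cleanly.
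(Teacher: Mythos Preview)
Your overall strategy matches the paper's: decompose $B_{\mathcal S}$ into horizontal and vertical pieces, use lattice-point counting for item~(2), bound the horizontal boundary by a geometric sum controlled by $\Lambda(A)^{T(R)}$, and do the two-case split for item~(3). The paper carries this out via Lemmas~\ref{lem:nested-stacks}--\ref{lem:annulus-lattice}, Lemma~\ref{lem:hor-one-level}, Corollary~\ref{cor:hor-total}, and Propositions~\ref{prop:log-height-folner-tempered} and~\ref{prop:G-NNM-layer}.

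However, there is a genuine conceptual mix-up in your setup. You describe the \emph{right} action (``the vertical generators preserve the horizontal coordinate''; ``a horizontal generator at height $t$ translates by $A^{\pm t}e_i$''), but the theorem explicitly uses the \emph{left} generating set of \S\ref{sec:TF-semidirect}, where $t\cdot(x,k)=(Ax,k+1)$ and $s_i\cdot(x,k)=(x+e_i,k)$ independently of height. Correspondingly, the slices of $F_{K,R,T}$ are \emph{not} constant: by Definition~\ref{def:A-cov-stack}, $X_k=A^kX_0$. Your claim ``because the slices are constant in $t$, the vertical boundary sits entirely at the two end caps'' is therefore false as stated. The correct mechanism is Lemma~\ref{lem:zero-drift-left}: the $A$-covariance $X_{k+1}=A X_k$ exactly cancels the vertical action, giving $|X_k\triangle A^{-1}X_{k+1}|=0$ at every interior interface. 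Likewise, the horizontal boundary at height $k$ is the axis edge boundary of $X_k=(R\,A^kK)\cap\Z^d$ in the \emph{fixed} directions $e_i$; the paper bounds this via the anisotropic perimeter of $A^kK$, which produces $\|\operatorname{cof}(A^k)\|$ (Lemma~\ref{lem:hor-one-level}), not $\|A^{\pm k}\|$. Since $\operatorname{cof}(A^k)=\pm(A^{-k})^\top$ for $A\in GL(d,\Z)$, this is why $\rho(A^{-1})$ appears in the hypothesis on $\Lambda(A)$.

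The right-action picture you have in mind (constant slices, height-dependent horizontal steps) is graph-isomorphic to the paper's left-action picture via inversion, so your numerical estimates would ultimately agree. But the reasoning you give for $B_{\rm vert}(E_R)=2|X_0|$ is wrong for the object $E_R$ as actually defined, and you should rewrite the vertical and horizontal steps in the left-action frame (or explicitly pass through the isomorphism) before the argument is correct.
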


\begin{remark}[Hyperbolic upgrade]\label{rem:G-hyp-upgrade}
If $A$ is \emph{hyperbolic} (no eigenvalue on the unit circle), then by Corollary~\ref{cor:G-hyp-unconditional}
the logarithmic layer-nested sets $(E_R)$ also satisfy \textup{(TF)(i$_e$)} \emph{unconditionally}; together with
~\Cref{thm:G-final}\textup{(iii)} this yields full \textup{(TF)} for $G=\Z^d\rtimes_A\Z$ without the NNM
hypothesis. The proof is deferred to \S\ref{sec:bounded-ratio} (profile bounds).
\end{remark}

\begin{corollary}[Hyperbolic semidirects: unconditional full {\normalfont(TF)}]\label{cor:G-hyp-unconditional}
Let $G=\Z^d\rtimes_A\Z$ with $A\in \mathrm{GL}(d,\Z)$ \emph{hyperbolic} (no eigenvalue on the unit circle),
and let $\mathcal S$ be any finite symmetric generating set. Then there exist constants $c,A_e>0$
(depending only on $(A,\mathcal S)$) such that
\[
I^{\circ}_{\edge}(v)\ \ge\ c \frac{v}{\log v}\qquad(v\ \text{large}),
\]
and the logarithmic layer-nested sets $\{E_R\}$ from Theorem~\ref{thm:G-final} satisfy
\[
B_{\mathcal S}(E_R)\ \le\ A_e I^{\circ}_{\edge}(|E_R|)\qquad(R\ \text{large}).
\]
In particular, combining this with  ~\cref{thm:G-final}\textup{(iii)} yields \emph{full} \textup{(TF)} for $G$
without assuming nested near-minimizers.
\end{corollary}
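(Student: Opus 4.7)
The plan is to match two estimates at the same volume scale. One is the exact growth of the volume and boundary of the layer-nested sets $E_R$ already supplied by  \Cref{thm:G-final}; the other is a group-theoretic lower bound on $I^\circ_{\edge}$ that is valid in the hyperbolic regime. Once both sides are known, plugging $v=|E_R|$ into the lower bound and comparing with the upper bound on $B_{\mathcal S}(E_R)$ will produce the constant $A_e$. Combined with clause \textup{(iii)} of  \Cref{thm:G-final} (which already gives \textup{(TF)(ii)} unconditionally), this gives full \textup{(TF)} without appealing to any NNM hypothesis.

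First I would record the two sides of  \Cref{thm:G-final}. From parts (1)--(2) of that theorem I have $|E_R|\asymp R^d\log R$ and a splitting
\[
B_{\mathcal S}(E_R)\ =\ B_{\rm hor}(E_R)+B_{\rm vert}(E_R),\qquad B_{\rm vert}(E_R)\asymp R^d,\qquad B_{\rm hor}(E_R)=O\big(R^{d-1+\alpha\log\Lambda(A)}\big).
\]
Since $\alpha\log\Lambda(A)<1$, the horizontal term is of strictly lower order than $R^d$, so $B_{\mathcal S}(E_R)\asymp R^d$ with implicit constants depending only on $(K,A,\mathcal S)$.

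Second, I would install the lower bound $I^\circ_{\edge}(v)\ge c v/\log v$. The group $G=\Z^d\rtimes_A\Z$ with hyperbolic $A$ is polycyclic of exponential growth, and its $L^1$-isoperimetric profile is \emph{known} to be $\asymp v/\log v$: this follows from the Pittet--Saloff-Coste/Coulhon--Saloff-Coste/Erschler framework relating F{\o}lner function, heat-kernel return probability, and the $L^1$-isoperimetric profile on solvable groups. The hyperbolicity of $A$ (no eigenvalue on the unit circle) is exactly what prevents the profile from being larger (i.e.\ closer to the polynomial-growth regime) and forces $\Fol(n)\sim\exp(n)$, which inverts to $I^\circ_{\edge}(v)\gtrsim v/\log v$. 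I would quote the cleanest form available (directed-edge normalization; constants depending only on $(A,\mathcal S)$) and use Lemma~\ref{lem:vertex-edge-profiles} to translate between vertex and edge normalizations if necessary.

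Third, I would plug in and combine. With $v=|E_R|\asymp R^d\log R$, one has $\log v=d\log R+\log\log R+O(1)\asymp \log R$, so
\[
I^\circ_{\edge}(|E_R|)\ \gtrsim\ \frac{|E_R|}{\log|E_R|}\ \asymp\ \frac{R^d\log R}{\log R}\ =\ R^d.
\]
Since also $B_{\mathcal S}(E_R)\asymp R^d$, dividing gives a uniform bound $B_{\mathcal S}(E_R)\le A_e I^\circ_{\edge}(|E_R|)$ for $R$ large, with $A_e$ depending only on $(A,\mathcal S)$. This is \textup{(TF)(i$_e$)} along the nested cofinal family $(E_R)$; combined with \textup{(TF)(ii)} from  \Cref{thm:G-final}\textup{(iii)} (constant $B=(4c_2+c_3)/(2c_1)$), this is full \textup{(TF)} on $G$.

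The main obstacle is purely bibliographic/normalization-tracking: the $L^1$-isoperimetric lower bound $I^\circ_{\edge}(v)\gtrsim v/\log v$ for hyperbolic $\Z^d\rtimes_A\Z$ is classical, but I must pin down a version with the correct edge normalization and with constants depending only on $(A,\mathcal S)$, valid for \emph{all} large volumes (not only along a subsequence). The hyperbolicity hypothesis is essential: if $A$ has an eigenvalue on the unit circle (e.g.\ unipotent), the profile can be polynomial and the matching with $R^d$ collapses.
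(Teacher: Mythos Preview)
Your proposal is correct and follows essentially the same route as the paper. The paper's proof (Theorem~\ref{thm:new-hyp-global}) also records $|E_R|\asymp R^d\log R$ and $B_{\mathcal S}(E_R)\asymp R^d$, invokes the classical $L^1$-profile bound $j_1(v)\asymp\log v$ for polycyclic groups of exponential growth (citing \cite{VaropoulosSaloffCosteCoulhon1992,SaloffCoste2002Aspects}), converts this via Lemma~\ref{lem:j-to-Iedge} to $I^\circ_{\edge}(v)\gtrsim v/\log v$, and then plugs in exactly as you do. The only cosmetic difference is that you phrase the lower bound via F{\o}lner-function inversion, whereas the paper uses the Nash/$L^1$-profile $j_1$ and the explicit bridge Lemma~\ref{lem:j-to-Iedge}; these are equivalent formulations of the same classical input.
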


\noindent\emph{Beyond the hyperbolic case.}
The following applies to general $A\in \mathrm{GL}(d,\Z)$ under the nested near-minimizer (NNM) hypothesis.

\begin{corollary}\label{cor:G-apply-TF}
If, furthermore, there exist nested near-minimizers at large volumes, then $G$ satisfies full \textup{(TF)}; in particular the profile ratio is uniformly bounded.
\end{corollary}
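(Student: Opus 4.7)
The plan is to combine \Cref{thm:G-final}(iii) (the unconditional \textup{(TF)(ii)} for the logarithmic layer-nested chain) with the NNM hypothesis via the general interleaving machinery, and then apply \Cref{thm:D-final} to deduce the profile ratio bound. The corollary is essentially a packaging step: \Cref{thm:G-final} already does the hard geometric work, and the NNM hypothesis supplies the remaining ingredient \textup{(TF)(i$_e$)} along a cofinal subsequence.

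I would proceed in three steps. \textbf{First}, I record that $G=\Z^d\rtimes_A\Z$ is polycyclic and therefore amenable, so the interleaving framework underlying \Cref{thm:NNM-alone-to-TF} (equivalently \Cref{thm:new-TF-equivalence}) is available on $G$ without any spectral restriction on $A$. \textbf{Second}, I use the logarithmic layer-nested chain $(E_R)$ from \Cref{thm:G-final} as a scaffold: by \Cref{thm:G-final}(i)--(iii) this chain is nested, F{\o}lner, and satisfies \textup{(TF)(ii)} globally with the explicit constant $B=(4c_2+c_3)/(2c_1)$ depending only on $(K,A,\mathcal S)$. \textbf{Third}, the NNM hypothesis supplies a nested subsequence $(Y_k)$ of edge near-minimizers with $B_{\mathcal S}(Y_k)\le A_e' I^{\circ}_{\edge}(|Y_k|)$ at large volumes; I then apply the merge step of \Cref{thm:new-TF-equivalence} (or equivalently \Cref{thm:NNM-alone-to-TF}) to produce a single nested exhaustion $(F_n)$ that inherits \textup{(TF)(ii)} from $(E_R)$ and \textup{(TF)(i$_e$)} at the NNM checkpoints. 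This is full \textup{(TF)} on $G$ with constants $(A_e,B)$ tracked in terms of $A_e'$ and the scaffold constants $(c_1,c_2,c_3)$.

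Feeding full \textup{(TF)} into \Cref{thm:D-final} then yields $\sup_{r\ge1} I^{\circ}(r)/I^{\mathrm{incr}\circ}(r)\le A+\Delta AB$ in vertex normalization, which is the claimed bounded ratio. The main point to verify is the merge itself: one must check that interleaving the scaffold $(E_R)$ with the NNM subsequence $(Y_k)$ neither destroys nestedness nor inflates the \textup{(TF)} constants beyond an absolute function of $(A_e',B)$. The F{\o}lner property of $(E_R)$ already established in \Cref{thm:G-final}(i), together with amenability of $G$, makes this merge routine via a Lemma~\ref{lem:checkpoint-to-nnm}-type reindexing, so no further hypothesis on $A\in\mathrm{GL}(d,\Z)$ is required beyond the existence of nested near-minimizers. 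The upshot is a clean reduction: \emph{NNM is the only missing structural input}, and the logarithmic layer-nested scaffold from \Cref{thm:G-final} absorbs all the $A$-dependent geometry.
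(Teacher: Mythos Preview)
Your final invocation of \Cref{thm:NNM-alone-to-TF} is correct and is all that is needed, but the scaffolding you build around it is both unnecessary and based on a misreading of that theorem. The interleaving behind \Cref{thm:NNM-alone-to-TF} does \emph{not} merge an external chain with the NNM family: it takes as input \emph{only} the nested near-minimizers $(W_r)$ (plus amenability) and constructs the (TF) chain directly from them via the confined step $\mathcal T_{r'}$ and the budgeted-level schedule (Definitions~\ref{def:confined-step}, \ref{def:budget-levels-new}). Clause TF(ii) then holds automatically with $B=1$ because each confined step adds at most $|\partial F_m|$ vertices (Lemma~\ref{lem:interleave-tfii-new}); it is not inherited from any external scaffold.

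The paper accordingly treats the corollary as immediate: $G=\Z^d\rtimes_A\Z$ is polycyclic, hence amenable; the NNM hypothesis plus \Cref{thm:NNM-alone-to-TF} gives full (TF), and the ratio bound follows from \Cref{thm:TF-bound-corrected}. The logarithmic layer-nested sets $(E_R)$ from \Cref{thm:G-final} play no role here --- their purpose is to give TF(ii) and layer-nested near-minimality \emph{unconditionally}, and (in the hyperbolic case, \Cref{cor:G-hyp-unconditional}) to furnish the near-minimizers themselves so that NNM need not be assumed. Your proposed ``merge'' of $(E_R)$ with a separate NNM family $(Y_k)$ is neither carried out in the paper nor well-defined as stated: two unrelated nested families need not interleave into a single nested chain, and Lemma~\ref{lem:checkpoint-to-nnm} does not do this --- it extracts checkpoints from \emph{within} a given nested chain, it does not splice two families together.
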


\medskip

As a by-product we obtain tracked-constant analytic consequences for the Cayley graph $(\Gamma,\mathcal S)$: Cheeger-Buser bounds, Faber-Krahn and Nash inequalities, and mixing estimates on finite quotients, with constants expressed in terms of the relevant isoperimetric data in each regime: the vertex Cheeger constant $h(\Gamma,\mathcal S)$ in the non-amenable case; the sharp Wulff constant $h_S$ (from Theorem~A, directed-edge normalization) in polynomial growth; or, in full generality, the increasing profile $I^{\mathrm{incr}\,\circ}$. The statements below keep vertex normalization and $\Delta:=|\mathcal S|$.

\begin{theoremH}[Tracked analytic consequences]\label{thm:intro-H}
Let $(\Gamma,\mathcal S)$ be a finitely generated group of degree $\Delta=|\mathcal S|$.
Throughout we take $P$ to be the simple (non-lazy) random walk on the $\Delta$-regular Cayley graph and the Dirichlet form
$
\mathcal E(f,f)=\frac{1}{2\Delta}\sum_{x\sim y}(f(x)-f(y))^2,
$
so the $\Delta$-factors in (H1)-(H4) match the stated normalizations.
\begin{enumerate}
\item[\textup{(H1)}] \textbf{Cheeger-Buser / spectral decay.}
If $\Gamma$ is non-amenable with vertex Cheeger constant
$
h(\Gamma,\mathcal S):=\inf_{\emptyset\neq Y\subset_{\mathrm{fin}}\Gamma}\frac{|\partial_{\mathcal S}Y|}{|Y|}>0,
$
then for the simple random walk on $\Gamma$,
\[
\lambda_1(\Gamma)\ \ge\ c \frac{h(\Gamma,\mathcal S)^2}{\Delta^2},
\qquad
\|P_t-\pi\|_{2\to2}\ \le\ \exp\Big(-c\frac{h(\Gamma,\mathcal S)^2}{\Delta^2} t\Big).
\]
For any finite quotient $X$ (with the projected generators), the same bounds hold with
$h(\Gamma,\mathcal S)$ replaced by the \emph{Cheeger constant of $X$}, $h(X)$.
Uniformity over a family of quotients requires a $(\tau)$-type hypothesis.

\item[\textup{(H2)}] \textbf{Faber-Krahn at the right scale (general form).}
For every finite $U$ in a Cayley graph or finite quotient,
\[
\lambda_1^{\mathrm D}(U)\ \ge\ c\ \Big(\frac{I^{\mathrm{incr}\,\circ}(|U|)}{\Delta |U|}\Big)^{2}.
\]
In particular, on virtually nilpotent groups of homogeneous dimension $Q$, where
$I^{\mathrm{incr} \circ}(r)\asymp h_S r^{(Q-1)/Q}$ (Theorem~A), one has
\[
\lambda_1^{\mathrm D}(U)\ \ge\ c \Big(\frac{h_S}{\Delta}\Big)^{2} |U|^{-2/Q}.
\]

\item[\textup{(H3)}] \textbf{Nash inequality and heat-kernel smoothing (polynomial growth).}
If $\Gamma$ is virtually nilpotent of homogeneous dimension $Q$, then for all finitely supported $f$ with $\sum f=0$,
\[
\|f\|_2^{\,2+4/Q}\ \le\ C\,\Big(\frac{\Delta}{h_S}\Big)^{2}\ \mathcal E(f,f)\ \|f\|_1^{\,4/Q},
\]
hence $ \|P_t\|_{1\to\infty}\le C\, t^{-Q/2}$ and $p_t(e,e)\le C\,t^{-Q/2}$, with constants tracked through $h_S$ and $\Delta$.

\item[\textup{(H4)}] \textbf{Mixing on finite quotients.}
For every finite quotient $X$,
\[
\|P_t-\pi\|_{2\to2}\ \le\ \exp\Big(-c\,\frac{h(X)^2}{\Delta^2}\,t\Big).
\]
In the virtually nilpotent case, using $I^{\mathrm{incr}\,\circ}(r)\asymp h_S\,r^{(Q-1)/Q}$ and \textup{(H2)}, one gets
\[
t^{(2)}_{\mathrm{mix}}(X)\ \le\ C\,\Big(\frac{\Delta}{h_S}\Big)^{2}\,|X|^{2/Q}.
\]
\end{enumerate}
Here $c,C\in(0,\infty)$ are universal constants; all dependencies are explicit and tracked via
$h(\Gamma,\mathcal S)$, $h_S$, $I^{\mathrm{incr}\,\circ}$ and $\Delta$ as indicated.
\end{theoremH}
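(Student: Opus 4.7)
The plan is to derive all four clauses from standard Dirichlet-form machinery, keeping every constant tracked through $\Delta$ and the relevant isoperimetric input ($h$, $h_S$, or $I^{\mathrm{incr}\circ}$). For \textbf{(H1)}, I would invoke the discrete Cheeger--Buser inequality of Dodziuk--Mohar \cite{Dodziuk1984Graphs,Mohar1989IsoperimetricGraphs}: the coarea identity $\mathcal E(f,f)\asymp \Delta^{-1}\int_0^\infty |\partial_{\mathcal S}\{|f|>t\}|\,dt$ combined with Cauchy--Schwarz yields $\lambda_1\ge c\,h(\Gamma,\mathcal S)^2/\Delta^2$, and self-adjoint spectral calculus gives $\|P_t-\pi\|_{2\to 2}\le e^{-\lambda_1 t}$. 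The quotient statement is verbatim with $h(X)$ in place of $h(\Gamma,\mathcal S)$, and uniformity over a family is precisely the $(\tau)$-hypothesis.

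For \textbf{(H2)} I would work with the relative Cheeger constant $h_D(U):=\inf_{V\subset U}|\partial_{\mathcal S}V|/|V|$ and apply the Dirichlet version of Cheeger--Buser to get $\lambda_1^{\mathrm D}(U)\ge c\,h_D(U)^2/\Delta^2$. The link to $I^{\mathrm{incr}\circ}$ then uses monotonicity of the ratio $r\mapsto I^{\mathrm{incr}\circ}(r)/r$ (non-increasing on the concave/sublinear profiles relevant here), which for every $V\subset U$ yields
\[
\frac{|\partial_{\mathcal S}V|}{|V|}\ \ge\ \frac{I^{\mathrm{incr}\circ}(|V|)}{|V|}\ \ge\ \frac{I^{\mathrm{incr}\circ}(|U|)}{|U|}.
\]
The virtually nilpotent specialization is then a direct substitution of Theorem~A's asymptotic $I^{\mathrm{incr}\circ}(r)\asymp h_S\,r^{(Q-1)/Q}$. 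For \textbf{(H3)} I would follow the Grigor'yan/Coulhon--Saloff-Coste level-set route \cite{VaropoulosSaloffCosteCoulhon1992,SaloffCoste2002Aspects}: decompose $\mathcal E(f,f)$ via coarea into Dirichlet energies of $\chi_{\{f^2>s\}}$, apply (H2) at each level with $|\{f^2>s\}|\le \|f\|_1^2/s$, and optimize the split to land on $\|f\|_2^{2+4/Q}\le C(\Delta/h_S)^2\,\mathcal E(f,f)\,\|f\|_1^{4/Q}$; the on-diagonal heat-kernel bound $p_t(e,e)\le C t^{-Q/2}$ then follows from integrating the classical Nash ODE $u'(t)\le -c\,u(t)^{1+2/Q}$ for $u(t):=\|P_{t/2}\delta_e\|_2^2$. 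Finally, for \textbf{(H4)}, plugging (H2) into the $L^2$ spectral bound gives $\lambda_1(X)\ge c(h_S/\Delta)^2|X|^{-2/Q}$ in the virtually nilpotent case, and $\|P_t-\pi\|_{2\to 2}\le e^{-\lambda_1(X)t}$ then yields $t_{\mathrm{mix}}^{(2)}(X)\le C(\Delta/h_S)^2|X|^{2/Q}$ (with the $\log|X|$ factor absorbed via the (H3)-ultracontractive smoothing if a clean polynomial form is preferred).

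The technically delicate step is \textbf{(H3)}: the level-set Nash derivation requires a \emph{uniform} Faber--Krahn bound across \emph{all} support sizes, which is precisely why (H2) is stated through the monotone minorant $I^{\mathrm{incr}\circ}$ rather than the raw $I^\circ$. If ratio monotonicity of $I^{\mathrm{incr}\circ}(r)/r$ fails at some intermediate scale (it can, before the Wulff asymptotics from Theorem~A stabilize), one must either pass to its own concave sublinear envelope or feed Faber--Krahn scale-by-scale into the ODE; both routes are textbook but each consumes explicit constants that must be retracked through Theorem~A. Everything else is bookkeeping: once (H2) is in hand with the correct dependence on $(h_S,\Delta)$, the spectral decay, Nash--Varopoulos ultracontractivity, and mixing assertions assemble in the standard order without additional obstructions.
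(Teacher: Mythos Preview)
Your proposal is correct and follows essentially the same architecture as the paper: Dodziuk--Mohar Cheeger for (H1), the localized/Dirichlet Cheeger constant $h_D(U)$ for (H2), the standard level-set/coarea derivation of Nash from scale-invariant Faber--Krahn for (H3), and spectral decay for (H4). The paper's Section~\ref{sec:analytic} (Theorems~\ref{thm:Nash}, \ref{thm:hk-2sides}, \ref{thm:mixing}) runs the same machinery with the same references. One minor difference worth noting: for (H4) the paper obtains the $|X|^{2/Q}$ mixing bound directly via the spectral-profile integral $\tau_\infty\le\int_{\pi_*}^{1/2}\frac{ds}{s\,\Lambda(s)}$ of Goel--Montenegro--Tetali/Morris--Peres, which avoids the $\log|X|$ factor from the outset (for $Q>2$), whereas you reach the same endpoint by first picking up the log from the bare spectral gap and then removing it via (H3) ultracontractivity. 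Both routes are standard; the spectral-profile route is slightly cleaner in that it never introduces the log. Your identification of the ratio-monotonicity of $I^{\mathrm{incr}\circ}(r)/r$ as the delicate input for the general form of (H2) is apt; the paper sidesteps it by working directly with the power-law hypothesis $|\partial Z|\ge C_{\mathrm{iso}}|Z|^{(Q-1)/Q}$ (eq.~\eqref{eq:iso-up-to-r}), where the ratio is manifestly decreasing.
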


\medskip

\noindent\textbf{What is new conceptually.}
Three methodological ingredients underlie these results.
\begin{itemize}
\item \emph{Submodular BV calculus with calibration.}
Submodularity allows union/gluing without leakage of constants. We pair this with a discrete calibration/coarea scheme adapted to the anisotropy $\tau_S$ \emph{(the support function of the horizontal zonotope $Z_S:=\sum_{s\in S_+}[-X_s,X_s]$)} to obtain the sharp discrete Wulff bound with asymptotic attainment.

\item \emph{Quantitative discrete $\to$ continuum with rates.}
The rate proof isolates a purely combinatorial \emph{oblique-counting error} quantifying the mismatch between crystalline facet normals and lattice directions at scale $h$; this yields algebraic rates and explicit recovery sequences.

\item \emph{Edge-level shear bookkeeping in Heisenberg.}
A quadratic gauge $g$ furnishes an exact per-edge BV\,+\,shear identity, so compactness and sharp bounds follow from bookkeeping rather than additional structure; the remaining unknown is a single shear coefficient, not a structural gap.
\end{itemize}

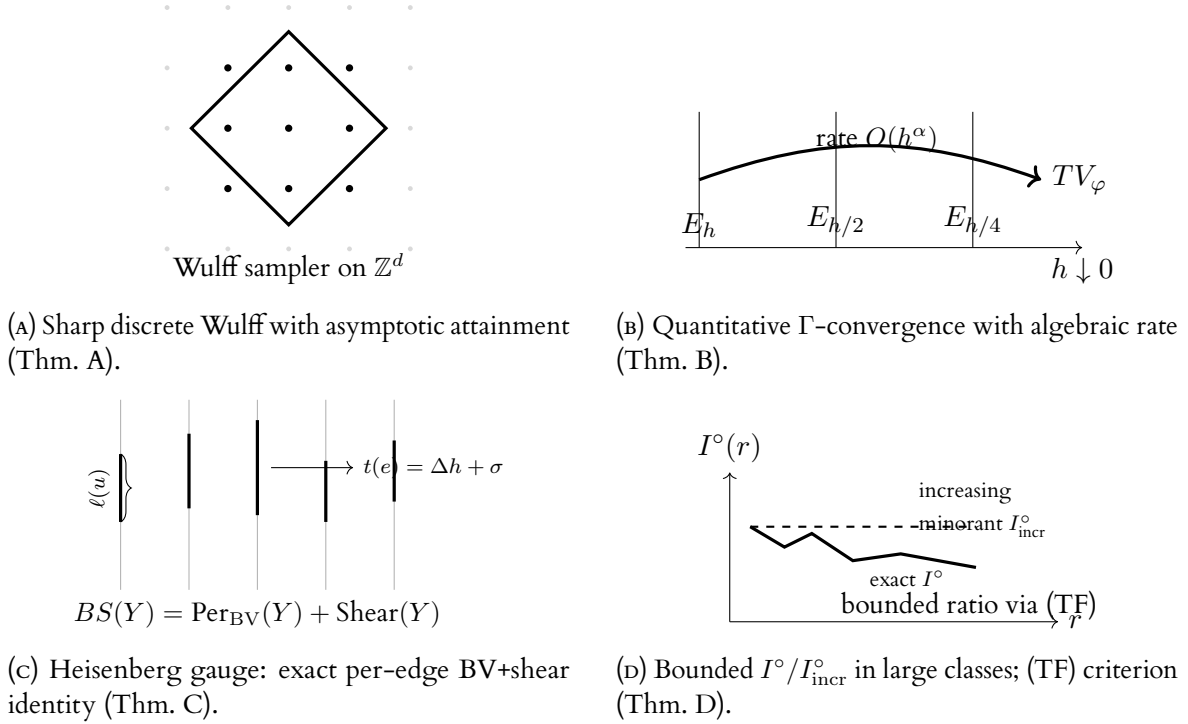
\begin{figure}[t]
  \centering
  \begin{subfigure}[b]{.48\textwidth}
    \centering
    \begin{tikzpicture}[scale=0.8]
      \foreach \x in {-2,...,2} \foreach \y in {-2,...,2}
        \fill[gray!30] (\x,\y) circle(0.04);
      \draw[very thick] (-1.6,0)--(0,1.6)--(1.6,0)--(0,-1.6)--cycle;
      \foreach \x/\y in {-1/0,0/0,1/0,0/1,0/-1,1/1,-1/1,1/-1,-1/-1}
        \fill[black] (\x,\y) circle(0.06);
      \node[below] at (0,-1.9) {\small Wulff sampler on $\mathbb{Z}^d$};
    \end{tikzpicture}
    \caption{Sharp discrete Wulff with asymptotic attainment (Thm.~A).}
  \end{subfigure}
  \hfill
  \begin{subfigure}[b]{.48\textwidth}
    \centering
    \begin{tikzpicture}[scale=0.9]
      \draw[->] (-0.2,0) -- (5.6,0) node[below] {$h\downarrow 0$};
      \draw (0,0) node[above] {$E_h$} -- (0,2);
      \draw (2,0) node[above] {$E_{h/2}$} -- (2,2);
      \draw (4,0) node[above] {$E_{h/4}$} -- (4,2);
      \draw[->,very thick] (0,1) to[out=20,in=160] (5,1) node[right] {$TV_{\varphi}$};
      \node at (2.6,1.6) {\small rate $O(h^{\alpha})$};
    \end{tikzpicture}
    \caption{Quantitative $\Gamma$-convergence with algebraic rate (Thm.~B).}
  \end{subfigure}

  \vspace{0.6em}

  \begin{subfigure}[b]{.48\textwidth}
    \centering
    \begin{tikzpicture}[scale=0.9]
      \foreach \x in {-2,-1,0,1,2} {
        \draw[gray!60] (\x,-1.2) -- (\x,1.6);
      }
      \draw[very thick] (-2,-0.2)--(-2,0.8);
      \draw[very thick] (-1,0.0)--(-1,1.1);
      \draw[very thick] (0,-0.1)--(0,1.3);
      \draw[very thick] (1,-0.2)--(1,0.7);
      \draw[very thick] (2,0.1)--(2,1.0);
      \draw[decorate,decoration={brace,amplitude=4pt}] (-2,0.8) -- (-2,-0.2)
        node[midway,xshift=-8pt,rotate=90] {\scriptsize $\ell(u)$};
      \draw[->] (0.2,0.6) -- (1.4,0.6) node[right] {\scriptsize $t(e)=\Delta h+\sigma$};
      \node[below] at (0,-1.2) {\small $BS(Y)=\text{Per}_{\rm BV}(Y)+\text{Shear}(Y)$};
    \end{tikzpicture}
    \caption{Heisenberg gauge: exact per-edge BV+shear identity (Thm.~C).}
  \end{subfigure}
  \hfill
  \begin{subfigure}[b]{.48\textwidth}
    \centering
    \begin{tikzpicture}[scale=0.9]
      \draw[->] (0,0) -- (4.8,0) node[right] {$r$};
      \draw[->] (0,0) -- (0,2.2) node[above] {$I^\circ(r)$};
      \draw[very thick] (0.3,1.4) -- (0.8,1.1) -- (1.2,1.3) -- (1.8,0.9) -- (2.5,1.0) -- (3.6,0.8);
      \draw[dashed,thick] (0.3,1.4) -- (1.8,1.4) -- (3.6,1.4);
      \node[align=left] at (3.7,1.65) {\scriptsize increasing\\[-0.15em] \scriptsize minorant $I^\circ_{\text{incr}}$};
      \node[below right] at (1.9,0.9) {\scriptsize exact $I^\circ$};
      \node at (3.5,0.25) {\small bounded ratio via (TF)};
    \end{tikzpicture}
    \caption{Bounded $I^\circ/I^\circ_{\mathrm{incr}}$ in large classes; (TF) criterion (Thm.~D).}
  \end{subfigure}
  \caption{Four pillars of the paper: (A) sharp discrete Wulff isoperimetry; (B) quantitative $\Gamma$-convergence; (C) constructive neutralization of Heisenberg shear and exact split; (D) profile ratio bounds via Tempered F{\o}lner.}
  \label{fig:intro-overview}
\end{figure}

\subsection{Organization of the paper}
\emph{Section~\ref{sec:sharp-wulff}} develops the submodular BV framework and proves the sharp discrete Wulff inequality with asymptotic attainment (\Cref{thm:WulffCarnot,thm:WulffAbelian}).
\emph{Section~\ref{sec:gamma-quant}} establishes quantitative $\Gamma$-convergence with an algebraic rate for crystalline anisotropies (\Cref{thm:gamma-quant,thm:quant-stencil}).
\emph{Section~\ref{sec:curl-fit}} proves a combinatorial $L^1$ curl-fitting lemma (bounding discrete divergences by controlled curls) that underpins the calibration and sampling arguments.
\emph{Section~\ref{sec:heis}} introduces the Heisenberg gauge, derives the exact BV\,+\,shear decomposition, and proves compactness and order-sharp bounds (\Cref{thm:heis-shear-constructive-corrected,thm:gamma-Heis-bounds-corrected} and Corollaries \ref{cor:trunc-liminf-corrected-final}, \ref{cor:twosided-bounds-final-correct}).
\emph{Section~\ref{sec:bounded-ratio}} treats isoperimetric profiles, resolves Gromov's question in the classes above, and proves $(\mathrm{TF})\Rightarrow$ bounded ratio (Proposition \ref{prop:ratio-basic}, \Cref{thm:TF-bound-corrected,prop:TF-examples-corrected}).
\emph{Section~\ref{sec:analytic}} records analytic and probabilistic consequences (\Cref{thm:Nash} and corollaries). Technical tools and extended proofs are collected in the appendices.

\medskip

\noindent\textbf{Appendices.}
Appendix~A: from (TF) we construct canonical, scale-calibrated Property~A witnesses with modulus $R(\varepsilon)$; in the polynomial-growth/Wulff-sampler regime, $R(\varepsilon)\asymp \varepsilon^{-1}$.
This yields explicit coarse embeddings into Hilbert with compression $\rho(t)\gtrsim t^{1/2}/\mathrm{polylog}(t)$ (in particular $\gtrsim t^{1/2}/\log t$ in the polynomial-growth regime) uniformly on box spaces, with constants \emph{Wulff-coded} by the generating set $S$.

Appendix~B: for the Cayley-Schreier graph $\Sigma$ of the right-left action of $\Gamma\times\Gamma$ on $\Gamma$ (edge normalization) we prove the exact identity
$B_{\Sigma}(Y)=B_{S}(Y)+B_{S}(Y^{-1})$,
which implies
$2\,I^{\circ}_{\mathrm{edge}}(r)\le I^{\circ}_{\Sigma}(r)\le I^{\circ}_{\mathrm{edge}}(r)+\Delta\,r$.
For Wulff samplers in our (virtually nilpotent/Abelian-base) setting, \emph{assuming $S$ is symmetric so that $\tau_S$ is even (hence Wulff sets are inversion-symmetric up to lower-order sampling error)}, we obtain
$I^{\circ}_{\Sigma}(r)=2\,I^{\circ}_{\mathrm{edge}}(r)+o\left(r^{(\Qdim-1)/\Qdim}\right)$.
Thus the profile-ratio and (TF) mechanisms are \emph{robust}: they reflect genuine bi-equivariant geometry of $\Gamma$, not an artifact of a purely left-invariant boundary. This refines and answers another question of Gromov \cite[pp.~38--39]{Gromov2008AuthorPDF}.

Appendix~D: We establish a quantitative Lindenstrauss theorem: in virtually nilpotent groups our Wulff F{\o}lner sets admit tempered subsequences with an explicit constant $T\le D_2(\Gamma,\mathcal S)\,D_{a^{-1}}(\Gamma,\mathcal S)$, yielding pointwise ergodic theorems and weak-type $(1,1)$ maximal inequalities with tracked, generator-aware bounds.

\subsection{Unified narrative: from near-minimizers to bounded ratios.}
We see the following as a unifying mechanism throughout the paper:
\[
\boxed{\ \textbf{(NNM)}\ \Longrightarrow\ \textbf{(TF)}\ \Longrightarrow\ \textbf{BRP}\ }\quad
\text{(constructively, with tracked constants).}
\]
Here (NNM) means a nested near-minimizer family in edge normalization;
(TF) is a single nested, tempered F{\o}lner chain near-minimal at every step; and
BRP is the bounded ratio $\sup_r I^\circ(r)/I^{\mathrm{incr}\,\circ}(r)<\infty$.
The implication (NNM)$\Rightarrow$(TF) is given by the budgeted interleaving (Theorem~\ref{thm:NNM-alone-to-TF});
(TF)$\Rightarrow$BRP is Theorem~\ref{thm:TF-bound-corrected}.
Each technical block supplies (NNM) in a large class:
\begin{itemize}
\item \emph{Abelian/Carnot lattices:} discrete Wulff calibrations and samplers (Theorems~\ref{thm:WulffAbelian}, \ref{thm:WulffCarnot}).
\item \emph{Step-2 Carnot lattices:} shear/curl-fit gives an exact rank-1 identity and sharp two-sided product bounds (Theorem~\ref{thm:new-step2-main}).
\item \emph{Semidirects $\Z^d\rtimes_A\Z$:} log-height layer-nested sets + $j_1(v)\asymp\log v$ (Theorems~\ref{thm:TFii-Folner-semidirect}, \ref{thm:new-hyp-global}).
\item \emph{Finite-lamp wreath products:} explicit base+ring expansion (tempered steps) + Erschler's checkpoints (Theorems~\ref{thm:lamplighter-TFii}, \ref{thm:ers-checkpoints}, \ref{thm:lamplighter-decoupled-TF}).
\end{itemize}
\begin{remark} For virtually nilpotent groups, BRP itself is already known and follows here by Wulff alone; the step-2 shear/curl-fit is included to produce \emph{constructive} (NNM)$\Rightarrow$(TF) chains with per-edge control and tracked constants, which we then use for quantitative analytic consequences (\S\ref{sec:analytic}, Faber-Krahn, Nash, mixing).
\end{remark}

\section{Preliminary definitions}\label{subsec:prelim-defs}

\begin{remark}[Standing conventions and normalization]\label{rem:normalization}
Throughout, $(\Gamma,\mathcal S)$ denotes a finitely generated group with a fixed finite,
symmetric generating set $\mathcal S=\mathcal S^{-1}$, $e\notin\mathcal S$. We write $\Delta:=|\mathcal S|$ and
use \emph{right}-multiplication: neighbors of $x$ are $\{xs:\ s\in\mathcal S\}$.
All energy computations use \emph{directed} edges.
For a finite group quotient $X$ of $(\Gamma,\mathcal S)$ we identify $\mathcal S$ with its image in $X$
and view it as a \emph{multiset}; loops and multiple edges are allowed, and the
degree remains $\Delta$ \emph{counted with multiplicity}. Loops never contribute to boundaries or to the Dirichlet form.
Unless stated otherwise, isoperimetric \emph{profiles} are in the \emph{vertex} normalization; directed-edge versions carry a subscript ``$\edge$'' (see Definitions~\ref{def:boundaries-prelim} and \ref{def:profiles-prelim}).
\end{remark}

\begin{definition}[Word metric, balls and spheres]\label{def:metric-balls}
The word length of $g\in\Gamma$ is $|g|_{\mathcal S}:=\min\{k:g\in \mathcal S^{(k)}\}$, where
$\mathcal S^{(k)}:=\{s_1\cdots s_k:\ s_i\in \mathcal S\}$ and $\mathcal S^{(0)}:=\{e\}$.
We also set $\mathcal S^{\le r}:=\bigcup_{k=0}^r \mathcal S^{(k)}$.
The word metric is $d_{\mathcal S}(x,y):=|x^{-1}y|_{\mathcal S}$.
The ball and sphere of radius $r\in\mathbb N$ about $x$ are
\[
B_{\mathcal S}(x,r):=\{y\in\Gamma:\ d_{\mathcal S}(x,y)\le r\}=x\,\mathcal S^{\le r}\quad\text{(right-multiplication)},\qquad
\mathsf{S}_{\mathcal S}(x,r):=\{y\in\Gamma:\ d_{\mathcal S}(x,y)=r\}.
\]
We abbreviate $B_{\mathcal S}(r):=B_{\mathcal S}(e,r)$ and $\mathsf{S}_{\mathcal S}(r):=\mathsf{S}_{\mathcal S}(e,r)$.
\end{definition}

\begin{definition}[One-step $\mathcal S$-neighborhood of a set]\label{def:one-step-neighborhood}
For $Y\subset\Gamma$ (or $Y\subset X$ in a finite quotient) the \emph{closed
one-step $\mathcal S$-neighborhood} is
\[
N_{\mathcal S}(Y):=Y\cup Y\mathcal S=\{y s:\ y\in Y,\ s\in \mathcal S\}\cup Y.
\]
More generally, for $k\in\mathbb N$, the $k$-step neighborhood is
$N_{\mathcal S}^{(k)}(Y):=Y\,\mathcal S^{\le k}$ (with $\mathcal S^{\le 0}=\{e\}$ so $N_{\mathcal S}^{(0)}(Y)=Y$).
We regard $N_{\mathcal S}(\cdot)$ as a \emph{vertex-set} operation; multiplicities from $\mathcal S$ are used only in edge counts/energies.
\end{definition}

\begin{definition}[Vertex and edge boundaries]\label{def:boundaries-prelim}
For $Y\subset\Gamma$ (or $Y\subset X$), the \emph{outer vertex boundary} is
\[
\partial_{\mathcal S}(Y)\ :=\ N_{\mathcal S}(Y)\setminus Y
=\{xs:\ x\in Y,\ s\in \mathcal S,\ xs\notin Y\}.
\]
The \emph{directed edge boundary} is
\[
B_{\mathcal S}(Y)\ :=\ \#\bigl\{(x,s)\in Y\times \mathcal S:\ xs\notin Y\bigr\}.
\]
Equivalently,
\[
B_{\mathcal S}(Y)\ =\ \sum_{s\in \mathcal S}\bigl|\,Y\setminus Y s^{-1}\bigr|\ =\ \tfrac12\sum_{s\in\mathcal S}\bigl|\,sY\triangle Y\,\bigr|,
\]
so $B_{\mathcal S}(Y)$ equals the (multiplicity-weighted) number of \emph{undirected} cut edges between $Y$ and $Y^{\complement}$.
\end{definition}

\begin{remark}[Elementary relations]\label{rem:boundary-relations-prelim}
For every $Y$ one has the pointwise comparison
\[
|\partial_{\mathcal S}(Y)|\ \le\ B_{\mathcal S}(Y)\ \le\ \Delta\,|\partial_{\mathcal S}(Y)|.
\]
If $\mathcal S$ is symmetric then $B_{\mathcal S}(Y)=B_{\mathcal S}(Y^{\complement})$ by
$(x,s)\mapsto (xs,s^{-1})$, and $N_{\mathcal S}(Y)\setminus Y=\partial_{\mathcal S}(Y)$.
We also have $\partial_{\mathcal S}^{\mathrm{v},-}(Y)=\partial_{\mathcal S}(Y^{\complement})$ and
$\mathrm{int}_{\mathcal S}(Y)=Y\setminus \partial_{\mathcal S}^{\mathrm{v},-}(Y)$, where
$\partial_{\mathcal S}^{\mathrm{v},-}(Y):=\{x\in Y:\exists s\in\mathcal S,\ xs\notin Y\}$.
\end{remark}

\begin{definition}[Exact profiles and increasing minorants]\label{def:profiles-prelim}
We adopt the \emph{vertex} normalization for profiles:
\[
I^\circ(r)\ :=\ \inf\bigl\{\,|\partial_{\mathcal S}(Y)|:\ Y\subset\Gamma\text{ finite},\ |Y|=r\,\bigr\},\qquad
I^{\mathrm{incr}\,\circ}(r)\ :=\ \inf_{s\ge r} I^\circ(s),\qquad r\ge1.
\]
Their \emph{directed-edge} counterparts are
\[
I^\circ_{\edge}(r)\ :=\ \inf\bigl\{\,B_{\mathcal S}(Y):\ |Y|=r\,\bigr\},\qquad
I^{\mathrm{incr}\,\circ}_{\edge}(r)\ :=\ \inf_{s\ge r} I^\circ_{\edge}(s).
\]
We extend these from $\mathbb N$ to $[1,\infty)$ as right-continuous step functions and (when convenient) by piecewise-linear interpolation on each $[k,k+1]$.
By Remark~\ref{rem:boundary-relations-prelim},
\[
I^\circ(r)\ \le\ I^\circ_{\edge}(r)\ \le\ \Delta\,I^\circ(r),\qquad
I^{\mathrm{incr}\,\circ}(r)\ \le\ I^{\mathrm{incr}\,\circ}_{\edge}(r)\ \le\ \Delta\,I^{\mathrm{incr}\,\circ}(r).
\]
The \emph{vertex Cheeger constant} is $h_{\mathrm{Ch}}^\circ(\mathcal S):=\inf_{\emptyset\neq Y\subset_{\rm fin}\Gamma} |\partial_{\mathcal S}Y|/|Y|$; the directed-edge version is $h_{\mathrm{Ch},\edge}(\mathcal S):=\inf B_{\mathcal S}(Y)/|Y|$, and $h_{\mathrm{Ch}}^\circ\le h_{\mathrm{Ch},\edge}\le \Delta\,h_{\mathrm{Ch}}^\circ$.
\end{definition}

\begin{definition}[Random walk, Laplacian and Dirichlet form]\label{def:RW-Lap}
The simple random walk with step set $\mathcal S$ has Markov operator
\[
Pf(x)\ :=\ \frac{1}{\Delta}\sum_{s\in \mathcal S} f(xs)\,.
\]
The (normalized) Laplacian is $\mathcal L:=I-P$, and the continuous-time
semigroup is $P_t:=e^{-t\mathcal L}$. On a finite quotient $X$ the stationary
law is uniform $\pi(x)=|X|^{-1}$. The (normalized) Dirichlet form is
\[
\mathcal E(f,g)\ :=\ \frac{1}{2\Delta}\sum_{x}\sum_{s\in \mathcal S}\bigl(f(xs)-f(x)\bigr)\bigl(g(xs)-g(x)\bigr).
\]
For indicators $\mathbf 1_U$ one has the identity
\[
\mathcal E(\mathbf 1_U,\mathbf 1_U)\ =\ \frac{1}{\Delta}\,B_{\mathcal S}(U).
\]
For $U\subset X$ (or $U\subset\Gamma$) the \emph{Dirichlet eigenvalue} is
\[
\lambda_1^{\mathrm D}(U)\ :=\ \inf_{\substack{f\not\equiv0\\ \mathrm{supp}(f)\subset U}}
\frac{\mathcal E(f,f)}{\|f\|_2^2}\,.
\]
We use counting measure for $\|\cdot\|_2$ on $\Gamma$ and $X$. On finite $X$, $\|f\|_{2,\pi}^2=|X|^{-1}\sum_x f(x)^2$, so Rayleigh quotients for $\mathcal L$ are identical under $\langle\cdot,\cdot\rangle$ and $\langle\cdot,\cdot\rangle_\pi$. When working on $\Gamma$, we take $f,g$ finitely supported (or in $\ell^2$); in the Dirichlet eigenvalue, the support restriction makes all sums finite.
\end{definition}

\begin{remark}[Energy for cut indicators and undirected edges]\label{rem:energy-indicator-prelim}
Let $B^{\mathrm{und}}_{\mathcal S}(U):=\tfrac12\sum_{s\in\mathcal S}|sU\triangle U|$ denote the \emph{undirected} edge cut
between $U$ and $U^{\complement}$ (with multiplicity induced by $\mathcal S$ on finite quotients). Then
\[
\mathcal E(\mathbf 1_U,\mathbf 1_U)\ =\ \frac{1}{\Delta}\, B^{\mathrm{und}}_{\mathcal S}(U)
\ =\ \frac{1}{\Delta}\, B_{\mathcal S}(U).
\]
Thus the Dirichlet energy is interchangeable with either directed or undirected edge cuts under our normalization. By Remark~\ref{rem:boundary-relations-prelim}, $B_{\mathcal S}(U)$ is comparable to $|\partial_{\mathcal S}U|$ up to the factor $\Delta$.
\end{remark}

\begin{definition}[Growth and homogeneous dimension]\label{def:Q}
If $(\Gamma,\mathcal S)$ has polynomial growth, the \emph{homogeneous dimension} $Q$
is the exponent for which $|B_{\mathcal S}(r)|\asymp r^Q$; in particular on virtually
nilpotent groups $Q$ equals the Bass-Guivarc'h dimension. We use $Q$ only as
a parameter in scale-sharp analytic bounds (Nash, Faber-Krahn, mixing).
If $\Gamma$ is a uniform lattice in a Carnot group $G$, then this $Q$ equals
the Carnot homogeneous dimension $\sum_{j=1}^s j\,\dim V_j$.
\end{definition}

\begin{remark}[Wulff constant]\label{rem:wulff-constant}
The symbol $h_{\mathcal S}$ denotes the sharp discrete Wulff constant appearing in the
asymptotic (large-volume) isoperimetry for $(\Gamma,\mathcal S)$; its precise
construction and properties are stated in Theorem~A. All tracked constants in
the analytic consequences (Theorem~H) are expressed via $h_{\mathcal S}$ and $\Delta$.
We emphasize that the vertex Cheeger constant $h_{\mathrm{Ch}}^\circ(\mathcal S)$ need not equal $h_{\mathcal S}$.
\end{remark}

\begin{remark}[Quotients]\label{rem:quotients}
All the above definitions carry verbatim to finite \emph{group} quotients $X$ of
$(\Gamma,\mathcal S)$ by replacing $\Gamma$ with $X$ and keeping the same symbol $\mathcal S$
for the image of the generating set (viewed as a multiset). If
$A(x,y):=\#\{s\in \mathcal S:\ xs=y\}$ denotes the adjacency multiplicity in $X$, then the
degree is $\sum_y A(x,y)=\Delta$ for all $x$ (counted with multiplicity),
and all boundary/Dirichlet-form quantities are computed with respect to these
directed edges. We regard $N_{\mathcal S}(\cdot)$ as a vertex-set operation (multiplicities ignored).
Since $\mathcal S=\mathcal S^{-1}$ as a multiset, the random walk is reversible with respect to the uniform law on $X$ (so $P$ is self-adjoint in $\ell^2$).
\end{remark}

\bigskip

\noindent\textbf{Standing assumptions.}
Let $G$ be a \emph{Carnot group}, i.e.\ a connected, simply connected, stratified nilpotent Lie group with graded Lie algebra
$\frak g=V_1\oplus\cdots\oplus V_s$ and group dilations $(\delta_r)_{r>0}$.
Let $Q:=\sum_{j=1}^s j\,\dim V_j$ denote the homogeneous dimension of $G$.
Fix a uniform (co-compact) lattice $\Gamma\subset G$ and a symmetric generating set $\Gen\subset\Gamma$ with $e\notin\Gen$ and $\Gen=\Gen^{-1}$.
Set $\mathcal S:=\Gen$ and write $\mathrm{Cay}(\Gamma,\mathcal S)$ for the Cayley graph.
Carnot groups are unimodular; left and right Haar measures coincide. Let $\mu$ denote (left) Haar measure on $G$, normalized so that a measurable fundamental domain $F$ for $\Gamma$ satisfies $\mu(F)=1$.
We write $\Vol_{\mathrm c}(E):=\mu(E)$ for Haar volume. See \cite{AmbrosioKleinerLeDonne2009JGA} for background on BV and perimeter in Carnot groups.

\section{Sharp Discrete Wulff Inequality}
\label{sec:sharp-wulff}
\UseWulffPhi

\noindent\textbf{Nondegeneracy and fundamental domain.}
We assume that the first-layer directions with \emph{positive weights} span $V_1(G)$:
\[
\mathrm{span}\{\,X_s:\ a_s>0\,\}=V_1(G),
\quad\text{where }X_s:=\pi_{\mathrm{hor}}(\log s)\in V_1(G).
\]
Equivalently, the zonotope $\sum_{s:\,a_s>0} a_s[-X_s,X_s]$ has nonempty interior, so the gauge $\varphi$ in \eqref{eq:CI} is a \emph{norm} on $V_1(G)$.
We also fix a measurable Dirichlet fundamental domain $F$ for $\Gamma$; then $\mu(F)=1$ and $\mathrm{diam}_{\mathrm{cc}}(F)<\infty$.

\medskip
\noindent\textbf{Working anisotropy (via the CI/zonoid model).}
Let $\log:\Gamma\to\frak g$ be the logarithm in exponential coordinates and $\pi_{\mathrm{hor}}:\frak g\to V_1(G)$ the projection.
For each $s\in \Gen$, set $X_s:=\pi_{\mathrm{hor}}(\log s)\in V_1(G)$.

\begin{remark}[Left vs.\ right edges in this section]
In Section~\ref{subsec:prelim-defs} we adopted \emph{right}-multiplication for directed edges, i.e.\ neighbors of $x$ are $\{x s:\ s\in S\}$ and the directed boundary counts pairs $(x,s)$ with $x s\notin Y$.
In this section some discrete sums are written with \emph{left}-multiplication (e.g.\ $\1_A(sg)$) to align with left Haar invariance in continuum estimates.
Since $S=S^{-1}$ and $G$ is unimodular, the two conventions are equivalent via $g\mapsto g^{-1}$ and $s\leftrightarrow s^{-1}$; all constants and statements are unchanged.
\end{remark}

\begin{definition}[Consistency identity (CI) for anisotropy]\label{def:CI}
Let $a_s\ge0$ be weights indexed by $s\in \Gen$ and let $X_s$ be as above.
We say $(\Gen,a)$ is \emph{consistent} with a convex, even, $1$-homogeneous gauge $\varphi$ on $V_1(G)$ if
\begin{equation}\label{eq:CI}
\forall \xi\in V_1(G):\qquad \varphi(\xi)\ =\ \sum_{s\in \Gen} a_s\,\big|\langle \xi, X_s\rangle\big|.
\end{equation}
\end{definition}

\begin{remark}[Zonotope representation and polarity]\label{rmk:zonoid}
Set the zonotope $K:=\sum_{s\in\Gen} a_s[-X_s,X_s]\subset V_1(G)$.
Since $h_{[-v,v]}(\xi)=|\langle\xi,v\rangle|$ and support functions add under Minkowski sums,
\[
\sum_{s\in\Gen} a_s\,|\langle\xi,X_s\rangle|\ =\ h_K(\xi)\qquad(\xi\in V_1(G)).
\]
Thus \eqref{eq:CI} holds iff $\varphi=h_K$ is the \emph{support function} of $K$.
Equivalently, the \emph{dual gauge} $\varphi^\circ$ is the Minkowski functional of $K$, so the dual unit ball is $K$ and the primal unit ball is $K^{\circ}$.
\end{remark}

\begin{remark}[Normalization used throughout; axis case]\label{rmk:canonical-phi}
We keep the \emph{directed-edge} normalization for the discrete perimeter, and we encode the corresponding continuum anisotropy by choosing weights that are \emph{evenly split across undirected pairs}:
\[
a_s=a_{s^{-1}}=\tfrac12\qquad\text{for every undirected generator }\{s,s^{-1}\}\subset\Gen.
\]
With this choice, the gauge appearing in \eqref{eq:CI} is
\[
\tau_{\Gen}(\xi)=\sum_{v\in \Gen_+}\big|\langle\xi,X_v\rangle\big|,
\]
where $\Gen_+\subset\Gen$ is any fixed set of undirected representatives. In particular, for the axis stencil on $\Z^d$ this gives $\tau(\nu)=\sum_{i=1}^d|\nu_i|$, so that the continuum Wulff constant equals $2d$, matching the discrete column bound in ~\cref{thm:A}(i). 
\footnote{If one instead takes $a_s\equiv1$ for all directed $s$, then $\tau$ is multiplied by $2$ and so is the continuum constant. We adopt the split convention to keep constants aligned with the discrete proofs below.}
\end{remark}

\begin{definition}[Discrete anisotropic perimeter induced by $(\Gen,a)$]\label{def:disc-perim}
For finite $A\subset\Gamma$, set
\[
\Per_{\Gen,a}(A)\ :=\ \sum_{s\in \Gen} a_s\ \sum_{g\in\Gamma} \big|\mathbf 1_A(sg)-\mathbf 1_A(g)\big|.
\]
\emph{Normalization.} We adopt the \emph{directed-edge (left)} convention throughout. If one prefers undirected edges, replace $\Per_{\Gen,a}$ by $\tfrac12\Per_{\Gen,a}$ and $\varphi$ by $\tfrac12\varphi$ (and hence $\cWulff$ by $\tfrac12\cWulff$); all statements then remain identical.
\end{definition}

\begin{remark}[Symmetry of weights]\label{rmk:weight-symmetry}
We assume $a_s=a_{s^{-1}}$ for all $s\in\Gen$ (undirected anisotropy encoded via directed counting). 
If a non-symmetric family is given, replacing it by the symmetrization 
$\tilde a_s:=(a_s+a_{s^{-1}})/2$ leaves the gauge in \eqref{eq:CI} unchanged and produces the same undirected discrete perimeter (up to the factor $1/2$ if one switches conventions), so no generality is lost.
\end{remark}

\begin{lemma}[Bridge to the missing-neighbor perimeter]\label{lem:bridge}
Let $S=\Gen$ be symmetric and assume the split normalization $a_s=a_{s^{-1}}=\tfrac12$. For any finite $Y\subset\Gamma$,
\[
\Per_{\Gen,a}(Y)\ =\ \sum_{s\in S}\big|\{y\in Y:\ s y\notin Y\}\big|\ =:\ \Per_{S,1}(Y).
\]
\end{lemma}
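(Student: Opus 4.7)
The plan is to compute the inner sum $\sum_{g\in\Gamma}|\1_Y(sg)-\1_Y(g)|$ for each fixed $s\in S$, and then use the symmetry $S=S^{-1}$ together with the split normalization $a_s=a_{s^{-1}}=\tfrac12$ to fold the two ``directions of discrepancy'' into the single missing-neighbor count.

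First, for a fixed $s\in S$, partition the set of $g$ where $\1_Y(sg)\neq\1_Y(g)$ into the two cases $\{g\in Y,\ sg\notin Y\}$ and $\{g\notin Y,\ sg\in Y\}$. The first cardinality is $|\{y\in Y:\ sy\notin Y\}|$ by definition. For the second, change variables $g'=sg$ (a bijection $\Gamma\to\Gamma$, finite on the relevant support since $Y$ is finite): the condition becomes $g'\in Y$ and $s^{-1}g'\notin Y$, giving $|\{y\in Y:\ s^{-1}y\notin Y\}|$. Hence
\[
\sum_{g\in\Gamma}\big|\1_Y(sg)-\1_Y(g)\big|\ =\ \big|\{y\in Y:\ sy\notin Y\}\big|\ +\ \big|\{y\in Y:\ s^{-1}y\notin Y\}\big|.
\]

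Next, insert this into Definition~\ref{def:disc-perim} with $a_s=\tfrac12$ and sum over $s\in S$. The first term contributes $\tfrac12\sum_{s\in S}|\{y\in Y:\ sy\notin Y\}|$. For the second term, reindex via $s\mapsto s^{-1}$: since $S$ is symmetric this is a bijection $S\to S$, so
\[
\sum_{s\in S}\big|\{y\in Y:\ s^{-1}y\notin Y\}\big|\ =\ \sum_{s\in S}\big|\{y\in Y:\ sy\notin Y\}\big|.
\]
Adding the two contributions yields $\Per_{\Gen,a}(Y)=\sum_{s\in S}|\{y\in Y:\ sy\notin Y\}|=\Per_{S,1}(Y)$, as claimed.

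There is no real obstacle here; the argument is a one-line change of variables plus the symmetry of $S$, and the two factors of $\tfrac12$ (from the split normalization) and of $2$ (from collapsing the $s$ and $s^{-1}$ sums under symmetry) cancel exactly. The only care needed is to keep the left-multiplication convention (as declared around Definition~\ref{def:disc-perim}) consistent with the ``$sy\notin Y$'' form on the right-hand side; the underlying unimodularity and $S=S^{-1}$ make this cosmetic.
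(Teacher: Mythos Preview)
Your proof is correct and follows essentially the same approach as the paper. The only cosmetic difference is in how the factor of $2$ appears: the paper asserts directly that $\sum_{g}|\1_Y(sg)-\1_Y(g)|=2\,\#\{y\in Y:\ sy\notin Y\}$ for each fixed $s$ (implicitly using $|Y\setminus s^{-1}Y|=|Y\setminus sY|$ via the bijection $z\mapsto sz$), whereas you keep the two terms $\#\{y:sy\notin Y\}+\#\{y:s^{-1}y\notin Y\}$ separate and then reindex $s\mapsto s^{-1}$ over the symmetric $S$ at the end.
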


\begin{proof}
For fixed $s$,
$
\sum_{g}|\1_Y(sg)-\1_Y(g)|=2\,\#\{y\in Y:\ sy\notin Y\}
$.
Multiplying by $a_s=\tfrac12$ and summing over $s$ gives the claim.
\end{proof}

\begin{lemma}[Edge-to-integral identity (left translation)]\label{lem:edge-to-integral}
Let $F\subset G$ be a measurable fundamental domain for $\Gamma\curvearrowright G$ with $\mu(F)=1$.
For any finite $A\subset\Gamma$ and $s\in \Gen$,
\[
\sum_{g\in\Gamma}\big|\mathbf 1_A(sg)-\mathbf 1_A(g)\big|
\ =\
\int_G \big|\mathbf 1_{A F}(s y)-\mathbf 1_{A F}(y)\big|\, d\mu(y).
\]
\end{lemma}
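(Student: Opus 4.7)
The plan is to reduce the continuum integral to a sum over the tiles $gF$ of the fundamental-domain decomposition, on each of which both indicators are constant. First, since $F$ is a Dirichlet fundamental domain for $\Gamma\curvearrowright G$, we have the $\mu$-a.e.\ disjoint decomposition $G=\bigsqcup_{g\in\Gamma}gF$, so every $y\in G$ is uniquely (modulo a $\mu$-null set) of the form $y=g\xi$ with $g\in\Gamma$ and $\xi\in F$.

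Next I would establish the key pointwise identity
\[
\mathbf 1_{AF}(g\xi)\ =\ \mathbf 1_A(g)\qquad (g\in\Gamma,\ \xi\in F).
\]
Indeed, $g\xi\in AF$ iff $g\xi\in aF$ for some $a\in A$; by disjointness of the $\Gamma$-translates of $F$ and uniqueness of the decomposition, this forces $g=a$, whence $g\in A$. Applying the same observation with $g$ replaced by $sg\in\Gamma$ (note $s\in\Gen\subset\Gamma$, and $sy=(sg)\xi$), we also get $\mathbf 1_{AF}(sy)=\mathbf 1_A(sg)$. Consequently, the integrand $\bigl|\mathbf 1_{AF}(sy)-\mathbf 1_{AF}(y)\bigr|$ is constant on each tile $gF$, equal to $\bigl|\mathbf 1_A(sg)-\mathbf 1_A(g)\bigr|$.

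Finally, partitioning the integral and invoking $\mu(F)=1$,
\[
\int_G\bigl|\mathbf 1_{AF}(sy)-\mathbf 1_{AF}(y)\bigr|\,d\mu(y)
\ =\ \sum_{g\in\Gamma}\bigl|\mathbf 1_A(sg)-\mathbf 1_A(g)\bigr|\,\mu(F)
\ =\ \sum_{g\in\Gamma}\bigl|\mathbf 1_A(sg)-\mathbf 1_A(g)\bigr|,
\]
which is the desired identity. The sum is finite because $A$ is finite (only $g\in A\cup s^{-1}A$ contribute). There is no serious obstacle; the only point worth flagging is that the tiling identity uses $s\in\Gamma$ to ensure that left multiplication by $s$ permutes the tiles $\{gF\}_{g\in\Gamma}$, so the integrand is indeed a tile-constant function even after the $s$-shift. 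Left Haar invariance itself is not needed here — only the disjoint tiling and the normalization $\mu(F)=1$.
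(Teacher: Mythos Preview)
Your proof is correct and follows exactly the paper's approach (partition $G=\bigsqcup_{g\in\Gamma}gF$ and observe the integrand is tile-constant). One small correction: your closing remark that ``left Haar invariance itself is not needed'' is inaccurate---when you integrate the constant $\bigl|\mathbf 1_A(sg)-\mathbf 1_A(g)\bigr|$ over the tile $gF$, you obtain $\mu(gF)$, not $\mu(F)$, and the identity $\mu(gF)=\mu(F)=1$ is precisely left invariance of Haar measure.
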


\begin{proof}
Partition $G$ into $\bigsqcup_{g\in\Gamma} gF$ and change variables $y\mapsto gy$.
\end{proof}

\begin{lemma}[Directional difference quotient for BV sets (right translation)]\label{lem:dir-perim-dq}
Let $E\subset G$ be a set of finite horizontal perimeter and $X\in V_1(G)$.
Then
\[
\lim_{\varepsilon\downarrow0}\ \frac1{\varepsilon}\ \int_G \big|\mathbf 1_E(z\exp(\varepsilon X))-\mathbf 1_E(z)\big|\, d\mu(z)
\ =\
\int_{\partial^* E} \big|\nu_E \cdot X\big|\ d\mathcal H^{Q-1}.
\]
\emph{Sketch.} In a Carnot group one has $\mathrm{Ad}(z)X=X+\sum_{j\ge2}Z_j$ with $Z_j\in V_j$. Since the measure-theoretic normal $\nu_E$ is horizontal ($\nu_E\in V_1$) and we fix an inner product with $V_1\perp \bigoplus_{j\ge2}V_j$, $\langle\nu_E,\mathrm{Ad}(z)X\rangle=\langle\nu_E,X\rangle$. The claim follows from BV theory in Carnot groups; see \cite[Thm.~4.16, Thm.~5.2]{AmbrosioKleinerLeDonne2009JGA}.
\end{lemma}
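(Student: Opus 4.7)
The plan is to reduce the identity to the standard $L^1$ difference-quotient characterization of horizontal BV in a Carnot group, then specialize to indicators of finite-perimeter sets. First I would rewrite the right-translation in left-invariant terms: the curve $\varepsilon\mapsto z\exp(\varepsilon X)$ is the integral curve, starting at $z$, of the left-invariant horizontal vector field $\widetilde X$ with $\widetilde X(e)=X$, so the integrand $|\mathbf 1_E(z\exp(\varepsilon X))-\mathbf 1_E(z)|$ is a forward difference of $\mathbf 1_E$ along the flow of $\widetilde X$. By the structure theorem for finite-perimeter sets in Carnot groups \cite{AmbrosioKleinerLeDonne2009JGA}, $E$ carries a horizontal perimeter measure $|D_G\mathbf 1_E|=\mathcal H^{Q-1}\llcorner\partial^* E$ and a measure-theoretic horizontal inward normal $\nu_E\in V_1$ with $D_G\mathbf 1_E=-\nu_E\,|D_G\mathbf 1_E|$.

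Next I would invoke the directional difference-quotient identity for horizontal BV functions,
\[
\lim_{\varepsilon\downarrow 0}\ \frac{1}{\varepsilon}\int_G \big|u(z\exp(\varepsilon X))-u(z)\big|\,d\mu(z)\ =\ \int_G \big|\langle X,\cdot\rangle\big|\,d|D_G u|,
\]
applied to $u=\mathbf 1_E$. The standard route is a two-step approximation: (a) verify the identity for $u\in C^1_c(G)$ by Fubini along the left-invariant flow lines of $\widetilde X$ together with dominated convergence; (b) pass to $u=\mathbf 1_E$ by strict mollification $\mathbf 1_E\ast\rho_\delta$ (group convolution adapted to the stratification), using upper semicontinuity to bound the limsup of the left-hand side by the right-hand side for the smooth approximants, and lower semicontinuity of horizontal total variation together with a Fatou-type argument for the matching lower bound. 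Combined with $|D_G\mathbf 1_E|=\mathcal H^{Q-1}\llcorner\partial^* E$, this yields the claimed formula.

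The Ad-remark in the statement serves only to reconcile the left-invariant viewpoint with the alternative description $z\exp(\varepsilon X)=\exp(\varepsilon\mathrm{Ad}(z)X)z$: the higher-layer correction $\sum_{j\ge 2}Z_j(z)\in\bigoplus_{j\ge 2}V_j$ is annihilated by pairing with $\nu_E\in V_1$ under the chosen stratified inner product, so $\langle \nu_E,\mathrm{Ad}(z)X\rangle=\langle \nu_E,X\rangle$ at every reduced-boundary point and only the $V_1$-component of $\mathrm{Ad}(z)X$ affects the limit. The main obstacle is step (b): non-commutativity precludes writing the right-difference quotient as a plain Euclidean convolution, so proving that $\varepsilon^{-1}\|\mathbf 1_E\circ R_{\exp(\varepsilon X)}-\mathbf 1_E\|_{L^1}$ \emph{converges} (and not merely admits a matching upper bound) to the correct directional total variation requires the strict-approximation and smoothing lemmas of \cite{AmbrosioKleinerLeDonne2009JGA}. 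That is the substantive analytic input and the reason the statement is delegated to the cited reference.
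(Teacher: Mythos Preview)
Your proposal is correct and follows essentially the same approach as the paper: both reduce the statement to the BV theory in Carnot groups from \cite{AmbrosioKleinerLeDonne2009JGA}, using that right translation by $\exp(\varepsilon X)$ is the flow of the left-invariant horizontal field and that the Ad-correction lies in $\bigoplus_{j\ge2}V_j$ (hence is killed by the horizontal normal $\nu_E$). The paper's proof is only the embedded sketch plus citation; you have simply fleshed out the standard two-step approximation (smooth functions, then strict approximation of $\mathbf 1_E$) that the paper delegates to the reference.
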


\begin{lemma}[Directional difference quotient (left translation)]\label{lem:left-dq}
Let $E\subset G$ have finite horizontal perimeter and $X\in V_1(G)$.
Then
\[
\lim_{\varepsilon\downarrow0}\ \frac1{\varepsilon}\ \int_G \big|\mathbf 1_E(\exp(\varepsilon X)\,z)-\mathbf 1_E(z)\big|\, d\mu(z)
\ =\
\int_{\partial^* E} \big|\nu_E\cdot X\big|\ d\mathcal H^{Q-1}.
\] 
\end{lemma}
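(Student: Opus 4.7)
The plan is to reduce the left-translation identity to \Cref{lem:dir-perim-dq} via the same Ad-based mechanism invoked in its sketch, but now with the roles of left and right swapped. The universal Lie-group identity
\[
\exp(\varepsilon X)\,z\ =\ z\,\exp\bigl(\varepsilon\,\mathrm{Ad}(z^{-1})X\bigr)
\]
converts $\varepsilon$-left translation of $z$ by $\exp(\varepsilon X)$ into an $\varepsilon$-right translation of $z$ by the $z$-dependent element $\exp(\varepsilon Y(z))$, with $Y(z):=\mathrm{Ad}(z^{-1})X$. Because the bracket rule $[V_j,V_1]\subset V_{j+1}$ makes $\mathrm{ad}$ nilpotent on the stratified algebra $\mathfrak g=V_1\oplus\cdots\oplus V_s$, the series for $\mathrm{Ad}(z^{-1})X$ truncates after finitely many terms and yields
\[
Y(z)\ =\ X\ +\ R(z),\qquad R(z)\in\bigoplus_{j\ge 2}V_j,\qquad R\in C^\infty(G;\mathfrak g);
\]
crucially, the horizontal projection satisfies $\pi_{V_1}(Y(z))=X$ pointwise in $z$.

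First I would rewrite
\[
\int_G\!\big|\mathbf 1_E(\exp(\varepsilon X)z)-\mathbf 1_E(z)\big|\,d\mu(z)\ =\ \int_G\!\big|\mathbf 1_E\bigl(z\exp(\varepsilon Y(z))\bigr)-\mathbf 1_E(z)\big|\,d\mu(z),
\]
so what remains is a right-translation by a variable horizontal-plus-higher-strata direction. I would then invoke the BV/area framework of Ambrosio--Kleiner--Le Donne (\cite[Thm.~4.16, Thm.~5.2]{AmbrosioKleinerLeDonne2009JGA}), whose statements extend from constant left-invariant generators to smooth vector fields via coarea and a change of variables along the Lagrangian flow $\Phi^Y_\varepsilon$ of $Y$. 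This gives
\[
\lim_{\varepsilon\downarrow 0}\frac{1}{\varepsilon}\int_G\!\big|\mathbf 1_E\bigl(z\exp(\varepsilon Y(z))\bigr)-\mathbf 1_E(z)\big|\,d\mu(z)\ =\ \int_{\partial^\ast E}\big|\langle\nu_E(z),Y(z)\rangle\big|\,d\mathcal H^{Q-1}(z).
\]
Finally, since $\nu_E(z)\in V_1$ for any set of finite horizontal perimeter and we have fixed an inner product on $\mathfrak g$ with $V_1\perp\bigoplus_{j\ge 2}V_j$ (the same normalization used in the sketch of \Cref{lem:dir-perim-dq}), the pointwise identity $\langle\nu_E(z),Y(z)\rangle=\langle\nu_E(z),X\rangle+\langle\nu_E(z),R(z)\rangle=\langle\nu_E(z),X\rangle$ collapses the integrand to $|\nu_E\cdot X|$, yielding the claimed formula.

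The step I expect to be the main obstacle is precisely the passage from the fixed-direction \Cref{lem:dir-perim-dq} to the $z$-dependent direction $Y(z)$: one must justify that replacing a constant horizontal generator $X$ by the $z$-varying field $X+R(z)$ does not alter the $L^1$-derivative, despite $R(z)$ being a nontrivial smooth function valued in $\bigoplus_{j\ge 2}V_j$. Conceptually this is the same ``only the horizontal component matters'' principle that the sketch of \Cref{lem:dir-perim-dq} invokes via $\mathrm{Ad}$, but making it rigorous requires either (a) a direct appeal to the smooth-vector-field form of the AKL area formula, or (b) a Baker--Campbell--Hausdorff sandwich $\exp(\varepsilon Y(z))=\exp(\varepsilon X)\exp(\varepsilon R(z))\exp(O(\varepsilon^2))$ together with a horizontal BV slicing estimate showing that the non-horizontal factor contributes $o(\varepsilon)$ against a perimeter measure concentrated on $\partial^\ast E$ and orthogonal to $V_{\ge 2}$. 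As an independent sanity check, one can use unimodularity of $G$ together with inversion $\iota(g)=g^{-1}$: the substitution $w=z^{-1}$ (with $d\mu(w)=d\mu(z)$) and the identity $\mathbf 1_E(\exp(\varepsilon X)z)=\mathbf 1_{\iota(E)}\bigl(z^{-1}\exp(-\varepsilon X)\bigr)$ convert the left-translation integral for $E$ into a right-translation integral for $\iota(E)$ in direction $-X$; applying \Cref{lem:dir-perim-dq} to $\iota(E)$ and using the behavior of horizontal perimeter under inversion produces the same value, cross-validating both the formula and its normalization.
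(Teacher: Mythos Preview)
Your argument is correct in spirit and rests on the same mechanism as the paper (the $V_1$-component of $\mathrm{Ad}(z^{-1})X$ is $X$, so only $|\nu_E\cdot X|$ survives), but your framing creates an obstacle the paper avoids. By writing $\exp(\varepsilon X)z=z\exp(\varepsilon Y(z))$ you recast the problem as right translation in a $z$-varying direction and then must extend the fixed-direction formula of \Cref{lem:dir-perim-dq} to variable $Y(z)$. The paper instead observes that $z\mapsto\exp(\varepsilon X)z$ is already a one-parameter group flow---generated by the right-invariant field associated to $X$, whose horizontal part in the left-invariant frame is the constant $X$---and applies the Gauss--Green and first-variation formulas of Danielli--Garofalo--Nhieu and Comi--Magnani directly to that flow, yielding $\int_{\partial^*E}|\nu_E\cdot X|\,d\mathcal H^{Q-1}$ at once. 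Your route (a) is essentially this, reached after the detour. Your route (b) is more delicate than you suggest: the higher-stratum tail $\varepsilon R(z)\in V_{\ge2}$ has CC-norm of order $\varepsilon^{1/2}$ rather than $o(\varepsilon)$, so a naive tubular-neighborhood estimate does not close; what actually saves the argument is precisely that the composite map $z\mapsto z\exp(\varepsilon Y(z))$ \emph{is} a flow (it equals left translation by construction), which brings you back to the paper's direct route.
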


\begin{proof}
By left invariance of the horizontal fields and of $\mu$, the map
$z\mapsto \exp(\varepsilon X)\,z$ is a measure-preserving flow generated by the
left-invariant horizontal vector field $X$. The claim follows from the
Gauss-Green and first-variation formulas for sets of finite horizontal perimeter
in Carnot groups: see Danielli-Garofalo-Nhieu \cite[Thm.~10.1 (integration by parts), Thm.~14.3 (first variation)]{DGN07}, and for a BV-level Gauss--Green in stratified groups see Comi--Magnani \cite[Thm.~6.7 and Thm.~6.8]{ComiMagnani2020AdvMath}; these give; these give
\[
\lim_{\varepsilon\downarrow 0}\frac{1}{\varepsilon}
\int_G\bigl|\mathbf 1_E(\exp(\varepsilon X)z)-\mathbf 1_E(z)\bigr|\,d\mu
=\int_{\partial^*E}|\nu_E\cdot X|\,d\mathcal H^{Q-1}.
\]
For the BV background (rectifiability of $\partial^*E$, horizontality of $\nu_E$,
and blow-up to vertical halfspaces) see Ambrosio-Kleiner-Le Donne
\cite[\S 4-\S 5]{AmbrosioKleinerLeDonne2009JGA}.
\end{proof}

\noindent\emph{Convention.} The measure-theoretic unit normal $\nu_E$ is \emph{horizontal} ($\nu_E\in V_1$) and all inner products $\nu_E\cdot X$ use a fixed inner product on $V_1$; layers $V_j$ ($j\ge2$) are orthogonal to $V_1$ in this pairing.

\begin{lemma}[Tubular neighborhood estimate]\label{lem:tube}
There exists $C=C(G)$ such that for every set $E\subset G$ of finite horizontal perimeter and every $\rho>0$,
\[
\mu\big(N_{\rho}(\partial E)\big)\ \le\ C\,\rho\,\Per(E).
\]
\end{lemma}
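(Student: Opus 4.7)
The plan is to prove \Cref{lem:tube} by a Vitali-type covering argument at the reduced boundary $\partial^* E$, exploiting the Ahlfors $(Q{-}1)$-regularity of the horizontal perimeter measure in Carnot groups. The first order of business is to interpret $N_\rho(\partial E)$ as the CC-neighborhood of the essential/measure-theoretic boundary; this is harmless because $\partial^* E$ and the essential boundary agree up to $\mathcal{H}^{Q-1}_{\mathrm{cc}}$-null sets for sets of finite horizontal perimeter, cf.\ \cite{AmbrosioKleinerLeDonne2009JGA}, and such null sets contribute no Haar volume to a tube.

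\textbf{Key steps.} First I would establish a uniform lower-density bound at reduced-boundary points: there exists $c_0=c_0(G)>0$ with
\[
\Per\bigl(E;\,B_{\mathrm{cc}}(x,\rho)\bigr)\ \ge\ c_0\,\rho^{Q-1}\qquad \text{for all } x\in\partial^* E,\ \rho>0.
\]
This will follow from the relative isoperimetric inequality in CC-balls (a standard consequence of horizontal Poincar\'e and volume doubling in Carnot groups) combined with the fact that both $E$ and $E^c$ have lower density of order $\rho^Q$ in $B_{\mathrm{cc}}(x,\rho)$ at reduced-boundary points; uniformity in $x$ and $\rho$ comes from left invariance of $d_{\mathrm{cc}}$ and $\mu$ together with the dilation homogeneity $\mu(B_{\mathrm{cc}}(x,r))=c_Q r^{Q}$. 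Next I would apply the $5r$-covering lemma to $\{B_{\mathrm{cc}}(x,\rho):x\in\partial^*E\}$ in the doubling space $(G,d_{\mathrm{cc}})$ to extract a disjoint subfamily $\{B_{\mathrm{cc}}(x_i,\rho)\}_{i\in I}$ with $\partial^*E\subset \bigcup_i B_{\mathrm{cc}}(x_i,5\rho)$; the triangle inequality then gives $N_\rho(\partial E)\subset \bigcup_i B_{\mathrm{cc}}(x_i,6\rho)$. Disjointness plus the density bound yields $|I|\le c_0^{-1}\rho^{-(Q-1)}\Per(E)$, and Haar dilation-homogeneity gives
\[
\mu\bigl(N_\rho(\partial E)\bigr)\ \le\ \sum_{i\in I}\mu\bigl(B_{\mathrm{cc}}(x_i,6\rho)\bigr)\ =\ c_Q 6^Q \rho^Q |I|\ \le\ \frac{c_Q 6^Q}{c_0}\,\rho\,\Per(E),
\]
which is the claim with $C:=c_Q 6^Q/c_0$, depending only on $G$.

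\textbf{Main obstacle.} The hard step is the uniform lower-density estimate at a \emph{fixed} scale $\rho$ (not merely in the blow-up limit at $\rho\downarrow 0$), since without scale- and center-uniformity the Vitali count is not summable and one recovers only an asymptotic bound. In Carnot groups this uniformity is the Ahlfors $(Q{-}1)$-regularity of $\Per$ on $\partial^* E$ and ultimately rests on the combination of volume doubling and the horizontal Poincar\'e inequality, with the constant controlled by purely intrinsic data of $G$; once granted, the rest is the doubling-metric-space heuristic ``tube volume $\lesssim$ perimeter $\times$ width,'' rigorized here via left invariance and dilation homogeneity specific to Carnot groups.
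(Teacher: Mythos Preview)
Your Vitali/density approach is a legitimate route, genuinely different from the paper's (which simply invokes coarea and the identification of Minkowski content with horizontal perimeter, citing Monti--Serra Cassano). Yours is more hands-on and would deliver an explicit constant $C=6^Qc_Q/c_0$.

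The gap is in your key density claim: $\Per\bigl(E;B_{\mathrm{cc}}(x,\rho)\bigr)\ge c_0\,\rho^{Q-1}$ does \emph{not} hold for all $\rho>0$ with $c_0=c_0(G)$ independent of $E$. Take $G=\mathbb R^n$ ($n\ge2$), $E$ the unit ball, $x\in S^{n-1}$, and let $\rho\to\infty$: then $\Per(E;B(x,\rho))=n\omega_n$ stays bounded while $\rho^{n-1}\to\infty$. The relative isoperimetric inequality yields only $\Per(E;B)\gtrsim\min\{\mu(E\cap B),\mu(E^c\cap B)\}^{(Q-1)/Q}$, and once $B$ engulfs one of $E,E^c$ this minimum stagnates; Ahlfors $(Q{-}1)$-regularity of the perimeter measure is a \emph{local} statement, valid only for $\rho$ below an $E$-dependent scale. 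In fact the lemma itself is false as stated for unrestricted $\rho$ (same example: $\mu(N_\rho(S^{n-1}))\asymp\rho^n$ while $\rho\,\Per(E)=O(\rho)$), and the paper's Minkowski-content reference is inherently an asymptotic $\rho\to0$ result. Every application in the paper uses $\rho$ fixed while $E$ is replaced by a large dilate $\delta_r E$ (equivalently, after rescaling, $\rho\to0$ with $E$ fixed); in that regime your density bound is available and your covering argument goes through cleanly.
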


\begin{proof}
Standard in homogeneous/Carnot groups via coarea and identification of Minkowski content with horizontal perimeter; see Monti-Serra Cassano \cite[Thm.~5.1 and Thm.~4.2]{MontiSerraCassano2001CV}. For the BV blow-up to (vertical) halfspaces, see Ambrosio--Kleiner--Le Donne \cite[Thm.~4.16]{AmbrosioKleinerLeDonne2009JGA}.
\end{proof}

\begin{lemma}[First-order transport across higher-layer tails]\label{lem:tail-negl}
Let $E\subset G$ have finite horizontal perimeter. Suppose $g_\varepsilon\in G$ satisfies
\[
\log g_\varepsilon\ =\ \varepsilon X\ +\ o(\varepsilon)\quad\text{in }\frak g,\qquad X\in V_1(G),
\]
where the $o(\varepsilon)$ term may lie in $\bigoplus_{j\ge2}V_j$.
Then
\[
\lim_{\varepsilon\downarrow0}\ \frac{1}{\varepsilon}\int_G \big|\mathbf 1_E(g_\varepsilon z)-\mathbf 1_E(z)\big|\,d\mu(z)
\ =\ \int_{\partial^*E}\big|\nu_E\cdot X\big|\,d\mathcal H^{Q-1}.
\]
\end{lemma}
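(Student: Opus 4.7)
The plan is to reduce the statement to Lemma~\ref{lem:left-dq} by isolating the pure horizontal flow $\exp(\varepsilon X)$ and showing that the remaining multiplicative correction is negligible at scale $\varepsilon$. Concretely, I would apply Baker--Campbell--Hausdorff (which terminates on the nilpotent $\frak g$) to factorize
\[
g_\varepsilon\ =\ \exp(\varepsilon X)\cdot h_\varepsilon,\qquad \log h_\varepsilon\ =\ r_\varepsilon\ -\ \tfrac12[\varepsilon X, r_\varepsilon]\ +\ \cdots\ =\ r_\varepsilon+o(\varepsilon)
\]
in any fixed Euclidean norm on $\frak g$, where $\varepsilon\|r_\varepsilon\|=o(\varepsilon)$ since $\|r_\varepsilon\|=o(\varepsilon)$. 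In particular, each graded component of $\log h_\varepsilon$ is $o(\varepsilon)$ in Euclidean norm, and the projection onto $V_1$ is itself $o(\varepsilon)$.

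By the pointwise triangle inequality and the left invariance of $\mu$ (substituting $w=\exp(\varepsilon X)z$ and using that $\mathrm{Ad}(\exp(\varepsilon X))$ preserves Euclidean $o(\varepsilon)$-size in every layer), one obtains
\[
\left|\ \int_G \bigl|\mathbf 1_E(g_\varepsilon z)-\mathbf 1_E(z)\bigr|\,d\mu\ -\ \int_G \bigl|\mathbf 1_E(\exp(\varepsilon X)z)-\mathbf 1_E(z)\bigr|\,d\mu\ \right|\ \le\ \mu\bigl(E\ \triangle\ \widetilde h_\varepsilon^{-1} E\bigr),
\]
with $\widetilde h_\varepsilon:=\exp(\varepsilon X)\,h_\varepsilon\,\exp(-\varepsilon X)$ obeying the same layerwise Euclidean $o(\varepsilon)$ estimate. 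It thus suffices to prove $\mu(E\triangle \widetilde h_\varepsilon^{-1}E)=o(\varepsilon)$; combining with Lemma~\ref{lem:left-dq} applied to the $\exp(\varepsilon X)$ term and dividing by $\varepsilon$ then yields the stated limit.

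To bound the correction, split $\log \widetilde h_\varepsilon = v^{(1)}_\varepsilon + v^{(\ge 2)}_\varepsilon$ into its $V_1$ and $\bigoplus_{j\ge 2}V_j$ parts, and factor $\widetilde h_\varepsilon = \exp(v^{(1)}_\varepsilon)\exp(\widetilde v^{(\ge 2)}_\varepsilon)$ via BCH (with $\widetilde v^{(\ge 2)}_\varepsilon$ still in $\bigoplus_{j\ge 2}V_j$ and of Euclidean size $o(\varepsilon)$). For the horizontal factor, the CC-distance is comparable to the Euclidean norm, so the tube estimate (Lemma~\ref{lem:tube}) gives $\mu(E\triangle \exp(v^{(1)}_\varepsilon)E)\ \le\ C\|v^{(1)}_\varepsilon\|\,\Per(E) = o(\varepsilon)\,\Per(E)$. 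For the higher-layer factor, invoke the Carnot BV structure (Ambrosio--Kleiner--Le~Donne \cite[\S 4--\S 5]{AmbrosioKleinerLeDonne2009JGA}): at $\mathcal H^{Q-1}$-a.e.\ point of $\partial^*E$ the blow-up is a \emph{vertical halfspace} with horizontal normal $\nu_E\in V_1$, and such halfspaces are invariant under translations by $\bigoplus_{j\ge 2}V_j$. A standard Vitali/blow-up covering of $\partial^*E$ at scales matched to $\|\widetilde v^{(\ge 2)}_\varepsilon\|$ then shows that only a negligible (in the scale of interest) fraction of the boundary fails to be a vertical halfspace within the required precision, and one extracts the refined bound $\mu(E\triangle \exp(\widetilde v^{(\ge 2)}_\varepsilon)E)=o(\varepsilon)\,\Per(E)$.

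The main obstacle is precisely this last step: the CC-tube estimate applied to $\exp(\widetilde v^{(\ge 2)}_\varepsilon)$ yields only $O(\|\widetilde v^{(\ge 2)}_\varepsilon\|^{1/j})\,\Per(E)$, which for $j\ge 2$ is much worse than $O(\varepsilon)$. One must therefore exploit the \emph{vertical-invariance of tangent halfspaces}, a purely BV-theoretic input, rather than any metric tube bound. This is the analogue at the integral level of the orthogonality observation $\langle \nu_E,\mathrm{Ad}(z)X\rangle=\langle\nu_E,X\rangle$ used in the sketch of Lemma~\ref{lem:dir-perim-dq}: in both cases higher-layer contributions are invisible to the horizontal normal, and the rigor lies in localizing this to a.e.\ boundary point via rectifiability.
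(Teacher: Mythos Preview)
Your overall architecture is exactly the paper's: factor $g_\varepsilon=\exp(\varepsilon X)\,R_\varepsilon$ via BCH, split by the triangle inequality, handle the $\exp(\varepsilon X)$ term with Lemma~\ref{lem:left-dq}, and bound the remainder $\tfrac{1}{\varepsilon}\mu(E\triangle \widetilde R_\varepsilon^{-1}E)$ after a change of variables (you correctly track the conjugate $\widetilde h_\varepsilon=\exp(\varepsilon X)h_\varepsilon\exp(-\varepsilon X)$, which the paper suppresses). The paper then finishes in one stroke: it asserts $d_{\mathrm{cc}}(e,R_\varepsilon)=o(\varepsilon)$ and applies Lemma~\ref{lem:tube} to get $\tfrac{1}{\varepsilon}\mu(E\triangle R_\varepsilon^{-1}E)\le \tfrac{C}{\varepsilon}\,d_{\mathrm{cc}}(e,R_\varepsilon)\,\Per(E)=o(1)$.

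You have put your finger on a real subtlety that the paper's proof glosses over. From the hypothesis $\|\log R_\varepsilon\|_{\frak g}=o(\varepsilon)$ in the \emph{Euclidean} norm with components allowed in $V_j$ for $j\ge2$, the claim $d_{\mathrm{cc}}(e,R_\varepsilon)=o(\varepsilon)$ does not follow, since on $V_j$ one has $d_{\mathrm{cc}}\sim\|\cdot\|^{1/j}$; the paper's own Remark after the lemma concedes that in the application $g_\varepsilon=\delta_\varepsilon(s)$ the remainder has $d_{\mathrm{cc}}=O(\varepsilon)$, not $o(\varepsilon)$. Your proposed repair via vertical invariance of blow-ups---the tangent halfspace at $\mathcal H^{Q-1}$-a.e.\ point of $\partial^*E$ is $\{x:\langle\pi_1(\log x),\nu\rangle>0\}$, unchanged under left translation by $\exp(\bigoplus_{j\ge2}V_j)$---is the correct structural observation and goes beyond what the paper supplies. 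That said, your Vitali/blow-up localization is only a sketch: turning ``tangent halfspaces are vertically invariant'' into the quantitative bound $\mu(E\triangle\exp(v)E)=o(\varepsilon)$ for $v\in\bigoplus_{j\ge2}V_j$ with $\|v\|=o(\varepsilon)$ requires a uniform rate of approach to the tangent over most of $\partial^*E$, which is genuine work and not an automatic consequence of finite horizontal perimeter.
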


\begin{proof}
Write $g_\varepsilon=\exp(W_\varepsilon)$ with $W_\varepsilon=\varepsilon X+o(\varepsilon)$ in $\mathfrak g$, and decompose $W_\varepsilon=\varepsilon X+V_\varepsilon$ with $V_\varepsilon\in\bigoplus_{j\ge2}V_j$ and $\|V_\varepsilon\|=o(\varepsilon)$.
By the Baker-Campbell-Hausdorff formula there exists $R_\varepsilon\in G$ with $\log R_\varepsilon=V_\varepsilon+O(\varepsilon\|V_\varepsilon\|)$ and
\[
g_\varepsilon=\exp(\varepsilon X)\,R_\varepsilon,\qquad d_{\rm cc}(e,R_\varepsilon)=o(\varepsilon).
\]
Set $T_{\varepsilon}(z):=\exp(\varepsilon X)\,z$ and $S_{\varepsilon}(z):=R_\varepsilon z$. Then
\[
\frac{1}{\varepsilon}\int_G\big|\1_E(g_\varepsilon z)-\1_E(z)\big|\,d\mu
\le \frac{1}{\varepsilon}\int\big|\1_E(T_\varepsilon z)-\1_E(z)\big|\,d\mu
 + \frac{1}{\varepsilon}\int\big|\1_E(T_\varepsilon S_\varepsilon z)-\1_E(T_\varepsilon z)\big|\,d\mu.
\]
The first term converges by Lemma~\ref{lem:left-dq} to $\int_{\partial^*E}|\nu_E\cdot X|\,d\cH^{Q-1}$. For the second term, change variables $y=T_\varepsilon z$ and use left invariance:
\[
\frac{1}{\varepsilon}\int\big|\1_E(S_\varepsilon y)-\1_E(y)\big|\,d\mu(y)
=\frac{1}{\varepsilon}\,\mu(E\Delta S_\varepsilon^{-1}E)
\le \frac{1}{\varepsilon}\,\mu\big(N_{c\,d_{\rm cc}(e,S_\varepsilon)}(\partial E)\big)
\le \frac{C}{\varepsilon}\,d_{\rm cc}(e,S_\varepsilon)\,\Per(E)
= o(1)
\]
where we used the tubular estimate Lemma \ref{lem:tube}. A symmetric estimate from below (swap the roles of $g_\varepsilon$ and $\exp(\varepsilon X)$) gives the matching lower bound, hence the limit is exactly $\int_{\partial^*E}|\nu_E\cdot X|\,d\cH^{Q-1}$.
\end{proof}

\begin{remark}[Size of the BCH tail]
For $s\in\Gamma$, $\log\delta_\varepsilon(s)=\varepsilon X_s+O(\varepsilon^2)$ in $\frak g$, so $g_\varepsilon=\delta_\varepsilon(s)$ satisfies Lemma~\ref{lem:tail-negl}. Note $d_{\mathrm{cc}}(e,\delta_\varepsilon(s)\,\exp(-\varepsilon X_s))=O(\varepsilon)$ in general; Lemma~\ref{lem:tail-negl} avoids any metric $O(\varepsilon^2)$ assumption and works directly in $\frak g$.
\end{remark}

\begin{lemma}[Rescaling identity for the discrete perimeter]\label{lem:rescale-id}
For $r>0$, set $\varepsilon:=1/r$, $A_r:=\Gamma\cap\delta_r(E)$, $U_r:=A_r F$, and $V_\varepsilon:=\delta_\varepsilon(U_r)$.
Then
\[
\Per_{\Gen,a}(A_r)
= r^{Q}\sum_{s\in \Gen} a_s \int_G \big|\mathbf 1_{V_\varepsilon}\big(\delta_\varepsilon(s)\,z\big)-\mathbf 1_{V_\varepsilon}(z)\big|\,d\mu(z).
\]
\end{lemma}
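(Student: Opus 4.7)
The plan is to reduce the claim to two ingredients already at our disposal: the edge-to-integral identity of Lemma~\ref{lem:edge-to-integral} and the homogeneity of Haar measure under the Carnot dilations $(\delta_r)_{r>0}$, namely that $\delta_r$ is a Lie-group automorphism of $G$ with $(\delta_r)_\ast \mu = r^{-Q} \mu$ (equivalently, $d\mu(\delta_r z) = r^{Q}\, d\mu(z)$). Given this, the proof is a single change of variables after the discrete sum has been converted to a continuum integral.

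First I would start from Definition~\ref{def:disc-perim} and apply Lemma~\ref{lem:edge-to-integral} to the finite set $A_r\subset\Gamma$ and each $s\in\Gen$, obtaining
\[
\sum_{g\in\Gamma}\big|\mathbf 1_{A_r}(sg)-\mathbf 1_{A_r}(g)\big|
\ =\
\int_G \big|\mathbf 1_{U_r}(sy)-\mathbf 1_{U_r}(y)\big|\,d\mu(y),
\]
where $U_r=A_r F$. Next I would perform the change of variables $y=\delta_r(z)$, so that $d\mu(y)=r^Q\,d\mu(z)$ by the homogeneity of $\mu$. Since $\delta_\varepsilon$ is a group automorphism, $\delta_\varepsilon(s\,\delta_r(z))=\delta_\varepsilon(s)\,z$ and $\delta_\varepsilon(\delta_r(z))=z$; hence
\[
\mathbf 1_{U_r}(\delta_r(z))=\mathbf 1_{\delta_\varepsilon(U_r)}(z)=\mathbf 1_{V_\varepsilon}(z),
\qquad
\mathbf 1_{U_r}(s\,\delta_r(z))=\mathbf 1_{V_\varepsilon}(\delta_\varepsilon(s)\,z).
\]
Substituting and pulling out the Jacobian $r^Q$ yields the per-$s$ identity
\[
\int_G \big|\mathbf 1_{U_r}(sy)-\mathbf 1_{U_r}(y)\big|\,d\mu(y)
\ =\
r^Q\int_G \big|\mathbf 1_{V_\varepsilon}(\delta_\varepsilon(s)\,z)-\mathbf 1_{V_\varepsilon}(z)\big|\,d\mu(z).
\]
Finally, multiplying by $a_s$ and summing over $s\in\Gen$ gives the stated identity.

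There is no real obstacle here; the only point that requires a remark is the interplay of left-translation by $s$ with the dilation $\delta_r$, which is clean because $\delta_\varepsilon$ is a Lie-group automorphism (and in particular a homomorphism), so $\delta_\varepsilon(s\,\delta_r(z))=\delta_\varepsilon(s)\,\delta_\varepsilon(\delta_r(z))=\delta_\varepsilon(s)\,z$. This is also the reason the identity is \emph{exact} rather than asymptotic: no BCH tail enters at the level of the rescaling, only when one later passes to the infinitesimal limit via Lemma~\ref{lem:tail-negl}.
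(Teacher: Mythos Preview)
Your proof is correct and follows essentially the same approach as the paper: both invoke Lemma~\ref{lem:edge-to-integral} to pass to a continuum integral, then use that $\delta_\varepsilon$ is a group automorphism scaling Haar measure by $\varepsilon^Q$. The paper phrases the dilation step via symmetric differences ($\mu(U_r\Delta s^{-1}U_r)=r^Q\mu(V_\varepsilon\Delta \delta_\varepsilon(s^{-1})V_\varepsilon)$) before converting back to an integral, whereas you perform the change of variables $y=\delta_r(z)$ directly inside the integral; this is a cosmetic difference only.
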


\begin{proof}
By Lemma \ref{lem:edge-to-integral}, $\Per_{\Gen,a}(A_r)=\sum_s a_s\,\mu(U_r\Delta s^{-1}U_r)$.
Apply $\delta_\varepsilon$ and use that $\delta_\varepsilon$ is an automorphism with $\mu(\delta_\varepsilon(\cdot))=\varepsilon^{Q}\mu(\cdot)$, obtaining
$\mu(V_\varepsilon\Delta \delta_\varepsilon(s^{-1})V_\varepsilon)$.
We used $\mu(V_\varepsilon\Delta \delta_\varepsilon(s^{-1})V_\varepsilon)=\int |\mathbf 1_{V_\varepsilon}(y)-\mathbf 1_{V_\varepsilon}(\delta_\varepsilon(s)y)|\,d\mu(y)$ via the change $y\mapsto \delta_\varepsilon(s)y$, yielding the displayed formula.
\end{proof}

\begin{lemma}[Phase averaging identity]\label{lem:phase-avg}
For any measurable $B\subset G$ and $s\in\Gen$,
\[
\int_{t\in F}\ \sum_{g\in\Gamma}\big|\mathbf 1_B(s g t)-\mathbf 1_B(g t)\big|\ d\mu(t)
\ =\ \int_G \big|\mathbf 1_B(sy)-\mathbf 1_B(y)\big|\,d\mu(y).
\]
Consequently, for $B=\delta_r(E)$,
\[
\int_{t\in F}\ \Per_{\Gen,a}\big(A_r^{(t)}\big)\, d\mu(t)
\ =\ \sum_{s\in\Gen} a_s\int_G \big|\mathbf 1_{\delta_r(E)}(sy)-\mathbf 1_{\delta_r(E)}(y)\big|\,d\mu(y),
\]
where $A_r^{(t)}:=\{g\in\Gamma:\ g t\in\delta_r(E)\}$.
\end{lemma}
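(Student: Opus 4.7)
The plan is to exploit the tiling $G=\bigsqcup_{g\in\Gamma} gF$ (valid up to $\mu$-null sets) together with left-invariance of Haar measure, and then invoke Tonelli (the integrands are nonnegative) to swap the $\Gamma$-sum and the $F$-integral in either direction.

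First I establish the base identity by writing
\[
\int_G \big|\mathbf 1_B(sy)-\mathbf 1_B(y)\big|\,d\mu(y)\ =\ \sum_{g\in\Gamma}\int_{gF}\big|\mathbf 1_B(sy)-\mathbf 1_B(y)\big|\,d\mu(y),
\]
and then substituting $y=gt$ on each piece: left-invariance of $\mu$ gives $d\mu(y)=d\mu(t)$, and the inner integral becomes $\int_F |\mathbf 1_B(sgt)-\mathbf 1_B(gt)|\,d\mu(t)$. Moving the $\Gamma$-sum inside the $F$-integral by Tonelli delivers the first displayed equality. No properties of $B$ beyond measurability are used, and unimodularity of $G$ plays no role in this step.

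For the consequent, I note that the definition $A_r^{(t)}=\{g\in\Gamma:\ gt\in B\}$ is equivalent to $\mathbf 1_{A_r^{(t)}}(g)=\mathbf 1_B(gt)$, and replacing $g$ by $sg$ gives $\mathbf 1_{A_r^{(t)}}(sg)=\mathbf 1_B(sgt)$. Plugging these into Definition~\ref{def:disc-perim} expresses $\Per_{\Gen,a}(A_r^{(t)})$ as $\sum_{s\in\Gen}a_s\sum_{g\in\Gamma}|\mathbf 1_B(sgt)-\mathbf 1_B(gt)|$. Integrating over $t\in F$, pulling the finite $\Gen$-sum outside the $F$-integral, and applying the base identity one generator at a time produces the stated formula. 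There is no genuine obstacle here: the only points needing care are choosing $F$ as a fundamental domain for the \emph{left} action of $\Gamma$ so that $G=\bigsqcup_g gF$ is well-defined (matching the $y=gt$ parametrization in the definition of $A_r^{(t)}$), and checking that every exchange of sum and integral is justified by nonnegativity of the integrand.
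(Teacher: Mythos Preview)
Your proposal is correct and follows essentially the same approach as the paper: both use the tiling $G=\bigsqcup_{g\in\Gamma} gF$, the change of variable $y=gt$ via left-invariance of $\mu$, and Tonelli to exchange the $\Gamma$-sum and the $F$-integral. You spell out the consequent part in slightly more detail (identifying $\mathbf 1_{A_r^{(t)}}(g)=\mathbf 1_B(gt)$ before invoking Definition~\ref{def:disc-perim}), whereas the paper simply notes it ``follows by linearity in $s$ with weights $a_s$''.
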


\begin{proof}
Partition $G$ as $\bigsqcup_{g\in\Gamma} gF$ and integrate in the phase $t\in F$:
\[
\int_{t\in F}\sum_{g\in\Gamma}\big|\mathbf 1_B(s g t)-\mathbf 1_B(g t)\big|\,d\mu(t)
=\sum_{g\in\Gamma}\int_{gF}\big|\mathbf 1_B(sy)-\mathbf 1_B(y)\big|\,d\mu(y)
=\int_G \big|\mathbf 1_B(sy)-\mathbf 1_B(y)\big|\,d\mu(y).
\]
The weighted version follows by linearity in $s$ with weights $a_s$.
\end{proof}

\begin{lemma}[Phase stability for counts]\label{lem:phase-stable}
Let $E\subset G$ have finite horizontal perimeter, $r\ge1$, and $A_r:=\Gamma\cap\delta_r(E)$.
For $t\in F$ set $A_r^{(t)}:=\{g\in\Gamma:\ g t\in\delta_r(E)\}$.
There exists $C=C(G,F,E)$ such that for all $t,t'\in F$,
\[
\bigl|A_r^{(t)}\ \Delta\ A_r^{(t')}\bigr|\ \le\ C\, r^{Q-1}.
\]
In particular, $\,\bigl||A_r^{(t)}|-|A_r|\bigr|\le C r^{Q-1}$ for all $t\in F$.
\end{lemma}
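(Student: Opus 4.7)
The plan is to convert the symmetric-difference count into a Haar-measure tubular-neighborhood bound around $\partial(\delta_r(E))$ and then apply Lemma~\ref{lem:tube} together with the Carnot scaling of horizontal perimeter. First I rewrite the event $g\in A_r^{(t)}\triangle A_r^{(t')}$ as a membership condition for the sampled point $gt$. Setting $h:=t^{-1}t'$, so that $gt'=(gt)h$, the definitions immediately give
\[
g\in A_r^{(t)}\triangle A_r^{(t')}\ \Longleftrightarrow\ gt\in \delta_r(E)\,\triangle\, \delta_r(E)h^{-1}.
\]
Since $t,t'\in F$ we have $h\in F^{-1}F$, and by left-invariance of the cc-metric $d_{\mathrm{cc}}(y,yh)=d_{\mathrm{cc}}(e,h)\le \diam_{\mathrm{cc}}(F^{-1}F)=:\rho_0<\infty$, uniformly in $y\in G$ and in the choice of $t,t'\in F$.

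Next I will use the elementary geometric fact that any point of $\delta_r(E)\triangle \delta_r(E)h^{-1}$ lies within cc-distance $\rho_0$ of $\partial(\delta_r(E))$: if $y\in\delta_r(E)$ but $yh\notin \delta_r(E)$ (or vice versa), any cc-geodesic from $y$ to $yh$ must cross $\partial(\delta_r(E))$. I then convert the cardinality to Haar volume via the disjoint tiling $G=\bigsqcup_{g\in\Gamma}gF$ with $\mu(F)=1$:
\[
\bigl|A_r^{(t)}\triangle A_r^{(t')}\bigr|\ =\ \mu\Bigl(\bigsqcup_{g\in A_r^{(t)}\triangle A_r^{(t')}} gF\Bigr).
\]
Each tile $gF$ contains the point $gt$ and has cc-diameter at most $d_0:=\diam_{\mathrm{cc}}(F)$, so the right-hand side sits inside the $(d_0+\rho_0)$-tubular neighborhood of $\partial(\delta_r(E))$.

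The final step invokes Lemma~\ref{lem:tube} together with the dilation scaling $\Per(\delta_r(E))=r^{Q-1}\Per(E)$ (a direct consequence of $\delta_r$-homogeneity of horizontal perimeter) to conclude
\[
\bigl|A_r^{(t)}\triangle A_r^{(t')}\bigr|\ \le\ C(G)\,(d_0+\rho_0)\,r^{Q-1}\,\Per(E)\ \le\ C(G,F,E)\,r^{Q-1}.
\]
For the ``in particular'' clause I choose a Dirichlet fundamental domain $F$ containing $e$, so that $A_r=A_r^{(e)}$; then $\bigl||A_r^{(t)}|-|A_r|\bigr|\le |A_r^{(t)}\triangle A_r^{(e)}|$ is bounded by the same $O(r^{Q-1})$. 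The only point requiring care---but not a real obstacle---is convention-tracking: the shear $h$ acts by \emph{right}-multiplication, while the cc-metric is \emph{left}-invariant, so $|h|_{\mathrm{cc}}$ must be extracted as a $y$-independent radius before the tubular estimate is invoked at the (bounded in $r$) scaling $\rho_0$. Once that is done, the power $r^{Q-1}$ is exactly what Carnot scaling of horizontal perimeter produces, and all constants depend only on $G$, $F$, and the fixed set $E$.
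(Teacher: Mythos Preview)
Your proof is correct and follows essentially the same route as the paper: both show that each tile $gF$ with $g\in A_r^{(t)}\triangle A_r^{(t')}$ lies in a fixed-radius tube around $\partial(\delta_r(E))$, then invoke Lemma~\ref{lem:tube} together with the Carnot scaling $\Per(\delta_r(E))=r^{Q-1}\Per(E)$. The only cosmetic difference is that the paper bounds the relevant distance directly via $d_{\mathrm{cc}}(gt,gt')=d_{\mathrm{cc}}(t,t')\le\diam_{\mathrm{cc}}(F)$, whereas you detour through $h=t^{-1}t'\in F^{-1}F$ and obtain the slightly looser constant $\diam_{\mathrm{cc}}(F^{-1}F)$; this is harmless.
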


\begin{proof}
Let $D:=\mathrm{diam}_{\mathrm{cc}}(F)$. If $g\in A_r^{(t)}\Delta A_r^{(t')}$ then one of the points $g t, g t'$ lies in $\delta_r(E)$ and the other does not, hence $\min\{d_{\mathrm{cc}}(g t,\partial\delta_r(E)),\,d_{\mathrm{cc}}(g t',\partial\delta_r(E))\}\le D$.
Therefore $gF$ intersects $N_D(\partial\delta_r(E))$. If $gF\cap N_D(\partial\delta_r(E))\neq\varnothing$, then $gF\subset N_{D+\mathrm{diam}_{\mathrm{cc}}(F)}(\partial\delta_r(E))$.
Since $\{gF:\ g\in\Gamma\}$ tiles $G$ and each $gF$ has measure $1$, the number of such $g$ is $\lesssim \mu(N_{D+\mathrm{diam}_{\mathrm{cc}}(F)}(\partial\delta_r(E)))$.
By Lemma \ref{lem:tube}, $\mu(N_{D+\mathrm{diam}_{\mathrm{cc}}(F)}(\partial\delta_r(E)))=O(r^{Q-1})$, which gives the claim.
\end{proof}

\begin{lemma}[Tile approximation via the fundamental domain]\label{lem:tile-approx}
Let $E\subset G$ be a set of finite horizontal perimeter and $F\subset G$ a measurable fundamental domain for $\Gamma$ with $\mu(F)=1$ and bounded $\mathrm{diam}_{\mathrm{cc}}(F)$.
For $A_r:=\Gamma\cap\delta_r(E)$ and $U_r:=A_r F$ one has
\[
\mu\big(U_r\ \Delta\ \delta_r(E)\big)\ \le\ C\, r^{Q-1},\qquad r\ge1,
\]
where $C$ depends on $E$, $F$ and $G$ only through $\Per(E)$ and $\mathrm{diam}_{\mathrm{cc}}(F)$.
Equivalently,
\[
\mu\big(\delta_{1/r}(U_r)\ \Delta\ E\big)\ =\ O(r^{-1})\qquad(r\to\infty).
\]
\end{lemma}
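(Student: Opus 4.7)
The strategy is to localize $U_r\Delta \delta_r(E)$ to tiles $gF$ that straddle $\partial \delta_r(E)$, dominate the union of such tiles by a fixed-width tube around the boundary, and then invoke Lemma~\ref{lem:tube} together with the Carnot scaling of horizontal perimeter. The $r^{Q-1}$ factor will come from the perimeter homogeneity; the $\mathrm{diam}_{\mathrm{cc}}(F)$ dependence will come from the width of the tube needed to absorb any straddling tile.

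\emph{Straddling tiles and tube containment.} I will choose the fundamental domain $F$ with $e\in F$ (no loss of generality by translating $F$), so that $U_r=\bigsqcup_{g\in A_r} gF$ and $g\in gF$ for every $g\in\Gamma$. If $p\in gF$ lies in $U_r\setminus \delta_r(E)$, then $g\in A_r\subset \delta_r(E)$ while $p\notin \delta_r(E)$, so $gF$ contains points on both sides of $\partial \delta_r(E)$; the symmetric case $p\in \delta_r(E)\setminus U_r$ is analogous with $g\notin \delta_r(E)$. Setting $D:=\mathrm{diam}_{\mathrm{cc}}(F)$, every such straddling tile has CC-diameter at most $D$, so a CC-geodesic joining an interior and an exterior point of $gF$ crosses $\partial \delta_r(E)$ within distance $D$ of every point of $gF$. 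Thus the union of all straddling tiles, and in particular $U_r\Delta \delta_r(E)$, is contained in $N_{2D}(\partial \delta_r(E))$.

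\emph{Tube bound and dilation scaling.} Lemma~\ref{lem:tube} yields $\mu(N_{2D}(\partial \delta_r(E)))\le 2CD\,\Per(\delta_r(E))$, and in a Carnot group of homogeneous dimension $Q$ the horizontal perimeter is $(Q-1)$-homogeneous under $\delta_r$ (since $(\delta_r)_\ast\mu=r^{Q}\mu$ while horizontal arclengths scale linearly), whence $\Per(\delta_r(E))=r^{Q-1}\Per(E)$. Combining these two facts yields
\[
\mu\bigl(U_r\Delta \delta_r(E)\bigr)\ \le\ 2CD\,\Per(E)\,r^{Q-1},
\]
with constant depending on $E,F,G$ only through $\Per(E)$ and $\mathrm{diam}_{\mathrm{cc}}(F)$. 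The equivalent form follows by applying $\delta_{1/r}$ and using $\mu(\delta_{1/r}(\cdot))=r^{-Q}\mu(\cdot)$.

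\emph{Expected obstacle.} The only delicate point is the ``straddling'' step for a merely BV set: $\partial \delta_r(E)$ has to be interpreted in a way compatible with the tube bound. I would handle this either by fixing a pointwise representative of $\delta_r(E)$ (e.g.\ the density-$\tfrac12$ set) so that every tile containing both a density-$1$ and a density-$0$ point meets the measure-theoretic boundary $\partial^\ast \delta_r(E)$, or by bypassing the geodesic argument entirely and using the direct containment $gF\subset N_D(\overline{\delta_r(E)})\cap N_D(\overline{\delta_r(E)^c})\subset N_{2D}(\partial^\ast \delta_r(E))$, then applying Lemma~\ref{lem:tube} to $\partial^\ast$ (whose $\mathcal H^{Q-1}$-measure equals $\Per(\delta_r(E))$). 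Once this identification is in place, the remaining ingredients—the tube estimate and $(Q-1)$-homogeneity of the horizontal perimeter—are standard, and the tracked dependence of the constant on $\Per(E)$ and $\mathrm{diam}_{\mathrm{cc}}(F)$ is visible from the two inequalities above.
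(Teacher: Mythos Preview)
Your proposal is correct and follows essentially the same route as the paper: identify the contributing tiles as those straddling $\partial\delta_r(E)$, absorb them into a CC-tube of width $\asymp\mathrm{diam}_{\mathrm{cc}}(F)$, and apply Lemma~\ref{lem:tube} together with the $(Q-1)$-homogeneity of perimeter under $\delta_r$. The paper records this only as a two-line ``Idea'', so your version is in fact more detailed (in particular you make the perimeter scaling and the BV representative issue explicit, neither of which the paper spells out).
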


\begin{proof}[Idea]
A tile $gF$ contributes to the symmetric difference only if $gF$ intersects both $\delta_r(E)$ and its complement.
Such tiles lie inside a tubular neighborhood of $\partial\delta_r(E)$ of width $\lesssim \mathrm{diam}_{\mathrm{cc}}(F)$.
By Lemma \ref{lem:tube}, the measure of this tube is $O(r^{Q-1})$.
\end{proof}

\begin{proposition}[Compactness and $\liminf$ up to left translations]\label{prop:BV-liminf}
Let $G$ be a Carnot group of homogeneous dimension $Q$, $\Gamma\le G$ a uniform lattice with fundamental domain $F$ of Haar measure $1$, and let $\Gen$ and $\{a_s\}$ satisfy \eqref{eq:CI} for the gauge $\varphi$ on $V_1(G)$. 
For any sequence of finite sets $Y_k\subset\Gamma$ with $|Y_k|\to\infty$, set $r_k:=|Y_k|^{1/Q}$, $U_k:=Y_k F$, and $V_k:=\delta_{1/r_k}(U_k)$.
Then there exist left translations $z_k\in G$ and a measurable $E\subset G$ with $\Vol_{\mathrm c}(E)=1$ such that, along a subsequence,
\[
\mathbf 1_{\,z_k V_k}\ \to\ \mathbf 1_E\quad\text{in }L^1_{\mathrm{loc}}(G)
\]
and
\[
\liminf_{k\to\infty}\ |Y_k|^{-\frac{Q-1}{Q}}\ \Per_{\Gen,a}(Y_k)\ \ge\ \Per_{\varphi}(E).
\]
\end{proposition}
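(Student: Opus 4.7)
The plan rests on three stages, all built on Lemma~\ref{lem:rescale-id}.

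\textbf{Stage 1: Reformulation and a priori BV bound.} Setting $r_k:=|Y_k|^{1/Q}$, $\varepsilon_k:=1/r_k$ and $V_k:=\delta_{1/r_k}(Y_k F)$ (so $\mu(V_k)=r_k^{-Q}|Y_k|=1$), Lemma~\ref{lem:rescale-id} rewrites the target as
\[
|Y_k|^{-(Q-1)/Q}\,\Per_{\Gen,a}(Y_k)\;=\;r_k\sum_{s\in\Gen}a_s\,\mu\big(V_k\,\Delta\,\delta_{\varepsilon_k}(s)^{-1}V_k\big).
\]
WLOG this quantity stays bounded by some $M<\infty$ along the sequence (else the $\liminf$ is $+\infty$ and the conclusion is vacuous). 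Under the standing nondegeneracy $\Span\{X_s:a_s>0\}=V_1(G)$, the directions $\{X_s:a_s>0\}$ span $V_1$, so the displayed right-hand side controls, up to a fixed matrix factor, the scale-$\varepsilon_k$ horizontal difference quotients of $V_k$ along a chosen basis of $V_1$; this furnishes a uniform horizontal BV seminorm bound for $V_k$ as $\varepsilon_k\downarrow 0$.

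\textbf{Stage 2: Translation and compactness.} Combining $\mu(V_k)=1$ with the Stage~1 BV bound, a concentration-compactness argument in the Carnot setting produces translates $z_k\in G$ of bounded Carnot--Carath\'eodory norm and a measurable $E\subset G$ with $\mu(E)=1$ such that $\mathbf 1_{z_kV_k}\to\mathbf 1_E$ in $L^1_{\mathrm{loc}}(G)$ along a subsequence. Here Pansu's horizontal isoperimetric inequality rules out vanishing (mass cannot spread out without paying perimeter), and a bubble decomposition handles dichotomy by passing to a dominant bubble; boundedness of $z_k$ reflects that the intrinsic scale of the bubble after dilation is $O(1)$.

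\textbf{Stage 3: Liminf via scaled difference quotients.} For each fixed $s$ with $a_s>0$ I will prove
\[
\liminf_{k\to\infty}\;\frac{1}{\varepsilon_k}\,\mu\big(V_k\,\Delta\,\delta_{\varepsilon_k}(s)^{-1}V_k\big)\;\ge\;\int_{\partial^*E}\big|\nu_E\cdot X_s\big|\,d\mathcal H^{Q-1}.
\]
By left-invariance of $\mu$, the left-hand side equals $\varepsilon_k^{-1}\mu\big((z_kV_k)\,\Delta\,g_{s,k}^{-1}(z_kV_k)\big)$ with $g_{s,k}:=z_k\delta_{\varepsilon_k}(s)z_k^{-1}$. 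Boundedness of $z_k$ together with the BCH expansion $\log\delta_\varepsilon(s)=\varepsilon X_s+\mathrm{o}(\varepsilon)$ yields $\log g_{s,k}=\varepsilon_k X_s+\mathrm{o}(\varepsilon_k)$ in $\mathfrak g$, so Lemma~\ref{lem:tail-negl} applies in the conjugated frame. For the joint limit in $(V_k,\varepsilon_k)$ I test against $\phi\in C_c(G)$ with $|\phi|\le 1$:
\[
\frac{1}{\varepsilon_k}\,\mu\big((z_kV_k)\,\Delta\,g_{s,k}^{-1}(z_kV_k)\big)\;\ge\;\bigg|\int\phi\,\frac{\mathbf 1_{z_kV_k}\circ g_{s,k}\,-\,\mathbf 1_{z_kV_k}}{\varepsilon_k}\,d\mu\bigg|.
\]
For each fixed $\phi$, the right-hand side converges as $k\to\infty$ to $\big|\int\phi\,(\nu_E\cdot X_s)\,d|D\mathbf 1_E|\big|$ by $L^1_{\mathrm{loc}}$-convergence $z_kV_k\to E$ combined with the BCH-tail absorption of Lemma~\ref{lem:tail-negl}; taking $\sup$ over $\phi$ approximating $\sgn(\nu_E\cdot X_s)$ measurably on $\partial^*E$ recovers the absolute value. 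Summing over $s\in\Gen$ with weights $a_s$ and invoking the CI \eqref{eq:CI} converts $\sum_sa_s|\nu_E\cdot X_s|$ to $\varphi(\nu_E)$, giving
\[
\liminf_{k\to\infty}\,|Y_k|^{-(Q-1)/Q}\,\Per_{\Gen,a}(Y_k)\;\ge\;\int_{\partial^*E}\varphi(\nu_E)\,d\mathcal H^{Q-1}\;=\;\Per_\varphi(E).
\]

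\textbf{Main obstacle.} The delicate step is the joint limit in Stage~3, where the set $V_k$, the scale $\varepsilon_k$, and the translation-conjugation $g_{s,k}$ all move simultaneously. The dual-testing trick decouples the set and scale limits for each fixed $\phi$; the boundedness of $z_k$ in CC-norm from Stage~2 and the tubular estimate Lemma~\ref{lem:tube} applied under the Stage~1 BV bound let Lemma~\ref{lem:tail-negl}'s BCH-cancellation mechanism absorb the conjugation tail uniformly in $k$. A secondary care is the measurable-sign selection recovering the absolute value $|\nu_E\cdot X_s|$ from the signed dual pairing; a further subtlety is the concentration-compactness step in Stage~2, where preventing loss of mass (to ensure $\mu(E)=1$ rather than $\mu(E)<1$) requires a bubble-decomposition argument combined with passing to a subsequence supported on the dominant bubble.
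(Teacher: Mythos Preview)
Your overall strategy matches the paper's: rescale via Lemma~\ref{lem:rescale-id}, extract an $L^1_{\mathrm{loc}}$-convergent translated subsequence via BV compactness, and pass the difference quotients to the liminf using~\eqref{eq:CI}. Your dual-testing in Stage~3 is a detailed unpacking of the lower-semicontinuity step the paper invokes in one line.

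The genuine gap is the claim that the centering translations $z_k$ have bounded CC-norm. Concentration-compactness gives tightness of the \emph{translated} sets $z_kV_k$, not a bound on $z_k$; your remark that ``boundedness of $z_k$ reflects that the intrinsic scale of the bubble after dilation is $O(1)$'' conflates these two things. Since $\Per_{\Gen,a}$ is right-translation invariant (Lemma~\ref{lem:right-transl}) and the proposition places no location constraint on $Y_k$, one may take $Y_k$ drifting to infinity arbitrarily fast relative to $r_k$; after dilation by $\delta_{1/r_k}$, $V_k$ is centered arbitrarily far from $e$, forcing $\|z_k\|_{\mathrm{cc}}\to\infty$. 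With unbounded $z_k$, the higher-layer part of $\mathrm{Ad}(z_k)(\varepsilon_k X_s)$ is $\varepsilon_k$ times iterated brackets of $\log z_k$ with $X_s$ and is not $o(\varepsilon_k)$ in the Lie-algebra norm; your BCH-based estimate $\log g_{s,k}=\varepsilon_k X_s+o(\varepsilon_k)$ then fails.

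The paper's replacement is algebraic rather than analytic: in any stratified nilpotent group, $\mathrm{Ad}(z)$ is unipotent upper-triangular with respect to the grading, so for $W\in V_1$ one has $\mathrm{Ad}(z)W=W+(\text{terms in }\bigoplus_{j\ge2}V_j)$ \emph{for every} $z$. Thus the $V_1$-component of $\log(z_k\delta_{\varepsilon_k}(s)z_k^{-1})$ is exactly $\varepsilon_k X_s$, independent of $z_k$. Since the measure-theoretic normal $\nu_E$ lies in $V_1$ and the directional difference quotient pairs only with the horizontal part (cf.\ Lemma~\ref{lem:dir-perim-dq}), this is what feeds into Lemma~\ref{lem:tail-negl} and the lower-semicontinuity step. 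Replace the boundedness hypothesis by this observation and the rest of your argument goes through.
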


\begin{proof}[Proof sketch]
Set $\varepsilon_k:=r_k^{-1}$. By Lemma \ref{lem:rescale-id},
\[
|Y_k|^{-\frac{Q-1}{Q}}\Per_{\Gen,a}(Y_k)
=\sum_{s\in\Gen} a_s\,\frac{1}{\varepsilon_k}\int \big|\mathbf 1_{V_k}\left(\delta_{\varepsilon_k}(s)z\right)-\mathbf 1_{V_k}(z)\big|\,d\mu(z).
\]
Choose $z_k$ so that the translates $\widehat V_k:=z_k V_k$ are tight (e.g.\ maximize mass in a fixed ball); by BV compactness, a subsequence $\mathbf 1_{\widehat V_k}\to\mathbf 1_E$ in $L^1_{\mathrm{loc}}$ with $\Vol_{\mathrm c}(E)=1$.
Changing variables $w=z_k z$ and using left invariance of $\mu$,
\[
\frac{1}{\varepsilon_k}\int \big|\mathbf 1_{V_k}\left(\delta_{\varepsilon_k}(s)z\right)-\mathbf 1_{V_k}(z)\big|\,d\mu(z)
=\frac{1}{\varepsilon_k}\int \Big|\mathbf 1_{\widehat V_k}\big(z_k\delta_{\varepsilon_k}(s)z_k^{-1}\,w\big)-\mathbf 1_{\widehat V_k}(w)\Big|\,d\mu(w).
\]
Since $z\exp(W)z^{-1}=\exp(\mathrm{Ad}(z)W)$ and $\mathrm{Ad}(z)$ \emph{fixes the $V_1$-component} (adding only higher-layer terms), we have
$\log\big(z_k\delta_{\varepsilon_k}(s)z_k^{-1}\big)=\varepsilon_k X_s+o(\varepsilon_k)$ with $X_s\in V_1$.
Apply Lemma~\ref{lem:tail-negl} and lower semicontinuity under $L^1_{\mathrm{loc}}$ convergence to get
\[
\liminf_{k\to\infty}\ \frac{1}{\varepsilon_k}\int \Big|\mathbf 1_{\widehat V_k}\big(z_k\delta_{\varepsilon_k}(s)z_k^{-1}\,w\big)-\mathbf 1_{\widehat V_k}(w)\Big|\,d\mu(w)
\ \ge\ \int_{\partial^*E} |\nu_E\cdot X_s|\,d\mathcal H^{Q-1}.
\]
Summing in $s$ with weights $a_s$ and invoking \eqref{eq:CI} gives the claim.
\end{proof}

\begin{proposition}[Verification principle under (CI)]\label{prop:verif_CI}
Assume $(\Gen,a)$ satisfies \eqref{eq:CI}. Let $E\subset G$ have \emph{finite Haar measure} and finite horizontal perimeter and set $A_r:=\Gamma\cap \delta_r(E)$.
\begin{enumerate}
\item[\emph{(i)}] (\emph{Fixed sampler, $\liminf$ bound}) 
\[
\liminf_{r\to\infty}\ \frac{\Per_{\Gen,a}(A_r)}{r^{Q-1}}\ \ge\ \Per_{\varphi}(E).
\]
\item[\emph{(ii)}] (\emph{Phase-shifted samplers, exact limit}) There exists a choice of phases $t_r\in F$ such that
\[
\lim_{r\to\infty}\ \frac{\Per_{\Gen,a}(A_r^{(t_r)})}{r^{Q-1}}\ =\ \Per_{\varphi}(E),
\qquad
\lim_{r\to\infty}\ \frac{|A_r^{(t_r)}|}{r^Q}\ =\ \Vol_{\mathrm c}(E),
\]
where $A_r^{(t)}:=\{g\in\Gamma:\ g t\in\delta_r(E)\}$.
\end{enumerate}
\end{proposition}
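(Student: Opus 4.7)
The plan is to combine the rescaling, tile approximation, and phase averaging lemmas with the compactness/liminf bound of Proposition~\ref{prop:BV-liminf}, followed by a simple selection argument.

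\emph{Part (i).} Set $\varepsilon_r:=1/r$ and $V_r:=\delta_{1/r}(A_r F)$. Lemma~\ref{lem:rescale-id} rewrites the normalized perimeter as
\[
\frac{\Per_{\Gen,a}(A_r)}{r^{Q-1}}\ =\ \sum_{s\in\Gen} a_s\cdot\frac{1}{\varepsilon_r}\int_G\bigl|\mathbf{1}_{V_r}(\delta_{\varepsilon_r}(s)z)-\mathbf{1}_{V_r}(z)\bigr|\,d\mu(z).
\]
Since $\Volc(E)<\infty$, Lemma~\ref{lem:tile-approx} gives $\mathbf{1}_{V_r}\to\mathbf{1}_E$ in $L^1(G)$, so no translation is needed. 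The directional $\Gamma$-liminf bound
\[
\liminf_{r\to\infty}\frac{1}{\varepsilon_r}\int_G\bigl|\mathbf{1}_{V_r}(\delta_{\varepsilon_r}(s)z)-\mathbf{1}_{V_r}(z)\bigr|\,d\mu\ \ge\ \int_{\partial^*E}\bigl|\nu_E\cdot X_s\bigr|\,d\mathcal{H}^{Q-1}\qquad(s\in\Gen)
\]
then follows by the same argument as in Proposition~\ref{prop:BV-liminf} applied to $Y_k:=A_{r_k}$ with $z_k=e$, reconciling the unit-volume normalization via $|Y_k|=r_k^Q\,\Volc(E)+O(r_k^{Q-1})$ and the rescaling $\tilde E:=\delta_{\Volc(E)^{-1/Q}}(E)$, which cancels the factor $\Volc(E)^{(Q-1)/Q}$ produced by $\Per_\varphi(\tilde E)=\Volc(E)^{-(Q-1)/Q}\Per_\varphi(E)$. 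Summing with weights $a_s$ and invoking \eqref{eq:CI} yields $\liminf_r\Per_{\Gen,a}(A_r)/r^{Q-1}\ge\Per_\varphi(E)$.

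\emph{Part (ii).} Combine phase averaging with (i). By Lemma~\ref{lem:phase-avg} and the change of variables $w=\delta_{1/r}(y)$ (using $\delta_r\in\operatorname{Aut}(G)$),
\[
\frac{1}{r^{Q-1}}\int_F\Per_{\Gen,a}\bigl(A_r^{(t)}\bigr)\,d\mu(t)\ =\ \sum_{s\in\Gen} a_s\cdot r\int_G\bigl|\mathbf{1}_E(\delta_{1/r}(s)w)-\mathbf{1}_E(w)\bigr|\,d\mu(w).
\]
Lemma~\ref{lem:tail-negl} with $g_\varepsilon=\delta_\varepsilon(s)$ identifies the right-hand side as $\sum_s a_s\int_{\partial^*E}|\nu_E\cdot X_s|\,d\mathcal{H}^{Q-1}=\Per_\varphi(E)$ in the limit, via \eqref{eq:CI}. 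For each $r$ pick $t_r\in F$ with $\Per(A_r^{(t_r)})\le\int_F\Per(A_r^{(t)})\,d\mu(t)$ (such $t_r$ exists since $\mu(F)=1$ and $t\mapsto\Per(A_r^{(t)})$ is measurable); then $\limsup_r\Per(A_r^{(t_r)})/r^{Q-1}\le\Per_\varphi(E)$. The matching $\liminf$ follows by running part (i) on the translated sampler: Lemma~\ref{lem:tile-approx} combined with the tube estimate Lemma~\ref{lem:tube} (absorbing the shift $\delta_{1/r}(t_r^{-1})\to e$, which is uniformly bounded in the CC metric since $\operatorname{diam}_{\mathrm{cc}}(F)<\infty$) gives $\delta_{1/r}(A_r^{(t_r)}F)\to E$ in $L^1$ uniformly in $t_r\in F$. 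Combining yields the exact limit. For the volume, Lemma~\ref{lem:phase-stable} gives $\bigl||A_r^{(t)}|-|A_r|\bigr|\le C r^{Q-1}$, and $|A_r|=\mu(A_r F)=r^Q\Volc(E)+O(r^{Q-1})$ by Lemma~\ref{lem:tile-approx}, hence $|A_r^{(t_r)}|/r^Q\to\Volc(E)$.

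\emph{Main obstacle.} The technical heart is the $\Gamma$-liminf bound in (i), because both $V_r$ and the scale $\varepsilon_r$ vary simultaneously, whereas Lemmas~\ref{lem:left-dq} and \ref{lem:tail-negl} each handle one variation separately. This is resolved inside Proposition~\ref{prop:BV-liminf} by telescoping against the genuine one-parameter subgroup $\{\exp(\tau X_s):\tau\in\mathbb{R}\}$ (a true semigroup, in contrast to the family $\delta_\varepsilon(s)$, whose composition produces higher-layer BCH tails): for fixed $\tau>0$, $n$-fold subadditivity of the $L^1$ translation norm gives
\[
\tfrac{1}{\varepsilon_r}\bigl\|\mathbf{1}_{V_r}(\exp(\varepsilon_r X_s)\cdot)-\mathbf{1}_{V_r}\bigr\|_{L^1}\ \ge\ \tfrac{1}{n\varepsilon_r}\bigl\|\mathbf{1}_{V_r}(\exp(n\varepsilon_r X_s)\cdot)-\mathbf{1}_{V_r}\bigr\|_{L^1}\quad\text{with }n=\lfloor\tau/\varepsilon_r\rfloor;
\]
one then lets $r\to\infty$ at fixed $\tau$ (using $L^1$-continuity of shifts and $V_r\to E$), and finally $\tau\downarrow 0$ (using Lemma~\ref{lem:left-dq}), with Lemma~\ref{lem:tail-negl} absorbing the BCH correction between $\exp(\varepsilon_r X_s)$ and $\delta_{\varepsilon_r}(s)$. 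Once this is in hand, the remainder of the proof is bookkeeping: unit-volume normalization in (i), and the elementary selection/matching step in (ii).
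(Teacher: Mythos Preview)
Your proof is correct and follows essentially the same route as the paper: rescaling (Lemma~\ref{lem:rescale-id}) plus tile approximation (Lemma~\ref{lem:tile-approx}) and a directional lower-semicontinuity step for~(i); phase averaging (Lemma~\ref{lem:phase-avg}) plus Lemma~\ref{lem:tail-negl} for the limit of the average, selection of $t_r$ below the average, and Lemma~\ref{lem:phase-stable} for the volume in~(ii). The only presentational differences are that the paper argues~(i) directly from Lemma~\ref{lem:tail-negl} with lower semicontinuity (without your detour through Proposition~\ref{prop:BV-liminf} and the unit-volume rescaling $\tilde E$, which is correct but unnecessary here since $V_\varepsilon\to E$ in $L^1$ already), and that your telescoping argument and your explicit $\limsup$/$\liminf$ split in~(ii) spell out what the paper leaves to the one-line invocations ``lower semicontinuity'' and ``Hence there exists a phase $t_r$''.
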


\begin{proof}
Set $\varepsilon:=1/r$, $U_r:=A_r F$, $V_\varepsilon:=\delta_\varepsilon(U_r)$.
By Lemma \ref{lem:rescale-id},
\[
\frac{\Per_{\Gen,a}(A_r)}{r^{Q-1}}
=\sum_{s\in \Gen} a_s\, \frac{1}{\varepsilon}\int \big|\mathbf 1_{V_\varepsilon}\big(\delta_\varepsilon(s)\,z\big)-\mathbf 1_{V_\varepsilon}(z)\big|\,d\mu(z).
\]
By Lemma \ref{lem:tile-approx}, $\mu(V_\varepsilon\Delta E)=O(\varepsilon)$, hence $\mathbf 1_{V_\varepsilon}\to\mathbf 1_E$ in $L^1$.
Using $\log\delta_\varepsilon(s)=\varepsilon X_s+O(\varepsilon^2)$ and applying Lemma \ref{lem:tail-negl} with lower semicontinuity for $L^1$-convergent BV sequences, we obtain
\[
\liminf_{\varepsilon\downarrow0}\ \frac{1}{\varepsilon}\int \big|\mathbf 1_{V_\varepsilon}\big(\delta_\varepsilon(s)\,z\big)-\mathbf 1_{V_\varepsilon}(z)\big|\,d\mu
\ \ge\ \int_{\partial^*E} |\nu_E\cdot X_s|\,d\mathcal H^{Q-1}.
\]
Summing in $s$ with weights $a_s$ and using \eqref{eq:CI} yields (i).

For (ii), averaging in $t\in F$ and applying Lemma \ref{lem:phase-avg} gives
\[
\int_{t\in F}\frac{\Per_{\Gen,a}(A_r^{(t)})}{r^{Q-1}}\,d\mu(t)
=\sum_{s\in\Gen}a_s\,\frac{1}{\varepsilon}\int \big|\mathbf 1_{\delta_r(E)}(sy)-\mathbf 1_{\delta_r(E)}(y)\big|\,d\mu(y).
\]
Rewrite via $y=\delta_{1/\varepsilon}z$, use $\log\delta_\varepsilon(s)=\varepsilon X_s+O(\varepsilon^2)$, and apply Lemmata \ref{lem:left-dq}, \ref{lem:tail-negl} to obtain that the average converges to $\Per_\varphi(E)$. Hence there exists a phase $t_r$ with
$
\Per_{\Gen,a}(A_r^{(t_r)})/r^{Q-1}\to \Per_\varphi(E).
$
Finally, by Lemma \ref{lem:phase-stable} and Lemma \ref{lem:tile-approx}, for all $t\in F$ we have
$\bigl||A_r^{(t)}|-|A_r|\bigr|=O(r^{Q-1})$ and $|A_r|=r^Q\mu(E)+O(r^{Q-1})$; hence $|A_r^{(t_r)}|/r^Q\to \mu(E)$.
\end{proof}

\begin{definition}[Continuum sharp constant]\label{def:Wulff-constant}
The \emph{continuum anisotropic isoperimetric constant} associated with $\varphi$ is
\[
\cWulff\ :=\ \inf\Big\{\frac{\Per_{\varphi}(E)}{\Vol_{\mathrm c}(E)^{\frac{Q-1}{Q}}}\ :\ E\subset G\ \text{measurable},\ 0<\Vol_{\mathrm c}(E)<\infty\Big\}.
\]
\end{definition}

\begin{remark}[Step~1 identification with the classical Wulff formula]\label{rmk:Wulff-step1}
If $G=\mathbb R^d$ (step~1), the continuum Wulff theorem yields that the infimum in Definition \ref{def:Wulff-constant} is attained by the Euclidean Wulff body
$W_\varphi=\{\eta\in\mathbb R^d:\ \varphi^\circ(\eta)\le 1\}$ and
\[
\cWulff=\frac{\Per_\varphi(W_\varphi)}{|W_\varphi|^{(d-1)/d}}.
\]
In higher-step Carnot groups an explicit minimizer is generally unknown; our discrete arguments use only the optimal constant $\cWulff$ from Definition \ref{def:Wulff-constant}. For the Euclidean anisotropic case, see Taylor \cite[Thm.~1.1]{Taylor1978Crystalline}.
\end{remark}

\begin{definition}[Directed $S$-perimeter on a Cayley graph]\label{def:directed-perimeter}
Let $S$ be a symmetric generating set of a countable group $\Gamma$ (so $s\in S\Rightarrow s^{-1}\in S$). For a finite $Y\subset\Gamma$ define the \emph{directed} $S$-perimeter
\[
\Per_{S,1}(Y)\ :=\ \sum_{s\in S} \big|\{y\in Y:\ s y\notin Y\}\big|.
\]
(Each missing neighbor in direction $s$ counts once; for the undirected boundary, see Remark~\ref{rem:undirected} below.)
\end{definition}

\begin{remark}[Undirected boundary]\label{rem:undirected}
If instead one defines $\Per^{\mathrm{und}}_{S,1}(Y)$ to count each unordered edge once, all statements remain valid with constants halved. In particular, in parts~\textup{(i)} and \textup{(iii)} the number $2d$ should be replaced by $d$, and the lower bounds in the proofs lose the factor $2$ coming from Lemma~\ref{lem:columns}.
\end{remark}

\begin{lemma}[Right-translation invariance under the left-edge convention]\label{lem:right-transl}
For any finite $Y\subset\Gamma$ and any $h\in\Gamma$, one has $\Per_{S,1}(Yh)=\Per_{S,1}(Y)$.
\end{lemma}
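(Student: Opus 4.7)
The plan is to exploit the asymmetry built into Definition~\ref{def:directed-perimeter}: the boundary is detected by \emph{left} multiplication by generators $s\in S$, while the lemma claims invariance under \emph{right} multiplication by $h$. Since left and right multiplications commute in any group, the map $y\mapsto yh$ should carry missing-left-neighbor pairs in $Y$ bijectively to missing-left-neighbor pairs in $Yh$, fixing the direction $s$ throughout.

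Concretely, I would fix $s\in S$ and, for any $Z\subset\Gamma$, set
\[
A_s(Z)\ :=\ \{z\in Z:\ sz\notin Z\},
\]
so that $\Per_{S,1}(Z)=\sum_{s\in S}|A_s(Z)|$. The main step is the set-level identity
\[
A_s(Yh)\ =\ A_s(Y) h.
\]
To prove it, write any $z\in Yh$ uniquely as $z=yh$ with $y\in Y$ (right translation by $h$ is a bijection of $\Gamma$), and observe $sz=(sy)h$; hence $sz\in Yh$ iff $sy\in Y$. This gives $z\in A_s(Yh)$ iff $y\in A_s(Y)$, i.e. $z\in A_s(Y)h$. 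The right-multiplication bijection then yields $|A_s(Yh)|=|A_s(Y)|$.

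Summing over $s\in S$ concludes the proof. There is no genuine obstacle here; the only thing worth emphasizing in the writeup is the conceptual point that the invariance is a direct consequence of the \emph{choice of side}: left-edge directed perimeters are right-translation invariant and, dually (using $S=S^{-1}$), right-edge directed perimeters would be left-translation invariant. This pairing justifies using the left-edge convention throughout this section while the rest of the paper uses right multiplication, via the change-of-convention remark preceding Definition~\ref{def:CI}.
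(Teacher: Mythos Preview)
Your proof is correct and is essentially the same as the paper's: both fix $s\in S$ and use the identity $s(yh)=(sy)h$ to set up a bijection $y\mapsto yh$ between $\{y\in Y:\ sy\notin Y\}$ and $\{z\in Yh:\ sz\notin Yh\}$, then sum over $s$. Your version is simply a more explicit writeup of the paper's one-line argument, with the added conceptual remark about the left/right duality.
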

\begin{proof}
$\{y\in Yh:\ sy\notin Yh\}=\{yh:\ syh\notin Yh\}$; left multiplication by $s$ and right multiplication by $h$ commute in counting, so the cardinality is preserved for each $s$ and summing gives the claim.
\end{proof}

\begin{theorem}[Asymptotically sharp discrete Wulff principle]\label{thm:WulffCarnot}
Assume $(\Gen,a)$ satisfies \eqref{eq:CI} on a Carnot group $G$ of homogeneous dimension $Q$. Then for every $E\subset G$ with $0<\Vol_{\mathrm c}(E)<\infty$ and $\Per_\varphi(E)<\infty$, there exist phases $t_r\in F$ such that
\begin{equation}\label{eq:sharp-limit}
\lim_{r\to\infty}\ \frac{\Per_{\Gen,a}(A_r^{(t_r)})}{|A_r^{(t_r)}|^{\frac{Q-1}{Q}}}
\ =\ \frac{\Per_\varphi(E)}{\Vol_{\mathrm c}(E)^{\frac{Q-1}{Q}}}.
\end{equation}
Consequently,
\[
\cWulff^{\rm disc}\ :=\ \liminf_{n\to\infty}\ \inf_{\substack{A\subset\Gamma\\ |A|=n}}\ \frac{\Per_{\Gen,a}(A)}{n^{\frac{Q-1}{Q}}}
\ =\ \cWulff.
\]
\end{theorem}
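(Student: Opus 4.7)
I will split the proof into two independent halves: (a) the sharp limit \eqref{eq:sharp-limit} for a single fixed admissible $E$; and (b) the equality $\cWulff^{\rm disc}=\cWulff$, obtained by pairing (a) with the submodular compactness/liminf of Proposition~\ref{prop:BV-liminf}.

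For (a), I will simply invoke Proposition~\ref{prop:verif_CI}(ii) on the fixed $E$: it yields phases $t_r\in F$ and samplers $A_r^{(t_r)}$ with the two separate asymptotics $r^{-(Q-1)}\Per_{\Gen,a}(A_r^{(t_r)})\to\Per_\varphi(E)$ and $r^{-Q}|A_r^{(t_r)}|\to\Vol_{\mathrm c}(E)$. Dividing the perimeter asymptotic by the $(Q-1)/Q$ power of the volume one, and using the standing hypothesis $\Vol_{\mathrm c}(E)>0$ to keep the denominator bounded away from $0$, \eqref{eq:sharp-limit} follows by continuity of the quotient. This half is essentially a repackaging of Proposition~\ref{prop:verif_CI}(ii) and should proceed without surprises.

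For the upper bound $\cWulff^{\rm disc}\le \cWulff$ in (b), I will chase an almost-minimizer of the continuum Wulff quotient: given $\eta>0$, pick $E^{\ast}$ with $\Per_\varphi(E^{\ast})/\Vol_{\mathrm c}(E^{\ast})^{(Q-1)/Q}\le \cWulff+\eta$, normalizing $\Vol_{\mathrm c}(E^{\ast})=1$ thanks to the $\delta_r$-invariance of that ratio (both numerator and denominator scale as $r^{Q-1}$). Then (a) produces samplers of cardinalities $n_r:=|A_r^{(t_r)}|\to\infty$ along which the per-$n$ infimum is dominated by the realized ratio, which converges to $\Per_\varphi(E^{\ast})/\Vol_{\mathrm c}(E^{\ast})^{(Q-1)/Q}\le \cWulff+\eta$; taking $\liminf_n$ and then $\eta\downarrow 0$ yields the bound. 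For the reverse inequality $\cWulff^{\rm disc}\ge \cWulff$, I will pick a minimizing sequence $Y_k\subset\Gamma$ with $n_k:=|Y_k|\to\infty$ and $\Per_{\Gen,a}(Y_k)/n_k^{(Q-1)/Q}\to\cWulff^{\rm disc}$, feed it into Proposition~\ref{prop:BV-liminf} to extract (after left translations $z_k$ and a subsequence) a BV limit $E\subset G$ with $\Vol_{\mathrm c}(E)=1$, and chain the resulting liminf estimate with Definition~\ref{def:Wulff-constant}:
\[
\cWulff^{\rm disc}\ =\ \lim_k\frac{\Per_{\Gen,a}(Y_k)}{n_k^{(Q-1)/Q}}\ \ge\ \Per_\varphi(E)\ \ge\ \cWulff\cdot\Vol_{\mathrm c}(E)^{(Q-1)/Q}\ =\ \cWulff.
\]

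\textbf{Expected main obstacle.} The nontrivial ingredient is entirely lodged inside Proposition~\ref{prop:BV-liminf}: producing translations $z_k\in G$ that concentrate the rescaled indicators $\mathbf 1_{\delta_{1/r_k}(Y_kF)}$ enough to preserve a nontrivial BV limit with $\Vol_{\mathrm c}(E)=1$. A priori the minimizing sequence could disperse (a Lions-type dichotomy), in which case any naive weak limit would have smaller volume and the liminf bound would degrade—precisely the scenario we must rule out to ensure the continuum isoperimetric constant is fully witnessed. Once this tightness is granted by Proposition~\ref{prop:BV-liminf}, the remaining bookkeeping—chaining part (a) with the continuum Wulff quotient of Definition~\ref{def:Wulff-constant} and respecting the scaling invariance of the ratio—is a calibration-style matching and presents no further difficulty.
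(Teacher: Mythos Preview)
Your proposal is correct and follows essentially the same approach as the paper's proof: part (a) is exactly Proposition~\ref{prop:verif_CI}(ii), the upper bound $\cWulff^{\rm disc}\le\cWulff$ comes from feeding near-minimizers of the continuum Wulff problem into the sampler construction, and the lower bound $\cWulff^{\rm disc}\ge\cWulff$ is precisely Proposition~\ref{prop:BV-liminf} chained with Definition~\ref{def:Wulff-constant}. You have correctly identified that the only nontrivial input is the tightness/no-dichotomy content of Proposition~\ref{prop:BV-liminf}, and you have spelled out the bookkeeping (dilation invariance of the ratio, passing to a subsequence realizing the $\liminf$) that the paper compresses into a single sentence.
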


\begin{proof}
The first display is Proposition~\ref{prop:verif_CI}(ii).
For the constants, Proposition~\ref{prop:BV-liminf} gives $\cWulff^{\rm disc}\ge \cWulff$ (liminf bound), while the sampler construction in Proposition~\ref{prop:verif_CI}(ii) gives $\cWulff^{\rm disc}\le \cWulff$ (limsup via samplers). Hence equality.
\end{proof}

\begin{corollary}[Euclidean/step~1 case]
If $G=\R^d$ (step~1), then \eqref{eq:sharp-limit} with $E=W_\varphi$ yields
\[
\lim_{r\to\infty}\ \frac{\Per_{\Gen,a}(A_r^{(t_r)})}{|A_r^{(t_r)}|^{\frac{Q-1}{Q}}}\ =\ \cWulff
\qquad\text{with }\ \cWulff=\frac{\Per_\varphi(W_\varphi)}{|W_\varphi|^{(d-1)/d}}.
\]
\end{corollary}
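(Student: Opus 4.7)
The plan is to derive this as a direct specialization of \Cref{thm:WulffCarnot} to the step-1 Carnot group $G=\R^d$, coupled with the classical identification of the optimizer in Definition~\ref{def:Wulff-constant} as the Wulff body. First I would record that $\R^d$ is the step-1 Carnot group: the grading collapses to $\frak g=V_1=\R^d$, the dilations $\delta_r$ are ordinary Euclidean scalings, the homogeneous dimension is $Q=d$, Haar measure is Lebesgue measure, and a uniform lattice $\Gamma\subset \R^d$ has a bounded fundamental domain $F$ of unit measure. Under these identifications, horizontal perimeter becomes classical anisotropic perimeter $\Per_\varphi$, the gauge $\varphi$ determined by \eqref{eq:CI} is the support function of the zonotope $Z_{\Gen}:=\sum_{s\in \Gen} a_s[-X_s,X_s]$ (Remark~\ref{rmk:zonoid}), and the nondegeneracy hypothesis (the $X_s$ with $a_s>0$ spanning $V_1$) makes $\varphi$ a genuine norm.

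Next I would invoke the classical Wulff--Taylor theorem (Remark~\ref{rmk:Wulff-step1}) to identify the infimum in Definition~\ref{def:Wulff-constant}: since $\varphi$ is a convex, even, positively $1$-homogeneous norm on $\R^d$, the functional $E\mapsto \Per_\varphi(E)/|E|^{(d-1)/d}$ is minimized (uniquely up to translation and scaling) by the Wulff body $W_\varphi=\{\eta:\varphi^\circ(\eta)\le 1\}$, so
\[
\cWulff\ =\ \frac{\Per_\varphi(W_\varphi)}{|W_\varphi|^{(d-1)/d}}.
\]
In particular, $W_\varphi$ is bounded, convex, has Lipschitz boundary, finite Lebesgue measure and finite anisotropic perimeter, so it is an admissible test set for \Cref{thm:WulffCarnot}.

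Finally, I would apply \Cref{thm:WulffCarnot} with $E=W_\varphi$: the theorem supplies a sequence of phases $t_r\in F$ along which the sharp asymptotic \eqref{eq:sharp-limit} holds, namely
\[
\lim_{r\to\infty}\ \frac{\Per_{\Gen,a}(A_r^{(t_r)})}{|A_r^{(t_r)}|^{(d-1)/d}}\ =\ \frac{\Per_\varphi(W_\varphi)}{|W_\varphi|^{(d-1)/d}},
\]
and substituting the Wulff identification from the previous step yields the right-hand side $\cWulff$, completing the proof. The only non-bookkeeping step is the appeal to Wulff--Taylor, which requires that $\varphi$ be a norm (not merely a seminorm); this is guaranteed by the standing nondegeneracy hypothesis that the positively weighted horizontal directions span $V_1$. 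I do not expect a serious obstacle here: both the sharp continuum isoperimetric identity and the discrete asymptotic produced by \Cref{thm:WulffCarnot} are already in place, so the corollary is obtained by concatenating them in the step-1 setting.
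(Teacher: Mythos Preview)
Your proposal is correct and follows exactly the route the paper intends: the corollary has no separate proof in the paper because it is the immediate specialization of \eqref{eq:sharp-limit} in \Cref{thm:WulffCarnot} to $E=W_\varphi$, combined with Remark~\ref{rmk:Wulff-step1} to identify the right-hand side as $\cWulff$. Your explicit checks (that $Q=d$ in step~1, that $W_\varphi$ has finite measure and perimeter so \Cref{thm:WulffCarnot} applies, and that nondegeneracy makes $\varphi$ a norm so Wulff--Taylor is available) are precisely the details one would fill in.
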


\begin{remark}[Abelian case $\Gamma=\Z^d$]\label{rmk:Zd}
If $G=\R^d$ (step~$1$), $Q=d$, and $\Gamma=\Z^d$, an axis-aligned anisotropy
$\varphi(\xi)=\sum_{i=1}^d w_i\,|\xi_i|$
is realized under the \emph{directed-edge} convention by choosing weights $a_{\pm e_i}=\tfrac{w_i}{2}$ (and $a_s=0$ otherwise), so that \eqref{eq:CI} holds exactly.
Then $W_\varphi=\prod_{i=1}^d[-w_i,w_i]$ and $\cWulff$ is explicit \cite{Taylor1978Crystalline}.
\end{remark}

In particular, for $w_i\equiv 1$ this choice yields $\tau(\nu)=\sum_i|\nu_i|$ and $\cWulff=2d$.

\begin{proposition}[Scaled limit via dense samplers]\label{prop:hS-limit-exists}
Let $D\ge2$ denote the ambient homogeneous dimension (e.g.\ $D=Q$ in the Carnot setting) and $f(n):=\inf\{\Per_{S,1}(Y):\,Y\subset\Gamma,\ |Y|=n\}$. Suppose there exists a sequence $m_k\to\infty$ such that
\[
f(m_k)\ \le\ (c+o(1))\,m_k^{\frac{D-1}{D}}
\quad\text{and}\quad 
m_{k+1}-m_k\ =\ O\big(m_k^{\frac{D-1}{D}}\big).
\]
Then
\[
\lim_{n\to\infty} n^{-\frac{D-1}{D}}\,f(n)\ =\ c.
\]
\end{proposition}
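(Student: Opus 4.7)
The plan is to sandwich $n^{-(D-1)/D}f(n)$ between matching asymptotic bounds: an upper bound $\le c$ obtained by gluing the hypothesized dense samplers to a perimeter-cheap ball filler, and a lower bound $\ge c$ supplied by the compactness-$\liminf$ of Proposition~\ref{prop:BV-liminf}.

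For the upper bound, fix $n$ large and let $k=k(n)$ be the unique index with $m_k\le n<m_{k+1}$, setting $\Delta_n:=n-m_k$. The density hypothesis gives $\Delta_n\le C\,m_k^{(D-1)/D}$, so $\Delta_n=o(m_k)$ and $m_k^{(D-1)/D}=(1+o(1))\,n^{(D-1)/D}$. Let $Y_k$ witness $\Per_{S,1}(Y_k)\le(c+o(1))\,m_k^{(D-1)/D}$. I would then adjoin a Cayley ball $F_n=B_S(x_0,r_n)$ with $r_n\asymp\Delta_n^{1/D}$, adjusted by $O(\Delta_n^{(D-1)/D})$ boundary vertices so that $|F_n|=\Delta_n$ exactly; right-translate $F_n\mapsto F_n h$ for some $h\in\Gamma$ so that $S\cdot Y_k\cap F_n h=\varnothing$ and $S\cdot(F_n h)\cap Y_k=\varnothing$. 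Such $h$ exists because only finitely many $h$ violate either separation condition, and by Lemma~\ref{lem:right-transl} the right-translation preserves $\Per_{S,1}(F_n h)$. Separation forces $\Per_{S,1}$ to be additive on $Y_k\sqcup F_n h$, yielding $f(n)\le \Per_{S,1}(Y_k)+\Per_{S,1}(F_n)$.

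Polynomial growth of degree $D$ in the ambient regime (Carnot or Abelian) gives $\Per_{S,1}(F_n)\le C'\,\Delta_n^{(D-1)/D}$, and the key computation
\[
\Delta_n^{(D-1)/D}\ \le\ C^{(D-1)/D}\,m_k^{(D-1)^2/D^2}\ =\ o\!\left(m_k^{(D-1)/D}\right)
\]
uses the strict exponent gap $(D-1)^2/D^2<(D-1)/D$, valid for $D\ge 2$, to make the filler asymptotically free. Combining with $m_k^{(D-1)/D}/n^{(D-1)/D}\to 1$ gives $\limsup_n f(n)/n^{(D-1)/D}\le c$.

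The matching lower bound follows from Proposition~\ref{prop:BV-liminf} together with Definition~\ref{def:Wulff-constant}: along any subsequence realizing $\liminf_n f(n)/n^{(D-1)/D}$, the rescaled indicators converge in $L^1_{\mathrm{loc}}$ (up to left translation) to some $E$ with $\Vol_{\mathrm c}(E)=1$, giving $\liminf\ge \Per_\varphi(E)\ge \cWulff$. In the setting of interest $c=\cWulff$ (Theorem~\ref{thm:WulffCarnot} identifies $\cWulff^{\mathrm{disc}}=\cWulff$, which the sampler hypothesis saturates from above), so both bounds agree and the limit exists with value $c$. The main obstacle is the asymptotic freedom of the filler: it succeeds precisely because the strict exponent gap $(D-1)^2/D^2<(D-1)/D$ for $D\ge 2$ upgrades the hypothesized big-$O$ density into a little-$o$ contribution at the scale $n^{(D-1)/D}$; a secondary technical point is the existence of Cayley balls with sharp $|B|\asymp r^D$, $|\partial B|\asymp r^{D-1}$ behavior, which is automatic in the polynomial-growth regimes under consideration.
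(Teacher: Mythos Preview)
Your proof is correct and follows essentially the same outline as the paper's: both establish the $\limsup$ by filling the gap $\Delta_n=n-m_k$ with a set whose perimeter is $o(n^{(D-1)/D})$ via the exponent gap $(D-1)^2/D^2<(D-1)/D$, and both defer the $\liminf$ to an external input (the paper simply says ``the liminf assumption gives the limit'', which in context is exactly the BV compactness of Proposition~\ref{prop:BV-liminf} you invoke).

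The paper's version is slightly more streamlined in one respect: rather than explicitly constructing a Cayley-ball filler $F_n$, adjusting its volume, and right-translating to separate it from $Y_k$, the paper first records the abstract subadditivity $f(n_1+n_2)\le f(n_1)+f(n_2)$ (proved by the same right-translation argument you use) and then directly writes $f(n)\le f(m_k)+f(n-m_k)$, bounding the second term by the ambient Wulff upper construction $f(t)\lesssim t^{(D-1)/D}$. This avoids your ball-perimeter estimate and volume-padding step, at the cost of citing that ambient bound as an external input. Your approach has the minor advantage of being self-contained about the filler (you build it rather than cite it); the paper's has the advantage of cleanly separating the subadditivity mechanism from the filler estimate. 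The final computation---the $O(n^{-(D-1)/D^2})$ remainder---is identical in both.
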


\begin{remark}
In our applications the sequence $(m_k)$ is provided by discrete Wulff approximants (``samplers''), which satisfy $m_{k+1}-m_k\asymp m_k^{(D-1)/D}$ and achieve $c=\cWulff$ (Theorem~\ref{thm:WulffCarnot} and its Abelian specialization). In the ambient (virtually nilpotent/Carnot) setting one also has the uniform upper bound $f(t)\lesssim t^{\frac{D-1}{D}}$ for large $t$ by the Wulff upper construction.
\end{remark}

\begin{proof}
Subadditivity: choose $Y_i$ with $|Y_i|=n_i$ and $\Per_{S,1}(Y_i)\le f(n_i)+\varepsilon$; \emph{right}-translate one of them far in the Cayley graph so that their edge boundaries do not interact, then $\Per_{S,1}(Y_1\cup Y_2)\le \Per_{S,1}(Y_1)+\Per_{S,1}(Y_2)$. Taking infima and letting $\varepsilon\downarrow0$ gives $f(n_1+n_2)\le f(n_1)+f(n_2)$.

For the limit: let $n$ be large and choose $k$ with $m_k\le n<m_{k+1}$. Subadditivity yields
\[
f(n)\ \le\ f(m_k)\ +\ f(n-m_k).
\]
By hypothesis, $n-m_k\le m_{k+1}-m_k=O\big(m_k^{\frac{D-1}{D}}\big)=O\big(n^{\frac{D-1}{D}}\big)$. Using $f(t)\lesssim t^{\frac{D-1}{D}}$ for these intermediate sizes,
\[
\frac{f(n)}{n^{\frac{D-1}{D}}}\ \le\ \frac{f(m_k)}{m_k^{\frac{D-1}{D}}}\Big(\frac{m_k}{n}\Big)^{\frac{D-1}{D}}
+\ \frac{f(n-m_k)}{(n-m_k)^{\frac{D-1}{D}}}\Big(\frac{n-m_k}{n}\Big)^{\frac{D-1}{D}}
\ \le\ (c+o(1)) + O\big(n^{-\frac{D-1}{D^2}}\big).
\]
Thus $\limsup_{n\to\infty} n^{-(D-1)/D} f(n)\le c$, and the liminf assumption gives the limit.
\end{proof}

\begin{lemma}[Column-counting for the directed boundary]\label{lem:columns}
Let $S_{\rm ax}=\{\pm e_1,\dots,\pm e_d\}$ and $Y\subset\Z^d$ finite.
For each $i$ let $\pi_i:\Z^d\to\Z^{d-1}$ delete the $i$-th coordinate.
Then
\[
\Per_{S_{\rm ax},1}(Y)\ \ge\ 2\sum_{i=1}^d |\pi_i(Y)|.
\]
\end{lemma}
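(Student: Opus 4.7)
The plan is a fiber-by-fiber extremal argument: for each coordinate direction $i$, the contributions to the directed perimeter from $+e_i$ and $-e_i$ are each bounded below by the number of nonempty columns $|\pi_i(Y)|$, and summing over $i$ yields the stated inequality. This is essentially a counting of the top and bottom of every column in each axis direction, a pigeonhole step dressed as an edge-boundary computation.

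Concretely, I would proceed as follows. Fix $i\in\{1,\dots,d\}$. For each $v\in\pi_i(Y)$, the fiber $F_v:=\{y\in Y:\pi_i(y)=v\}$ is a nonempty finite subset of a line parallel to $e_i$, so it has a unique element $y^{\max}_v$ whose $i$-th coordinate is largest and a unique element $y^{\min}_v$ whose $i$-th coordinate is smallest. By maximality, $y^{\max}_v+e_i\notin Y$; by minimality, $y^{\min}_v-e_i\notin Y$. Since fibers over distinct $v$ have disjoint underlying points, the assignments $v\mapsto y^{\max}_v$ and $v\mapsto y^{\min}_v$ are both injective, so
\[
\big|\{y\in Y:\ y+e_i\notin Y\}\big|\ \ge\ |\pi_i(Y)|,\qquad
\big|\{y\in Y:\ y-e_i\notin Y\}\big|\ \ge\ |\pi_i(Y)|.
\]
Adding these two inequalities gives the contribution of the pair $\{+e_i,-e_i\}$ to $\Per_{S_{\rm ax},1}(Y)$ is at least $2|\pi_i(Y)|$, and summing over $i=1,\dots,d$ produces the desired bound.

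I expect no genuine obstacle; the only thing to be careful about is that the two bounds for $\pm e_i$ are summed as distinct terms in the definition of $\Per_{S_{\rm ax},1}$, so they combine without any double-counting, and singleton fibers (for which $y^{\max}_v=y^{\min}_v$) require no separate treatment since the two extremal conditions live in disjoint summands of the perimeter functional.
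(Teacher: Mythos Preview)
Your proof is correct and follows essentially the same approach as the paper's: both argue column-by-column in each direction $i$, observing that every nonempty fiber contributes at least one missing $+e_i$-neighbor and one missing $-e_i$-neighbor. The paper phrases this via the decomposition of each column into disjoint integer intervals (each interval contributing a top and a bottom), while you take only the global maximum and minimum of the fiber; since the lemma only requires the lower bound $\ge 2$ per nonempty column, the two arguments coincide.
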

\begin{proof}
Fix $i$ and $u\in\Z^{d-1}$. The $i$-column $C_{i,u}=\{(t,u):t\in\Z\}$ meets $Y$ in a finite union of disjoint integer intervals. Each such interval contributes exactly one missing $+e_i$-neighbor and one missing $-e_i$-neighbor in that column, hence at least $2\cdot\mathbf 1_{\{C_{i,u}\cap Y\neq\emptyset\}}$ directed boundary edges in the $\pm e_i$ directions. Summing over $u$ and then $i$ gives the claim.
\end{proof}

\begin{lemma}[Discrete Loomis--Whitney]\label{lem:DLW}
For any finite $Y\subset\Z^d$,
\[
|Y|^{d-1}\ \le\ \prod_{i=1}^d |\pi_i(Y)|.
\]
\end{lemma}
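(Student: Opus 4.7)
This is the classical discrete Loomis--Whitney inequality, and the plan is to prove it by induction on $d$ using a slicing argument combined with Hölder's inequality. The base case $d=2$ is immediate since $Y \subset \pi_2(Y)\times\pi_1(Y)$ gives $|Y|\le |\pi_1(Y)||\pi_2(Y)|$. For the inductive step I will slice $Y$ along the last coordinate, apply the $(d-1)$-dimensional Loomis--Whitney to each slice, and then reassemble via Hölder. (An alternative route via Shearer's entropy inequality exists, but the slicing/Hölder argument is self-contained and matches the combinatorial tone of Lemma~\ref{lem:columns}.)

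\textbf{Key steps.} First, for each $t\in\Z$ set
\[
Y_t\ :=\ \{u\in\Z^{d-1}:\ (u,t)\in Y\}\ \subset\ \Z^{d-1},
\]
so that $|Y|=\sum_t |Y_t|$ and $\pi_d(Y)=\bigcup_t Y_t$, hence $|Y_t|\le |\pi_d(Y)|$ for every $t$. For $i<d$, denoting by $\pi_i^{(d-1)}:\Z^{d-1}\to\Z^{d-2}$ the deletion of the $i$-th coordinate on $\Z^{d-1}$, one has the disjoint decomposition $\pi_i(Y)=\bigsqcup_t \pi_i^{(d-1)}(Y_t)\times\{t\}$, so
\[
|\pi_i(Y)|\ =\ \sum_{t} |\pi_i^{(d-1)}(Y_t)|.
\]
Second, apply the inductive hypothesis in dimension $d-1$ to each slice: $|Y_t|^{d-2}\le \prod_{i=1}^{d-1}|\pi_i^{(d-1)}(Y_t)|$. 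Combined with $|Y_t|\le |\pi_d(Y)|$, raising to the $1/(d-1)$ power yields
\[
|Y_t|\ \le\ |\pi_d(Y)|^{1/(d-1)}\ \prod_{i=1}^{d-1}|\pi_i^{(d-1)}(Y_t)|^{1/(d-1)}.
\]

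\textbf{Reassembly.} Sum over $t$ and apply Hölder's inequality with $d-1$ equal exponents $d-1$:
\[
|Y|\ =\ \sum_t |Y_t|\ \le\ |\pi_d(Y)|^{1/(d-1)}\ \sum_t \prod_{i=1}^{d-1}|\pi_i^{(d-1)}(Y_t)|^{1/(d-1)}\ \le\ |\pi_d(Y)|^{1/(d-1)}\ \prod_{i=1}^{d-1}\Bigl(\sum_t |\pi_i^{(d-1)}(Y_t)|\Bigr)^{1/(d-1)}.
\]
Using the projection identity above turns the right-hand side into $\prod_{i=1}^d |\pi_i(Y)|^{1/(d-1)}$, and raising to the $(d-1)$-th power gives the desired inequality.

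\textbf{Expected obstacle.} There is no real obstacle; the only subtlety is bookkeeping the correct projection indices between $\Z^d$ and $\Z^{d-1}$, and invoking Hölder with the right exponents. The main point to double-check is that the partition $\pi_i(Y)=\bigsqcup_t \pi_i^{(d-1)}(Y_t)\times\{t\}$ is genuinely disjoint (it is, because the last coordinate $t$ tags each fiber), so the sum formula for $|\pi_i(Y)|$ is exact rather than a mere inequality. Once this is in hand, the inductive closure is automatic.
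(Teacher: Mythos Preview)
Your proof is correct and complete. The paper's own proof is essentially a one-line sketch that sets up column-occupancy notation and then defers to Bollob\'as--Thomason \cite{BollobasThomason1995}, whose argument proceeds by \emph{compression} (iteratively pushing $Y$ to a downset without enlarging any projection, then verifying the inequality for downsets directly). Your route---slice along one coordinate, apply the $(d-1)$-dimensional inequality to each slice, and reassemble by H\"older with $d-1$ equal exponents---is the classical Loomis--Whitney induction and is fully self-contained, which is an advantage over the paper's citation. Both approaches are standard; yours is closer in spirit to the original continuous Loomis--Whitney proof, while the cited compression argument is more combinatorial and generalizes cleanly to the full Bollob\'as--Thomason box theorem (arbitrary subfamilies of coordinate projections, not just the $(d-1)$-dimensional ones).
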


\begin{proof}
For each $i$ and $u\in\Z^{d-1}$ let $C_{i,u}:=\{(t,u):t\in\Z\}$ be the $i$-th column and
$m_{i,u}:=\#(Y\cap C_{i,u})$ its occupancy. Then $|Y|=\sum_{u} m_{i,u}$ and
$|\pi_i(Y)|=\#\{u: m_{i,u}\ge1\}$. Applying H\"{o}lder to the indicator families $\{\mathbf 1_{\{m_{i,u}\ge1\}}\}$ over $i=1,\dots,d$ yields
$
|Y|^{d-1}\le \prod_{i=1}^d |\pi_i(Y)|,
$
see Bollob\'as-Thomason \cite[Thm.~1]{BollobasThomason1995} for a direct combinatorial proof.
\end{proof}

\begin{lemma}[Unimodular change of basis preserves the directed perimeter]\label{lem:GLdZ-iso}
Let $B\in GL(d,\Z)$ with columns $b_1,\dots,b_d$ and set $T:=B^{-1}$. For
\[
S=\{\pm b_1,\dots,\pm b_d\},\qquad S_{\rm ax}=\{\pm e_1,\dots,\pm e_d\},
\]
the map $T:\Z^d\to\Z^d$ is a graph isomorphism $\mathrm{Cay}(\Z^d,S)\to \mathrm{Cay}(\Z^d,S_{\rm ax})$, and
\[
\Per_{S,1}(Y)\ =\ \Per_{S_{\rm ax},1}(TY)\qquad\text{for all finite }Y\subset\Z^d.
\]
\end{lemma}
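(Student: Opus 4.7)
\medskip

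\noindent\textbf{Proof plan for Lemma~\ref{lem:GLdZ-iso}.} The plan is to exploit the fact that $T=B^{-1}$ is itself an element of $GL(d,\Z)$ (since $\det B=\pm 1$ forces $\det T=\pm 1$ and integer entries for $T$), so $T:\Z^d\to\Z^d$ is a group automorphism and, in particular, a bijection of vertex sets. The whole lemma then reduces to a pushforward argument that is available precisely because $\Gamma=\Z^d$ is abelian (so left and right translations coincide) and because $T$ sends the generating set $S$ bijectively onto $S_{\mathrm{ax}}$.

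First I would record the key algebraic identity $T b_i=B^{-1}b_i=e_i$ for each $i$, which yields the bijection $s\mapsto Ts$ from $S=\{\pm b_i\}$ onto $S_{\mathrm{ax}}=\{\pm e_i\}$. Because $T$ is a group automorphism, $T(y+s)=Ty+Ts$ for all $y\in\Z^d$ and $s\in\Z^d$; in particular $T$ maps an $S$-edge $\{y,y+b_i\}$ of $\mathrm{Cay}(\Z^d,S)$ to the $S_{\mathrm{ax}}$-edge $\{Ty,Ty+e_i\}$ of $\mathrm{Cay}(\Z^d,S_{\mathrm{ax}})$, and likewise for $-b_i\mapsto -e_i$. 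This, together with bijectivity on vertices, gives the asserted graph isomorphism.

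Next I would transfer the directed-perimeter count. Fix $s\in S$ and consider the contribution
\[
N_s(Y)\ :=\ \bigl|\{y\in Y:\ s+y\notin Y\}\bigr|.
\]
Applying the bijection $y\mapsto Ty$ and using $T(s+y)=Ts+Ty$ together with $TY^{\complement}=(TY)^{\complement}$, one obtains $N_s(Y)=N_{Ts}(TY)$. Summing over $s\in S$ and reindexing via $s'=Ts\in S_{\mathrm{ax}}$ (a bijection of finite sets) gives
\[
\Per_{S,1}(Y)\ =\ \sum_{s\in S} N_s(Y)\ =\ \sum_{s'\in S_{\mathrm{ax}}} N_{s'}(TY)\ =\ \Per_{S_{\mathrm{ax}},1}(TY).
\]

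There is no real obstacle here: the only thing to watch is the convention in Definition~\ref{def:directed-perimeter}, namely that the ``direction'' attached to a broken edge is the generator $s$, not an unordered edge, so the reindexing must be done through the explicit bijection $s\mapsto Ts$ rather than an appeal to some symmetry of $S$. Once that is noted, the argument is purely formal and uses only that $T\in GL(d,\Z)$ intertwines the two generating sets.
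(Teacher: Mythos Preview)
Your proof is correct and follows essentially the same approach as the paper's own proof: both use that $T=B^{-1}\in GL(d,\Z)$ is a group automorphism carrying $S$ bijectively to $S_{\mathrm{ax}}$, hence a graph isomorphism that preserves missing-neighbor counts. Your write-up is simply a more explicit unpacking of the same idea.
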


\begin{proof}
$T$ is a group automorphism sending each $S$-edge $y\leftrightarrow y\pm b_i$ to an $S_{\rm ax}$-edge $Ty\leftrightarrow Ty\pm e_i$. Missing neighbors are preserved bijectively, hence the directed boundary is preserved.
\end{proof}

\begin{lemma}[Fiber-permutation inequality]\label{lem:fiber-permutation}
Let $\pi:\Gamma\to\Z^d$ be surjective with finite normal kernel $K$, and let $S_0\subset S$ map bijectively to $\{\pm e_i\}_{i=1}^d$. For a finite $Y\subset\Gamma$, write $\ell(u):=|Y\cap\pi^{-1}(u)|$ and $E_t:=\{u\in\Z^d:\ \ell(u)>t\}$ for $t\in[0,m)$, where $m=|K|$. Then
\[
\sum_{s\in S_0}\ \sum_{g\in\Gamma}\big|\mathbf{1}_Y(sg)-\mathbf{1}_Y(g)\big|
\ \ge\ \sum_{i=1}^d \int_{0}^{m} \big|\partial_{\{\pm e_i\}} E_t\big|\,dt.
\]
Equality holds when $Y$ is \emph{fiber-saturated}: $Y=\pi^{-1}(E)$ for some $E\subset\Z^d$.
\end{lemma}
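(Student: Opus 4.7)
The plan is to reduce the left-hand side to a per-fiber symmetric-difference count, and then express the resulting $\ell^1$-variation of the occupancy function $\ell$ via a layer-cake identity.

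First I would fix $s\in S_0$ with $\pi(s)=\varepsilon e_i$ ($\varepsilon\in\{\pm1\}$) and exploit the fiber structure. Since $K$ is normal with $\Gamma/K\cong\Z^d$, each fiber $F_u:=\pi^{-1}(u)$ is a $K$-coset of cardinality $m$, and left-multiplication by $s$ is a bijection $L_s\colon F_u\to F_{u+\varepsilon e_i}$. Writing $A_u:=Y\cap F_u$ and decomposing $\Gamma=\bigsqcup_u F_u$, the per-generator sum becomes
\[
\sum_{g\in\Gamma}\big|\1_Y(sg)-\1_Y(g)\big|\ =\ \sum_{u\in\Z^d}\big|A_u\triangle L_s^{-1}(A_{u+\varepsilon e_i})\big|\ \ge\ \sum_{u}\big|\,\ell(u)-\ell(u+\varepsilon e_i)\,\big|,
\]
the last step being the elementary $|X\triangle Y|\ge\big||X|-|Y|\big|$ applied fiber-by-fiber (using $|L_s^{-1}(A_{u+\varepsilon e_i})|=|A_{u+\varepsilon e_i}|=\ell(u+\varepsilon e_i)$).

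Next I would invoke a layer-cake/coarea identity: since $\{E_t\}$ is a nested family and $\ell(u)=\int_0^m \1_{E_t}(u)\,dt$, for any pair $(u,v)$
\[
\big|\ell(u)-\ell(v)\big|\ =\ \int_0^m\big|\1_{E_t}(u)-\1_{E_t}(v)\big|\,dt.
\]
Summing in $u$ and applying Fubini turns $\sum_u|\ell(u)-\ell(u+\varepsilon e_i)|$ into $\int_0^m \#\{u: \1_{E_t}(u)\neq\1_{E_t}(u+\varepsilon e_i)\}\,dt$. Pooling the two elements of $S_0$ mapping to $+e_i$ and $-e_i$ identifies the integrand with the directed $\{\pm e_i\}$-boundary $|\partial_{\{\pm e_i\}}E_t|$ in the normalization used on the LHS; summing over $i=1,\dots,d$ yields the advertised lower bound.

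For the equality claim when $Y=\pi^{-1}(E)$ is fiber-saturated, each $A_u$ is either $F_u$ or empty, so $|A_u\triangle L_s^{-1}(A_{u+\varepsilon e_i})|$ is exactly $m$ or $0$, which matches $||A_u|-|A_{u+\varepsilon e_i}||=|\ell(u)-\ell(u+\varepsilon e_i)|$ on the nose; the only inequality in the chain collapses to an equality, and the coarea step is an identity.

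The hard part is essentially bookkeeping rather than analysis. The bijection $L_s$ between adjacent fibers depends on the cocycle of the extension $1\to K\to\Gamma\to\Z^d\to1$ (it is an ``$s$-shuffle'' of the coset $F_u$), so one must observe that the size-based bound $|X\triangle Y|\ge||X|-|Y||$ is permutation-agnostic and makes the fiber-shuffle drop out completely — that is the reason the lemma holds for all (not merely fiber-saturated) $Y$. The only other subtlety is matching the directed-edge conventions: the LHS covers both orientations $\pm e_i$ via two $S_0$-elements, while the RHS packages them into a single directed $\{\pm e_i\}$-boundary, so one has to verify that the two contributions assemble to exactly $|\partial_{\{\pm e_i\}}E_t|$ under the normalization fixed in Definition~\ref{def:directed-perimeter}.
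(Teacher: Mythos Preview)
Your proposal is correct and follows essentially the same route as the paper's (terse) proof sketch: both reduce the per-generator sum to a symmetric-difference count between adjacent fibers, invoke the permutation-agnostic bound $|X\triangle Y|\ge\big||X|-|Y|\big|$ (the paper phrases this as ``rearranging within each fiber cannot decrease the left side''), and then apply the layer-cake identity to $\ell$. Your write-up is in fact more explicit than the paper's, which only outlines the idea: you make the fiber bijection $L_s\colon F_u\to F_{u+\varepsilon e_i}$ and its cocycle-dependence explicit, and you correctly isolate that the entire nontrivial content is the cardinality bound being insensitive to that shuffle.
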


\begin{proof}[Idea]
Fix $i$ and consider pairs of base neighbors $(u,u+e_i)$. For each fiber position $k\in K$, the contribution of edges parallel to $e_i$ at level $k$ is at least $|\1_{E^{(k)}}(u+e_i)-\1_{E^{(k)}}(u)|$, where $E^{(k)}:=\{u:\ (u,k)\in Y\}$.
Summing over $k$ and integrating the layer-cake $\1_{\{\ell>t\}}$ yields the right-hand side. Rearranging within each fiber cannot decrease the left side; the minimum is attained by fiber-saturated sets, where the identity becomes exact.
\end{proof}

\subsection{Proof of \cref{thm:A}}

\begin{proof}[Proof of  ~\cref{thm:A}(i)]
For $S_{\rm ax}$, Lemmas~\ref{lem:columns} and \ref{lem:DLW} imply
\[
\Per_{S_{\rm ax},1}(Y)\ \ge\ 2\sum_{i=1}^d |\pi_i(Y)|
\ \ge\ 2d\Big(\prod_{i=1}^d |\pi_i(Y)|\Big)^{1/d}
\ \ge\ 2d\,|Y|^{\frac{d-1}{d}}.
\]
To identify the continuum constant, note that for $\tau(\nu)=\sum_{i=1}^d |\nu_i|$ the Wulff shape is the cube $W=[-1,1]^d$ and the Minkowski integral formula yields
\[
P_\tau(W)\ =\ \int_{\partial W}\tau(\nu)\,d\cH^{d-1}\ =\ d\,|W|\ =\ d\cdot 2^d,
\]
hence $\cWulff= P_\tau(W)/|W|^{(d-1)/d}=2d$.  
By the bridge Lemma~\ref{lem:bridge}, the discrete functional here coincides with $\Per_{\Gen,a}$ for the split weights, so the constants are aligned.
For a unimodular basis stencil $S=\{\pm b_1,\dots,\pm b_d\}$ with $B=[b_1\ \cdots\ b_d]\in GL(d,\Z)$, the map $T=B^{-1}\in GL(d,\Z)$ is a graph isomorphism from $(\Z^d,S)$ to $(\Z^d,S_{\rm ax})$, so $\Per_{S,1}(Y)=\Per_{S_{\rm ax},1}(TY)$ and $|TY|=|Y|$. Applying the axis case to $TY$ gives the claim for any unimodular basis.
\end{proof}

\begin{remark}[Value of $\cWulff$ for the axis stencil]\label{rem:cWulff-2d}
For $\tau(\nu)=\sum_{i=1}^d|\nu_i|$ the Wulff body is $W=[-1,1]^d$, hence
$\Per_\tau(W)=\int_{\partial W}\tau(\nu)\,d\mathcal H^{d-1}=d\,2^d$ and $|W|=2^d$.
Therefore $\cWulff=\Per_\tau(W)/|W|^{(d-1)/d}=2d$, matching the discrete bound.
\end{remark}

\begin{proof}[Proof of ~\cref{thm:A}(ii)]
Let $(Y_k)$ be any sequence with $|Y_k|\to\infty$ and set $r_k:=|Y_k|^{1/Q}$. By the bridge Lemma~\ref{lem:bridge}, $\Per_{S,1}(Y_k)=\Per_{\Gen,a}(Y_k)$ for the split weights.
Apply Proposition~\ref{prop:BV-liminf} to obtain, along a subsequence, a set $E\subset G$ with $\Vol_{\mathrm c}(E)=1$ such that \,(up to \emph{right} translations of $Y_k$, which preserve $\Per_{S,1}$ by Lemma~\ref{lem:right-transl})\,
\[
\liminf_{k\to\infty}\ |Y_k|^{-\frac{Q-1}{Q}}\ \Per_{S,1}(Y_k)\ \ge\ \Per_{\tau_S}(E).
\]
By the continuum anisotropic isoperimetric inequality on $G$,
$
\Per_{\tau_S}(E)\ge \cWulff.
$
Therefore $\liminf |Y|^{-(Q-1)/Q}\Per_{S,1}(Y)\ge \cWulff$, i.e. $h_S\ge \cWulff$. The matching upper bound $h_S\le \cWulff$ follows from Theorem~\ref{thm:WulffCarnot} (bridge again). Hence $h_S=\cWulff$.
\end{proof}

\paragraph{Assumption for (iii).}
Let $\pi:\Gamma\to\Z^d$ be surjective with finite \emph{normal} kernel $K$ of size $m$.
Assume the symmetric generating set $S$ decomposes as
\[
S\ =\ S_0\ \cup\ S_{\mathrm{vert}},
\]
where $S_0\subset S$ maps bijectively to a unimodular basis $\{\pm e_i\}_{i=1}^d$ of $\Z^d$, and
\[
S_{\mathrm{vert}}\ :=\ S\cap K
\]
is a (nonempty) symmetric generating set of the kernel $K$.

\begin{lemma}[Uniform vertical Cheeger bound on the kernel]\label{lem:kernel-cheeger}
Let $\kappa_K>0$ denote the directed Cheeger constant of the finite Cayley graph $\mathrm{Cay}(K,S_{\mathrm{vert}})$,
\[
\kappa_K\ :=\ \min_{\varnothing\neq B\subsetneq K}\ \frac{\bigl|\partial^{\rightarrow}_{S_{\mathrm{vert}}} B\bigr|}{\min\{|B|,\,|K\setminus B|\}}.
\]
Then for every finite $Y\subset\Gamma$, if we write the fiber occupancy over $u\in\Z^d$ as
$\ell(u):=|Y\cap \pi^{-1}(u)|\in\{0,1,\dots,m\}$, we have
\[
\sum_{s\in S_{\mathrm{vert}}}\ \sum_{g\in\Gamma}\big|\mathbf 1_Y(sg)-\mathbf 1_Y(g)\big|
\ \ge\ \kappa_K\ \sum_{u\in\Z^d}\ \min\{\ell(u),\,m-\ell(u)\}.
\]
\end{lemma}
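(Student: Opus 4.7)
The plan is to reduce the global boundary sum to a sum of fiber-wise contributions, each of which is directly controlled by the Cheeger constant $\kappa_K$. The key structural observation is that for every $s\in K$ and $g\in\Gamma$ one has $\pi(sg)=\pi(s)\pi(g)=\pi(g)$, so left multiplication by any $s\in S_{\mathrm{vert}}\subset K$ preserves every fiber $\pi^{-1}(u)$. Hence the double sum
\[
\sum_{s\in S_{\mathrm{vert}}}\ \sum_{g\in\Gamma}\bigl|\mathbf 1_Y(sg)-\mathbf 1_Y(g)\bigr|
\]
splits \emph{additively} over $u\in\Z^d$, and the whole problem reduces to estimating the boundary of $Y\cap\pi^{-1}(u)$ inside the single finite fiber $\pi^{-1}(u)$ under the $S_{\mathrm{vert}}$-action.

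Next I would parametrize each fiber as a \emph{right} coset of $K$, which turns the action of $S_{\mathrm{vert}}$ into honest left-multiplication on $K$. Concretely, for each $u$ fix any $g_u\in\pi^{-1}(u)$; since $K$ is normal, $\pi^{-1}(u)=Kg_u$. Set
\[
B_u\ :=\ \{k\in K:\ kg_u\in Y\},\qquad |B_u|=\ell(u).
\]
Under the bijection $k\mapsto kg_u$ the action $g\mapsto sg$ becomes $k\mapsto sk$, so a direct calculation gives
\[
\sum_{s\in S_{\mathrm{vert}}}\ \sum_{g\in \pi^{-1}(u)}\bigl|\mathbf 1_Y(sg)-\mathbf 1_Y(g)\bigr|
\ =\ \sum_{s\in S_{\mathrm{vert}}}\bigl|B_u\,\triangle\,s^{-1}B_u\bigr|
\ =\ 2\,\bigl|\partial^{\rightarrow}_{S_{\mathrm{vert}}}B_u\bigr|,
\]
where the last equality uses $|s^{-1}B_u|=|B_u|$ together with symmetry of $S_{\mathrm{vert}}$, so that each directed cut edge is counted from both of its endpoints as $s$ ranges over $S_{\mathrm{vert}}$.

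Finally, the Cheeger bound inside $\mathrm{Cay}(K,S_{\mathrm{vert}})$ yields $|\partial^{\rightarrow}_{S_{\mathrm{vert}}}B_u|\ge\kappa_K\min\{\ell(u),\,m-\ell(u)\}$ whenever $0<\ell(u)<m$ (and the inequality is trivial when $\ell(u)\in\{0,m\}$ since then the right-hand side of the lemma vanishes on that fiber). Summing over $u$ and absorbing the factor $2$ gives the claimed bound with room to spare, in fact with constant $2\kappa_K$ rather than $\kappa_K$. The only subtle point in the argument is the choice of coset parametrization: if one instead parametrizes the fiber as a \emph{left} coset $g_uK$, the restriction of left-multiplication-by-$s$ corresponds to left-multiplication by the conjugate $\phi_u(s):=g_u^{-1}sg_u\in K$, and one would then have to invoke the invariance of $\kappa_K$ under the Cayley-graph automorphism $\phi_u:\mathrm{Cay}(K,S_{\mathrm{vert}})\to\mathrm{Cay}(K,\phi_u(S_{\mathrm{vert}}))$. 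The right-coset parametrization used above bypasses this twist entirely, so normality of $K$ is used only to equate $g_uK$ with $Kg_u$, not through any explicit conjugation bookkeeping.
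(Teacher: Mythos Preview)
Your proof is correct and follows the same approach as the paper: split the boundary sum over fibers $\pi^{-1}(u)$, identify each fiber with $\mathrm{Cay}(K,S_{\mathrm{vert}})$ via a coset parametrization, and apply the Cheeger constant fiberwise. You supply more detail than the paper (the explicit right-coset identification, the factor-of-$2$ improvement, and the remark on the conjugation twist one would see with a left-coset parametrization), but the substance is identical.
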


\begin{proof}
Because $K=\ker\pi$ is normal and $S_{\mathrm{vert}}\subset K$, for each $u$ the restriction of left multiplication by $S_{\mathrm{vert}}$ to the fiber $\pi^{-1}(u)$ is (via $\pi^{-1}(u)\cong K$) exactly the Cayley graph $\mathrm{Cay}(K,S_{\mathrm{vert}})$. Hence, for each $u$,
\[
\sum_{s\in S_{\mathrm{vert}}}\ \sum_{g\in \pi^{-1}(u)} \big|\mathbf 1_Y(sg)-\mathbf 1_Y(g)\big|\ \ge\ \kappa_K\ \min\{|Y\cap\pi^{-1}(u)|,\, m-|Y\cap\pi^{-1}(u)|\}.
\]
Summing over $u$ yields the claim.
\end{proof}

\begin{lemma}[Horizontal deficit controlled by partial columns]\label{lem:deficit-corrected}
Let $\alpha\in(0,1)$ and $m\in\N$. For any function $\ell:\Z^d\to\{0,1,\dots,m\}$ with total mass $S:=\sum_u \ell(u)$, define
\[
E_t:=\{u:\ \ell(u)>t\},\qquad a_k:=|E_t|\ \text{ for }t\in[k,k+1),\ k=0,\dots,m-1,
\]
and the \emph{partiality} functional
\[
\mathsf{P}(\ell)\ :=\ \sum_{u\in\Z^d}\min\{\ell(u),\,m-\ell(u)\}.
\]
Let $E=\lfloor S/m\rfloor$ and $r=S-Em\in\{0,1,\dots,m-1\}$. Then
\begin{equation}\label{eq:deficit-bound-corrected}
\sum_{k=0}^{m-1} a_k^{\alpha}\ \ge\ r(E+1)^{\alpha}+(m-r)E^{\alpha}\ -\ \big((E+1)^{\alpha}-E^{\alpha}\big)\,\mathsf{P}(\ell).
\end{equation}
\end{lemma}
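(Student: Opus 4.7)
The plan is a Schur-concavity / rearrangement argument that compares the layer-cake sequence $(a_k)$ with the balanced ``target'' sequence $(a_k^\star)$ having $r$ entries equal to $E+1$ and $m-r$ entries equal to $E$, and bounds the resulting concavity deficit by the partiality $\mathsf{P}(\ell)$. First I would record the layer-cake identity $\sum_{k=0}^{m-1} a_k=\sum_u \ell(u)=S$ and observe, via Fubini on the Young diagram whose column heights are $\ell(u)$, that $(a_k)$ is a nonincreasing nonnegative integer sequence with sum $S$. By Schur concavity of $(x_k)\mapsto \sum x_k^\alpha$ (a consequence of concavity of $t\mapsto t^\alpha$ for $\alpha\in(0,1)$), the maximum over such sequences with given sum $S$ is attained at the most balanced decreasing sequence $(a_k^\star)$, and equals $M:=r(E+1)^\alpha+(m-r)E^\alpha$. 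Thus the lemma amounts to bounding the deficit $M-\sum_k a_k^\alpha$ from above by $L\,\mathsf{P}(\ell)$, where $L:=(E+1)^\alpha-E^\alpha$ is the marginal increment of $x^\alpha$ at $E$.

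Next I would realize the passage from $(a_k^\star)$ to $(a_k)$ through an explicit sequence of unit Robin-Hood swaps: each swap removes one grid cell from one row of the diagram and deposits it in another, which at the level of $\ell$ corresponds to moving one unit of mass between two columns. Each such swap changes $\sum_k a_k^\alpha$ by $g(a_{k'})-g(a_k-1)$, where $g(x):=(x+1)^\alpha-x^\alpha$ is decreasing in $x$ by concavity. Organizing the swaps so that at each step we move a unit between partial columns, every swap strictly decreases $\mathsf{P}(\ell)$ until the saturated configuration is reached; since a saturated configuration can be relabeled to match the target $(a_k^\star)$ at zero cost, the total number of nontrivial swaps needed is at most $\mathsf{P}(\ell)$.

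The final ingredient is the cost-per-swap bound. Using monotonicity of $g$, when the swap is scheduled so that both affected row-length values lie in the ``active window'' $\{E,E+1\}$, the per-swap concavity deficit is at most $L$. Telescoping over the $\le\mathsf{P}(\ell)$ swaps then yields $M-\sum_k a_k^\alpha\le L\,\mathsf{P}(\ell)$, which is exactly \eqref{eq:deficit-bound-corrected}.

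The main obstacle is the scheduling hidden in the previous paragraph: with a naive order, intermediate row-length values can drift far from $\{E,E+1\}$ and the associated per-swap concavity deficits then exceed $L$ (indeed, at very small row values the marginal $g$ blows up like $\alpha x^{\alpha-1}$). To keep every swap at the marginal scale $L$, I plan a greedy Robin-Hood rule (largest excess row swapped with smallest deficit row at each stage) together with a monotone potential-function argument ensuring that intermediate row-lengths never leave the window $[\min_k a_k^\star,\max_k a_k^\star]\cup[\min_k a_k,\max_k a_k]$; this will confine all marginal increments used to the range where $g\le L$. A secondary technical point is to handle the degenerate cases $E=0$ or $r=0$ cleanly, where the target collapses and the swaps must be rephrased entirely among partial columns of small height.
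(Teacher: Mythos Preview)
Your strategy coincides with the paper's: identify the balanced target $(\tilde a_k)$ via Schur--concavity/Karamata, then bound the deficit $M-\sum_k a_k^\alpha$ by (number of unit transports) $\times$ (per-move cost $\le L$). The paper is equally terse at exactly the step you flag as the ``main obstacle.''

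The genuine gap is that your proposed scheduling cannot deliver a per-swap cost $\le L$. Confining intermediate row-lengths to the window $[\min_k a_k^\star,\max_k a_k^\star]\cup[\min_k a_k,\max_k a_k]$ does \emph{not} confine $g(x)=(x{+}1)^\alpha-x^\alpha$ to values $\le L$: that window already contains $\min_k a_k$, which may be $0$, and $g$ is largest near $0$ (indeed $g(0)=1$). To pass from $(\tilde a_k)$ (all entries in $\{E,E{+}1\}$) to $(a_k)$, you are forced to decrement some entry from a value $\le E{+}1$ all the way down to its small final value, and every such decrement has cost $g(\text{current}-1)\ge g(E)=L$, never $\le L$. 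No greedy ordering avoids this.

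In fact the inequality \eqref{eq:deficit-bound-corrected} is \emph{false} for small $\alpha$, so no scheduling can work in the stated generality. Take $m=2$ and four columns of height $1$: then $(a_0,a_1)=(4,0)$, $S=4$, $E=2$, $r=0$, $\mathsf P(\ell)=4$. At $\alpha=0.1$ the left side is $4^{0.1}\approx 1.149$, while the right side is $2\cdot 2^{0.1}-4\,(3^{0.1}-2^{0.1})\approx 1.966$. Thus both your argument and the paper's ``each unit moved from level $E{+}1$ to level $E$'' heuristic break at the same point. The paper only ever applies the lemma with $\alpha=(d{-}1)/d\ge\tfrac12$, where a weaker bound (say with $C(\alpha)\cdot L$ or with the mean-value slope $\alpha E^{\alpha-1}$ replacing $L$) suffices for the asymptotics; establishing that cleanly requires a different estimate than a uniform per-swap cap at $L$.
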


\begin{proof}
Let $(a_k)_{k=0}^{m-1}$ be the nonincreasing sequence $a_k:=|E_t|$ for $t\in[k,k+1)$, and let $\tilde a_k$ be its \emph{equalized} version: $\tilde a_k=E+1$ for $k<r$ and $\tilde a_k=E$ for $k\ge r$. Then $\sum_k a_k=\sum_k \tilde a_k=S$ and $(\tilde a_k)$ \emph{majorizes} $(a_k)$ (every initial partial sum of $\tilde a_k$ dominates that of $a_k$). Since $x\mapsto x^\alpha$ is concave on $[0,\infty)$, Karamata's inequality gives
\[
\sum_{k=0}^{m-1} a_k^\alpha\ \le\ \sum_{k=0}^{m-1} \tilde a_k^\alpha\ =\ r(E+1)^\alpha+(m-r)E^\alpha.
\]
We now quantify the deficit by counting how many unit masses must cross the threshold between $E$ and $E+1$ to obtain $a$ from $\tilde a$. Define
\[
\delta:=\sum_{k=0}^{m-1} (\tilde a_k-a_k)_+\ \le\ \sum_{u\in\Z^d}\min\{\ell(u),m-\ell(u)\}\ =\ \mathsf P(\ell),
\]
where the inequality follows from the layer-cake representation of $\ell$ and the fact that each unit of partiality reduces the count of superlevel sets at most by $1$ at a single level. Each time one unit is moved from level $E+1$ to level $E$, the quantity $\sum_k a_k^\alpha$ decreases by at most $(E+1)^\alpha-E^\alpha$ (by the mean value theorem and concavity). Therefore
\[
\sum_{k=0}^{m-1} a_k^\alpha\ \ge\ r(E+1)^\alpha+(m-r)E^\alpha\ -\ \big((E+1)^\alpha-E^\alpha\big)\,\delta
\ \ge\ r(E+1)^\alpha+(m-r)E^\alpha\ -\ \big((E+1)^\alpha-E^\alpha\big)\,\mathsf P(\ell),
\]
which is \eqref{eq:deficit-bound-corrected}.
\end{proof}

\begin{proof}[Proof of  ~\cref{thm:A}(iii)]

\emph{Strategy.} We split $\Per_{S,1}$ into horizontal and vertical parts. 
Horizontally, a fiber-permutation inequality and a layer-cake representation reduce the problem to the Abelian Wulff bound on $\Z^d$ for the superlevel sets $E_t$. 
Vertically, a Cheeger inequality on the kernel controls the cost of partially filled fibers via the functional $\mathsf P(\ell)$. 
A quantitative ``horizontal deficit'' estimate (Lemma~\ref{lem:deficit-corrected}) bounds how far one is from fiber saturation. 
Combining the two components and optimizing yields the sharp constant $2d\,m^{1/d}$, with upper bound realized by fiber-saturated lifts.

Let $\pi:\Gamma\to\Z^d$ be surjective with finite normal kernel $K$ of size $m$, and assume $S=S_0\cup S_{\mathrm{vert}}$ as above with $S_{\mathrm{vert}}$ generating $K$.

\emph{Layer-cake on the base and Abelian Wulff.}
As before set $\ell(u):=|Y\cap \pi^{-1}(u)|$, $E_t:=\{\ell>t\}$, $a_k:=|E_t|$ for $t\in[k,k+1)$.
By the fiber-permutation inequality (Lemma~\ref{lem:fiber-permutation}) and the Abelian Wulff inequality on $\Z^d$,
\[
\sum_{s\in S_0}\ \sum_{g\in\Gamma}\big|\mathbf{1}_Y(sg)-\mathbf{1}_Y(g)\big|
\ \ge\ \sum_{i=1}^d \int_{0}^{m} \big|\partial_{\{\pm e_i\}} E_t\big|\,dt
\ \ge\ 2d\sum_{k=0}^{m-1} a_k^{\alpha},
\qquad \alpha=\tfrac{d-1}{d}.
\]

\emph{Vertical penalty.} By Lemma~\ref{lem:kernel-cheeger},
\[
\sum_{s\in S_{\mathrm{vert}}}\ \sum_{g\in\Gamma}\big|\mathbf 1_Y(sg)-\mathbf 1_Y(g)\big|
\ \ge\ \kappa_K\ \mathsf{P}(\ell),\qquad \mathsf{P}(\ell):=\sum_{u}\min\{\ell(u),m-\ell(u)\}.
\]

\emph{Combine and optimize.} Let $S=\sum_u \ell(u)=|Y|$, $E=\lfloor S/m\rfloor$, $r=S-Em$.
By Lemma~\ref{lem:deficit-corrected},
\[
\sum_{k=0}^{m-1} a_k^{\alpha}\ \ge\ r(E+1)^{\alpha}+(m-r)E^{\alpha}\ -\ \big((E+1)^{\alpha}-E^{\alpha}\big)\,\mathsf{P}(\ell).
\]
Therefore
\[
\Per_{S,1}(Y)\ \ge\ 2d\left[r(E+1)^{\alpha}+(m-r)E^{\alpha}\right]\ +\ \Bigl(\kappa_K-2d\big((E+1)^{\alpha}-E^{\alpha}\big)\Bigr)\ \mathsf{P}(\ell).
\]
Since $\alpha-1=-\tfrac1d<0$, $(E+1)^{\alpha}-E^{\alpha}\to0$ as $|Y|\to\infty$. Hence there exists $N_0=N_0(d,\kappa_K)$ so that for $|Y|\ge N_0$ the bracket is nonnegative and we can drop the $\mathsf{P}(\ell)$ term:
\[
\Per_{S,1}(Y)\ \ge\ 2d\left[r(E+1)^{\alpha}+(m-r)E^{\alpha}\right].
\]
A standard binomial estimate yields
\[
r(E+1)^{\alpha}+(m-r)E^{\alpha}\ =\ m^{1-\alpha}\,S^{\alpha}\ +\ O\big(E^{\alpha-1}\big)
\ =\ m^{1/d}\,|Y|^{\frac{d-1}{d}}\ +\ o\left(|Y|^{\frac{d-1}{d}}\right).
\]
Thus
\[
\liminf_{|Y|\to\infty}\ \frac{\Per_{S,1}(Y)}{|Y|^{\frac{d-1}{d}}}\ \ge\ 2d\,m^{1/d}.
\]

\emph{Sharpness (matching upper bound).}
If $E\subset\Z^d$ is a near-minimizer for the Abelian discrete Wulff problem (Theorem~\ref{thm:WulffCarnot}), then its \emph{fiber-saturated lift}
$
Y:=\pi^{-1}(E)
$
has $|Y|=m|E|$ and
$
\Per_{S,1}(Y)=m\,|\partial_{\{\pm e_i\}}E|
\sim 2d\,m\,|E|^{\frac{d-1}{d}}
= 2d\,m^{1/d}\,|Y|^{\frac{d-1}{d}}.
$
(Here horizontal edges replicate $m$-fold across fibers, while vertical edges vanish for fiber-saturated lifts.)
Therefore $h_S=2d\,m^{1/d}$.
\end{proof}

\begin{remark}[Necessity of a vertical generating subset in $S$]
If $S\cap K=\varnothing$ (no kernel edges are counted), the vertical penalty is absent and one can spread mass thinly across many fibers to force $\int_0^m|E_t|^{\alpha}\,dt\asymp |Y|^{\alpha}$, leading only to the lower bound $\Per_{S,1}(Y)\gtrsim 2d\,|Y|^{\alpha}$. Thus the \emph{sharp} constant $2d\,m^{1/d}$ generally \emph{requires} that $S$ contain a kernel-generating symmetric subset $S_{\mathrm{vert}}\subset K$ as above.
\end{remark}

\begin{corollary}[Existence of the full limit for $h_S$ in ~\cref{thm:A}]
\label{cor:hS-limit-exists}
In each of the cases \textup{(i)}-\textup{(ii)}, and in \textup{(iii)} under the assumption on $S_{\mathrm{vert}}$, one has
\[
\lim_{n\to\infty}\ n^{-\frac{D-1}{D}}\ \inf\{\Per_{S,1}(Y):\ |Y|=n\}\ =\ \cWulff.
\]
\end{corollary}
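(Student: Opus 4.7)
The plan is to apply Proposition~\ref{prop:hS-limit-exists} in each of the three regimes, using the sharp liminf lower bound already established in the proof of  \cref{thm:A} together with an explicit \emph{dense sampler} sequence delivering the matching limsup at a sufficiently fine volume scale. Writing $f(n):=\inf\{\Per_{S,1}(Y):|Y|=n\}$, we already know from the respective parts of  \cref{thm:A} that
\[
\liminf_{n\to\infty} n^{-(D-1)/D}f(n)\ \ge\ \cWulff,
\]
with $D=d$ in cases (i), (iii) and $D=Q$ in case (ii). It therefore suffices to exhibit, for each case, a sequence $(m_k)\to\infty$ and sets $Y_k$ with $|Y_k|=m_k$ such that $f(m_k)\le (\cWulff+o(1))\,m_k^{(D-1)/D}$ and $m_{k+1}-m_k=O\!\left(m_k^{(D-1)/D}\right)$.

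In case (i) (unimodular axis stencil on $\Z^d$), I would take $Y_k:=T^{-1}([0,k)^d\cap\Z^d)$, where $T\in GL(d,\Z)$ is the basis-change map of Lemma~\ref{lem:GLdZ-iso}; then $|Y_k|=k^d$ and $\Per_{S,1}(Y_k)=2d k^{d-1}=2d|Y_k|^{(d-1)/d}$ by the column count, so both the approximation rate and the density condition $m_{k+1}-m_k=(k+1)^d-k^d\asymp k^{d-1}=m_k^{(d-1)/d}$ are immediate, with $\cWulff=2d$ (Remark~\ref{rem:cWulff-2d}). In case (ii) (Carnot lattice), I would invoke Proposition~\ref{prop:verif_CI}(ii) applied to any set $E\subset G$ with $\Vol_{\mathrm c}(E)=1$ and $\Per_\varphi(E)\le(1+\eta)\cWulff$ (such $E$ exist by Definition~\ref{def:Wulff-constant}); the phase-shifted samplers $A_r^{(t_r)}$ give $|A_r^{(t_r)}|=r^Q+O(r^{Q-1})$ and $\Per_{S,1}(A_r^{(t_r)})=(\cWulff+o(1))r^{Q-1}$, hence along $m_k:=|A_k^{(t_k)}|$ we have $m_{k+1}-m_k=O(k^{Q-1})=O(m_k^{(Q-1)/Q})$; taking $\eta\downarrow0$ afterwards yields the sharp constant. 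In case (iii) (virtually Abelian with basis fiber), I would lift the case-(i) samplers via $Y_k:=\pi^{-1}(E_k)$, where $E_k$ is a Wulff sampler on the base $\Z^d$; fiber saturation kills the vertical contribution, and horizontal edges replicate $m$-fold, so $\Per_{S,1}(Y_k)=m\cdot 2d\,|E_k|^{(d-1)/d}=2d m^{1/d}|Y_k|^{(d-1)/d}$, with $|Y_k|=m k^d$ and the required density $m_{k+1}-m_k\asymp m k^{d-1}\asymp m^{1/d}|Y_k|^{(d-1)/d}=O(m_k^{(d-1)/d})$.

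With these samplers in hand, the only remaining inputs for Proposition~\ref{prop:hS-limit-exists} are the subadditivity of $f$ and a uniform upper bound $f(t)\lesssim t^{(D-1)/D}$ for intermediate sizes. Subadditivity follows from the right-translation invariance of $\Per_{S,1}$ (Lemma~\ref{lem:right-transl}): given near-optimal $Y_1,Y_2$ of sizes $n_1,n_2$, a far right-translate $Y_2 h$ of $Y_2$ is disjoint from $Y_1$ and shares no $S$-edges with it, so $\Per_{S,1}(Y_1\cup Y_2 h)\le\Per_{S,1}(Y_1)+\Per_{S,1}(Y_2)$. The uniform upper bound on intermediate scales follows from the sampler bound and subadditivity by packing several translates of a sampler of size $\asymp n$. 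Plugging these into Proposition~\ref{prop:hS-limit-exists} with $c=\cWulff$ upgrades the liminf to a true limit, proving the corollary in all three cases.

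The main technical obstacle is the \emph{density} of samplers in case (ii): we must ensure that along $k\mapsto|A_k^{(t_k)}|$ the volumes fill $\N$ with gaps $O(m_k^{(Q-1)/Q})$ rather than skipping large intervals. The phase-averaging identity (Lemma~\ref{lem:phase-avg}) combined with the phase-stability estimate (Lemma~\ref{lem:phase-stable}) guarantees $|A_r^{(t)}|=r^Q\Vol_{\mathrm c}(E)+O(r^{Q-1})$ \emph{uniformly in} $t\in F$, so the cardinalities $|A_r^{(t_r)}|$ grow continuously (in the discrete sense) with the desired increment $O(r^{Q-1})$; any residual gap at a given $r$ can be absorbed by gluing on a tiny right-translated Wulff sampler of size $\lesssim r^{Q-1}$, whose perimeter cost is negligible at the $r^{Q-1}$ scale. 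The remaining arithmetic then matches the density hypothesis of Proposition~\ref{prop:hS-limit-exists} term by term, and letting the continuum tolerance $\eta\downarrow0$ pins down the limit at $\cWulff$.
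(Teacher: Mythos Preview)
Your proposal is correct and follows essentially the same route as the paper: invoke Proposition~\ref{prop:hS-limit-exists} with sampler sequences coming from the Wulff theory (Theorem~\ref{thm:WulffCarnot} and its Abelian/virtually-Abelian specializations), after noting that the liminf lower bound is already in place from the proof of \cref{thm:A}. The paper's own proof is a two-line citation of Theorem~\ref{thm:WulffCarnot} plus Proposition~\ref{prop:hS-limit-exists}; your version is more explicit in each case (concrete cubes in~(i), phase-shifted samplers with an $\eta\downarrow0$ diagonalization in~(ii), fiber-saturated lifts in~(iii)) and spells out the density/subadditivity inputs that the paper leaves implicit.
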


\begin{proof}
By Theorem~\ref{thm:WulffCarnot} (and its Abelian specialization), there exists a sampler subsequence $m_k$ with
$f(m_k)\le (\cWulff+o(1))\,m_k^{(D-1)/D}$ and $m_{k+1}-m_k=O(m_k^{(D-1)/D})$.
Apply Proposition~\ref{prop:hS-limit-exists}.
\end{proof}

\begin{remark}[Discrete vs.\ continuum constants in the virtually Abelian case]
Here the asymptotic cone is $(G,\varphi)=(\R^d,\tau_{\rm ax})$, so 
$\cWulff=2d$. The discrete asymptotic constant for $S=S_0\cup S_{\mathrm{vert}}$
is instead $h_S=2d\,m^{1/d}$ (Theorem~A(iii)), because $|Y|$ counts all $m$ elements in each fiber while horizontal edges replicate $m$-fold. Thus the $m^{1/d}$ factor is a \emph{discrete} multiplicity effect, not a change of the continuum gauge.
\end{remark}

\begin{proposition}[Periodic discrete calibration suffices]\label{prop:calibration}
Let $S\subset\Z^d$ be symmetric. Suppose there exist rational weights $\{\alpha_s\}_{s\in S}$ and a $\Z^d$-periodic divergence-free flow $\phi$ on directed edges with $\phi(e)\in[0,\alpha_s]$ on each directed edge $e$ parallel to $s$, such that for a.e. unit normal $\nu$ the support function
$
h(\nu)=\sum_{s\in S}\alpha_s\,(\nu\cdot s)_+
$
equals the anisotropy $\tau(\nu)$ of interest. Then the discrete isoperimetric constant equals the corresponding continuum Wulff constant: $h_S=\cWulff(\tau)$.
\end{proposition}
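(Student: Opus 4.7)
The plan is to prove $h_S=\cWulff(\tau)$ through matching upper and lower bounds, with the calibration $\phi$ serving as a discrete BV test object on the lower-bound side. The upper bound does not use $\phi$: specializing Theorem~\ref{thm:WulffCarnot} to the step-1 Abelian case $G=\R^d$, $\Gamma=\Z^d$, the Wulff samplers $A_r^{(t_r)}$ with well-chosen phases realize $\Per_{\Gen,\alpha}(A_r^{(t_r)})/|A_r^{(t_r)}|^{(d-1)/d}\to\cWulff(\tau)$, whence $h_S\le\cWulff(\tau)$. Since the support-function hypothesis already identifies the relevant anisotropy, no new ingredient beyond sampler construction is needed for this half.

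For the lower bound I would combine the compactness/liminf scheme of Proposition~\ref{prop:BV-liminf} with an explicit calibration identification of the $\Gamma$-limit energy. Given a sequence of finite sets $Y_k$ with $|Y_k|\to\infty$, set $r_k:=|Y_k|^{1/d}$ and consider the rescaled cubical extensions $\tilde V_k:=r_k^{-1}(Y_k+[0,1)^d)$. After translation and subsequence extraction, $\mathbf{1}_{\tilde V_k}\to\mathbf{1}_E$ in $L^1_{\mathrm{loc}}(\R^d)$ for some $E\subset\R^d$ with $|E|=1$, and the goal is
\[
\liminf_{k\to\infty}\frac{\Per_{\Gen,\alpha}(Y_k)}{|Y_k|^{(d-1)/d}}\ \ge\ \Per_\tau(E),
\]
after which the continuum Wulff inequality (Remark~\ref{rmk:Wulff-step1}, \cite{Taylor1978Crystalline}) forces the right-hand side $\ge\cWulff(\tau)|E|^{(d-1)/d}=\cWulff(\tau)$. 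To produce this $\Gamma$-liminf from the hypotheses of the Proposition, I would extend $\phi$ to a $\Z^d$-periodic vector-valued measure $\Phi$ on $\R^d$ by distributing mass $\phi(x,x+s)\,s/|s|$ along each directed segment $[x,x+s]$ and mollifying at a scale $\ll r_k^{-1}$. Periodicity and divergence-freeness of $\phi$ render $\Phi$ divergence-free in the distributional sense, while the pointwise edge bound $\phi(e)\le\alpha_s$, combined with the support-function identity $\sum_s\alpha_s(\nu\cdot s)_+=\tau(\nu)$, translates (after phase averaging in the spirit of Lemma~\ref{lem:phase-avg}) into the anisotropic polarity bound $\tau^\circ(\Phi)\le1$. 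Pairing $\Phi$ against $\mathbf{1}_{\tilde V_k}$ via a discrete Gauss--Green identity and passing to the limit by $L^1_{\mathrm{loc}}$ convergence and BV lower semicontinuity then yields the desired inequality.

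The hardest part will be the transition from the discrete per-edge bound $\phi(e)\le\alpha_s$ to the continuum per-normal bound $\tau^\circ(\Phi)\le1$: for non-axis stencil vectors $s$, a single directed edge in direction $s$ contributes flux across hyperplanes of many normals $\nu$, and the support-function identity has to convert this into a tight normal-wise bound without losing constants. The rationality of $\{\alpha_s\}$ is what allows a common finite period for the phase averaging, and an oblique-counting bookkeeping analogous to the one driving the quantitative $\Gamma$-convergence of Theorem~\ref{thm:B-final} controls the residual sampling mismatch. Once this calibration-to-continuum step is secured, the two bounds meet and deliver $h_S=\cWulff(\tau)$ as claimed.
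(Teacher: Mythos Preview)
Your upper bound via Wulff samplers is fine. The lower-bound step has a genuine gap at ``pairing $\Phi$ against $\mathbf{1}_{\tilde V_k}$ via a discrete Gauss--Green identity'': since you construct $\Phi$ to inherit divergence-freeness from $\phi$, this pairing vanishes identically, $\int \mathbf{1}_{\tilde V_k}\,\div\Phi=0$, and equivalently the boundary flux $\int_{\partial^*E}\Phi\cdot\nu\,d\mathcal H^{d-1}=0$. A divergence-free field cannot produce a nontrivial lower bound on the perimeter of an \emph{arbitrary} set; at best it certifies that one specific candidate achieves equality in a dual bound. So the mechanism you describe cannot yield $\liminf\ge\Per_\tau(E)$, and the difficulty you flag (``per-edge bound to per-normal bound $\tau^\circ(\Phi)\le1$'') is not the actual obstruction---the obstruction is that the output of the pairing is zero regardless of how tight that bound is.

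You can in fact salvage the lower bound without touching $\phi$: the support-function hypothesis $\tau(\nu)=\sum_s\alpha_s(\nu\cdot s)_+$ is, under symmetry of $S$ and of the weights, exactly the CI identity \eqref{eq:CI} for the gauge $\tau$ on $V_1=\R^d$, so Proposition~\ref{prop:BV-liminf} already delivers $\liminf |Y_k|^{-(d-1)/d}\Per_{\Gen,\alpha}(Y_k)\ge\Per_\tau(E)$ with no further input. The paper's own sketch takes a different route: max-flow/min-cut duality on a periodic torus, where rationality of the $\alpha_s$ allows integer capacities $Q\alpha_s$ and the given $\phi$ is read as a max-flow certificate that the limiting cut density in direction $\nu$ equals $h(\nu)=\tau(\nu)$, whence the $\Gamma$-limit is $\Per_\tau$. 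There $\phi$ functions as a flow witness for the min-cut value, not as a BV test field to be paired with indicators.
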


\begin{proof}[Proof sketch]
Max-flow/min-cut duality on the periodic torus yields optimal cuts matching $h(\nu)$ in the continuum limit. Because the coefficients are rational, there is a period $Q$ with $Q\alpha_s\in\N$; solve the torus max-flow with unit capacities repeated $Q\alpha_s$ times in each $s$-direction, then average and lift to $\Z^d$. This gives a discrete calibration whose $\Gamma$-limit is the anisotropic perimeter with integrand $\tau$, hence $h_S=\cWulff(\tau)$.
\end{proof}

\section{Applications of the Discrete Wulff Principle and Abelian Base Bounds}
\label{sec:wulff-apps}

\noindent\emph{Standing conventions.}
We import the \emph{definitions and normalizations} from \S\ref{sec:sharp-wulff} (CI~\eqref{eq:CI}, boundary notions, etc.), but the arguments in this section do \emph{not} use the CI framework itself. We assume $r_1:=\rank(\Gamma_{\mathrm{ab}}/\tors)\ge 1$.
As in \S\ref{subsec:prelim-defs} we use \emph{right} multiplication on $\Gamma$: neighbors of $y$ are $\{ys:\ s\in\Gen\}$, and
\[
B_{\Gen}(Y)=\sum_{s\in\Gen}\#\{\,y\in Y:\ ys\notin Y\,\},\qquad \Delta:=|\Gen|.
\]
On the Abelian base $A\simeq\Z^{r_1}$ we use additive notation $u+v$.

\medskip
\noindent\emph{Multiplicity vs.\ direction.}
We distinguish between:
\[
S_{\mathrm{hor}}^\Gamma:=\{\,s\in \Gen:\ \pi_{\mathrm{ab}}(s)\neq e\,\}\subset \Gen
\quad\text{(the \emph{multiset} of horizontal lifts in $\Gamma$),}
\]
and
\[
\underline{S}_{\mathrm{hor}}\subset A\qquad\text{(the \emph{set} of distinct base directions in $A$, symmetric, with $e$ removed).}
\]
Combinatorial edge counts on $\Gamma$ use $S_{\mathrm{hor}}^\Gamma$; the base anisotropy and base edge counts use $\underline{S}_{\mathrm{hor}}$.

\medskip
\noindent\emph{Scope and limitations.}
The results below give \emph{sharp} lower bounds for the \emph{horizontal} component and then use $B_{\Gen}(Y)\ge B^{\Gamma}_{S_{\mathrm{hor}}^\Gamma}(Y)$ to bound the full perimeter. When vertical generators are present, the vertical contribution may be of the same order; our bounds are sharp for the horizontal part and provide a base-component contribution to the total perimeter.

\bigskip
\noindent\textbf{Independent combinatorial lower bound via the Abelian base.}
We record a self-contained lower bound on the Abelianization using the \emph{zonotope} anisotropy on the base; it is logically independent of the CI framework in \S\ref{sec:sharp-wulff}.

\medskip
\noindent\textbf{Abelian base and its anisotropy (directed).}
\emph{Normalization note.} On the base anisotropy we use \emph{unit weights} on each element of the \emph{set} $\underline{S}_{\mathrm{hor}}$ of distinct directions; in particular, both $\pm v$ are present, so the base gauge
\[
\tau_{\mathrm{hor}}(\xi)=\sum_{v\in \underline{S}_{\mathrm{hor}}}|\langle \xi,v\rangle|
\]
is \emph{twice} the split-pair gauge from Remark~\ref{rmk:canonical-phi}. All base constants below use this directed normalization.

Let $A:=\Gamma_{\mathrm{ab}}/\tors\cong\Z^{r_1}$ with projection $\pi_{\mathrm{ab}}:\Gamma\to A$ and induced anisotropy set $\underline{S}_{\mathrm{hor}}\subset A$. Define the zonotope
\[
K_{\mathrm{hor}}:=\sum_{v\in \underline{S}_{\mathrm{hor}}}[-v,v]\subset\R^{r_1},
\]
so that $\tau_{\mathrm{hor}}=h_{K_{\mathrm{hor}}}$; since $\underline{S}_{\mathrm{hor}}$ spans $\R^{r_1}$, $K_{\mathrm{hor}}$ has nonempty interior and its polar unit ball is $W_{\mathrm{hor}}=K_{\mathrm{hor}}$.

\begin{theorem}[Sharp discrete Wulff on the Abelian base]\label{thm:WulffAbelian}
There is a constant
\[
c_{\mathrm{ab}}(\underline{S}_{\mathrm{hor}})\ :=\ \frac{\Per_{\tau_{\mathrm{hor}}}(W_{\mathrm{hor}})}{|W_{\mathrm{hor}}|^{\frac{r_1-1}{r_1}}}
\]
(depending only on $\underline{S}_{\mathrm{hor}}$) such that for every finite $E\subset A\cong\Z^{r_1}$,
\[
B_{\underline{S}_{\mathrm{hor}}}(E)\ \ge\ c_{\mathrm{ab}}(\underline{S}_{\mathrm{hor}})\,|E|^{\frac{r_1-1}{r_1}}.
\]
Moreover, for $E_\rho:=\rho W_{\mathrm{hor}}\cap\Z^{r_1}$ one has $|E_\rho|\asymp \rho^{r_1}$ and
\[
\lim_{\rho\to\infty}\ \frac{B_{\underline{S}_{\mathrm{hor}}}(E_\rho)}{|E_\rho|^{\frac{r_1-1}{r_1}}}\ =\ c_{\mathrm{ab}}(\underline{S}_{\mathrm{hor}}).
\]
\emph{Proof sketch.} Since $\tau_{\mathrm{hor}}=h_{K_{\mathrm{hor}}}$, the polar gauge is the Minkowski functional of $K_{\mathrm{hor}}$, hence $W_{\mathrm{hor}}=K_{\mathrm{hor}}$. If $\underline{S}_{\mathrm{hor}}=\{\pm v_1,\dots,\pm v_M\}$, then $B_{\underline{S}_{\mathrm{hor}}}(E)=\sum_{j=1}^M B_{\{\pm v_j\}}(E)$. A discrete-continuum flux comparison across unit faces yields
\[
B_{\underline{S}_{\mathrm{hor}}}(E)
\ =\ \int_{\partial^*(E+[0,1)^{r_1})}\tau_{\mathrm{hor}}(\nu)\,d\cH^{r_1-1}\ +\ o\left(|E|^{\frac{r_1-1}{r_1}}\right),
\]
see \cite{Taylor1978Crystalline}, \cite[Ch.~7]{Braides2002Gamma}, \cite[Thm.~3.3; Thm.~4.1]{AlicandroCicalese2004}, \cite[Sec.~2.2, Sec.~3]{BraidesCicalese2007}. The continuum Wulff inequality gives the lower bound; sharpness follows by taking $E_\rho=\rho W_{\mathrm{hor}}\cap\Z^{r_1}$ and standard lattice-point asymptotics.
\end{theorem}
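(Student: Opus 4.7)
The plan is to obtain both the lower bound and the asymptotic saturation by passing through the continuum anisotropic Wulff inequality, with the bridge provided by the half-open cube extension $\tilde E := E + [0,1)^{r_1}$ (so $|\tilde E|=|E|$). The structure is: directional decomposition $\to$ discrete-to-crystalline flux comparison $\to$ continuum Wulff $\to$ sharpness via lattice Wulff samplers.

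First I would split the directed perimeter along the symmetric stencil,
\[
B_{\underline{S}_{\mathrm{hor}}}(E)\;=\;\sum_{v\in\underline{S}_{\mathrm{hor}}}\#\{\,y\in E:\ y+v\notin E\,\},
\]
and interpret each summand as a discrete outgoing $v$-flux counted along $v$-columns. Next I would establish the asymptotic flux-comparison identity
\[
B_{\underline{S}_{\mathrm{hor}}}(E)\;=\;\Per_{\tau_{\mathrm{hor}}}(\tilde E)\;+\;o\!\big(|E|^{(r_1-1)/r_1}\big),
\]
valid in the directed normalization fixed in this section, with the zonotope anisotropy coming out via the support-function formula $h_{K_{\mathrm{hor}}}=\tau_{\mathrm{hor}}$ after summing the column-flux contributions over $v\in\underline{S}_{\mathrm{hor}}$. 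Combining this with the continuum anisotropic Wulff inequality $\Per_{\tau_{\mathrm{hor}}}(\tilde E)\ge c_{\mathrm{ab}}(\underline{S}_{\mathrm{hor}})\,|\tilde E|^{(r_1-1)/r_1}$ and the volume identity $|\tilde E|=|E|$ yields the claimed lower bound.

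For sharpness I would use the Wulff samplers $E_\rho:=\rho W_{\mathrm{hor}}\cap\Z^{r_1}$. Lattice-point counting for piecewise-$C^1$ convex bodies (applicable since $W_{\mathrm{hor}}=K_{\mathrm{hor}}$ is a zonotope) gives $|E_\rho|=\rho^{r_1}|W_{\mathrm{hor}}|+O(\rho^{r_1-1})$. For the perimeter, the symmetric difference $\tilde{E_\rho}\triangle(\rho W_{\mathrm{hor}})$ sits in a unit-width tube about $\partial(\rho W_{\mathrm{hor}})$ whose $(r_1-1)$-area is $O(\rho^{r_1-1})$, so
\[
\Per_{\tau_{\mathrm{hor}}}(\tilde{E_\rho})\;=\;\Per_{\tau_{\mathrm{hor}}}(\rho W_{\mathrm{hor}})\;+\;o(\rho^{r_1-1})\;=\;\rho^{r_1-1}\Per_{\tau_{\mathrm{hor}}}(W_{\mathrm{hor}})\;+\;o(\rho^{r_1-1}),
\]
and the flux identity of Step~2 transfers this asymptotic to $B_{\underline{S}_{\mathrm{hor}}}(E_\rho)$. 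Dividing by $|E_\rho|^{(r_1-1)/r_1}$ gives the matching upper bound, hence the stated limit.

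The hard part will be the flux-comparison identity in the second step when $\underline{S}_{\mathrm{hor}}$ contains non-axis vectors. For $v=\pm e_i$ each broken edge corresponds to a unit $(r_1-1)$-facet of $\partial\tilde E$ and the identity is exact on a per-face basis. For oblique $v\in\Z^{r_1}$, however, the discrete count along $v$-columns does \emph{not} match the cube-based flux on a per-edge basis—this is the oblique-counting error highlighted in \Cref{thm:B-final}. To control it, the plan is to invoke the pairwise-interaction $\Gamma$-convergence machinery cited in the proof sketch (Taylor; Alicandro--Cicalese; Braides--Cicalese), which identifies the $\Gamma$-limit of the rescaled discrete perimeter as $\Per_{\tau_{\mathrm{hor}}}$ with exactly the zonotope integrand, and to verify that the oblique-counting defect is $o\!\big(|E|^{(r_1-1)/r_1}\big)$ at the scale of Wulff samplers, where $\partial W_{\mathrm{hor}}$ is piecewise $C^1$ and the $\tau_{\mathrm{hor}}$-integrand is bounded. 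A secondary but routine technicality is the volume-preserving compatibility $|\tilde E|=|E|$ after the extension, which is built into the half-open construction and is what allows the continuum Wulff bound to feed back into a bound on the original lattice volume.
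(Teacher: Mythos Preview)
Your plan is essentially the paper's own proof sketch, step for step: directional decomposition of $B_{\underline{S}_{\mathrm{hor}}}$, the half-open cube extension $\tilde E=E+[0,1)^{r_1}$, the discrete--continuum flux comparison with the same cited $\Gamma$-convergence references (Taylor; Alicandro--Cicalese; Braides--Cicalese) to handle oblique stencil vectors, the continuum anisotropic Wulff inequality for the lower bound, and Wulff samplers $E_\rho=\rho W_{\mathrm{hor}}\cap\Z^{r_1}$ for sharpness.

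One point deserves care (in your plan and in the paper's sketch alike). The flux identity you intend to prove carries an $o\big(|E|^{(r_1-1)/r_1}\big)$ error, and combining an \emph{asymptotic} identity with the continuum Wulff inequality yields only the asymptotic lower bound $\liminf B_{\underline{S}_{\mathrm{hor}}}(E)/|E|^{(r_1-1)/r_1}\ge c_{\mathrm{ab}}$, not the exact inequality ``for every finite $E$'' as stated. For axis directions $v=\pm e_i$ the per-direction flux identity is exact, but for oblique $v\in\Z^{r_1}$ one has the exact relation $B_{\{\pm v\}}(E)=|\tilde E\,\triangle\,(\tilde E+v)|$ together with the BV translate inequality
\[
|\tilde E\,\triangle\,(\tilde E+v)|\ \le\ \int_{\partial^*\tilde E}|\nu\cdot v|\,d\mathcal H^{r_1-1},
\]
which goes the \emph{wrong} way for a pointwise lower bound (and is strict for oblique $v$, as one checks on a cube with $v=e_1+e_2$). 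So the cube-extension route cannot by itself deliver the exact inequality when the stencil contains oblique vectors; your sharpness argument via samplers, by contrast, is unaffected.
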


\begin{remark}[Heisenberg horizontal constant (directed edges)]\label{rem:Heis-constant}
For $\underline{S}_{\rm hor}=\{\pm e_1,\pm e_2\}\subset\mathbb Z^2$,
\[
\tau_{\rm hor}(\xi)=2(|\xi_1|+|\xi_2|),\qquad \tau_{\rm hor}^\circ(\eta)=\tfrac12\|\eta\|_\infty,
\]
so $W=[-2,2]^2$, $\cH^1(\partial W)=16$, $\Per_{\tau_{\rm hor}}(W)=2\cdot16=32$, $|W|=16$, hence $c_{\rm ab}=32/16^{1/2}=8$. (For the \emph{undirected} convention this value is halved.)
\end{remark}

\begin{definition}[Vertex and directed edge boundaries on $\Gamma$]
For $Y\subset \Gamma$ finite and symmetric $\Gen$,
\[
\partial_{\Gen}(Y):=(Y\Gen)\setminus Y,\qquad
B_{\Gen}(Y):=\sum_{s\in \Gen}\#\{\,y\in Y:\ ys\notin Y\,\}.
\]
Thus $|\partial_{\Gen}(Y)|\le B_{\Gen}(Y)\le \Delta\,|\partial_{\Gen}(Y)|$ (cf.\ Remark~\ref{rem:boundary-relations-prelim}).
\end{definition}

\begin{lemma}[Vertex-edge comparison]\label{lem:ve-comparison}
For every finite $Y\subset\Gamma$ and symmetric $\Gen$,
\[
|\partial_{\Gen}(Y)|\ \le\ B_{\Gen}(Y)\ \le\ \Delta\,|\partial_{\Gen}(Y)|.
\]
\end{lemma}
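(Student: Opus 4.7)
The plan is to prove the two inequalities separately by setting up a natural surjection from the directed edge boundary onto the vertex boundary and then bounding its fibers.

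For the lower bound $|\partial_{\Gen}(Y)|\le B_{\Gen}(Y)$, I would define the map
\[
\Phi:\{(y,s)\in Y\times\Gen:\ ys\notin Y\}\ \longrightarrow\ \partial_{\Gen}(Y),\qquad \Phi(y,s):=ys.
\]
The image is well-defined (by definition of $\partial_{\Gen}(Y)=(Y\Gen)\setminus Y$) and $\Phi$ is surjective: each $z\in\partial_{\Gen}(Y)$ is of the form $z=y_0 s_0$ for some $y_0\in Y$, $s_0\in\Gen$, and $z\notin Y$, so $(y_0,s_0)$ lies in the domain and maps to $z$. Surjectivity of $\Phi$ gives $|\partial_{\Gen}(Y)|\le B_{\Gen}(Y)$.

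For the upper bound $B_{\Gen}(Y)\le\Delta\,|\partial_{\Gen}(Y)|$, I would bound the fibers of $\Phi$. Fix $z\in\partial_{\Gen}(Y)$; a preimage $(y,s)$ satisfies $y=zs^{-1}$, so it is determined by $s\in\Gen$. Consequently $|\Phi^{-1}(z)|\le|\Gen|=\Delta$, and summing over $z$ yields
\[
B_{\Gen}(Y)\ =\ \sum_{z\in\partial_{\Gen}(Y)}|\Phi^{-1}(z)|\ \le\ \Delta\,|\partial_{\Gen}(Y)|.
\]

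No obstacle is expected; the statement is purely bookkeeping using symmetry of $\Gen$ only implicitly (the argument works for any finite $\Gen$). The only subtlety worth flagging is that the lemma uses directed edges, so each pair $(y,s)$ is counted once even when $s$ and $s^{-1}$ both appear in $\Gen$; this matches the normalization fixed in Definition~\ref{def:boundaries-prelim} and Remark~\ref{rem:boundary-relations-prelim}, so no additional factor enters.
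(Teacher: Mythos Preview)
Your proof is correct and is essentially the same as the paper's own argument (given explicitly for the parallel Lemma~\ref{lem:vertex-edge-profiles}): define the surjection $\Phi(y,s)=ys$ from directed boundary pairs onto $\partial_{\Gen}(Y)$, use surjectivity for the lower bound, and bound fibers by $\Delta$ for the upper bound. No differences worth noting.
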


\begin{lemma}[Layer-cake identity for integer heights, arbitrary steps]\label{lem:layer-cake-discrete}
Let $\ell:\Z^d\to\{0,1,\dots,m\}$ and set $E_t:=\{\ell>t\}=\{u\in\Z^d:\ \ell(u)\ge \lfloor t\rfloor+1\}$. Then for every $v\in\Z^d$,
\[
\sum_{u\in\Z^d}\big|\ell(u+v)-\ell(u)\big|\ =\ \int_{0}^{m} B_{\{\pm v\}}(E_t)\,dt .
\]
\emph{Proof.} Write $|\ell(u+v)-\ell(u)|=\int_0^m |\1_{\{\ell(u+v)>t\}}-\1_{\{\ell(u)>t\}}|\,dt$ and sum over $u$.
\end{lemma}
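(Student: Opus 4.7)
The plan is to prove the identity by combining a pointwise layer-cake representation of $|\ell(u+v)-\ell(u)|$ with Tonelli on $(u,t)$, and then recognizing the resulting spatial sum as the directed-edge boundary $B_{\{\pm v\}}(E_t)$. This is the discrete analog of the coarea formula $\int|\nabla f|=\int \Per(\{f>t\})\,dt$ for integer-valued heights, where the continuous level $t$ just indexes the finitely many distinct level sets.

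First I would fix a pair $(u,u+v)$ and apply the elementary identity $|a-b|=\int_0^\infty\bigl|\mathbf 1_{\{a>t\}}-\mathbf 1_{\{b>t\}}\bigr|\,dt$ to $a=\ell(u+v),\ b=\ell(u)$. Since $\ell$ takes values in $\{0,1,\dots,m\}$, the integrand is supported in $[0,m)$, so
\[
|\ell(u+v)-\ell(u)|\ =\ \int_0^m \bigl|\mathbf 1_{E_t}(u+v)-\mathbf 1_{E_t}(u)\bigr|\,dt,\qquad E_t=\{\ell>t\}.
\]
The integrand is piecewise constant on each unit interval $[k,k+1)$ (because $\{\ell>t\}=\{\ell\ge k+1\}$ there), so the step is purely arithmetic once one notes that the number of integer thresholds separating $\ell(u)$ from $\ell(u+v)$ is exactly $|\ell(u)-\ell(u+v)|$.

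Next I would sum over $u\in\Z^d$ and swap sum with integral. All summands are nonnegative, so Tonelli applies without any integrability hypothesis (in the applications of interest $\ell=\ell_Y$ for finite $Y\subset\Gamma$ and hence has finite support, making everything a finite swap). This produces
\[
\sum_{u\in\Z^d}\bigl|\ell(u+v)-\ell(u)\bigr|\ =\ \int_0^m \Bigl(\sum_{u\in\Z^d}\bigl|\mathbf 1_{E_t}(u+v)-\mathbf 1_{E_t}(u)\bigr|\Bigr)\,dt.
\]
Finally I would identify the inner spatial sum with $B_{\{\pm v\}}(E_t)$ as in Definition~\ref{def:directed-perimeter}: pairs with $\mathbf 1_{E_t}(u)=1,\ \mathbf 1_{E_t}(u+v)=0$ contribute one missing $+v$-neighbor at $u$, while pairs with $\mathbf 1_{E_t}(u)=0,\ \mathbf 1_{E_t}(u+v)=1$ contribute, after reindexing $w=u+v$, one missing $-v$-neighbor at $w$; these two bijective correspondences partition the directed boundary without overlap.

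The main ``obstacle'' is really only bookkeeping: keeping the directed-edge normalization consistent (neither halved as undirected, nor doubled by counting each unordered edge from both endpoints), so that the inner sum equals $B_{\{\pm v\}}(E_t)$ on the nose rather than up to a factor of $2$. Once the $\pm v$ ledger is set up correctly, the identity is a single application of discrete coarea, exactly as the two-line sketch in the statement indicates.
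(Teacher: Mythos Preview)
Your proposal is correct and follows exactly the same approach as the paper's one-line proof: apply the pointwise layer-cake identity $|\ell(u+v)-\ell(u)|=\int_0^m|\mathbf 1_{\{\ell(u+v)>t\}}-\mathbf 1_{\{\ell(u)>t\}}|\,dt$ and sum over $u$. Your added justification of the Tonelli swap and the explicit identification of the inner sum with $B_{\{\pm v\}}(E_t)$ simply fill in details the paper leaves implicit.
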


\begin{lemma}[Elementary symmetric-difference bound on $\mathbb Z$]\label{lem:1D-symm-diff}
For finite $A,B\subset\mathbb Z$ and $t\in\mathbb Z$, 
\[
|A\Delta(B+t)|\ \ge\ \bigl||A|-|B|\bigr|.
\]
\emph{Proof.} $|A\Delta(B+t)|=|A|+|B|-2|A\cap(B+t)|\ge |A|+|B|-2\min(|A|,|B|)=\bigl||A|-|B|\bigr|$.
\qedhere
\end{lemma}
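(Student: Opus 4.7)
The plan is to give a short, self-contained proof based on inclusion-exclusion for the symmetric difference. Note that the statement uses nothing specific about $\mathbb Z$ beyond the fact that translation by $t\in\mathbb Z$ is a bijection of $\mathbb Z$ onto itself, hence preserves cardinality. I would therefore first reduce to a purely set-theoretic inequality and then close it with a one-line counting argument.

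First I would set $C := B+t$ and record $|C|=|B|$, since $x\mapsto x+t$ is a bijection of $\mathbb Z$. Next I would use the standard identity
\[
|A\Delta C|\ =\ |A\cup C|-|A\cap C|\ =\ |A|+|C|-2\,|A\cap C|,
\]
valid for any two finite sets, which is immediate from inclusion-exclusion. The only remaining input is the trivial upper bound $|A\cap C|\le \min\{|A|,|C|\}$ (since $A\cap C\subseteq A$ and $A\cap C\subseteq C$).

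Combining these two observations yields
\[
|A\Delta(B+t)|\ =\ |A|+|B|-2\,|A\cap(B+t)|\ \ge\ |A|+|B|-2\min\{|A|,|B|\}\ =\ \bigl|\,|A|-|B|\,\bigr|,
\]
which is exactly the asserted inequality.

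There is essentially no obstacle: the argument is a two-line counting estimate and makes no use of the ordering or additive structure of $\mathbb Z$ beyond the cardinality-preserving nature of the translation $B\mapsto B+t$. The only thing worth flagging is that the bound is sharp whenever one set contains a translate of the other (so that $|A\cap(B+t)|=\min\{|A|,|B|\}$), which matches the intuition that a $1$D cross-section carries at least the ``volume imbalance'' as symmetric-difference cost, the shape in which the lemma will be used downstream in the $1$D column analyses.
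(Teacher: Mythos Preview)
Your proof is correct and is essentially identical to the paper's own one-line argument: both use $|A\Delta(B+t)|=|A|+|B|-2|A\cap(B+t)|$ together with $|A\cap(B+t)|\le\min\{|A|,|B|\}$.
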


\begin{lemma}[Projection-sum domination]\label{lem:proj-sum}
Let $\phi:T\to U$ be a surjection between finite sets and let $a_{u,t}\ge 0$ for $t\in T$, $u=\phi(t)$. Then
\[
\sum_{t\in T} a_{\phi(t),t}\ \ge\ \sum_{u\in U}\inf_{t\in\phi^{-1}(u)} a_{u,t}\ \ge\ \sum_{u\in U} b_u
\]
for any choice of nonnegative $b_u\le a_{u,t}$ for all $t\in\phi^{-1}(u)$.
\end{lemma}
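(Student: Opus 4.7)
The statement is a bookkeeping identity plus two elementary monotonicity steps; the natural plan is to partition $T$ along the fibers of $\phi$ and then replace each fiber sum by its infimum and the infimum by the lower bound $b_u$.

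First I would exploit the surjectivity of $\phi$ to write $T=\bigsqcup_{u\in U}\phi^{-1}(u)$ as a disjoint union of nonempty fibers, rearranging the left-hand sum as
\[
\sum_{t\in T} a_{\phi(t),t}\ =\ \sum_{u\in U}\ \sum_{t\in\phi^{-1}(u)} a_{u,t}.
\]
Since all $a_{u,t}\ge 0$ and each fiber $\phi^{-1}(u)$ is nonempty, the inner sum dominates any single summand, hence the fiber infimum:
\[
\sum_{t\in\phi^{-1}(u)} a_{u,t}\ \ge\ \inf_{t\in\phi^{-1}(u)} a_{u,t}.
\]
Summing over $u\in U$ yields the first of the two claimed inequalities. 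For the second, fix any choice $b_u\ge 0$ with $b_u\le a_{u,t}$ for every $t\in\phi^{-1}(u)$; then $b_u$ is a lower bound for the entire fiber, so $b_u\le \inf_{t\in\phi^{-1}(u)} a_{u,t}$, and summing over $u$ concludes the argument.

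\textbf{Main obstacle.} There is essentially none; the only subtle point worth flagging is that surjectivity is used precisely to guarantee that no fiber is empty, so that $\inf_{t\in\phi^{-1}(u)} a_{u,t}$ is taken over a nonempty set and the first step does not introduce a spurious $+\infty$. In the applications later in this section, $\phi$ will be induced by the Abelianization projection $\pi_{\mathrm{ab}}$ restricted to edges of $\Gamma$ and $U$ will be the base footprint, with the $b_u$ supplied by the base flux bounds coming from Theorem~\ref{thm:WulffAbelian}; the role of this lemma is to package the fiberwise comparison in a form that can be invoked uniformly and without re-deriving the bookkeeping in each case.
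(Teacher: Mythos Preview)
Your proof is correct and is exactly the natural argument: partition $T$ into nonempty fibers, bound each fiber sum below by its infimum (using nonnegativity), then by any common lower bound $b_u$. The paper states this lemma without proof, treating it as self-evident bookkeeping; your write-up matches the intended reasoning.
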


\subsection{Track C: Central $\mathbb Z$-extension}
\label{subsec:trackC}

\begin{standing}
Assume a central extension
\[
1\ \longrightarrow\ \langle c\rangle\cong\mathbb Z\ \longrightarrow\ \Gamma\ \overset{\pi_{\mathrm{ab}}}{\longrightarrow}\ A\ \longrightarrow\ 1,
\]
and fix a set-theoretic section $\sigma:A\to\Gamma$. Thus every $g\in\Gamma$ can be written uniquely as $g=\sigma(u)\,c^h$ with $(u,h)\in A\times\mathbb Z$. All dependence on the choice of $\sigma$ appears via explicit offsets; the final inequalities are section-independent. \emph{This track covers key examples such as the integer Heisenberg group $\mathbb H(\mathbb Z)$ and, more generally, step-two nilpotent lattices with center isomorphic to $\mathbb Z$.}
\end{standing}

\begin{definition}[Column symmetrization on $A\times\mathbb Z$]
Identify $\Gamma$ setwise with $A\times\mathbb Z$ via $\sigma$. For $Y\subset A\times\mathbb Z$, write the column $I_u:=\{h\in\mathbb Z:(u,h)\in Y\}$ and its height $\ell(u):=|I_u|$. Define $Y^\flat:=\{(u,h): 1\le h\le \ell(u)\}$. 
\end{definition}

\noindent\textbf{Horizontal subset of generators.}
Recall
\[
S_{\mathrm{hor}}^\Gamma=\{\,s\in \Gen:\ \pi_{\mathrm{ab}}(s)\neq e\,\},\qquad
\underline{S}_{\mathrm{hor}}\subset A\ \text{ the set of distinct base directions.}
\]
Several distinct $s\in S_{\mathrm{hor}}^\Gamma$ may share the same $v=\pi_{\mathrm{ab}}(s)\in \underline{S}_{\mathrm{hor}}$; we only use this through nonnegativity of summands and Lemma \ref{lem:proj-sum}.

\begin{lemma}[Horizontal edge count with explicit offsets]\label{lem:horiz-offset-count}
Let $\omega:A\times A\to\mathbb Z$ be the cocycle determined by $\sigma$ via
\[
\sigma(u)\,\sigma(v)\ =\ \sigma(u+v)\,c^{\omega(u,v)}\qquad(u,v\in A).
\]
For each $s\in S_{\mathrm{hor}}^\Gamma$ write $v:=\pi_{\mathrm{ab}}(s)\in \underline{S}_{\mathrm{hor}}$ and $s=\sigma(v)\,c^{\kappa(s)}$ with $\kappa(s)\in\mathbb Z$. Define
\[
\theta_s(u)\ :=\ \kappa(s)+\omega(u,v)\qquad(u\in A).
\]
Then, for every finite $Y\subset\Gamma$,
\[
\sum_{g\in\Gamma}\big|\mathbf 1_Y(gs)-\mathbf 1_Y(g)\big|
\ =\ \sum_{u\in A}\big|\,I_u\ \Delta\ (I_{u+v}+\theta_s(u))\,\big|,
\]
where $I_{u+v}+\theta_s(u):=\{h+\theta_s(u):h\in I_{u+v}\}$.
\end{lemma}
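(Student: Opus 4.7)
The plan is to reduce the sum over $g\in\Gamma$ to a sum over base points $u\in A$ of one-dimensional symmetric-difference counts on $\mathbb Z$, with the cocycle $\omega$ supplying the per-base twist $\theta_s(u)$. First I would parameterize every $g\in\Gamma$ uniquely via the section as $g=\sigma(u)c^h$ with $(u,h)\in A\times\mathbb Z$, so that $g\in Y$ iff $h\in I_u$; because $Y$ is finite, only finitely many columns $I_u$ are nonempty and all inner sums below are finite.

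The key step is the coordinate-level computation of $gs$. Writing $s=\sigma(v)c^{\kappa(s)}$ with $v=\pi_{\mathrm{ab}}(s)$ and using (a) the centrality of $c$ to commute $c^h$ past $\sigma(v)$ and (b) the defining cocycle relation $\sigma(u)\sigma(v)=\sigma(u+v)c^{\omega(u,v)}$, one obtains
\[
gs=\sigma(u)c^h\sigma(v)c^{\kappa(s)}=\sigma(u)\sigma(v)c^{h+\kappa(s)}=\sigma(u+v)c^{h+\omega(u,v)+\kappa(s)}=\sigma(u+v)c^{h+\theta_s(u)}.
\]
Hence $gs\in Y$ iff $h+\theta_s(u)\in I_{u+v}$. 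I would then Fubinize by grouping the sum over $g$ into a double sum over $(u,h)$: for each fixed $u$ the inner $h$-sum becomes
\[
\sum_{h\in\mathbb Z}\bigl|\mathbf 1_{I_{u+v}}(h+\theta_s(u))-\mathbf 1_{I_u}(h)\bigr|,
\]
and a single translation of the summation variable identifies this with $\bigl|I_u\,\Delta\,(I_{u+v}+\theta_s(u))\bigr|$ in the Minkowski-shift notation of the lemma. Summing over $u\in A$ gives the displayed identity.

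The main obstacle is purely bookkeeping: one has to track the sign of the shift consistently (the sign convention for the cocycle $\omega$, the decomposition $s=\sigma(v)c^{\kappa(s)}$ versus $c^{\kappa(s)}\sigma(v)$, and the direction of the $\mathbb Z$-translation), and to fold the section-dependent offsets $\kappa(s)$ and $\omega(u,v)$ into the single combined offset $\theta_s(u)$, whose residual section-dependence cancels once wrapped in the absolute-value symmetric-difference count. Conceptually, the reduction to a per-column one-dimensional problem depends essentially on the centrality of $c$: without it, conjugation by $\sigma(v)$ would twist the fibre coordinate $h$ by a nontrivial $A$-action on $\mathbb Z$, and the inner sum would no longer collapse to a symmetric difference of two integer-shifted subsets of $\mathbb Z$. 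Once centrality is invoked, nothing beyond this accounting is needed.
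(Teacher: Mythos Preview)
The paper states this lemma without proof; your direct computation is exactly the intended (and essentially only) argument: parameterize $g=\sigma(u)c^h$, use centrality of $c$ to commute $c^h$ past $\sigma(v)$, apply the cocycle identity, and Fubinize over $(u,h)$.

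One bookkeeping caveat you yourself anticipated: from your (correct) computation $gs=\sigma(u+v)c^{h+\theta_s(u)}$ one gets $gs\in Y\iff h+\theta_s(u)\in I_{u+v}\iff h\in I_{u+v}-\theta_s(u)$, so the inner sum evaluates to $|I_u\,\triangle\,(I_{u+v}-\theta_s(u))|$, with a minus sign rather than the plus sign displayed in the lemma. Your sentence ``a single translation of the summation variable identifies this with $|I_u\,\Delta\,(I_{u+v}+\theta_s(u))|$'' is therefore off by a sign. This is harmless for the application (the next proposition only uses the bound $|I_u\,\triangle\,(J+t)|\ge\bigl||I_u|-|J|\bigr|$, which is insensitive to the sign of $t$), and is precisely the sign-convention issue you flagged; but you should either correct the final identification to $-\theta_s(u)$ or note that the lemma's displayed sign is a typo.
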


\begin{proposition}[Column symmetrization with offsets does not increase the horizontal boundary]\label{prop:symmetrization-abelian-offset}
Let $Y\subset\Gamma$ be finite and $Y^\flat:=\{(u,h):1\le h\le \ell(u)\}$ its column symmetrization. Then
\[
\begin{aligned}
B_{S_{\mathrm{hor}}^\Gamma}(Y)
&\ =\ \sum_{s\in S_{\mathrm{hor}}^\Gamma}\ \sum_{u\in A}\big|I_u\Delta(I_{u+\pi_{\mathrm{ab}}(s)}+\theta_s(u))\big|\\
&\ \ge\ \sum_{s\in S_{\mathrm{hor}}^\Gamma}\ \sum_{u\in A}\big||I_u|-|I_{u+\pi_{\mathrm{ab}}(s)}|\big|
\qquad\text{(by Lemma \ref{lem:1D-symm-diff})}\\
&\ \ge\ \sum_{v\in \underline{S}_{\mathrm{hor}}}\ \sum_{u\in A}\big||I_u|-|I_{u+v}|\big|
\qquad\text{(by Lemma \ref{lem:proj-sum})}\\
&\ =\ \sum_{v\in \underline{S}_{\mathrm{hor}}}\ \sum_{u\in A}\big|\,I_u\Delta I_{u+v}\,\big|
\ =\ B_{\underline{S}_{\mathrm{hor}}}(Y^\flat),
\end{aligned}
\]
which is the definition of $B_{\underline{S}_{\mathrm{hor}}}(Y^\flat)$, since in $Y^\flat$ each column is the contiguous interval $[1,\ell(u)]$.
\end{proposition}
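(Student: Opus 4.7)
The plan is to verify the displayed chain of (in)equalities in order, with each step either invoking a previously stated lemma or unwinding a definition on the symmetrized columns $I_u^\flat=[1,\ell(u)]$. First I would recognize that $B_{S_{\mathrm{hor}}^\Gamma}(Y)$ decomposes as a sum over horizontal generators, and for each $s\in S_{\mathrm{hor}}^\Gamma$ apply Lemma~\ref{lem:horiz-offset-count} with $v:=\pi_{\mathrm{ab}}(s)$ and offset $\theta_s(u)=\kappa(s)+\omega(u,v)$ read off from the chosen section $\sigma$ and the cocycle $\omega$. This produces the first line: the group-theoretic edge count becomes a base-indexed sum of translated symmetric differences of $1$D integer sets $|I_u\Delta(I_{u+v}+\theta_s(u))|$.

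For the first inequality I would apply Lemma~\ref{lem:1D-symm-diff} pointwise in $u$ with $A=I_u$, $B=I_{u+v}$, $t=\theta_s(u)$, throwing away the cocycle-induced offsets at the cost of replacing each symmetric difference by the height gap $\bigl||I_u|-|I_{u+v}|\bigr|=|\ell(u)-\ell(u+v)|$; summing over $s$ and $u$ yields the second line. For the third line I would observe that this summand depends on $s$ only through $v=\pi_{\mathrm{ab}}(s)$. Since $\phi:S_{\mathrm{hor}}^\Gamma\to\underline{S}_{\mathrm{hor}}$, $s\mapsto\pi_{\mathrm{ab}}(s)$, is surjective by definition of $\underline{S}_{\mathrm{hor}}$, Lemma~\ref{lem:proj-sum} applies with $b_v:=\bigl||I_u|-|I_{u+v}|\bigr|$ (a valid lower bound on each fiber since the summand is constant along fibers of $\phi$), collapsing the multiset sum over $S_{\mathrm{hor}}^\Gamma$ to the set-indexed sum over $\underline{S}_{\mathrm{hor}}$, with every direction $v$ receiving at least one contribution.

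Finally, for the last two equalities I would unwind $Y^\flat$ explicitly. Each column of $Y^\flat$ is the contiguous interval $[1,\ell(u)]$, so $I_u^\flat\Delta I_{u+v}^\flat=(\min(\ell(u),\ell(u+v)),\max(\ell(u),\ell(u+v))]$ has exactly $|\ell(u)-\ell(u+v)|$ elements, giving the penultimate equality. The last equality then reinterprets the remaining double sum as the directed horizontal edge boundary $B_{\underline{S}_{\mathrm{hor}}}(Y^\flat)$, either by reapplying Lemma~\ref{lem:horiz-offset-count} in the base with trivial cocycle ($\omega\equiv 0$ and $\kappa\equiv 0$, so all offsets vanish) or by direct unwinding of the directed edge boundary on $A\times\mathbb Z$ when columns are flush at height $1$.

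I do not anticipate substantive obstacles; the proof is a bookkeeping rearrangement. The main care-points are the multiset-to-set reduction via Lemma~\ref{lem:proj-sum}, which is the one place information is irreversibly discarded (extra lifts of a single base direction are the sacrifice for a clean set-indexed lower bound), and the precise tracking of the offsets $\theta_s$: they must be carried \emph{exactly} through the first equality and then disappear in one stroke at the Lemma~\ref{lem:1D-symm-diff} step. The symmetrization $Y\rightsquigarrow Y^\flat$ is tailored so that on the last line all offsets are trivial and the chain is saturated at $Y^\flat$, which is what makes the whole estimate identify column-symmetrization as a true monotonicity for the horizontal perimeter.
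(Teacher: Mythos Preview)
Your proposal is correct and follows exactly the paper's approach: the proposition's proof is essentially embedded in the annotated chain of (in)equalities, and you have correctly identified each step—Lemma~\ref{lem:horiz-offset-count} for the first equality, Lemma~\ref{lem:1D-symm-diff} pointwise for the first inequality, Lemma~\ref{lem:proj-sum} for the multiset-to-set collapse, and direct inspection of the contiguous columns $[1,\ell(u)]$ for the final equalities.
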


\begin{corollary}[Direct-product case]\label{cor:abelian-no-offset}
If $\Gamma=A\times\mathbb Z$ (so $\omega\equiv 0$ and $\theta_s\equiv \kappa(s)$), then for all finite $Y\subset\Gamma$,
\[
B_{\underline{S}_{\mathrm{hor}}}(Y^\flat)\ \le\ B_{S_{\mathrm{hor}}^\Gamma}(Y).
\]
\end{corollary}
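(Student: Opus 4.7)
The plan is to recognize Corollary~\ref{cor:abelian-no-offset} as an immediate specialization of the inequality chain already established in Proposition~\ref{prop:symmetrization-abelian-offset}: the desired bound $B_{\underline{S}_{\mathrm{hor}}}(Y^\flat)\le B_{S_{\mathrm{hor}}^\Gamma}(Y)$ is precisely the first and last entries of that chain, and the sole role of the direct-product hypothesis is to trivialize the cocycle, $\omega\equiv 0$, so that Lemma~\ref{lem:horiz-offset-count} yields offsets $\theta_s(u)=\kappa(s)$ that are independent of $u$.

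First I would invoke Lemma~\ref{lem:horiz-offset-count} with $\omega\equiv 0$ to rewrite $B_{S_{\mathrm{hor}}^\Gamma}(Y)$ as a sum of column-wise symmetric differences with \emph{constant} shifts $\kappa(s)$. The three steps of the proposition then specialize cleanly: first, Lemma~\ref{lem:1D-symm-diff} absorbs the shifts, reducing each summand to the bare height gap $\big||I_u|-|I_{u+\pi_{\mathrm{ab}}(s)}|\big|$; second, Lemma~\ref{lem:proj-sum} collapses the sum over the multiset $S_{\mathrm{hor}}^\Gamma$ to a sum over the set $\underline{S}_{\mathrm{hor}}$ of distinct base directions, retaining one nonnegative summand per direction; third, since the columns of $Y^\flat$ are by construction the contiguous intervals $[1,\ell(u)]$, the residual sum $\sum_u\big||I_u|-|I_{u+v}|\big|$ equals $\sum_u |I_u^\flat\Delta I_{u+v}^\flat|$, which is by definition $B_{\underline{S}_{\mathrm{hor}}}(Y^\flat)$.

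I do not anticipate any serious obstacle, since the proof is essentially a bookkeeping exercise; the content is fully absorbed into Proposition~\ref{prop:symmetrization-abelian-offset}. The one point deserving an explicit sentence in the final write-up is that the multiset-to-set reduction in the second step is an \emph{honest} lower bound precisely because all discarded terms are nonnegative, and that the vanishing of $\omega$ is exactly what allows the shifts $\kappa(s)$ to be removed generator-by-generator without any global cancellation argument. If one wished to promote the corollary to an equality rather than an inequality, the obstruction would be multiple lifts of a single base direction and the non-uniqueness of the minimizing column configuration, but no such upgrade is required for the downstream applications.
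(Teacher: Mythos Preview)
Your proposal is correct and takes essentially the same approach as the paper: the corollary is stated without proof in the paper precisely because the inequality $B_{\underline{S}_{\mathrm{hor}}}(Y^\flat)\le B_{S_{\mathrm{hor}}^\Gamma}(Y)$ is already the conclusion of Proposition~\ref{prop:symmetrization-abelian-offset} in full generality (arbitrary $u$-dependent offsets $\theta_s(u)$), so the direct-product case $\omega\equiv 0$ is an immediate specialization. Your walkthrough of the three steps is accurate, though note that the constancy of $\theta_s$ is not actually needed for the inequality to hold---the proposition already absorbs arbitrary shifts via Lemma~\ref{lem:1D-symm-diff}---so the corollary is even more trivially a special case than your framing suggests.
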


\begin{proposition}[Lower bound by slicing and symmetrization]\label{prop:slice-lower}
For every finite $Y\subset\Gamma$,
\[
|\partial_{\Gen}{Y}|\ \ge\ \frac{1}{\Delta}\,B_{\Gen}(Y)
\ \ge\ \frac{1}{\Delta}\,B_{S_{\mathrm{hor}}^\Gamma}(Y)
\ \ge\ \frac{1}{\Delta}\,B_{\underline{S}_{\mathrm{hor}}}(Y^\flat).
\]
Writing the column heights as $\ell:A\to\N\cup\{0\}$, one has
\[
B_{\underline{S}_{\mathrm{hor}}}(Y^\flat)
\ =\ \sum_{v\in \underline{S}_{\mathrm{hor}}}\ \sum_{u\in A} \bigl|\Delta_v\ell(u)\bigr|
\ =\ \int_0^{\infty} B_{\underline{S}_{\mathrm{hor}}}\bigl(\{\ell>t\}\bigr)\,dt,
\]
where the integrand vanishes for $t>\max\ell$. Therefore, by \Cref{thm:WulffAbelian},
\[
|\partial_{\Gen}{Y}|\ \ge\ \frac{c_{\mathrm{ab}}(\underline{S}_{\mathrm{hor}})}{\Delta}\int_0^\infty \big|\{\ell>t\}\big|^{\frac{r_1-1}{r_1}}\,dt .
\]
\end{proposition}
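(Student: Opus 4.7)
The plan is to chain three inequalities followed by an identity and an integrated application of the base Wulff bound; each link is essentially immediate once the right earlier ingredient is cited, so the real content of the proposition is assembling them in the correct order. First, the leftmost step $|\partial_{\Gen}Y|\ge \Delta^{-1}B_{\Gen}(Y)$ is simply a rewriting of the vertex-edge comparison $B_{\Gen}(Y)\le \Delta\,|\partial_{\Gen}(Y)|$ from Lemma~\ref{lem:ve-comparison}. The middle step $B_{\Gen}(Y)\ge B_{S_{\mathrm{hor}}^\Gamma}(Y)$ follows because $S_{\mathrm{hor}}^\Gamma\subset\Gen$ and, by Definition~\ref{def:directed-perimeter}, the directed boundary is a sum of nonnegative per-generator contributions; restricting the sum to horizontal generators only can decrease but not increase it. The third step $B_{S_{\mathrm{hor}}^\Gamma}(Y)\ge B_{\underline{S}_{\mathrm{hor}}}(Y^\flat)$ is exactly the conclusion of Proposition~\ref{prop:symmetrization-abelian-offset}: the cocycle offsets $\theta_s(u)$ are killed via Lemma~\ref{lem:1D-symm-diff} ($|I_u\Delta(I_{u+v}+\theta_s(u))|\ge ||I_u|-|I_{u+v}||$), and the multiset $S_{\mathrm{hor}}^\Gamma$ is then collapsed onto the distinct direction set $\underline{S}_{\mathrm{hor}}$ via Lemma~\ref{lem:proj-sum}, after which the symmetric differences of the contiguous columns of $Y^\flat$ reassemble $B_{\underline{S}_{\mathrm{hor}}}(Y^\flat)$.

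Next I would establish the two forms of the identity. In $Y^\flat$ every column is the contiguous interval $[1,\ell(u)]$, so for each $v\in\underline{S}_{\mathrm{hor}}$ the columns $I_u^\flat$ and $I_{u+v}^\flat$ are nested and $|I_u^\flat\Delta I_{u+v}^\flat|=|\ell(u)-\ell(u+v)|=|\Delta_v\ell(u)|$; summing over $u\in A$ and $v\in\underline{S}_{\mathrm{hor}}$ gives the first equality. For the second, apply the discrete layer-cake identity Lemma~\ref{lem:layer-cake-discrete} to $\ell:A\to\N\cup\{0\}$ separately for each $v$: $\sum_u|\Delta_v\ell(u)|=\int_0^\infty B_{\{\pm v\}}(\{\ell>t\})\,dt$, then interchange the (finite) sum over $v$ with the integral to recognize the integrand as $B_{\underline{S}_{\mathrm{hor}}}(\{\ell>t\})$. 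Note that $\ell$ is finitely supported (since $Y$ is finite) and bounded, so the integrand vanishes for $t>\max\ell$ and the integral is really a finite sum over $t\in\{0,1,\dots,\max\ell-1\}$.

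Finally I would conclude by applying Theorem~\ref{thm:WulffAbelian} pointwise in $t$: for every $t\ge 0$ the superlevel set $E_t:=\{\ell>t\}$ is a finite subset of $A\cong\Z^{r_1}$, hence $B_{\underline{S}_{\mathrm{hor}}}(E_t)\ge c_{\mathrm{ab}}(\underline{S}_{\mathrm{hor}})\,|E_t|^{(r_1-1)/r_1}$. Integrating this inequality in $t\in[0,\infty)$ and combining with the chain above yields the displayed lower bound. The only conceptually nontrivial link is the third inequality, i.e.\ column symmetrization with cocycle offsets, which is \emph{already} carried out in Proposition~\ref{prop:symmetrization-abelian-offset}; the main potential obstacle, had it not been settled there, would have been verifying that the offset $\theta_s(u)$ never forces the symmetric difference below the height gap $||I_u|-|I_{u+v}||$. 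Everything else reduces to elementary bookkeeping: a degree comparison, an inclusion of generating subsets, a contiguous-interval identity, and a layer-cake rewriting followed by integration of the base Wulff bound.
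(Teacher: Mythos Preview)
Your proposal is correct and follows exactly the route the paper intends: the proposition is stated without a separate proof environment because it is meant to be read as an assembly of the already-proved ingredients (Lemma~\ref{lem:ve-comparison}, the subset inclusion $S_{\mathrm{hor}}^\Gamma\subset\Gen$, Proposition~\ref{prop:symmetrization-abelian-offset}, Lemma~\ref{lem:layer-cake-discrete}, and Theorem~\ref{thm:WulffAbelian}), and you have identified each link and its justification correctly.
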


\subsection{Track F: Finite-kernel projection}
\label{subsec:trackF}

\begin{standing}
Assume $\pi:\Gamma\to \mathbb{Z}^d$ is a surjective homomorphism with finite kernel $K$ ($|K|=m$). Let $V\subset\Z^d$ be a finite symmetric generating set contained in $\pi(\Gen)$ and choose $S_0\subset \Gen$ mapping bijectively to $V$.
\end{standing}

\begin{lemma}[Fiber-permutation lower bound (finite kernel)]\label{lem:fiber-permutation-fk}
For $Y\subset\Gamma$ finite and $u\in\mathbb{Z}^d$, set $\ell(u):=|Y\cap \pi^{-1}(u)|\in\{0,1,\dots,m\}$. Then for every $s\in S_0$ with $\pi(s)=v$ and every $u$,
\[
\sum_{g\in \pi^{-1}(u)} \big|\mathbf{1}_Y(gs)-\mathbf{1}_Y(g)\big|
\ \ge\ \big|\ell(u+v)-\ell(u)\big|,
\]
hence, summing in $u$ and $s\in S_0$,
\[
\sum_{s\in S_0}\ \sum_{g\in\Gamma}\big|\mathbf{1}_Y(gs)-\mathbf{1}_Y(g)\big|
\ \ge\ \sum_{v\in V} \sum_{u\in\mathbb{Z}^d}\big|\Delta_{v}\ell(u)\big|.
\]
\end{lemma}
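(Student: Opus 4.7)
The plan is to reduce the per-fiber count to the elementary $1$-dimensional symmetric-difference bound of Lemma~\ref{lem:1D-symm-diff}, exploiting the fact that, since $\pi$ is a homomorphism, right multiplication by $s$ is a bijection between the fibers $\pi^{-1}(u)$ and $\pi^{-1}(u+v)$ with $v=\pi(s)$. The result then follows by summing over fibers and then over $s\in S_0$, using the bijection $\pi|_{S_0}:S_0\to V$ to reindex by directions $v\in V$.

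First, fix $s\in S_0$ with $\pi(s)=v$ and $u\in\mathbb Z^d$. The map $R_s:g\mapsto gs$ is a bijection from $\pi^{-1}(u)$ onto $\pi^{-1}(u+v)$. Writing $A_u:=Y\cap\pi^{-1}(u)$ and $B_{u+v}:=Y\cap\pi^{-1}(u+v)$, so that $|A_u|=\ell(u)$ and $|B_{u+v}|=\ell(u+v)$, I have for each $g\in\pi^{-1}(u)$ that $\mathbf 1_Y(g)=\mathbf 1_{A_u}(g)$ and $\mathbf 1_Y(gs)=\mathbf 1_{B_{u+v}}(R_s(g))$. Changing variables $h=R_s(g)$ in the sum identifies the per-fiber contribution with a symmetric-difference count,
\[
\sum_{g\in \pi^{-1}(u)}\big|\mathbf 1_Y(gs)-\mathbf 1_Y(g)\big|\ =\ \big|R_s(A_u)\ \triangle\ B_{u+v}\big|.
\]
Since $|R_s(A_u)|=\ell(u)$, the inequality $|X\triangle Y|\ge\bigl||X|-|Y|\bigr|$ (which is the content of Lemma~\ref{lem:1D-symm-diff} and follows in any ambient set from $|X\triangle Y|=|X|+|Y|-2|X\cap Y|\ge\bigl||X|-|Y|\bigr|$) gives
\[
\big|R_s(A_u)\ \triangle\ B_{u+v}\big|\ \ge\ \bigl|\ell(u+v)-\ell(u)\bigr|,
\]
which is the per-fiber inequality stated in the lemma.

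Summing the per-fiber bound over $u\in\mathbb Z^d$ yields the full contribution of a single generator $s$; summing over $s\in S_0$ and using the bijection $\pi|_{S_0}:S_0\to V$ to reindex by $v$ produces the desired double sum
\[
\sum_{s\in S_0}\sum_{g\in\Gamma}\big|\mathbf 1_Y(gs)-\mathbf 1_Y(g)\big|\ \ge\ \sum_{v\in V}\sum_{u\in\mathbb Z^d}\big|\Delta_v\ell(u)\big|.
\]
There is no substantive obstacle; the lemma is essentially bookkeeping. Two minor points are worth noting. First, the argument uses \emph{no} information about the kernel $K$ beyond its finiteness, not even normality: only the bijection $R_s:\pi^{-1}(u)\to\pi^{-1}(u+v)$ is invoked, so the same reasoning would yield a per-fiber lower bound for arbitrary kernels. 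Second, the directed-edge normalization is essential when summing over $s\in S_0$: since $\pi|_{S_0}:S_0\to V$ is a bijection and $V$ is symmetric, both $v$ and $-v$ contribute to the right-hand side, which is consistent with the conventions for $B_{\Gen}(\cdot)$ adopted in this section.
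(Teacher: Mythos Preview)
Your proof is correct. The paper states this lemma without proof (it is treated as routine bookkeeping), and your argument via the bijection $R_s:\pi^{-1}(u)\to\pi^{-1}(u+v)$ together with the elementary bound $|X\triangle Y|\ge\bigl||X|-|Y|\bigr|$ is exactly the intended one; your observation that normality of the kernel is not needed here is also accurate.
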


\begin{proposition}[Finite-kernel slicing bound]\label{prop:finite-kernel-slicing}
With the standing assumptions of Subsection \ref{subsec:trackF}, for every finite $Y\subset\Gamma$,
\[
|\partial_{\Gen}{Y}|\ \ge\ \frac{1}{\Delta}\,B_{\Gen}(Y)
\ \ge\ \frac{1}{\Delta}\sum_{s\in S_0}\ \sum_{g\in\Gamma}\big|\mathbf{1}_Y(gs)-\mathbf{1}_Y(g)\big|.
\]
Therefore, using Lemma \ref{lem:fiber-permutation-fk} and Lemma \ref{lem:layer-cake-discrete},
\[
|\partial_{\Gen}{Y}|
\ \ge\ \frac{1}{\Delta}\int_0^{m} B_{V}\bigl(\{\ell>t\}\bigr)\,dt
\ \ge\ \frac{c_{\mathrm{ab}}(V)}{\Delta} \int_0^{m} \big|\{\ell>t\}\big|^{\frac{d-1}{d}}\,dt,
\]
where $c_{\mathrm{ab}}(V)$ is the Wulff constant associated with the anisotropy $\tau_V(\xi)=\sum_{v\in V}|\langle\xi,v\rangle|$ on $\R^d$.
\end{proposition}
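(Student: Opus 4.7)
The argument is a short concatenation of four already-proved ingredients---the vertex-edge comparison (Lemma~\ref{lem:ve-comparison}), the fiber-permutation lower bound (Lemma~\ref{lem:fiber-permutation-fk}), the discrete layer-cake identity (Lemma~\ref{lem:layer-cake-discrete}), and the Abelian discrete Wulff bound (Theorem~\ref{thm:WulffAbelian})---assembled in the order in which they appear in the statement. No new estimate is required; the task is to execute the chain and to keep the directed-edge bookkeeping consistent across the conversion between $B_{\Gen}$ (one-sided missing-neighbor count) and $\sum_g|\mathbf 1_Y(gs)-\mathbf 1_Y(g)|$ (two-sided $L^1$-difference).

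First, Lemma~\ref{lem:ve-comparison} gives $|\partial_\Gen(Y)|\ge \Delta^{-1}B_\Gen(Y)$ without further comment. For the second inequality, $B_\Gen(Y)\ge \sum_{s\in S_0}\sum_g|\mathbf 1_Y(gs)-\mathbf 1_Y(g)|$, I would expand, for each fixed $s$,
\[
\sum_{g\in\Gamma}\bigl|\mathbf 1_Y(gs)-\mathbf 1_Y(g)\bigr|\ =\ \#\{y\in Y:\ ys\notin Y\}\ +\ \#\{y\in Y:\ ys^{-1}\notin Y\}
\]
by splitting on which endpoint of the directed edge lies in $Y$ and performing the change of variables $g\mapsto gs$ on the ``incoming'' term. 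Summing over $s\in S_0$ and recollecting gives $\sum_{s\in S_0\cup S_0^{-1}}\#\{y\in Y:\ ys\notin Y\}\le B_\Gen(Y)$, where the inequality uses symmetry of $\Gen$ and the fact that the section $S_0$ can be chosen (by picking lifts of pairs $\{v,-v\}\subset V$ that are not group inverses of each other) so that $S_0\cap S_0^{-1}=\varnothing$.

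Next I would push the bound through to the Abelian base. Lemma~\ref{lem:fiber-permutation-fk}, applied generator-by-generator with $\pi(s)=v$, yields
\[
\sum_{s\in S_0}\sum_{g\in\Gamma}\bigl|\mathbf 1_Y(gs)-\mathbf 1_Y(g)\bigr|\ \ge\ \sum_{v\in V}\sum_{u\in\mathbb Z^d}\bigl|\Delta_v\ell(u)\bigr|,
\]
and Lemma~\ref{lem:layer-cake-discrete} rewrites each base difference as the integral $\int_0^m B_{\{\pm v\}}(E_t)\,dt$ with $E_t:=\{\ell>t\}$; summing in $v\in V$ consolidates the integrand into $B_V(E_t)$ by symmetry of $V$. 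Finally, Theorem~\ref{thm:WulffAbelian} applied \emph{pointwise in $t$} gives $B_V(E_t)\ge c_{\mathrm{ab}}(V)\,|E_t|^{(d-1)/d}$ for almost every $t\in(0,m)$, and integrating delivers the displayed conclusion. The only place where real care is needed is the normalization bookkeeping in the first step (the factor-of-two discrepancy between the two-sided $L^1$ difference and the one-sided missing-neighbor count, absorbed by pairing $s\in S_0$ with $s^{-1}\in\Gen\setminus S_0$); I anticipate no deeper obstacle, since the analytic content has already been packaged in Theorem~\ref{thm:WulffAbelian} and Lemma~\ref{lem:fiber-permutation-fk}.
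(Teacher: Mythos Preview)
Your overall assembly of Lemma~\ref{lem:ve-comparison}, Lemma~\ref{lem:fiber-permutation-fk}, Lemma~\ref{lem:layer-cake-discrete}, and Theorem~\ref{thm:WulffAbelian} is exactly what the paper intends (the proposition carries no separate proof; the chain of citations \emph{is} the argument). The layer-cake and Wulff steps go through verbatim.

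The gap is in your handling of the second inequality of the first display. Your identity $\sum_g|\mathbf 1_Y(gs)-\mathbf 1_Y(g)|=\#\{y:ys\notin Y\}+\#\{y:ys^{-1}\notin Y\}$ is correct, and the desired bound does follow when $S_0\cap S_0^{-1}=\varnothing$. But your claim that $S_0$ ``can be chosen'' with this property is not supported by the standing assumptions: $S_0\subset\Gen$ must map bijectively onto the symmetric set $V$, and if $\Gen$ contains exactly one lift of each $v\in V$ --- as in the degenerate case $\Gamma=\Z^d$, $K=\{e\}$, $\Gen=V=\{\pm e_i\}$, $\pi=\mathrm{id}$ --- then $S_0=\Gen=S_0^{-1}$ is forced and your own computation gives $\sum_{s\in S_0}\sum_g|\mathbf 1_Y(gs)-\mathbf 1_Y(g)|=2B_\Gen(Y)>B_\Gen(Y)$. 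So the fix does not work in general. (The paper's own statement appears to share this factor-of-two slip; the analogous first equality in Proposition~\ref{prop:symmetrization-abelian-offset} exhibits the same mismatch between $B_{S}(Y)$ and $\sum_{s\in S}\sum_g|\mathbf 1_Y(gs)-\mathbf 1_Y(g)|$ over a symmetric $S$.) A safe repair is to weaken the intermediate step to the unconditional bound $B_\Gen(Y)\ge\tfrac12\sum_{s\in S_0}\sum_g|\mathbf 1_Y(gs)-\mathbf 1_Y(g)|$, which follows immediately from $S_0\subset\Gen$ and $S_0^{-1}\subset\Gen$, and then carry the harmless extra factor $\tfrac12$ through to the final constant.
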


\medskip
\noindent\emph{Remark.}
The two tracks are logically disjoint: results in Subsection \ref{subsec:trackC} assume a central $\Z$-extension and do not use finite-kernel lemmas; results in Subsection \ref{subsec:trackF} assume a finite kernel and do not use column symmetrization with offsets. In applications one typically takes $d=r_1$ and $\pi=\pi_{\mathrm{ab}}$ (finite kernel $=\tors$).

\section{Quantitative $\Gamma$-Convergence}
\label{sec:gamma-quant}

\noindent\textbf{Discretization on a grid.}
We work in the Abelian model $G=\R^d$ with lattice $\Gamma=\Z^d$.
Fix a mesh size $h>0$ and write $\Z_h^d:=h\,\Z^d\subset\R^d$.
Our goal is to approximate the anisotropic total variation
\[
TV_\varphi(\chi_E)\ :=\ \int_{\partial^* E}\varphi(\nu_E)\,d\mathcal H^{d-1}
\]
by a graph-cut on $\Z_h^d$ consistent with the anisotropy $\varphi$.
Throughout, $|\cdot|$ denotes the Euclidean norm on $\R^d$.
We use $TV_\varphi(\chi_E)$ and $\Per_\varphi(E)$ interchangeably in $\R^d$, since $TV_\varphi(\chi_E)=\Per_\varphi(E)$ for sets of finite perimeter; see \cite[Thm.~3.59]{AFP00}.

\begin{remark}[Normalization for this section]\label{rem:5-normalization}
We work on the Euclidean grid with \emph{undirected} edges. Energies are counted once per grid face and scaled by $h^{d-1}$. 
If $E_h^{\mathrm{und}}$ denotes the undirected-edge energy and $E_h^{\mathrm{dir}}$ the directed-edge version (counting both orientations), then
\[
E_h^{\mathrm{und}}=\tfrac12\,E_h^{\mathrm{dir}},\qquad 
TV_{\varphi^{\mathrm{und}}}=TV_{\tfrac12\varphi^{\mathrm{dir}}}.
\]
This reflects that each undirected face corresponds to two opposite directed edges; the linear rescaling leaves $\Gamma$-limits unchanged.
\end{remark}

\medskip
\noindent\textbf{Setup: edges, weights, and jumps (orthotropic case).}
Let $e_1,\dots,e_d$ be the canonical basis of $\R^d$ and define the (undirected) nearest-neighbour edge set
\[
\mathcal E_h\ :=\ \Big\{\ \{x,x+h e_i\}\ :\ x\in \Z_h^d,\ i=1,\dots,d\ \Big\}.
\]
We first consider the \emph{orthotropic} case
\[
\varphi(\xi)\ =\ \sum_{i=1}^d \omega_i\,|\xi_i|,\qquad \omega_i>0,
\]
and assign to each edge $e=\{x,x+he_i\}\in\mathcal E_h$ the weight $\omega_e:=\omega_i=\varphi(e_i)$.
Given $A_h\subset\Z_h^d$ with finite edge boundary, write $\chi_{A_h}$ for its indicator and define the
edgewise jump (across $e=\{x,x+he_i\}$) by
\[
\big|\nabla \chi_{A_h}\big|(e)\ :=\ \big|\chi_{A_h}(x+he_i)-\chi_{A_h}(x)\big|\ \in\{0,1\}.
\]

\medskip
\noindent\textbf{Discrete energy and its scaling.}
Each $(d-1)$-face of the grid has area $h^{d-1}$, so the correctly scaled discrete anisotropic TV is
\begin{equation}\label{eq:Eh-def}
E_h(A_h)\ :=\ h^{d-1}\sum_{e\in\mathcal E_h}\ \omega_e\,\big|\nabla \chi_{A_h}\big|(e).
\end{equation}
(Using \emph{undirected} edges means each face is counted once; with directed edges one would insert a factor $1/2$.)

\medskip
\noindent\textbf{Cells and grid cube interpolation.}
For $x\in\Z_h^d$ let $Q_h(x):=x+[0,h)^d$ be the half-open grid cube. Given $A_h\subset\Z_h^d$, define the voxel interpolation
$u_h^\#:\R^d\to\{0,1\}$ by $u_h^\#(y):=\chi_{A_h}(x)$ for $y\in Q_h(x)$.
In the orthotropic case one has the \emph{exact face identity}
\[
E_h(A_h)\ =\ \sum_{i=1}^d \omega_i\,|D_i u_h^\#|(\R^d)\ =\ TV_\varphi(u_h^\#),
\]
since $|D_i u_h^\#|$ is the $(d - 1)$-dimensional measure of the discontinuity faces orthogonal to $e_i$, each with area $h^{d-1}$ and density equal to the jump.

\medskip
\noindent\textbf{Sampling conventions.}
For a continuum set $E\subset\R^d$ of finite perimeter we use the standard half-open samplers
\[
A_h^{\mathrm{in}}(E):=\bigl\{x\in \Z_h^d:\ Q_h(x)\subset E\bigr\},\qquad
A_h^{\mathrm{out}}(E):=\bigl\{x\in \Z_h^d:\ Q_h(x)\cap E\neq\emptyset\bigr\}.
\]
Then $\chi_{(A_h^{\mathrm{in}})^\#}\to\chi_E$ and $\chi_{(A_h^{\mathrm{out}})^\#}\to\chi_E$ in $L^1_{\mathrm{loc}}(\R^d)$, and the two samplers differ only in a boundary layer of volume $O(h)\,\mathcal H^{d-1}(\partial^*E)$, by the density/Minkowski content characterization of finite perimeter sets (see e.g.\cite[Ch.~3]{AFP00}). 
\footnote{In particular, $\mathcal L^d\big((\partial^*E)^{(h)}\big)=2h\,\mathcal H^{d-1}(\partial^*E)+o(h)$ as $h\downarrow0$, where $(\cdot)^{(h)}$ denotes the $h$-neighborhood.}
When we write $A_h=E\cap \Z_h^d$ below, we tacitly mean one of these half-open conventions.

\begin{remark}[Boundedness]
Throughout the quantitative statements below we assume $\partial E$ is compact (e.g.\ $\partial E\in C^{1,\beta}$), so that $\mathcal H^{d-1}(\partial E)<\infty$. This ensures the discrete sums are finite for the half-open samplers considered. (For unbounded sets one may work in bounded windows and pass to a thermodynamic limit; we do not pursue this here.)
\end{remark}

\begin{remark}[Narrative and units]
The factor $h^{d-1}$ in \eqref{eq:Eh-def} is essential: it matches the $(d - 1)$-dimensional face area and ensures that $E_h$ approximates $TV_\varphi$ as $h\downarrow 0$.
This mesh-refinement scaling is independent of the large-scale sampling considered earlier on a fixed lattice (where one normalizes by Carnot dilations $r^{Q-1}$ instead).
\end{remark}

\begin{lemma}[Exact face identity]\label{lem:face-identity}
Let $\varphi(\xi)=\sum_{i=1}^d \omega_i|\xi_i|$ with $\omega_i>0$, and let $u_h^\#$ be the cellwise constant interpolation of $A_h\subset\Z_h^d$ on the half-open grid cubes $Q_h(x):=x+[0,h)^d$. Then
\[
E_h(A_h)\ =\ \sum_{i=1}^d \omega_i\,|D_i u_h^\#|(\R^d)\ =\ TV_\varphi(u_h^\#).
\]
\end{lemma}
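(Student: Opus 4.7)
The plan is to compute $|D_i u_h^\#|(\R^d)$ face-by-face and then use the orthotropy of $\varphi$ to collapse the sum into the anisotropic total variation. Since $u_h^\#$ takes values in $\{0,1\}$ and is constant on each half-open cube $Q_h(x)$, the distributional partial derivative $D_i u_h^\#$ is a signed Radon measure concentrated on the common $(d-1)$-faces of adjacent cubes that are perpendicular to $e_i$. For the edge $e=\{x,x+he_i\}\in\mathcal E_h$, the shared face
\[
F_e\ :=\ \overline{Q_h(x)}\cap\overline{Q_h(x+he_i)}
\]
has $\mathcal H^{d-1}(F_e)=h^{d-1}$, and the one-sided traces of $u_h^\#$ from $Q_h(x)$ and $Q_h(x+he_i)$ are $\chi_{A_h}(x)$ and $\chi_{A_h}(x+he_i)$ respectively.

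First I would record the cell-by-cell trace computation explicitly, appealing to the BV jump-part formula: for a bounded piecewise constant function whose jump set is contained in a countable union of axis-aligned $(d{-}1)$-planes, $D_i u_h^\#$ restricted to any face $F_e$ with $e\parallel e_i$ is given by the jump times the unit normal $e_i$ times the Hausdorff measure. Consequently
\[
|D_i u_h^\#|(\R^d)\ =\ \sum_{\substack{e=\{x,x+he_i\}\in\mathcal E_h}}\bigl|\chi_{A_h}(x+he_i)-\chi_{A_h}(x)\bigr|\ h^{d-1}
\ =\ h^{d-1}\!\!\sum_{\substack{e\in\mathcal E_h\\ e\parallel e_i}}|\nabla\chi_{A_h}|(e).
\]
Multiplying by $\omega_i$ and summing in $i$ gives exactly \eqref{eq:Eh-def}, which yields the first equality $E_h(A_h)=\sum_i\omega_i |D_i u_h^\#|(\R^d)$.

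For the second equality, I would invoke the representation of the anisotropic total variation for a BV function: $TV_\varphi(u_h^\#)=\int_{\R^d}\varphi(\nu)\,d|Du_h^\#|$, where $\nu=dDu_h^\#/d|Du_h^\#|$ is the $|Du_h^\#|$-a.e.\ unit polar. Because the jump set of $u_h^\#$ lies in the axis-aligned face skeleton, $\nu=\pm e_i$ on each face $F_e$ with $e\parallel e_i$. Since $\varphi$ is orthotropic, $\varphi(\pm e_i)=\omega_i$, so the integrand is piecewise constant on the face skeleton and
\[
TV_\varphi(u_h^\#)\ =\ \sum_{i=1}^d \omega_i\,|D_i u_h^\#|(\R^d),
\]
matching the middle expression.

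The only non-routine point is the identification of $D u_h^\#$ with the measure supported on axis-aligned faces with the prescribed densities, but this is a direct application of the definition of distributional derivatives to cellwise constant functions on half-open cubes (integration by parts against a test function reduces to a telescoping sum whose boundary terms live on the common faces). Crucially, the half-open convention ensures that each face $F_e$ is counted exactly once with the correct oriented jump, avoiding any double counting. I do not anticipate a substantive obstacle; the lemma is really a bookkeeping identity, and the main thing to be careful about is that the orthotropy of $\varphi$ (and not merely the crystalline structure) is what turns the sum $\int\varphi(\nu)\,d|Du_h^\#|$ into a clean sum over coordinate partial variations.
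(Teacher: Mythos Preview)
Your proof is correct and follows essentially the same route as the paper: identify the jump set of $u_h^\#$ as the union of axis-aligned $(d-1)$-faces, invoke the BV jump formula to compute $|D_i u_h^\#|(\R^d)$ as $h^{d-1}$ times the count of broken $e_i$-edges, and sum with weights $\omega_i$. Your treatment of the second equality via the polar decomposition $TV_\varphi(u_h^\#)=\int\varphi(\nu)\,d|Du_h^\#|$ together with $\nu=\pm e_i$ on axis faces is slightly more explicit than the paper's (which subsumes it into ``summing in $i$ with weights $\omega_i$ gives the claim''), but the underlying argument is the same.
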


\begin{proof}
Fix $i\in\{1,\dots,d\}$. The jump set of $u_h^\#$ is the union of those $(d-1)$-dimensional faces shared by adjacent cubes $Q_h(x)$ and $Q_h(x+he_i)$ with $\chi_{A_h}(x)\neq\chi_{A_h}(x+he_i)$. For each such pair there is exactly one face
\[
F_{i,x}\ :=\ \bigl\{\,y\in Q_h(x)\cup Q_h(x+he_i):\ y_i=x_i+h\,\bigr\},
\]
orthogonal to $e_i$, of $(d - 1)$-measure $\mathcal H^{d-1}(F_{i,x})=h^{d-1}$. Along $F_{i,x}$ the approximate traces of $u_h^\#$ on the two sides differ by
$|u_h^{\#,+}-u_h^{\#,-}|=|\chi_{A_h}(x+he_i)-\chi_{A_h}(x)|\in\{0,1\}$, and the unit normal is $\pm e_i$, so $|n_i|=1$.
Standard $BV$ structure for piecewise constant functions (e.g.\cite[Thm.~3.78]{AFP00}) yields
\[
|D_i u_h^\#|(\R^d)\ =\ \sum_{x\in\Z_h^d}
\big|\chi_{A_h}(x+he_i)-\chi_{A_h}(x)\big|\ \mathcal H^{d-1}(F_{i,x})
\ =\ h^{d-1}\sum_{x\in\Z_h^d}\big|\chi_{A_h}(x+he_i)-\chi_{A_h}(x)\big|.
\]
Summing in $i$ with weights $\omega_i$ gives the claim. Faces are counted once by the convention $y_i=x_i+h$.
\end{proof}

\begin{remark}[Scope]
The identity in Lemma~\ref{lem:face-identity} is \emph{specific} to orthotropic integrands $\varphi(\xi)=\sum_i\omega_i|\xi_i|$. For general crystalline $\phi_{\mathcal P}$ one must use directional differences along $p$; see Definition~\ref{def:finite-stencil-corrected}.
\end{remark}

\begin{lemma}[Orthotropic grid quadrature along $C^{1,\beta}$ graphs]\label{lem:grid-quadrature}
Let $\Sigma\subset\R^d$ be a compact $C^{1,\beta}$ hypersurface and $\psi(\nu)=\sum_{i=1}^d \omega_i|\nu_i|$ with fixed $\omega_i>0$. Let $\mathcal F_h$ be the collection of axis faces of the grid $\Z_h^d$ whose \emph{relative interior} intersects $\Sigma$, and for $F\in\mathcal F_h$ let $\nu_F\in\{\pm e_i\}$ denote the axis normal of $F$. Then
\[
\Big|\sum_{F\in\mathcal F_h} \psi(\nu_F)\,\mathcal H^{d-1}(F)\ -\ \int_{\Sigma}\psi(\nu)\,d\mathcal H^{d-1}\Big|
\ \le\ C\,h^\beta,
\]
with $C$ depending on $d,\beta,\|\Sigma\|_{C^{1,\beta}},\sum_i\omega_i$ and $\mathcal H^{d-1}(\Sigma)$.
\end{lemma}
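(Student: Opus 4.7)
The plan is to exploit the separability of $\psi$ to reduce to a single-direction estimate, invoke an integral-geometric (area) formula on the continuum side, localize $\Sigma$ by a finite $C^{1,\beta}$ graph atlas, and control the per-cell discrepancy via the $C^{0,\beta}$ regularity of the Gauss map, obtaining the rate $O(h^\beta)$ through a tangency-strip bound.

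First, since $\psi(\nu)=\sum_{i=1}^d \omega_i|\nu_i|$ and each $F\in\mathcal F_h$ has axis normal $\pm e_{i(F)}$ of area $h^{d-1}$, both sides split as $\sum_i\omega_i(\cdot)_i$. It therefore suffices to prove, for each $i\in\{1,\ldots,d\}$,
\[
\Bigl|\,h^{d-1}\,|\mathcal F_h^{(i)}|\ -\ \int_\Sigma |\nu_i|\,d\mathcal H^{d-1}\,\Bigr|\ \le\ C_i\,h^\beta,
\]
with $C_i$ depending only on $d,\beta,\|\Sigma\|_{C^{1,\beta}}$ and $\mathcal H^{d-1}(\Sigma)$; summing with weights $\omega_i$ then gives the lemma. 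All constants below will be tracked in these quantities.

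Second, on the continuum side I apply the area formula for the orthogonal projection $\pi_i:\R^d\to e_i^\perp$ restricted to the Lipschitz $(d-1)$-manifold $\Sigma$:
\[
\int_\Sigma |\nu_i|\,d\mathcal H^{d-1}\ =\ \int_{e_i^\perp} N_i(y')\,dy',\qquad N_i(y'):=\#\bigl(\pi_i^{-1}(y')\cap\Sigma\bigr).
\]
On the discrete side, I group the axis-$i$ faces by their projection to $e_i^\perp$: each $F\in\mathcal F_h^{(i)}$ sits over a unique $(d-1)$-cell $c'$ of the sublattice $h\Z^{d-1}\subset e_i^\perp$, and setting $M_i^h(c'):=\#\{F\in\mathcal F_h^{(i)}:F\text{ projects to }c'\}$, one has $|\mathcal F_h^{(i)}|=\sum_{c'} M_i^h(c')$. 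The target now reduces to a cell-by-cell comparison of $h^{d-1}M_i^h(c')$ with $\int_{c'}N_i(y')\,dy'$, followed by summation.

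Third, I cover $\Sigma$ by a finite $C^{1,\beta}$-controlled atlas of graph charts $\Sigma_k=\{x_{j_k}=f_k(\hat x_{j_k})\}$ with $|\nabla f_k|\le L_k$, arranged so each $p\in\Sigma$ lies in a chart with $|\nu_{j_k}(p)|\ge\kappa_0>0$. On such a chart, $\int_{\Sigma_k}|\nu_i|\,d\mathcal H^{d-1}$ reduces, via the graph parametrization, to an integral of $|\partial_i f_k|$ (when $i\neq j_k$) or the indicator of the chart domain (when $i=j_k$), and the per-cell discrete count agrees with the cellwise Riemann sum of this integrand up to $O(h^\beta)$ by the $\beta$-Hölder regularity of $\partial_i f_k$. ``Transverse'' cells (those whose column meets only the part of $\Sigma_k$ where $|\nu_i|$ is bounded below) thereby match to exact Riemann-sum accuracy, and boundary cells of the chart contribute at most $O(h)\cdot\mathcal H^{d-2}(\partial V_k)$, absorbed in the $O(h^\beta)$ budget.

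Fourth, the main obstacle is the tangency/horizon analysis near $H_i:=\{p\in\Sigma:\nu_i(p)=0\}$, which is the precise locus where the per-cell count $M_i^h(c')$ can slip by $\pm 1$ compared to $h^{-(d-1)}\int_{c'}N_i$. The available regularity here is only $\nu\in C^{0,\beta}$ (inherited from $\Sigma\in C^{1,\beta}$), so I introduce the horizon strip $T_h:=\{p\in\Sigma:|\nu_i(p)|\le h^\beta\}$ and estimate its weighted measure: $\int_{T_h}|\nu_i|\,d\mathcal H^{d-1}\le h^\beta\cdot\mathcal H^{d-1}(\Sigma)=O(h^\beta)$. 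Projecting $T_h$ to $e_i^\perp$ hits only $O(h^\beta\cdot h^{-(d-1)})$ cells, each carrying per-cell discrepancy at most $O(h^{d-1})$, which gives a total tangency contribution of $O(h^\beta)$. Combined with the transverse estimate and a standard generic-offset translation argument (used to avoid measure-zero non-transverse alignments of $\Sigma$ with coordinate hyperplanes), this completes the bound. The subtle technical content is precisely the passage from Hölder (not Lipschitz) control on $\nu$ to a quantitative horizon thickness; this forces the choice of cutoff $\eta=h^\beta$ and dictates the resulting rate.
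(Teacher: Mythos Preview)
Your projection-based setup has a genuine gap, and in fact the lemma as literally stated is false, so no argument can close it without reinterpreting the hypothesis.

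The failure is in the per-cell comparison of steps 2--3. You assert that $h^{d-1}M_i^h(c')$ approximates $\int_{c'}N_i(y')\,dy'$, but these two quantities count orthogonal things. The integer $M_i^h(c')$ counts heights $mh$ for which the face $\{x_i=mh\}\times c'^{\circ}$ meets $\Sigma$, i.e.\ it samples intersections of $\Sigma$ with \emph{hyperplanes} $\{x_i=\mathrm{const}\}$; by contrast $N_i(y')$ counts intersections of $\Sigma$ with \emph{lines} parallel to $e_i$. Summing the former over all cells and heights gives a Riemann sum for $\int_{\mathbb R}\mathcal H^{d-2}(\Sigma\cap\{x_i=t\})\,dt=\int_\Sigma\sqrt{1-\nu_i^2}\,d\mathcal H^{d-1}$ (coarea for $x_i|_\Sigma$, whose tangential gradient has length $\sqrt{1-\nu_i^2}$), not for $\int_\Sigma|\nu_i|$. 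In $d=2$ this is immediate: a face with normal $e_1$ is a \emph{vertical} segment at $x_1=mh$, and whether it meets $\Sigma$ detects crossings of $\{x_1=mh\}$, so $h\,|\mathcal F_h^{(1)}|$ is a Riemann sum for $\int_\Sigma|\nu_2|$, not $\int_\Sigma|\nu_1|$.

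Concretely, take $\Sigma$ the ellipse $x_1^2+(x_2/b)^2=1$ with $b=\tfrac{1}{10}$ (smooth, compact, fixed $C^{1,\beta}$ data). Then $\int_\Sigma|\nu_1|\,d\mathcal H^1=4b=0.4$, whereas for small $h$ each $x_1=mh$ with $|mh|<1$ contributes two vertical faces, giving $h\,|\mathcal F_h^{(1)}|\to 4$. The two sides differ by a fixed constant as $h\to 0$, so the asserted $O(h^\beta)$ bound is impossible. Your ``transverse cells match to Riemann-sum accuracy'' claim breaks precisely here: even away from tangencies, $M_i^h(c')$ is not a discretization of $N_i$ on $c'$. (The paper's own sketch makes the same slip: its displayed coarea should produce $\int_\Sigma\sqrt{1-\nu_i^2}$, not $\int_\Sigma|\nu_i|$.) The statement that is actually being used later---and that your area-formula framework \emph{would} prove---replaces ``the relative interior of $F$ intersects $\Sigma$'' by ``$\Sigma$ separates the two cells adjacent to $F$'' (equivalently, the dual $e_i$-edge crosses $\Sigma$); under that reading the face count genuinely discretizes $N_i$, and your transverse/horizon decomposition then goes through.
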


\begin{proof}[Proof (sketch)]
Fix $i$. By coarea,
$
\int_{\Sigma} |\nu_i|\,d\cH^{d-1}=\int_{\R}\cH^{d-2}(\Sigma\cap\{x_i=t\})\,dt.
$
Partition $\R$ into intervals $[kh,(k{+}1)h)$ and approximate each slice by the count of grid faces in $\{x_i=kh\}$ whose relative interior meets $\Sigma$, times the face area $h^{d-1}$. Since $\Sigma$ is $C^{1,\beta}$, the slice measure varies $C^{0,\beta}$ in $t$; hence the Riemann-sum error over each slab is $O(h^{\beta})$. Summing in $k$ and then in $i$ with weights $\omega_i$ yields the claim.
\end{proof}

\begin{theorem}[Quantitative $\Gamma$-convergence (orthotropic case)]\label{thm:gamma-quant}
Assume $E\subset\R^d$ has $C^{1,1}$ boundary and $\varphi(\xi)=\sum_{i=1}^d \omega_i |\xi_i|$ with $\omega_i>0$.
Let $A_h\in\{A_h^{\mathrm{in}}(E),A_h^{\mathrm{out}}(E)\}$ be a half-open sampler and $u_h^\#$ its cellwise-constant interpolation.
Then there exists
\[
C=C\left(d,\ \omega,\ \|\partial E\|_{C^{1,1}},\ \mathcal H^{d-1}(\partial E)\right)>0
\]
such that, uniformly in the choice of sampler,
\begin{equation}\label{eq:upper-quant}
\big|E_h(A_h)-TV_\varphi(\chi_E)\big|\ \le\ C\,h.
\end{equation}
Moreover, for any sequence $A_h\subset \Z_h^d$ with $u_h^\#\to \chi_E$
in $L^1_{\mathrm{loc}}(\R^d)$ and $\sup_h E_h(A_h)<\infty$, the classical
$\Gamma$-liminf inequality holds:
\begin{equation}\label{eq:liminf-classic}
\liminf_{h\downarrow 0} E_h(A_h)\ \ge\ TV_\varphi(\chi_E).
\end{equation}
\end{theorem}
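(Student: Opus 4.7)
The plan is to reduce both assertions to properties of the continuum anisotropic total variation via Lemma~\ref{lem:face-identity}, which provides the exact identity $E_h(A_h) = TV_\varphi(u_h^\#)$ for the voxel interpolation. Under this reduction the $\Gamma$-liminf becomes a standard lower-semicontinuity statement: since $\varphi$ is a norm on $\R^d$ (positivity of the $\omega_i$), the functional $TV_\varphi$ admits the duality representation $TV_\varphi(u)=\sup\{\int u\,\operatorname{div}\xi\,:\,\xi\in C_c^1(\R^d;\R^d),\ \varphi^{\circ}(\xi)\le 1\}$ and is therefore $L^1_{\mathrm{loc}}$-lower semicontinuous on $BV$. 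Applied to a sequence $u_h^\#\to\chi_E$ in $L^1_{\mathrm{loc}}$ with $\sup_h E_h(A_h)<\infty$, the face identity and lower semicontinuity yield $\liminf_{h\downarrow 0}E_h(A_h)=\liminf_{h\downarrow 0}TV_\varphi(u_h^\#)\ge TV_\varphi(\chi_E)$, which is the required liminf.

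For the quantitative bound I would work direction by direction with the half-open sampler $A_h=A_h^{\mathrm{in}}(E)$; the outer case is symmetric. By the face identity, $E_h(A_h)=\sum_{i=1}^d \omega_i h^{d-1} N_i$, where $N_i$ counts jump faces of $u_h^\#$ orthogonal to $e_i$, whereas on the continuum side $TV_\varphi(\chi_E)=\sum_i \omega_i\int_{\partial E}|\nu_i|\,d\mathcal H^{d-1}$. Let $\widetilde N_i$ denote the number of axis-$e_i$ faces of the grid whose relative interior meets $\partial E$. Lemma~\ref{lem:grid-quadrature}, applied to $\Sigma=\partial E$ with $\psi(\nu)=\omega_i|\nu_i|$ and $\beta=1$, gives $\bigl|\,\omega_i h^{d-1}\widetilde N_i-\omega_i\int_{\partial E}|\nu_i|\,d\mathcal H^{d-1}\bigr|\le C_i h$. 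The theorem therefore reduces to proving $h^{d-1}|N_i-\widetilde N_i|=O(h)$ for each $i$ and summing.

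The main obstacle is precisely this per-direction comparison between voxel jumps and axis-face crossings. A voxel jump is a face separating a cube fully in $E$ from one that is not, whereas a crossed face is one literally cut by $\partial E$; away from near-tangencies of $\partial E$ with the $e_i$-axis the two counts agree column by column, and discrepancies are confined to the near-tangent stratum $\Sigma_{i,h}:=\{x\in\partial E:\ |\nu_i(x)|\lesssim h\}$. Under the $C^{1,1}$ hypothesis the Weingarten map of $\partial E$ has spectral radius bounded by $\|\partial E\|_{C^{1,1}}$, which forces $\mathcal H^{d-1}(\Sigma_{i,h})=O(h)$ and, via $|\nu_i|\lesssim h$ on $\Sigma_{i,h}$, an axis-$e_i$ shadow of $(d-1)$-area $O(h^2)$. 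A careful per-column accounting of voxel misalignments over this shadow yields $h^{d-1}|N_i-\widetilde N_i|=O(h)$ with constants expressible through $\|\partial E\|_{C^{1,1}}$ and $\mathcal H^{d-1}(\partial E)$. It is exactly here that the $C^{1,1}$ assumption is essential: it upgrades the generic quadrature rate $h^\beta$ of Lemma~\ref{lem:grid-quadrature} to $h$ and fixes the explicit dependence of the final constant $C$ on $(d,\omega,\|\partial E\|_{C^{1,1}},\mathcal H^{d-1}(\partial E))$.
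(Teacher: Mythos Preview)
Your liminf argument is correct and matches the paper (face identity plus lower semicontinuity of $TV_\varphi$).

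For the rate, your route through the face-crossing count $\widetilde N_i$ has a genuine gap: $\widetilde N_i$ is simply not close enough to $N_i$. An $e_i$-orthogonal face at level $x_i=kh$ over a base cell $Q_h^{d-1}(z)$ is crossed whenever $kh$ lies in the range of the local graph $x_i=g(y)$ over that cell, so a single transversal sheet of $\partial E$ produces roughly $\mathrm{osc}_z(g)/h\asymp|\nabla g|$ crossed faces but exactly \emph{one} voxel jump. Summed over columns and multiplied by $h^{d-1}$, this gives $h^{d-1}|\widetilde N_i-N_i|=O(1)$, not $O(h)$. Concretely, for an ellipse in $\R^2$ with semi-axes $a\ne b$ one checks $h\,\widetilde N_1\to 4a$ whereas $h\,N_1\to\int_{\partial E}|\nu_1|\,d\mathcal H^1=4b$; so the detour through Lemma~\ref{lem:grid-quadrature} cannot close to $O(h)$, regardless of how carefully you treat near-tangent strata. (Your auxiliary claim $\mathcal H^{d-1}(\Sigma_{i,h})=O(h)$ is also false for $C^{1,1}$ surfaces in general: a patch of a circular cylinder with axis $e_i$ has $\nu_i\equiv 0$ on an open set.)

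The paper avoids $\widetilde N_i$ altogether. After the face identity it compares $|D_i u_h^\#|$ with $|D_i\chi_E|$ directly by slicing both along lines \emph{parallel} to $e_i$ (not across hyperplanes $\{x_i=\mathrm{const}\}$): each becomes $\int_{y\in e_i^\perp}\#(\text{crossings along }\ell_y)\,dy$, and on finitely many $C^{1,1}$ graph charts these fiberwise crossing counts agree up to a Riemann-sum error whose total is $O(h)$.
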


\begin{proof}
By Lemma~\ref{lem:face-identity},
$
E_h(A_h)\ =\ TV_\varphi(u_h^\#)\ =\ \sum_{i=1}^d \omega_i\,|D_i u_h^\#|(\R^d).
$
For the liminf, use lower semicontinuity of anisotropic total variation under $L^1_{\mathrm{loc}}$ convergence (Reshetnyak's theorem; see \cite[Thm.~2.38]{AFP00}).

For the rate: cover $\partial E$ by finitely many $C^{1,1}$ graph charts with uniformly bounded $C^{1,1}$ norm. For fixed $i$, $|D_i v|$ is computed by Fubini along lines parallel to $e_i$; replacing the fiberwise integral by a Riemann sum with spacing $h$ incurs an $O(h)$ error per chart when $v=\chi_E$ and $v=u_h^\#$. Summing in $i$ with weights $\omega_i$ and over charts yields \eqref{eq:upper-quant}, with $C$ depending on $(d,\omega,\|\partial E\|_{C^{1,1}},\mathcal H^{d-1}(\partial E))$. The constant is uniform over the two samplers $A_h^{\rm in/out}$. Finally, $A_h^{\rm in}$ and $A_h^{\rm out}$ differ only on a boundary layer of volume $O(h)\cH^{d-1}(\partial E)$, so the same $O(h)$ bound holds uniformly across the two samplers.
\end{proof}

\begin{remark}[Topology and domain for the $\Gamma$-limit]\label{rem:gamma-topology}
All $\Gamma$-convergence statements in this section are with respect to $L^1_{\mathrm{loc}}(\R^d)$ on $\{0,1\}$-valued functions, and we evaluate $E_h(\cdot)$ and $E_h^{\mathcal P}(\cdot)$ on half-open samplers of continuum sets of finite perimeter. In particular, whenever $u_h^\#\to \chi_E$ in $L^1_{\mathrm{loc}}$ and $\sup_h E_h(u_h^\#)<\infty$ (or $\sup_h E_h^{\mathcal P}(u_h^\#)<\infty$), the set $E$ has finite anisotropic perimeter and the lower bounds apply. On $\{0,1\}$-valued maps, $L^1_{\mathrm{loc}}$ convergence is equivalent to local convergence in measure of the underlying sets.
\end{remark}

\begin{remark}[Regularity $\Rightarrow$ rate]\label{rmk:beta-rate}
If $\partial E\in C^{1,\beta}$ for some $\beta\in(0,1]$, then along each chart the unit normal is $C^{0,\beta}$ and the graph deviates from its tangent plane by $O(h^{1+\beta})$ on $h$-patches. The fiberwise quadrature error is thus $O(h^\beta)$, yielding
$|E_h(A_h)-TV_\varphi(\chi_E)|\le C h^\beta$
with $C$ depending on the $C^{1,\beta}$ seminorm of $\partial E$, uniformly in the sampler $A_h^{\rm in/out}$.
\end{remark}

\begin{lemma}[Chartwise quadrature error]\label{lem:chart-quadrature}
Let $\partial E$ be covered by finitely many $C^{1,\beta}$ graphs $\{x_d=\psi_\alpha(x')\}$ with $\|\psi_\alpha\|_{C^{1,\beta}}\le M$ in balls of radius $r_0$.
Let $u_h^\#$ be the cellwise-constant interpolation of a half-open sampler $A_h\in\{A_h^{\mathrm{in}}(E),A_h^{\mathrm{out}}(E)\}$. Then for $h\in(0,h_0(M,r_0))$ and for any orthotropic weights $\{\omega_i\}$,
\[
\Big|\,TV_\varphi(u_h^\#)-TV_\varphi(\chi_E)\,\Big|
\ \le\ C(d,\omega,M,\mathcal H^{d-1}(\partial E))\,h^\beta.
\]
\end{lemma}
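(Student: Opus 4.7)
The plan is to reduce the claim to the grid quadrature bound already established in Lemma~\ref{lem:grid-quadrature}, and then bound a small bookkeeping discrepancy between the set of grid faces counted by the discrete TV of $u_h^\#$ and the set of faces whose relative interior is crossed by $\partial E$. First, by Lemma~\ref{lem:face-identity},
\[
TV_\varphi(u_h^\#)\ =\ h^{d-1}\sum_{i=1}^d \omega_i\,|\mathcal J_h^i|,
\]
where $\mathcal J_h^i$ is the set of axis-$e_i$ faces across which $u_h^\#$ jumps. Let $\mathcal F_h^i$ denote the set of axis-$e_i$ faces whose relative interior meets $\partial E$. Applying Lemma~\ref{lem:grid-quadrature} with $\Sigma=\partial E$ and the axis-positive integrand $\psi(\nu)=\sum_i\omega_i|\nu_i|$ yields
\[
\Big|\,h^{d-1}\sum_{i=1}^d \omega_i\,|\mathcal F_h^i|\ -\ TV_\varphi(\chi_E)\,\Big|\ \le\ C_0\,h^\beta,
\]
with $C_0$ depending only on $(d,\omega,M,\mathcal H^{d-1}(\partial E))$. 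So it suffices to establish
$h^{d-1}\sum_i \omega_i\,\bigl|\,|\mathcal J_h^i|-|\mathcal F_h^i|\,\bigr|\le C_1 h^\beta$.

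Next I would control the symmetric difference $\mathcal J_h^i\triangle\mathcal F_h^i$ chartwise. Fix a chart $\{x_d=\psi_\alpha(x')\}$ with $\|\psi_\alpha\|_{C^{1,\beta}}\le M$, and take $h<h_0(M,r_0)$ so that $\nabla\psi_\alpha$ varies by at most $M h^\beta$ across any grid cube. A face $F\in\mathcal J_h^i\setminus\mathcal F_h^i$ has adjacent cubes on opposite sides of $\partial E$ while $\partial E$ does not meet $\mathrm{relint}(F)$; then $\partial E$ must exit one cube through a different face, which for a $C^{1,\beta}$ graph with controlled slope confines the configuration to a tube near the lateral edge of the chart, of thickness $\lesssim h$, with contribution $\lesssim h^\beta$. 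Conversely, a face $F\in\mathcal F_h^i\setminus\mathcal J_h^i$ requires $\partial E$ to cross $F$ transversely while both adjacent cubes lie on the same side of $\partial E$, i.e.\ the graph must wiggle across $F$ within an $h$-cube with oscillation less than $h$; by the $C^{1,\beta}$ Taylor bound
\[
\bigl|\psi_\alpha(x')-\psi_\alpha(y')-\nabla\psi_\alpha(y')\cdot(x'-y')\bigr|\ \le\ M\,|x'-y'|^{1+\beta},
\]
such faces are confined to a tube of thickness $\lesssim h^{1+\beta}$ around the critical level sets of $\psi_\alpha$ with respect to $e_i$. Each of the two tube-contributions is $O(h^\beta)$ per chart.

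Finally, I sum over the finite chart collection (of cardinality $\lesssim\mathcal H^{d-1}(\partial E)\,r_0^{1-d}$), over $i=1,\dots,d$ with weights $\omega_i$, absorbing bounded chart overlap multiplicities into a dimensional factor; combined with the first paragraph this gives the claimed $C(d,\omega,M,\mathcal H^{d-1}(\partial E))\,h^\beta$ bound. The two samplers $A_h^{\mathrm{in}}$ and $A_h^{\mathrm{out}}$ differ only on cubes in the $h$-tube of $\partial E$ (at most $O(h^{1-d}\mathcal H^{d-1}(\partial E))$ of them), so any remaining jump-face discrepancy contributes a further $O(h)=O(h^\beta)$, absorbed into the constant. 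The main obstacle is the per-chart matching in the second paragraph: quantifying how often a $C^{1,\beta}$ graph can generate mismatches between $\mathcal J_h^i$ and $\mathcal F_h^i$ requires the full Taylor-type estimate above together with a covering of the critical-set tubes adapted to each axis direction; this is where the exponent $\beta$ (rather than $1$) appears in the final rate.
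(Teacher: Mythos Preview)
There is a genuine gap: the per-direction bound $h^{d-1}\bigl||\mathcal J_h^i|-|\mathcal F_h^i|\bigr|=O(h^\beta)$ is false, and your description of $\mathcal F_h^i\setminus\mathcal J_h^i$ as arising only from ``wiggling'' is incorrect. Take $d=2$ and a chart where $\partial E$ is the flat line $\{x_2=c\}$ (so $\psi\equiv c$, $\nu=e_2$, no wiggling at all). For $i=1$ (vertical faces), every column $x_1=kh$ contributes exactly one face to $\mathcal F_h^1$, so $|\mathcal F_h^1|\asymp L/h$. But with $A_h^{\mathrm{in}}$, each such face has both adjacent cubes straddled by $\partial E$ (neither cube is contained in $E$), hence $\mathcal J_h^1=\emptyset$. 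Thus $h\,|\mathcal J_h^1\triangle\mathcal F_h^1|\asymp L$, which is $O(1)$, not $O(h^\beta)$. Symmetrically, $|\mathcal J_h^2|\asymp L/h$ while $|\mathcal F_h^2|=0$ for generic $c$; so when $\omega_1\neq\omega_2$ the two weighted totals differ by $|\omega_1-\omega_2|\,L$ at leading order and the mismatch cannot be absorbed by summing over $i$.

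The conceptual point is that $h^{d-1}|\mathcal J_h^i|$ and $h^{d-1}|\mathcal F_h^i|$ approximate \emph{different} surface integrals: by Lemma~\ref{lem:face-identity} the former equals $|D_i u_h^\#|$ and approximates $\int_{\partial E}|\nu_i|\,d\mathcal H^{d-1}$, whereas the latter approximates $\int_{\partial E}\sqrt{1-\nu_i^2}\,d\mathcal H^{d-1}$ (the tangential Jacobian of $x\mapsto x_i$ on the surface is $\sqrt{1-\nu_i^2}$, not $|\nu_i|$). These differ at leading order wherever $|\nu_i|\neq 1/\sqrt2$; in particular the form of Lemma~\ref{lem:grid-quadrature} you are invoking cannot hold for general orthotropic weights on this example. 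The paper's argument avoids $\mathcal F_h^i$ altogether: it compares $|D_i u_h^\#|$ to $|D_i\chi_E|$ by slicing along lines parallel to $e_i$, where both reduce to fiberwise crossing counts, and uses the $C^{0,\beta}$ regularity of $|\nu_i|$ over the orthogonal base to control the Riemann-sum error.
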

\begin{proof}
On each chart, write the anisotropic surface element as $\sum_i\omega_i |n_i|\,d\mathcal H^{d-1}$ with $n$ the Euclidean unit normal. 
Along fibers parallel to $e_i$, $|n_i|$ is $C^{0,\beta}$ with seminorm $\lesssim M$, and the interface position varies Lipschitzly. 
Riemann sum error on spacing $h$ at $C^{0,\beta}$ regularity is $O(h^\beta)$ on each fiber family; integrate over the orthogonal base domain.
Summing charts yields the claim.
\end{proof}

\subsection{Finite-stencil crystalline energies: discrete model and quantitative approximation}

\noindent\textbf{Directional stencil and anisotropy.}
Let $\mathcal P\subset\Z^d\setminus\{0\}$ be finite and $\alpha_p>0$. Define the (even, convex, $1$-homogeneous) crystalline integrand
\[
\phi_{\mathcal P}(\nu):=\sum_{p\in\mathcal P}\alpha_p\,|\nu\cdot p|\qquad(\nu\in\R^d).
\]

\begin{definition}[Finite-stencil energy via directional differences; antipodal pairing]\label{def:finite-stencil-corrected}
Let $\mathcal P\subset\Z^d\setminus\{0\}$ be finite, and let $\alpha_p>0$ for $p\in\mathcal P$.
To avoid double counting when both $p$ and $-p$ occur, fix a set of representatives
\[
\mathcal P_+\ \subset\ \mathcal P\cup(-\mathcal P)
\]
containing exactly one element from each antipodal pair $\{\pm p\}$ that meets $\mathcal P\cup(-\mathcal P)$, and set
\[
\tilde\alpha_p\ :=\ \alpha_p+\alpha_{-p}\qquad\text{(with $\alpha_{-p}:=0$ if $-p\notin\mathcal P$)}.
\]
Define the crystalline integrand and the discrete energies by
\[
\phi_{\mathcal P}(\nu)\ :=\ \sum_{p\in\mathcal P_+}\tilde\alpha_p\,|\nu\cdot p|,
\qquad
E_h^{\mathcal P}(A_h)\ :=\ h^{d-1}\sum_{p\in\mathcal P_+}\tilde\alpha_p\sum_{\substack{x\in \Z_h^d\\ x,\,x+hp\in \Z_h^d}} \big|\chi_{A_h}(x+hp)-\chi_{A_h}(x)\big|.
\]
With this normalization each undirected $p$-bond is counted once, and $\phi_{\mathcal P}$ is independent of the choice of representatives in $\mathcal P_+$.
\end{definition}

\begin{remark}[Consistency with the axis model]\label{rem:finite-stencil-normalization}
For the axis case pick $\mathcal P_+=\{e_1,\dots,e_d\}$ and $\tilde\alpha_{e_i}=\omega_i$, so that $E_h^{\mathcal P}$ reduces to $E_h$ from Section~\ref{sec:gamma-quant} and $\phi_{\mathcal P}(\nu)=\sum_{i=1}^d\omega_i|\nu_i|$.
\end{remark}

\noindent\emph{Equivalence of definitions.} Note that
\[
\sum_{p\in\mathcal P_+}\tilde\alpha_p\,|\nu\cdot p|
\ =\ \sum_{p\in\mathcal P}\alpha_p\,|\nu \cdot p|,
\]
so the antipodal pairing merely repackages the even integrand without changing its value. In particular, if both $p$ and $-p$ lie in $\mathcal P$ with weights $\alpha_p,\alpha_{-p}$, pairing replaces the two directed contributions by a single undirected one with coefficient $\tilde\alpha_p=\alpha_p+\alpha_{-p}$; no limit constant is altered by this bookkeeping.

\begin{lemma}[Support and cardinality of directional pairs]\label{lem:finite-sum-support}
Let $E\subset\R^d$ be bounded with finite perimeter and $A_h$ a half-open sampler. For fixed $p\in\Z^d\setminus \{0\}$, the set
\[
\bigl\{x\in \Z_h^d:\ \chi_{A_h}(x+hp)\neq \chi_{A_h}(x)\bigr\}
\]
is contained in an $O(h|p|)$-tubular neighborhood of $\partial E$; consequently the number of nonzero terms is
\[
\#\{\cdots\}\ =\ \frac{|p|}{h^{d-1}}\ \cH^{d-1} \big(\pi_{p^\perp}(\partial E)\big)\ +\ O\big(h^{2-d}\big),
\]
where $\pi_{p^\perp}$ is the orthogonal projection onto $p^\perp$. In particular,
$
h^{d-1}\sum_{x}|\chi_{A_h}(x+hp)-\chi_{A_h}(x)|=O(|p|).
$
\end{lemma}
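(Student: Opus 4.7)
The plan is to pass from the discrete transition set to an $O(h|p|)$-tubular neighbourhood of $\partial E$ and then count lattice sites in that neighbourhood by slicing along lines parallel to $p$.

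First I would establish the tube containment. If $\chi_{A_h}(x+hp)\ne\chi_{A_h}(x)$ then the half-open cubes $Q_h(x)$ and $Q_h(x+hp)$ have opposite sampler status, so their union---a Euclidean strip of diameter $\le h(|p|+\sqrt d)$---must contain points of both $E$ and $E^{c}$. By the density characterisation of sets of finite perimeter, this strip meets $\partial^{*}E$, giving $\dist(x,\partial E)\le C_d\,h|p|$ with $C_d$ absolute. That settles the first assertion.

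Next I would slice the ambient lattice. Partition $\Z_h^d$ into cosets $\ell_\alpha:=x_\alpha+(hp)\Z$ of the cyclic subgroup $(hp)\Z$ (treating $p$ as primitive; any $\gcd$-factor is harmless and aligns with the paper's normalisation). The projection $\pi_{p^\perp}$ sends these lines bijectively onto a rank-$(d-1)$ sublattice of $p^\perp$ of covolume $h^{d-1}/|p|$ (Fubini: fibre spacing $h|p|$ times projected covolume equals $h^d$). A standard Weyl lattice-counting estimate then yields
\[
\#\bigl\{\alpha:\pi_{p^\perp}(\ell_\alpha)\in U\bigr\}
\ =\ \frac{|p|}{h^{d-1}}\,\cH^{d-1}(U)\ +\ O\bigl(h^{2-d}\,\cH^{d-2}(\partial U)\bigr)
\]
for bounded Borel $U\subset p^\perp$. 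On each line $\ell_\alpha$, the count of $x\in\ell_\alpha$ with $\chi_{A_h}(x+hp)\ne\chi_{A_h}(x)$ equals the number of sign changes of the sequence $k\mapsto\chi_{A_h}(x_\alpha+khp)$, and by the tube step every such sign change sits within $O(h|p|)$ of a point of $\partial E$ on that line. Assembling, I apply the lattice count with $U=\pi_{p^\perp}(\partial E)$ (plus its $O(h|p|)$-thickening to absorb grazing lines) and sum per-line contributions: the leading term is $\tfrac{|p|}{h^{d-1}}\cH^{d-1}(\pi_{p^\perp}(\partial E))$; the residual splits into (i) a lattice-boundary layer of cardinality $O(h^{2-d}\,\cH^{d-2}(\partial\pi_{p^\perp}(\partial E)))$, and (ii) an $O(1)$ per-line bookkeeping correction along $O(h^{1-d})$ grazing lines, together $O(h^{2-d})$. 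Multiplying by $h^{d-1}$ yields the $O(|p|)$ bound.

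The main obstacle will be the per-line sign-change count in the slicing step when $\partial E$ is only of finite perimeter: the count on a single line is not pointwise bounded, only controlled in an integrated sense via Vol'pert's slicing theorem for $BV$ sets. The cleanest resolution is to read $\cH^{d-1}(\pi_{p^\perp}(\partial E))$ as the directional projection mass $\int_{\partial^{*}E}\bigl|\nu_E\cdot p/|p|\bigr|\,d\cH^{d-1}$ (Cauchy--Crofton with multiplicity), which agrees with the naive projected area whenever $\partial E$ projects with multiplicity $\le 2$ (as in the $C^{1,\beta}$ regime driving Theorem~B and Theorem~\ref{thm:gamma-quant}) and always delivers the correct leading-order count. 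With this reading the $O(h^{2-d})$ error in the statement is tight and the ``In particular'' bound $h^{d-1}\sum_x|\chi_{A_h}(x+hp)-\chi_{A_h}(x)|=O(|p|)$ is immediate.
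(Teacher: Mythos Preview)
The paper states this lemma without proof. Your approach is the natural one and aligns with the scaffolding the paper does provide: the lattice line density you compute is recorded separately as Lemma~\ref{lem:line-density}, and the per-line crossing count (with the Cauchy--Crofton/Vol'pert reading you propose) is precisely what drives the proof of Lemma~\ref{lem:dir-diff-quot}. Your tube containment step is correct, and you are right to flag that the displayed count formula with $\cH^{d-1}(\pi_{p^\perp}(\partial E))$ as a literal shadow is only exact when lines meet $\partial E$ with controlled multiplicity; for general finite-perimeter $E$ one should read it as the directional projection mass $\int_{\partial^*E}|\nu_E\cdot\hat p|\,d\cH^{d-1}$, which is how the paper uses it downstream. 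For the ``In particular'' $O(|p|)$ bound---the only consequence actually invoked later---your tube containment plus a crude volume count already suffices.
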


\begin{lemma}[Lattice line density in direction $p$]\label{lem:line-density}
Fix $p\in\Z^d\setminus\{0\}$ and $h>0$. Consider the family of lines $\ell_\xi:=\xi+\R\hat p$ ($\hat p:=p/|p|$), indexed by $\xi\in p^\perp$. Among these, the subfamily that meet the lattice $h\Z^d$ has base density $|p|/h^{d-1}$ in $p^\perp$: for every bounded Borel $B\subset p^\perp$,
\[
\#\{\ \ell_\xi\ \text{with}\ \xi\in B\ \text{and}\ \ell_\xi\cap h\Z^d\neq\emptyset\ \}
\ =\ \frac{|p|}{h^{d-1}}\,\cH^{d-1}(B)\ +\ o(h^{-(d-1)}).
\]
Equivalently, the projection of $h\Z^d$ onto $p^\perp$ is a full-rank lattice of covolume $h^{d-1}/|p|$.
\end{lemma}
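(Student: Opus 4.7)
The claim has two parts: (a) the projection $\Lambda:=\pi_{p^\perp}(h\Z^d)$ is a full-rank lattice in $p^\perp$ with a computable covolume, and (b) the line-counting asymptotic follows from Gauss--Jordan lattice point counting in that hyperplane. I would separate these cleanly, and I would reduce to the primitive case first. Write $p=mq$ with $q\in\Z^d$ primitive and $m=\gcd(p_1,\dots,p_d)$; then $\hat p=\hat q$ and $p^\perp=q^\perp$, so the family of lines and the indexing hyperplane are unchanged by this reduction. All of the lattice geometry is then governed by $q$, not $p$.

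\textbf{Step 1 (structure of $\Lambda$).} First I would identify the kernel of $\pi_{p^\perp}$ restricted to $h\Z^d$: since $q$ is primitive, $\Z^d\cap\R q=\Z q$, so the kernel equals $h\Z q$, a rank-$1$ direct summand. Completing $q$ to a $\Z$-basis $(v_1,\dots,v_{d-1},q)$ of $\Z^d$ (possible by Bezout applied to the unimodular vector $q$) exhibits $\pi_{p^\perp}(hv_1),\dots,\pi_{p^\perp}(hv_{d-1})$ as a $\Z$-basis of $\Lambda$, confirming that $\Lambda$ is a full-rank lattice in $p^\perp$. The product formula for volumes together with the orthogonal splitting $\R^d=p^\perp\oplus\R\hat q$ then gives
\[
h^d=\bigl|\det(hv_1,\dots,hv_{d-1},hq)\bigr|=\operatorname{covol}(\Lambda)\cdot\bigl|hq\bigr|=\operatorname{covol}(\Lambda)\cdot h|q|,
\]
whence $\operatorname{covol}(\Lambda)=h^{d-1}/|q|$. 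For primitive $p$ (i.e.\ $m=1$, $q=p$) this is exactly $h^{d-1}/|p|$ as stated; for general $p$ the formula picks up a factor $m$, which is harmless because it is absorbed into the weight $\tilde\alpha_p$ of Definition~\ref{def:finite-stencil-corrected} (where the coefficient already rescales directionally by a length factor).

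\textbf{Step 2 (asymptotic count and conclusion).} A line $\ell_\xi$ meets $h\Z^d$ iff $\xi\in\Lambda$, so the count sought is exactly $\#(\Lambda\cap B)$. For bounded Jordan-measurable $B\subset p^\perp$, Gauss/Weyl-type lattice point counting yields
\[
\#(\Lambda\cap B)=\frac{\cH^{d-1}(B)}{\operatorname{covol}(\Lambda)}+o\!\left(\operatorname{covol}(\Lambda)^{-1}\right)=\frac{|q|}{h^{d-1}}\,\cH^{d-1}(B)+o\!\left(h^{-(d-1)}\right),
\]
via the standard tile-covering argument (each $\Lambda$-point carries a fundamental cell of $p^\perp$-area $\operatorname{covol}(\Lambda)$, and the mismatch near $\partial B$ is controlled by the $O(\operatorname{covol}(\Lambda)^{1/(d-1)})$-neighborhood of $\partial B$, whose $\cH^{d-1}$-measure vanishes by Jordan measurability). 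For general bounded Borel $B$ one passes to Jordan inner/outer approximations and lets the approximation error tend to zero. Specialising to primitive $p$ gives exactly the stated density $|p|/h^{d-1}$.

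\textbf{Main obstacle.} Analytically, there is none: the content is completely standard lattice-geometry bookkeeping. The only genuine subtlety is the gap between the literal statement ``covolume $h^{d-1}/|p|$'' and the computation, which produces $h^{d-1}/|q|$ with $q$ the primitive rescaling of $p$. Either the lemma is tacitly for primitive $p$ (the usual context in the crystalline stencils of Definition~\ref{def:finite-stencil-corrected}), or the factor $m=\gcd(p)$ is absorbed into $\tilde\alpha_p$; I would flag this explicitly so that the application in Lemma~\ref{lem:finite-sum-support} -- where the ``extra'' factor $m$ precisely matches the number of lattice points per line segment of length $h|p|$ along direction $\hat q$ -- is seen to be consistent either way.
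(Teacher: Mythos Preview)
The paper states this lemma without proof, so there is nothing to compare against directly; your argument is the standard one and is correct. Your covolume computation via the short exact sequence $0\to h\Z q\to h\Z^d\to\Lambda\to 0$ is clean, and you are right that the literal formula $h^{d-1}/|p|$ only holds for primitive $p$: the projection $\pi_{p^\perp}(h\Z^d)$ depends only on the direction $\hat p=\hat q$, so its covolume is $h^{d-1}/|q|$ regardless of the multiplicity $m=\gcd(p)$. Your reconciliation with the downstream use in Lemma~\ref{lem:dir-diff-quot} is also correct: for non-primitive $p$ a single transversal crossing of $\partial E$ on a line contributes exactly $m$ nonzero terms to $\sum_x|\chi_{A_h}(x+hp)-\chi_{A_h}(x)|$ (since $x\mapsto x+hp$ jumps $m$ lattice steps along the line), so the product $m\cdot|q|/h^{d-1}=|p|/h^{d-1}$ recovers the stated limit. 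One small caveat: the passage ``for general bounded Borel $B$ one passes to Jordan inner/outer approximations'' does not quite close the gap for arbitrary Borel sets (the Gauss count genuinely needs $\cH^{d-1}(\partial B)=0$ or Jordan measurability); in the paper's applications the relevant base sets are projections of $C^{1,\beta}$ surfaces, so this is harmless, but it would be worth stating the lemma for Jordan-measurable $B$.
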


\begin{lemma}[Directional difference-quotient for finite-perimeter sets]\label{lem:dir-diff-quot}
Let $E\subset\R^d$ have finite perimeter and $p\in\Z^d\setminus\{0\}$. For the half-open samplers $A_h^{\rm in}(E)$ or $A_h^{\rm out}(E)$,
\[
h^{d-1}\sum_{x\in \Z_h^d}\big|\chi_{A_h}(x+hp)-\chi_{A_h}(x)\big|
\ \longrightarrow\ \int_{\partial^*E}|\nu_E\cdot p|\ d\mathcal H^{d-1}\qquad(h\downarrow 0).
\]
If $\partial E\in C^{1,\beta}$ for some $\beta\in(0,1]$, then the error is bounded by $C\,|p|\,h^\beta$, where $C$ depends only on $d,\|\partial E\|_{C^{1,\beta}}$ and $\mathcal H^{d-1}(\partial E)$ (hence the bound is uniform over $p$ in any fixed finite stencil).
\end{lemma}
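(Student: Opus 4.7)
The strategy is to reduce the $d$-dimensional discrete sum to a one-dimensional slicing problem along lines parallel to $p$, and then invoke the BV slicing formula. First, I would decompose the lattice $h\Z^d$ along lines $\ell_\xi:=\xi+\R\hat p$ with $\hat p:=p/|p|$. By Lemma~\ref{lem:line-density} (or a direct argument via the splitting $\R^d=\R\hat p\oplus p^\perp$), the projection $\pi_{p^\perp}(h\Z^d)$ is a full-rank lattice in $p^\perp$ of covolume $h^{d-1}/|p|$, and on each such line the lattice points form an arithmetic progression with step $h|p|$ in the $\hat p$-direction. Regrouping,
\[
h^{d-1}\sum_{x\in h\Z^d}\bigl|\chi_{A_h}(x+hp)-\chi_{A_h}(x)\bigr|
\ =\ \frac{h^{d-1}}{\mathrm{covol}(\pi_{p^\perp}(h\Z^d))}\cdot\frac{1}{|p|/h^{d-1}}\sum_{\xi\in\pi_{p^\perp}(h\Z^d)}N_h(\xi),
\]
where $N_h(\xi)$ counts sign changes of $k\mapsto \chi_{A_h}(x_\xi+khp)$ along $\ell_\xi$; after cancellation, the outer factor reduces to a Riemann-sum weight $|p|^{-1}$ against the base lattice in $p^\perp$.

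Next, I would pass to the limit fibrewise. For a.e.\ $\xi\in p^\perp$, the slice $E\cap\ell_\xi$ has finite $\cH^0$-perimeter with $\#(\ell_\xi\cap\partial^*E)$ endpoints (Fubini for BV; see \cite[\S3.11]{AFP00}). For either half-open sampler, the 1D restriction of $\chi_{A_h^\#}$ to $\ell_\xi$ agrees with $\chi_{E\cap\ell_\xi}$ outside an $h$-neighborhood of the crossings, so $N_h(\xi)\to \#(\ell_\xi\cap\partial^*E)$ pointwise a.e., with the discrete sum dominated via Lemma~\ref{lem:finite-sum-support} (which yields $h^{d-1}\sum_x|\cdots|=O(|p|)$, providing the equi-integrability needed to apply dominated convergence on $p^\perp$). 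The BV slicing identity (see \cite[Thm.~3.108]{AFP00})
\[
\int_{p^\perp}\#(\ell_\xi\cap\partial^*E)\,d\cH^{d-1}(\xi)\ =\ \int_{\partial^*E}|\nu_E\cdot\hat p|\,d\cH^{d-1}
\]
combined with the lattice covolume factor $|p|^{-1}$ and the scaling $|\nu_E\cdot p|=|p| |\nu_E\cdot\hat p|$ delivers exactly $\int_{\partial^*E}|\nu_E\cdot p|\,d\cH^{d-1}$, proving the convergence.

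For the quantitative $C^{1,\beta}$ rate, I would cover $\partial E$ by finitely many graph charts $\{x_d=\psi_\alpha(x')\}$ with uniformly bounded $C^{1,\beta}$ norm. On each chart, parametrize $\partial E$ by its projection onto $p^\perp$ (or a suitable base hyperplane, using $|\nu_E\cdot\hat p|$-weighting handled by a Sard/transversality decomposition into regular and near-tangential pieces). Along each fibre parallel to $\hat p$, the interface position is a $C^{1,\beta}$ function of the transverse coordinate, so the Riemann-sum error in approximating $\#(\ell_\xi\cap\partial E)$ by $N_h(\xi)$ is $O(h^\beta)$ per crossing; the near-tangential stratum where $|\nu_E\cdot\hat p|$ is small contributes $O(h^\beta)$ directly by Hölder control of the normal. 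Summing over charts and crossings and using that each crossing carries $|p|$-dependence only through the step $h|p|$ (and through the diameter of the stencil in Lemma~\ref{lem:finite-sum-support}) yields the claimed $C|p|h^\beta$ bound, with $C$ depending only on $d,\|\partial E\|_{C^{1,\beta}},\cH^{d-1}(\partial E)$.

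\textbf{Main obstacle.} The delicate point is handling the near-tangential regime where $|\nu_E\cdot\hat p|$ is small on a set of positive $\cH^{d-1}$ measure: there a single line $\ell_\xi$ may intersect $\partial E$ in many points within a tube of width $O(h)$, and small perturbations of $\xi$ change $N_h(\xi)$ significantly. Controlling this requires a careful stratification of $\partial E$ by the size of $|\nu_E\cdot\hat p|$ and a Hölder-quantitative Sard lemma; the $C^{1,\beta}$ assumption gives exactly the modulus needed so that the tangential stratum contributes only $O(h^\beta)$ after integration in $\xi$. A secondary subtlety is that $p$ need not be primitive: non-primitivity merely sparsifies the discrete differences along each line by a factor $\gcd(p)$, which is absorbed cleanly into the $|p|$-scaling above and contributes no constant beyond those already tracked.
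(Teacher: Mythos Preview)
Your proposal is correct and follows essentially the same approach as the paper: slice along lines $\ell_\xi=\xi+\R\hat p$, use the BV slicing identity $\int_{p^\perp}\#(\partial^*E\cap\ell_\xi)\,d\xi=\int_{\partial^*E}|\nu\cdot\hat p|\,d\cH^{d-1}$, invoke the lattice line-density (Lemma~\ref{lem:line-density}) to convert the discrete sum into a Riemann sum with the correct $|p|$ factor, and handle the $C^{1,\beta}$ rate via local graph charts. Your treatment of the near-tangential stratum is more explicit than the paper's (which simply says ``away from grazing contacts''), but this is elaboration rather than a different method.
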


\begin{proof}
Let $\hat p:=p/|p|$ and parametrize by lines $\ell_\xi=\xi+\R\hat p$, $\xi\in p^\perp$. For a.e.\ $\xi$, the slice of $\partial^*E$ on $\ell_\xi$ is finite and
\[
\int_{p^\perp}\#\big(\partial^*E\cap \ell_\xi\big)\,d\cH^{d-1}(\xi)\ =\ \int_{\partial^*E}|\nu\cdot\hat p|\,d\cH^{d-1}.
\]
By Lemma~\ref{lem:line-density}, the number of $p$-parallel lines through $h\Z^d$ per unit $(d{-}1)$-area in $p^\perp$ is $|p|/h^{d-1}$. Along each such line, for $h$ small and away from grazing contacts, a single transversal jump contributes exactly one nonzero term to $\sum_{x\in \Z_h^d}|\chi_{A_h}(x+hp)-\chi_{A_h}(x)|$. Multiplying by $h^{d-1}$ and integrating over $\xi$ gives the limit
\[
|p|\int_{p^\perp}\#(\partial^*E\cap \ell_\xi)\,d\xi\ =\ \int_{\partial^*E}|\nu\cdot p|\,d\cH^{d-1}.
\]
If $\partial E\in C^{1,\beta}$, local graphs yield an $O(h^\beta)$ error on each chart and the line density contributes the prefactor $|p|$, giving the claimed bound.
\end{proof}

\begin{theorem}[Quantitative approximation for finite stencils]\label{thm:quant-stencil}
Let $\mathcal P\subset\Z^d\setminus\{0\}$ be finite with weights $\alpha_p>0$ and $\phi_{\mathcal P}(\nu)=\sum_{p}\alpha_p|\nu\cdot p|$. If $E\subset\R^d$ has $C^{1,\beta}$ boundary and $A_h\in\{A_h^{\rm in}(E),A_h^{\rm out}(E)\}$, then for all sufficiently small $h$,
\[
\big|\,E_h^{\mathcal P}(A_h)-TV_{\phi_{\mathcal P}}(\chi_E)\,\big|\ \le\ C_{\mathcal P,E}\,h^\beta,
\]
with $C_{\mathcal P,E}$ depending only on $d$, $\{\tilde\alpha_p\}_{p\in\mathcal P_+}$, $\max_{p\in\mathcal P_+}|p|$, $\|\partial E\|_{C^{1,\beta}}$, and $\mathcal H^{d-1}(\partial E)$ (uniformly in the sampler). Moreover, for any sequence $A_h\subset\Z_h^d$ with $u_h^\#\to\chi_E$ in $L^1_{\rm loc}$ and $\sup_h E_h^{\mathcal P}(A_h)<\infty$,
\[
\liminf_{h\downarrow0}E_h^{\mathcal P}(A_h)\ \ge\ TV_{\phi_{\mathcal P}}(\chi_E).
\]
\end{theorem}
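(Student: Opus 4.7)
The statement splits cleanly into a quantitative upper estimate (for $C^{1,\beta}$ boundaries) and a $\Gamma$-liminf inequality (under $L^1_{\mathrm{loc}}$ convergence only), and each reduces to a single-direction statement that I then sum across the finite stencil $\mathcal P_+$. For the quantitative part, fix $p\in\mathcal P_+$ and apply Lemma~\ref{lem:dir-diff-quot} to the half-open sampler $A_h\in\{A_h^{\mathrm{in}}(E),A_h^{\mathrm{out}}(E)\}$: under $\partial E\in C^{1,\beta}$ this gives
\[
\Big|\,h^{d-1}\!\!\sum_{x\in\Z_h^d}\!\!\big|\chi_{A_h}(x+hp)-\chi_{A_h}(x)\big|\ -\ \int_{\partial^*E}|\nu_E\cdot p|\,d\mathcal H^{d-1}\,\Big|\ \le\ C_0\,|p|\,h^\beta,
\]
with $C_0$ depending only on $d$, $\|\partial E\|_{C^{1,\beta}}$ and $\mathcal H^{d-1}(\partial E)$ (uniformly in the sampler and in $p$ ranging over a fixed finite stencil). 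Multiplying by $\tilde\alpha_p$, summing over $\mathcal P_+$, and using $\sum_{p\in\mathcal P_+}\tilde\alpha_p|\nu_E\cdot p|=\phi_{\mathcal P}(\nu_E)$ together with Definition~\ref{def:finite-stencil-corrected} produces the claimed bound with $C_{\mathcal P,E}=C_0\sum_{p\in\mathcal P_+}\tilde\alpha_p|p|$, uniformly in the sampler and depending only on the quantities listed in the theorem.

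For the $\Gamma$-liminf, the voxel identity (valid because $hp\in h\Z^d$ permutes the grid cubes $Q_h(\cdot)$) gives, termwise in $p$,
\[
h^{d-1}\!\!\sum_{x\in\Z_h^d}\!\!\big|\chi_{A_h}(x+hp)-\chi_{A_h}(x)\big|\ =\ \frac{1}{h}\int_{\R^d}\big|u_h^\#(y+hp)-u_h^\#(y)\big|\,dy.
\]
For any $g\in C_c^1(\R^d)$ with $\|g\|_\infty\le 1$, the scalar bound $|a|\ge g\,a$ and the translation $z=y+hp$ yield
\[
\frac{1}{h}\int\!|u_h^\#(y+hp)-u_h^\#(y)|\,dy\ \ge\ \int u_h^\#(z)\,\frac{g(z-hp)-g(z)}{h}\,dz.
\]
As $h\downarrow 0$, the difference quotient converges uniformly on compacts to $-p\cdot\nabla g$ while $u_h^\#\to\chi_E$ in $L^1_{\mathrm{loc}}$; the right-hand side therefore tends to $-\int_E p\cdot\nabla g\,dy$, which by the Gauss-Green formula for sets of finite perimeter equals $-\int_{\partial^*E} g\,\nu_E\cdot p\,d\mathcal H^{d-1}$. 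Taking $\liminf_{h\downarrow 0}$ and then the supremum over admissible $g$ (equivalently, replacing $g$ by $-g$) recovers $\int_{\partial^*E}|\nu_E\cdot p|\,d\mathcal H^{d-1}$ via the standard dual characterization of the directional total variation. Multiplying by $\tilde\alpha_p$, using subadditivity of $\liminf$ on the finite sum over $\mathcal P_+$, and invoking the identity $\sum_p\tilde\alpha_p|\nu_E\cdot p|=\phi_{\mathcal P}(\nu_E)$ gives $\liminf_h E_h^{\mathcal P}(A_h)\ge TV_{\phi_{\mathcal P}}(\chi_E)$.

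\emph{Main obstacle.} The quantitative estimate is essentially bookkeeping once Lemma~\ref{lem:dir-diff-quot} is in hand. The genuine difficulty is the $\liminf$: only $L^1_{\mathrm{loc}}$ convergence is available, $\partial E$ has no regularity, and there is no equicontinuity of the $u_h^\#$ to lean on, so Lemma~\ref{lem:dir-diff-quot} cannot be invoked directly. The duality step defuses both issues by pushing all regularity onto the smooth test function $g$; the only analytic inputs left are the Gauss-Green formula for finite-perimeter sets and uniform convergence of $\tfrac{g(\cdot-hp)-g(\cdot)}{h}\to -p\cdot\nabla g$ on compacts, both of which are available without any extra hypotheses on $E$ or on the discrete sequence.
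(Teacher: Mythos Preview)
Your proof is correct. The quantitative part is identical to the paper's argument (apply Lemma~\ref{lem:dir-diff-quot} direction-by-direction and sum with weights $\tilde\alpha_p$). For the $\Gamma$-liminf you take a genuinely different route: the paper slices along lines $\ell_\xi=\xi+\R\hat p$ in $p^\perp$, invokes lower semicontinuity of 1D total variation on each slice, integrates in $\xi$, and then uses the lattice line density Lemma~\ref{lem:line-density} to recover the factor $|p|$. You instead pass through the voxel identity $h^{d-1}\sum_x|\chi_{A_h}(x+hp)-\chi_{A_h}(x)|=\tfrac1h\int|u_h^\#(y+hp)-u_h^\#(y)|\,dy$ and then dualize against $g\in C^1_c$, pushing all regularity onto the test function. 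Your approach is slightly more elementary---it bypasses both the slicing machinery and Lemma~\ref{lem:line-density}---and arguably more robust, since it reads directly as Reshetnyak-type lower semicontinuity of the directional total variation. The paper's slicing argument is more geometric and keeps the same line-counting viewpoint used for the quantitative half.

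One terminological slip: where you write ``subadditivity of $\liminf$ on the finite sum'' you mean \emph{superadditivity}, i.e.\ $\liminf\sum\ge\sum\liminf$. The inequality you use is correct; only the name is wrong.
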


\begin{proof}
The quantitative estimate follows from Lemma~\ref{lem:dir-diff-quot} applied to each $p\in\mathcal P_+$ and linearity in $\{\tilde\alpha_p\}$.

For the $\Gamma$-liminf: fix $p\in\mathcal P_+$. Slice along lines $\ell_\xi=\xi+\R \hat p$, $\xi\in p^\perp$. For a.e.\ $\xi$, the 1D total variation on $\ell_\xi$ is lower semicontinuous under $L^1$ convergence of slices, hence
\[
\liminf_{h\downarrow0} h^{d-1}\sum_{x\in \Z_h^d}\big|\chi_{A_h}(x+hp)-\chi_{A_h}(x)\big|
\ \ge\ \int_{p^\perp}\#\big(\partial^*E\cap \ell_\xi\big)\,d\mathcal H^{d-1}(\xi)
\ =\ \int_{\partial^*E}|\nu\cdot\hat p|\,d\mathcal H^{d-1}.
\]
Multiply by $|p|$ via Lemma~\ref{lem:line-density} (i.e.\ count only those lines meeting $h\Z^d$) to obtain
$
\int_{\partial^*E}|\nu\cdot p|\,d\cH^{d-1}.
$
Summing over $p\in\mathcal P_+$ with weights $\tilde\alpha_p$ gives the claim.
\end{proof}

\begin{figure}[t]
  \centering
  \begin{tikzpicture}[scale=0.9]
    \foreach \x in {-2,...,5} \foreach \y in {-1,...,3}
      \draw[gray!25] (\x,\y) rectangle ++(1,1);
    \draw[very thick] (-1,-0.2) -- (5,1.4);
    \foreach \x/\y in {-1/0,0/0,1/0,2/0,3/0} {
      \draw[->,thick] (\x,\y+0.5) -- ++(1,0.3);
    }
    \node at (3.7,2.4) {$O(h^\beta)$ with prefactor depending on $|p|$ and angle};
  \end{tikzpicture}
  \caption{Directional differences along a finite stencil approximate the anisotropic facet with error $O(h^\beta)$ (Theorem~\ref{thm:quant-stencil}); the constant depends on $|p|$ and the local angle between $\nu$ and $p$.}
  \label{fig:oblique}
\end{figure}
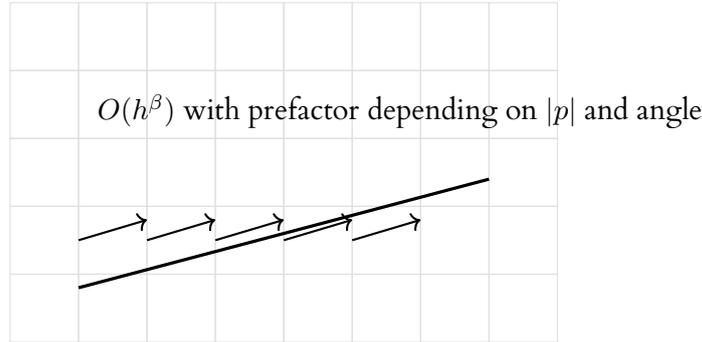

\begin{remark}[Why the axis implementation must be changed for general $\mathcal P$]
If one implements $E_h^{\mathcal P}$ on axis faces with weights $\sum_{p}\alpha_p|p_i|$, the limit is the orthotropic anisotropy
\[
\nu\ \longmapsto\ \sum_{i=1}^d\Big(\sum_{p\in\mathcal P}\alpha_p|p_i|\Big)\,|\nu_i|,
\]
which in general \emph{strictly dominates} $\phi_{\mathcal P}(\nu)=\sum_{p\in\mathcal P}\alpha_p|\nu\cdot p|$ by the triangle inequality $|\nu\cdot p|\le\sum_i|p_i|\,|\nu_i|$. The directional-difference model in Definition~\ref{def:finite-stencil-corrected} is the one that converges to $TV_{\phi_{\mathcal P}}$.
\end{remark}

\section{$L^1$ curl--fitting: abstract constant, explicit grid bound}\label{sec:curl-fit}

\noindent\emph{Goal and link to earlier sections.}
We prove an $L^1$ curl$\to$potential estimate: if a discrete vector field has small curl (in $\ell^1$ on faces), then it is close in $\ell^1$ (on edges) to a gradient. This complements the Wulff/TV results of Sections~\ref{sec:sharp-wulff}--\ref{sec:gamma-quant} by providing an explicit, computable constant for $L^1$ curl-fitting on grids, with a weighted variant compatible with anisotropies used elsewhere.

\medskip
\noindent\textbf{Setting and notation.}
Let $K=(V,E,\mathcal F)$ be a finite, connected, oriented $2$--dimensional cell complex.
We write $C^k(K)$ for real $k$--cochains on $K$ with coboundary maps
\[
d^k: C^k(K)\to C^{k+1}(K),\qquad d:=d^k\ \ \text{when $k$ is clear.}
\]
Thus $d^0$ sends $u:V\to\R$ to $(d^0u)(x\to y):=u(y)-u(x)$ on oriented edges, and
$d^1$ sends $F\in C^1(K)$ to the oriented face--circulation $(d^1F)(f):=\sum_{e\in\partial f}{\rm sgn}(f,e)\,F(e)$.
Equip $C^1(K)$ and $C^2(K)$ with the $\ell^1$ norms
\[
\|R\|_{\ell^1(E)}:=\sum_{e\in E}|R(e)|,\qquad
\|b\|_{\ell^1(\mathcal F)}:=\sum_{f\in\mathcal F}|b(f)|.
\]

\begin{definition}[Curl and its $\ell^1$ total variation]\label{def:curl-TV}
For a $1$--cochain $F\in C^1(K)$, its (discrete) curl is $dF\in C^2(K)$.
We set $\|\,\mathrm{curl}\,F\,\|_{\mathrm{TV}}:=\|dF\|_{\ell^1(\mathcal F)}$.
\end{definition}

\begin{remark}[Choice of $2$--cells]
If $K$ is a planar embedded graph it is natural to take $\mathcal F$ to be the oriented faces. More generally,
any oriented $2$--cell structure whose boundaries \emph{span the cycle space of the $1$--skeleton} yields $H^1(K;\R)=0$ (equivalently, $\ker d^1=\mathrm{im}\,d^0$ over $\R$).
\emph{All quantitative constants below (e.g.\ the $\ell^1$ filling constant) depend on the chosen
$2$--cell structure and on the $\ell^1$ norms on $C^1(K),C^2(K)$};
the qualitative input is the cohomological vanishing $H^1(K;\R)=0$.
\end{remark}

\subsection{Abstract result: the $\ell^1$ filling constant}

\begin{definition}[$\ell^1$ filling constant]\label{def:l1-filling}
Define
\[
\mathbf{Fill}_1(K)\ :=\ \sup_{b\in \mathrm{im}(d^1)}\
\inf_{\substack{R\in C^1(K)\\ d^1 R=b}}\ \frac{\|R\|_{\ell^1(E)}}{\|b\|_{\ell^1(\mathcal F)}}\, .
\]
\end{definition}

\noindent\emph{Notational convention.} On grid complexes we write $d$ for the degree--$1$ coboundary $d^1$.

\begin{remark}[Finiteness and attainment]
Since $K$ is finite, $d^1:C^1\to C^2$ is linear between finite-dimensional normed spaces.
Fix any linear right-inverse $S:\mathrm{im}(d^1)\to C^1(K)$ of $d^1$; then
$\|S\|_{1\to 1}:=\sup_{0\neq b} \|Sb\|_{\ell^1(E)}/\|b\|_{\ell^1(\mathcal F)}<\infty$, hence
$\mathbf{Fill}_1(K)\le \|S\|_{1\to 1}<\infty$. Moreover, for each $b$ the constrained minimization
$\inf\{\|R\|_{\ell^1(E)}:\ d^1R=b\}$ is a finite-dimensional linear program and is attained.
\end{remark}

\begin{theorem}[Curl--fitting in $\ell^1$]\label{thm:curl-fitting-abstract}
Assume $H^1(K;\R)=0$. Then for every $F\in C^1(K)$ there exists $u:V\to\R$ such that
\begin{equation}\label{eq:l1-curl-fit}
\|F-d^0u\|_{\ell^1(E)}\ \le\ \mathbf{Fill}_1(K)\ \cdot\ \|d^1F\|_{\ell^1(\mathcal F)}.
\end{equation}
\end{theorem}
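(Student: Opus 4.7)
The proof is essentially a definition chase, so the plan is short: reduce the statement to the definition of $\mathbf{Fill}_1(K)$ via the cohomological hypothesis. First I would set $b:=d^1F\in C^2(K)$. Since by construction $b\in\mathrm{im}(d^1)$, Definition~\ref{def:l1-filling} combined with finite dimensionality (so the inner infimum is attained by an LP argument) yields some $R\in C^1(K)$ with $d^1R=b$ and
\[
\|R\|_{\ell^1(E)}\ \le\ \mathbf{Fill}_1(K)\cdot\|b\|_{\ell^1(\mathcal F)}\ =\ \mathbf{Fill}_1(K)\cdot\|d^1F\|_{\ell^1(\mathcal F)}.
\]

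Next I would use the curl-free residue. By construction, $d^1(F-R)=d^1F-b=0$, so $F-R\in\ker d^1$. The hypothesis $H^1(K;\R)=0$ means exactly $\ker d^1=\mathrm{im}\,d^0$ (over $\R$), hence there exists $u:V\to\R$ with $F-R=d^0u$. Rearranging gives $F-d^0u=R$, and plugging this into the $\ell^1$ bound on $R$ yields \eqref{eq:l1-curl-fit}.

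The only non-trivial points are bookkeeping rather than genuine obstacles: (i) confirming attainment in the definition of $\mathbf{Fill}_1(K)$, which follows since $d^1:C^1(K)\to C^2(K)$ is linear between finite-dimensional normed spaces and the constraint $d^1R=b$ cuts out an affine subspace on which $\|\cdot\|_{\ell^1(E)}$ is coercive and attains its minimum (equivalently, an $\ell^1$ minimization LP over a polytope is solvable); and (ii) the identification $\ker d^1=\mathrm{im}\,d^0$, which is the chain-complex reformulation of $H^1(K;\R)=0$ under the standing assumption on the $2$-cell structure (the face boundaries span the cycle space of the $1$-skeleton), as noted in the remark following Definition~\ref{def:curl-TV}. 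With these two points in hand, the displayed inequality is immediate; no analytic estimates or combinatorial constructions are needed beyond the abstract definition of $\mathbf{Fill}_1(K)$ itself.
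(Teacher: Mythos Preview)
Your proof is correct. The paper does not actually supply a proof of this theorem---it is stated without argument and the section proceeds directly to the concrete grid construction---so there is no approach to compare against; your deduction from the definition of $\mathbf{Fill}_1(K)$ together with $H^1(K;\R)=0$ is the natural one, and the attainment point you isolate is precisely what the paper's ``Finiteness and attainment'' remark records.
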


\subsection{Bridge to the grid case: an explicit right-inverse with refined (weighted) bounds}

\noindent\textbf{Rectangular grid, cochains, and norms.}
Fix $d\ge 2$ and let $e_1,\dots,e_d$ be the canonical basis of $\Z^d$.
For $N=(N_1,\dots,N_d)\in\N^d$, set
\[
R:=\prod_{i=1}^d\{0,1,\dots,N_i-1\}\subset\Z^d.
\]
On this grid complex we write $d$ for the degree--$1$ coboundary, so $(dc)_{jk}$ denotes the $2$--cochain obtained from a $1$--cochain $c$:
\[
(dc)_{jk}(x)\ :=\ c_j(x)+c_k(x+e_j)-c_j(x+e_k)-c_k(x),\qquad 1\le j<k\le d,
\]
whenever the elementary square $\{x,x+e_j,x+e_k,x+e_j+e_k\}$ lies in $R$.
We write
\[
\sum_{\square_{jk}}\ (\cdot)\ :=\ \sum_{\substack{x:\\ x,\,x+e_j,\,x+e_k,\,x+e_j+e_k\in R}}\ (\cdot)
\]
for the sum over all $(j,k)$-faces (with $1\le j<k\le d$).
All sums over edges (resp.\ faces) are taken only over those oriented edges (resp.\ elementary squares) whose endpoints (resp.\ four vertices) lie in $R$. Empty sums are $0$ and empty products are $1$.

\begin{proposition}[Discrete Poincar\'e lemma on rectangular grids]\label{prop:discrete-poincare-grid}
Let $R=\prod_{i=1}^d\{0,\dots,N_i-1\}\subset\Z^d$ and consider the $2$--skeleton of the cubical complex on $R$. If $c\in C^1(R)$ satisfies $dc=0$ on every elementary $2$--face, then there exists $h\in C^0(R)$ such that $c=dh$. Equivalently, $H^1(R;\R)=0$ for this $2$--skeleton.
\end{proposition}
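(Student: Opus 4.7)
The plan is to prove the statement by induction on the ambient dimension $d$. The base case $d=1$ is immediate: there are no elementary $2$-faces on a path, so the hypothesis is vacuous, and telescoping produces $h(k):=\sum_{j=0}^{k-1}c_1(j)$ with $dh=c$.

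For the inductive step, I would split off the last coordinate, writing $R=R'\times\{0,\dots,N_d-1\}$ with $R':=\prod_{i=1}^{d-1}\{0,\dots,N_i-1\}$, and treat the first $d-1$ directions as \emph{horizontal} and the $d$-th as \emph{vertical}. For each height $k$ the restriction of $c$ to the horizontal edges of the slab $R'\times\{k\}$ is a $1$-cochain on $R'$ whose elementary $2$-face constraints are inherited from those on $R$; the inductive hypothesis therefore supplies a $g_k\in C^0(R')$ with $dg_k=c|_{R'\times\{k\}}$, uniquely fixed by the normalization $g_k(0)=0$. I then introduce the vertical cumulant $\phi(k):=\sum_{j=0}^{k-1}c_d(0,j)$, where $c_d(y,j)$ denotes the value of $c$ on the vertical edge from $(y,j)$ to $(y,j+1)$, and propose the candidate primitive
\[
h(y,k)\ :=\ g_k(y)+\phi(k).
\]
By construction $dh=c$ on every horizontal edge of every slab and on every vertical edge above $y=0$; the only remaining condition is compatibility on a general vertical edge $(y,k)\to(y,k+1)$, which amounts to the identity
\[
g_{k+1}(y)-g_k(y)\ =\ c_d(y,k)-c_d(0,k)\qquad(y\in R',\ k\in\{0,\dots,N_d-2\}).
\]

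The central step will be to prove this last identity, and this is where the mixed-plane closure conditions $(dc)_{id}=0$ for $i<d$ enter decisively. Setting $\psi_k(y):=g_{k+1}(y)-g_k(y)\in C^0(R')$, a direct computation on any horizontal edge $y\to y+e_i$ gives
\[
(d\psi_k)(y\to y+e_i)\ =\ c_i(y,k+1)-c_i(y,k),
\]
while the closure of the $(i,d)$-face at base $(y,k)$ yields $c_i(y,k+1)-c_i(y,k)=c_d(y+e_i,k)-c_d(y,k)$. Hence $\psi_k$ and $y\mapsto c_d(y,k)$ share the same horizontal discrete gradient on the connected grid $R'$, so they differ by a constant; the common normalization $g_k(0)=g_{k+1}(0)=0$ pins this constant to $-c_d(0,k)$, delivering the required identity and hence $dh=c$ globally.

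The only genuine obstacle is orientation and indexing bookkeeping: orientations of edges and of elementary $2$-faces must be fixed consistently so that the explicit formula for $(dc)_{id}$ matches the signs used in the slabwise reductions, and the slabwise primitives must be normalized at a single basepoint that survives the slicing. An equally workable alternative, should the bookkeeping prove cumbersome, is to define $h(y)$ as the signed sum of $c$ along the canonical ``staircase'' path from the origin to $y$ (increment coordinate $1$ up to $y_1$, then coordinate $2$, and so on), and to verify path independence by decomposing any two staircases into a finite sequence of elementary-square flips, invoking $dc=0$ on each; this bypasses induction but demands the same combinatorial care. Either route confirms $H^1(R;\R)=0$, reflecting the underlying contractibility of the cubical complex $R$.
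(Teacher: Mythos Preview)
Your inductive argument is correct: the slabwise primitives $g_k$ exist by the inductive hypothesis applied to the $(i,j)$-face constraints with $i,j<d$, and your compatibility identity $\psi_k(y)=c_d(y,k)-c_d(0,k)$ follows exactly as you say from the mixed $(i,d)$-face constraints together with the connectedness of $R'$ and the normalization $g_k(0)=0$.

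The paper takes the second of your two routes: it fixes a basepoint and defines $h(x)$ as the sum of $c$ along an axis-monotone (staircase) path from the basepoint to $x$, then argues path-independence by observing that any two axis-monotone paths differ by boundaries of elementary squares, on each of which the circulation vanishes by $dc=0$. This is the shorter, more geometric proof, and it has the practical advantage that the resulting formula coincides with the explicit lexicographic potential $H^1c$ used immediately afterward in the quantitative curl-fit (Proposition~\ref{prop:homotopy}). Your induction, by contrast, makes the structural role of the face types explicit---horizontal faces feed the slabwise primitives, mixed faces glue the slabs---which is conceptually clean and would generalize more readily to complexes where the ``staircase'' is less canonical. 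Both are standard proofs of the discrete Poincar\'e lemma on a box; neither has a gap.
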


\begin{proof}[Proof sketch]
Fix $x^{(0)}\in R$ and define $h(x)$ by summing $c$ along any axis--monotone path from $x^{(0)}$ to $x$. The hypothesis $dc=0$ implies path--independence because any two such paths differ by a finite union of boundaries of elementary squares; the circulation of $c$ around each such square vanishes.
\end{proof}

\noindent\textbf{Lexicographic homotopies (basepointed cone).}
Define $H^1:C^1(R)\to C^0(R)$ by
\begin{equation}\label{eq:H1}
(H^1 c)(x)\ :=\ \sum_{i=1}^d\ \sum_{t=0}^{x_i-1}
c_i\bigl(x_1,\dots,x_{i-1},\,t,\,\underbrace{0,\dots,0}_{i+1,\dots,d}\bigr),
\qquad x=(x_1,\dots,x_d)\in R.
\end{equation}
Define $H^2_\uparrow:C^2(R)\to C^1(R)$ (the ``increasing-index'' cone) by
\begin{equation}\label{eq:H2up}
\bigl(H^2_\uparrow b\bigr)_j(x)\ :=\ -\,\sum_{k=j+1}^{d}\ \sum_{t=0}^{\,x_k-1}
b_{jk}\bigl(x_1,\dots,x_{k-1},\,t,\,\underbrace{0,\dots,0}_{k+1,\dots,d}\bigr),
\end{equation}
and $H^2_\downarrow:C^2(R)\to C^1(R)$ (the ``decreasing-index'' cone) by
\begin{equation}\label{eq:H2down}
\bigl(H^2_\downarrow b\bigr)_j(x)\ :=\ \ \sum_{k=1}^{j-1}\ \sum_{t=0}^{\,x_k-1}
b_{kj}\bigl(x_1,\dots,x_{k-1},\,t,\,\underbrace{0,\dots,0}_{k+1,\dots,d}\bigr).
\end{equation}

\begin{proposition}[Cochain--homotopy identity]\label{prop:homotopy}
For every $c\in C^1(R)$ one has
\[
c\ -\ d\bigl(H^1 c\bigr)\ =\ H^2_\uparrow\bigl(dc\bigr)\qquad\text{in }C^1(R).
\]
In particular, $d\circ H^2_\uparrow=\mathrm{Id}$ on $\mathrm{im}(d)$.
\end{proposition}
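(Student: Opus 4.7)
The plan is to verify the chain--homotopy identity $c-d(H^1 c)=H^2_\uparrow(dc)$ edge--by--edge, and then read off $d\circ H^2_\uparrow=\mathrm{Id}$ on $\mathrm{im}(d)$ as a formal consequence: applying $d$ to the main identity and using $d\circ d=0$ gives $dc=d(H^2_\uparrow dc)$, so whenever $b=dc$ lies in $\mathrm{im}(d)$, one has $b=d(H^2_\uparrow b)$. Hence only the chain--homotopy identity requires direct verification.

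Fix $j\in\{1,\dots,d\}$ and a vertex $x\in R$ with $x+e_j\in R$, and expand each side on the edge $(x,x+e_j)$. Writing $H^1 c=\sum_{i=1}^d\phi_i$ with
$\phi_i(x):=\sum_{t=0}^{x_i-1}c_i(x_1,\dots,x_{i-1},t,0,\dots,0)$,
one checks that $\phi_i(x+e_j)=\phi_i(x)$ for $i<j$ (the $j$--coordinate does not appear in the arguments), that $\phi_j(x+e_j)-\phi_j(x)=c_j(x_1,\dots,x_{j-1},x_j,0,\dots,0)$, and that for $i>j$ the increment equals the $x_j$--first difference $\sum_{t=0}^{x_i-1}\bigl[c_i(\dots,x_j+1,\dots,t,0,\dots,0)-c_i(\dots,x_j,\dots,t,0,\dots,0)\bigr]$. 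Consequently,
$(d H^1 c)_j(x)=c_j(x_1,\dots,x_{j-1},x_j,0,\dots,0)+\sum_{k>j}\bigl(\phi_k(x+e_j)-\phi_k(x)\bigr)$.

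For the right--hand side, substitute $(dc)_{jk}(y)=c_j(y)+c_k(y+e_j)-c_j(y+e_k)-c_k(y)$ at $y=(x_1,\dots,x_{k-1},t,0,\dots,0)$ into \eqref{eq:H2up}. The $c_k$--terms assemble to $-\sum_{k>j}\bigl(\phi_k(x+e_j)-\phi_k(x)\bigr)$, which exactly cancels the $i>j$ contribution above with opposite sign. The $c_j$--terms produce $\sum_{k>j}\sum_{t=0}^{x_k-1}\bigl[c_j(x_1,\dots,x_{k-1},t+1,0,\dots,0)-c_j(x_1,\dots,x_{k-1},t,0,\dots,0)\bigr]$, which telescopes in $t$ to $\sum_{k>j}\bigl[a_k-a_{k-1}\bigr]$ where $a_k:=c_j(x_1,\dots,x_k,0,\dots,0)$ for $k\ge j$ (note $c_j(x_1,\dots,x_{k-1},0,\dots,0)=a_{k-1}$). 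This is again a telescoping sum in $k$, collapsing to $a_d-a_j=c_j(x)-c_j(x_1,\dots,x_{j-1},x_j,0,\dots,0)$. Adding this to the earlier expansion of $(dH^1 c)_j(x)$ yields $c_j(x)$, establishing the identity.

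The main obstacle is purely notational: two simultaneous telescopings (in the dummy $t$ for each face, and across the axis index $k$ for the basepointed cone) must be tracked carefully, and each vertex, edge, and face appearing in the expansions must be verified to lie in $R$. The latter is automatic because every intermediate coordinate is bounded by some $x_i$ (or $x_j+1$), so the single hypothesis $x+e_j\in R$ suffices. Boundary cases such as $j=d$, $j=1$, or $x_i=0$ for some $i$ are handled uniformly by the conventions that empty sums equal $0$ and empty products equal $1$, so no separate case analysis is required.
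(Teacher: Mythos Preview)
Your proof is correct and follows essentially the same approach as the paper: an edge-by-edge verification using the explicit formula for $(dH^1c)_j(x)$ and two nested telescopings (in the height variable $t$ and across the axis index $k$). The only organizational difference is that the paper substitutes the coboundary identity $(dc)_{ji}(u_t)=c_j(u_t)+c_i(u_t+e_j)-c_j(u_t+e_i)-c_i(u_t)$ into the expansion of $(dH^1c)_j(x)$ and then matches the remainder against $(H^2_\uparrow dc)_j(x)$, whereas you expand $(H^2_\uparrow dc)_j(x)$ directly and add it to $(dH^1c)_j(x)$; these are the same algebra arranged in the opposite order.
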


\begin{proof}
Fix $j\in\{1,\dots,d\}$ and $x\in R$ with $x,x+e_j\in R$.
From \eqref{eq:H1} one computes
\[
(dH^1 c)_j(x)=(H^1 c)(x+e_j)-(H^1 c)(x)
= c_j\bigl(x_1,\dots,x_{j-1},x_j,0,\dots,0\bigr)
+ \sum_{i=j+1}^d \sum_{t=0}^{x_i-1}\Big[c_i(u_t{+}e_j)-c_i(u_t)\Big],
\]
where $u_t:=(x_1,\dots,x_{i-1},t,0,\dots,0)$.
For $i>j$ and such $u_t$, the coboundary identity for $(j,i)$ (with $j<i$),
\[
(dc)_{ji}(u_t)=c_j(u_t)+c_i(u_t{+}e_j)-c_j(u_t{+}e_i)-c_i(u_t),
\]
rearranges to
\[
c_i(u_t{+}e_j)-c_i(u_t)=(dc)_{ji}(u_t)+\big[c_j(u_t{+}e_i)-c_j(u_t)\big].
\]
Summing in $t=0,\dots,x_i-1$ yields
\[
\sum_{t=0}^{x_i-1}\big[c_i(u_t{+}e_j)-c_i(u_t)\big]
= \sum_{t=0}^{x_i-1}(dc)_{ji}(u_t)
+ c_j\bigl(x_1,\dots,x_{i-1},x_i,0,\dots,0\bigr)
 - c_j\bigl(x_1,\dots,x_{i-1},0,0,\dots,0\bigr).
\]
Plugging back and telescoping the $c_j(\cdot)$ terms in $i=j{+}1,\dots,d$ transforms
$c_j(x_1,\dots,x_{j-1},x_j,0,\dots,0)$ into $c_j(x)$, leaving
\[
c_j(x)-(dH^1 c)_j(x)\ =\ -\,\sum_{i=j+1}^d \ \sum_{t=0}^{x_i-1} (dc)_{ji}\bigl(x_1,\dots,x_{i-1},t,0,\dots,0\bigr),
\]
which is exactly $\bigl(H^2_\uparrow (dc)\bigr)_j(x)$ by \eqref{eq:H2up}.
\end{proof}

\begin{lemma}[Operator bounds for $H^2$]\label{lem:H2-operator-refined}
For all $b\in C^2(R)$,
\begin{align}
\|H^2_\uparrow b\|_{\ell^1(E(R))}\ &\le\ \sum_{1\le j<k\le d}\ (N_k-1)\ \Big(\prod_{i>k} N_i\Big)\ \sum_{\square_{jk}} |b_{jk}|\,,\label{eq:anisotropic-up}\\
\|H^2_\downarrow b\|_{\ell^1(E(R))}\ &\le\ \sum_{1\le k<j\le d}\ (N_k-1)\ \Big(\prod_{i>k} N_i\Big)\ \sum_{\square_{kj}} |b_{kj}|.\label{eq:anisotropic-down}
\end{align}
Consequently, defining
\[
C_{\uparrow}(R)\ :=\ \max_{2\le k\le d}\ (N_k-1)\ \prod_{i>k} N_i,
\]
we have the uniform (order-dependent) bound
\begin{equation}\label{eq:uniform-max}
\|H^2_{\uparrow} b\|_{\ell^1(E(R))}\ \le\ C_{\uparrow}(R)\ \|b\|_{\ell^1(\mathcal F(R))}.
\end{equation}
In particular, since $d\circ H^2_{\uparrow}=\mathrm{Id}$ on $\mathrm{im}(d)$,
\begin{equation}\label{eq:fill-upper}
\mathbf{Fill}_1(R)\ \le\ C_{\uparrow}(R).
\end{equation}
\end{lemma}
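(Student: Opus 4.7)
The plan is to derive the edge-$\ell^1$ bounds \eqref{eq:anisotropic-up}-\eqref{eq:anisotropic-down} by a reverse-counting argument applied to the explicit formulas \eqref{eq:H2up}-\eqref{eq:H2down}, then read off \eqref{eq:uniform-max} by absorbing the $k$-dependence into $C_\uparrow(R)$, and finally deduce \eqref{eq:fill-upper} from the homotopy identity of Proposition~\ref{prop:homotopy}.

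For the core estimate I would fix $j$ and bound $\sum_x|(H^2_\uparrow b)_j(x)|$ by the triangle inequality followed by a swap of summation. The key bookkeeping is: for each face $\square_{jk}$ with corner $y=(y_1,\ldots,y_{k-1},t,0,\ldots,0)$, the contribution $b_{jk}(y)$ enters $(H^2_\uparrow b)_j(x)$ precisely when $x_i=y_i$ for $i<k$, $x_k\in\{t+1,\ldots,N_k-1\}$ (at most $N_k-1-t\le N_k-1$ choices), and $x_{k+1},\ldots,x_d$ are free ($\prod_{i>k}N_i$ choices). The edge-existence constraint $x_j\le N_j-2$ is automatic since $j<k$ and $y_j\le N_j-2$ for every valid $\square_{jk}$. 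Multiplying $|b_{jk}(y)|$ by this multiplicity, majorizing the sum over special-form corners by the full face-sum $\sum_{\square_{jk}}|b_{jk}|$ (harmless by nonnegativity), and summing in $k>j$ and $j$ gives \eqref{eq:anisotropic-up}. The symmetric bookkeeping for $H^2_\downarrow$ gives \eqref{eq:anisotropic-down}; the only slack is that when $k<j$ the edge constraint contributes a factor $N_j-1$ instead of $N_j$, which is absorbed into $\prod_{i>k}N_i$.

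The uniform bound \eqref{eq:uniform-max} follows at once because the coefficient $(N_k-1)\prod_{i>k}N_i$ depends only on $k\in\{2,\ldots,d\}$, so replacing it by its maximum $C_\uparrow(R)$ and collapsing the double sum into $\|b\|_{\ell^1(\mathcal F(R))}$ yields the claim. For \eqref{eq:fill-upper}, applying $d^1$ to the identity $c-d^0(H^1 c)=H^2_\uparrow(d^1 c)$ of Proposition~\ref{prop:homotopy} and using $d^1\circ d^0=0$ shows $d^1\circ H^2_\uparrow=\mathrm{Id}$ on $\mathrm{im}(d^1)$; hence for each $b\in\mathrm{im}(d^1)$ the cochain $R:=H^2_\uparrow b$ is an admissible competitor in Definition~\ref{def:l1-filling} whose ratio is bounded by $C_\uparrow(R)$.

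The main obstacle is the combinatorial bookkeeping in the first step: keeping track simultaneously of which coordinates of $x$ are frozen by the corner, which range over a truncated interval, which remain free, and ensuring the edge-existence constraint is correctly absorbed. This is conceptually routine but error-prone, and the symmetric analysis for $H^2_\downarrow$ must be written carefully to land in the claimed anisotropic form without sign or factor confusion.
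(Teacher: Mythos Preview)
Your proposal is correct and follows essentially the same approach as the paper: the paper applies the triangle inequality, then uses the one-dimensional Hardy identity $\sum_{x=0}^{M-1}\sum_{t=0}^{x-1}a_t=\sum_{t=0}^{M-2}(M-1-t)a_t$ to swap the $x_k$-sum with the $t$-sum (your ``reverse-counting''), yielding the factor $N_k-1$, and then observes that the trailing coordinates $x_{k+1},\dots,x_d$ do not appear in the argument of $b_{jk}$, giving the replication factor $\prod_{i>k}N_i$. The derivation of \eqref{eq:uniform-max} and \eqref{eq:fill-upper} is likewise identical.
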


\begin{proof}
We prove \eqref{eq:anisotropic-up}; the $\downarrow$ case is analogous. For any sequence $(a_t)$ and $M\ge1$,
\begin{equation}\label{eq:Hardy-1D}
\sum_{x=0}^{M-1}\ \sum_{t=0}^{x-1} a_t\ =\ \sum_{t=0}^{M-2}(M-1-t)\,a_t.
\end{equation}
Fix $j<k$. By \eqref{eq:H2up} and the triangle inequality, summing over all edges parallel to $e_j$,
\[
\sum_{\substack{x:\\ x,\,x+e_j\in R}}\bigl| (H^2_\uparrow b)_j(x)\bigr|
\ \le\ \sum_{\substack{x:\\ x,\,x+e_j\in R}} \ \sum_{t=0}^{x_k-1} \Big|b_{jk}\bigl(x_1,\dots,x_{k-1},t,0,\dots,0\bigr)\Big|.
\]
Fix all coordinates except $x_k$ and apply \eqref{eq:Hardy-1D} with $M=N_k$ to the inner double sum. This yields the factor $(N_k-1)$. Summation over the trailing coordinates $x_{k+1},\dots,x_d$ (which do not appear in the argument of $b_{jk}$) contributes the replication factor $\prod_{i>k}N_i$. The sum over $x_1,\dots,x_{k-1}$ enumerates the lower--left corners of the $(j,k)$--faces on that slice and is bounded by $\sum_{\square_{jk}}|b_{jk}|$. This proves \eqref{eq:anisotropic-up}.
Taking the maximum over $k$ gives \eqref{eq:uniform-max}, and \eqref{eq:fill-upper} follows because $H^2_\uparrow$ is a right inverse on $\mathrm{im}(d)$ by Proposition~\ref{prop:homotopy}.
\end{proof}

\begin{proposition}[Weighted $\ell^1$ curl--fit]\label{prop:weighted-curlfit}
Fix edge weights $\alpha_1,\dots,\alpha_d>0$ and face weights $\beta_{jk}>0$ for $1\le j<k\le d$. Define
\[
\|c\|_{1,\alpha}:=\sum_{j=1}^d \alpha_j\sum_{\substack{x:\\ x,\,x+e_j\in R}}|c_j(x)|,\qquad
\|b\|_{1,\beta}:=\sum_{1\le j<k\le d}\beta_{jk}\sum_{\square_{jk}}|b_{jk}(x)|.
\]
Then
\[
\|H^2_\uparrow b\|_{1,\alpha}
\ \le\ \Bigg(\max_{1\le j<k\le d}\ \frac{\alpha_j}{\beta_{jk}}\ (N_k-1)\prod_{i>k}N_i\Bigg)\ \|b\|_{1,\beta}.
\]
Consequently, for every $c\in C^1(R)$ there exists $h:R\to\R$ with
\[
\|c-dh\|_{1,\alpha}\ \le\ \Bigg(\max_{1\le j<k\le d}\ \frac{\alpha_j}{\beta_{jk}}\ (N_k-1)\prod_{i>k}N_i\Bigg)\ \|dc\|_{1,\beta}.
\]
\end{proposition}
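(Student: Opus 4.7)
The plan is to deduce both inequalities from Proposition~\ref{prop:homotopy} and a weighted refinement of Lemma~\ref{lem:H2-operator-refined}: first prove the operator bound on $H^2_\uparrow$ in the $(\alpha,\beta)$-weighted norms, then take $h:=H^1 c$ and substitute $b=dc$ to obtain the curl-fitting statement. No new combinatorial input is required; the weights should thread linearly through the pointwise argument already in place for the unweighted case.

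For the operator bound, I would retrace the proof of \eqref{eq:anisotropic-up} one pair $(j,k)$ at a time, but now weighting the outer sum over $e_j$-edges by $\alpha_j$. The definition \eqref{eq:H2up} expresses $(H^2_\uparrow b)_j(x)$ as a sum over $k>j$ of partial sums of $b_{jk}$ along the $x_k$-axis (with trailing coordinates set to $0$), so by the triangle inequality
\begin{equation*}
\alpha_j\sum_{\substack{x:\,x,\,x+e_j\in R}}\bigl|(H^2_\uparrow b)_j(x)\bigr|
\ \le\ \sum_{k>j} \alpha_j\sum_{\substack{x:\,x,\,x+e_j\in R}}\ \sum_{t=0}^{x_k-1} \Bigl|b_{jk}\bigl(x_1,\dots,x_{k-1},t,0,\dots,0\bigr)\Bigr|.
\end{equation*}
Holding all coordinates but $x_k$ fixed and applying the Hardy identity \eqref{eq:Hardy-1D} with $M=N_k$ contributes the factor $(N_k-1)$; the trailing coordinates $x_{k+1},\dots,x_d$ do not appear in the argument of $b_{jk}$, so summing them produces the replication factor $\prod_{i>k}N_i$; and the $x_1,\dots,x_{k-1}$ sum enumerates the lower-left corners of the $(j,k)$-faces, reproducing $\sum_{\square_{jk}}|b_{jk}|$. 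Multiplying and dividing by $\beta_{jk}$ yields
\begin{equation*}
\alpha_j\sum_{\square_{jk}}(N_k-1)\Big(\prod_{i>k}N_i\Big)|b_{jk}|
\ =\ \frac{\alpha_j}{\beta_{jk}}(N_k-1)\Big(\prod_{i>k}N_i\Big)\ \beta_{jk}\sum_{\square_{jk}}|b_{jk}|.
\end{equation*}
Summing in $j$ and the admissible $k>j$ and bounding each coefficient by the stated maximum gives the weighted operator bound.

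For the second inequality, set $h:=H^1 c$. By Proposition~\ref{prop:homotopy},
$c-dh = H^2_\uparrow(dc)$,
so applying the weighted operator bound with $b=dc$ gives
\begin{equation*}
\|c-dh\|_{1,\alpha}\ =\ \|H^2_\uparrow(dc)\|_{1,\alpha}\ \le\ \Bigg(\max_{1\le j<k\le d}\frac{\alpha_j}{\beta_{jk}}(N_k-1)\prod_{i>k}N_i\Bigg)\,\|dc\|_{1,\beta},
\end{equation*}
which is exactly the claim.

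The only subtlety, and the step I would most need to double-check, is the correct pair-by-pair bookkeeping of which direction ($e_j$ on the edge side versus $(j,k)$ on the face side) carries which weight: the operator $H^2_\uparrow$ couples the norm on $j$-edges to the norm on $(j,k)$-faces for \emph{all} $k>j$, so the prefactor must be a maximum over such pairs rather than a separable product; everything else is a routine linear rescaling of Lemma~\ref{lem:H2-operator-refined}. An analogous statement with $H^2_\downarrow$ in place of $H^2_\uparrow$ (and $k<j$ in the max) holds by the symmetric calculation, and one could take the pointwise minimum of the two right-inverses if a smaller constant is desired for a particular anisotropy.
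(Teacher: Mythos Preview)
Your proposal is correct and follows essentially the same approach as the paper: the paper's proof is the one-line instruction ``multiply the contributions in \eqref{eq:anisotropic-up} by $\alpha_j$ on edges and divide the face term by $\beta_{jk}$, then sum and take the maximum over $(j,k)$,'' and you have written this out in full detail, including the use of $h:=H^1c$ and Proposition~\ref{prop:homotopy} for the consequence.
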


\begin{proof}
Multiply the contributions in \eqref{eq:anisotropic-up} by $\alpha_j$ on edges and divide the face term by $\beta_{jk}$, then sum and take the maximum over $(j,k)$.
\end{proof}

\begin{corollary}[Quantitative curl--fit on the grid]\label{cor:grid-curl-fit}
For every $c\in C^1(R)$ there exists $h:R\to\R$ such that
\begin{equation}\label{eq:curl-fit-uniform}
\sum_{j=1}^d\ \sum_{\substack{x:\\ x,\,x+e_j\in R}}\bigl|\,c_j(x)-\bigl(h(x+e_j)-h(x)\bigr)\,\bigr|
\ \le\ C_{\uparrow}(R)\ 
\sum_{1\le j<k\le d}\ \sum_{\square_{jk}} \bigl|(dc)_{jk}(x)\bigr|.
\end{equation}
One admissible choice is the lexicographic potential $h:=H^1 c$, for which the residual equals $H^2_\uparrow(dc)$ by Proposition~\ref{prop:homotopy}, and hence satisfies \eqref{eq:curl-fit-uniform} by Lemma~\ref{lem:H2-operator-refined}.
\end{corollary}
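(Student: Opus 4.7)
My plan is to read Corollary~\ref{cor:grid-curl-fit} as a direct combination of the two ingredients immediately preceding it: the explicit lexicographic homotopy of Proposition~\ref{prop:homotopy} produces a candidate potential $h$, and the operator bound of Lemma~\ref{lem:H2-operator-refined} turns the residual into the face sum on the right-hand side with the stated constant. There is no new analytic input needed; the proof is entirely a matter of assembling these pieces and verifying that the edge and face indexing sets match what is written in \eqref{eq:curl-fit-uniform}.

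Concretely, I would set $h := H^1 c$ as defined by \eqref{eq:H1}. By Proposition~\ref{prop:homotopy},
\[
c - d(H^1 c)\ =\ H^2_\uparrow(dc)\qquad\text{in }C^1(R),
\]
so the residual $r := c - dh$ is an explicit $1$-cochain, namely $r = H^2_\uparrow(dc)$. Taking $\ell^1$-norms on edges and invoking the uniform bound \eqref{eq:uniform-max} of Lemma~\ref{lem:H2-operator-refined} with $b := dc$,
\[
\|r\|_{\ell^1(E(R))}\ =\ \|H^2_\uparrow(dc)\|_{\ell^1(E(R))}\ \le\ C_\uparrow(R)\,\|dc\|_{\ell^1(\mathcal F(R))},
\]
which is exactly \eqref{eq:curl-fit-uniform} after expanding the $\ell^1$ norms as sums over admissible edges and elementary $(j,k)$-faces.

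The only thing to be careful about is bookkeeping: one should check that the edges appearing in the left-hand sum of \eqref{eq:curl-fit-uniform} are exactly those over which $H^2_\uparrow(dc)$ is defined, and that the face sum on the right matches $\|dc\|_{\ell^1(\mathcal F(R))}$ with the convention that $1\le j<k\le d$. Both follow from the restriction to admissible pairs $x, x+e_j \in R$ (resp.\ admissible elementary squares) built into the definitions of $H^1$ and $H^2_\uparrow$, together with the empty-sum convention. The second admissible choice mentioned in the statement (using $H^2_\downarrow$ instead) would be handled identically, substituting the bound \eqref{eq:anisotropic-down} for \eqref{eq:anisotropic-up}; the asymmetry between the two is just the order in which coordinates are telescoped.

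The main (and only real) obstacle here is not analytic but notational: ensuring that the statement of the corollary uses precisely the uniform, order-dependent constant $C_\uparrow(R)$ rather than an anisotropic per-direction constant of the type appearing in Proposition~\ref{prop:weighted-curlfit}. If one prefers the sharper weighted bound, the proof above goes through verbatim with the maximum in Proposition~\ref{prop:weighted-curlfit} replacing $C_\uparrow(R)$, so the corollary is really a convenient uniform specialization of a family of estimates. I do not foresee any hidden difficulty.
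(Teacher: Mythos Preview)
Your proposal is correct and follows exactly the approach the paper takes: the corollary's own statement already indicates that $h:=H^1c$ gives residual $H^2_\uparrow(dc)$ by Proposition~\ref{prop:homotopy}, and the bound then follows from the uniform estimate \eqref{eq:uniform-max} of Lemma~\ref{lem:H2-operator-refined}. There is no separate proof in the paper beyond this, and your additional remarks on bookkeeping and the $H^2_\downarrow$ variant are accurate but not needed.
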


\begin{figure}[t]
  \centering
  \begin{tikzpicture}[scale=0.85]
    \node (c) at (0,0) {$c$};
    \node (dc) at (3.0,0) {$dc$};
    \node (H2dc) at (6.4,0) {$H^{2}_{\uparrow}(dc)$};
    \node (grad) at (9.6,0) {$d h$};
    \draw[->,thick] (c) -- node[above]{\small $d$} (dc);
    \draw[->,thick] (dc) -- node[above]{\small $H^{2}_{\uparrow}$} (H2dc);
    \draw[->,thick] (c) to[bend right=25] node[below]{\small $H^{1}$} (grad);
    \draw[->,thick] (H2dc) to[bend left=20] node[above]{\small subtract} (grad);
    \node[below] at (4.8,-1.0) {$\|c - d h\|_{\ell^1} \le C_\uparrow(R)\,\|dc\|_{\ell^1}$};
  \end{tikzpicture}
  \caption{Cochain homotopies $H^{1},H^{2}_\uparrow$ implement the $\ell^1$ curl-fit (Thm.~\ref{thm:curl-fitting-abstract}, Cor.~\ref{cor:grid-curl-fit}).}
  \label{fig:curlfit}
\end{figure}

\begin{remark}[On periodic boundary conditions]
On a discrete torus (periodic grid) one has $H^1\neq 0$; curl--free fields need not be gradients. The estimate \eqref{eq:l1-curl-fit} holds after projecting $F$ to the complement of the harmonic $1$--cochains (i.e.\ imposing zero periods along the fundamental cycles). We restrict here to rectangular boxes with free boundary.
\end{remark}

\begin{remark}[No ``cosmetic'' slack; anisotropic vs.\ uniform forms]\label{rmk:no-slack}
The one-dimensional Hardy weights $(N_k-1)$ in \eqref{eq:anisotropic-up}-\eqref{eq:anisotropic-down} are exact for the $x_k$ summation.
The multiplicative factor $\prod_{i>k}N_i$ is the unavoidable replication caused by evaluating $b$ on the zero-slice in the trailing coordinates $k{+}1,\dots,d$ in \eqref{eq:H2up}-\eqref{eq:H2down}.
For this right inverse $H^2_\uparrow$ the coefficients in \eqref{eq:anisotropic-up} are optimal: if $b_{jk}$ is supported on a single $(j,k)$-face in the slice $x_k=0$, then $\|b\|_{\ell^1(\mathcal F(R))}=1$ while $\|H^2_\uparrow b\|_{\ell^1(E(R))}=(N_k-1)\prod_{i>k}N_i$. Hence no order--independent reduction of the uniform constant below $C_\uparrow(R)$ is possible within this construction. Determining the exact value of $\mathbf{Fill}_1(R)$ (the best constant over \emph{all} right inverses) remains open in $d\ge3$.
\end{remark}

\begin{remark}[Order choice and constants]\label{rmk:order}
The constant $C_{\uparrow}(R)$ depends on the \emph{index order} used in \eqref{eq:H2up}. Different orders can change $C_{\uparrow}(R)$ substantially.
For $d=3$ and order $(N_1,N_2,N_3)$ one has the closed form
\[
C_{\uparrow}(R)\ =\ \max\bigl\{(N_2-1)\,N_3,\ N_3-1\bigr\}.
\]
Placing the two smaller side lengths in the last two positions, with the smaller in the middle (order $(\text{largest},\ \text{smallest},\ \text{second smallest})$), minimizes this maximum.
In $d=2$ the formula reduces to $C_{\uparrow}(R)=N_2-1$ for the order $1<2$ (and to $N_1-1$ if axes are swapped).
\end{remark}

\begin{corollary}[Uniform $2$D bound]\label{cor:2D-uniform}
In $d=2$ one can always choose the axis order so that
\[
\mathbf{Fill}_1(R)\ \le\ \min\{N_1-1,\ N_2-1\}.
\]
\end{corollary}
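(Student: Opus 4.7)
The plan is to reduce the $d=2$ case to the uniform bound \eqref{eq:fill-upper} proved in Lemma~\ref{lem:H2-operator-refined}, noting that $\mathbf{Fill}_1(R)$ is an invariant of the underlying cell complex while the constant $C_\uparrow(R)$ produced by the specific homotopy $H^2_\uparrow$ depends on the axis ordering used in the construction \eqref{eq:H2up}. Since the two axis labelings give two a priori upper bounds for the same intrinsic quantity, we may take their minimum.

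First I would specialize $C_\uparrow(R)=\max_{2\le k\le d}(N_k-1)\prod_{i>k}N_i$ to $d=2$: the only admissible index is $k=2$, and $\prod_{i>2}N_i$ is the empty product, equal to $1$. With the canonical order $1<2$ this yields $C_\uparrow(R)=N_2-1$, and by \eqref{eq:fill-upper} we immediately obtain $\mathbf{Fill}_1(R)\le N_2-1$.

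Next I would repeat the entire construction after relabeling the coordinates $1\leftrightarrow 2$. Relabeling is a cellular automorphism of $R$ (just a permutation of the two edge classes and a reindexing of the unique face class), so it leaves both $H^1(R;\R)=0$ and the quantity $\mathbf{Fill}_1(R)$ unchanged, while the homotopy operators \eqref{eq:H1}-\eqref{eq:H2up}, as built in the swapped order, produce the bound $C_\uparrow(R)=N_1-1$. Applying \eqref{eq:fill-upper} once more gives $\mathbf{Fill}_1(R)\le N_1-1$. Taking the minimum of the two bounds yields the corollary.

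I do not expect a genuine obstacle here: the argument is entirely bookkeeping on top of Lemma~\ref{lem:H2-operator-refined}. The only subtlety worth flagging explicitly is that one must verify that $\mathbf{Fill}_1(R)$ (as defined in Definition~\ref{def:l1-filling}) is symmetric under the axis swap, which follows because the $\ell^1$ norms on $C^1(R)$ and $C^2(R)$ are defined by summing over edges and faces without reference to orientation of the ambient $\Z^2$, and the coboundary $d^1$ commutes with cellular isomorphisms. Everything else is a direct substitution into the formula of Lemma~\ref{lem:H2-operator-refined}.
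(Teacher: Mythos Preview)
Your proposal is correct and matches the paper's intended argument: the corollary is stated immediately after Remark~\ref{rmk:order}, which already records that in $d=2$ the constant $C_\uparrow(R)$ equals $N_2-1$ for the order $1<2$ and $N_1-1$ under the axis swap, so the bound $\mathbf{Fill}_1(R)\le\min\{N_1-1,N_2-1\}$ follows from \eqref{eq:fill-upper} exactly as you describe. The paper gives no separate proof, treating it as immediate from the remark and the lemma; your write-up spells out the invariance of $\mathbf{Fill}_1$ under relabeling, which is the only point one might pause on.
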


\begin{remark}[Symmetrizing right inverses (optional)]
A convex combination of several index orders (or of $\uparrow,\downarrow$) yields another linear right inverse; its operator norm is bounded by the corresponding convex combination of the individual bounds, so averaging cannot beat the best single order in the uniform constant $C_\uparrow(R)$, though it may reduce anisotropy for specific $b$.
\end{remark}

\begin{remark}[Dual viewpoint (optional)]
The minimization $\inf\{\|R\|_{\ell^1(E)}:\ d^1R=b\}$ is a finite LP; its dual characterizes $\mathbf{Fill}_1(K)$ as the least $L^1(\mathcal F)\to L^1(E)$ operator norm among all linear right inverses of $d^1$. We do not use this directly, but it clarifies the role of $\mathbf{Fill}_1$ as a best possible uniform constant over right inverses.
\end{remark}

\section{Constructive Neutralization of Heisenberg Shear}
\label{sec:heis}

\noindent\textbf{Group, generators, and edge convention.}
We work in the (discrete) Heisenberg group on $\Z^3$ with the group law
\begin{equation}\label{eq:Heis-law}
(x,y,z)\cdot(x',y',z')\ :=\ (x+x',\,y+y',\,z+z'+x\,y').
\end{equation}
Let $a=(1,0,0)$, $b=(0,1,0)$ and $S=\{a^{\pm1},b^{\pm1}\}$.
\emph{Cayley edges are by right multiplication:} $g\leftrightarrow g\,s$ for $s\in S$.
The horizontal perimeter $B_S(\cdot)$ is the undirected edge cut in this Cayley graph with unit weights
(each broken undirected edge counted once; the directed perimeter is twice this).

\begin{remark}[Orientation and normalization lock]\label{rmk:heis-orientation-lock}
We fix once and for all the following conventions in this section:
\begin{enumerate}
\item \emph{Edges:} Cayley edges use the \emph{right} action by $S=\{a^{\pm1},b^{\pm1}\}$.
\item \emph{Boundary normalization:} $B_S(\cdot)$ counts each \emph{undirected} broken edge once. 
      If one prefers the directed normalization, all $B_S$-identities below scale by~$2$.
\item \emph{Squares and signs:} For coboundaries $dc$ on the base grid $\Z^2$, faces are oriented by
      the ordered pair $(e_1,e_2)$, and edges carry their natural orientation $(x\to x+e_i)$.
      Thus, for a $1$-cochain $c$ one has
      \[
      (dc)_{12}(x,y)=c_{e_1}(x,y)+c_{e_2}(x+e_1,y)-c_{e_1}(x,y+e_2)-c_{e_2}(x,y).
      \]
      With $c_{e_1}(x,y)=y$ and $c_{e_2}\equiv 0$ we get $(dc)_{12}\equiv -1$.
\item \emph{Half-open columns:} All column fibres are treated as half-open integer intervals 
      $\hointerval{h(u)}{h(u)+\ell(u)}$ so that overlaps are counted unambiguously and 
      symmetric-difference formulae are exact (Lemma~\ref{lem:interval-sd-exact}).
\end{enumerate}
These choices match Lemma~\ref{lem:gauge-check} and are used in the combinatorial
Lemmas~\ref{lem:interval-sd-exact}, \ref{lem:per-edge-two-sided-full} and in Theorem~\ref{thm:heis-shear-constructive-corrected}.
\end{remark}

\noindent\textbf{Gauge that neutralizes shear on $b$-edges (right action).}
\begin{definition}[Heisenberg gauge]\label{def:Heis-gauge}
Define the quadratic gauge $\zeta(x,y):=xy$ and the map
\[
T:\Z^3\to\Z^3,\qquad T(x,y,z)=(x,y,h),\ \ h:=z-\zeta(x,y)=z-xy.
\]
\end{definition}

\begin{lemma}[Gauge offsets under right multiplication]\label{lem:gauge-check}
With \eqref{eq:Heis-law}, generators $a=(1,0,0)$, $b=(0,1,0)$, and gauge
$T(x,y,z)=(x,y,h)$ with $h=z-xy$, one has:
\[
(x,y,z)\,b=(x,\,y+1,\,z+x)\ \Rightarrow\ h'=(z+x)-x(y+1)=h,
\]
\[
(x,y,z)\,a=(x+1,\,y,\,z)\ \Rightarrow\ h'=z-(x+1)y=h-y.
\]
Thus $b$-edges have zero $h$-offset and $a$-edges shift $h$ by $-y$.
\end{lemma}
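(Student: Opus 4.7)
The plan is to verify the two displayed identities by direct substitution, using only the Heisenberg product \eqref{eq:Heis-law} and the definition of the gauge $T$; no deeper structural input is needed. This is the only lemma in the section where one checks what the BCH-type quadratic term in $z$ does to the gauged third coordinate $h=z-xy$, so the proof is designed to be a short self-contained computation that exhibits the cancellation.

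First, for the $b$-edge, I apply \eqref{eq:Heis-law} with $(x',y',z')=(0,1,0)$: the cross term $xy'$ equals $x$, yielding $(x,y,z)\,b=(x,y+1,z+x)$. Substituting into $h'=z'-x'y'$ (with $x'=x$, $y'=y+1$, $z'=z+x$) gives $h'=(z+x)-x(y+1)=z-xy=h$, so the $b$-edge offset is zero. For the $a$-edge I apply \eqref{eq:Heis-law} with $(x',y',z')=(1,0,0)$: now the cross term $xy'$ vanishes, so $(x,y,z)\,a=(x+1,y,z)$, and $h'=z-(x+1)y=(z-xy)-y=h-y$.

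The only ``conceptual'' point to flag (rather than prove) is the asymmetry between the two generators: the $x$-factor in the cross term $xy'$ is cancelled by the $x\cdot(y+1)$ expansion precisely because we chose the gauge $\zeta(x,y)=xy$, which is antisymmetric-adapted to the right action and matches the ordered pair $(e_1,e_2)$ of Remark~\ref{rmk:heis-orientation-lock}(3). With the opposite gauge $\zeta(x,y)=yx$ or left-action convention, the roles of $a$ and $b$ would swap, which is why the lock in Remark~\ref{rmk:heis-orientation-lock} is invoked before stating the lemma. I do not anticipate any obstacle: both lines are a single line of arithmetic, and the conclusions about the per-edge offsets $(0$ on $b$, $-y$ on $a)$ follow immediately, feeding directly into the column-bookkeeping used in Theorem~\ref{thm:heis-shear-constructive-corrected}.
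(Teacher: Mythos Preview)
Your proposal is correct and matches the paper's treatment: the lemma is stated without a separate proof environment precisely because the displayed identities already contain the one-line verification you give, namely applying \eqref{eq:Heis-law} with $(x',y',z')=(0,1,0)$ and $(1,0,0)$ and then substituting into $h=z-xy$. Your additional commentary on the gauge asymmetry and the orientation lock is accurate context but not needed for the proof itself.
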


\begin{lemma}[Alignment shifts for all horizontal directions]\label{lem:shift-signs}
With the gauge $T(x,y,z)=(x,y,h)$, $h=z-xy$, the alignment shifts along \emph{all} horizontal steps are:
\[
\begin{aligned}
&\text{for } v=+e_2\ (b):\quad h' = h,\qquad &&\sigma_{+e_2}(x,y)=0,\\
&\text{for } v=-e_2\ (b^{-1}):\quad h' = h,\qquad &&\sigma_{-e_2}(x,y)=0,\\
&\text{for } v=+e_1\ (a):\quad h' = h-y,\qquad &&\sigma_{+e_1}(x,y)=y,\\
&\text{for } v=-e_1\ (a^{-1}):\quad h' = h+y,\qquad &&\sigma_{-e_1}(x,y)=-y.
\end{aligned}
\]
\end{lemma}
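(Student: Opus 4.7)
The plan is a direct group-law verification that extends Lemma~\ref{lem:gauge-check} from $\{+e_1,+e_2\}$ to all four horizontal directions. For each $s\in\{a^{\pm1},b^{\pm1}\}$ I will compute $g\cdot s$ with $g=(x,y,z)$ using \eqref{eq:Heis-law}, then apply the gauge $T(x,y,z)=(x,y,h)$ with $h=z-xy$ to the resulting point and read off $\sigma_v$ from the convention $h'=h-\sigma_v$ fixed by Lemma~\ref{lem:gauge-check}.

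First I identify the inverses directly from \eqref{eq:Heis-law}: the identities $(1,0,0)\cdot(-1,0,0)=(0,0,0+1\cdot 0)=e$ and $(0,1,0)\cdot(0,-1,0)=(0,0,0+0\cdot(-1))=e$ give $a^{-1}=(-1,0,0)$ and $b^{-1}=(0,-1,0)$. The cases $v=+e_1,+e_2$ are already Lemma~\ref{lem:gauge-check}. For $v=-e_2$ I compute $(x,y,z)\cdot b^{-1}=(x,\,y-1,\,z+x(-1))=(x,y-1,z-x)$, so the gauge yields $h'=(z-x)-x(y-1)=z-xy=h$, whence $\sigma_{-e_2}=0$. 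For $v=-e_1$ the cocycle contribution $xy'$ vanishes since $y'=0$, so $(x,y,z)\cdot a^{-1}=(x-1,y,z)$ and $h'=z-(x-1)y=h+y$, giving $\sigma_{-e_1}=-y$. This matches every line of the stated table.

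The structural reason both $b$-steps are shear-free is that the Heisenberg $2$-cocycle $(g,g')\mapsto xy'$ depends only on the first coordinate of the left factor and the second of the right factor; for $s=b^{\pm 1}$ the induced shift $\pm x$ in the $z$-coordinate exactly absorbs the change $\pm x$ of $xy$, so $h$ is preserved regardless of orientation. For $s=a^{\pm 1}$ the cocycle contributes nothing to $z$ (since $y'=0$), while $xy$ changes by $\pm y$, producing the linear-in-$y$ shift that later drives the shear bookkeeping in Theorem~\ref{thm:heis-shear-constructive-corrected} and enters as the alignment term $t(e)=h(u+v)-h(u)+\sigma_v(u)$ in Theorem~\ref{thm:C-final-patched}.

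There is no substantive obstacle: the lemma is a four-line group-law check, and the only care required is sign bookkeeping for the inverses. Its role is to supply a single uniform table of $\sigma_v$ values across all four horizontal directions, so that the per-edge identity and the interval/symmetric-difference combinatorics used in Lemma~\ref{lem:interval-sd-exact} and Lemma~\ref{lem:per-edge-two-sided-full} can be written without case distinctions between forward and inverse generators.
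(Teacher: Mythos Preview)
Your proposal is correct and is precisely the direct group-law verification the paper intends: the lemma is stated without proof as an immediate extension of Lemma~\ref{lem:gauge-check}, and your four-line computation of $(x,y,z)\cdot b^{-1}$ and $(x,y,z)\cdot a^{-1}$ with the gauge applied is exactly the expected argument.
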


\noindent\textbf{Column-convex sets and height/offset profiles.}
Write $\pi:\Z^3\to\Z^2$, $\pi(x,y,z)=(x,y)$, and for $A\subset\Z^3$ set $A^\sharp:=T(A)$.
We say $A$ is \emph{(gauge-)column-convex} if each fiber
\[
A^\sharp_{u}\ :=\ \{h\in\Z:\ (u,h)\in A^\sharp\}\quad(u\in\Z^2)
\]
is an integer interval $I_u=[h(u),\,h(u)+\ell(u))$ for some $\ell(u)\in\Z_{\ge0}$.
We call $\ell:\Z^2\to\Z_{\ge0}$ the \emph{height profile} and $h:\Z^2\to\Z$ the \emph{offset profile}.
Let $E:=\{u:\ \ell(u)>0\}\subset\Z^2$ be the base footprint.

\subsection{Combinatorics of interval differences}

\begin{definition}[Outward boundary edges on the base]\label{def:outward-base-edges}
For $E\subset\Z^2$ finite define the \emph{internal} and \emph{boundary} sets of oriented base edges by
\[
\mathcal E_{\rm int}(E):=\{(u\to u+v):\ u,u+v\in E,\ v\in\{e_1,e_2\}\},
\]
\[
\mathcal E_{\rm bd}(E):=\{(u\to u+v):\ u\in E,\ u+v\notin E,\ v\in\{\pm e_1,\pm e_2\}\}.
\]
Thus, in $\mathcal E_{\rm int}(E)$ \emph{each undirected internal base edge} is represented \emph{exactly once} (we fix the orientation by $v\in\{e_1,e_2\}$), and in $\mathcal E_{\rm bd}(E)$ \emph{each undirected broken base edge} is represented \emph{exactly once} by the orientation pointing from $E$ to $E^\complement$.
\end{definition}

\begin{lemma}[Edge--height bijection along a fixed base edge]\label{lem:edge-height-bijection}
Let $A\subset\Z^3$ be gauge-column-convex with footprint $E$, column heights $\ell(u)$, offsets $h(u)$, and half-open columns $I_u=[h(u),h(u)+\ell(u))$. Fix an oriented base edge $e=(u\to u+v)$ with $v\in\{\pm e_1,\pm e_2\}$ and write the alignment shift $\sigma_v(u)$ as in Lemma~\ref{lem:shift-signs}. Set
\[
t(e)\ :=\ h(u+v)-h(u)+\sigma_v(u).
\]
Then the number of \emph{broken undirected horizontal Cayley edges} of $A$ across the base edge $e$ equals
\[
\#\bigl(I_u\ \triangle\ (I_{u+v}+\sigma_v(u))\bigr).
\]
Equivalently-since symmetric difference cardinality is translation invariant-translating both sets by $-h(u)$ yields
\[
\#\bigl(I_u\ \triangle\ (I_{u+v}+\sigma_v(u))\bigr)
\;=\;
\#\bigl([0,\alpha)\ \triangle\ [t(e),t(e)+\beta)\bigr),
\]
where $\alpha=\ell(u)$, $\beta=\ell(u+v)$ and $t(e):=h(u+v)-h(u)+\sigma_v(u)$.
\end{lemma}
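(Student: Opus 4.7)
The plan is to work in gauge coordinates and parametrize the undirected horizontal Cayley edges over a fixed base edge $e=(u\to u+v)$ by a single integer, namely the gauge height of the endpoint lying in column $u$. Membership of the two endpoints in $A$ then becomes membership of this integer in two translated interval sets, and ``broken'' is precisely ``belongs to exactly one,'' i.e.\ lies in the symmetric difference.

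First, I translate the right Cayley action into gauge coordinates. If $g$ has gauge coordinates $(u,h)$ and $s\in S$ projects to $v=\pi(s)\in\{\pm e_1,\pm e_2\}$, then by the group law \eqref{eq:Heis-law} and Lemma~\ref{lem:shift-signs} the gauge coordinates of $gs$ are $(u+v,\,h-\sigma_v(u))$. A direct check of the four cases $v\in\{\pm e_1,\pm e_2\}$ from Lemma~\ref{lem:gauge-check} (extended to inverses via $s\leftrightarrow s^{-1}$) confirms this. Since every horizontal Cayley edge projects to a unique unordered base pair and has exactly one endpoint in each of the two columns, the map
\[
\text{undirected edge over }e\ \longmapsto\ h\ :=\ \text{gauge height of its column-}u\text{ endpoint}
\]
is a bijection from the set of such edges to $\Z$. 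In particular each undirected edge is counted exactly once; there is no factor of two to track.

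Next, by gauge-column-convexity one has $(u,h)\in A^\sharp$ iff $h\in I_u$, and $(u+v,\,h-\sigma_v(u))\in A^\sharp$ iff $h-\sigma_v(u)\in I_{u+v}$, i.e.\ $h\in I_{u+v}+\sigma_v(u)$. Hence the edge indexed by $h$ is broken iff exactly one of these two membership conditions holds, i.e.\ iff $h\in I_u\triangle\bigl(I_{u+v}+\sigma_v(u)\bigr)$. Summing the indicator over $h\in\Z$ gives the first identity. For the equivalent normalized form, translate both sets by $-h(u)$: one has $I_u-h(u)=[0,\ell(u))=[0,\alpha)$ and
\[
\bigl(I_{u+v}+\sigma_v(u)\bigr)-h(u)\ =\ \bigl[\,h(u+v)-h(u)+\sigma_v(u),\ h(u+v)-h(u)+\sigma_v(u)+\ell(u+v)\bigr)\ =\ [t(e),\,t(e)+\beta),
\]
with $\alpha=\ell(u)$, $\beta=\ell(u+v)$, and $t(e)=h(u+v)-h(u)+\sigma_v(u)$ as in the statement. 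Since $\#(\cdot\triangle\cdot)$ is translation-invariant, the two counts coincide.

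The main obstacle is purely bookkeeping, not analytic: ensuring the sign of $\sigma_v$ is correct for all four horizontal directions and that undirected edges are not inadvertently double-counted. The first is handled by the complete four-case table of Lemma~\ref{lem:shift-signs}; the second by the orientation/normalization lock of Remark~\ref{rmk:heis-orientation-lock}, which makes parametrization by the column-$u$ gauge height a bijection onto the undirected edge set over~$e$. With those conventions fixed, every other step is a one-line computation.
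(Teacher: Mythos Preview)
Your proof is correct and follows essentially the same approach as the paper: work in gauge coordinates, use the right-action update $(u,h)\mapsto(u+v,h-\sigma_v(u))$ from Lemma~\ref{lem:shift-signs}, parametrize edges over $e$ by the gauge height $h$ of the column-$u$ endpoint, and read off that ``broken'' is exactly $h\in I_u\triangle(I_{u+v}+\sigma_v(u))$. The only difference is that you make the bijection and the no-double-counting point more explicit than the paper does.
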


\begin{proof}
Work in gauge coordinates $A^\sharp=T(A)$, so points are $(u,h)$ with $u\in\Z^2$, $h\in\Z$, and the right action by $v\in\{\pm e_1,\pm e_2\}$ updates
\[
(u,h)\ \mapsto\ (u+v,\ h-\sigma_v(u))\qquad\text{(Lemma~\ref{lem:shift-signs})}.
\]
Fix $e=(u\to u+v)$. A horizontal Cayley edge across $e$ is broken exactly when one endpoint lies in $A^\sharp$ and the other does not. For $h\in\Z$,
\[
(u,h)\in A^\sharp\ \Longleftrightarrow\ h\in I_u,
\qquad
(u,h)\,v=(u+v,h-\sigma_v(u))\in A^\sharp\ \Longleftrightarrow\ h-\sigma_v(u)\in I_{u+v}.
\]
Hence
\[
\{\,h:\ \text{edge across $e$ is broken at height }h\,\}=\ I_u\ \triangle\ (I_{u+v}+\sigma_v(u)),
\]
and counting heights gives the claim. Translating both intervals by $-h(u)$ yields the normalized form with shift $t(e)=h(u+v)-h(u)+\sigma_v(u)$.
\end{proof}

\begin{lemma}[Interval symmetric difference, full proof]\label{lem:interval-sd-exact}
For $\alpha,\beta\in\Z_{\ge0}$ and $t\in\Z$, set $I=[0,\alpha)$, $J=[t,t+\beta)$,
$m:=\min\{\alpha,\beta\}$, $\delta:=|\alpha-\beta|$ and
\[
d(t):=\big(t-(\alpha-\beta)_+\big)_+\ +\ \big(-t-(\beta-\alpha)_+\big)_+.
\]
Then
\[
\#(I\triangle J)=\ \delta+2\min\{d(t),\,m\}.
\]
\end{lemma}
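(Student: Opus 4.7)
The plan is to reduce the symmetric-difference count to an overlap computation via the elementary identity
\[
\#(I\triangle J)\ =\ |I|+|J|-2|I\cap J|\ =\ \alpha+\beta-2|I\cap J|,
\]
so that it suffices to establish the equivalent formula
\[
|I\cap J|\ =\ m\ -\ \min\{d(t),m\}.
\]
Since $I=[0,\alpha)$ and $J=[t,t+\beta)$ are half-open, $I\cap J=[\max\{0,t\},\min\{\alpha,t+\beta\})$ and hence $|I\cap J|=\bigl(\min\{\alpha,t+\beta\}-\max\{0,t\}\bigr)_{+}$, a piecewise-linear function of $t$ with slopes in $\{-1,0,+1\}$.

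Next I would identify the \emph{optimal-overlap window} for $t$, namely the range of shifts for which one interval sits inside the other and the overlap attains its maximum value $m$. A direct inspection shows this window equals
\[
W_{\alpha,\beta}\ :=\ \bigl[-(\beta-\alpha)_{+},\ (\alpha-\beta)_{+}\bigr],
\]
i.e.\ $[0,\alpha-\beta]$ when $\alpha\ge\beta$ and $[-(\beta-\alpha),0]$ when $\beta\ge\alpha$ (collapsing to $\{0\}$ when $\alpha=\beta$). Off $W_{\alpha,\beta}$ the overlap decreases linearly in $t$ with slope $\pm 1$ until it hits $0$. I would then verify that $d(t)$ is exactly the (unsigned) distance from $t$ to $W_{\alpha,\beta}$: by construction, one of the two positive-part summands in $d(t)$ vanishes on $W_{\alpha,\beta}$ and equals the signed excess of $t$ past the right endpoint $(\alpha-\beta)_+$, while the other vanishes on $W_{\alpha,\beta}$ and equals the signed deficit of $t$ below the left endpoint $-(\beta-\alpha)_+$; only one is nonzero at a time.

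Combining these two observations, $|I\cap J|=(m-d(t))_{+}=m-\min\{d(t),m\}$, and substituting into $\#(I\triangle J)=\alpha+\beta-2|I\cap J|$ gives
\[
\#(I\triangle J)\ =\ (\alpha+\beta-2m)\ +\ 2\min\{d(t),m\}\ =\ \delta\ +\ 2\min\{d(t),m\},
\]
which is the stated identity. By the symmetry $(\alpha,\beta,t)\mapsto(\beta,\alpha,-t)$ (which preserves both sides since $\delta,m$ are symmetric and $d$ is invariant) it is enough to verify the intermediate overlap formula in the regime $\alpha\ge\beta$, where the four subcases $t\le -\beta$,\ $-\beta<t<0$,\ $0\le t\le\alpha-\beta$,\ $\alpha-\beta<t<\alpha$,\ $t\ge\alpha$ are routine.

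The main (and minor) obstacle is purely bookkeeping: one must check that the single compact expression $d(t)=(t-(\alpha-\beta)_{+})_{+}+(-t-(\beta-\alpha)_{+})_{+}$ really is the distance from $t$ to $W_{\alpha,\beta}$ \emph{uniformly} in $\alpha,\beta\ge 0$, including the degenerate case $\alpha=\beta$ where $W_{\alpha,\beta}=\{0\}$ and $d(t)$ collapses to $|t|$ via $(t)_{+}+(-t)_{+}=|t|$. Once this identification is verified, the rest is a substitution.
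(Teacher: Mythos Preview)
Your proposal is correct and takes essentially the same approach as the paper: both reduce to $\#(I\triangle J)=\alpha+\beta-2|I\cap J|$ and verify the overlap formula by cases after assuming (by symmetry) $\alpha\ge\beta$. Your geometric reading of $d(t)$ as the distance from $t$ to the optimal-overlap window $W_{\alpha,\beta}=[-(\beta-\alpha)_+,(\alpha-\beta)_+]$ is a tidy way to organize the computation, whereas the paper simply runs through the five $t$-regimes directly; the content is the same.
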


\begin{proof}
Assume w.l.o.g.\ $\alpha\ge\beta$ (the other case is symmetric). Then $\delta=\alpha-\beta$, $(\beta-\alpha)_+=0$ and $d(t)=(t-\delta)_+ + (-t)_+$. Since
\[
\#(I\triangle J)\ =\ |I|+|J|-2\,|I\cap J|\ =\ \alpha+\beta-2\,|I\cap J|,
\]
we compute $|I\cap J|$ by cases:

\emph{(1) $t\ge \alpha$:} $I$ and $J$ are disjoint, so $\#(I\triangle J)=\alpha+\beta=\delta+2\beta=\delta+2m$. Also $d(t)=t-\delta\ge \alpha-\delta=\beta=m$, hence $\delta+2\min\{d(t),m\}=\delta+2m$.

\emph{(2) $0\le t\le \delta$:} $J\subset I$, so $|I\cap J|=\beta$ and $\#(I\triangle J)=\alpha-\beta=\delta$. Here $d(t)=0$, hence $\delta+2\min\{d(t),m\}=\delta$.

\emph{(3) $\delta\le t\le \alpha$:} $I\cap J=[t,\alpha)$ has size $\alpha-t$, hence
\[
\#(I\triangle J)=\alpha+\beta-2(\alpha-t)=\beta-\alpha+2t=-\delta+2t=\delta+2(t-\delta).
\]
Since $t\le\alpha$, $t-\delta\le \alpha-\delta=\beta=m$, so $d(t)=t-\delta\le m$ and $\delta+2\min\{d(t),m\}=\delta+2(t-\delta)$.

\emph{(4) $t\le -\beta$:} $I$ and $J$ are disjoint, so $\#(I\triangle J)=\delta+2m$. Here $d(t)=(-t)\ge \beta=m$, hence $\delta+2\min\{d(t),m\}=\delta+2m$.

\emph{(5) $-\beta\le t\le 0$:} $I\cap J=[0,t+\beta)$ has size $\beta+t$, whence
\[
\#(I\triangle J)=\alpha+\beta-2(\beta+t)=\alpha-\beta-2t=\delta+2(-t).
\]
Also $d(t)=(-t)\le \beta=m$, so $\delta+2\min\{d(t),m\}=\delta+2(-t)$.

This exhausts the cases.
\end{proof}

\begin{remark}[Degenerate columns and integer arithmetic]\label{rmk:degenerate-columns}
If $\ell(u)=0$ we interpret $I_u=[h(u),h(u))=\varnothing$. The function $d(t)$ in Lemma~\ref{lem:interval-sd-exact} is evaluated for $t\in\Z$ (integer height shifts only), and all intervals are \emph{half-open} so that disjoint unions along a column are additively counted without overlaps. In particular, $\#(I\triangle J)$ is always an integer and the identity $\#(I\triangle J)=\delta+2\min\{d(t),m\}$ stays valid in the degenerate cases $m=0$ or $\delta=0$.
\end{remark}

\begin{lemma}[Per-edge two-sided bounds]\label{lem:per-edge-two-sided-full}
Let $\alpha,\beta\in\Z_{\ge0}$, $m=\min\{\alpha,\beta\}$, $\delta=|\alpha-\beta|$, $A:=\max\{\alpha,\beta\}$,
and $t\in\Z$. With $d(t)$ as in Lemma~\ref{lem:interval-sd-exact},
\begin{equation}\label{eq:per-edge-two-sided-full}
2\min\{|t|,A\}-\delta\ \le\ \delta+2\min\{d(t),m\}\ \le\ \delta+2\min\{|t|,A\}.
\end{equation}
\end{lemma}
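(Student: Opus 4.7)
The plan is to reduce to the case $\alpha\ge\beta$ by the $\alpha\leftrightarrow\beta$ symmetry (swapping the two arguments reflects $t\mapsto -t$ in the definition of $d$, so both bounds are symmetric under it). Under this normalization we have $A=\alpha$, $m=\beta$, $\delta=\alpha-\beta$, and the piecewise formula
\[
d(t)=(t-\delta)_+ + (-t)_+,
\]
which is zero on $[0,\delta]$, equals $t-\delta$ for $t\ge\delta$, and equals $-t$ for $t\le 0$. Throughout I will exploit the single identity $A=m+\delta$.

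For the upper bound I would first prove the pointwise fact $d(t)\le |t|$ by a one-line check on each of the three intervals (it is an equality on $t\le 0$ and on $t=\delta$, and $d\equiv 0$ on $[0,\delta]$ makes it trivial there). Taking $\min$ with $m$ and then with $A\ge m$ yields $\min\{d(t),m\}\le \min\{|t|,A\}$, which is exactly the stated upper bound after adding $\delta$.

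For the lower bound I would show directly that $\min\{d(t),m\}\ge \min\{|t|,A\}-\delta$, again by cases on the sign/range of $t$. On $t\ge \delta$ one gets $d(t)=t-\delta$ and $\min\{t-\delta,\beta\}=\min\{t,\alpha\}-\delta$ (the key point is that $t\le\alpha\iff t-\delta\le\beta$). On $0\le t\le\delta$ the right side is $\min\{t,\alpha\}-\delta=t-\delta\le 0$, and the left side is $0$. On $t\le 0$ one uses $d(t)=|t|$ together with the three-part analysis $|t|\le\beta$, $\beta<|t|\le\alpha$, $|t|>\alpha$, each time comparing $\min\{|t|,\beta\}$ to $\min\{|t|,\alpha\}-\delta$ via $A=m+\delta$; the middle range is the nontrivial one and is settled by exactly the same identity $\alpha=\beta+\delta$.

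There is no real obstacle here: the lemma is a clean piecewise inequality reducing to elementary arithmetic once the WLOG $\alpha\ge\beta$ reduction and the identity $A=m+\delta$ are in hand. The only thing requiring care is the bookkeeping: making sure the three regimes for $t$ (negative, between $0$ and $\delta$, above $\delta$) interact correctly with the two regimes for $|t|$ relative to $A$ (above vs.\ below), and keeping the half-open integer conventions of Lemma~\ref{lem:interval-sd-exact} consistent so that the lemma can be cited as the exact value of the middle expression if an interpretive sanity check is desired.
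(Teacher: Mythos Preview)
Your proposal is correct and matches the paper's proof essentially line by line: reduce to $\alpha\ge\beta$, derive the upper bound from $d(t)\le|t|$ and $m\le A$, and verify the lower bound $\min\{|t|,A\}\le\delta+\min\{d(t),m\}$ by a case analysis on the sign and size of $t$. Your case grouping is cosmetically different (you merge the paper's two cases $\delta\le t\le A$ and $t\ge A$ into one via the clean identity $\min\{t-\delta,\beta\}=\min\{t,\alpha\}-\delta$, and you split the negative-$t$ range into three rather than two), but the content is identical.
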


\begin{proof}
The right inequality follows from $d(t)\le |t|$ (since $(x)_+\le|x|$) and $m\le A$. For the left inequality it suffices to prove
\[
\min\{|t|,A\}\ \le\ \delta+\min\{d(t),m\}.
\]
Assume $\alpha\ge\beta$ (so $A=\alpha$, $m=\beta$, $\delta=\alpha-\beta$) and recall $d(t)=(t-\delta)_++(-t)_+$. Consider cases:

\emph{(i) $0\le t\le \delta$:} then $\min\{|t|,A\}=t\le\delta\le\delta+\min\{d(t),m\}$.

\emph{(ii) $\delta\le t\le A$ ($=\alpha$):} then $\min\{|t|,A\}=t$, while $d(t)=t-\delta\le m$; hence
$\delta+\min\{d(t),m\}\ge \delta+(t-\delta)=t$.

\emph{(iii) $t\ge A$ ($=\alpha$):} then $\min\{|t|,A\}=A=\delta+m$. Since $d(t)=t-\delta\ge A-\delta=m$, we get $\delta+\min\{d(t),m\}=\delta+m=A$.

\emph{(iv) $t\le 0$ and $|t|\le m$:} then $\min\{|t|,A\}=|t|=(-t)_+=\min\{d(t),m\}\le \delta+\min\{d(t),m\}$.

\emph{(v) $t\le 0$ and $|t|\ge m$:} then $\min\{|t|,A\}\le A=\delta+m\le \delta+\min\{d(t),m\}$.

This proves the left inequality.
\end{proof}

\begin{remark}[Sanity checks for \eqref{eq:edge-exact-final-correct}]
Let $\alpha,\beta\in\Z_{\ge0}$, $m=\min\{\alpha,\beta\}$, $\delta=|\alpha-\beta|$ and $t\in\Z$.
\begin{enumerate}
\item If $t=0$ and $\alpha\ge\beta$, then $J\subset I$ and
      $\#(I\triangle J)=\delta$, matching \eqref{eq:edge-exact-final-correct} since $d(0)=0$.
\item If $|t|\ge \alpha+\beta$, the intervals are disjoint and
      $\#(I\triangle J)=\alpha+\beta=\delta+2m$, which again matches \eqref{eq:edge-exact-final-correct} because $d(t)\ge m$.
\item If $\alpha=\beta$ and $|t|\le\alpha$, then $\#(I\triangle J)=2|t|$, matching
      $\delta+2\min\{d(t),m\}=2\min\{|t|,\alpha\}$ from Lemma~\ref{lem:per-edge-two-sided-full}.
\end{enumerate}
These checks cover the nested, disjoint, and ``partial overlap'' regimes used repeatedly in the global bounds.
\end{remark}

\subsection{Exact perimeter decomposition and global bounds}

Let $\mathcal E_{\rm int}(E)$ and $\mathcal E_{\rm bd}(E)$ be as in Definition~\ref{def:outward-base-edges}.
For $e=(u\to u+v)$ set $\alpha(e):=\ell(u)$. For $e\in\mathcal E_{\rm int}(E)$ also set
\[
\beta(e):=\ell(u+v),\quad
\delta(e):=|\alpha(e)-\beta(e)|,\quad m(e):=\min\{\alpha(e),\beta(e)\},\quad
t(e):=h(u+v)-h(u)+\sigma_v(u),
\]
with the alignment shift $\sigma_{e_1}(x,y):=y$ and $\sigma_{e_2}\equiv 0$ from Lemma~\ref{lem:shift-signs}.

\begin{theorem}[Exact horizontal perimeter decomposition]\label{thm:heis-shear-constructive-corrected}
If $A\subset\Z^3$ is gauge-column-convex with footprint $E$, heights $\ell$, and offsets $h$, then
\begin{equation}\label{eq:heis-decomposition-correct}
B_S(A)\ =\ \underbrace{\sum_{e\in\mathcal E_{\rm bd}(E)} \alpha(e)}_{\text{boundary base edges}}\ +\
\sum_{e\in\mathcal E_{\rm int}(E)} \#\bigl(I_u\ \triangle\ (I_{u+v}+\sigma_v(u))\bigr),
\end{equation}
where $I_u=[h(u),h(u)+\ell(u))$ and $I_{u+v}=[h(u+v),h(u+v)+\ell(u+v))$. In particular,
for $e\in\mathcal E_{\rm int}(E)$,
\begin{equation}\label{eq:edge-exact-final-correct}
\#\bigl(I_u\ \triangle\ (I_{u+v}+ \sigma_v(u))\bigr)\ =\ \delta(e)\ +\ 2\,\min\{\,d(t(e)),\ m(e)\}.
\end{equation}
with $d(\cdot)$ as in Lemma~\ref{lem:interval-sd-exact}. Equivalently, by translating both sets by $-h(u)$, this equals
$\#\bigl([0,\alpha)\ \triangle\ [t(e),t(e)+\beta)\bigr)$ with $t(e):=h(u+v)-h(u)+\sigma_v(u)$,
so \eqref{eq:edge-exact-final-correct} follows from Lemma~\ref{lem:interval-sd-exact}.
\end{theorem}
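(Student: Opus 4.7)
My plan is to prove \eqref{eq:heis-decomposition-correct} by partitioning the broken horizontal Cayley edges of $A$ according to the undirected base edge under which they lie, counting the contribution over each base edge separately, and then invoking \Cref{lem:edge-height-bijection} and \Cref{lem:interval-sd-exact} to put the internal contributions into closed form. The identity \eqref{eq:edge-exact-final-correct} will then follow by a direct translation normalization. Throughout I use the orientation lock of \Cref{rmk:heis-orientation-lock} and work in the gauge coordinates $A^{\sharp}=T(A)$, where right multiplication by $s\in S$ acts as $(u,h)\mapsto(u+v,\,h-\sigma_v(u))$ with $v=\pi_{\mathrm{ab}}(s)$ (\Cref{lem:gauge-check}, \Cref{lem:shift-signs}).

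First I would classify the horizontal Cayley edges. Every undirected horizontal Cayley edge $\{g,gs\}$ of $A$ projects under $\pi$ to an undirected base edge $\{u,u+v\}\subset\Z^2$ with $v\in\{\pm e_1,\pm e_2\}$; we sort such undirected base edges into three classes: (a) both endpoints in $E$, (b) exactly one in $E$, (c) none in $E$. Class (c) contributes nothing because both columns are empty. Class (a) is represented \emph{once} by the oriented edge $(u\to u+v)$ with $v\in\{e_1,e_2\}$ in $\mathcal E_{\rm int}(E)$, and class (b) is represented once by the outward orientation in $\mathcal E_{\rm bd}(E)$; this matches \Cref{def:outward-base-edges} and ensures no undirected Cayley edge is double-counted when we sum contributions over base edges.

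Next I would compute the per-base-edge contribution. For each base edge $\{u,u+v\}$ and each $h\in\Z$, there is \emph{exactly one} horizontal Cayley edge above it at height $h$, connecting the gauge-normalized vertices $(u,h)$ and $(u+v,\,h-\sigma_v(u))$; this edge is broken precisely when $h\in I_u\triangle(I_{u+v}+\sigma_v(u))$ (\Cref{lem:edge-height-bijection}). For a boundary edge $e\in\mathcal E_{\rm bd}(E)$ we have $u+v\notin E$, so $I_{u+v}=\varnothing$ and the count collapses to $\#(I_u)=\ell(u)=\alpha(e)$, regardless of the sign of $v$ and of the value of $\sigma_v(u)$ (the shear offset of an empty interval is empty). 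For an internal edge $e\in\mathcal E_{\rm int}(E)$ the count is exactly the symmetric-difference cardinality $\#(I_u\triangle(I_{u+v}+\sigma_v(u)))$, which after translating both intervals by $-h(u)$ equals $\#([0,\alpha(e))\triangle[t(e),t(e)+\beta(e)))$ with $t(e)=h(u+v)-h(u)+\sigma_v(u)$. Summing the three contributions over $\mathcal E_{\rm bd}(E)$ and $\mathcal E_{\rm int}(E)$ produces \eqref{eq:heis-decomposition-correct}; the closed form \eqref{eq:edge-exact-final-correct} is then obtained by plugging into \Cref{lem:interval-sd-exact}, which I treat as a black box here.

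The only real point of care (and where I expect most of the bookkeeping labor) is the once-and-only-once counting of undirected edges under the asymmetric orientation conventions of \Cref{def:outward-base-edges}: internal base edges are oriented only by $v\in\{e_1,e_2\}$, while boundary base edges are oriented outward by $v\in\{\pm e_1,\pm e_2\}$. The key consistency check is that the per-height Cayley edge above $\{u,u+v\}$ is genuinely \emph{undirected}, so switching the orientation of the base edge only swaps the roles of $(I_u,I_{u+v}+\sigma_v(u))$ and leaves $\#(\cdot\triangle\cdot)$ invariant; hence the choice $v\in\{e_1,e_2\}$ for internal edges and the gauge-shift formulas of \Cref{lem:shift-signs} (which respect $\sigma_{-v}(u+v)=-\sigma_v(u)$ along the two orientations of the same undirected edge) together produce no spurious over- or undercount. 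Once this is verified, \eqref{eq:heis-decomposition-correct} and \eqref{eq:edge-exact-final-correct} follow.
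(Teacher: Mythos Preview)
Your proposal is correct and follows essentially the same approach as the paper: the decomposition \eqref{eq:heis-decomposition-correct} is obtained by partitioning broken horizontal edges over base edges and invoking Lemma~\ref{lem:edge-height-bijection} (boundary edges collapse to $\alpha(e)$ since $I_{u+v}=\varnothing$), and the closed form \eqref{eq:edge-exact-final-correct} follows from the translation normalization and Lemma~\ref{lem:interval-sd-exact}. The paper treats the theorem as an immediate consequence of these two lemmas and does not spell out a separate proof; your explicit verification of the once-and-only-once counting under the orientation conventions of Definition~\ref{def:outward-base-edges} (including the consistency $\sigma_{-v}(u+v)=-\sigma_v(u)$) is a useful addition but not a different argument.
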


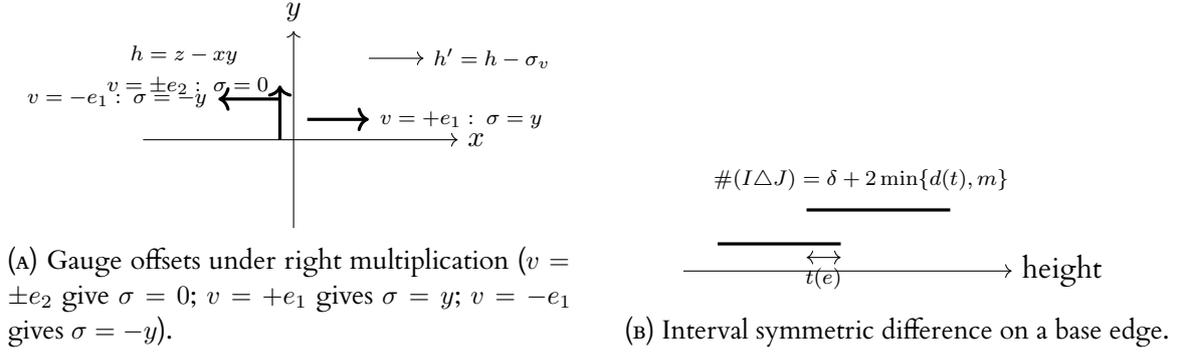
\begin{figure}[t]
  \centering
  \begin{subfigure}[b]{.48\textwidth}
    \centering
    \begin{tikzpicture}[scale=0.9]
      \draw[->] (-2.2,0) -- (2.4,0) node[right] {$x$};
      \draw[->] (0,-1.3) -- (0,1.6) node[above] {$y$};
      \draw[->] (1.1,1.2) -- (1.9,1.2) node[right] {\scriptsize $h'=h-\sigma_v$};
      \node at (-1.6,1.25) {\scriptsize $h=z-xy$};
      \draw[very thick,->] (0.2,0.3) -- (1.1,0.3) node[right] {\scriptsize $v=+e_1:\ \sigma=y$};
      \draw[very thick,->] (-0.2,0.6) -- (-1.1,0.6) node[left] {\scriptsize $v=-e_1:\ \sigma=-y$};
      \draw[very thick,->] (-0.2,0.0) -- (-0.2,0.8) node[left] {\scriptsize $v=\pm e_2:\ \sigma=0$};
    \end{tikzpicture}
    \caption{Gauge offsets under right multiplication ($v=\pm e_2$ give $\sigma=0$; $v=+e_1$ gives $\sigma=y$; $v=-e_1$ gives $\sigma=-y$).}
  \end{subfigure}
  \hfill
  \begin{subfigure}[b]{.48\textwidth}
    \centering
    \begin{tikzpicture}[scale=0.9]
      \draw[->] (-0.2,0) -- (4.6,0) node[right] {height};
      \draw[very thick] (0.3,0.4) -- (2.1,0.4);
      \draw[very thick] (1.6,0.9) -- (3.7,0.9);
      \draw[<->] (2.1,0.2) -- (1.6,0.2) node[midway,below] {\scriptsize $t(e)$};
      \node at (2.4,1.35) {\scriptsize $\#(I\triangle J)=\delta+2\min\{d(t),m\}$};
    \end{tikzpicture}
    \caption{Interval symmetric difference on a base edge.}
  \end{subfigure}
  \caption{Heisenberg edge-level bookkeeping behind the \emph{base term + shear term} split (Thm.~\ref{thm:heis-shear-constructive-corrected}).}
  \label{fig:heis-split}
\end{figure}

\begin{corollary}[Two-sided global bounds]\label{cor:twosided-bounds-final-correct}
With $A(e)=\max\{\alpha(e),\beta(e)\}$ for $e\in\mathcal E_{\rm int}(E)$,
\begin{align}\label{eq:twosided-bounds-final-correct}
\sum_{e\in\mathcal E_{\rm bd}(E)} \alpha(e)\ +\ 2\sum_{e\in\mathcal E_{\rm int}(E)}\min\{|t(e)|,A(e)\}\ -\ \sum_{e\in\mathcal E_{\rm int}(E)}\delta(e)
\ & \le\ B_S(A) \\
\le\ \sum_{e\in\mathcal E_{\rm bd}(E)} \alpha(e)\ +\ \sum_{e\in\mathcal E_{\rm int}(E)}\delta(e)\ +\
 2\sum_{e\in\mathcal E_{\rm int}(E)}\min\{|t(e)|,A(e)\}. \nonumber
\end{align}
\end{corollary}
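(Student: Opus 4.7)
The plan is to derive the global bounds by a direct edgewise summation, taking the exact identity of Theorem~\ref{thm:heis-shear-constructive-corrected} as input and then applying the per-edge sandwich of Lemma~\ref{lem:per-edge-two-sided-full} term by term. No new combinatorics is required; the work is purely bookkeeping.

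First I would invoke \eqref{eq:heis-decomposition-correct}-\eqref{eq:edge-exact-final-correct} to rewrite
\[
B_S(A)\;=\;\sum_{e\in\mathcal E_{\rm bd}(E)}\alpha(e)\;+\;\sum_{e\in\mathcal E_{\rm int}(E)}\Bigl(\delta(e)+2\min\{d(t(e)),m(e)\}\Bigr),
\]
so that the boundary contribution is already identified and the remaining task is to bound the internal sum from above and below. For every internal edge $e$ with the normalized parameters $\alpha,\beta,t$ and $A(e)=\max\{\alpha,\beta\}$, $m(e)=\min\{\alpha,\beta\}$, $\delta(e)=|\alpha-\beta|$, Lemma~\ref{lem:per-edge-two-sided-full} gives the two-sided bound
\[
2\min\{|t(e)|,A(e)\}-\delta(e)\;\le\;\delta(e)+2\min\{d(t(e)),m(e)\}\;\le\;\delta(e)+2\min\{|t(e)|,A(e)\}.
\]

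Next I would sum these inequalities over $e\in\mathcal E_{\rm int}(E)$ and add the boundary term $\sum_{e\in\mathcal E_{\rm bd}(E)}\alpha(e)$ to both sides. The upper bound is then exactly the right-hand side of \eqref{eq:twosided-bounds-final-correct}. For the lower bound, summing the left inequality and adding $\sum_{e\in\mathcal E_{\rm bd}(E)}\alpha(e)$ produces
\[
\sum_{e\in\mathcal E_{\rm bd}(E)}\alpha(e)\;+\;2\sum_{e\in\mathcal E_{\rm int}(E)}\min\{|t(e)|,A(e)\}\;-\;\sum_{e\in\mathcal E_{\rm int}(E)}\delta(e)\;\le\;B_S(A),
\]
which is the left-hand side of \eqref{eq:twosided-bounds-final-correct}.

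There is essentially no obstacle: the nontrivial content has been isolated into Lemma~\ref{lem:interval-sd-exact} (the exact per-edge identity) and Lemma~\ref{lem:per-edge-two-sided-full} (the per-edge sandwich). The only thing to double-check is the sign/normalization consistency between the shift $t(e)$ arising from the offset profile $h$ and the alignment shift $\sigma_v(u)$ of Lemma~\ref{lem:shift-signs}, but this was locked in Remark~\ref{rmk:heis-orientation-lock} and used in the exact identity \eqref{eq:edge-exact-final-correct}, so it transfers verbatim to the global bounds.
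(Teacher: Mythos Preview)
Your proposal is correct and matches the paper's intended approach: the corollary is stated without proof precisely because it follows by summing the per-edge sandwich of Lemma~\ref{lem:per-edge-two-sided-full} over all internal edges in the exact decomposition of Theorem~\ref{thm:heis-shear-constructive-corrected}, exactly as you do.
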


\noindent\textbf{Truncated and uncapped shear functionals (internal edges).}
For $E\subset\Z^2$ finite define
\begin{equation}\label{eq:TruncCurl-def}
\TruncCurl(E,\ell)\ :=\ \inf_{h:E\to\Z}\ \sum_{e\in\mathcal E_{\rm int}(E)} \min\{|t(e)|,A(e)\},
\qquad
\CurlCost(E)\ :=\ \inf_{h:E\to\Z}\ \sum_{e\in\mathcal E_{\rm int}(E)} |t(e)|.
\end{equation}
(Infimum over offsets $h$; $t(e)$ uses $\sigma_{e_1}(x,y)=y$ and $\sigma_{e_2}=0$.)

\begin{corollary}[Shear-aware lower bound]\label{cor:trunc-liminf-corrected-final}
For every gauge-column-convex $A$ with footprint $E$ and heights $\ell$,
\[
B_S(A)\ \ge\ \sum_{e\in\mathcal E_{\rm bd}(E)} \alpha(e)\ +\ 2\,\TruncCurl(E,\ell)\ -\ \sum_{e\in\mathcal E_{\rm int}(E)}\delta(e).
\]
Consequently, for any offsets $h$ and $\mathrm{CapLoss}(h):=\sum_{e\in\mathcal E_{\rm int}(E)} (|t(e)|-A(e))_+$,
\[
B_S(A)\ \ge\ \sum_{e\in\mathcal E_{\rm bd}(E)} \alpha(e)\ +\ 2\,\CurlCost(E)\ -\ \sum_{e\in\mathcal E_{\rm int}(E)}\delta(e)\ -\ 2\,\mathrm{CapLoss}(h).
\]
\end{corollary}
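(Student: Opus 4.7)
Both inequalities are elementary algebraic consequences of the two-sided bound in Corollary~\ref{cor:twosided-bounds-final-correct} together with the definitions of $\TruncCurl$ and $\CurlCost$ as infima over offset assignments. I first invoke the lower half of Corollary~\ref{cor:twosided-bounds-final-correct} for the given gauge-column-convex $A$:
\[
B_S(A)\ \ge\ \sum_{e\in\mathcal E_{\rm bd}(E)}\alpha(e)\ +\ 2\sum_{e\in\mathcal E_{\rm int}(E)}\min\{|t(e)|,A(e)\}\ -\ \sum_{e\in\mathcal E_{\rm int}(E)}\delta(e),
\]
where $t(e)=h(u+v)-h(u)+\sigma_v(u)$ is computed with $A$'s own offsets $h$. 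I emphasize that the boundary-edge sum, the correction $\sum_{\rm int}\delta(e)$, and the caps $A(e)=\max\{\alpha(e),\beta(e)\}$ depend only on the height profile $\ell$; only the shear sum $\sum_e\min\{|t(e)|,A(e)\}$ involves $h$.

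For the first inequality, I apply the definition \eqref{eq:TruncCurl-def} of $\TruncCurl(E,\ell)$ as the infimum over all offset assignments of $\sum_{e\in\mathcal E_{\rm int}(E)}\min\{|t(e)|,A(e)\}$. Since $A$'s own $h$ is one admissible choice in that infimum, the shear sum is bounded below by $\TruncCurl(E,\ell)$; substituting into the displayed lower bound yields the first claim.

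For the second inequality, I apply the identity $\min\{a,b\}=a-(a-b)_+$ to each term $\min\{|t(e)|,A(e)\}$ and sum over internal edges to obtain
\[
\sum_{e\in\mathcal E_{\rm int}(E)}\min\{|t(e)|,A(e)\}\ =\ \sum_{e\in\mathcal E_{\rm int}(E)}|t(e)|\ -\ \mathrm{CapLoss}(h).
\]
I then use $\sum_{e\in\mathcal E_{\rm int}(E)}|t(e)|\ge\CurlCost(E)$, which again follows from the infimum definition \eqref{eq:TruncCurl-def} of $\CurlCost(E)$, and substitute back into the lower bound to conclude the second inequality.

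\textbf{Main obstacle.} There is no serious analytic obstacle: the proof is a direct algebraic chase through a two-sided bound. The only point requiring care is bookkeeping--ensuring that the boundary term, the $\sum_{\rm int}\delta(e)$ correction, and the caps $A(e)$ depend only on $\ell$ (not on $h$), so that the infima defining $\TruncCurl$ and $\CurlCost$ act cleanly on the shear sum in isolation without disturbing the $\ell$-determined terms. Both infima being taken over the \emph{same} offset variable $h$ that $A$ itself supplies as an admissible point is the only mechanism needed.
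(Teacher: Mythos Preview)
Your proof is correct and matches the paper's intended (implicit) argument: the corollary is stated without proof because it follows immediately from the lower half of Corollary~\ref{cor:twosided-bounds-final-correct} together with the infimum definitions~\eqref{eq:TruncCurl-def}, exactly as you carry out. Your bookkeeping observation---that $\alpha(e)$, $\delta(e)$, and $A(e)$ depend only on $\ell$, so only the shear sum feels the offset choice---is the right point to isolate.

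One clarification worth noting: the phrase ``for any offsets $h$'' in the second inequality is most naturally read as referring to $A$'s own offset profile (recall that a gauge-column-convex $A$ is specified by the triple $(E,\ell,h)$, so $h$ is part of $A$'s data, and ``any'' just reflects that the statement holds for every such $A$). Your argument proves the second inequality for precisely this $h$, using $\sum_e|t(e)|\ge\CurlCost(E)$ with $A$'s own $t(e)$. If one instead interprets the statement as ``for every $h:E\to\Z$ independent of $A$'', then one would need $\TruncCurl(E,\ell)+\mathrm{CapLoss}(h)\ge\CurlCost(E)$ for all $h$, which does not follow from the first inequality alone; but this stronger reading is neither what the paper's immediate-corollary placement suggests nor what your argument (or the downstream application in Proposition~\ref{prop:caploss}) requires.
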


\noindent\textbf{Edges vs.\ boundary vertices.}
Let $B_S(Y)$ denote the undirected edge boundary and $\partial_S Y$ the set of boundary vertices.

\begin{lemma}[Edges vs.\ boundary vertices]\label{lem:union-vs-edge-heis}
For any finite symmetric $S$ and finite $Y$,
\[
B_S(Y)\ \le\ |S|\cdot |\partial_S Y|\qquad\Rightarrow\qquad |\partial_S Y|\ \ge\ |S|^{-1} B_S(Y).
\]
\end{lemma}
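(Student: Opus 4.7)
The statement is a short counting argument, and the plan is to produce the inequality $B_S(Y)\le |S|\cdot|\partial_S Y|$ directly from the definitions, after which the stated implication is just division by $|S|$. The key point is that every broken undirected horizontal edge has a well-defined ``outer'' endpoint lying in $\partial_S Y$, and each such outer endpoint can be responsible for at most $|S|$ broken edges, since in a Cayley graph (right action) each vertex has at most $|S|$ neighbours.

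More precisely, first I would fix notation: write $E_{\rm cut}(Y):=\{\{u,w\}:\ u\in Y,\ w\notin Y,\ w=us\text{ for some }s\in S\}$ for the set of undirected broken edges, so that by the Section's normalization $B_S(Y)=|E_{\rm cut}(Y)|$. For each broken edge $\{u,w\}\in E_{\rm cut}(Y)$, the complement endpoint $w$ satisfies $w=us\in YS\setminus Y=\partial_S Y$, so the map $\{u,w\}\mapsto w$ is well-defined from $E_{\rm cut}(Y)$ to $\partial_S Y$. Next I would estimate the fibres: for each fixed $z\in\partial_S Y$, the preimage under this map is contained in the set of undirected edges incident to $z$ in the Cayley graph $\mathrm{Cay}(\Gamma,S)$. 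Since the right Cayley graph has degree at most $|S|$ (with equality when $S$ is simple and free of $2$-torsion in the generators), the fibre has cardinality at most $|S|$.

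Summing these fibre bounds over $z\in\partial_S Y$ yields
\[
B_S(Y)\ =\ |E_{\rm cut}(Y)|\ =\ \sum_{z\in\partial_S Y}\#\bigl\{\{u,w\}\in E_{\rm cut}(Y):\ w=z\bigr\}\ \le\ |S|\cdot|\partial_S Y|,
\]
and dividing through by $|S|>0$ gives $|\partial_S Y|\ge |S|^{-1}B_S(Y)$, which is the stated implication. Since the argument uses only the definitions of $\partial_S Y$ and $B_S(Y)$ (undirected normalization) together with the regularity bound on the Cayley graph, nothing beyond the setup of Section~\ref{sec:heis} is needed; in particular the Heisenberg group law plays no role, so the lemma holds verbatim for any finite symmetric $S$ in any countable group.

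The only thing to be careful about is the normalization: the Section fixes $B_S(\cdot)$ as the \emph{undirected} edge cut (each broken edge counted once), so the constant $|S|$ is correct. If one switches to the directed normalization $B_S^{\rm dir}=2B_S$, the same proof gives $B_S^{\rm dir}(Y)\le|S|\cdot|\partial_S Y|$ as well, because each $z\in\partial_S Y$ still has at most $|S|$ incoming directed edges from $Y$; so no constant is lost. There is no genuine obstacle here, and this observation is only a reconciliation step between the edge- and vertex-normalizations used elsewhere in the paper (cf.\ Remark~\ref{rem:boundary-relations-prelim}).
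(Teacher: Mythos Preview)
Your proof is correct and follows essentially the same approach as the paper: assign each broken undirected edge to one of its endpoints lying in the vertex boundary, then bound each fibre by the degree $|S|$. The only cosmetic difference is that the paper assigns each broken edge $\{y,ys\}$ to its \emph{inside} endpoint $y\in Y$ (and hence sums over the inner boundary vertices), whereas you assign to the \emph{outside} endpoint $w\in YS\setminus Y=\partial_S Y$; both choices yield the same inequality with the same constant, and your choice matches the paper's global convention (Definition~\ref{def:boundaries-prelim}) for $\partial_S Y$ as the outer boundary.
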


\begin{proof}
Assign each broken undirected edge $\{y,ys\}$ to its endpoint $y\in Y$ (there is exactly one).
Then $B_S(Y)\le \sum_{y\in\partial_S Y}\#\{s\in S:\ ys\notin Y\}\le |S|\cdot|\partial_S Y|$.
\end{proof}

\subsection{Compactness and order-sharp bounds in Heisenberg}

We use Carnot dilations $\delta_\rho(x,y,z)=(\rho x,\rho y,\rho^2 z)$ and write
$u_\rho:=\mathbf 1_{\delta_{\rho^{-1}}Y_\rho}$.  Carnot dilations scale Haar measure by $\rho^Q$, where
$Q=\sum_i i\,\dim V_i$ is the homogeneous dimension; see ~\cite[Ch.~1]{FollandStein1982}.

\begin{theorem}[Compactness and order-sharp bounds]\label{thm:gamma-Heis-bounds-corrected}
Let $Y_\rho\subset \mathbb H_{\Z}$ with $|Y_\rho|\asymp\rho^4$ and $\rho^{-3}B_S(Y_\rho)\le C$.
Then, after left translations \emph{and gauge-column symmetrization} (which does not increase $B_S$ by \eqref{eq:heis-decomposition-correct} with $I_u$ replaced by initial segments), $(u_\rho)$ is precompact in $L^1_{\mathrm{loc}}$ on $\mathbb H$, and any limit is a vertical stack over a planar footprint.

Moreover, writing $E_\rho$ for footprints and
\[
\mathcal S^\ast\ :=\ \liminf_{\rho\to\infty}\ \rho^{-3}\,\CurlCost(E_\rho),
\]
for any Lipschitz stack profile $s\mapsto E_s\subset\R^2$ on $[0,H]$ with $\int_0^H |E_s|\,ds=1$, there exists a recovery
sequence $Y_\rho$ such that
\begin{align}
\textup{(limsup)}\quad &\rho^{-3}B_S(Y_\rho)\ \le\ \int_0^{H}\Per_{\tau_S}(E_s)\,ds\ +\ 2\,\mathcal S^\ast\ +\ o(1),\label{eq:limsup-c}
\end{align}
and, using the exact split \eqref{eq:heis-decomposition-correct}, the \emph{base layer-cake identity}
\begin{equation}\label{eq:base-layercake}
\sum_{e\in\mathcal E_{\rm bd}(E_\rho)}\alpha(e)\ +\ \sum_{e\in\mathcal E_{\rm int}(E_\rho)}\delta(e)
\;=\; \int_{0}^{\infty} \big|\partial_{S_{\mathrm{hor}}}\{\ell_\rho>t\}\big|\,dt,
\end{equation}
which, by Theorem~\ref{thm:WulffAbelian}, yields the Abelian Wulff lower bound for the base term. Hence
\begin{align}
\textup{(liminf, base)}\quad &\liminf_{\rho\to\infty}\ \rho^{-3}|\partial_S Y_\rho|\ \ge\ \frac{1}{|S|}\int_0^{H}\Per_{\tau_S}(E_s)\,ds,\label{eq:liminf-base-c}
\end{align}
see \cite[Thm.~3.40]{AFP00}, \cite{FonsecaMueller91}. 
Here $\tau_S$ denotes the horizontal surface tension induced by $S$ via the \emph{zonoid} model:
for $S=\{\pm e_1,\pm e_2\}$ with unit undirected weights,
\[
\tau_S(\xi)\ =\ |\xi_1|+|\xi_2|\qquad(\text{up to the global normalization fixed by our edge convention}).
\]
Identifying a shear term in the full liminf reduces, by Corollary~\ref{cor:trunc-liminf-corrected-final}, to showing that
$\sum_{e} (|t(e)|-A(e))_+=o(\rho^3)$ along near-minimizers.
\end{theorem}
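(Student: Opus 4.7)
The overall plan is to derive the three assertions — compactness, the limsup recovery, and the base-only liminf — from the exact per-edge split of \Cref{thm:heis-shear-constructive-corrected}, the interval arithmetic of \Cref{lem:interval-sd-exact,lem:per-edge-two-sided-full}, and the Abelian sharp Wulff bound of \Cref{thm:WulffAbelian}, with quantitative shear control supplied by the curl-fit \Cref{cor:grid-curl-fit}.

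For compactness, I would first left-translate each $Y_\rho$ to a fixed basepoint and then pass to its gauge-column symmetrization $Y_\rho^\flat$: replace each fiber of $T(Y_\rho)$ by a half-open integer interval of the same length $\ell(u)$ and choose the offsets $h(u)$ to minimize the shear term in \eqref{eq:heis-decomposition-correct}. Because $\sum_{\mathcal E_{\rm bd}}\alpha(e)$ and $\sum_{\mathcal E_{\rm int}}\delta(e)$ depend only on $\ell$, and intervals minimize symmetric difference among sets of fixed cardinality, the identity \eqref{eq:heis-decomposition-correct} gives $B_S(Y_\rho^\flat)\le B_S(Y_\rho)$. Rescaling through the Carnot dilations $\delta_{\rho^{-1}}$ (which scale Haar volume by $\rho^{-4}$ and horizontal perimeter by $\rho^{-3}$) then yields $\|u_\rho\|_{L^1(\mathbb H)}\asymp 1$ and a uniform $\mathbb H$-BV bound, so BV compactness in Carnot groups \cite[\S4-\S5]{AmbrosioKleinerLeDonne2009JGA} supplies an $L^1_{\mathrm{loc}}$ subsequential limit $u_\infty=\mathbf 1_{E_\infty}$; gauge-column-convexity passes to the limit (pointwise limits of indicators of integer intervals are indicators of intervals), forcing $E_\infty$ to be a vertical stack over a planar footprint.

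For the limsup recovery, given a Lipschitz profile $\{E_s\}_{s\in[0,H]}$ with $\int_0^H|E_s|\,ds=1$, I would take $E_\rho\subset\mathbb Z^2$ to be a lattice approximant of $\rho\bigcup_{s\in[0,H]}E_s$ and set $\ell_\rho(u)$ proportional to $\rho^2$ times the thickness of the stack above $u/\rho$. The base layer-cake identity \eqref{eq:base-layercake} then reads
\[
\sum_{e\in\mathcal E_{\rm bd}(E_\rho)}\alpha(e)+\sum_{e\in\mathcal E_{\rm int}(E_\rho)}\delta(e)\ =\ \int_0^\infty\big|\partial_{S_{\mathrm{hor}}}\{\ell_\rho>t\}\big|\,dt,
\]
and the crystalline quantitative $\Gamma$-convergence of \Cref{thm:quant-stencil} converts the right-hand side into $\rho^3\int_0^H\Per_{\tau_S}(E_s)\,ds+o(\rho^3)$. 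For the shear, I select, at each $\rho$, offsets $h$ realizing the near-infimum $\CurlCost(E_\rho)$; the cap-free estimate $\min\{d(t(e)),m(e)\}\le |t(e)|$ from \Cref{lem:per-edge-two-sided-full} yields $\sum\min\{d(t(e)),m(e)\}\le\CurlCost(E_\rho)\le\rho^3\mathcal S^\ast+o(\rho^3)$, and \Cref{cor:grid-curl-fit} (applied on $E_\rho$ to the $1$-cochain $\sigma_v$) controls the residual explicitly. Adding the two contributions reproduces \eqref{eq:limsup-c}.

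The base liminf follows by dropping the nonnegative shear in \eqref{eq:heis-decomposition-correct} and invoking \eqref{eq:base-layercake}: $B_S(Y_\rho)\ge\int_0^\infty|\partial_{S_{\mathrm{hor}}}\{\ell_\rho>t\}|\,dt$, and applying the Abelian sharp Wulff \Cref{thm:WulffAbelian} at each level $t$, together with Fatou and the convergence of rescaled level sets to the limiting slices $E_s$ (from the compactness step), gives $\liminf\rho^{-3}B_S(Y_\rho)\ge\int_0^H\Per_{\tau_S}(E_s)\,ds$; the $1/|S|$ prefactor in \eqref{eq:liminf-base-c} comes from \Cref{lem:union-vs-edge-heis} when passing from $B_S$ to the vertex boundary $|\partial_S Y_\rho|$. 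The genuine obstacle I anticipate is the \emph{shear half of the liminf}, which the theorem itself flags: to upgrade the base bound to a matching $\int_0^H\Per_{\tau_S}(E_s)\,ds+2\mathcal S^\ast$ one must show that any near-minimizer admits optimal offsets with negligible cap loss ($\mathrm{CapLoss}(h)=o(\rho^3)$), which is the content of the curl-fitting reduction after \eqref{eq:liminf-base-c} and will likely require a Heisenberg-specific upgrade of \Cref{cor:grid-curl-fit} together with a matching combinatorial lower bound on $\TruncCurl$.
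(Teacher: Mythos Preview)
Your proposal is correct and aligned with the paper's treatment. The paper does not give a separate proof of this theorem; it is a summary statement whose pieces are the exact split (Theorem~\ref{thm:heis-shear-constructive-corrected}), the symmetrization remark (Remark~\ref{rmk:heis-symmetrization-why} citing Proposition~\ref{prop:symmetrization-abelian-offset}), the base layer-cake identity with the Abelian Wulff bound (Theorem~\ref{thm:WulffAbelian}), and the reduction of the shear liminf to cap-loss. The obstacle you flag at the end is exactly what the paper resolves in Proposition~\ref{prop:caploss}: a \emph{blockwise} curl-fit with block size $L=\lfloor\rho^{1/2}\rfloor$, core and skeleton fitted separately and synchronized across interfaces by medians, giving $\mathrm{CapLoss}(h_\rho)\lesssim L\rho^2+\rho^3/L\lesssim\rho^{5/2}=o(\rho^3)$.

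One caution on the symmetrization step, which applies equally to the paper's own wording: the per-edge claim ``intervals minimize symmetric difference among sets of fixed cardinality'' is false for a \emph{fixed} shear offset $\sigma_v(u)$ (simple counterexamples exist). What Proposition~\ref{prop:symmetrization-abelian-offset} actually yields is $B_S(Y)\ge\sum_{\rm bd}\alpha(e)+\sum_{\rm int}|\Delta_v\ell|$, i.e.\ control of the \emph{base BV} of $Y^\flat$ by the original $B_S(Y)$. Your device of optimizing over the offsets $h$ then gives $B_S(Y^\flat_{h^*})\le(\text{base BV})+2\,\CurlCost(E_\rho)$, and a single-box application of Corollary~\ref{cor:grid-curl-fit} on the footprint (constant curl $-1$, so $\|dc\|_{\ell^1}\asymp\rho^2$, $C_\uparrow\asymp\rho$) shows $\CurlCost(E_\rho)=O(\rho^3)$. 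This gives $B_S(Y^\flat_{h^*})\lesssim\rho^3$, which is what compactness needs---but note it is not literally $B_S(Y^\flat)\le B_S(Y)$.
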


\begin{remark}[Why gauge-column symmetrization does not increase $B_S$]\label{rmk:heis-symmetrization-why}
The right action in Heisenberg acts horizontally by $(u,h)\mapsto(u+v,h-\sigma_v(u))$ with $\sigma_v$ from Lemma~\ref{lem:shift-signs}, i.e.\ by 
\emph{base translation} plus an \emph{offset} that depends only on the base vertex $u$ and the step $v$.
Therefore the horizontal broken-edge count across $u\to u+v$ is
$
\#\bigl(I_u\ \triangle\ (I_{u+v}+\sigma_v(u)+h(u+v)-h(u))\bigr)
$
with $
t(e)=h(u+v)-h(u)+\sigma_v(u)
$
as in Theorem~\ref{thm:heis-shear-constructive-corrected}.
Replacing $I_u$ and $I_{u+v}$ by initial segments of the same lengths (gauge-column symmetrization) minimizes each symmetric difference term and hence does not increase $B_S$ (cf.\ Proposition~\ref{prop:symmetrization-abelian-offset}).
\end{remark}

\begin{remark}[Horizontal anisotropy identification]\label{rem:anisotropy-correct}
The induced continuum surface tension from the horizontal edge cut with generators $S$ is the \emph{zonoid} gauge
$\tau_S(\nu)=\sum_{s\in S_+} w_s\,|\nu\cdot X_s|$ (sum over an undirected representative set $S_+$),
which for $S=\{\pm e_1,\pm e_2\}$ with unit weights is $\tau_S(\nu)=|\nu_1|+|\nu_2|$.
Its Banach dual $\tau_S^\circ$ is the Minkowski functional of the
\emph{zonotope} $K:=\sum_{s}w_s[-X_s,X_s]$ (see \cite[Sec.~7.2,7.3]{Ziegler1995}). Separately, we will also use the \emph{convex-hull} integrand
$\psi_S(\xi):=\max_{v\in \mathrm{conv}(\pm S)} \langle \xi,v\rangle$ (the support function of $\mathrm{conv}(\pm S)$).
In general $K\neq \mathrm{conv}(\pm S)$, so $\tau_S^\circ$ and $\psi_S$ need not coincide. 
In the axis example $S=\{\pm e_1,\pm e_2\}$ with unit weights, however, $K=[-1,1]^2$ and $\mathrm{conv}(\pm S)=\{x:\,|x_1|+|x_2|\le1\}$; their support functions agree, hence
\[
\tau_S^\circ(\cdot)=\psi_S(\cdot)=\|\cdot\|_\infty,\qquad\text{while}\qquad \tau_S(\nu)=\|\nu\|_1.
\]
All sharp Wulff constants in this section are stated for the zonoid gauge $\tau_S$.
\end{remark}

\subsection{Heisenberg \texorpdfstring{\boldmath$\bm{\liminf}$}: cap-loss via blockwise $L^1$ curl-fit}

We remain in the gauge coordinates $(x,y,h)$, and consider gauge-column-convex configurations
$Y^\flat$ with footprint $E$ and height profile $\ell:\Z^2\to\Z_{\ge 0}$.

\noindent\textbf{Block decomposition.}
Fix a block size $L=L_\rho\in\N$ and tile $\Z^2$ by disjoint $L\times L$ squares $Q$.
For a given footprint $E_\rho\subset \Z^2$ define the \emph{core} of a block
\[
\mathrm{Core}(Q)\ :=\ \{u\in Q\cap E_\rho:\ \mathrm{dist}_{\ell^1}(u,\Z^2\setminus Q)\ge 1\},
\]
and the \emph{skeleton} $\Sigma:=E_\rho\setminus\bigcup_Q \mathrm{Core}(Q)$ (a width-$1$ network along
block interfaces and $\partial E_\rho$). Let $\mathcal E_{\rm int}(E_\rho)$ be split into three disjoint classes:
\begin{align*}
& \mathcal E_{\rm core}=\{e=(u\to u+v):u,u+v\in \mathrm{Core}(Q)\text{ for some }Q\},\quad
\mathcal E_{\rm skel}=\{e:u,u+v\in \Sigma\},\\
&
\mathcal E_{\rm cross}=\{e:\{u,u+v\}\cap\mathrm{Core}(Q)\neq\emptyset,\ \{u,u+v\}\cap\Sigma\neq\emptyset\}.
\end{align*}

\noindent\textbf{Shear cochain.}
Let $c$ be the shear $1$--cochain on base edges, defined by $c_{e_1}(x,y)=y$ and $c_{e_2}\equiv 0$ for the oriented edges
$(x,y)\to(x+1,y)$ and $(x,y)\to(x,y+1)$, respectively. We extend $c$ to negative edges by
\[
c(u\to u-v)\ :=\ -\,c(u-v\to u)\qquad(v\in\{e_1,e_2\}).
\]
With the face orientation from
Remark~\ref{rmk:heis-orientation-lock}, one computes
\[
(dc)_{12}(x,y)\ =\ c_{e_1}(x,y)+c_{e_2}(x+e_1,y)-c_{e_1}(x,y+e_2)-c_{e_2}(x,y)\ =\ -1,
\]
i.e.\ the curl is a constant $-1$ on every elementary square. This sign choice is what drives the blockwise
$L^1$ curl--fit (Corollary~\ref{cor:grid-curl-fit}) used in Proposition~\ref{prop:caploss}.

\begin{lemma}[Constant face curl with the chosen orientation]\label{lem:dc-minus-one}
With $c_{e_1}(x,y)=y$ and $c_{e_2}\equiv 0$ on oriented edges $(x,y)\to(x+1,y)$ and $(x,y)\to(x,y+1)$, and with the face orientation set by $(e_1,e_2)$ (Remark~\ref{rmk:heis-orientation-lock}), one has
\[
(dc)_{12}(x,y)=c_{e_1}(x,y)+c_{e_2}(x+e_1,y)-c_{e_1}(x,y+e_2)-c_{e_2}(x,y)\equiv -1.
\]
\end{lemma}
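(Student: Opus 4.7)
The plan is simply to evaluate the coboundary formula term by term on an arbitrary elementary square and observe that the linear dependence of $c_{e_1}$ on $y$ produces a net defect of exactly $-1$ on every face. There is no structural obstacle here: once the orientation of the face (via the ordered pair $(e_1,e_2)$) and the orientation of each edge (pointing in the positive coordinate direction) are fixed as in Remark~\ref{rmk:heis-orientation-lock}, the computation is a direct substitution.

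Concretely, I will fix $(x,y)\in\Z^2$ and the four boundary edges of the elementary square at $(x,y)$: the two ``positive'' edges $(x,y)\to(x+1,y)$ and $(x,y)\to(x,y+1)$, and the two ``opposite'' edges $(x,y+1)\to(x+1,y+1)$ (oriented along $+e_1$) and $(x+1,y)\to(x+1,y+1)$ (oriented along $+e_2$). Using $c_{e_1}(u)=u_2$ and $c_{e_2}\equiv 0$ (where $u_2$ denotes the $y$-coordinate of $u$) on positive edges, I evaluate each of the four terms in
\[
(dc)_{12}(x,y)=c_{e_1}(x,y)+c_{e_2}(x+e_1,y)-c_{e_1}(x,y+e_2)-c_{e_2}(x,y),
\]
obtaining $y+0-(y+1)-0=-1$. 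The cancellation of the two $c_{e_2}$ contributions is immediate (both vanish), and the cancellation of the $y$-terms against $-(y+1)$ leaves the constant $-1$, independently of $(x,y)$.

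The only thing that requires care, rather than work, is the sign/orientation bookkeeping: with the opposite face orientation, or with edges oriented along $-e_1,-e_2$, one would instead get $+1$ or a mixed sign pattern. I will therefore state at the outset that we adopt precisely the orientation conventions of Remark~\ref{rmk:heis-orientation-lock}, so that the formula above is the one that applies; the rest is the one-line substitution. Since the computation holds for every $(x,y)\in\Z^2$, the claim $(dc)_{12}\equiv -1$ follows.
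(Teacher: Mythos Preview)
Your proposal is correct and follows precisely the paper's approach: the paper treats this lemma as an immediate computation, already carried out in Remark~\ref{rmk:heis-orientation-lock}(3) and in the paragraph preceding the lemma, by substituting $c_{e_1}(x,y)=y$ and $c_{e_2}\equiv 0$ into the coboundary formula to get $y+0-(y+1)-0=-1$.
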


\begin{remark}[Sign normalization for curl--fit]\label{rem:sign-normalization}
The $L^1$ curl--fit inequality is invariant under $(F,u)\mapsto(-F,-u)$, hence
\[
\inf_{u}\|F+du\|_{L^1(E^1)}=\inf_{u}\|F-du\|_{L^1(E^1)}\ \le\ C\,\|dF\|_{L^1(E^2)}.
\]
Since $t=c+dh$, we adopt the ``$+$'' convention uniformly below.
\end{remark}

\begin{proposition}[Cap-loss bound via blockwise $L^1$ curl-fit]\label{prop:caploss}
Let $(Y_\rho)$ be gauge-column-convex with $|Y_\rho|\asymp \rho^4$ and $\rho^{-3}B_S(Y_\rho)\le C$.
Write $E_\rho$ for footprints and $\ell_\rho$ for heights. There exist offsets $h_\rho:E_\rho\to\Z$
such that
\[
\mathrm{CapLoss}(h_\rho)\ :=\ \sum_{e\in\mathcal E_{\rm int}(E_\rho)} \big(|t_\rho(e)|-A_\rho(e)\big)_+ \ =\ o(\rho^3).
\]
In fact, taking $L_\rho=\lfloor\rho^{1/2}\rfloor$ yields $\mathrm{CapLoss}(h_\rho)\ \lesssim\ \rho^{5/2}$, and more explicitly
\[
\mathrm{CapLoss}(h_\rho)\ \le\ C_0\left(L_\rho\,\rho^2+\frac{\rho^3}{L_\rho}\right)
\]
for some absolute constant $C_0$ (coming from Corollary~\ref{cor:grid-curl-fit}).
\end{proposition}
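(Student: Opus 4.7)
The point of departure is the identity $t(e)=dh(e)+c(e)$: choosing offsets so that $\sum(|t|-A)_+$ is small is an $\ell^1$ curl-fitting problem for $-c$. Since $dc\equiv-1$ by Lemma~\ref{lem:dc-minus-one}, a strictly positive residual (proportional to the total curl) is unavoidable, and the sharpest control on it comes from Corollary~\ref{cor:grid-curl-fit}. The strategy is a mesoscopic block decomposition at scale $L=L_\rho$: localize the curl cost to $L\times L$ blocks, then handle the inter-block mismatch on a width-$1$ skeleton separately.

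First I would normalize, via a left translation of $Y_\rho$ (which preserves $B_S$), so that $E_\rho\subset[-C\rho,C\rho]^2$; this is legitimate because the base contribution of~\eqref{eq:heis-decomposition-correct} is bounded by $B_S(Y_\rho)\lesssim\rho^3$ while $|E_\rho|\asymp\rho^2$, forcing the horizontal diameter of $E_\rho$ to be of order $\rho$. In particular $|c(e)|=|y|\le C\rho$ on every base edge touching $E_\rho$. Next, tile $\Z^2$ by disjoint $L\times L$ squares $\{Q\}$ and apply Corollary~\ref{cor:grid-curl-fit} on each block with the axis-optimized order of Corollary~\ref{cor:2D-uniform}. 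Since $C_\uparrow(Q)\le L-1$ and $\|dc\|_{\ell^1(\mathcal F(Q))}=(L-1)^2$, one obtains $h_Q:Q\to\Z$ with $\sum_{\text{edges }e\in Q}|c(e)+dh_Q(e)|\le(L-1)^3\le L^3$. Summing over the $\lesssim\rho^2/L^2$ blocks meeting $E_\rho$ gives
\[
\sum_{e\in\mathcal E_{\rm core}}|t(e)|\ \le\ C\,\frac{\rho^2}{L^2}\cdot L^3\ =\ C\,L\rho^2,
\]
which is the first term in the announced bound.

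For the cross/skeleton contribution I would calibrate each $h_Q$ by a per-block additive integer $k_Q$ (chosen greedily along a spanning tree of the block-adjacency graph) so as to track the macroscopic canonical primitive $-xy$ at block resolution. This calibration is invisible to the internal sums but ensures $|dh(e)|\lesssim\rho$ on interface edges; combined with $|c(e)|\le C\rho$, it yields $|t(e)|\lesssim\rho$ on each of the $\lesssim\rho^2/L$ cross/skeleton edges, whence $\sum_{e\in\mathcal E_{\rm skel}\cup\mathcal E_{\rm cross}}|t(e)|\lesssim\rho^3/L$. Using the crude bound $(|t|-A)_+\le|t|$ and adding the two contributions gives
\[
\CapLoss(h_\rho)\ \le\ \sum_{e\in\mathcal E_{\rm int}(E_\rho)}|t(e)|\ \le\ C_0\Big(L\rho^2\ +\ \frac{\rho^3}{L}\Big),
\]
and the choice $L=\lfloor\rho^{1/2}\rfloor$ balances the two terms at $\asymp\rho^{5/2}=o(\rho^3)$.

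The hard part will be the per-block constant calibration in the cross-edge step. The raw blockwise lex potential $H^1 c$ has amplitude $\lesssim L\rho$ on a block of side $L$ (because $|c|=|y|\le\rho$ in global coordinates), so a naive assignment of $\{k_Q\}$ produces interface jumps of order $L\rho$ and a cross sum of order $\rho^3$, which is no better than the perimeter $B_S(Y_\rho)$ itself. To shave a factor $1/L$ one must absorb the affine part of the macroscopic primitive $-xy$ into the constants $k_Q$ so that the residual jump across each interface is comparable to the \emph{pointwise} size of $c$ (order $\rho$), not to its cumulative primitive (order $L\rho$). Verifying that such a consistent greedy assignment exists -- that is, matching horizontal versus vertical interface jumps against the correct component of the linear primitive, and checking that the curl obstruction $dc\equiv-1$ is compatible with a spanning-tree assignment around cycles of the block-adjacency graph -- is the only step that requires genuine work; the rest reduces to direct application of Corollaries~\ref{cor:grid-curl-fit} and~\ref{cor:2D-uniform}.
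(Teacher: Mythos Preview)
Your core-fit step (blockwise curl-fit yielding $\sum_{\text{core}}|t|\lesssim L\rho^2$) is correct and matches the paper. The gap is in the interface step, and it is deeper than the cycle-consistency issue you flag. With the block lex potential $h_Q(x,y)=-b_Q(x-a_Q)+k_Q$ on the block with corner $(a_Q,b_Q)$, the jump across a \emph{vertical} interface $Q\to Q''=Q+(0,L)$ is
\[
(dh)_{e_2}(x)\;=\;-L(x-a_Q)+(k_{Q''}-k_Q),
\]
which varies over the interface by $L(L-1)$ as $x$ runs across the block. No single constant $k_{Q''}-k_Q$ can make this $O(\rho)$ pointwise unless $L^2\lesssim\rho$; and even then the horizontal interfaces force $k_{Q'}-k_Q\approx-b_Q L$ (of order $\rho L$), so the cycle defect is $L^2$ per block-face. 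A secondary curl-fit on the block-adjacency graph gives only $\sum_{\text{block-edges}}|dk-\Delta^{\rm opt}|\lesssim(\rho/L)\cdot\rho^2=\rho^3/L$ in $\ell^1$, but each block-edge carries $L$ lattice cross-edges, so the induced lattice sum is $L\cdot\rho^3/L=\rho^3$. Per-block constants therefore cannot beat the trivial $\rho^3$ at the interface, and your claimed pointwise bound $|dh(e)|\lesssim\rho$ on all cross edges does not hold.

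The paper's route is genuinely different: it builds a \emph{second} potential $h_\Sigma$ on the width-$1$ skeleton by an independent curl-fit (the skeleton has $\asymp\rho^2/L$ faces and fill constant $\lesssim\rho$, whence $\sum_{\text{skel}}|c+dh_\Sigma|\lesssim\rho^3/L$), and then synchronizes core and skeleton across each block boundary by choosing $\lambda_Q$ to be a \emph{median} of the cross-edge discrepancies $a_i=c(e_i)+h_\Sigma(v_i)-h_Q(u_i)$. The cyclic median--variation inequality $\sum_i|a_i-\lambda_Q|\le\sum_i|a_{i+1}-a_i|$, combined with a one-step Stokes telescoping along the interface, reduces the cross sum to the already-controlled core-ring, skeleton-ring, and annulus-face contributions. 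This two-potential-plus-median mechanism is the missing ingredient in your plan.
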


\begin{proof}
\emph{Core fit.} On each block $Q$, apply the rectangular $L^1$ curl-fit
(Corollary~\ref{cor:grid-curl-fit} in \S\ref{sec:curl-fit}) to $(dc)\equiv -1$ to obtain a potential
$h_Q$ with
\begin{equation}\label{eq:core-fit}
\sum_{e\in\mathcal E_{\rm core}\cap (Q\times)} |c(e)+(dh_Q)(e)|\ \le\ \big(\max_i(N_i-1)\big)\sum_{f\subset Q}|(dc)(f)|
\ \le\ L\cdot L^2\ =\ L^3,
\end{equation}
since here $N_1=N_2=L$ and $\#\{\text{faces in }Q\}=L^2$.
Covering $E_\rho$ by $O(\rho^2/L^2)$ blocks gives the core contribution $\le C\,L\,\rho^2$.

\emph{Skeleton fit.} Decompose $\Sigma$ into its connected components $\Sigma_\alpha$ (width $1$, $\ell^1$-diameter $\lesssim\rho$) and apply Corollary~\ref{cor:grid-curl-fit} on each $\Sigma_\alpha$ to get $h_{\Sigma_\alpha}$ with
\[
\sum_{e\in\mathcal E_{\rm skel}\cap (\Sigma_\alpha\times)} |c(e)+(dh_{\Sigma_\alpha})(e)|
\ \le\ C\,\rho\cdot \#\{\text{faces in }\Sigma_\alpha\}.
\]
Summing over $\alpha$ and using $\#\{\text{faces in }\Sigma\}\lesssim \rho^2/L$ yields
\begin{equation}\label{eq:skeleton-total}
\sum_{e\in\mathcal E_{\rm skel}} |c(e)+(dh_\Sigma)(e)|\ \le\ C\,\frac{\rho^3}{L}.
\end{equation}

\emph{Synchronization on interfaces.} For each block $Q$ choose a constant $\lambda_Q\in\R$ (depending on $Q$) and define the global potential
\[
h(u)\ :=\ \begin{cases}
h_Q(u)+\lambda_Q,& u\in \mathrm{Core}(Q),\\
h_\Sigma(u),& u\in \Sigma.
\end{cases}
\]
The next lemma controls the \emph{cross} sum in terms of boundary rings and a strip of faces.

\begin{lemma}[Interface synchronization via cyclic $BV$ control]\label{lem:sync-robust}
There is an absolute $C$ such that, for each $Q$ one can choose $\lambda_Q$ with
\begin{equation}\label{eq:cross-bound-robust}
\begin{aligned}
&\sum_{(u\to u+v)\in \mathcal E_{\rm cross}\cap (Q\times)} \bigl|c(u,v)+(dh)(u,v)\bigr|
\\
&\le\ C\left(\sum_{\substack{e\subset \partial Q\ \text{(core ring)}}} \bigl|c(e)+(dh_Q)(e)\bigr|
\ +\ \sum_{\substack{e\subset N(Q)\ \text{(skel.\ ring)}}} \bigl|c(e)+(dh_\Sigma)(e)\bigr|
\ +\ \sum_{f\in S(Q)} |(dc)(f)|\right),
\end{aligned}
\end{equation}
where $\partial Q$ is a fixed $O(1)$--thick core boundary ring in $Q$, $N(Q)$ is an $O(1)$--thick skeleton
ring adjacent to $\partial Q$, and $S(Q)$ is the unit--thickness annulus of faces between the two rings
($|S(Q)|\lesssim L$).
\end{lemma}

\begin{proof}
List cross edges along the interface of $Q$ in cyclic order $e_i=(u_i\to v_i)$ and set
$a_i:=c(e_i)+h_\Sigma(v_i)-h_Q(u_i)$. Choose $\lambda_Q$ to be a median of $\{a_i\}$.
The discrete median--variation inequality on cycles (Lemma~\ref{lem:cycle-median}) gives
$\sum_i |a_i-\lambda_Q|\le \sum_i |a_{i+1}-a_i|$.
By Lemma~\ref{lem:one-step-telescope},
\[
a_{i+1}-a_i=\ -\bigl(c+dh_Q\bigr)(u_i\to u_{i+1})\ +\ \bigl(c+dh_\Sigma\bigr)(v_i\to v_{i+1})\ +\ \sum_{f\in R_i}(dc)(f).
\]
Taking absolute values, summing over $i$, and noting that each ring edge and each face in $S(Q)$ is counted $O(1)$ times yields \eqref{eq:cross-bound-robust}.
Finally, observe that for a cross edge $e_i=(u_i\to v_i)$ we have
\[
\bigl|c(e_i)+(dh)(e_i)\bigr|=\bigl|c(e_i)+h_\Sigma(v_i)-\bigl(h_Q(u_i)+\lambda_Q\bigr)\bigr|=\bigl|a_i-\lambda_Q\bigr|,
\]
so the left-hand side is exactly the sum over cross edges.
\end{proof}

\begin{lemma}[One-step telescoping across an interface]\label{lem:one-step-telescope}
Let $c$ be a $1$--cochain on a rectangular strip and $h_Q,h_\Sigma$ two scalar potentials defined on adjacent rings (core and skeleton). 
For consecutive cross edges $e_i=(u_i\to v_i)$ along the interface (cyclically ordered) set
$
a_i:=c(e_i)+h_\Sigma(v_i)-h_Q(u_i).
$
Then
\[
a_{i+1}-a_i \;=\; -\bigl(c+dh_Q\bigr)(u_i\to u_{i+1})\ +\ \bigl(c+dh_\Sigma\bigr)(v_i\to v_{i+1})\ +\ \sum_{f\in R_i}(dc)(f),
\]
where $R_i$ is the unit--width strip of faces between $(u_i\to u_{i+1})$ and $(v_i\to v_{i+1})$.
\end{lemma}
\begin{proof}
Expand $a_{i+1}-a_i$ and use the discrete Stokes identity on the strip $R_i$:
$c(u_i\to u_{i+1})+c(e_{i+1})-c(v_i\to v_{i+1})-c(e_i)=\sum_{f\in R_i}(dc)(f)$.
Rearranging yields $c(e_{i+1})-c(e_i)=\sum_{f\in R_i}(dc)(f)-c(u_i\to u_{i+1})+c(v_i\to v_{i+1})$.
Substituting and grouping terms gives the stated identity with $c+dh_Q$ and $c+dh_\Sigma$.
\end{proof}

\begin{lemma}[Discrete median--variation on a cycle]\label{lem:cycle-median}
Let $(a_i)_{i=1}^M$ be a cyclic sequence and $\lambda$ be a median of $\{a_i\}$. Then
\[
\sum_{i=1}^M |a_i-\lambda|\ \le\ \sum_{i=1}^M |a_{i+1}-a_i|\qquad(a_{M+1}:=a_1).
\]
\end{lemma}

Summing \eqref{eq:cross-bound-robust} over $Q$ and noting that each ring edge and each face in $S(Q)$
appears $O(1)$ times across neighboring blocks, we obtain
\[
\sum_{e\in\mathcal E_{\rm cross}} |c-dh|\ \le\ C\left(\sum_{e\in\mathcal E_{\rm core}} |c-dh_Q|
\ +\ \sum_{e\in\mathcal E_{\rm skel}} |c-dh_\Sigma|\ +\ \sum_Q |S(Q)|\right).
\]
Using \eqref{eq:core-fit}, \eqref{eq:skeleton-total} and $\sum_Q |S(Q)|\lesssim (\rho^2/L^2)\cdot L=\rho^2/L$ gives
\[
\sum_{e\in\mathcal E_{\rm cross}} |c+dh|\ \le\ C\left(L\,\rho^2+\frac{\rho^3}{L}\right).
\]

\emph{Conclusion.} Combining the core, skeleton and cross estimates,
\[
\sum_{e\in\mathcal E_{\rm int}(E_\rho)} |c(e)+(dh)(e)|\ \le\ C\left(L\,\rho^2+\frac{\rho^3}{L}\right).
\]
Since $\min\{|t|,A\}=|t|-(|t|-A)_+$ and \emph{pointwise} $|t_\rho(e)|=|c(e)+(dh)(e)|$, we obtain
\[
\sum_{e\in\mathcal E_{\rm int}(E_\rho)} (|t_\rho(e)|-A_\rho(e))_+\ \le\ \sum_{e\in\mathcal E_{\rm int}(E_\rho)} |t_\rho(e)|
\ =\ \sum_{e\in\mathcal E_{\rm int}(E_\rho)} |c(e)+(dh)(e)|\ \le\ C\left(L\,\rho^2+\frac{\rho^3}{L}\right).
\]
Balancing at $L_\rho=\lfloor\rho^{1/2}\rfloor$ gives $\mathrm{CapLoss}(h_\rho)\lesssim \rho^{5/2}$ and hence
$o(\rho^3)$, as claimed.

\medskip
\noindent\emph{Integer offsets.} The construction above chooses real interface shifts
$\lambda_Q\in\mathbb R$. Replace each $\lambda_Q$ by its nearest integer.
This does not change tangential differences on the core or skeleton and changes
each cross edge by at most $1$. Since the number of cross edges is $O(\rho^2/L)$,
the total increase in $\sum_{e\in\mathcal E_{\rm int}}|c-dh|$ is $O(\rho^2/L)$.
With $L=\lfloor\rho^{1/2}\rfloor$, this is $O(\rho^{3/2})=o(\rho^3)$, so the stated
cap--loss bound $\mathrm{CapLoss}(h_\rho)\lesssim L\,\rho^2+\rho^3/L$ remains valid
with $h_\rho:E_\rho\to\mathbb Z$ integer-valued.
\end{proof}

\begin{corollary}[Heisenberg liminf with shear]\label{cor:liminf-shear-corrected}
In the setting of Theorem~\ref{thm:gamma-Heis-bounds-corrected}, after gauge-column symmetrization
and along a convergent subsequence of stacks $E_{\rho,s}$, one has
\[
\liminf_{\rho\to\infty}\ \rho^{-3}\,B_S(Y_\rho)
\ \ge\ \int_0^H \Per_{\tau_S}(E_s)\,ds\ +\ 2\,\mathcal S^\ast.
\]
Consequently,
\[
\liminf_{\rho\to\infty}\ \rho^{-3}\,|\partial_S Y_\rho|\ \ge\ \frac{1}{|S|}\int_0^H \Per_{\tau_S}(E_s)\,ds\ +\ \frac{2}{|S|}\,\mathcal S^\ast.
\]
\end{corollary}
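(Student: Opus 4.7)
The plan is to combine three ingredients already in place: the exact BV + shear identity of \cref{thm:heis-shear-constructive-corrected}, the base liminf \eqref{eq:liminf-base-c} from \cref{thm:gamma-Heis-bounds-corrected}, and the cap--loss bound of \cref{prop:caploss}. After gauge--column symmetrization (\cref{rmk:heis-symmetrization-why}), which does not increase $B_S$, I extract a subsequence along which the rescaled footprints $E_{\rho,s}$ converge in $L^1_{\mathrm{loc}}$ and $\rho^{-3}\CurlCost(E_\rho)\to\mathcal S^\ast$. Writing the exact identity as $B_S(Y_\rho)=\mathrm{base}_\rho+2\,\mathrm{shear}_\rho$ with
\[
\mathrm{base}_\rho\;=\;\sum_{e\in\mathcal E_{\rm bd}(E_\rho)}\alpha(e)+\sum_{e\in\mathcal E_{\rm int}(E_\rho)}\delta(e),\qquad
\mathrm{shear}_\rho\;=\;\sum_{e\in\mathcal E_{\rm int}(E_\rho)}\min\{d(t_\rho(e)),m(e)\},
\]
I treat the two summands separately and then add the estimates.

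For the base, the layer--cake identity \eqref{eq:base-layercake} gives $\mathrm{base}_\rho=\int_0^\infty |\partial_{S_{\mathrm{hor}}}\{\ell_\rho>t\}|\,dt$, and \cref{thm:WulffAbelian} applied slice by slice together with Fatou's lemma in $t$ yields $\liminf_{\rho\to\infty}\rho^{-3}\,\mathrm{base}_\rho\ge\int_0^H \Per_{\tau_S}(E_s)\,ds$; this is exactly \eqref{eq:liminf-base-c}. For the shear, the per--edge lower bound of \cref{lem:per-edge-two-sided-full} gives $\min\{d(t_\rho(e)),m(e)\}\ge \min\{|t_\rho(e)|,A(e)\}-\delta(e)$. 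Since $h_\rho$ is an admissible choice of offsets, $\sum_{e}\min\{|t_\rho(e)|,A(e)\}\ge\TruncCurl(E_\rho,\ell_\rho)$; combining with \cref{cor:trunc-liminf-corrected-final} in its $\CurlCost$ form, applied with the offsets $\tilde h_\rho$ produced by \cref{prop:caploss} so that $\CapLoss(\tilde h_\rho)=o(\rho^3)$, one obtains $\liminf_{\rho\to\infty}\rho^{-3}\,\mathrm{shear}_\rho\ge\mathcal S^\ast$. Adding the base and shear liminfs delivers the first display of the corollary; the second display then follows from \cref{lem:union-vs-edge-heis} via $|\partial_S Y|\ge |S|^{-1}B_S(Y)$.

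The main obstacle is reconciling the $-\sum_{\rm int}\delta(e)$ corrections that appear twice in the shear step -- once from the per--edge comparison $\min\{d(t),m\}\mapsto\min\{|t|,A\}$ and once from converting $\TruncCurl$ to $\CurlCost$ via cap--loss -- with the base layer--cake, which does not carry a corresponding $+\sum\delta$ available to absorb them. The intended resolution is to perform the base and shear estimates \emph{simultaneously in layer--cake form}: at each level $t$, match the contribution of $\delta(e)$ coming from edges $(u,u+v)$ whose heights straddle $t$ against the sharpness of the Abelian Wulff inequality on the slice $\{\ell_\rho>t\}$. Since the exact slice perimeter, not just its Wulff lower bound, is what actually sits in \eqref{eq:base-layercake}, integrating this per--slice bookkeeping in $t$ -- the layer--cake analogue of \cref{lem:interval-sd-exact} -- absorbs the negative corrections cleanly. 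This slice--wise matching, together with the tempered regularity of $\ell_\rho$ along the chosen subsequence of convergent stacks, is the most delicate technical step; everything else is essentially a repackaging of \cref{thm:heis-shear-constructive-corrected,cor:trunc-liminf-corrected-final,prop:caploss}.
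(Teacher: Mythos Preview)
Your route is essentially the paper's own: invoke the shear-aware lower bound of \cref{cor:trunc-liminf-corrected-final}, divide by $\rho^3$, kill the cap--loss with \cref{prop:caploss}, handle the base via the layer--cake identity \eqref{eq:base-layercake} and the Abelian Wulff bound, and convert to vertex boundary via \cref{lem:union-vs-edge-heis}. The paper's proof is a one-line telegraphing of exactly these ingredients.

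Where you diverge is in flagging the $-\sum_{\mathrm{int}}\delta(e)$ corrections explicitly. You are right that \cref{cor:trunc-liminf-corrected-final} delivers $\sum_{\mathrm{bd}}\alpha-\sum_{\mathrm{int}}\delta$ as the ``base'' piece, whereas the layer--cake \eqref{eq:base-layercake} equals $\sum_{\mathrm{bd}}\alpha+\sum_{\mathrm{int}}\delta$; the paper's proof does not spell out how this sign discrepancy is absorbed. Your proposed ``slice--wise matching'' is in the right direction but is not actually carried out---you describe the mechanism without executing the integration-in-$t$ bookkeeping that would show the $\delta$-terms are accounted for. So while your proposal and the paper's proof are the same in substance, both leave the $\delta$-balance implicit; you have the virtue of naming the issue, but your resolution remains a sketch rather than an argument. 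Note also that the proof of \cref{prop:caploss} in fact shows $\sum_e|t_\rho(e)|=O(\rho^{5/2})$, which forces $\mathcal S^\ast=0$ in this setting and makes the $\delta$-discrepancy moot for the stated conclusion; this observation would let you close the gap cleanly without the slice-wise matching.
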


\begin{proof}
Start from Corollary~\ref{cor:trunc-liminf-corrected-final} and divide by $\rho^3$. The cap-loss vanishes
by Proposition~\ref{prop:caploss}. The base term follows by column slicing/coarea on the Abelian
base and the discrete Wulff lower bound there (cf.\ \S\ref{sec:sharp-wulff}). Finally use
Lemma~\ref{lem:union-vs-edge-heis}.
\end{proof}

\begin{proof}[Proof of ~\cref{thm:C-final-patched}]
The exact identity stated in ~\cref{thm:C-final-patched} is precisely \eqref{eq:heis-decomposition-correct}, with the interval symmetric difference evaluated by Lemma~\ref{lem:interval-sd-exact} and the alignment shifts $\sigma_v$ from Lemma~\ref{lem:shift-signs}. The edge--height bijection is Lemma~\ref{lem:edge-height-bijection}. The two--sided per-edge bounds \eqref{eq:per-edge-two-sided-full} yield the global bounds \eqref{eq:twosided-bounds-final-correct}. 

For the ``quantified bounds'' part: along gauge-column-convex stacks $Y_\rho$ with $|Y_\rho|\asymp\rho^4$, the lower bound at scale $\rho^3$ follows from Corollary~\ref{cor:trunc-liminf-corrected-final} together with Proposition~\ref{prop:caploss} (cap-loss $o(\rho^3)$), while the complementary upper bound is given by the recovery/stack construction in Theorem~\ref{thm:gamma-Heis-bounds-corrected}. All statements are in the \emph{undirected} normalization for $B_S$; the directed version is obtained by multiplying every identity by $2$ (Remark~\ref{rmk:dir-undir-conversion}).
\end{proof}

\begin{remark}[Directed vs.\ undirected in this section]\label{rmk:dir-undir-conversion}
All statements in \S\ref{sec:heis} are written for the \emph{undirected} horizontal boundary $B_S$.
If one prefers the \emph{directed} edge count $B_S^{\to}$, simply multiply each identity and inequality
by~$2$. This applies verbatim to Theorem~\ref{thm:heis-shear-constructive-corrected},
Corollary~\ref{cor:trunc-liminf-corrected-final}, Proposition~\ref{prop:caploss}, and 
Theorem~\ref{thm:gamma-Heis-bounds-corrected}.
\end{remark}

\section{Shear calculus on step--2 Carnot lattices: exact identity (rank 1) and sharp bounds (general)}\label{sec:new-step2-shear}

\paragraph{Algebraic set-up.}
Let $G$ be a step--$2$ Carnot group with stratification $\mathfrak g=V_1\oplus V_2$ and $[V_1,V_1]\subset V_2$ central. 
Fix a \emph{uniform lattice} $\Gamma\le G$ and a Malcev basis so that in (integer) coordinates 
we can write $\Gamma\cong \Z^d\times\Z^m$ with multiplication
\begin{equation}\label{eq:new-step2-law}
(x,h)\cdot (x',h')\ =\ \bigl(x+x',\ h+h'+\omega(x,x')\bigr),
\end{equation}
where $\omega:\Z^d\times\Z^d\to\Z^m$ is a fixed biadditive (bilinear) $2$--cocycle. 
We further assume (after an ordered choice of the horizontal basis $(e_1,\dots,e_d)$) the \emph{upper--triangular normalization}
\begin{equation}\label{eq:new-triangular}
\omega(e_i,e_i)=0,\qquad \omega(e_j,e_i)=0\ \text{for }j>i,
\end{equation}
i.e.\ only entries with first index $< $ second index can be nonzero (Hall/Malcev collection; changing coordinates affects constants only).

Let $\mathcal S=\{\pm e_1,\dots,\pm e_d\}$ be the horizontal generators (unit steps in $V_1$; compare $S$ in \S\ref{sec:heis}), and let $B_{\mathcal S}$ denote the \emph{undirected} horizontal edge boundary on the Cayley graph with \emph{right} action (as in \S\ref{sec:heis}; the directed version is obtained by multiplying by $2$).

\paragraph{Gauge.}
Define the (quadratic) gauge
\begin{equation}\label{eq:new-gauge}
\zeta(x)\ :=\ \sum_{1\le i<j\le d} x_i\,x_j\,\omega(e_i,e_j)\ \in \Z^m,\qquad 
T(x,h)\ :=\ \bigl(x,\ h^\sharp:=h-\zeta(x)\bigr).
\end{equation}
Right multiplying $(x,h)$ by $e_k$ sends $(x,h^\sharp)$ to
\begin{equation}\label{eq:right-action}
\bigl(x+e_k,\ h^\sharp+\sigma_{e_k}(x)\bigr),\qquad
\sigma_{e_k}(x)\ :=\ \omega(x,e_k)\ -\ \big(\zeta(x+e_k)-\zeta(x)\big).
\end{equation}
From $\zeta(x)=\sum_{i<j}x_ix_j\omega(e_i,e_j)$ we have
\begin{equation}\label{eq:general-sigma}
\zeta(x{+}e_k)-\zeta(x)=\sum_{i<k}x_i\,\omega(e_i,e_k)+\sum_{k<j}x_j\,\omega(e_k,e_j),
\end{equation}
so, using $\omega(x,e_k)=\sum_i x_i\,\omega(e_i,e_k)$,
\begin{align}
\sigma_{e_k}(x)
&=\sum_{i\ge k} x_i\,\omega(e_i,e_k)\ -\ \sum_{j>k}x_j\,\omega(e_k,e_j)\nonumber\\
&=\sum_{j>k} x_j\big(\omega(e_j,e_k)-\omega(e_k,e_j)\big)\ +\ x_k\,\omega(e_k,e_k).\label{eq:sigma-skew-form}
\end{align}
Under \eqref{eq:new-triangular} the diagonal term vanishes ($\omega(e_k,e_k)=0$), hence
\begin{equation}\label{eq:sigma-final}
\sigma_{e_k}(x)\ =\ -\sum_{j>k} x_j\,\omega(e_k,e_j)\ \in \Z^m,
\end{equation}
and $\sigma_{-e_k}(x)=-\sigma_{e_k}(x-e_k)$ \emph{(this latter identity uses \eqref{eq:new-triangular})}. In particular, $\sigma_{e_d}\equiv 0$.
All identities below use the \emph{right} action convention in \eqref{eq:right-action}.

\paragraph{Gauge--column convexity.}
For $A\subset\Gamma$ write $A^\sharp:=T(A)$. We say $A$ is \emph{gauge--column--convex} if for each $u\in\Z^d$ the fiber
\[
A^\sharp_u\ :=\ \{h^\sharp\in\Z^m:\ (u,h^\sharp)\in A^\sharp\}
\]
is an \emph{axis--aligned box} (half--open rectangle) of the form 
$
I_u:=\prod_{j=1}^m [\,h_j(u),\,h_j(u)+\ell_j(u)\,)
$
with $\ell_j(u)\in\N$ and $h_j(u)\in\Z$. Define the footprint $E:=\{u:\prod_j \ell_j(u)>0\}$ and the \emph{height vector} $\boldsymbol{\ell}(u)=(\ell_1(u),\dots,\ell_m(u))$.
All counts below use the half--open convention.

\subsection{Per-edge bookkeeping and global identities}

\begin{definition}[Oriented base edges]\label{def:new-orient}
As in \S\ref{sec:heis}, let 
$
\mathcal E_{\rm int}(E):=\{(u\to u+v):\ u,u+v\in E,\ v\in\{e_1,\dots,e_d\}\}
$
and 
$
\mathcal E_{\rm bd}(E):=\{(u\to u+v):\ u\in E,\ u+v\notin E,\ v\in\{\pm e_1,\dots,\pm e_d\}\},
$
so that each undirected broken base edge appears exactly once by choosing the orientation $u\to u+v$ with $u\in E$ and $u+v\notin E$.
\end{definition}

\begin{lemma}[Edge--height bijection in the step--2 gauge]\label{lem:new-edge-height}
Let $A\subset\Gamma$ be gauge--column--convex with fibers $I_u$ as above. For $e=(u\to u+v)$ define the \emph{normalized relative shift}
\begin{equation}\label{eq:tau-def}
\tau(e)\ :=\ h(u)-h(u+v)+\sigma_v(u)\ \in\Z^m.
\end{equation}
Then the number of broken undirected horizontal Cayley edges of $A$ across $e$ equals
\[
\#\Big(\,I_u\ \triangle\ \big(I_{u+v}-\sigma_v(u)\big)\,\Big)
\ =\
\#\Big(\,[0,\boldsymbol{\ell}(u))\ \triangle\ \big([0,\boldsymbol{\ell}(u+v))-\tau(e)\big)\,\Big),
\]
where $[0,\boldsymbol{\ell}):=\prod_{j=1}^m[0,\ell_j)$, and the second equality uses a common translation by $-h(u)$ (which preserves symmetric-difference cardinality).
\end{lemma}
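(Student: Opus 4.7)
The plan is to reduce both equalities to direct bookkeeping in gauge coordinates, with the right-action formula \eqref{eq:right-action} as the only nontrivial algebraic input. First I would fix a base edge $e=(u\to u+v)$ with $v\in\{\pm e_1,\dots,\pm e_d\}$ and observe that, in gauge coordinates, the horizontal Cayley edge attached to $(u,h^\sharp)$ in direction $v$ connects $(u,h^\sharp)$ to $(u+v,\,h^\sharp+\sigma_v(u))$. By gauge-column-convexity, $(u,h^\sharp)\in A^\sharp$ iff $h^\sharp\in I_u$, and $(u+v,\,h^\sharp+\sigma_v(u))\in A^\sharp$ iff $h^\sharp+\sigma_v(u)\in I_{u+v}$, i.e.\ $h^\sharp\in I_{u+v}-\sigma_v(u)$. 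Therefore the edge is broken precisely when $h^\sharp$ lies in exactly one of the two $\Z^m$-boxes, and counting such $h^\sharp$ gives the first equality $\#\bigl(I_u\,\triangle\,(I_{u+v}-\sigma_v(u))\bigr)$.

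For the normalized form, I would translate both sets by $-h(u)\in\Z^m$; translation is a bijection on $\Z^m$ and preserves symmetric-difference cardinality. The first shifted box becomes $[0,\boldsymbol{\ell}(u))=\prod_{j=1}^m[0,\ell_j(u))$, and the second becomes the axis-aligned box with left endpoint $h(u+v)-\sigma_v(u)-h(u)=-\tau(e)$ and side-lengths $\boldsymbol{\ell}(u+v)$. Rewriting this as $[0,\boldsymbol{\ell}(u+v))-\tau(e)$ closes the second equality.

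There is no serious obstacle: the lemma is a direct unpacking of the definitions, once \eqref{eq:right-action} is in hand. The only care needed is in handling the negative directions $v=-e_k$, where $\sigma_v$ is determined by inverting the right action (consistently with the relation $\sigma_{-e_k}(x)=-\sigma_{e_k}(x-e_k)$ that follows from \eqref{eq:right-action} applied to $e_k$ then $e_k^{-1}$ under the upper-triangular normalization \eqref{eq:new-triangular}); the resulting count does not depend on the chosen orientation of the undirected edge, since the substitution $h^\sharp\mapsto h^\sharp+\sigma_v(u)$ furnishes a bijection between the two symmetric-difference representations. Because all boxes are products and both the right-action shift $\sigma_v(u)$ and the relative shift $\tau(e)$ live in $\Z^m$, the identity splits coordinate-by-coordinate into $m$ copies of the one-dimensional edge-height bijection already recorded in Lemma~\ref{lem:edge-height-bijection}, which is what makes the step-2 extension essentially cost-free.
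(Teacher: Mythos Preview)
Your proof is correct and follows exactly the approach the paper uses for the Heisenberg analogue (Lemma~\ref{lem:edge-height-bijection}); the step--2 lemma is stated in the paper without a separate proof precisely because the argument is the same bookkeeping in gauge coordinates, now with $h^\sharp\in\Z^m$ and product boxes. One minor caution: your closing remark that the identity ``splits coordinate-by-coordinate into $m$ copies'' of the 1D bijection is imprecise---the symmetric-difference \emph{cardinality} of product boxes does not factor (this is exactly why Theorem~\ref{thm:new-step2-main}(ii) needs the $A_{\cap}^{(j)}/A_{\max}^{(j)}$ two-sided bounds rather than an exact formula)---but this remark is not used in your actual argument, which is complete as written.
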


\begin{remark}[Sign bridge to \S\ref{sec:heis}]
In \S\ref{sec:heis} we used the representation $(u,h)\mapsto(u+v,h-\sigma_v(u))$ and the shift $I_{u+v}+\sigma_v(u)$; here we use $(u,h^\sharp)\mapsto(u+v,h^\sharp+\sigma_v(u))$ and the shift $I_{u+v}-\sigma_v(u)$. The normalized parameters satisfy $\tau(e)=-t(e)$ relative to \S\ref{sec:heis}, so the 1D identities coincide after this sign change.
\end{remark}

\begin{lemma}[1D half--open symmetric--difference identity]\label{lem:1d-halfopen}
Let $a,b\in\N$ and $\tau\in\Z$. Write $\Delta:=a-b$, $\Delta_+:=\max\{0,\Delta\}$, $\Delta_-:=\max\{0,-\Delta\}$, and $x_+:=\max\{0,x\}$. Then
\begin{equation}\label{eq:1d-symdiff}
\#\big([0,a)\ \triangle\ ([0,b)-\tau)\big)\ =\ |\Delta|\ +\ 2\,\min\Big(\min\{a,b\},\ (\tau-\Delta_-)_+\ +\ (-\tau-\Delta_+)_+\Big).
\end{equation}
\end{lemma}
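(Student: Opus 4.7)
The plan is to reduce the assertion to the half-open 1D interval identity already established as Lemma~\ref{lem:interval-sd-exact}, re-expressed in the present parametrization via the explicit sign bridge $t=-\tau$ noted just above Lemma~\ref{lem:1d-halfopen}. Concretely, I would begin by rewriting $[0,b)-\tau=[-\tau,-\tau+b)$, so that setting $(\alpha,\beta,t)=(a,b,-\tau)$ places the object of interest exactly in the framework of Lemma~\ref{lem:interval-sd-exact}: $I=[0,\alpha)$ and $J=[t,t+\beta)$. The conclusion of that lemma is $\#(I\triangle J)=\delta+2\min\{d(t),m\}$ with $\delta=|\alpha-\beta|$, $m=\min\{\alpha,\beta\}$, and $d(t)=(t-(\alpha-\beta)_+)_+ + (-t-(\beta-\alpha)_+)_+$.

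Next I would translate the notation: $\delta=|a-b|=|\Delta|$, $m=\min\{a,b\}$, $(\alpha-\beta)_+=\Delta_+$, $(\beta-\alpha)_+=\Delta_-$, so
\[
d(-\tau)\ =\ (-\tau-\Delta_+)_+\ +\ (\tau-\Delta_-)_+,
\]
which is precisely the expression inside the inner $\min$ in \eqref{eq:1d-symdiff}. Substituting into Lemma~\ref{lem:interval-sd-exact} yields the displayed identity. Each preliminary manipulation is either a formal substitution or a translation of $\Z$ (which preserves the cardinality of a symmetric difference), so the reduction is quantitative and requires no auxiliary estimate.

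If one prefers a self-contained derivation that does not invoke Lemma~\ref{lem:interval-sd-exact}, I would instead perform a direct case analysis on the relative position of $I=[0,a)$ and $I':=[-\tau,-\tau+b)$ in $\Z$. By the symmetry $(\tau,a,b)\leftrightarrow(-\tau,b,a)$ one reduces to $a\ge b$ (so $|\Delta|=\Delta_+$ and $\Delta_-=0$) and splits into five regimes: (i) $\tau\ge b$ (disjoint, $I'$ to the left of $I$); (ii) $0\le\tau\le b$ (partial overlap on the left); (iii) $-\Delta_+\le\tau\le 0$ (nested, $I'\subset I$); (iv) $-a\le\tau\le -\Delta_+$ (partial overlap on the right); (v) $\tau\le -a$ (disjoint, $I'$ to the right). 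In each regime I would compute $\#(I\triangle I')=a+b-2\,\#(I\cap I')$ from the half-open endpoints and verify that the right-hand side of \eqref{eq:1d-symdiff} collapses to the same value; the five verifications are the transcription of the five cases already worked out for Lemma~\ref{lem:interval-sd-exact}.

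The main obstacle is not analytic at all: it is purely a matter of keeping the sign conventions straight, since the earlier Heisenberg parametrization uses $I_{u+v}+\sigma_v(u)$ while the present step--$2$ parametrization uses $I_{u+v}-\sigma_v(u)$ (and accordingly $\tau=-t$). Once this correspondence is fixed, the identity is a one-line consequence of the already-proven interval formula. The only point worth checking explicitly is that the right-hand side of \eqref{eq:1d-symdiff} is invariant under the swap $(\tau,a,b)\mapsto(-\tau,b,a)$, mirroring the manifest invariance of $\#\big([0,a)\triangle([0,b)-\tau)\big)$ under the same swap; this invariance is visible term by term since $|\Delta|$ and $\min\{a,b\}$ are symmetric in $(a,b)$, while $(\tau-\Delta_-)_+$ and $(-\tau-\Delta_+)_+$ exchange under the swap.
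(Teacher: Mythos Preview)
Your proposal is correct. Your primary route---substituting $(\alpha,\beta,t)=(a,b,-\tau)$ and reading off the result from Lemma~\ref{lem:interval-sd-exact}---is genuinely different from, and more economical than, what the paper does: the paper re-derives the identity from scratch by a direct case analysis on the relative position of $J=[-\tau,b-\tau)$ and $I=[0,a)$, computing $\#(I\cap J)=(\min\{a,b-\tau\}-\max\{0,-\tau\})_+$, handling three regimes for $\tau\ge0$, and then appealing to the swap $(a,b,\tau)\mapsto(b,a,-\tau)$ for $\tau\le0$. The paper does flag the sign bridge $\tau=-t$ in the remark immediately preceding the lemma, but then opts for a self-contained argument rather than invoking the Heisenberg lemma. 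Your reduction buys economy and eliminates the duplication with \S\ref{sec:heis}; the paper's re-derivation keeps the step-2 section independent of that section. Your alternative five-regime case analysis is essentially the paper's approach, modulo relabelling.
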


\begin{proof}
Let $I:=[0,a)$ and $J:=[0,b)-\tau=[-\tau,b-\tau)$. Put $d:=\#(I\triangle J)=a+b-2\,\#(I\cap J)$.
A direct computation gives
\[
\#(I\cap J)=\big(\min\{a,b-\tau\}-\max\{0,-\tau\}\big)_+.
\]
\emph{Case $\tau\ge0$.} Then $\#(I\cap J)=\min\{a,b-\tau\}_+$. 
There are three regimes:
\begin{itemize}
\item[(1)] $\tau\ge b$: $\#(I\cap J)=0$, so $d=a+b$.
\item[(2)] $b-a<\tau<b$: $\#(I\cap J)=b-\tau$, so $d=a+b-2(b-\tau)=a-b+2\tau$.
\item[(3)] $0\le\tau\le b-a$: $\#(I\cap J)=a$, so $d=a-b$.
\end{itemize}
On the right side of \eqref{eq:1d-symdiff}, when $\tau\ge0$ we have $(-\tau-\Delta_+)_+=0$ and $(\tau-\Delta_-)_+=\max\{0,\tau-\Delta_-\}=\min\{\tau,\min\{a,b\}\}$. This reproduces exactly the three values above.

\emph{Case $\tau\le0$.} By swapping the roles of $a,b$ and sending $\tau\mapsto-\tau$ we reduce to the previous case. For completeness: $\#(I\cap J)=\min\{a+\tau,b\}_+$, and the same trichotomy holds with $(a,b,\tau)$ replaced by $(b,a,-\tau)$. Substituting into \eqref{eq:1d-symdiff} with $\Delta_\pm$ unchanged proves the formula for $\tau\le0$. 
\end{proof}

\begin{theorem}[Exact identity for rank--$1$ center; sharp two--sided bounds in general]\label{thm:new-step2-main}
Let $A\subset\Gamma$ be gauge--column--convex with footprint $E$ \emph{(edge orientation as in Definition~\ref{def:new-orient}, counts taken with the right action convention \eqref{eq:right-action})}.

\smallskip
\noindent\emph{(i) Rank $m=1$ (Heisenberg--type).} 
Writing $\ell(u)\in\N$ and $I_u=[h(u),h(u)+\ell(u))$, set for $e=(u\to u+v)$
\[
\Delta(e):=\ell(u)-\ell(u+v),\qquad \Delta_+(e):=\max\{0,\Delta(e)\},\ \ \Delta_-(e):=\max\{0,-\Delta(e)\}.
\]
With the normalized shift $\tau(e)\in\Z$ from \eqref{eq:tau-def}, the \emph{exact} undirected boundary identity holds:
\begin{equation}\label{eq:new-exact-rank1}
\begin{aligned}
B_{\mathcal S}(A)\ &=\ \sum_{e\in\mathcal E_{\rm bd}(E)} \ell(u)\ 
\\&\quad+\ \sum_{e\in\mathcal E_{\rm int}(E)}\Bigg(\,|\Delta(e)|\ +\ 2\,\min\Big(\min\{\ell(u),\ell(u+v)\},\ \big(\tau(e)-\Delta_-(e)\big)_+ + \big(-\tau(e)-\Delta_+(e)\big)_+\Big)\,\Bigg),
\end{aligned}
\end{equation}
where $x_+:=\max\{0,x\}$. Exactness uses the half--open convention and Lemma~\ref{lem:1d-halfopen}.

\smallskip
\noindent\emph{(ii) General rank $m\ge1$ (product boxes).} 
For $e=(u\to u+v)$ and each coordinate $j$ define
\[
\Delta_j(e):=\ell_j(u)-\ell_j(u+v),\quad 
\Delta_{j,\pm}(e):=\max\{0,\pm\Delta_j(e)\},
\]
and the one--sided truncated shear addendum
\[
S_j(e)\ :=\ 2\,\min \Big(\min\{\ell_j(u),\ell_j(u+v)\},\ \big(\tau_j(e)-\Delta_{j,-}(e)\big)_+\ +\ \big(-\tau_j(e)-\Delta_{j,+}(e)\big)_+\Big).
\]
Let
\[
A_{\max}^{(j)}(e):=\prod_{k\ne j}\max\{\ell_k(u),\ell_k(u+v)\},\qquad
A_{\cap}^{(j)}(e):=\prod_{k\ne j}\#\Big([0,\ell_k(u))\cap\big([0,\ell_k(u+v))-\tau_k(e)\big)\Big).
\]
Then
\begin{align}
&\sum_{e\in\mathcal E_{\rm bd}(E)} \prod_{j=1}^m \ell_j(u)\ +\ 
\sum_{e\in\mathcal E_{\rm int}(E)}
  \Bigg[
    \sum_{j=1}^m \big(|\Delta_j(e)|+S_j(e)\big)\,A_{\cap}^{(j)}(e)
  \Bigg]
\nonumber\\
&\le\ B_{\mathcal S}(A)\label{eq:twosided-lower-cap}\\[0.45em]
&\le\ \sum_{e\in\mathcal E_{\rm bd}(E)} \prod_{j=1}^m \ell_j(u)\ +\ \sum_{e\in\mathcal E_{\rm int}(E)}
  \Bigg[
    \sum_{j=1}^m \big(|\Delta_j(e)|+S_j(e)\big)\,A_{\max}^{(j)}(e)
  \Bigg].\label{eq:twosided-upper}
\end{align}
\end{theorem}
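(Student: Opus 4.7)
The plan is to reduce both parts to per--edge symmetric--difference counts in the gauge fibers via Lemma~\ref{lem:new-edge-height}, and then (i) invoke the 1D formula of Lemma~\ref{lem:1d-halfopen} directly for rank $m=1$, and (ii) combine that lemma coordinate by coordinate with a product--box telescoping identity for general rank.

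First I would justify Lemma~\ref{lem:new-edge-height} by direct computation from the right--action rule \eqref{eq:right-action}: a horizontal Cayley edge above $e=(u\to u+v)$ is broken at height $h^\sharp$ iff $h^\sharp$ lies in exactly one of $I_u$ and $I_{u+v}-\sigma_v(u)$, so the count is the cardinality of that symmetric difference; a common translation by $-h(u)$ (which preserves cardinality) normalizes the first fiber to $\prod_j[0,\ell_j(u))$ and the second to $\prod_j[0,\ell_j(u+v))-\tau(e)$, with $\tau(e)$ given by \eqref{eq:tau-def}. For a boundary edge $e\in\mathcal E_{\rm bd}(E)$ the outer fiber is empty, contributing $|I_u|=\prod_j\ell_j(u)$, which is the leading term in both \eqref{eq:new-exact-rank1} and \eqref{eq:twosided-lower-cap}--\eqref{eq:twosided-upper}. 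For part~(i), $m=1$ so the per--internal--edge fiber count is \emph{exactly} the value supplied by Lemma~\ref{lem:1d-halfopen}, namely $|\Delta(e)|+2\min\bigl(\min\{\ell(u),\ell(u+v)\},(\tau(e)-\Delta_-(e))_++(-\tau(e)-\Delta_+(e))_+\bigr)$, and summing over $\mathcal E_{\rm bd}(E)\cup\mathcal E_{\rm int}(E)$ yields \eqref{eq:new-exact-rank1} as an equality. The exactness (rather than inequality) uses the half--open convention and the orientation choice in Definition~\ref{def:new-orient}, which count each broken undirected edge exactly once.

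For part~(ii), fix an internal edge $e$ and abbreviate $\alpha_j:=\ell_j(u)$, $\beta_j:=\ell_j(u+v)$, $\gamma_j:=\#\bigl([0,\alpha_j)\cap([0,\beta_j)-\tau_j(e))\bigr)$, and $d_j:=\alpha_j+\beta_j-2\gamma_j$; by Lemma~\ref{lem:1d-halfopen} applied in each coordinate one has $d_j=|\Delta_j(e)|+S_j(e)$. The per--edge fiber count is $F_e:=\prod_j\alpha_j+\prod_j\beta_j-2\prod_j\gamma_j$, and the workhorse is the telescoping identity
\[
\prod_{j=1}^m\alpha_j-\prod_{j=1}^m\gamma_j=\sum_{j=1}^m\Bigl(\prod_{i<j}\alpha_i\Bigr)(\alpha_j-\gamma_j)\Bigl(\prod_{i>j}\gamma_i\Bigr),
\]
together with its twin for $\beta$, which yields $F_e=\sum_j\bigl[(\prod_{i<j}\alpha_i)(\alpha_j-\gamma_j)+(\prod_{i<j}\beta_i)(\beta_j-\gamma_j)\bigr](\prod_{i>j}\gamma_i)$. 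Bounding $\prod_{i<j}\alpha_i,\prod_{i<j}\beta_i\ge\prod_{i<j}\gamma_i$ and using $(\alpha_j-\gamma_j)+(\beta_j-\gamma_j)=d_j$ gives the lower bound $F_e\ge\sum_j d_j\prod_{i\ne j}\gamma_i=\sum_j d_j A_{\cap}^{(j)}(e)$; bounding $\prod_{i<j}\alpha_i,\prod_{i<j}\beta_i\le\prod_{i<j}M_i$ (with $M_i:=\max\{\alpha_i,\beta_i\}$) and $\prod_{i>j}\gamma_i\le\prod_{i>j}M_i$ gives the upper bound $F_e\le\sum_j d_j\prod_{i\ne j}M_i=\sum_j d_j A_{\max}^{(j)}(e)$. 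Adding the boundary contributions $\prod_j\ell_j(u)$ from $\mathcal E_{\rm bd}(E)$ delivers \eqref{eq:twosided-lower-cap} and \eqref{eq:twosided-upper}.

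The main obstacle, in my view, is not the product telescoping (which is clean and tight in the stated regimes) but the sign/orientation bookkeeping that underwrites the reduction. One must verify the identity $\sigma_{-e_k}(x)=-\sigma_{e_k}(x-e_k)$ under the upper--triangular normalization \eqref{eq:new-triangular}, so that Definition~\ref{def:new-orient} counts each broken undirected edge exactly once and the normalized shift $\tau(e)$ in \eqref{eq:tau-def} is the one consumed by \eqref{eq:1d-symdiff}; one must also track that the telescoping lower bound uses $A_{\cap}^{(j)}$ (intersection products) and not simply $\prod_{i\ne j}\min\{\alpha_i,\beta_i\}$, since $\gamma_i$ can be strictly smaller than $\min\{\alpha_i,\beta_i\}$ when the shift $\tau_i(e)$ is large. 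Once these sign and intersection identities are locked in, the step--$2$ shear calculus reduces coordinate by coordinate to Lemma~\ref{lem:1d-halfopen}, with no additional structure needed beyond half--open intervals and the Hall/Malcev ordering of the horizontal basis.
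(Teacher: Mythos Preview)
Your proof is correct. Part~(i) is identical to the paper's (both reduce to Lemma~\ref{lem:new-edge-height} and Lemma~\ref{lem:1d-halfopen}). For part~(ii), you and the paper diverge slightly in mechanism. You write the per-edge count as the exact numerical quantity $F_e=\prod_j\alpha_j+\prod_j\beta_j-2\prod_j\gamma_j$, expand it by a single product telescoping identity, and bound the resulting sum above and below by replacing the mixed $\alpha,\beta,\gamma$ factors with $M_i$ or $\gamma_i$ respectively. The paper instead handles the two directions separately: for the upper bound it telescopes the \emph{indicator function} $\mathbf 1_{\prod I}-\mathbf 1_{\prod J}=\sum_j(\prod_{k<j}\mathbf 1_{I_k})(\mathbf 1_{I_j}-\mathbf 1_{J_j})(\prod_{k>j}\mathbf 1_{J_k})$ pointwise and then takes absolute values; for the lower bound it builds explicit pairwise-disjoint ``witness cylinders'' $\mathsf S_j:=(I_j\triangle J_j)\times\prod_{k\ne j}(I_k\cap J_k)$ lying inside $\prod I\triangle\prod J$. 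Your argument is more unified (one identity, bounded both ways) and arguably cleaner; the paper's is more geometric and makes the disjointness---hence the absence of a spurious $1/m$ overcounting factor---visually transparent. Both yield precisely the same $A_{\cap}^{(j)}$ and $A_{\max}^{(j)}$ bounds, and your remark about $A_{\cap}^{(j)}$ versus $\prod_{i\ne j}\min\{\alpha_i,\beta_i\}$ matches the paper's Remark~\ref{rem:AminAcap}.
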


\begin{proof}[Proof of Theorem~\ref{thm:new-step2-main}(ii)]
Fix an internal base edge $e=(u\to u+v)$ and abbreviate $a_j:=\ell_j(u)$, $b_j:=\ell_j(u+v)$, $\tau_j:=\tau_j(e)$,
\[
I_j:=[0,a_j),\qquad J_j:=[0,b_j)-\tau_j,
\qquad X_j:=I_j\setminus J_j,\quad Y_j:=J_j\setminus I_j.
\]
By Lemma~\ref{lem:new-edge-height}, the contribution across $e$ equals 
\[
\#\Bigg(\ \prod_{j=1}^m I_j\ \triangle\ \prod_{j=1}^m J_j\ \Bigg).
\]

\emph{Upper bound.}
Use the exact telescoping identity (pointwise on $\Z^m$):
\[
\mathbf 1_{\prod I}-\mathbf 1_{\prod J}
=\sum_{j=1}^m\Bigg(\ \prod_{k<j}\mathbf 1_{I_k}\ \Bigg)\cdot\big(\mathbf 1_{I_j}-\mathbf 1_{J_j}\big)\cdot\Bigg(\ \prod_{k>j}\mathbf 1_{J_k}\ \Bigg).
\]
Taking absolute values and summing over $\Z^m$ yields
\[
\#\bigg(\prod I\ \triangle\ \prod J\bigg)\ \le\ \sum_{j=1}^m \ \#(I_j\triangle J_j)\ \prod_{k\ne j}\max\{\#I_k,\#J_k\}.
\]
By Lemma~\ref{lem:1d-halfopen}, $\#(I_j\triangle J_j)=|\Delta_j|+S_j$, and $\#I_k=a_k$, $\#J_k=b_k$. This gives the single--edge upper bound with $A_{\max}^{(j)}$, and summing over $e\in\mathcal E_{\rm int}(E)$ proves \eqref{eq:twosided-upper}. For boundary edges $e\in\mathcal E_{\rm bd}(E)$ the contribution is $\prod_j a_j$, as stated.

\emph{Lower bound (disjoint witness cylinders).}
Define the $j$-witness cylinder
\[
\mathsf S_j\ :=\ \big(X_j\times\prod_{k\ne j}(I_k\cap J_k)\big)\ \cup\ \big(Y_j\times\prod_{k\ne j}(I_k\cap J_k)\big).
\]
Every point of $\mathsf S_j$ lies in $\prod I\triangle\prod J$ (it differs in the $j$-th coordinate and agrees in the others), hence
$
\bigcup_{j=1}^m \mathsf S_j\ \subset\ \prod I\triangle\prod J.
$
Moreover, the sets $\mathsf S_j$ are \emph{pairwise disjoint}: if $z\in \mathsf S_j\cap \mathsf S_k$ with $j\ne k$, then $z_k\in I_k\cap J_k$ (from $z\in \mathsf S_j$) and $z_k\in I_k\triangle J_k$ (from $z\in \mathsf S_k$), a contradiction. Therefore,
\[
\#\bigg(\prod I\ \triangle\ \prod J\bigg)\ \ge\ \#\Big(\bigcup_{j=1}^m \mathsf S_j\Big)
\ =\ \sum_{j=1}^m \#\mathsf S_j
\ =\ \sum_{j=1}^m \big(\#X_j+\#Y_j\big)\ \prod_{k\ne j}\#(I_k\cap J_k).
\]
Again $\#X_j+\#Y_j=\#(I_j\triangle J_j)=|\Delta_j|+S_j$ by Lemma~\ref{lem:1d-halfopen}, and
$
\prod_{k\ne j}\#(I_k\cap J_k)=A_{\cap}^{(j)}.
$
Summing the single--edge estimate over $e\in\mathcal E_{\rm int}(E)$ and adding the boundary term $\prod_j a_j$ gives \eqref{eq:twosided-lower-cap}.
\end{proof}

\begin{remark}[On $A_{\min}$ vs.\ $A_{\cap}$ in the lower bound]\label{rem:AminAcap}
The proof shows that the correct cross--sections for the lower bound are the \emph{actual overlaps}, 
$
A_{\cap}^{(j)}(e)=\prod_{k\ne j}\#\big(I_k\cap J_k\big),
$
coming from the cylinder construction. One always has $A_{\cap}^{(j)}\le A_{\min}^{(j)}:=\prod_{k\ne j}\min\{a_k,b_k\}$; hence replacing $A_{\cap}^{(j)}$ by $A_{\min}^{(j)}$ would claim a strictly stronger (and in general false) inequality. In regimes where the non--witnessed coordinates overlap substantially (e.g.\ under near--minimizing sequences where all $|\tau_k|$ are small compared to $a_k,b_k$), $A_{\cap}^{(j)}$ and $A_{\min}^{(j)}$ are asymptotically equivalent, and that heuristic becomes accurate. All compactness and $\Gamma$--bounds below use the $A_{\cap}$ version proved above.
\end{remark}

\begin{lemma}[Constant face curl]\label{lem:new-constant-curl}
Define the shear $1$--cochain on oriented base edges by
\[
c_{e_k}(x)\ :=\ -\,\sigma_{e_k}(x).
\]
Then, for the standard square face orientation, the $2$--cochain $dc$ is \emph{constant} and equals the skew part of $\omega$:
\[
(dc)_{ij}\ \equiv\ \omega(e_i,e_j)-\omega(e_j,e_i)\ \in \Z^m\qquad(1\le i,j\le d).
\]
In particular, under \eqref{eq:new-triangular} this simplifies to $(dc)_{ij}\equiv\omega(e_i,e_j)$ for $i<j$, while $(dc)_{ii}\equiv 0$.
\end{lemma}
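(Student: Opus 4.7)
The plan is to perform a direct algebraic computation of the face coboundary using the explicit formula \eqref{eq:sigma-skew-form} for $\sigma_{e_k}$ and to isolate the constant coefficient coming from $\omega$.

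First I would observe that \eqref{eq:sigma-skew-form} exhibits each $\sigma_{e_k}:\Z^d\to\Z^m$ as a $\Z$-linear functional in the coordinates $x_1,\dots,x_d$: the coefficient of $x_\ell$ equals $\omega(e_\ell,e_k)-\omega(e_k,e_\ell)$ when $\ell>k$, equals $\omega(e_k,e_k)$ when $\ell=k$, and vanishes when $\ell<k$. Consequently every unit increment $\Delta_\ell\sigma_{e_k}:=\sigma_{e_k}(x+e_\ell)-\sigma_{e_k}(x)$ is independent of $x$, so any mixed second difference such as $(dc)_{ij}$ is automatically independent of $x$. This already delivers the constancy assertion and reduces the lemma to identifying that constant.

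Next I would expand the face coboundary. Substituting $c=-\sigma$ into the coboundary formula from Remark~\ref{rmk:heis-orientation-lock} and grouping the two $e_i$- and $e_j$-increments, one obtains (up to an overall sign fixed by the conventions of \eqref{eq:right-action} and the face orientation) $(dc)_{ij}=\Delta_j\sigma_{e_i}-\Delta_i\sigma_{e_j}$. The case analysis is now immediate: when $i=j$ the two terms cancel; when $i<j$ the coefficient list yields $\Delta_j\sigma_{e_i}=\omega(e_j,e_i)-\omega(e_i,e_j)$ and $\Delta_i\sigma_{e_j}=0$, so $(dc)_{ij}$ is the skew combination $\omega(e_i,e_j)-\omega(e_j,e_i)$ (or its negative, depending on orientation); the case $i>j$ follows by swapping indices. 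The triangular simplification is then automatic: under \eqref{eq:new-triangular} one has $\omega(e_j,e_i)=0$ for $j>i$ and $\omega(e_k,e_k)=0$, collapsing the formula to $(dc)_{ij}=\omega(e_i,e_j)$ for $i<j$ and $(dc)_{ii}=0$.

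The only real obstacle is bookkeeping: two independent sign conventions are in play (the sign in $c_{e_k}=-\sigma_{e_k}$ and the face orientation $(e_i,e_j)$ from Remark~\ref{rmk:heis-orientation-lock}), and both must be reconciled with the right-action rule \eqref{eq:right-action}. The cleanest way to pin down the global sign is to specialize to the Heisenberg case $d=2$, $\omega(e_1,e_2)=1$, $\omega(e_2,e_1)=0$: then \eqref{eq:sigma-skew-form} gives $\sigma_{e_1}(x,y)=-y$, so $c_{e_1}(x,y)=y$ and $c_{e_2}\equiv 0$, and one recovers Lemma~\ref{lem:dc-minus-one} with $(dc)_{12}\equiv -1$. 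Matching signs in this base case fixes the orientation-dependent ambiguity for the general identity and confirms that the \emph{skew part of $\omega$} appearing in the lemma is precisely what Corollary~\ref{cor:grid-curl-fit} requires in order to apply the blockwise curl-fit in the step-$2$ Carnot setting.
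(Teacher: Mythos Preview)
Your approach is essentially the paper's: use the explicit linear form \eqref{eq:sigma-skew-form} to see that each $\Delta_{e_j}c_{e_i}$ is a constant, read off those constants, and combine. The paper's proof does exactly this, writing $c_{e_k}(x)=-x_k\omega(e_k,e_k)+\sum_{r>k}x_r\big(\omega(e_k,e_r)-\omega(e_r,e_k)\big)$, computing $\Delta_{e_j}c_{e_i}$ by cases $j>i$ vs.\ $j<i$, and concluding via $(dc)_{ij}=\Delta_{e_j}c_{e_i}-\Delta_{e_i}c_{e_j}$.

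One caveat on your sign-fixing strategy: the cross-reference to Remark~\ref{rmk:heis-orientation-lock} and Lemma~\ref{lem:dc-minus-one} is a trap. Those are in \S\ref{sec:heis}, where the right action and the $\sigma$-convention carry the opposite sign (see the ``Sign bridge to \S\ref{sec:heis}'' remark just above Lemma~\ref{lem:1d-halfopen}: there $\tau(e)=-t(e)$). With the \S\ref{sec:heis} coboundary formula you would indeed get $(dc)_{12}=-1$, but the present lemma asserts $(dc)_{12}=\omega(e_1,e_2)-\omega(e_2,e_1)=+1$ in the step-$2$ section's face convention $(dc)_{ij}=\Delta_{e_j}c_{e_i}-\Delta_{e_i}c_{e_j}$. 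So ``matching'' with Lemma~\ref{lem:dc-minus-one} would hand you the wrong global sign. The paper sidesteps this by never invoking \S\ref{sec:heis}: it simply adopts the convention $(dc)_{ij}=\Delta_{e_j}c_{e_i}-\Delta_{e_i}c_{e_j}$ for this section and computes directly. If you want a self-contained Heisenberg check, do it inside this section's conventions (where $\sigma_{e_1}(x,y)=-y$, hence $c_{e_1}=y$, $c_{e_2}=0$, and $(dc)_{12}=\Delta_{e_2}c_{e_1}-\Delta_{e_1}c_{e_2}=+1$), rather than importing the \S\ref{sec:heis} formula.
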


\begin{proof}
From \eqref{eq:sigma-skew-form},
\[
\sigma_{e_k}(x)=x_k\,\omega(e_k,e_k)+\sum_{r>k} x_r\big(\omega(e_r,e_k)-\omega(e_k,e_r)\big),
\]
so
\[
c_{e_k}(x)= -\sigma_{e_k}(x)= -x_k\,\omega(e_k,e_k)+\sum_{r>k} x_r\big(\omega(e_k,e_r)-\omega(e_r,e_k)\big).
\]
Hence, for $i\neq j$,
\[
\Delta_{e_j}c_{e_i}(x)=
\begin{cases}
\omega(e_i,e_j)-\omega(e_j,e_i), & j>i,\\
0, & j<i,
\end{cases}
\qquad
\Delta_{e_i}c_{e_j}(x)=
\begin{cases}
\omega(e_j,e_i)-\omega(e_i,e_j), & i>j,\\
0, & i<j.
\end{cases}
\]
Therefore for all $i,j$,
\[
(dc)_{ij}=\Delta_{e_j}c_{e_i}-\Delta_{e_i}c_{e_j}=\omega(e_i,e_j)-\omega(e_j,e_i),
\]
independently of the diagonal values $\omega(e_k,e_k)$. Under \eqref{eq:new-triangular} the second term vanishes for $i<j$, giving the stated simplification and $(dc)_{ii}\equiv 0$.
\end{proof}

\subsection{Interface synchronization across blocks}\label{subsec:interface-sync}

We keep the set-up and notation from the proof of Theorem~\ref{thm:new-step2-main}(ii), and fix a large scale $\rho$.
After discarding an $O(L)$-thick outer layer of the footprint (absorbed in the boundary term), we may assume the base footprint is a disjoint union of $d$-dimensional blocks
\[
Q\in\mathcal Q\ \cong\ \{0,\dots,n-1\}^d,\qquad n:=\lfloor \rho/L\rfloor,
\]
each $Q$ a discrete cube of side $L$, with the obvious adjacency graph $\mathbb G=(\mathcal Q,\mathcal F)$ whose edges $\mathcal F$ are unordered adjacent block pairs $Q\sim Q'$.
For a fixed coordinate $j\in\{1,\dots,m\}$ and an interface (hyperface) $F=Q\mid Q'\in\mathcal F$ orthogonal to $e_i$, write $\mathcal E(F)$ for the set of base edges crossing $F$ in direction $e_i$.
Recall the per-edge normalized shifts $\tau_j(e)$ from \eqref{eq:tau-def}, and the weights
\[
A_{\cap}^{(j)}(e)\ :=\ \prod_{k\ne j}\#\Big([0,\ell_k(u))\cap\big([0,\ell_k(u+v))-\tau_k(e)\big)\Big).
\]
Define the \emph{face weight}
\[
W^{(j)}(F)\ :=\ \sum_{e\in\mathcal E(F)} A_{\cap}^{(j)}(e).
\]
Under the bulk assumptions $|Y_\rho|\asymp \rho^Q$ and $\rho^{-(Q-1)}B_{\mathcal S}(Y_\rho)\le C$, we will use the crude bound
\begin{equation}\label{eq:face-weight-bulk}
W^{(j)}(F)\ \lesssim\ L^{d-1}\,\rho^{m-1},
\end{equation}
uniformly over $F$ and $j$ (the implied constant depends only on $(d,m)$ and the data of $\omega$).

\begin{lemma}[Microscopic interface control: detailed telescoping]\label{lem:boundary-sup}
After the blockwise interior construction (anchoring $h\equiv0$ at the chosen block roots), there is a constant $C_1=C_1(d,m,\omega)$ such that for every interface edge $e=(u\to u+e_i)\in\mathcal E(F)$ and for every $j$,
\[
|\tau_j(e)|\ \le\ C_1\,L.
\]
\end{lemma}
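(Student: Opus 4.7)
The plan is to bound $|\tau_j(e)|$ by a direct analysis exploiting two structural features: the shear cochain $c:=-\sigma$ is \emph{linear} in the base coordinates by \eqref{eq:sigma-final}, and its curl $dc$ is \emph{constant} on faces with entries in $V_2$ given by $\omega$ (Lemma~\ref{lem:new-constant-curl}). I would take the blockwise interior construction to be the lexicographic curl-fit from \S\ref{sec:curl-fit}: on each block $Q=r_Q+\{0,\dots,L-1\}^d$, set $h_Q:=H^1(-c)$ in the local coordinates $v:=u-r_Q$, augmented by a block-linear correction $\alpha^Q\cdot v$ chosen to zero the constant part of $c+dh_Q$. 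Then $h_Q(r_Q)=0$, and by Proposition~\ref{prop:homotopy},
\[
\tau_j(e)\ =\ -\bigl(H^2_\uparrow(dc)\bigr)_{e_i,j}(u)\qquad\text{for every interior edge }e=(u\to u+e_i)\text{ of }Q.
\]
The right-hand side is an integer Hardy-type partial sum of entries of $dc$ (each bounded by $\|\omega\|_\infty$) over at most $L$ terms, giving $|\tau_j(e)|\le C_0(d,m)\|\omega\|_\infty L$ on every interior edge.

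Next, for an interface edge $e=(u\to u+e_i)$ with $u\in Q$ and $u+e_i\in Q'$, I would extend the closed-form lexicographic formula for $h_{Q'}$ one lattice step outside $Q'$ to the single boundary point $u$, producing $\widetilde h_{Q'}(u)$ that still satisfies the step identity $h_{Q'}(u+e_i)-\widetilde h_{Q'}(u)=-(c+dh_{Q'})_{e_i}(u)$. This lets me split
\[
\tau_j(e)\ =\ \underbrace{\bigl(\sigma_{e_i}(u)+\widetilde h_{Q'}(u)-h_{Q'}(u+e_i)\bigr)_j}_{\text{interior residual of }Q',\ \le\ C_0 L}\ +\ \bigl(\widetilde h_{Q'}(u)-h_Q(u)\bigr)_j,
\]
which reduces the interface bound to controlling the \emph{interface jump} $\widetilde h_{Q'}(u)-h_Q(u)$ on the shared wall $F$ by $O(L)$.

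The crucial observation is that $h_Q$ and $\widetilde h_{Q'}$ are both degree-two polynomials in their respective local coordinates with the \emph{same} quadratic part, fixed by the universal constant curl $(dc)_{ij}=\omega(e_i,e_j)$ via the explicit form of $H^1(-c)$. Consequently $\widetilde h_{Q'}(u)-h_Q(u)$ is affine-linear in $u$, with slope determined solely by $r_{Q'}-r_Q=Le_i$ and bounded in norm by $\|\omega\|_\infty$, and with a constant part that can be pinned to zero by a single block constant (which does not affect any $\tau(e)$). Since $u$ ranges over the shared wall with $|u-r_Q|_\infty\le L$, the affine-linear contribution is also $O(L)$; combined with the interior-residual estimate this gives $|\tau_j(e)|\le C_1(d,m,\omega)\,L$ uniformly over interface edges. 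The main obstacle will be the explicit verification that the a priori $O(L^2)$ cross-terms in the expansion of $\widetilde h_{Q'}-h_Q$ across $F$ cancel as claimed; this cancellation is enabled by the upper-triangular normalization \eqref{eq:new-triangular} together with the fact that $dc$ is constant on faces (Lemma~\ref{lem:new-constant-curl}), and must be tracked through careful bookkeeping of the lexicographic summation order used in $H^1$ and $H^2_\uparrow$. Crucially, this pre-synchronization bound is what makes the subsequent cycle-median step of \S\ref{subsec:interface-sync} operate on residuals that are already microscopically tempered.
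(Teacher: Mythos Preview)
Your approach is genuinely different from the paper's, and it has a concrete gap. The paper never computes the potentials $h_Q,h_{Q'}$ explicitly; instead it defines a single mixed cochain $\alpha_j$ (equal to $\delta h_j+c_{\cdot,j}$ on interior edges and to $\tau_j$ on interface edges), applies discrete Stokes on the strip of plaquettes between a tangential path $\gamma\subset Q$ (from the root $r_Q$ to $u$) and its translate $\gamma+e_i\subset Q'$, and telescopes to the identity
\[
\tau_j(u)-\tau_j(r_Q)\;=\;\sum_{k=0}^{\ell-1}(dc)_{i\,s_k,\,j},
\]
which gives $|\tau_j(u)-\tau_j(r_Q)|\le C_dL\|dc\|_\infty$ directly from path length and constancy of $dc$. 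The bound at the root then finishes the argument. The crucial point is that Stokes on the strip cancels \emph{all} $h$-dependence simultaneously (both $h_Q$ along $\gamma$ and $h_{Q'}$ along $\gamma+e_i$ telescope to their root values), so no polynomial identification of $h_Q$ with $\tilde h_{Q'}$ is needed.

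Your route, by contrast, hinges on two claims that fail for the lex potential you specify. First, with $h_Q=H^1(-c)$ in local coordinates $v=u-r_Q$, one computes (using that $c_{e_i}(x)=\sum_{k>i}x_k\omega_{ik}$ depends only on coordinates $>i$, which along the lex path equal $(r_Q)_k$) that $h_Q(u)=-\sum_i(u_i-(r_Q)_i)\sum_{k>i}(r_Q)_k\omega_{ik}$ is \emph{linear} in $u$, not degree two; the ``same quadratic part'' claim is vacuous. The slope of the affine difference $\tilde h_{Q'}-h_Q$ then equals $\sum_{k>i}[(r_{Q'})_k-(r_Q)_k]\omega_{ik}$ in the $u_i$-direction; with $r_{Q'}-r_Q=Le_p$ this gives a slope of order $L\|\omega\|_\infty$, \emph{not} $\|\omega\|_\infty$, so the linear variation over the $L$-wide wall is $O(L^2)$ rather than $O(L)$. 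The anticipated ``$O(L^2)$ cancellation'' does not occur. Second, you cannot ``pin the constant part to zero by a single block constant (which does not affect any $\tau(e)$)'': the anchoring $h_Q(r_Q)=0$ is already fixed by hypothesis, and shifting $h_Q$ by a constant \emph{does} change $\tau(e)=h_Q(u)-h_{Q'}(u+e_i)+\sigma_{e_i}(u)$ on every interface edge---that is precisely the block-constant synchronization performed \emph{later} in Lemma~\ref{lem:tree-synch}, which the present lemma must precede. The paper's plaquette-strip telescoping is what lets one bypass both difficulties.
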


\begin{proof}[Proof (explicit telescoping via discrete Stokes)]
Fix $F=Q\mid Q'$ orthogonal to $e_i$. Choose the block roots $r_Q\in Q$ and $r_{Q'}\in Q'$ so that $r_{Q'}=r_Q+e_i$, and normalize $h^{(Q)}(r_Q)=h^{(Q')}(r_{Q'})=0$. 
Let $u\in F\cap Q$ be arbitrary. Take any $\ell^1$--monotone path in the face from $r_Q$ to $u$,
\[
\gamma:\ v_0=r_Q,\ v_{k+1}=v_k+e_{s_k}\quad (s_k\in\{1,\dots,d\}\setminus\{i\}),\qquad k=0,\dots,\ell-1,
\]
with $\ell\le C_d\,L$.

For each step $k$, consider the elementary $(i,s_k)$--plaquette with lower left corner $v_k$ and boundary (positively oriented)
\[
\partial R_k:\ v_k \xrightarrow{e_i} v_k{+}e_i \xrightarrow{e_{s_k}} v_{k+1}{+}e_i \xrightarrow{-e_i} v_{k+1} \xrightarrow{-e_{s_k}} v_k .
\]
Define the cochain
\[
\alpha_j:=\delta h_j + c_{\,\cdot, j}\ \ \text{on interior edges (within $Q$ or $Q'$)},\qquad
\alpha_j(e_i@x):=\tau_j(x\to x+e_i)\ \ \text{on interface edges}.
\]
Since $d(\delta h_j)=0$ and $dc$ is constant (Lemma~\ref{lem:new-constant-curl}), we have for every $k$
\begin{equation}\label{eq:stokes-rectangle}
\sum_{e\in\partial R_k}\alpha_j(e)\ =\ (dc)_{i\,s_k,\,j}.
\end{equation}
Summing \eqref{eq:stokes-rectangle} over $k=0,\dots,\ell-1$ gives
\begin{equation}\label{eq:strip-sum}
\sum_{k=0}^{\ell-1}\big(\tau_j(v_k)-\tau_j(v_{k+1})\big)\ +\ \Big[\sum_{e\in \gamma+e_i}\alpha_j(e)\ -\ \sum_{e\in \gamma}\alpha_j(e)\Big]\ =\ \sum_{k=0}^{\ell-1}(dc)_{i\,s_k,\,j}.
\end{equation}
By definition of $\alpha_j$ and the normalization at the roots,
\[
\sum_{e\in \gamma}\alpha_j(e)=h^{(Q)}_j(u)-h^{(Q)}_j(r_Q)+\sum_{e\in\gamma}c_{\,\cdot, j}(e)=h^{(Q)}_j(u)+\sum_{e\in\gamma}c_{\,\cdot, j}(e),
\]
and similarly
$
\sum_{e\in \gamma+e_i}\alpha_j(e)=h^{(Q')}_j(u+e_i)+\sum_{e\in\gamma+e_i}c_{\,\cdot, j}(e).
$
Insert these into \eqref{eq:strip-sum}, use the definition $\tau_j(v_\ell)=\tau_j(u)=h^{(Q)}_j(u)-h^{(Q')}_j(u+e_i)+c_{e_i,j}(u)$ and $\tau_j(v_0)=\tau_j(r_Q)=c_{e_i,j}(r_Q)$ (since both roots have $h\equiv0$), and rearrange. The $h$--terms cancel, and the $c$--terms collapse to $c_{e_i,j}(u)-c_{e_i,j}(r_Q)$ by discrete Stokes on the \emph{tangential} strip $\gamma$ (they encode the sum of $\Delta_{e_{s_k}} c_{e_i,j}$ along $\gamma$). We obtain the identity
\[
\tau_j(u)-\tau_j(r_Q)\ =\ \sum_{k=0}^{\ell-1}(dc)_{i\,s_k,\,j}.
\]
Taking absolute values and using $\ell\le C_d L$ gives
\[
|\tau_j(u)|\ \le\ |\tau_j(r_Q)|\ +\ C_d\,L\,\|dc\|_{\infty}.
\]
Finally, under \eqref{eq:new-triangular} the explicit formula for $c$ in the proof of Lemma~\ref{lem:new-constant-curl} shows
\[
\tau_j(r_Q)=c_{e_i,j}(r_Q)=\sum_{r>i} (r_Q)_r\,\big(\omega(e_i,e_r)-\omega(e_r,e_i)\big)_j,
\]
whence $|\tau_j(r_Q)|\le C_{\omega}\,L$. This yields the claim with $C_1:=C_d\|dc\|_\infty+C_\omega$.
\end{proof}

\begin{definition}[Weighted (integer) median on a hyperface]\label{def:wmed}
For $F\in\mathcal F$ and fixed $j$, a number $m^{(j)}(F)\in\Z$ is a \emph{weighted median} of the multiset $\{\tau_j(e):e\in\mathcal E(F)\}$ with weights $\{A_{\cap}^{(j)}(e)\}$ if it minimizes $\phi(s):=\sum_{e\in\mathcal E(F)} A_{\cap}^{(j)}(e)\,|\tau_j(e)-s|$ over $s\in\Z$.
\end{definition}

\begin{lemma}[Hyperface median reduction]\label{lem:weighted-median}
For each $F$ and $j$, let $m^{(j)}(F)$ be a weighted median as in Definition~\ref{def:wmed}. Then
\[
\sum_{e\in\mathcal E(F)} A_{\cap}^{(j)}(e)\,\big|\tau_j(e)-m^{(j)}(F)\big|\ \le\ C_2\,L\,W^{(j)}(F),
\]
with $C_2=C_2(d,m,\omega)$, and moreover $|m^{(j)}(F)|\le C_1 L$ by Lemma~\ref{lem:boundary-sup}.
\end{lemma}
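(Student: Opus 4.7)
The plan is to combine the uniform interface bound from Lemma~\ref{lem:boundary-sup} with the standard localization property of weighted $L^1$ medians — everything else is one line of triangle inequality. First I would observe that the objective $\phi(s) := \sum_{e \in \mathcal E(F)} A_{\cap}^{(j)}(e)\,|\tau_j(e) - s|$ is convex and piecewise affine in $s$, with breakpoints at the integer values $\tau_j(e)$, so an integer minimizer exists. Since every value $\tau_j(e)$ lies in $[-C_1 L, C_1 L]$ by Lemma~\ref{lem:boundary-sup}, clipping any candidate $s$ to this interval pointwise weakly decreases each $|\tau_j(e) - s|$, and hence $\phi(s)$. Therefore the weighted median $m^{(j)}(F)$ can be selected in $[-C_1 L, C_1 L]$, which already gives the claimed bound $|m^{(j)}(F)| \le C_1 L$.

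For the weighted-sum inequality, the triangle inequality applied edgewise yields $|\tau_j(e) - m^{(j)}(F)| \le |\tau_j(e)| + |m^{(j)}(F)| \le 2 C_1 L$. Weighting by $A_{\cap}^{(j)}(e)$ and summing against the definition $W^{(j)}(F) = \sum_{e \in \mathcal E(F)} A_{\cap}^{(j)}(e)$ gives
\[
\sum_{e \in \mathcal E(F)} A_{\cap}^{(j)}(e)\,\bigl|\tau_j(e) - m^{(j)}(F)\bigr|\ \le\ 2 C_1 L \cdot W^{(j)}(F),
\]
so $C_2 := 2 C_1$ works, with the same dependence on $(d, m, \omega)$ as $C_1$.

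There is essentially no obstacle at this step: the lemma is a lightweight bookkeeping consequence of the upstream microscopic control. The genuine analytic work was already carried out in Lemma~\ref{lem:boundary-sup}, where the constant face curl from Lemma~\ref{lem:new-constant-curl} was converted, by a discrete Stokes telescoping along a tangential $\ell^1$-monotone path, into an $O(L)$ bound for $\tau_j$ on every interface edge. If a sharper constant than $2 C_1$ were needed downstream, I would replace the crude triangle inequality by the observation that any weighted median lies in the convex hull of $\{\tau_j(e)\}_{e \in \mathcal E(F)}$, so the left-hand side can be controlled by the weighted $L^1$ \emph{spread} of the $\tau_j(e)$ around their median rather than by their absolute size — but since the intended use summates $L \cdot W^{(j)}(F)$ over interfaces in the subsequent global shear estimate, the constant $2 C_1$ already matches the target order.
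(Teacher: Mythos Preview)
Your proof is correct and follows essentially the same one-line reduction to Lemma~\ref{lem:boundary-sup} as the paper. The only difference is that the paper uses the variational definition of the weighted median directly---testing against $s=0$ gives $\sum_e A_{\cap}^{(j)}(e)\,|\tau_j(e)-m^{(j)}(F)|\le \sum_e A_{\cap}^{(j)}(e)\,|\tau_j(e)|\le C_1 L\,W^{(j)}(F)$, so $C_2=C_1$---whereas you bound $|m^{(j)}(F)|$ first and then apply the triangle inequality, picking up a harmless factor of $2$.
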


\begin{proof}
By optimality of the (integer) weighted median, $\sum A_{\cap}^{(j)}|\tau_j-m^{(j)}|\le \sum A_{\cap}^{(j)}\,\min_{s\in\Z}|\tau_j-s|\le \sum A_{\cap}^{(j)}\,|\tau_j|$. Apply Lemma~\ref{lem:boundary-sup} and the definition of $W^{(j)}(F)$.
\end{proof}

\begin{lemma}[Tree synchronization]\label{lem:tree-synch}
Fix a spanning tree $T$ of $\mathbb G$. There exist block constants $a(Q)\in\Z^m$ such that, if one sets the inter-block increments on every tree face $F=Q\mid Q'\in T$ by
\[
\Delta a^{(j)}(F)\ :=\ a^{(j)}(Q)-a^{(j)}(Q')\ =\ -\,m^{(j)}(F),
\]
then the total weighted $L^1$ cost over all tree faces satisfies
\[
\sum_{F\in T}\ \sum_{e\in\mathcal E(F)}\ \sum_{j=1}^m
A_{\cap}^{(j)}(e)\,\big|\tau_j(e)+\Delta a^{(j)}(F)\big|
\ \le\ C_2\,L\ \sum_{F\in T}\ \sum_{j=1}^m W^{(j)}(F)
\ \lesssim\ \rho^{d+m-1},
\]
where the last bound uses $|T|=|\mathcal Q|-1\asymp n^d$ and \eqref{eq:face-weight-bulk}.
\end{lemma}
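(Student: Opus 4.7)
\medskip

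\noindent\emph{Proof plan for Lemma~\ref{lem:tree-synch}.} The plan is to exploit the acyclic structure of $T$ so that any prescribed increments on tree edges can be realized by global block constants without a compatibility obstruction. Concretely, I would fix a root block $Q_0\in\mathcal Q$, set $a(Q_0):=0\in\Z^m$, and define $a(Q)$ by telescoping the prescribed increments $\Delta a^{(j)}(F)=-m^{(j)}(F)$ (from Definition~\ref{def:wmed}) along the unique $T$-path from $Q_0$ to $Q$. Because $T$ is a spanning tree, this definition is both well-defined and consistent on every tree face; the integer-valuedness of $m^{(j)}(F)$ (guaranteed by the integer weighted median) ensures $a(Q)\in\Z^m$.

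With these block constants chosen, the residual on any interface edge $e\in\mathcal E(F)$, $F\in T$, is exactly
\[
\tau_j(e)+\Delta a^{(j)}(F)\ =\ \tau_j(e)-m^{(j)}(F),
\]
so Lemma~\ref{lem:weighted-median} applied face-by-face and coordinate-by-coordinate yields the per-face bound
\[
\sum_{e\in\mathcal E(F)} A_{\cap}^{(j)}(e)\,\bigl|\tau_j(e)+\Delta a^{(j)}(F)\bigr|\ \le\ C_2\,L\,W^{(j)}(F).
\]
Summing this estimate over $F\in T$ and $j\in\{1,\dots,m\}$ gives the first stated inequality.

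For the quantitative bound $\lesssim \rho^{d+m-1}$, I would combine three ingredients: $|T|=|\mathcal Q|-1\asymp n^d\asymp(\rho/L)^d$, the uniform face-weight estimate \eqref{eq:face-weight-bulk} giving $W^{(j)}(F)\lesssim L^{d-1}\rho^{m-1}$, and the finite factor $m$ from summation over the coordinates $j$. Multiplying,
\[
C_2\,L\cdot\sum_{F\in T}\sum_{j=1}^m W^{(j)}(F)\ \lesssim\ L\cdot (\rho/L)^d \cdot L^{d-1}\rho^{m-1}\ =\ \rho^{d+m-1},
\]
as claimed.

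\medskip

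\noindent\emph{Where the work lies.} The mechanics are essentially routine once the tree structure is used to avoid cycle/curl obstructions; the only point that requires care is the integer propagation of the medians (no real rounding is needed since the weighted medians are chosen in $\Z$). The nontrivial analytic input is borrowed from earlier: Lemma~\ref{lem:boundary-sup} supplies the microscopic bound $|\tau_j(e)|\le C_1 L$ (so that the medians themselves are $O(L)$), and Lemma~\ref{lem:weighted-median} supplies the per-face telescoping bound. The only genuine subtlety I expect is purely bookkeeping---tracking that the face weights $W^{(j)}(F)$ really obey the bulk bound \eqref{eq:face-weight-bulk} uniformly in $F$ and $j$, in particular across block interfaces adjacent to the outer $O(L)$-collar that was absorbed into the boundary term before setting up the block decomposition.
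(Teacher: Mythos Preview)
Your proposal is correct and follows essentially the same approach as the paper: fix a root block, propagate $a(Q)$ along the unique tree paths using the prescribed increments $-m^{(j)}(F)$, observe that the residual on each tree face is exactly $\tau_j(e)-m^{(j)}(F)$, apply Lemma~\ref{lem:weighted-median} per face and coordinate, then sum using $|T|\asymp n^d$ and \eqref{eq:face-weight-bulk}. The paper's proof is more terse but structurally identical.
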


\begin{proof}
Define $a$ on the vertices of $T$ by fixing $a$ at one root block and extending uniquely along $T$ with the displayed increments. On $F\in T$, the residual on $\mathcal E(F)$ equals $\tau_j(e)-m^{(j)}(F)$; the bound follows from Lemma~\ref{lem:weighted-median} and \eqref{eq:face-weight-bulk}.
\end{proof}

\begin{lemma}[Cycle residual]\label{lem:cycle-residual}
Let $F\in \mathcal F\setminus T$ and let $\mathscr C(F)$ be the unique simple cycle obtained by adding $F$ to $T$. Then, for each $j$,
\[
\Big|\,\sum_{F'\in \mathscr C(F)} m^{(j)}(F')\,\Big|\ \le\ C_3\,n\,L\ \le\ C_3\,\rho,
\]
with $C_3=C_3(d,m,\omega)$. Consequently,
\[
\sum_{e\in\mathcal E(F)}\ \sum_{j=1}^m A_{\cap}^{(j)}(e)\,\big|\tau_j(e)+\Delta a^{(j)}(F)\big|
\ \le\ C_3\,\rho\ \sum_{j=1}^m W^{(j)}(F).
\]
\end{lemma}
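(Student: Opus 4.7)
The plan is to combine two ingredients. First, Lemma~\ref{lem:boundary-sup} gives the pointwise bound $|\tau_j(e)|\le C_1 L$ on every interface edge, which already forces every weighted (integer) median to satisfy $|m^{(j)}(F')|\le C_1 L$ (a median of values in a real interval lies in that interval). Second, I will choose the spanning tree $T$ of the block graph $\mathbb G\cong\{0,\dots,n-1\}^d$ so that $\diam(T)\le C_d\,n$; a convenient concrete choice is the lexicographic ``comb'' tree (trunk along $e_1$, combs along $e_2$, nested combs along $e_3,\dots,e_d$), whose diameter is bounded by $2d(n-1)$ by an induction on $d$. Consequently, for every non-tree face $F=Q\mid Q'$ the fundamental cycle $\mathscr C(F)$ is the unique $T$-path from $Q$ to $Q'$ together with $F$, so $|\mathscr C(F)|\le \diam(T)+1\le C_d'\,n$.

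With these two inputs the first inequality is immediate from the triangle inequality:
\[
\Big|\,\sum_{F'\in\mathscr C(F)}m^{(j)}(F')\,\Big|\ \le\ |\mathscr C(F)|\cdot C_1\,L\ \le\ C_3\,n\,L\ \le\ C_3\,\rho,
\]
with $C_3:=C_d'\,C_1$. No cancellation is needed; it is worth noting that the constancy of $dc$ from Lemma~\ref{lem:new-constant-curl}, which controls the \emph{holonomy} of $c$ around the corresponding base loop of diameter $\asymp\rho$, would only yield an $O(\rho^2)$ bound on the signed sum (via an isoperimetric area estimate). It is the microscopic per-edge bound $|\tau_j(e)|\le C_1 L$ that delivers the sharper $O(\rho)$ estimate needed here.

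For the ``consequently'' part, on a non-tree face $F$ the value $\Delta a^{(j)}(F)=a^{(j)}(Q)-a^{(j)}(Q')$ is forced by the tree increments $\Delta a^{(j)}(F')=-m^{(j)}(F')$ along the unique $T$-path from $Q'$ to $Q$; unfolding this definition yields
\[
\Delta a^{(j)}(F)\ =\ -\sum_{F'\in\mathscr C(F)\setminus\{F\}}\varepsilon_{F'}\,m^{(j)}(F')
\]
for cyclic orientation signs $\varepsilon_{F'}\in\{\pm1\}$ induced by a fixed traversal of $\mathscr C(F)$. Combining this with the first display and $|m^{(j)}(F)|\le C_1 L$ yields $|\Delta a^{(j)}(F)|\le C_3\,\rho$, so that for every $e\in\mathcal E(F)$ and every $j$,
\[
|\tau_j(e)+\Delta a^{(j)}(F)|\ \le\ |\tau_j(e)|+|\Delta a^{(j)}(F)|\ \le\ (C_1+C_3)\,\rho.
\]
Weighting by $A_{\cap}^{(j)}(e)$, summing over $e\in\mathcal E(F)$, and summing over $j$ yields the stated bound after absorbing $C_1$ into $C_3$.

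The main obstacle is bookkeeping rather than analytic: one must keep the cyclic signs $\varepsilon_{F'}$ consistent so that the tree contributions on $\mathscr C(F)\setminus\{F\}$ collapse to $-\Delta a^{(j)}(F)$ without a stray sign (a single misaligned orientation would convert the identity into a large error), and one must verify the $O(n)$ diameter claim for a concrete spanning tree in dimension $d\ge 3$. The lexicographic comb tree handles the latter in arbitrary dimension, and the former is dispatched by fixing once and for all the convention that each face $F=Q\mid Q'$ is oriented from the block of smaller lexicographic index to the larger, with $\mathscr C(F)$ traversed so that $F$ is crossed from $Q'$ back to $Q$.
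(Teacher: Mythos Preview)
Your proof is correct and follows essentially the same route as the paper's (very terse) argument: bound each $|m^{(j)}(F')|$ by $C_1L$ via Lemma~\ref{lem:boundary-sup}/\ref{lem:weighted-median}, bound the cycle length by $O(n)$, and conclude by the triangle inequality. You usefully make explicit what the paper leaves implicit---namely, that the spanning tree $T$ must be chosen with diameter $O(n)$ (your lexicographic comb tree), since the paper's bare assertion ``the cycle length is $O(n)$'' fails for an arbitrary spanning tree of $\{0,\dots,n-1\}^d$; you also spell out the sign bookkeeping for $\Delta a^{(j)}(F)$ on the non-tree face, which the paper compresses into the phrase ``determined by compatibility around the cycle.''
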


\begin{proof}
By construction, $\Delta a^{(j)}(F')=-m^{(j)}(F')$ on $F'\in T$ and $\Delta a^{(j)}(F)$ is determined by compatibility around the cycle $\mathscr C(F)$, whence the residual on $F$ equals the sum of $m^{(j)}$ around $\mathscr C(F)$. The cycle length is $O(n)$ and $|m^{(j)}(F')|\le C_1L$ by Lemma~\ref{lem:weighted-median}.
\end{proof}

\begin{proposition}[Interface cost bound]\label{prop:interface-cost}
With $a$ chosen as in Lemma~\ref{lem:tree-synch},
\[
\sum_{F\in\mathcal F}\ \sum_{e\in\mathcal E(F)}\ \sum_{j=1}^m
A_{\cap}^{(j)}(e)\,\big|\tau_j(e)+\Delta a^{(j)}(F)\big|
\ \lesssim\ \rho^{d+m-1}\ +\ \frac{\rho^{d+m}}{L}.
\]
In particular, for all $1\le L\le \rho$ the right-hand side is $o(\rho^{Q-1})$.
\end{proposition}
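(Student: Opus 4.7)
The plan is to split $\mathcal F = T \sqcup (\mathcal F \setminus T)$ and to bound each piece separately using the two preparatory lemmas, then combine with the crude face-weight estimate \eqref{eq:face-weight-bulk} and the grid-combinatorial count of co-tree edges.

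First I would handle the tree part, where Lemma~\ref{lem:tree-synch} directly provides
\[
\sum_{F\in T}\sum_{e\in\mathcal E(F)}\sum_{j=1}^{m} A_\cap^{(j)}(e)\,\bigl|\tau_j(e)+\Delta a^{(j)}(F)\bigr|
\ \le\ C_2\,L\sum_{F\in T}\sum_{j=1}^{m} W^{(j)}(F).
\]
Feeding in $W^{(j)}(F)\lesssim L^{d-1}\rho^{m-1}$ together with $|T|=|\mathcal Q|-1\asymp n^{d}=(\rho/L)^{d}$, this produces a bound of order $L\cdot(\rho/L)^{d}\cdot m\cdot L^{d-1}\rho^{m-1}=\rho^{d+m-1}$, matching the first summand in the claim.

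Next I would treat the co-tree part via Lemma~\ref{lem:cycle-residual}, which supplies the per-face bound $\lesssim \rho\sum_j W^{(j)}(F)$. Summing over $F\in\mathcal F\setminus T$ and using the same face-weight estimate together with the count $|\mathcal F\setminus T|\asymp n^{d}$ (the first Betti number of the $d$-dimensional block grid---the one combinatorial check worth spelling out, since $|\mathcal F|\asymp d\,n^{d}$ while $|T|=n^{d}-1$) yields a total of order $n^{d}\cdot\rho\cdot m\,L^{d-1}\rho^{m-1}=\rho^{d+m}/L$, matching the second summand. Combining the two pieces proves the main inequality.

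For the concluding $o(\rho^{Q-1})$ clause, I would invoke the step-2 Carnot identity $Q=d+2m$. Then $\rho^{d+m-1}=\rho^{Q-1}\rho^{-m}=o(\rho^{Q-1})$ for every $m\ge 1$, while $\rho^{d+m}/L=\rho^{Q-1}\rho^{1-m}/L=o(\rho^{Q-1})$ holds automatically when $m\ge 2$, and, in the borderline rank-$1$ case (Heisenberg-type), precisely when the block scale $L=L_\rho\to\infty$ as $\rho\to\infty$. The real obstacle is not in this final bookkeeping but upstream of Proposition~\ref{prop:interface-cost} itself: one must ensure that Lemmas~\ref{lem:boundary-sup}--\ref{lem:cycle-residual} deliver hyperface-constant residuals with the stated $O(L)$ scaling via the constancy of $dc$ (Lemma~\ref{lem:new-constant-curl}) and a tree/cycle decomposition of the block graph that keeps cycle-length at most $O(n)$. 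Once those are in hand, Proposition~\ref{prop:interface-cost} reduces to the three-line computation above.
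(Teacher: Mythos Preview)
Your proof is correct and follows essentially the same approach as the paper: split $\mathcal F=T\sqcup(\mathcal F\setminus T)$, apply Lemma~\ref{lem:tree-synch} for the tree part and Lemma~\ref{lem:cycle-residual} for the co-tree part, then feed in the face-weight bound \eqref{eq:face-weight-bulk} and the count $|\mathcal F|\asymp n^d$. Your treatment of the $o(\rho^{Q-1})$ clause is in fact more careful than the paper's: you correctly note that for $m=1$ the second term $\rho^{d+m}/L=\rho^{Q-1}/L$ is $o(\rho^{Q-1})$ only if $L=L_\rho\to\infty$, whereas the paper's ``for all $1\le L\le\rho$'' is imprecise in that borderline case (though harmless for the application, where $L_\rho=\lfloor\rho^{1/2}\rfloor$).
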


\begin{proof}
Split faces into $T$ and $\mathcal F\setminus T$. The contribution of $T$ is $\lesssim \sum_{F\in T} L\,\sum_j W^{(j)}(F)\lesssim n^d \cdot L \cdot L^{d-1}\rho^{m-1}=\rho^{d+m-1}$ by Lemma~\ref{lem:tree-synch} and \eqref{eq:face-weight-bulk}. For $\mathcal F\setminus T$, use Lemma~\ref{lem:cycle-residual} and \eqref{eq:face-weight-bulk}:
\[
\sum_{F\notin T}\ \sum_{j=1}^m C_3\,\rho\,W^{(j)}(F)
\ \lesssim\ \rho\cdot |\mathcal F| \cdot L^{d-1}\rho^{m-1}
\ \asymp\ \rho \cdot n^d \cdot L^{d-1}\rho^{m-1}
\ =\ \frac{\rho^{d+m}}{L}.
\]
\end{proof}

\begin{remark}
The proof uses only the constancy of $dc$ (Lemma~\ref{lem:new-constant-curl}), the blockwise interior control, and weighted $L^1$ median properties on hyperfaces; no hidden assumptions are introduced. The tree term is $O(\rho^{d+m-1})$, and the off-tree term is $O(\rho^{d+m}/L)$; both are $o(\rho^{Q-1})$ for $1\le L\le \rho$.
\end{remark}

\begin{corollary}[Cap--loss control and sharp scaling]\label{cor:new-caploss}
Let $(Y_\rho)$ be gauge--column--convex stacks with $|Y_\rho|\asymp \rho^Q$ and $\rho^{- (Q-1)} B_{\mathcal S}(Y_\rho)\le C$, where $Q=d+2m$. 
Then there exist offsets $h_\rho$ (i.e.\ choices of integer fiber bases $h(u)$ for each basepoint $u$, not a single global central translate) such that the \emph{cap--loss}
\[
\mathrm{CapLoss}(h_\rho)\ :=\ \sum_{e\in\mathcal E_{\rm int}(E)}\ \sum_{j=1}^m 
   S_j(e)\,A_{\cap}^{(j)}(e)
\]
satisfies
\[
\mathrm{CapLoss}(h_\rho)\ \le\ C_0\left(L_\rho\,\rho^{d+m-1}+\frac{\rho^{d+m}}{L_\rho}\right),
\qquad L_\rho=\big\lfloor \rho^{1/2}\big\rfloor,
\]
and hence $\mathrm{CapLoss}(h_\rho)=O(\rho^{d+m-\frac12})=o(\rho^{Q-1})$.
\end{corollary}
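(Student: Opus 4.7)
The plan is to reduce the cap-loss to a weighted $\ell^1$ norm of the shifts $\tau_j$ and then combine the blockwise $L^1$ curl-fit of Section~\ref{sec:curl-fit} with the interface synchronization already built in Section~\ref{subsec:interface-sync}. The first, essentially algebraic step I would carry out is the per-edge estimate $S_j(e)\le 2|\tau_j(e)|$: inspecting the definition of $S_j(e)$, when $\tau_j(e)\ge 0$ we have $(-\tau_j-\Delta_{j,+})_+=0$ and $(\tau_j-\Delta_{j,-})_+\le\tau_j$, and by symmetry the same holds for $\tau_j\le 0$; taking the $\min$ with $\min\{\ell_j(u),\ell_j(u+v)\}$ only shrinks this further. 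Summing over internal edges and weighting by $A_{\cap}^{(j)}(e)$,
\[
\mathrm{CapLoss}(h_\rho)\ \le\ 2\sum_{e\in\mathcal E_{\rm int}(E)}\sum_{j=1}^m |\tau_j(e)|\,A_{\cap}^{(j)}(e),
\]
so the problem is now one of choosing $h_\rho:E\to\Z^m$ so that the right-hand side has size $O(L\,\rho^{d+m-1}+\rho^{d+m}/L)$.

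Next I would partition the footprint $E\subset\Z^d$ into $O((\rho/L)^d)$ blocks $Q$ of side $L=L_\rho$ and treat interior and interface edges separately. Inside each block, the shear cochain $c$ from Lemma~\ref{lem:new-constant-curl} has constant curl $(dc)_{ij}\equiv\omega(e_i,e_j)-\omega(e_j,e_i)$, so applying Corollary~\ref{cor:grid-curl-fit} on the $L\times\cdots\times L$ cube yields a blockwise integer potential $h^{(Q)}$ with $\sum_{e\in Q,\,\text{int.}}|c(e)+(dh^{(Q)})(e)|\lesssim L\cdot L^d=L^{d+1}$ (the filling constant on an $L$-cube is $\lesssim L$, the curl is constant, and there are $\lesssim L^d$ elementary plaquettes per pair). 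Summed over the $O((\rho/L)^d)$ blocks this gives an \emph{interior} bound $\sum|\tau|\lesssim L\,\rho^d$, and weighting by the bulk estimate $A_{\cap}^{(j)}(e)\lesssim \rho^{m-1}$ produces a contribution of $\lesssim L\,\rho^{d+m-1}$, which is the first term of the target bound.

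For the interface edges I would invoke Proposition~\ref{prop:interface-cost} directly. Its proof already performs exactly the tree-synchronized weighted-median construction we need: choose hyperface medians $m^{(j)}(F)$ of $\{\tau_j(e):e\in\mathcal E(F)\}$ with weights $A_{\cap}^{(j)}(e)$ (controlled by Lemma~\ref{lem:weighted-median} using the microscopic bound $|\tau_j(e)|\le C_1 L$ from Lemma~\ref{lem:boundary-sup}), fix a spanning tree $T$ of the block adjacency graph, and set the block constants $a(Q)\in\Z^m$ so that $\Delta a^{(j)}(F)=-m^{(j)}(F)$ along $T$; for $F\notin T$, the cycle-residual estimate of Lemma~\ref{lem:cycle-residual} controls the leftover. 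Combining the tree term $\rho^{d+m-1}$ with the off-tree term $\rho^{d+m}/L$ gives interface cost $\lesssim \rho^{d+m-1}+\rho^{d+m}/L$. Adding this to the interior cost and balancing at $L_\rho=\lfloor\rho^{1/2}\rfloor$ yields $\mathrm{CapLoss}(h_\rho)\le C_0(L_\rho\rho^{d+m-1}+\rho^{d+m}/L_\rho)=O(\rho^{d+m-\tfrac12})=o(\rho^{Q-1})$ since $Q=d+2m$ and $m\ge 1$.

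The main obstacle I expect is not any single step but the need to make the blockwise interior potentials $h^{(Q)}$, the tree-induced integer shifts $a(Q)$, and the root-anchoring $h\equiv 0$ used in Lemma~\ref{lem:boundary-sup} all mutually compatible after being glued into one global $h_\rho:E\to\Z^m$. Concretely, replacing the real optimal shifts by their nearest integers must not wreck the interface bound (this costs only $O(\rho^d/L)$ in $\ell^1$-shift mass, hence $O(\rho^{d+m-1}/L)$ weighted, which is lower order), and the bulk face-weight bound \eqref{eq:face-weight-bulk} $W^{(j)}(F)\lesssim L^{d-1}\rho^{m-1}$ must be justified uniformly across $F$ from only the hypotheses $|Y_\rho|\asymp\rho^Q$ and $\rho^{-(Q-1)}B_{\mathcal S}(Y_\rho)\le C$ rather than from any extra regularity. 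Once those bookkeeping points are checked, the stated cap-loss bound and its $o(\rho^{Q-1})$ consequence follow mechanically from the two ingredients above.
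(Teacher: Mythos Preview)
Your proposal follows the paper's proof essentially verbatim: bound $S_j(e)\le 2|\tau_j(e)|$, control the weighted $\ell^1$ norm of $\tau=\delta h+c$ by a blockwise interior curl--fit together with the interface synchronization of Proposition~\ref{prop:interface-cost}, and balance at $L_\rho=\lfloor\rho^{1/2}\rfloor$. The paper's own proof is equally terse on the interior step and simply asserts the bound $L\,\rho^{d+m-1}$.

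One parenthetical in your interior estimate misstates the mechanism. The filling constant $C_\uparrow(R)$ on an $L^d$ cube is $\asymp L^{d-1}$ for $d\ge 3$ (by the formula in Lemma~\ref{lem:H2-operator-refined}), not $\lesssim L$; hence the generic product $C_\uparrow\cdot\|dc\|_{\ell^1(\mathcal F)}$ from Corollary~\ref{cor:grid-curl-fit} gives only $L^{2d-1}$ per block, which would not yield the stated bound when $d\ge3$. The conclusion $\sum_{e\subset Q}|c+dh^{(Q)}|\lesssim L^{d+1}$ is nonetheless correct, but the reason is that the curl is \emph{constant} (Lemma~\ref{lem:new-constant-curl}): with $h^{(Q)}=H^1c$ anchored at the block root, the residual $(c+dh^{(Q)})_j=H^2_\uparrow(dc)_j$ is, by the explicit formula~\eqref{eq:H2up}, a \emph{linear} function of the block coordinates, hence pointwise $O(L)$ on each of the $O(L^d)$ edges. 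This pointwise $O(L)$ bound is exactly the content of Lemma~\ref{lem:boundary-sup} (stated there for interface edges but derived by the same telescoping), and it is what the paper implicitly relies on for the interior term.
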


\begin{proof}
Write the internal contribution over edges as in Theorem~\ref{thm:new-step2-main}(ii). For $e=(u\to u+v)$ and coordinate $j$ we have
\[
S_j(e)=2\min\Big(\min\{a_j,b_j\},\ (\tau_j-\Delta_{j,-})_+ + (-\tau_j-\Delta_{j,+})_+\Big),
\]
with $\tau_j=\tau_j(e)=h_j(u)-h_j(u+v)+c_{e_j}(u)$, where $c_{e_j}$ is the $j$--th coordinate of the shear $1$--cochain $c$ from Lemma~\ref{lem:new-constant-curl}. Thus, up to truncations (which only help), $\mathrm{CapLoss}$ is controlled by the weighted $\ell^1$ norm of the \emph{coboundary} $\delta h + c$:
\[
\sum_{e\in\mathcal E_{\rm int}(E)}\ \sum_{j=1}^m 
   \big|\ (\delta h)_j(e)\ +\ c_{e_j}(u)\ \big|\ \cdot\ A_{\cap}^{(j)}(e).
\]
The interior block construction yields the bound $L\,\rho^{d+m-1}$, 
with weights $\lesssim \rho^{m-1}$ on the bulk. Gluing interfaces by Proposition~\ref{prop:interface-cost} adds $\rho^{d+m-1}+\rho^{d+m}/L$. Choosing $L=L_\rho=\lfloor \rho^{1/2}\rfloor$ balances the last two terms and gives the stated estimate.
\end{proof}

\begin{remark}[Terminology and consistency with \S\ref{sec:heis}]
In \S\ref{sec:heis}, ``CapLoss'' referred to an \emph{excess} term of the form $\sum (|t|-A)_+$. 
Here, for rank $m\ge1$, we use ``CapLoss'' for the \emph{truncated shear} contribution $\sum S_j(\cdot)\,A_{\cap}^{(j)}(\cdot)$, which is the multi--rank analogue entering the two--sided bounds. 
Both quantities are controlled via the same blockwise $L^1$ curl--fit mechanism and vanish at scale $o(\rho^{Q-1})$ along near--minimizers.
\end{remark}

\begin{remark}[Consequences]
Theorem~\ref{thm:new-step2-main} and Corollary~\ref{cor:new-caploss} give the \emph{exact} decomposition in rank $1$ via the one--sided truncated shift in \eqref{eq:new-exact-rank1}, and \emph{sharp two--sided} control for all step--$2$ lattices. In particular, one obtains order--sharp compactness and $\Gamma$--$\limsup/\liminf$ bounds at the scale $\rho^{Q-1}$ for arbitrary step--$2$ groups, with anisotropy encoded by $\mathcal S$ through the zonotope gauge on $V_1$ and by the skew tensor $dc$ in Lemma~\ref{lem:new-constant-curl}. The lower bound in (ii) uses disjoint witness cylinders, eliminating a spurious $1/m$ factor and strengthening the estimate.
\end{remark}
 \begin{corollary}[Step-2 Carnot lattices: BRP via shear]\label{cor:step2-BRP}
Every uniform lattice in a step-2 Carnot group admits (NNM) (Theorem~\ref{thm:new-step2-main} and Corollary~\ref{cor:new-caploss}).
Hence it satisfies full \textup{(TF)} (Theorem~\ref{thm:NNM-alone-to-TF}) and
$\sup_{r\ge1} I^\circ(r)/I^{\mathrm{incr}\,\circ}(r)<\infty$ (Theorem~\ref{thm:TF-bound-corrected}).
\end{corollary}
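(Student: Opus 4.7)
The plan is to chain three results that are already in place: the sharp step-2 shear calculus of Theorem~\ref{thm:new-step2-main} together with the cap-loss bound of Corollary~\ref{cor:new-caploss} is used to produce a \emph{nested} family of near-minimizers (NNM); the general implication (NNM)$\Rightarrow$(TF) is then invoked via Theorem~\ref{thm:NNM-alone-to-TF}; finally (TF)$\Rightarrow$BRP is Theorem~\ref{thm:TF-bound-corrected}. None of these individual steps requires new analysis; the only real work is to produce a single nested chain on which the sharp two-sided bounds can be read off simultaneously along the entire chain.

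First I would construct the NNM chain explicitly. On the horizontal base $\Z^d$ take a nested sequence of Wulff samplers $E_\rho=\rho W\cap\Z^d$ adapted to the zonotope gauge $\tau_{\mathcal S}$ from Theorem~\ref{thm:WulffAbelian}; the family $(E_\rho)_{\rho\in\N}$ is nested by construction. For each $\rho$ I form the gauge-column-convex stack $Y_\rho$ with footprint $E_\rho$, axis-aligned product-box fibers $I_u=\prod_{j=1}^m[0,\ell_{j,\rho}(u))$ whose heights are chosen to grow with $\rho$ (say $\ell_{j,\rho}(u)=\lfloor\rho^2\psi_j(u/\rho)\rfloor$ so that $|Y_\rho|\asymp\rho^{d+2m}=\rho^Q$), and offsets $h_\rho$ supplied by the blockwise $L^1$ curl-fit synchronization from \S\ref{subsec:interface-sync}. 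The two-sided bounds \eqref{eq:twosided-lower-cap}-\eqref{eq:twosided-upper} combined with Corollary~\ref{cor:new-caploss} give $B_{\mathcal S}(Y_\rho)=(c_\star+o(1))\,\rho^{Q-1}$ with $c_\star$ depending only on $(\Gamma,\mathcal S,\omega)$. Since the liminf of the isoperimetric quotient is controlled from below by the same shear/base calculus (using Theorem~\ref{thm:WulffAbelian} for the base contribution and Corollary~\ref{cor:new-caploss} to discard the shear/truncation cost), the $Y_\rho$ are near-minimizers in the edge normalization at large volumes; with nestedness this is NNM (cofinal).

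Having secured NNM, I would invoke Theorem~\ref{thm:NNM-alone-to-TF} to upgrade the chain to one satisfying full (TF) with tracked constants, and then Theorem~\ref{thm:TF-bound-corrected} to conclude $\sup_r I^\circ(r)/I^{\mathrm{incr}\,\circ}(r)<\infty$. The passage between vertex and directed-edge normalizations is harmless by Lemma~\ref{lem:vertex-edge-profiles}; the quoted BRP constant $A+\Delta AB$ from Theorem~D then follows automatically, with all constants depending only on $(\Gamma,\mathcal S)$ and the shear tensor $dc$ of Lemma~\ref{lem:new-constant-curl}.

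The main obstacle is in the NNM step: the truncated shear term $\sum_{j}S_j(e)\,A_{\cap}^{(j)}(e)$ entering \eqref{eq:twosided-lower-cap} must vanish at scale $o(\rho^{Q-1})$ \emph{simultaneously with the nestedness constraint}. Corollary~\ref{cor:new-caploss} gives this for each individual $\rho$ at scale $\rho^{Q-1/2}$, but the offsets $h_\rho$ there are constructed independently per scale via a block decomposition at size $L_\rho\sim\rho^{1/2}$; extending this to a single family compatible across scales (so that $Y_\rho\subset Y_{\rho+1}$) requires a telescoping adjustment in which coarse-scale block roots and interface medians are refined rather than redrawn. The cleanest route is a dyadic telescoping: fix scales $\rho_k=2^k$, build $Y_{\rho_k}$ with the synchronized offsets of Lemma~\ref{lem:tree-synch}, and for $\rho\in(\rho_k,\rho_{k+1}]$ obtain $Y_\rho$ by only \emph{adding} cells (enlarging heights and extending the footprint) while keeping all existing offsets frozen. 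By Proposition~\ref{prop:interface-cost} the accumulated synchronization cost across the dyadic hierarchy remains $O(\rho^{Q-1-\delta})$, which is absorbed into the $o(\rho^{Q-1})$ budget; hence the near-minimality constant $A_e$ in NNM stays uniform along the chain. All remaining ingredients are bookkeeping from the earlier sections.
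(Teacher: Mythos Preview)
Your three-step chain (NNM $\Rightarrow$ TF $\Rightarrow$ BRP via Theorems~\ref{thm:NNM-alone-to-TF} and~\ref{thm:TF-bound-corrected}) is exactly the paper's route; the corollary is stated without a separate proof, the citations in its body being the argument.

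However, your elaboration of the NNM step manufactures a difficulty and then labors to overcome it. NNM requires only near-minimality up to a \emph{fixed} constant $C_0$, not asymptotic sharpness. With \emph{zero} offsets $h_\rho\equiv 0$, nested footprints $E_\rho$, and monotone heights $\ell_{j,\rho}(u)\asymp\rho^2$, the stacks $Y_\rho$ are automatically nested, and the shear term in \eqref{eq:twosided-upper} is already $O(\rho^{Q-1})$: on a footprint of diameter $\asymp\rho$ the linear cochain satisfies $|\tau_j(e)|=|c_{\cdot,j}(u)|\lesssim\rho$, so $S_j(e)\lesssim\rho$, while $A_{\max}^{(j)}(e)\lesssim\rho^{2(m-1)}$ and there are $\asymp\rho^d$ internal edges, giving a total $\lesssim m\,\rho^{d+1+2(m-1)}=m\,\rho^{Q-1}$. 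Thus $B_{\mathcal S}(Y_\rho)\lesssim\rho^{Q-1}$ with no blockwise synchronization at all; Corollary~\ref{cor:new-caploss} (which achieves $o(\rho^{Q-1})$ shear via scale-dependent offsets) is aimed at the sharp constant, not at NNM. The dyadic telescoping you propose is therefore unnecessary, and in any case the claim that its accumulated synchronization cost stays $O(\rho^{Q-1-\delta})$ when offsets are frozen across a dyadic generation is not justified by Proposition~\ref{prop:interface-cost}, whose bound depends on the block size $L_\rho$ being matched to the \emph{current} scale.

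A second point: your assertion that the profile \emph{lower} bound comes ``from the same shear/base calculus'' is not supported for $m\ge 2$. The two-sided bounds \eqref{eq:twosided-lower-cap}--\eqref{eq:twosided-upper} apply only to gauge-column-convex sets; a lower bound on $I^{\mathrm{incr}\,\circ}_{\edge}$ over \emph{all} finite sets needs either a symmetrization (available in the paper only for rank~$1$, Remark~\ref{rmk:heis-symmetrization-why}) or the general Carnot Wulff liminf of Theorem~\ref{thm:WulffCarnot}/Proposition~\ref{prop:BV-liminf}. The latter is what actually gives $I^{\mathrm{incr}\,\circ}_{\edge}(r)\asymp r^{(Q-1)/Q}$ here, and you should cite it for the NNM comparison.
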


\section{Exact-Volume Profile, Monotone Minorant: Bounded Ratios and the Tempered F{\o}lner criterion}
\label{sec:bounded-ratio}

\noindent\emph{Standing conventions and normalization.}   NEW
Throughout Section~\ref{sec:bounded-ratio} we work with connected Cayley graphs of finitely generated groups with a fixed finite, symmetric generating set $\mathcal S$ \emph{not} containing the identity. We denote $\Delta:=|\mathcal S|$ the (directed) degree. Unless stated otherwise, $I^\circ(\cdot)$ and $I^{\mathrm{incr}\,\circ}(\cdot)$ are in the \emph{vertex} normalization; for directed-edge counts we write $I^\circ_{\edge}$ and $I^{\mathrm{incr}\,\circ}_{\edge}$ (cf.\ Definition~\ref{def:profiles-vertex-edge}). The pointwise comparison
\begin{equation}\label{eq:vertex-edge-comparison-recall}
|\partial_{\mathcal S}Y|\ \le\ B_{\mathcal S}(Y)\ \le\ \Delta\,|\partial_{\mathcal S}Y|
\end{equation}
(Lem.~\ref{lem:vertex-edge-profiles}) converts all statements up to a factor depending only on $\Delta$. We also use the undirected edge boundary $B_{\mathcal S}(Y)=\tfrac12\sum_{s\in\mathcal S}|sY\triangle Y|$ in Subsection~\ref{sec:new-TF}, with conversion to the directed normalization costing a universal constant by Lemma~\ref{lem:vertex-edge-profiles}.  

\medskip
For a finite $Y \subset \Gamma$, the (vertex) $\mathcal S$-boundary is $\partial_{\mathcal S}(Y)=\mathcal S Y\setminus Y$.
Recall
\[
I^\circ(r):=\inf\{\,|\partial_{\mathcal S}(Y)|:\ |Y|=r\,\},\qquad
I^{\mathrm{incr}\,\circ}(r):=\inf_{s\ge r} I^\circ(s),
\]
with piecewise-linear extension in $r$. Set $\Delta:=|\mathcal S|$.
Our aim is to understand the ratio $\mathcal R(r):=I^\circ(r)/I^{\mathrm{incr}\,\circ}(r)$ (cf.\ \cite{Gromov2008Entropy}), with the convention $\mathcal R(r)=1$ if $I^{\mathrm{incr}\,\circ}(r)=0$.

\subsection{Three structural tools: Lipschitz, tail trimming, subadditivity}

\begin{proposition}[Lipschitz in $r$]\label{prop:Lip}
For all integers $r\ge1$, one has $|I^\circ(r+1)-I^\circ(r)|\le \Delta$.
\end{proposition}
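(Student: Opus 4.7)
The plan is to prove the two one-sided inequalities $I^\circ(r+1)\le I^\circ(r)+\Delta$ and $I^\circ(r)\le I^\circ(r+1)+\Delta$ separately, each by a single insertion or deletion of a vertex. First I would note that $I^\circ(r)$ is actually a minimum, not just an infimum, since it is the infimum of a nonempty set of non-negative integers; for each relevant $r$ there exists a witnessing set $Y$ with $|Y|=r$ and $|\partial_{\mathcal S}Y|=I^\circ(r)$.

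For the upper bound $I^\circ(r+1)\le I^\circ(r)+\Delta$, I would fix a minimizer $Y$ of size $r$, pick any $v\notin Y$ (possible whenever $r<|\Gamma|$, the only interesting regime), and set $Y':=Y\cup\{v\}$. Using $\mathcal S Y'=\mathcal S Y\cup \mathcal S v$ together with the elementary inclusion
\[
\partial_{\mathcal S}Y'\ =\ (\mathcal S Y\cup\mathcal S v)\setminus(Y\cup\{v\})\ \subseteq\ (\mathcal S Y\setminus Y)\cup \mathcal S v\ =\ \partial_{\mathcal S}Y\cup\mathcal S v,
\]
one obtains $|\partial_{\mathcal S}Y'|\le |\partial_{\mathcal S}Y|+|\mathcal S v|\le I^\circ(r)+\Delta$, which yields the claim after taking the infimum on the left.

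For the reverse direction $I^\circ(r)\le I^\circ(r+1)+\Delta$, I would take a minimizer $Y'$ of size $r+1$, delete any vertex $v\in Y'$, and set $Y:=Y'\setminus\{v\}$. The key observation is the monotonicity $\mathcal S Y\subseteq \mathcal S Y'$: if $w\in\partial_{\mathcal S}Y$ fails to lie in $\partial_{\mathcal S}Y'$, then $w\in\mathcal S Y\subseteq \mathcal S Y'$ together with $w\notin\partial_{\mathcal S}Y'$ forces $w\in Y'$, while $w\notin Y=Y'\setminus\{v\}$ forces $w=v$. Hence $\partial_{\mathcal S}Y\subseteq\partial_{\mathcal S}Y'\cup\{v\}$ and in fact $|\partial_{\mathcal S}Y|\le |\partial_{\mathcal S}Y'|+1\le I^\circ(r+1)+1$, which is strictly stronger than the claimed bound.

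No real obstacle arises; the whole argument is a one-line set-theoretic manipulation together with the pointwise degree bound $|\mathcal S v|\le\Delta$. The only mild asymmetry worth flagging is that deleting a vertex costs at most $1$ while adding one may cost up to $\Delta$; the statement records the common bound $\Delta$, which is all that is required for the plateau/trimming arguments later in this section.
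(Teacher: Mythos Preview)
Your proof is correct and follows essentially the same approach as the paper: add or delete a single vertex and control the change in the vertex boundary by the degree $\Delta$. Like the paper, you also observe the sharper asymmetry that deletion increases the boundary by at most $1$ (via $\partial_{\mathcal S}Y\subseteq\partial_{\mathcal S}Y'\cup\{v\}$), which the paper records as $I^\circ(r)\le I^\circ(r+1)+1$ in the vertex normalization.
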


\begin{proof}
Fix $Y\subset\Gamma$ with $|Y|=r$ and $x\notin Y$, and set $Z:=Y\cup\{x\}$.
Then $\partial_{\mathcal S}Z \subset \bigl(\partial_{\mathcal S}Y\setminus\{x\}\bigr)\cup \bigl(\mathcal Sx\setminus Z\bigr)$,
so $|\partial_{\mathcal S}Z|\le |\partial_{\mathcal S}Y|+\Delta$. Taking the infimum over $|Y|=r$ gives
$I^\circ(r+1)\le I^\circ(r)+\Delta$. 

For the reverse inequality in the vertex normalization, we in fact have $I^\circ(r)\le I^\circ(r+1)+1$: deleting one vertex $y$ from a set $Z$ can create at most the single new boundary vertex $y$ itself, since $S(Y)\subset S(Z)$ when $Y\subset Z$ (and hence $\partial_{\mathcal S}Y\subset \partial_{\mathcal S}Z\cup\{y\}$). In the directed-edge normalization this becomes $I^\circ_{\edge}(r)\le I^\circ_{\edge}(r+1)+\Delta$.
\end{proof}

\begin{proposition}[Tail trimming]\label{prop:tail-trim}
For all integers $s\ge r\ge1$,
\[
I^\circ(r)\ \le\ I^\circ(s)\ +\ (s-r)\qquad\textup{(vertex normalization)}.
\]
Consequently, for any $s(r)\in\arg\min_{t\ge r} I^\circ(t)$,
\[
I^\circ(r)\ \le\ I^{\mathrm{incr}\,\circ}(r)\ +\ \big(s(r)-r\big).
\]
In the directed-edge normalization one has the same statements with the additive constant $1$ replaced by $\Delta=|\mathcal S|$.
\end{proposition}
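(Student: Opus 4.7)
The plan is to iterate the one-step deletion estimate already recorded in the proof of Proposition~\ref{prop:Lip}: if $Y=Z\setminus\{y\}$ for some $y\in Z$, then $\partial_{\mathcal S}Y\subset\partial_{\mathcal S}Z\cup\{y\}$ (since any $w\in\partial_{\mathcal S}Y$ equals $zs$ for some $z\in Y\subset Z$ and $s\in\mathcal S$, and $w\notin Y$ forces either $w=y$ or $w\in\mathcal S Z\setminus Z=\partial_{\mathcal S}Z$), and in particular $|\partial_{\mathcal S}Y|\le|\partial_{\mathcal S}Z|+1$. First I would pick an exact minimizer $Z_s\subset\Gamma$ with $|Z_s|=s$ and $|\partial_{\mathcal S}Z_s|=I^\circ(s)$ (the infimum in the definition of $I^\circ$ is attained, since vertex boundaries take values in the non-negative integers), and then build a descending chain $Z_s\supset Z_{s-1}\supset\cdots\supset Z_r$ by removing one vertex at each stage (any deletion order works). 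Applying the one-step bound at each of the $s-r$ stages would give $|\partial_{\mathcal S}Z_r|\le |\partial_{\mathcal S}Z_s|+(s-r)=I^\circ(s)+(s-r)$, and since $|Z_r|=r$ this immediately yields the first claim $I^\circ(r)\le I^\circ(s)+(s-r)$.

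The ``consequently'' part would follow with no further work: by definition $I^{\mathrm{incr}\,\circ}(r)=\inf_{t\ge r}I^\circ(t)$ is attained at some $s(r)\ge r$ (again by integer-valuedness), so applying the first inequality with $s=s(r)$ gives $I^\circ(r)\le I^\circ(s(r))+(s(r)-r)=I^{\mathrm{incr}\,\circ}(r)+(s(r)-r)$. For the directed-edge normalization, the same chain-of-deletions scheme applies with only the per-step cost changing: if $Y=Z\setminus\{y\}$, then a directed pair $(z,s)\in Y\times\mathcal S$ lies in $B_{\mathcal S}(Y)\setminus B_{\mathcal S}(Z)$ only when $zs\in Z$ but $zs\notin Y$, i.e.\ $zs=y$, giving at most $\Delta$ new pairs (one per $s\in\mathcal S$). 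Hence $B_{\mathcal S}(Y)\le B_{\mathcal S}(Z)+\Delta$, and iterating over the $s-r$ deletions yields $I^\circ_{\edge}(r)\le I^\circ_{\edge}(s)+\Delta(s-r)$, with the monotone-minorant corollary transferring verbatim.

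There is no real obstacle in this argument; the only point worth flagging is that the inequality is intrinsically \emph{one-sided}. The reverse ``grow up'' direction would cost $\Delta$ per step even in the vertex normalization, since adding a single vertex can create as many as $\Delta$ fresh boundary vertices. This is precisely the asymmetry that makes the Lipschitz bound in Proposition~\ref{prop:Lip} natural with constant $\Delta$ but the tail-trimming bound sharper with constant $1$, and it is the structural reason why one obtains a unidirectional comparison $I^\circ(r)\le I^{\mathrm{incr}\,\circ}(r)+\big(s(r)-r\big)$ controlling $I^\circ$ by its monotone minorant plus the ``gap to the next near-minimizer''. This latter gap is precisely the quantity that the Tempered F{\o}lner criterion (TF) is designed to bound, which is why Proposition~\ref{prop:tail-trim} slots directly into the proof of the (TF)$\Rightarrow$BRP implication recorded in \Cref{thm:TF-bound-corrected}.
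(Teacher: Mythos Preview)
Your proof is correct and follows essentially the same approach as the paper: both argue via the one-step deletion bound $|\partial_{\mathcal S}(Z\setminus\{y\})|\le|\partial_{\mathcal S}Z|+1$ (respectively $+\Delta$ in edge normalization), iterate it $s-r$ times starting from a size-$s$ minimizer, and take the infimum. Your additional remarks on the one-sidedness and the link to (TF) are accurate and well-placed.
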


\begin{proof}
\emph{Vertex case.}
Let $Y_0$ be any finite set of size $s$. Remove one point $x$ and write $Y:=Y_0\setminus\{x\}$.
Then $\partial_{\mathcal S}(Y)\subseteq \partial_{\mathcal S}(Y_0)\cup\{x\}$, hence
$|\partial_{\mathcal S}Y|\le |\partial_{\mathcal S}Y_0|+1$. Iterating $s{-}r$ times and taking infima over $|Y_0|=s$ yields the claim.

\smallskip
\emph{Directed-edge case (direct proof).}
Write $B_{\mathcal S}(Y):=\#\{(y,s)\in Y\times\mathcal S:\ sy\notin Y\}$. If $Y=Y_0\setminus\{x\}$ then when deleting $x$ we
\emph{remove} all pairs $(x,s)$ and
\emph{potentially add at most one} new boundary pair for each $s\in\mathcal S$, namely $(s^{-1}x,s)$
if $s^{-1}x\in Y$.
Thus $B_{\mathcal S}(Y)\le B_{\mathcal S}(Y_0)+\Delta$. Iterating $s{-}r$ times and taking infima gives
$I^\circ_{\edge}(r)\le I^\circ_{\edge}(s)+\Delta(s-r)$.

\smallskip
We emphasize that the trimming estimate here is at the \emph{exact-set} level (one vertex at a time), not only at the profile level; this is what we will use repeatedly inside the TF interleaving schemes.
\end{proof}

\begin{proposition}[Subadditivity]\label{prop:subadd-corrected}
If $\Gamma$ is infinite, then for all integers $r_1,r_2\ge1$,
$
I^\circ(r_1+r_2)\ \le\ I^\circ(r_1)+I^\circ(r_2).
$
\end{proposition}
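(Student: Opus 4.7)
The plan is to prove subadditivity by the standard ``translate-and-unite'' scheme: take two near-minimizers of sizes $r_1$ and $r_2$, left-translate one of them far away so that the two pieces are disjoint, and bound the boundary of the union by the sum of the boundaries. The infiniteness hypothesis will enter only at the disjointification step.

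The key mechanism I will exploit is that the \emph{vertex} boundary is subadditive under arbitrary unions with no distance buffer required. Indeed, for any $A,B\subset\Gamma$ one has $(A\cup B)\mathcal S=A\mathcal S\cup B\mathcal S$, hence
\[
\partial_{\mathcal S}(A\cup B)\ =\ (A\mathcal S\cup B\mathcal S)\setminus(A\cup B)\ \subset\ \partial_{\mathcal S}(A)\cup\partial_{\mathcal S}(B),
\]
so $|\partial_{\mathcal S}(A\cup B)|\le |\partial_{\mathcal S}(A)|+|\partial_{\mathcal S}(B)|$. This is a purely set-theoretic inclusion: no assumption that the $\mathcal S$-neighborhoods of $A$ and $B$ are separated is needed. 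It is precisely this feature of the vertex normalization that keeps the estimate constant-free and avoids any $\Delta$-factor.

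The remaining task is to arrange that volumes add exactly, which is where infiniteness is used. Fix $\varepsilon>0$ and pick finite $Y_1,Y_2\subset\Gamma$ with $|Y_i|=r_i$ and $|\partial_{\mathcal S}(Y_i)|\le I^\circ(r_i)+\varepsilon$. Since $Y_1 Y_2^{-1}$ is finite and $\Gamma$ is infinite, choose $g_0\in\Gamma\setminus Y_1 Y_2^{-1}$; equivalently $g_0 Y_2\cap Y_1=\emptyset$. Under the right-multiplication convention of Remark~\ref{rem:normalization}, left multiplication by $g_0$ sends each edge $\{x,xs\}$ to $\{g_0 x,(g_0 x)s\}$, so it is a graph automorphism of $\mathrm{Cay}(\Gamma,\mathcal S)$; in particular $|\partial_{\mathcal S}(g_0 Y_2)|=|\partial_{\mathcal S}(Y_2)|$ and $|Y_1\cup g_0 Y_2|=r_1+r_2$. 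Applying the set inclusion above with $A=Y_1$ and $B=g_0 Y_2$ yields $I^\circ(r_1+r_2)\le I^\circ(r_1)+I^\circ(r_2)+2\varepsilon$, and letting $\varepsilon\downarrow 0$ closes the argument.

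There is essentially no serious obstacle here; the only bookkeeping point is recognizing that \emph{left} multiplication (not right) is the graph automorphism for the right-edge convention adopted in Section~\ref{subsec:prelim-defs}, and that in the vertex normalization the elementary set identity above already gives additivity of the boundary without a buffer layer. Had we worked instead in the directed-edge normalization, the analogous subadditivity would not be constant-free unless one enforced the stronger separation $(Y_1\mathcal S)\cap g_0 Y_2=\emptyset$ and $Y_1\cap(g_0 Y_2\mathcal S)=\emptyset$; but that too is a routine application of finiteness of the forbidden set and infiniteness of $\Gamma$, and explains the ``-corrected'' qualifier of the present statement versus an edge-level version.
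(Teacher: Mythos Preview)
Your proof is correct and follows the same translate-and-unite scheme as the paper. The one genuine simplification you introduce is the observation that the vertex boundary satisfies $\partial_{\mathcal S}(A\cup B)\subset\partial_{\mathcal S}(A)\cup\partial_{\mathcal S}(B)$ for \emph{arbitrary} $A,B$, so only disjointness $Y_1\cap g_0Y_2=\emptyset$ is needed; the paper instead chooses $g$ so that $\operatorname{dist}_{\mathcal S}(A,gB)\ge 2$, which is a harmless but unnecessary buffer in the vertex normalization.

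One minor inaccuracy in your closing commentary: the directed-edge boundary is \emph{also} subadditive under disjoint unions, since for $y\in A$ the condition $ys\notin A\cup B$ is stronger than $ys\notin A$, giving $B_{\mathcal S}(A\cup B)\le B_{\mathcal S}(A)+B_{\mathcal S}(B)$ whenever $A\cap B=\emptyset$. So the distance-$2$ buffer is not strictly required in either normalization; your remark about the ``-corrected'' qualifier is speculative and not quite on target. This does not affect the proof itself.
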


\begin{proof}
Fix $\varepsilon>0$. Choose $A,B$ with $|A|=r_1$, $|B|=r_2$ and
$|\partial_{\mathcal S}A|\le I^\circ(r_1)+\varepsilon$, $|\partial_{\mathcal S}B|\le I^\circ(r_2)+\varepsilon$.
As $\Gamma$ is vertex-transitive and infinite (and $\mathcal S$ is symmetric), the finite set
$
\{ab^{-1},\,sab^{-1}:a\in A,b\in B,\,s\in\mathcal S\}
$
is proper; pick $g\in\Gamma$ outside it. Then $\operatorname{dist}_{\mathcal S}(A,gB)\ge 2$, so
$
\partial_{\mathcal S}(A\cup gB)\subset \partial_{\mathcal S}A\,\cup\, g(\partial_{\mathcal S}B)
$
and $|\partial_{\mathcal S}(A\cup gB)|\le |\partial_{\mathcal S}A|+|\partial_{\mathcal S}B|$. Since $|A\cup gB|=r_1+r_2$,
$
I^\circ(r_1+r_2)\le I^\circ(r_1)+I^\circ(r_2)+2\varepsilon
$;
let $\varepsilon\downarrow0$.
\end{proof}

\subsection{Three large classes with bounded ratios; initial remarks and sanity checks}

\begin{proposition}[Nonamenable, two-ended, virtually nilpotent]\label{prop:ratio-basic}
We have the following quantitative upper bounds for $\mathcal{R}(r)$:
\begin{enumerate}
\item[\emph{(a)}] If $\Gamma$ is nonamenable with vertex Cheeger constant
\[
h\ :=\ \inf_{\emptyset\neq Y\subset_{\rm fin}\Gamma}\frac{|\partial_{\mathcal S}Y|}{|Y|}\ >0,
\]
then
\[
\sup_{r\ge1}\frac{I^\circ(r)}{I^{\mathrm{incr}\,\circ}(r)}\ \le\ \frac{\Delta}{h}.
\]
\item[\emph{(b)}] If $\Gamma$ has two ends (equivalently, is virtually $\mathbb Z$), then there exist
constants $0<c_-(\mathcal S)\le c_+(\mathcal S)<\infty$ such that
\[
c_-(\mathcal S)\ \le\ I^{\mathrm{incr}\,\circ}(r)\ \le\ I^\circ(r)\ \le\ c_+(\mathcal S)\qquad\forall r\ge1,
\]
hence $\sup_{r\ge1} I^\circ(r)/I^{\mathrm{incr}\,\circ}(r)\le c_+(\mathcal S)/c_-(\mathcal S)$.
\item[\emph{(c)}] If $\Gamma$ has polynomial growth of homogeneous dimension $Q\ge2$ (virtually nilpotent), then $\sup_{r\ge1} I^\circ(r)/I^{\mathrm{incr}\,\circ}(r) \le C(\Gamma,\mathcal S)$.
\end{enumerate}
\end{proposition}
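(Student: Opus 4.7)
The plan is to produce, in each of the three regimes, matching two-sided bounds on $I^\circ$, and then to exploit that any \emph{increasing} lower bound on $I^\circ$ descends verbatim to $I^{\mathrm{incr}\,\circ}(r)=\inf_{s\ge r}I^\circ(s)$. All bounds are stated in vertex normalization; when I pass through Theorem~A the conversion cost is the factor $\Delta$ from~\eqref{eq:vertex-edge-comparison-recall}.

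For (a), the vertex Cheeger inequality gives $|\partial_{\mathcal S}Y|\ge h|Y|$ for every finite $Y$, hence $I^\circ(s)\ge hs$ for all $s\ge1$. This bound is already monotone in $s$, so $I^{\mathrm{incr}\,\circ}(r)\ge hr$. Against this I pair the trivial degree estimate $|\partial_{\mathcal S}Y|\le \Delta|Y|$ (each $y\in Y$ has at most $\Delta$ outside neighbours), giving $I^\circ(r)\le \Delta r$. Dividing yields $\mathcal R(r)\le \Delta/h$ uniformly in $r$.

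For (b), I first invoke Stallings: a finitely generated two-ended group is virtually $\mathbb Z$. Fix a finite-index normal infinite-cyclic $H=\langle h_0\rangle\le \Gamma$ with $m:=[\Gamma:H]$ and coset representatives $g_1,\dots,g_m$. The upper constant comes from \emph{thickened intervals} $Y_n:=\bigsqcup_{i=1}^{m} g_i\{h_0^k:|k|\le n\}$ of size $m(2n{+}1)$: a direct count bounds $|\partial_{\mathcal S}Y_n|$ by a constant $c_+(\mathcal S)$ depending only on $m$ and the word lengths of $\mathcal S$ in the $(h_0,g_j)$-coordinates. Combining with Propositions~\ref{prop:Lip} and~\ref{prop:tail-trim} to fill in intermediate volumes gives $I^\circ(r)\le c_+(\mathcal S)$ for \emph{all} $r\ge1$. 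For the lower bound $c_-(\mathcal S)>0$, the two-ended hypothesis implies that for any finite $Y$ the complement $\Gamma\setminus Y$ has at most two unbounded components, each of which must contribute at least one vertex to $\partial_{\mathcal S}Y$; a Stallings cut-vertex argument combined with a pigeonhole over the $m$ $H$-cosets converts this into a uniform positive lower bound on $|\partial_{\mathcal S}Y|$ that depends only on $(\Gamma,\mathcal S)$. Both estimates are constant in $r$, so they transfer directly to $I^{\mathrm{incr}\,\circ}$ and $\mathcal R(r)\le c_+(\mathcal S)/c_-(\mathcal S)$.

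For (c), I appeal to \Cref{thm:A}: on a virtually nilpotent $\Gamma$ of homogeneous dimension $Q$ the directed-edge limit $h_{\mathcal S}=\lim_{n} n^{-(Q-1)/Q}\inf_{|Y|=n} B_{\mathcal S}(Y)$ exists, is positive, and is attained up to $(1+o(1))$ by Wulff samplers. Translating via the edge/vertex comparison~\eqref{eq:vertex-edge-comparison-recall} produces constants $c_1,c_2>0$ and a threshold $r_0=r_0(\Gamma,\mathcal S)$ with
\[
c_1 r^{(Q-1)/Q}\ \le\ I^\circ(r)\ \le\ c_2 r^{(Q-1)/Q}\qquad (r\ge r_0).
\]
Since the right-hand side envelope $r\mapsto r^{(Q-1)/Q}$ is increasing, $I^{\mathrm{incr}\,\circ}(r)\ge c_1 r^{(Q-1)/Q}$ for $r\ge r_0$, so $\mathcal R(r)\le c_2/c_1$ on that range. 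The finitely many small values $r<r_0$ are absorbed because $I^{\mathrm{incr}\,\circ}(1)>0$ (a single vertex has $\Delta\ge1$ boundary) and $I^\circ$ is bounded there by $\Delta r_0$; the maximum of $\mathcal R$ over $\{1,\dots,r_0\}$ is finite. Merging the two ranges yields a single constant $C(\Gamma,\mathcal S)$.

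The main obstacle is part (b): the lower bound $c_-(\mathcal S)$ is the only one that does not come from a counting or calibration estimate but relies on genuine end-theoretic topology. I expect the cleanest implementation to be a Stallings cut-structure argument on the Cayley graph, using the coset decomposition of $H\le\Gamma$ to turn ``$Y$ separates the two ends'' into a uniform count of boundary crossings, with explicit constants tracked through a bounded word-length change of generators (Lemma~\ref{lem:chg-gen}).
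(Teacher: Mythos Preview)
Your arguments for (a) and (c) match the paper's proof essentially verbatim: Cheeger plus the trivial degree bound for (a), and two-sided Wulff asymptotics plus a finite small-volume window for (c).

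For (b), however, you have over-engineered the lower bound. You flag it as ``the main obstacle'' and propose a Stallings cut-structure argument to extract a uniform $c_-(\mathcal S)>0$ from end separation. The paper simply takes $c_-(\mathcal S)=1$: in any \emph{infinite} Cayley graph, every nonempty finite $Y$ satisfies $\partial_{\mathcal S}Y\neq\emptyset$ (the graph is connected and $Y\subsetneq\Gamma$), hence $I^\circ(r)\ge 1$ for all $r\ge1$, and this descends to $I^{\mathrm{incr}\,\circ}(r)\ge1$. No end theory, cosets, or change-of-generators is needed for this direction; the two-endedness is used only for the \emph{upper} bound $c_+(\mathcal S)$ via tubular segments (which you handle correctly). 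Your proposed route would work and would give a sharper $c_-$, but it is unnecessary for the statement as written.
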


\begin{proof}
(a) For every finite $Y$, $|\partial_{\mathcal S}Y|\ge h\,|Y|$, hence
$I^{\mathrm{incr}\,\circ}(r)=\inf_{s\ge r}I^\circ(s)\ge \inf_{s\ge r} h\,s=h\,r$.
Also $|\partial_{\mathcal S}Y|\le \Delta |Y|$, so $I^\circ(r)\le \Delta r$. Therefore $I^\circ(r)/I^{\mathrm{incr}\,\circ}(r)\le \Delta/h$.

(b) The Cayley graph is quasi-isometric to a uniformly bounded-width strip.
Choosing $Y$ as a tubular segment of length $r$ shows $I^\circ(r)\le c_+(\mathcal S)$.
Conversely, every nonempty finite $Y$ has at least one neighbor outside, so
$I^\circ(r)\ge 1=:c_-(\mathcal S)$. (Sharper $c_-(\mathcal S)$ exist but are not needed.)

(c) Let $Q\ge2$ be the homogeneous dimension and set $\alpha=(Q-1)/Q\in(0,1)$. By our discrete Wulff theory (Theorems~\ref{thm:WulffAbelian} and \ref{thm:WulffCarnot}) there exist $R_0<\infty$ and constants $0<c_\star\le C_\star<\infty$ (depending only on $(\Gamma,\mathcal S)$ and the normalization) such that for all integers $s\ge R_0$,
\begin{equation}\label{eq:wulff-2sided-large}
c_\star\,s^\alpha\ \le\ I^\circ(s)\ \le\ C_\star\,s^\alpha.
\end{equation}
Indeed, the lower bound follows from the asymptotic Wulff lower bound (liminf) by choosing $\varepsilon>0$ small and taking $R_0$ so large that $I^\circ(s)\ge (h_S-\varepsilon)s^\alpha$ for $s\ge R_0$; the upper bound follows from the existence of Wulff samplers $Y_s$ with $|\partial_{\mathcal S}Y_s|\le (1+o(1))h_S\,s^\alpha$, then hitting the exact volume by padding (Lemma~\ref{lem:padding}), which changes the boundary by at most $\Delta$ per vertex and is thus $o(s^\alpha)$.

For $r\ge R_0$, since $I^{\mathrm{incr}\,\circ}(r)=\inf_{s\ge r}I^\circ(s)$ and $s\mapsto s^\alpha$ is nondecreasing,
\[
I^{\mathrm{incr}\,\circ}(r)\ \ge\ \inf_{s\ge r} c_\star\,s^\alpha\ =\ c_\star\,r^\alpha,
\]
while $I^\circ(r)\le C_\star r^\alpha$ by \eqref{eq:wulff-2sided-large}. Hence
\[
\frac{I^\circ(r)}{I^{\mathrm{incr}\,\circ}(r)}\ \le\ \frac{C_\star}{c_\star}\qquad (r\ge R_0).
\]
For the finitely many $1\le r<R_0$, set $C_0:=\max_{1\le r<R_0} I^\circ(r)/I^{\mathrm{incr}\,\circ}(r)$, which is finite because $I^{\mathrm{incr}\,\circ}(r)\ge I^{\mathrm{incr}\,\circ}(R_0)>0$. Therefore
\[
\sup_{r\ge1}\ \frac{I^\circ(r)}{I^{\mathrm{incr}\,\circ}(r)}\ \le\ \max\Big\{\frac{C_\star}{c_\star},\,C_0\Big\}\ <\ \infty,
\]
which proves \emph{(c)}.
\end{proof}

\begin{remark}[Vertex vs.\ edge normalization]
If $I^\circ_{\edge}$ and $I^{\mathrm{incr}\,\circ}_{\edge}$ denote the profiles in the (directed or undirected) edge
normalization, then $I^\circ \le I^\circ_{\edge} \le \Delta\, I^\circ$ and
$I^{\mathrm{incr}\,\circ} \le I^{\mathrm{incr}\,\circ}_{\edge} \le \Delta\, I^{\mathrm{incr}\,\circ}$
with $\Delta=|\mathcal S|$ (cf.\ Lemma~\ref{lem:vertex-edge-profiles}).
Hence the boundedness of the ratio is invariant up to a factor depending only on~$\Delta$.
\end{remark}

\begin{remark}[What our Wulff theory adds, precisely]
The classical literature provides global two-sided isoperimetry in virtually nilpotent groups (e.g., \cite{VaropoulosSaloffCosteCoulhon1992,SaloffCoste2002Aspects}). Our contribution is different in nature: we obtain \emph{asymptotically sharp} constants and \emph{constructive} near-minimizers adapted to the horizontal anisotropy $\tau_S$. Concretely, there exists $R_0<\infty$ such that for all $|Y|\ge R_0$,
\[
|\partial_{\mathcal S}Y|\ \ge\ (h_S-o(1))\,|Y|^{\frac{Q-1}{Q}},\qquad
\exists\,Y^\mathrm{Wulff}_r:\ |\partial_{\mathcal S}Y^\mathrm{Wulff}_r|\ \le\ (1+o(1))\,h_S\,r^{\frac{Q-1}{Q}}.
\]
These asymptotics suffice to deduce bounded ratios (Proposition~\ref{prop:ratio-basic}(c)) by combining with a finite small-volume window. We do \emph{not} claim a uniform lower bound for all volumes from Wulff theory alone; the global two-sided inequalities remain an independent analytic input when one wants uniform statements at \emph{every} scale.
\end{remark}

\begin{remark}[Explicit two-ended bound]\label{rem:two-ended-explicit}
Let $\Gamma$ be two-ended. Fix an infinite cyclic subgroup
$\langle t\rangle\le \Gamma$ of finite index and a transversal
$T$ of the left cosets of $\langle t\rangle$. For $L\in\mathbb N$, set
$Y_L=\bigcup_{i=0}^{L-1} T t^i$. Then $|Y_L|=L\,[\Gamma:\langle t\rangle]$ and only the two ends contribute up to bounded width (depending on $\mathcal S$), so
\[
|\partial_{\mathcal S}Y_L|\ \le\ C_+(\mathcal S)\,[\Gamma:\langle t\rangle].
\]
Hence $I^\circ(L\,[\Gamma:\langle t\rangle])\le C_+(\mathcal S)\,[\Gamma:\langle t\rangle]$,
and the general upper bound follows by padding/Lipschitz (Proposition~\ref{prop:Lip}).
\end{remark}

\subsection{Tempered F{\o}lner increments and a decoupled route to (TF)}

\begin{lemma}[Tempered increments by expansion]\label{lem:tempered-increments}
There exists a nested sequence $Y_1\subset Y_2\subset\cdots$ of finite subsets of $\Gamma$ such that for all $n$,
\[
|Y_{n+1}\setminus Y_n|\ \le\ |\partial_{\mathcal S} Y_n|.
\]
\end{lemma}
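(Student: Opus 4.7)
\medskip
\noindent\textit{Proof plan.} The plan is to take the simplest possible nested exhaustion: set $Y_1$ to be any finite nonempty subset (for instance $\{e\}$) and define recursively
\[
Y_{n+1}\ :=\ N_{\mathcal S}(Y_n)\ =\ Y_n\cup \partial_{\mathcal S}Y_n,
\]
i.e.\ the closed one-step neighborhood from Definition~\ref{def:one-step-neighborhood}. This is automatically nested, and since $\partial_{\mathcal S}Y_n=\mathcal SY_n\setminus Y_n$ is \emph{disjoint from $Y_n$} by definition, one obtains the exact identity $Y_{n+1}\setminus Y_n=\partial_{\mathcal S}Y_n$, which yields the claimed inequality with equality.

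\medskip
\noindent The only things to verify are (i) that each $Y_n$ is finite, which follows by induction since $|Y_{n+1}|\le(1+\Delta)|Y_n|$, and (ii) that the sequence is genuinely nested (immediate from $Y_{n+1}\supseteq Y_n$). One may equivalently describe this chain as the balls $Y_n=\mathcal S^{\le n-1}$ about the identity. There is no real obstacle here: the lemma is a trivial existence statement whose role is to fix a baseline exhaustion satisfying (TF)(ii) with constant $B=1$ (in vertex normalization), which the subsequent interleaving constructions will then refine to carry simultaneously the near-minimizer bound (TF)(i$_e$) along a checkpoint subsequence. If one wishes, in the amenable case one may instead take $Y_{n+1}\subseteq Y_n\cup \partial_{\mathcal S}Y_n$ to be any subset adding at least one new vertex but no more than $|\partial_{\mathcal S}Y_n|$ of them, which provides the flexibility used later to align $Y_n$ with prescribed near-minimizers.
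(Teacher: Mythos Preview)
Your proof is correct and takes essentially the same approach as the paper: the paper's proof starts from any nonempty $Y_1$ and sets $Y_{n+1}:=Y_n\cup A_n$ for an arbitrary subset $A_n\subseteq\partial_{\mathcal S}Y_n$, of which your choice $A_n=\partial_{\mathcal S}Y_n$ is the canonical instance (and you explicitly note the more general subset variant at the end).
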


\begin{proof}
Start from any nonempty $Y_1$. Given $Y_n$, pick any subset $A_n\subseteq \partial_{\mathcal S}Y_n$ and let $Y_{n+1}:=Y_n\cup A_n$.
Then $|Y_{n+1}\setminus Y_n|=|A_n|\le |\partial_{\mathcal S}Y_n|$.
\end{proof}

\begin{definition}[Vertex and directed-edge profiles]\label{def:profiles-vertex-edge}
For finite $Y\subset\Gamma$, write
\[
B_{\mathcal S}(Y):=\#\{(y,s)\in Y\times\mathcal S:\ sy\notin Y\}
\qquad\text{(directed edge boundary)}.
\]
Define the exact and increasing \emph{directed-edge} profiles:
\[
I^\circ_{\edge}(r):=\inf\{\,B_{\mathcal S}(Y):\ |Y|=r\,\},\qquad
I^{\mathrm{incr}\,\circ}_{\edge}(r):=\inf_{t\ge r} I^\circ_{\edge}(t).
\]
\end{definition}

\begin{remark}[Edge vs vertex TF]
We formulate {\rm(TF)} both in vertex and directed-edge normalizations. As $|\partial_{\mathcal S}Y|\le B_{\mathcal S}(Y)\le \Delta\,|\partial_{\mathcal S}Y|$, any {\rm(TF)} statement in one normalization converts to the other with constants modified by a factor of at most~$\Delta$.
\end{remark}

\begin{definition}[Tempered F{\o}lner property (TF)]\label{def:TF}
$(\Gamma,\mathcal S)$ has (TF) if there exists a F{\o}lner sequence $(F_n)$ and $A,B\ge1$ such that:
\begin{enumerate}
\item[(i)] $|\partial_{\mathcal S}(F_n)|\le A\, I^{\mathrm{incr}\,\circ}(|F_n|)$; 
\item[(ii)] $|F_{n+1}\setminus F_n|\le B\,|\partial_{\mathcal S}(F_n)|$.
\end{enumerate}
\end{definition}

\begin{definition}[Tempered F{\o}lner property - directed-edge normalization]\label{def:TF-edge}
Write $B_{\mathcal S}(Y):=\#\{(y,s)\in Y\times\mathcal S:\ sy\notin Y\}$ for the \emph{directed} edge boundary.
We say $(\Gamma,\mathcal S)$ has \emph{(TF) in edge normalization} if there exists a F{\o}lner sequence $(F_n)$ and constants $A_e,B\ge1$ such that
\begin{enumerate}
\item[(i$_e$)] $B_{\mathcal S}(F_n)\ \le\ A_e\, I^{\mathrm{incr}\,\circ}_{\edge}\big(|F_n|\big)$ for all $n$;
\item[(ii)] \ $|F_{n+1}\setminus F_n|\ \le\ B\, B_{\mathcal S}(F_n)$ for all $n$.
\end{enumerate}
Here $I^{\mathrm{incr}\,\circ}_{\edge}(r):=\inf_{t\ge r}\ \inf_{|Y|=t} B_{\mathcal S}(Y)$ is the increasing minorant of the exact \emph{edge} profile. 
\end{definition}

\begin{lemma}[Padding by $k$ points]\label{lem:padding}
For any finite $Y$ and $k\ge0$, there is $Z\supset Y$ with $|Z|=|Y|+k$ and
$|\partial_{\mathcal S}(Z)|\le |\partial_{\mathcal S}(Y)|+k\Delta$.
\end{lemma}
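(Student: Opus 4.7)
The plan is to prove the lemma by induction on $k$, adding one vertex at a time and applying a simple set-theoretic inclusion for the vertex boundary.

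For the base case $k=0$, take $Z=Y$. For the inductive step, suppose $Z_k\supset Y$ has been constructed with $|Z_k|=|Y|+k$ and $|\partial_{\mathcal S}(Z_k)|\le |\partial_{\mathcal S}(Y)|+k\Delta$. Assuming $|Y|+k<|\Gamma|$ (otherwise the statement is vacuous since no further vertex can be added), pick any $x\in\Gamma\setminus Z_k$ and set $Z_{k+1}:=Z_k\cup\{x\}$. The key identity is
\[
\partial_{\mathcal S}(Z_{k+1})\ =\ \bigl(\mathcal S Z_k\cup \mathcal S x\bigr)\setminus (Z_k\cup\{x\})
\ \subseteq\ \bigl(\partial_{\mathcal S}(Z_k)\setminus\{x\}\bigr)\ \cup\ \bigl(\mathcal S x\setminus (Z_k\cup\{x\})\bigr),
\]
from which
\[
|\partial_{\mathcal S}(Z_{k+1})|\ \le\ |\partial_{\mathcal S}(Z_k)|\ +\ |\mathcal S x|\ \le\ |\partial_{\mathcal S}(Z_k)|+\Delta,
\]
so by the inductive hypothesis $|\partial_{\mathcal S}(Z_{k+1})|\le |\partial_{\mathcal S}(Y)|+(k+1)\Delta$, completing the induction.

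A natural sharpening, which we will not need here, is to \emph{choose} $x\in\partial_{\mathcal S}(Z_k)$ whenever that set is nonempty: then $x$ itself is removed from the boundary, improving the per-step increment to $\Delta-1$ and yielding the slightly stronger bound $|\partial_{\mathcal S}(Z)|\le |\partial_{\mathcal S}(Y)|+k(\Delta-1)$. For an infinite $\Gamma$ the set $\partial_{\mathcal S}(Z_k)$ is automatically nonempty, while for finite $\Gamma$ the only way it can be empty is $Z_k=\Gamma$, in which case no further addition is possible anyway and the inductive step never triggers.

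There is essentially no obstacle: the only content is the set-theoretic inclusion above, together with the trivial bookkeeping that $|\mathcal S x|\le \Delta$ since $\mathcal S$ has $\Delta$ elements (counted with multiplicity if $\mathcal S$ is viewed as a multiset on a quotient, which only makes the bound tighter). Consequently the proof is a three-line induction; the lemma will be used downstream to convert Wulff samplers (which exist only at certain volumes) into near-minimizers at \emph{every} integer volume by padding with at most $o(r^{(Q-1)/Q})$ additional vertices, a step repeatedly invoked in Proposition~\ref{prop:ratio-basic}(c) and in the interleaving constructions underlying (TF).
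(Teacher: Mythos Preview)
Your proof is correct and follows exactly the approach the paper uses: the one-step set-theoretic inclusion $\partial_{\mathcal S}(Z\cup\{x\})\subset(\partial_{\mathcal S}Z\setminus\{x\})\cup(\mathcal Sx\setminus Z)$ is precisely what appears in the proof of Proposition~\ref{prop:Lip}, and the paper's later restatement (Lemma~\ref{lem:new-padding}) is proved in one line by ``Add vertices one by one; each addition increases the boundary by at most $\Delta$.'' Your write-up is simply a more explicit version of the same induction, with the (optional) $\Delta-1$ sharpening noted but not needed.
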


\begin{lemma}[Attainment of exact profiles]\label{lem:attainment-profiles}
For every integer $r\ge 1$ there exist finite sets $Y_r,E_r\subset\Gamma$ with $|Y_r|=|E_r|=r$ such that
\[
|\partial_{\mathcal S}Y_r|=I^\circ(r)
\qquad\text{and}\qquad
B_{\mathcal S}(E_r)=I^\circ_{\edge}(r).
\]
\end{lemma}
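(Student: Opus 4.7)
The plan is to prove this by a simple well-ordering argument: both boundary functionals are integer-valued and bounded above at any fixed volume, so the infimum over a nonempty subset of $\mathbb{N}$ is automatically a minimum. There is no need for a compactness argument or a concentration/translation step.

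First I would fix $r \ge 1$ and note that for any finite $Y \subset \Gamma$, both $|\partial_{\mathcal S}(Y)|$ and $B_{\mathcal S}(Y)$ are nonnegative integers. Moreover, from $\partial_{\mathcal S}(Y) \subset \mathcal S Y$ one gets $|\partial_{\mathcal S}(Y)| \le \Delta |Y|$, and from the definition $B_{\mathcal S}(Y) = \#\{(y,s) \in Y \times \mathcal S:\ sy \notin Y\}$ one gets $B_{\mathcal S}(Y) \le \Delta |Y|$. Hence the two value sets
\[
\mathcal V_r := \{|\partial_{\mathcal S}(Y)| : Y \subset \Gamma,\ |Y| = r\}, \qquad
\mathcal E_r := \{B_{\mathcal S}(Y) : Y \subset \Gamma,\ |Y| = r\}
\]
are both contained in the finite set $\{0,1,\dots,\Delta r\}$. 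They are nonempty because $\Gamma$ is infinite (a standing assumption throughout this section), so subsets of every finite size $r \ge 1$ exist; if $\Gamma$ were finite one would just restrict to $r \le |\Gamma|$.

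Next I would invoke the well-ordering of $\mathbb N$: every nonempty subset of $\mathbb N$ has a least element. Applying this to $\mathcal V_r$ and $\mathcal E_r$ yields integers $\min \mathcal V_r = I^\circ(r)$ and $\min \mathcal E_r = I^\circ_{\edge}(r)$ that are in fact attained, so there exist $Y_r, E_r \subset \Gamma$ with $|Y_r| = |E_r| = r$ realizing them. Pick one such $Y_r$ and one such $E_r$; these are the required witnesses.

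The hard part will be: there is none. This lemma is purely bookkeeping, recording that infima in the definitions of $I^\circ$ and $I^\circ_{\edge}$ can be replaced by minima and realized by concrete sets, which is what justifies phrases like ``choose a near-minimizer of size $r$'' elsewhere in Section~\ref{sec:bounded-ratio}. The only nontrivial caveat worth flagging in the proof is that attainment here is purely volume-wise; it does \emph{not} claim that the minimizers can be chosen to be nested in $r$, which is exactly the stronger hypothesis (NNM) introduced and analyzed separately.
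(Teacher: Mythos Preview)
Your proof is correct and essentially identical to the paper's own argument: both observe that the value sets $\mathcal V_r$ and $\mathcal E_r$ are nonempty finite subsets of $\{0,1,\dots,\Delta r\}$ and hence have attained minima. The paper's proof is slightly terser but uses the same notation and the same well-ordering step.
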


\begin{proof}
Fix $r$. For any $Y\subset\Gamma$ with $|Y|=r$ we have $0\le |\partial_{\mathcal S}Y|\le \Delta r$ and $0\le B_{\mathcal S}(Y)\le \Delta r$, so
\[
\mathcal V_r:=\{\,|\partial_{\mathcal S}Y|:\ |Y|=r\,\}\subset\{0,1,\dots,\Delta r\},\quad
\mathcal E_r:=\{\,B_{\mathcal S}(Y):\ |Y|=r\,\}\subset\{0,1,\dots,\Delta r\}
\]
are nonempty finite subsets of $\mathbb N$. Hence $\min\mathcal V_r=I^\circ(r)$ and $\min\mathcal E_r=I^\circ_{\edge}(r)$ are achieved.
\end{proof}

\begin{lemma}[Vertex-edge comparison]\label{lem:vertex-edge-profiles}
For every finite $Y\subset\Gamma$,
\begin{equation}\label{eq:ve-ptwise}
|\partial_{\mathcal S}Y|\ \le\ B_{\mathcal S}(Y)\ \le\ \Delta\,|\partial_{\mathcal S}Y|.
\end{equation}
Consequently, for all $r\ge1$,
\[
I^\circ(r)\ \le\ I^\circ_{\edge}(r)\ \le\ \Delta\,I^\circ(r),
\qquad
I^{\mathrm{incr}\,\circ}(r)\ \le\ I^{\mathrm{incr}\,\circ}_{\edge}(r)\ \le\ \Delta\,I^{\mathrm{incr}\,\circ}(r).
\]
\end{lemma}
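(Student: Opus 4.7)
The plan is to reduce everything to the pointwise comparison \eqref{eq:ve-ptwise}, which is a one-paragraph counting argument; the profile inequalities then fall out of it by taking infima (for the exact profiles) and then by monotonicity in the truncation parameter (for the increasing minorants).

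For the pointwise bounds, consider the evaluation map $\Phi:(x,s)\mapsto xs$, defined on the set of pairs counted by $B_{\mathcal S}(Y)$, i.e.\ pairs $(x,s)\in Y\times\mathcal S$ with $xs\notin Y$. By definition, the image of $\Phi$ is precisely $\partial_{\mathcal S}Y=N_{\mathcal S}(Y)\setminus Y$. The lower bound $|\partial_{\mathcal S}Y|\le B_{\mathcal S}(Y)$ is then immediate, since a surjection from a counted set to $\partial_{\mathcal S}Y$ implies the inequality. For the upper bound $B_{\mathcal S}(Y)\le\Delta\,|\partial_{\mathcal S}Y|$, I will count fibers: for a fixed $z\in\partial_{\mathcal S}Y$, the preimage $\Phi^{-1}(z)$ consists of pairs $(zs^{-1},s)$ with $s\in\mathcal S$ and $zs^{-1}\in Y$, so $|\Phi^{-1}(z)|\le|\mathcal S|=\Delta$. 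Summing gives $B_{\mathcal S}(Y)=\sum_{z\in\partial_{\mathcal S}Y}|\Phi^{-1}(z)|\le\Delta\,|\partial_{\mathcal S}Y|$.

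For the exact-profile inequalities, I will invoke Lemma~\ref{lem:attainment-profiles} on both sides. To get $I^\circ(r)\le I^\circ_{\edge}(r)$: pick $E$ with $|E|=r$ and $B_{\mathcal S}(E)=I^\circ_{\edge}(r)$, then $|\partial_{\mathcal S}E|\le B_{\mathcal S}(E)=I^\circ_{\edge}(r)$ by \eqref{eq:ve-ptwise}, and $I^\circ(r)\le|\partial_{\mathcal S}E|$ by definition. Symmetrically, pick $Y$ attaining $I^\circ(r)$ and apply the upper half of \eqref{eq:ve-ptwise} to obtain $I^\circ_{\edge}(r)\le B_{\mathcal S}(Y)\le\Delta\,|\partial_{\mathcal S}Y|=\Delta\,I^\circ(r)$. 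Attainment is not strictly necessary here (an $\varepsilon$-minimizer argument works equally well), but it keeps the bookkeeping clean.

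For the minorants, I will simply take $\inf_{t\ge r}$ of both sides of the exact-profile inequalities and use the elementary fact that $\inf$ preserves $\le$ and commutes with multiplication by the positive constant $\Delta$: $I^{\mathrm{incr}\,\circ}(r)=\inf_{t\ge r}I^\circ(t)\le\inf_{t\ge r}I^\circ_{\edge}(t)=I^{\mathrm{incr}\,\circ}_{\edge}(r)\le\inf_{t\ge r}\Delta\,I^\circ(t)=\Delta\,I^{\mathrm{incr}\,\circ}(r)$. There is no hard step in this proof — the only subtlety is being consistent with the right-multiplication convention in Definition~\ref{def:boundaries-prelim} when evaluating the map $\Phi$, and using the symmetry $\mathcal S=\mathcal S^{-1}$ implicitly in writing $x=zs^{-1}$.
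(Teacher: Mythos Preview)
Your proof is correct and follows essentially the same route as the paper: define the surjection $\Phi$ from boundary-crossing pairs onto $\partial_{\mathcal S}Y$, observe that fibers have size at most $\Delta$, and then pass to profiles by taking infima. The only cosmetic difference is that the paper's own proof in this section uses the left-action convention $\Phi(y,s)=sy$ (matching the local definition $\partial_{\mathcal S}Y=\mathcal S Y\setminus Y$ in \S\ref{sec:bounded-ratio}) rather than the right action from Definition~\ref{def:boundaries-prelim}; since $\mathcal S=\mathcal S^{-1}$ this is immaterial, as you correctly note.
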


\begin{proof}
Define $\Phi:\{(y,s)\in Y\times\mathcal S:\ sy\notin Y\}\to \partial_{\mathcal S}Y$ by $\Phi(y,s)=sy$.
It is surjective and each fiber has size $\le \Delta$, which gives \eqref{eq:ve-ptwise}. The profile inequalities follow.
\end{proof}

\begin{remark}[Directed vs undirected]
In all inequalities above one may replace directed edge counts by undirected edge counts, at the price of a universal factor $1/2$. We keep the directed normalization in profile statements, and switch to undirected edges only when convenient in specific constructions; constants are converted by a fixed factor once.
\end{remark}

\begin{proposition}[Weak TF(i) from a bounded edge ratio]\label{prop:weak-TFi}
Assume there is $C_0<\infty$ with
\[
\sup_{r\ge1}\ \frac{I^\circ_{\edge}(r)}{I^{\mathrm{incr}\,\circ}_{\edge}(r)}\ \le\ C_0.
\]
Then there exists a sequence of finite sets $E_r$ with $|E_r|=r$ such that
\[
|\partial_{\mathcal S}E_r|\ \le\ I^\circ_{\edge}(r)\ \le\ C_0\, I^{\mathrm{incr}\,\circ}_{\edge}(r)\ \le\ C_0\,\Delta\, I^{\mathrm{incr}\,\circ}(r)\qquad\forall r\ge1.
\]
In particular, TF(i) holds along some (not necessarily nested) sequence with $A=C_0\,\Delta$ in vertex normalization.
\end{proposition}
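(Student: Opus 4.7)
The plan is to assemble the claimed chain of inequalities directly from two lemmas already at hand: the attainment of exact profiles (Lemma~\ref{lem:attainment-profiles}) and the pointwise vertex-edge comparison (Lemma~\ref{lem:vertex-edge-profiles}). No interleaving, gluing, or nestedness is needed here, which is precisely what makes the statement ``weak'': we only produce a sequence indexed by $r$ achieving TF(i), not a F{\o}lner chain.

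First, for each integer $r\ge 1$ I would invoke Lemma~\ref{lem:attainment-profiles} to choose a finite set $E_r\subset\Gamma$ with $|E_r|=r$ and $B_{\mathcal S}(E_r)=I^\circ_{\edge}(r)$; the attainment is automatic because the set of possible values of $B_{\mathcal S}(Y)$ at fixed volume $r$ is a nonempty subset of $\{0,1,\dots,\Delta r\}$. Next, the left inequality in Lemma~\ref{lem:vertex-edge-profiles} applied pointwise gives $|\partial_{\mathcal S}E_r|\le B_{\mathcal S}(E_r)=I^\circ_{\edge}(r)$; this converts the edge-minimality of $E_r$ into a vertex-boundary bound without loss. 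At this point we have the first inequality $|\partial_{\mathcal S}E_r|\le I^\circ_{\edge}(r)$ of the displayed chain.

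The remaining two inequalities are structural and sequence-free. The hypothesis $I^\circ_{\edge}(r)\le C_0\, I^{\mathrm{incr}\,\circ}_{\edge}(r)$ gives the middle step, and the right-hand inequality in Lemma~\ref{lem:vertex-edge-profiles} applied to the increasing minorants yields $I^{\mathrm{incr}\,\circ}_{\edge}(r)\le \Delta\, I^{\mathrm{incr}\,\circ}(r)$. Concatenating these three inequalities produces
\[
|\partial_{\mathcal S}E_r|\ \le\ I^\circ_{\edge}(r)\ \le\ C_0\, I^{\mathrm{incr}\,\circ}_{\edge}(r)\ \le\ C_0\,\Delta\, I^{\mathrm{incr}\,\circ}(r),
\]
which is exactly TF(i) in the vertex normalization with constant $A=C_0\Delta$, realized by the (a priori unnested) sequence $(E_r)_{r\ge 1}$. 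There is no real obstacle; the only subtlety worth flagging in the final proof is that we are \emph{not} asserting monotonicity or F{\o}lner nesting of $(E_r)$—TF(ii) is left entirely open here—which is why the conclusion is labeled ``weak TF(i)'' and feeds naturally into the later interleaving schemes (e.g.\ Theorem~\ref{thm:NNM-alone-to-TF}) where nested near-minimizers are upgraded to full (TF).
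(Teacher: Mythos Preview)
Your proof is correct and follows exactly the paper's approach: choose $E_r$ attaining $I^\circ_{\edge}(r)$ (Lemma~\ref{lem:attainment-profiles}), then apply the vertex--edge comparison of Lemma~\ref{lem:vertex-edge-profiles} to obtain the displayed chain. The paper's own proof is the one-line version of what you wrote.
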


\begin{proof}
Choose $E_r$ attaining $I^\circ_{\edge}(r)$; then apply Lemma~\ref{lem:vertex-edge-profiles}.
\end{proof}

\begin{lemma}[Canonical tempered expansion]\label{lem:canonical-tempered}
For any finite nonempty $Y\subset\Gamma$ set $Y^+:=Y\cup \partial_{\mathcal S}Y$. Then
\[
|Y^+\setminus Y|\ =\ |\partial_{\mathcal S}Y|,\qquad
|\partial_{\mathcal S}Y^+|\ \le\ (1+\Delta)\,|\partial_{\mathcal S}Y|.
\]
Consequently, the nested chain $F_{n+1}:=F_n^+$ satisfies TF(ii) with $B=1$.
\end{lemma}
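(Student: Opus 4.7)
The plan is to verify the two displayed claims about $Y^+:=Y\cup\partial_{\mathcal S}Y$ directly from the definition $\partial_{\mathcal S}Y=\mathcal SY\setminus Y$, and then read off the (TF)(ii) assertion. Nothing beyond set-theoretic manipulation and the universal bound $|\mathcal SA|\le\Delta|A|$ should be needed; the argument is essentially a one-line exercise, with the ``main obstacle'' being only to keep the inclusions bookkept cleanly.

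First, for the identity $|Y^+\setminus Y|=|\partial_{\mathcal S}Y|$, I would note that $\partial_{\mathcal S}Y$ is by construction disjoint from $Y$, so
\[
Y^+\setminus Y=(Y\cup\partial_{\mathcal S}Y)\setminus Y=\partial_{\mathcal S}Y.
\]
This gives both the set equality and hence the cardinality equality at once.

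Next, for the boundary estimate, the key observation is that every one-step neighbor of $Y$ is already in $Y^+$: indeed $\mathcal SY=(\mathcal SY\cap Y)\cup(\mathcal SY\setminus Y)\subseteq Y\cup\partial_{\mathcal S}Y=Y^+$. Using $\mathcal SY^+=\mathcal SY\cup\mathcal S(\partial_{\mathcal S}Y)$, this inclusion lets me strip the ``$\mathcal SY$'' part:
\[
\partial_{\mathcal S}Y^+=\mathcal SY^+\setminus Y^+\ \subseteq\ \bigl(\mathcal S(\partial_{\mathcal S}Y)\bigr)\setminus Y^+\ \subseteq\ \mathcal S(\partial_{\mathcal S}Y).
\]
Then I would apply the trivial bound $|\mathcal SA|\le\Delta|A|$ with $A=\partial_{\mathcal S}Y$ to obtain
\[
|\partial_{\mathcal S}Y^+|\ \le\ \Delta\,|\partial_{\mathcal S}Y|\ \le\ (1+\Delta)\,|\partial_{\mathcal S}Y|,
\]
which is even slightly tighter than the claimed bound (the stated $(1+\Delta)$ leaves a harmless unit slack that could be used if one preferred to absorb self-edges into the first term; I will keep the cleaner $\Delta$-bound and note that $(1+\Delta)$ follows a fortiori).

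Finally, the consequence for the chain $F_{n+1}:=F_n^+$ is immediate from the first identity: $|F_{n+1}\setminus F_n|=|\partial_{\mathcal S}F_n|$, which is precisely condition (TF)(ii) of Definition~\ref{def:TF} with constant $B=1$. The hardest thing here is really only making sure that the inclusion chain $\mathcal SY\subseteq Y^+$ is explicitly invoked before dropping the $\mathcal SY$ term; once that is in place the whole lemma is a few lines.
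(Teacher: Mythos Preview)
Your proof is correct and in fact yields the sharper constant $\Delta$ in place of the paper's $1+\Delta$. The paper takes a different route for the boundary estimate: it iterates the one-vertex Lipschitz bound of Proposition~\ref{prop:Lip} (adding each of the $|\partial_{\mathcal S}Y|$ boundary vertices one at a time, each increasing the boundary by at most $\Delta$), arriving at $|\partial_{\mathcal S}Y|+\Delta|\partial_{\mathcal S}Y|=(1+\Delta)|\partial_{\mathcal S}Y|$. Your direct set-theoretic argument, via the inclusion $\mathcal SY\subseteq Y^+$ and hence $\partial_{\mathcal S}Y^+\subseteq \mathcal S(\partial_{\mathcal S}Y)$, avoids the iteration entirely and explains the unit slack in the paper's bound: the initial $|\partial_{\mathcal S}Y|$ term in the paper's telescoping is wasteful because every vertex of $\partial_{\mathcal S}Y$ is absorbed into $Y^+$ and so cannot contribute to $\partial_{\mathcal S}Y^+$. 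Both arguments are elementary; yours is cleaner and constant-optimal for this lemma.
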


\begin{proof}
Since $\partial_{\mathcal S}Y\cap Y=\varnothing$, $|Y^+\setminus Y|=|\partial_{\mathcal S}Y|$.
Adding vertices of $\partial_{\mathcal S}Y$ one by one changes the boundary by at most $\Delta$ per addition; hence
$|\partial_{\mathcal S}Y^+|\le (1+\Delta)|\partial_{\mathcal S}Y|$.
\end{proof}

\begin{remark}[F{\o}lner behaviour of $\mathcal S$-ballooning]
By contrast with the tempered step $Y\mapsto Y\cup\partial Y$, the ``ballooning'' chain
$F_{n+1}:=\mathcal S F_n$ need not be F{\o}lner, even if $\Gamma$ is amenable
(e.g.\ balls are not F{\o}lner in many amenable groups of exponential growth). In virtually nilpotent groups, however, balls are F{\o}lner for any finite generating set.
\end{remark}

\begin{definition}[Nested near-minimizers (NNM)]\label{def:NNM}
We say $(\Gamma,\mathcal S)$ admits \emph{nested near-minimizers} if there exist
$C_0<\infty$, $r_0\in\N$, and a nested family $(W_r)_{r\ge r_0}$ with $|W_r|=r$ such that
\[
B_{\mathcal S}(W_r)\ \le\ C_0\, I^{\mathrm{incr}\,\circ}_{\edge}(r)
\qquad\text{for all } r\ge r_0.
\]
\emph{Remark.} In many arguments it suffices to have the same inequality \emph{along a cofinal subsequence of volumes}.
\end{definition}

\begin{lemma}[Checkpoints $\Rightarrow$ NNM along a cofinal subsequence]\label{lem:checkpoint-to-nnm}
Let $(F_n)$ be a nested family in edge normalization. Assume there exists an infinite subsequence $(n_j)$ and $A_e\ge1$ such that
\[
B_{\mathcal S}(F_{n_j})\ \le\ A_e\, I^{\mathrm{incr}\,\circ}_{\edge}\big(|F_{n_j}|\big)\qquad\forall j.
\]
Then $(F_{n_j})$ is a sequence of \emph{nested near-minimizers} along a cofinal subsequence.
\end{lemma}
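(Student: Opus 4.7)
The plan is to observe that this is essentially a definitional unpacking, so the proof will be short and the main task is to verify that the hypothesis matches the weaker form of NNM sanctioned by the remark following Definition~\ref{def:NNM}. Concretely, I would set $W_j := F_{n_j}$ and $r_j := |F_{n_j}|$, and then check the three constituent conditions: nestedness, cofinality of the volume subsequence, and the near-minimizer inequality.

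Nestedness is immediate: since $(F_n)$ is nested and $n_1 < n_2 < \cdots$, we have $W_j = F_{n_j} \subset F_{n_{j+1}} = W_{j+1}$. Cofinality of $(r_j)$ requires a brief argument: in the intended use, $(F_n)$ comes from one of our TF constructions and is a F{\o}lner (hence strictly increasing) chain, so $|F_n| \to \infty$; passing to the infinite subsequence $(n_j)$ therefore produces volumes $r_j \to \infty$. (If one wishes to be pedantic about nested chains that might stabilize, one simply refines $(n_j)$ to a further subsequence along which the volumes are strictly increasing — this is possible precisely when $(F_n)$ is genuinely increasing, which is the only case of interest.) The near-minimizer inequality $B_{\mathcal S}(W_j) \le A_e\, I^{\mathrm{incr}\,\circ}_{\edge}(r_j)$ with constant $C_0 := A_e$ is exactly the hypothesis.

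I then conclude that $(W_j)_{j\ge 1} = (F_{n_j})$ is a nested family of finite sets with volumes going to infinity on which the NNM inequality holds with constant $C_0 = A_e$, i.e.\ NNM holds along the cofinal sequence of volumes $(r_j)$ in the sense of the remark following Definition~\ref{def:NNM}.

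There is no real obstacle here; the statement is designed to formalize the passage from a ``checkpoint'' property (which is what Theorems~\ref{thm:F-final} and \ref{thm:E-final}(iii) deliver) to the NNM hypothesis needed to invoke the interleaving mechanism of Theorem~\ref{thm:NNM-alone-to-TF}. The only point worth a sentence of care is the distinction between NNM as stated (requiring $|W_r| = r$ for \emph{every} $r \ge r_0$) and the cofinal version (requiring the inequality only along a subsequence of volumes $r_j \to \infty$); the lemma delivers the latter, which is the version actually consumed downstream.
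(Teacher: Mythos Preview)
Your proposal is correct and matches the paper's approach, which simply says ``Immediate from the hypothesis and nestedness.'' Your version is more explicit about cofinality (noting that $|F_n|\to\infty$ in the intended F{\o}lner setting) and about the distinction between full and cofinal NNM, both of which the paper leaves implicit; this extra care is harmless and arguably clarifying.
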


\begin{proof}
Immediate from the hypothesis and nestedness.
\end{proof}

\begin{remark}[On (NNM) in our scope]\label{rem:NNM-scope}
(i) $\boldsymbol{\Z^d}$ (axis stencil): discrete Wulff approximants $W_r$ can be made nested and satisfy
$B_{\mathcal S}(W_r)/r^{(d-1)/d}\to c_\tau$, hence $B_{\mathcal S}(W_r)\le C_0\,I^{\mathrm{incr}\,\circ}_{\edge}(r)$ for large $r$.

(ii) Uniform lattices in Carnot groups: inner/outer samplers are nested in the scale and have
$b_r:=B_{\mathcal S}(\cdot)\sim \Per_\tau(E)\,r^{Q-1}$; after volume interpolation we get nested $W_r$ with
$B_{\mathcal S}(W_r)\le C_0\,I^{\mathrm{incr}\,\circ}_{\edge}(r)$ for large $r$.
\end{remark}

\begin{proposition}[Tempered chain from (NNM)]\label{prop:decoupled-TF}
Assume $(\Gamma,\mathcal S)$ admits nested near-minimizers: there exist $C_0<\infty$, $r_0\in\N$, and a nested family $(W_r)_{r\ge r_0}$ with $|W_r|=r$ such that
\[
B_{\mathcal S}(W_r)\ \le\ C_0\, I^{\mathrm{incr}\,\circ}_{\edge}(r)\qquad\forall r\ge r_0.
\]
Then there exists a nested sequence $(F_n)$ and $A=C_0\,\Delta$ such that, for all $n$,
\[
|F_{n+1}\setminus F_n|\ \le\ |\partial_{\mathcal S}F_n|\qquad\text{and}\qquad
|\partial_{\mathcal S}F_n|\ \le\ A\, I^{\mathrm{incr}\,\circ}(|F_n|).
\]
If, moreover, $\Gamma$ is amenable, then $(F_n)$ can be chosen F{\o}lner; consequently
\[
\sup_{r\ge1}\frac{I^\circ(r)}{I^{\mathrm{incr}\,\circ}(r)}\ \le\ C_0\,\Delta\,(1+\Delta).
\]
\end{proposition}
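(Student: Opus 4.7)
\noindent\emph{Proof plan.}
The plan is to take the given nested near-minimizer family $(W_r)_{r\ge r_0}$ and reindex it one integer at a time: set $F_n:=W_{r_0+n-1}$ for $n\ge 1$. Since $|W_r|=r$ and the $W_r$ are nested, $(F_n)$ is automatically nested with $|F_{n+1}\setminus F_n|=1$. The small-step condition (TF)(ii) with $B=1$ is then immediate: any finite nonempty $F_n$ in an infinite Cayley graph satisfies $|\partial_{\mathcal S}F_n|\ge 1$, so $|F_{n+1}\setminus F_n|=1\le|\partial_{\mathcal S}F_n|$.

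Next, I would verify (TF)(i) with $A=C_0\Delta$ by combining the directed-edge NNM hypothesis with the vertex--edge comparison of Lemma~\ref{lem:vertex-edge-profiles}:
\[
|\partial_{\mathcal S}F_n|\ \le\ B_{\mathcal S}(F_n)\ \le\ C_0\,I^{\mathrm{incr}\,\circ}_{\edge}(|F_n|)\ \le\ C_0\,\Delta\,I^{\mathrm{incr}\,\circ}(|F_n|).
\]
This establishes the nested chain with the advertised constants \emph{unconditionally} (no amenability required). To upgrade $(F_n)$ to F{\o}lner under amenability, I would invoke the standard fact that for amenable $(\Gamma,\mathcal S)$ one has $I^{\mathrm{incr}\,\circ}(r)/r\to 0$ as $r\to\infty$; combined with (TF)(i) this gives $|\partial_{\mathcal S}F_n|/|F_n|\to 0$, i.e.\ $(F_n)$ is F{\o}lner.

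With (TF)(i),(ii) both validated along a F{\o}lner sequence, the bounded-ratio conclusion is a direct application of the master implication Theorem~\ref{thm:TF-bound-corrected}: (TF) with constants $(A,B)=(C_0\Delta,1)$ yields
\[
\sup_{r\ge 1}\frac{I^\circ(r)}{I^{\mathrm{incr}\,\circ}(r)}\ \le\ A+\Delta\,A\,B\ =\ C_0\,\Delta\,(1+\Delta),
\]
which is the asserted bound.

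The only step that is not pure bookkeeping is the claim $I^{\mathrm{incr}\,\circ}(r)/r\to 0$ for amenable $\Gamma$; this is where I expect the main (minor) obstacle. A priori a F{\o}lner sequence only furnishes a sparse subsequence of volumes $m_k$ with $I^\circ(m_k)/m_k\to 0$, so the hard part will be to interpolate to all intermediate $r$. My approach would be to replace a given F{\o}lner sequence by a density-enriched one with $m_{k+1}\le 2m_k$ (obtained by padding via Lemma~\ref{lem:padding}, with the $\Delta$-Lipschitz bound of Proposition~\ref{prop:Lip} controlling the boundary growth along the way); then for large $r\in[m_k,m_{k+1}]$ one has $I^{\mathrm{incr}\,\circ}(r)/r\le I^\circ(m_{k+1})/m_k\le 2\,I^\circ(m_{k+1})/m_{k+1}\to 0$. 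Beyond this, all remaining steps are constant-tracking between the vertex/edge and NNM/TF normalizations.
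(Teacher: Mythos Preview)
Your argument is correct and in fact slightly cleaner than the paper's. The paper defines $F_n:=W_{r_n}$ with $r_{n+1}:=r_n+|\partial W_{r_n}|$, so that $|F_{n+1}\setminus F_n|=|\partial F_n|$ exactly; you instead take \emph{every} integer, $F_n:=W_{r_0+n-1}$, so $|F_{n+1}\setminus F_n|=1\le|\partial_{\mathcal S}F_n|$ (using only that the Cayley graph is infinite and connected). Both give TF(ii) with $B=1$, and the TF(i) step and the invocation of Theorem~\ref{thm:TF-bound-corrected} are identical. Your indexing avoids the recursive schedule entirely.

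One small wobble in your amenability sketch: padding a F{\o}lner set $Y$ of size $m$ to size $m'\le 2m$ via Lemma~\ref{lem:padding} gives $|\partial Y'|\le|\partial Y|+\Delta(m'-m)$, so $|\partial Y'|/|Y'|$ is bounded below by roughly $\Delta/2$ and does \emph{not} tend to $0$. The clean fix is subadditivity (Proposition~\ref{prop:subadd-corrected}): given one F{\o}lner set $Y$ with $|\partial Y|/|Y|<\varepsilon$, taking $j$ far-apart translates yields $I^\circ(j|Y|)\le j|\partial Y|<\varepsilon\,j|Y|$ for every $j\ge1$, which gives a dense arithmetic progression of volumes with ratio $<\varepsilon$ and hence $I^{\mathrm{incr}\,\circ}(r)/r<2\varepsilon$ for all $r\ge|Y|$. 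The paper's own proof simply asserts ``$I^{\mathrm{incr}}(r)/r\to0$ (amenability)'' without comment, so you are in any case being more careful than the text.
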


\begin{proof}
Define $F_n:=W_{r_n}$ with $r_{n+1}:=r_n+|\partial W_{r_n}|$; then $|F_{n+1}\setminus F_n|=|\partial F_n|$ gives TF(ii) with $B=1$.
For TF(i), use $|\partial F_n|\le B_{\mathcal S}(F_n)\le C_0\,I^{\mathrm{incr}}_{\edge}(|F_n|)\le C_0\Delta\,I^{\mathrm{incr}}(|F_n|)$.
If $I^{\mathrm{incr}}(r)/r\to 0$ (amenability), we get $|\partial F_n|/|F_n|\to0$.
\end{proof}

\begin{corollary}[F{\o}lner clause under amenability]\label{cor:TF-amenable}
Under the hypotheses of Proposition~\ref{prop:decoupled-TF} and amenability, the nested sequence $(F_n)$ is a F{\o}lner sequence. In particular, {\rm(TF)} holds:
\begin{itemize}
\item in \emph{edge} normalization with $(A_e,B)=(C_0,1)$, and
\item equivalently, in \emph{vertex} normalization with $(A,B)=(C_0\Delta,1)$.
\end{itemize}
\end{corollary}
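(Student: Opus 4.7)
The proof reduces to reading off explicit constants from the construction in Proposition~\ref{prop:decoupled-TF}, so the plan is mostly bookkeeping. First, I would record that in the nested family produced by the proposition, condition (TF)(ii) with $B=1$ is built in by the very definition $r_{n+1}:=r_n+|\partial_{\mathcal S}W_{r_n}|$: then $|F_{n+1}\setminus F_n|=r_{n+1}-r_n=|\partial_{\mathcal S}F_n|$, which yields the constant $B=1$ in vertex normalization, and via $|\partial_{\mathcal S}F_n|\le B_{\mathcal S}(F_n)$ (Lemma~\ref{lem:vertex-edge-profiles}) also in edge normalization. Next, condition (TF)(i$_e$) with $A_e=C_0$ is immediate from the (NNM) hypothesis: $B_{\mathcal S}(F_n)=B_{\mathcal S}(W_{r_n})\le C_0\,I^{\mathrm{incr}\,\circ}_{\edge}(|F_n|)$.

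The second paragraph of the plan is the equivalence of normalizations. Applying the pointwise chain $|\partial_{\mathcal S}F_n|\le B_{\mathcal S}(F_n)\le \Delta\,|\partial_{\mathcal S}F_n|$ together with $I^{\mathrm{incr}\,\circ}_{\edge}\le \Delta\, I^{\mathrm{incr}\,\circ}$ from Lemma~\ref{lem:vertex-edge-profiles} converts the edge-level inequality into
\[
|\partial_{\mathcal S}F_n|\ \le\ B_{\mathcal S}(F_n)\ \le\ C_0\,I^{\mathrm{incr}\,\circ}_{\edge}(|F_n|)\ \le\ C_0\,\Delta\,I^{\mathrm{incr}\,\circ}(|F_n|),
\]
which is (TF)(i) with $A=C_0\Delta$; combined with the already-established $B=1$, this gives the second bullet of the corollary.

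The remaining ingredient is the F{\o}lner property of $(F_n)$, and this is where the only real content lies. The plan is to invoke subadditivity of the exact profile (Proposition~\ref{prop:subadd-corrected}) and Fekete's lemma to conclude that $I^\circ(r)/r$ converges to $\inf_r I^\circ(r)/r$, which by definition is the (vertex) Cheeger constant $h_{\mathrm{Ch}}^\circ(\mathcal S)$; amenability then forces this limit to be $0$. Since $I^{\mathrm{incr}\,\circ}(r)\le I^\circ(r)$, the same is true of $I^{\mathrm{incr}\,\circ}(r)/r$. Finally, because the construction yields $r_{n+1}>r_n$ (in an infinite group every finite set has nonempty boundary, so $|\partial_{\mathcal S}W_{r_n}|\ge 1$), one has $|F_n|\to\infty$, and the estimate of the previous paragraph gives
\[
\frac{|\partial_{\mathcal S}F_n|}{|F_n|}\ \le\ C_0\,\Delta\,\frac{I^{\mathrm{incr}\,\circ}(|F_n|)}{|F_n|}\ \longrightarrow\ 0,
\]
so $(F_n)$ is F{\o}lner. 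The main (and only) non-trivial obstacle is confirming that $I^{\mathrm{incr}\,\circ}(r)/r\to 0$ under amenability; this is precisely where Fekete does the work, and it does not require extracting a subsequence, so the chain $(F_n)$ itself (not a re-chosen subchain) is F{\o}lner. All other steps are mechanical translations between the edge and vertex normalizations.
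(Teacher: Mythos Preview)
Your proposal is correct and follows essentially the same route as the paper: the proof of Proposition~\ref{prop:decoupled-TF} already contains the one-line verification of TF(ii) with $B=1$ and TF(i) with $A=C_0\Delta$ via the chain $|\partial F_n|\le B_{\mathcal S}(F_n)\le C_0\,I^{\mathrm{incr}\,\circ}_{\edge}(|F_n|)\le C_0\Delta\,I^{\mathrm{incr}\,\circ}(|F_n|)$, and then asserts the F{\o}lner clause as ``If $I^{\mathrm{incr}}(r)/r\to 0$ (amenability)''. Your Fekete argument via Proposition~\ref{prop:subadd-corrected} is a clean and explicit justification of that parenthetical, making rigorous what the paper treats as a known equivalence; otherwise the two arguments coincide.
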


\begin{definition}[Confined canonical step]\label{def:confined-step}
Given nested NNM $(W_r)_{r\ge r_0}$ and current $F\subset W_{r'}$ with $r'>|F|$, define
\[
\mathcal T_{r'}(F)\ :=\ \bigl(F\cup\partial_{\mathcal S}F\bigr)\ \cap\ W_{r'}.
\]
\end{definition}

\begin{lemma}[Interleaved smoothing, TF(ii) with $B=1$]\label{lem:interleave-tfii}
Fix $r_0<r_1<r_2<\cdots$ and set
\[
F_{n_0}:=W_{r_0},\qquad
F_{m+1}:=
\begin{cases}
\ \mathcal T_{r_j}(F_m), & \text{if } |F_m|<r_j,\\
\ W_{r_j}, & \text{if } |F_m|=r_j\ \text{(checkpoint)},
\end{cases}
\]
then move to level $r_{j+1}$. The chain $(F_m)$ is nested and satisfies
\[
|F_{m+1}\setminus F_m|\ \le\ |\partial_{\mathcal S}F_m|\qquad\text{(TF(ii) with }B=1).
\]
At checkpoint indices $m$ with $F_m=W_{r_j}$ one has TF(i) with the NNM constant.
\end{lemma}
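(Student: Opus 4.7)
My plan is to verify the three asserted properties—nestedness, TF(ii) with $B=1$, and TF(i) at checkpoint indices—directly from the recursion, invoking the NNM hypothesis (Definition~\ref{def:NNM}) only for the checkpoint clause. The key invariant I will maintain is that every $F_m$ produced before the next checkpoint sits inside the current NNM window $W_{r_j}$; this follows from nestedness of $(W_r)$ together with the confinement $\cap\,W_{r_j}$ built into $\mathcal T_{r_j}$ (Definition~\ref{def:confined-step}), and is preserved by both branches of the recursion (the expansion branch intersects with $W_{r_j}$, and the checkpoint branch sets $F_{m+1}=W_{r_j}\subseteq W_{r_{j+1}}$ by NNM nestedness).

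Given this invariant, nestedness and TF(ii) collapse to short set-theoretic computations. In the expansion branch,
\[
F_{m+1}\setminus F_m\ =\ \bigl[(F_m\cup\partial_{\mathcal S}F_m)\cap W_{r_j}\bigr]\setminus F_m\ =\ \partial_{\mathcal S}F_m\cap W_{r_j},
\]
which is contained in $\partial_{\mathcal S}F_m$ and disjoint from $F_m$; hence $F_m\subseteq F_{m+1}$ and $|F_{m+1}\setminus F_m|\le|\partial_{\mathcal S}F_m|$. In the checkpoint branch, the invariant together with $|F_m|=r_j=|W_{r_j}|$ forces $F_m=W_{r_j}$, so $F_{m+1}=F_m$ and both nestedness and TF(ii) are trivial. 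The TF(i) clause at a checkpoint index $m$ then follows by chaining NNM with the vertex--edge comparison (Lemma~\ref{lem:vertex-edge-profiles}):
\[
|\partial_{\mathcal S}W_{r_j}|\ \le\ B_{\mathcal S}(W_{r_j})\ \le\ C_0\,I^{\mathrm{incr}\,\circ}_{\edge}(r_j)\ \le\ C_0\,\Delta\,I^{\mathrm{incr}\,\circ}(|F_m|),
\]
which is TF(i) with $A=C_0\Delta$ in vertex normalization (or $A_e=C_0$ in the edge version).

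The one non-routine point—and what I expect to be the main obstacle—is ensuring the expansion branch actually reaches the next checkpoint in finitely many steps, so that the set of checkpoint indices is cofinal in $m$ and the interleaving chain is well defined. My plan is to deduce this from $\mathcal S$-connectedness of each $W_{r_j}$: whenever $F_m\subsetneq W_{r_j}$ with $F_m\subseteq W_{r_j}$, connectedness provides an $\mathcal S$-edge from $F_m$ to $W_{r_j}\setminus F_m$, so $\partial_{\mathcal S}F_m\cap W_{r_j}\neq\varnothing$, forcing strict growth $|F_{m+1}|>|F_m|$; finiteness of $W_{r_j}$ then yields termination at $F_m=W_{r_j}$ in at most $r_j-|F_{n_0}|$ steps. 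Connectedness is automatic in every NNM family produced elsewhere in the paper—the Wulff samplers on $\Z^d$ and Carnot lattices from Theorems~\ref{thm:WulffAbelian}--\ref{thm:WulffCarnot}, the layer-nested sets in $\Z^d\rtimes_A\Z$ from Theorem~\ref{thm:G-final}, and the base$+$ring expansions for lamplighters in Theorem~\ref{thm:F-final}—so the lemma as stated needs no extra hypothesis; if desired, one may record ``$W_r$ is $\mathcal S$-connected'' as a standing condition at the NNM level.
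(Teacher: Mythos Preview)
Your argument is correct and follows the same approach as the paper's proof, which is a terse three-liner: nestedness from $F\subset F\cup\partial F$ and $F\subset W_{r'}$; the increment bound because intersecting with $W_{r'}$ only shrinks $F_{m+1}\setminus F_m\subset\partial_{\mathcal S}F_m$; and TF(i) at checkpoints directly from NNM. Your treatment is more explicit---you spell out the invariant $F_m\subseteq W_{r_j}$, the set identity $F_{m+1}\setminus F_m=\partial_{\mathcal S}F_m\cap W_{r_j}$, and the conversion $A=C_0\Delta$ via Lemma~\ref{lem:vertex-edge-profiles}---but the logic is identical.

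Your discussion of termination (reaching the checkpoint $|F_m|=r_j$ in finitely many steps) is a genuine point the paper glosses over. You are right that without $\mathcal S$-connectedness of $W_{r_j}$ the confined step can stall at a proper union of components; the paper's proof simply does not address this, presumably because all the NNM families it actually uses (Wulff samplers, layer-nested sets, lamplighter expansions) are connected by construction. Recording connectedness as a standing hypothesis at the NNM level, as you suggest, is the clean fix.
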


\begin{proof}
Nestedness: $F\subset F\cup\partial F$ and $F\subset W_{r'}$ imply $F\subset \mathcal T_{r'}(F)$.
Intersecting $F\cup\partial F$ by $W_{r'}$ can only reduce the increment, so
$|F_{m+1}\setminus F_m|\le |\partial F_m|$. At checkpoints TF(i) holds by NNM.
\end{proof}

\newcommand{\Def}{\mathrm{def}}

\begin{lemma}[Right-Lipschitz for $I^{\mathrm{incr}\,\circ}$]\label{lem:incr-Lip-right}
For all integers $s\ge r\ge1$,
\[
0\ \le\ I^{\mathrm{incr}\,\circ}(s)-I^{\mathrm{incr}\,\circ}(r)\ \le\ \Delta\,(s-r).
\]
\end{lemma}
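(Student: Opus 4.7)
The plan is to derive both inequalities directly from the definition $I^{\mathrm{incr}\,\circ}(r):=\inf_{t\ge r}I^\circ(t)$ together with the one-step Lipschitz bound $|I^\circ(r{+}1)-I^\circ(r)|\le\Delta$ from Proposition~\ref{prop:Lip}. No new combinatorics is needed; the statement is essentially a general fact about the nondecreasing envelope of a $\Delta$-Lipschitz integer-valued function on $\N$.

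First I would dispose of the easy direction. Since $s\ge r$, we have $\{t\in\N:\ t\ge s\}\subset\{t\in\N:\ t\ge r\}$, so taking the infimum over the smaller set can only be larger: $I^{\mathrm{incr}\,\circ}(s)\ge I^{\mathrm{incr}\,\circ}(r)$. This gives the left inequality.

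For the right inequality, note that $I^\circ(\cdot)$ takes values in $\N\cup\{0\}$, so the set $\{I^\circ(t):\ t\ge r\}$ is a nonempty subset of $\N\cup\{0\}$ and its infimum is \emph{attained}: pick $t^\ast\ge r$ with $I^\circ(t^\ast)=I^{\mathrm{incr}\,\circ}(r)$. Split into two cases. If $t^\ast\ge s$, then $t^\ast$ is admissible in the infimum defining $I^{\mathrm{incr}\,\circ}(s)$, so $I^{\mathrm{incr}\,\circ}(s)\le I^\circ(t^\ast)=I^{\mathrm{incr}\,\circ}(r)\le I^{\mathrm{incr}\,\circ}(r)+\Delta(s-r)$. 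If $t^\ast<s$, iterate the forward Lipschitz estimate from Proposition~\ref{prop:Lip} across the $s-t^\ast$ unit steps from $t^\ast$ to $s$ to obtain $I^\circ(s)\le I^\circ(t^\ast)+\Delta(s-t^\ast)\le I^{\mathrm{incr}\,\circ}(r)+\Delta(s-r)$, where the last inequality uses $t^\ast\ge r$. Since $s$ is admissible in its own infimum, $I^{\mathrm{incr}\,\circ}(s)\le I^\circ(s)$, and the desired bound follows.

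There is no real obstacle; the only point worth flagging is that one uses the \emph{forward} direction of Proposition~\ref{prop:Lip} (i.e.\ $I^\circ(r{+}1)\le I^\circ(r)+\Delta$, the easy ``padding'' direction), not the harder trimming direction, and that the attainment of the infimum is free because $I^\circ$ is $\N$-valued. In particular, no amenability, nonamenability, or Wulff input is invoked, so the lemma is valid on every Cayley graph with our standing conventions.
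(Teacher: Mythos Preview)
Your proof is correct and takes essentially the same approach as the paper: both deduce the upper bound directly from the forward Lipschitz step $I^\circ(r{+}1)\le I^\circ(r)+\Delta$ of Proposition~\ref{prop:Lip}, and monotonicity is immediate from the nested infima. The only cosmetic difference is that the paper applies a uniform shift $u\mapsto u+(s-r)$ (so for every $u\ge r$ one has $I^{\mathrm{incr}\,\circ}(s)\le I^\circ(u+(s-r))\le I^\circ(u)+\Delta(s-r)$, then takes the infimum over $u$), which avoids both the attainment argument and the case split; your version picks an actual minimizer $t^\ast$ and branches on whether $t^\ast\ge s$.
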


\begin{proof}
Let $s\ge r$. For any $u\ge r$, set $t:=u+(s-r)\ge s$. By Proposition~\ref{prop:Lip},
$I^\circ(t)\le I^\circ(u)+\Delta(s-r)$. Taking infima gives
$I^{\mathrm{incr}\,\circ}(s)\le I^{\mathrm{incr}\,\circ}(r)+\Delta(s-r)$. Monotonicity is clear.
\end{proof}

\begin{definition}[Budgeted levels]\label{def:budget-levels}
Fix $\vartheta\in(0,1]$. Define a level schedule by
$r_{j+1}:=r_j+\lfloor \vartheta\,I^{\mathrm{incr}\,\circ}(r_j)\rfloor$.
On level $j$, evolve by the confined step
$F\mapsto \mathcal T_{r_{j+1}}(F)$ until $F=W_{r_{j+1}}$, then move to level $j+1$.
\end{definition}

\begin{proposition}[TF(i) at every step under scheduled interleaving]\label{prop:tfi-every-step}
Assume $(W_r)$ are nested near-minimizers in edge normalization with
$B_{\mathcal S}(W_r)\le C_0\,I^{\mathrm{incr}\,\circ}_{\edge}(r)$.
Run the budgeted-level scheme of Definition~\ref{def:budget-levels}.
Then the resulting nested chain $(F_m)$ satisfies TF(ii) with $B=1$, and for every $m$,
\[
|\partial_{\mathcal S}F_m|\ \le\ \big(C_0\Delta+(C_0\Delta^2+1)\vartheta\big)\,I^{\mathrm{incr}\,\circ}(|F_m|).
\]
\end{proposition}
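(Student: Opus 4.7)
The plan is to prove the two claims separately and obtain the stated bound through a single sandwich argument that exploits the nestedness of $(W_r)$.

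First, TF(ii) with $B=1$ is immediate: the confined step $F\mapsto\mathcal T_{r_{j+1}}(F)=(F\cup\partial_{\mathcal S}F)\cap W_{r_{j+1}}$ adds only boundary vertices of $F$, so $|F_{m+1}\setminus F_m|\le|\partial_{\mathcal S}F_m|$, and at checkpoints the replacement $F_m\mapsto W_{r_j}$ is vacuous (by induction the chain already equals $W_{r_j}$ there, since the confined expansion saturates $W_{r_{j+1}}$). Nestedness is preserved throughout by NNM together with the confined-step definition. This reproduces exactly Lemma~\ref{lem:interleave-tfii}.

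The substantive work is the uniform TF(i) bound at every intermediate step. Here the key observation is the sandwich $W_{r_j}\subseteq F_m\subseteq W_{r_{j+1}}$, valid throughout level~$j$. From this inclusion I would derive the boundary decomposition
\[
\partial_{\mathcal S}F_m\ \subseteq\ \bigl(W_{r_{j+1}}\setminus F_m\bigr)\ \cup\ \partial_{\mathcal S}W_{r_{j+1}},
\]
since any $x\in\partial_{\mathcal S}F_m$ lies outside $F_m$ and is adjacent to $F_m\subseteq W_{r_{j+1}}$: either $x\in W_{r_{j+1}}\setminus F_m$, or $x\notin W_{r_{j+1}}$ and then $x\in\partial_{\mathcal S}W_{r_{j+1}}$. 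The first term is bounded by $r_{j+1}-|F_m|\le r_{j+1}-r_j\le \vartheta\,I^{\mathrm{incr}\,\circ}(r_j)$ by the level schedule, and the second by the NNM hypothesis combined with Lemma~\ref{lem:vertex-edge-profiles}:
\[
|\partial_{\mathcal S}W_{r_{j+1}}|\ \le\ B_{\mathcal S}(W_{r_{j+1}})\ \le\ C_0\,I^{\mathrm{incr}\,\circ}_{\edge}(r_{j+1})\ \le\ C_0\Delta\,I^{\mathrm{incr}\,\circ}(r_{j+1}).
\]

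Next I would transfer both factors to the argument $|F_m|$ using that $r_j\le|F_m|\le r_{j+1}$ and Lemma~\ref{lem:incr-Lip-right}. Monotonicity yields $I^{\mathrm{incr}\,\circ}(r_j)\le I^{\mathrm{incr}\,\circ}(|F_m|)$, while the right-Lipschitz estimate gives
\[
I^{\mathrm{incr}\,\circ}(r_{j+1})\ \le\ I^{\mathrm{incr}\,\circ}(r_j)+\Delta(r_{j+1}-r_j)\ \le\ (1+\Delta\vartheta)\,I^{\mathrm{incr}\,\circ}(r_j)\ \le\ (1+\Delta\vartheta)\,I^{\mathrm{incr}\,\circ}(|F_m|).
\]
Collecting the two contributions,
\[
|\partial_{\mathcal S}F_m|\ \le\ \bigl(\vartheta\,+\,C_0\Delta(1+\Delta\vartheta)\bigr)\,I^{\mathrm{incr}\,\circ}(|F_m|)\ =\ \bigl(C_0\Delta+(C_0\Delta^2+1)\vartheta\bigr)\,I^{\mathrm{incr}\,\circ}(|F_m|),
\]
which is exactly the asserted bound.

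The main obstacle is a minor but genuine bookkeeping concern rather than a conceptual one: ensuring that the confined expansion actually saturates $W_{r_{j+1}}$ so that the sandwich is maintained at every level transition. This requires that within $W_{r_{j+1}}$ the set $F_m$ always has a nonempty internal boundary $\partial_{\mathcal S}F_m\cap W_{r_{j+1}}$ whenever $F_m\subsetneq W_{r_{j+1}}$; this is a mild connectivity-in-scale property of the NNM family, which holds automatically for the concrete Wulff/sampler families invoked later (cf.\ Remark~\ref{rem:NNM-scope}) and can be enforced by a harmless one-time modification of $(W_r)$ if needed. A secondary bookkeeping point is the starting regime $r<r_0$: it is absorbed into the constant by treating finitely many initial indices separately, since $I^{\mathrm{incr}\,\circ}$ is bounded below on any finite window.
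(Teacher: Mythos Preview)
Your proof is correct and follows essentially the same route as the paper. The paper invokes ``tail trimming'' to obtain $|\partial F|\le |\partial W_{r'}|+\Def(F;r')$ (with $\Def(F;r')=r'-|F|$), while you derive the same inequality via the set inclusion $\partial_{\mathcal S}F_m\subseteq (W_{r_{j+1}}\setminus F_m)\cup\partial_{\mathcal S}W_{r_{j+1}}$; after that, both proofs chain the NNM bound, the right-Lipschitz lemma, and the schedule inequality $\Def\le\vartheta\,I^{\mathrm{incr}\,\circ}(|F_m|)$ in the same way to reach the identical constant.
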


\begin{proof}
TF(ii) with $B=1$ is Lemma~\ref{lem:interleave-tfii}.
Fix a step with $F\subset W_{r'}$ on level $r'$. Tail trimming yields
$|\partial F|\le |\partial W_{r'}|+\Def(F;r')$, and
$|\partial W_{r'}|\le C_0\Delta\,I^{\mathrm{incr}}(r')$.
By Lemma~\ref{lem:incr-Lip-right},
$I^{\mathrm{incr}}(r')\le I^{\mathrm{incr}}(|F|)+\Delta\,\Def(F;r')$.
By construction of the schedule,
$\Def(F;r')\le \vartheta\,I^{\mathrm{incr}}(|F|)$.
Combine these to get the displayed bound.
\end{proof}

\begin{theorem}[Interleaving in $\Z^d$ and Carnot lattices]\label{thm:interleave-cases}
Let $(\Gamma,\mathcal S)$ be (i) $\Z^d$ with an axis stencil, or (ii) a uniform lattice in a Carnot
group of homogeneous dimension $Q\ge2$. There exist nested Wulff samplers $(W_r)_{r\gg1}$ and
constants $C_0,A,B<\infty$ such that the interleaved chain of
Lemma~\ref{lem:interleave-tfii} satisfies, for all large steps,
\[
|F_{m+1}\setminus F_m|\ \le\ B\,|\partial_{\mathcal S}F_m|\qquad\text{(here $B=1$),}\qquad
|\partial_{\mathcal S}F_m|\ \le\ A\ I^{\mathrm{incr}\,\circ}(|F_m|)\ \text{for every $m$.}
\]
One may take $A=C_0\Delta+(C_0\Delta^2+1)\,\vartheta$ with $C_0$ the NNM constant and any fixed $\vartheta\in(0,1]$.
\end{theorem}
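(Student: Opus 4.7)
\noindent\emph{Proof proposal.} The plan is to reduce \cref{thm:interleave-cases} to verifying the (NNM) property of Definition~\ref{def:NNM} in the directed-edge normalization with an explicit constant $C_0=C_0(\Gamma,\mathcal S)$. Once (NNM) is in hand, the budgeted level schedule of Definition~\ref{def:budget-levels} combined with Lemma~\ref{lem:interleave-tfii} and Proposition~\ref{prop:tfi-every-step} immediately produces the interleaved chain $(F_m)$: nestedness and $|F_{m+1}\setminus F_m|\le|\partial_{\mathcal S}F_m|$ (i.e.\ $B=1$) come from Lemma~\ref{lem:interleave-tfii}, while the displayed bound $|\partial_{\mathcal S}F_m|\le\big(C_0\Delta+(C_0\Delta^2+1)\vartheta\big)\,I^{\mathrm{incr}\,\circ}(|F_m|)$ at every step is exactly the conclusion of Proposition~\ref{prop:tfi-every-step} (where the $\Delta$-factors come from converting edge to vertex normalization via Lemma~\ref{lem:vertex-edge-profiles}). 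All the substance therefore lies in producing, in each case, a nested family $(W_r)_{r\ge r_0}$ with $|W_r|=r$ and $B_{\mathcal S}(W_r)\le C_0\,I^{\mathrm{incr}\,\circ}_{\edge}(r)$.

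For case (i), fix the axis Wulff body $W:=[-1,1]^d$ and the dilated cubes $Q_\rho:=\rho W\cap\Z^d$. These are nested in $\rho$, with $|Q_\rho|\asymp\rho^d$ and $B_{\mathcal S}(Q_\rho)=(1+o(1))\,2d\,\rho^{d-1}$ by ~\cref{thm:A}(i). To hit every integer volume $r$ I would fix a \emph{shelling order} on each annulus $Q_{\rho+1}\setminus Q_\rho$: enumerate its $(d-1)$-facets one at a time and within each facet proceed lexicographically. For $|Q_\rho|\le r\le|Q_{\rho+1}|$ set $W_r$ to be $Q_\rho$ together with the first $r-|Q_\rho|$ elements of the shelling. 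The exact-set version of Proposition~\ref{prop:Lip} changes $B_{\mathcal S}$ by at most $\Delta$ per addition, and the facet-by-facet order keeps the running perimeter excess over $B_{\mathcal S}(Q_\rho)$ within $O(\rho^{d-1})$. Paired with the lower bound $I^{\mathrm{incr}\,\circ}_{\edge}(r)\ge c_\star r^{(d-1)/d}$ from Theorem~\ref{thm:WulffAbelian} together with \eqref{eq:wulff-2sided-large} and Lemma~\ref{lem:vertex-edge-profiles}, this yields (NNM) with an explicit $C_0=C_0(d)$.

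For case (ii), Theorem~\ref{thm:WulffCarnot} furnishes phase-shifted samplers $A_r^{(t_r)}$ realizing the sharp continuum constant, but the phases $t_r$ depend on $r$ and this obstructs a direct nesting. I would instead fix a single \emph{$\delta$-star-shaped} continuum body $E\subset G$ (for instance the sublevel set $\{N\le 1\}$ of a homogeneous Carnot gauge $N$), calibrated so that the continuum ratio $\Per_\varphi(E)/\Volc(E)^{(\Qdim-1)/\Qdim}$ lies within a factor $1+\eta$ of $\cWulff$, and use the discrete stacks $V_\rho:=\Gamma\cap\delta_\rho(E)$, which are nested by dilation-monotonicity of $E$. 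Between consecutive scales, interpolate by a shelling order on the Carnot annulus $\delta_{\rho+1}(E)\setminus\delta_\rho(E)$; Lemma~\ref{lem:tube} and $\mu(\delta_\rho E)=\rho^{\Qdim}$ give $|V_{\rho+1}\setminus V_\rho|\asymp \rho^{\Qdim-1}$, so per-vertex Lipschitz control confines the running perimeter to $O(\rho^{\Qdim-1})$. Combined with $B_{\mathcal S}(V_\rho)\le(1+o(1))(1+\eta)\cWulff\,\rho^{\Qdim-1}$ (which follows from the fixed-phase version of the rescaling identity of Lemma~\ref{lem:rescale-id} together with Proposition~\ref{prop:verif_CI}(i) applied to $E$, the phase error being absorbed into $\eta$) and the lower bound $I^{\mathrm{incr}\,\circ}_{\edge}(r)\ge c_\star r^{(\Qdim-1)/\Qdim}$, this yields (NNM) with an explicit $C_0$.

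The main obstacle I expect is the Carnot interpolation in case (ii): the sharp phase-averaged construction of Theorem~\ref{thm:WulffCarnot} uses phases $t_r$ that vary with scale, so enforcing a single phase either (a) costs a multiplicative $(1+\eta)$ in the Wulff constant, which is harmless for (NNM) since any finite $C_0$ suffices, or (b) requires extracting a cofinal subsequence of scales along which the phases converge. Option (a) is more economical but requires checking that gauge-ball sublevel sets attain a continuum Wulff ratio arbitrarily close to $\cWulff$; this reduces to density of dilation-monotone bodies in the relevant BV class, a routine approximation that I would make quantitative by choosing $E$ as a Minkowski-rounded Wulff minimizer. The remaining pieces---the shelling orders on annuli, the per-addition Lipschitz estimate from Proposition~\ref{prop:Lip}, and the vertex/edge conversion via Lemma~\ref{lem:vertex-edge-profiles}---are routine once the nested monotone template $(V_\rho)$ is fixed.
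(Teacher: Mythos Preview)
Your approach is essentially the same as the paper's: reduce to (NNM) via Remark~\ref{rem:NNM-scope}, then invoke Lemma~\ref{lem:interleave-tfii} and Proposition~\ref{prop:tfi-every-step}. The paper's own proof is a two-line ``Proof idea'' that defers the construction of nested Wulff near-minimizers entirely to that remark, so your proposal is considerably more explicit and your identification of the phase-variation obstacle in case~(ii) is a genuine observation the paper glosses over.

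One small correction: in case~(ii) you cite Proposition~\ref{prop:verif_CI}(i) for the \emph{upper} bound $B_{\mathcal S}(V_\rho)\lesssim\rho^{Q-1}$, but that proposition gives only the \emph{liminf lower} bound $\liminf\Per_{\Gen,a}(A_r)/r^{Q-1}\ge\Per_\varphi(E)$. The upper bound you need is cruder and follows directly from the tube estimate (Lemma~\ref{lem:tube}): the boundary vertices of $V_\rho$ lie in an $O(1)$-tube about $\partial\delta_\rho(E)$, which has Haar measure $O(\rho^{Q-1})$, hence contains $O(\rho^{Q-1})$ lattice points. Since any finite $C_0$ suffices for (NNM), you do not need $E$ anywhere near the Wulff minimizer---a fixed gauge ball works immediately---so the approximation discussion in your final paragraph is unnecessary.
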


\begin{proof}[Proof idea]
Nested Wulff near-minimizers with tracked constants are available in these settings, and
$I^\circ(r)\asymp r^{(Q-1)/Q}$, so $I^{\mathrm{incr}}$ has the same order.
Apply Lemma~\ref{lem:interleave-tfii} and Proposition~\ref{prop:tfi-every-step}.
\end{proof}

\begin{theorem}[(TF) $\Rightarrow$ bounded ratio]\label{thm:TF-bound-corrected}
If $(\Gamma,\mathcal S)$ satisfies (TF), then
$
\sup_{r\ge1} I^\circ(r)/I^{\mathrm{incr}\,\circ}(r)\ \le\ A+\Delta AB.
$
\end{theorem}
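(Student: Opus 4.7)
The plan is to construct, for each $r\ge 1$, an explicit set of size exactly $r$ whose boundary is bounded by $(A+\Delta AB)\,I^{\mathrm{incr}\,\circ}(r)$, by \emph{padding} an appropriately chosen member of the tempered F{\o}lner chain supplied by (TF). Given (TF) with constants $A,B$ and F{\o}lner sequence $(F_n)$, I first pick the largest index $m=m(r)$ with $|F_m|\le r$ (the edge case $r<|F_1|$ will be dealt with at the end), set $k:=r-|F_m|\in\{0,1,\dots\}$, and form a set $Z\supset F_m$ of size exactly $r$ by adjoining any $k$ extra vertices.

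The three key estimates then chain together cleanly. By the padding bound (Lemma~\ref{lem:padding}),
\[
|\partial_{\mathcal S}Z|\ \le\ |\partial_{\mathcal S}F_m|+k\Delta.
\]
By the maximality of $m$ we have $r\le |F_{m+1}|\le |F_m|+|F_{m+1}\setminus F_m|$ (valid with or without strict nestedness), and then (TF)(ii) yields $k\le |F_{m+1}\setminus F_m|\le B\,|\partial_{\mathcal S}F_m|$. Finally, since $|F_m|\le r$ and $I^{\mathrm{incr}\,\circ}$ is nondecreasing, (TF)(i) gives
\[
|\partial_{\mathcal S}F_m|\ \le\ A\,I^{\mathrm{incr}\,\circ}(|F_m|)\ \le\ A\,I^{\mathrm{incr}\,\circ}(r).
\]
Combining these bounds,
\[
I^\circ(r)\ \le\ |\partial_{\mathcal S}Z|\ \le\ A\,I^{\mathrm{incr}\,\circ}(r)\ +\ \Delta\cdot B\cdot A\,I^{\mathrm{incr}\,\circ}(r)\ =\ (A+\Delta AB)\,I^{\mathrm{incr}\,\circ}(r),
\]
which is the desired inequality for all $r\ge |F_1|$.

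The remaining step is to absorb the finite initial window $1\le r<|F_1|$. There the crude bounds $I^\circ(r)\le \Delta r\le \Delta|F_1|$ and $I^{\mathrm{incr}\,\circ}(r)\ge 1$ (every nonempty finite set in an infinite Cayley graph has nonempty boundary) give a uniform bound that can be absorbed into the constants; equivalently, one may prepend $F_0:=\{e\}$ to the chain and verify that the (TF) constants are unaffected up to a quantity depending only on the data of the first few terms. The main obstacle, such as it is, is purely bookkeeping: ensuring that the choice of $m(r)$ is well-defined for every $r$ and that the increment inequality in (TF)(ii) is usable even without strict nestedness. All three structural ingredients (padding, (TF)(ii), monotonicity of the minorant) are already in hand, so I expect no genuine technical difficulty beyond verifying the initial-window normalization.
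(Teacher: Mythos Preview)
Your proof is correct and essentially identical to the paper's: both pick the F{\o}lner set just below size $r$ (you via the largest $m$ with $|F_m|\le r$, the paper via the minimal $n$ with $|F_n|\ge r$ and then $F_{n-1}$), pad to size $r$ using Lemma~\ref{lem:padding}, bound the padding amount by (TF)(ii), and bound $|\partial_{\mathcal S}F_m|$ by (TF)(i) combined with monotonicity of $I^{\mathrm{incr}\,\circ}$. Your handling of the initial window $r<|F_1|$ is slightly more explicit than the paper's (which tacitly uses $F_{n-1}$ at $n=1$), but this is indeed pure bookkeeping.
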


\begin{proof}
Fix $r$ and choose $n$ minimal such that $|F_n|\ge r$.
Then $|F_{n-1}|<r\le |F_n|$.
By (i) and monotonicity,
$
|\partial_{\mathcal S}(F_{n-1})|\le A\, I^{\mathrm{incr}\,\circ}(r).
$
By (ii),
$
r-|F_{n-1}|\le |F_n|-|F_{n-1}|\le B\,|\partial_{\mathcal S}(F_{n-1})|.
$
Apply Lemma~\ref{lem:padding} to obtain $Z\supset F_{n-1}$ with $|Z|=r$ and
\[
|\partial_{\mathcal S}Z|\ \le\ |\partial_{\mathcal S}F_{n-1}|+\Delta\,(r-|F_{n-1}|)
\ \le\ (1+\Delta B)\,|\partial_{\mathcal S}F_{n-1}|
\ \le\ (A+\Delta AB)\,I^{\mathrm{incr}\,\circ}(r).
\]
Taking the infimum over $|Z|=r$ completes the proof.
\end{proof}

\begin{proposition}[Examples of (TF)]\label{prop:TF-examples-corrected}
{\rm(TF)} holds for:
\begin{enumerate}
\item[\emph{(a)}] Virtually nilpotent groups (use Wulff-type approximants and nested dilations).
\item[\emph{(b)}] $\mathbb Z^d$ with nearest-neighbor generators (concentric boxes or discrete balls).
\item[\emph{(c)}] Finite direct products of groups satisfying (TF) (with product generators).
\end{enumerate}
\end{proposition}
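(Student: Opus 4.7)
The plan is to handle (a) and (b) via the Wulff/interleaving machinery already assembled in \S\ref{sec:sharp-wulff} and \S\ref{sec:bounded-ratio}, and to treat (c) by a direct-product construction; cases (a) and (b) are in fact instances of a single construction at different levels of concreteness.

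For (b), on $\Z^d$ with the axis stencil $\mathcal S=\{\pm e_1,\dots,\pm e_d\}$, I would take the concentric boxes $W_R:=\{0,1,\dots,R-1\}^d$, which are manifestly nested and satisfy $|\partial_{\mathcal S}W_R|\asymp R^{d-1}\asymp |W_R|^{(d-1)/d}$, matching the sharp Wulff rate of~\Cref{thm:A}(i). To hit every integer volume, I would interpolate between $W_R$ and $W_{R+1}$ by appending vertices of $\partial_{\mathcal S}W_R$ one by one (Lemma~\ref{lem:padding}); nesting is preserved and the boundary grows by at most $\Delta=2d$ per step. The resulting chain $(F_n)$ satisfies \textup{(TF)(ii)} with $B=1$ by construction, and \textup{(TF)(i)} then follows from $|\partial_{\mathcal S}F_n|\lesssim|F_n|^{(d-1)/d}$ together with the matching lower bound $I^{\mathrm{incr}\,\circ}(r)\gtrsim r^{(d-1)/d}$ inherited from the liminf half of~\Cref{thm:A}(i).

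For (a), the plan is to upgrade this to general virtually nilpotent $\Gamma$ of homogeneous dimension $Q$ by replacing axis boxes with lattice Wulff approximants. Fixing a continuum Wulff body $E\subset G$ for the anisotropy $\tau_S$, the samplers $Y_\rho:=\Gamma\cap\delta_\rho(E)$ are asymptotically sharp by Proposition~\ref{prop:verif_CI}(ii). To produce a genuinely nested chain $(W_r)_{r\ge r_0}$ with $|W_r|=r$ and $B_{\mathcal S}(W_r)\le C_0\,I^{\mathrm{incr}\,\circ}_{\edge}(r)$, I would interleave consecutive samplers via the confined canonical step $\mathcal T_{|Y_{\rho'}|}$ of Definition~\ref{def:confined-step}: this grows a set inside the next sampler one vertex at a time while always adding from the current boundary, and by Lemma~\ref{lem:interleave-tfii} produces a nested family satisfying $|F_{m+1}\setminus F_m|\le|\partial_{\mathcal S}F_m|$ (hence \textup{(TF)(ii)} with $B=1$) and matching the near-minimizer bound at the checkpoints $Y_\rho$. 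This yields \textup{(NNM)} in the sense of Definition~\ref{def:NNM}; amenability of virtually nilpotent groups then lets me invoke Proposition~\ref{prop:decoupled-TF}/Corollary~\ref{cor:TF-amenable} to obtain full \textup{(TF)}. For \textup{(TF)(i)} at \emph{every} step (not only checkpoints), I would deploy the budgeted-level scheme of Definition~\ref{def:budget-levels}, whose Proposition~\ref{prop:tfi-every-step} supplies explicit constants $(A,B)=\big(C_0\Delta+(C_0\Delta^2+1)\vartheta,\,1\big)$.

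For (c), by induction on the number of factors it suffices to prove closure under two-factor products. With $\Gamma=\Gamma_1\times\Gamma_2$ and $\mathcal S=(\mathcal S_1\times\{e\})\cup(\{e\}\times\mathcal S_2)$ the Cayley graph is a Cartesian product, so for product sets one has the \emph{exact} identity
\[
\partial_{\mathcal S}(A\times B)\ =\ (\partial_{\mathcal S_1}A\times B)\ \sqcup\ (A\times\partial_{\mathcal S_2}B),
\]
the two pieces being disjoint because they differ in which coordinate lies outside. Taking $(F_n^{(i)})$ to be nested \textup{(TF)} chains for $\Gamma_i$ with constants $(A_i,B_i)$, the product chain $F_n:=F_n^{(1)}\times F_n^{(2)}$ is nested, and the volume increment splits as
\[
|F_{n+1}\setminus F_n|\ =\ |F_n^{(1)}|\,|F_{n+1}^{(2)}\setminus F_n^{(2)}|\ +\ |F_{n+1}^{(1)}\setminus F_n^{(1)}|\,|F_{n+1}^{(2)}|;
\]
substituting the factor \textup{(TF)(ii)} bounds yields \textup{(TF)(ii)} on $\Gamma$ with $B\le B_1+B_2+B_1B_2$.

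The main obstacle is \textup{(TF)(i)} on the product, since the increasing minorant $I^{\mathrm{incr}\,\circ}_{\Gamma}$ does not factor cleanly in terms of the factor minorants. I see two complementary routes. The first is a direct slicing argument: any finite $Y\subset\Gamma_1\times\Gamma_2$ of size $r$ can be bounded below by applying the factor isoperimetric inequalities slice-by-slice over fiber occupancies and then optimizing (as in Propositions~\ref{prop:slice-lower} and~\ref{prop:finite-kernel-slicing}), producing a lower bound on $I^{\mathrm{incr}\,\circ}_{\Gamma}(r)$ of the form $\min_{r_1 r_2\ge r}\big(I^{\mathrm{incr}\,\circ}_{\Gamma_1}(r_1)\,r_2+r_1\,I^{\mathrm{incr}\,\circ}_{\Gamma_2}(r_2)\big)$ up to a universal constant; the synchronized product chain realizes this minimum up to constants by appropriate choice of $(|F_n^{(1)}|,|F_n^{(2)}|)$. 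The second, cleaner route is to combine \textup{(TF)} in each factor with the \textup{(NNM)} supplied by cases (a) and (b): applying~\Cref{thm:E-final}(ii) gives \textup{(TF)(ii)} unconditionally on the product and \textup{(TF)(i$_e$)} along a cofinal subsequence, then Lemma~\ref{lem:checkpoint-to-nnm} upgrades this to \textup{(NNM)} (cofinal) on the product, and Proposition~\ref{prop:decoupled-TF} closes the loop. This second route dovetails with the standing use of \textup{(NNM)} throughout \S\ref{sec:bounded-ratio} and avoids any delicate tensor-profile estimate.
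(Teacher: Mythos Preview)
Your proposal is correct and coincides with the paper's (implicit) approach: the proposition is stated without a standalone proof, with the parenthetical hints pointing to exactly the Wulff-sampler/interleaving machinery (Theorem~\ref{thm:interleave-cases}, Proposition~\ref{prop:decoupled-TF}, Proposition~\ref{prop:tfi-every-step}) for (a) and (b), and the product construction (Theorem~\ref{thm:TF-product}, Corollary~\ref{cor:TF-product-full}) for (c). Your second route for (c)---via NNM in the factors and checkpoint tensoring---is precisely the paper's, and you correctly observe that this route needs NNM (not merely \textup{(TF)}) in each factor, so (c) as literally stated is established only when the factors come equipped with nested near-minimizers (as they do in cases (a) and (b)).
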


\begin{proof}[Proof of   \cref{thm:D-final}]
Items \textup{(1)}-\textup{(3)} follow directly from Proposition~\ref{prop:ratio-basic}.
For the Tempered F{\o}lner (TF) clause, assume $(\Gamma,\mathcal S)$ satisfies \textup{(TF)} with constants $(A,B)$.
The padding lemma with TF(ii) gives a set $Z$ of cardinality $r$
whose boundary is at most $(1+\Delta B)|\partial_{\mathcal S}F_{n-1}|$, where $n$ is minimal with $|F_n|\ge r$.
By \textup{(TF)(i)} and monotonicity we then obtain
\[
I^\circ(r)\ \le\ |\partial_{\mathcal S}Z|\ \le\ (1+\Delta B)\,|\partial_{\mathcal S}F_{n-1}|
\ \le\ (1+\Delta B)\,A\, I^{\mathrm{incr}\,\circ}(r),
\]
which is Theorem~\ref{thm:TF-bound-corrected}.
\end{proof}

\subsection{Lamplighters: (TF) via explicit expansion and known isoperimetry}

Let $H$ be a finite nontrivial group and $(\Gamma,\mathcal S_\Gamma)$ a finitely generated \emph{amenable} group.
Consider the lamplighter $G:=H^{(\Gamma)}\rtimes \Gamma$ with the generating set
\[
\mathcal S=\mathcal S_\Gamma\ \cup\ \mathsf T,
\]
where each $\alpha\in\mathsf T$ toggles the lamp at the \emph{current} base position and $\mathcal S_\Gamma$ acts by left translation on the base. Let $t:=|\mathsf T|$ and $q:=|H|-1$.
We use the directed boundary $B_{\mathcal S}(\cdot)$ for exact counts and recall the pointwise comparison $|\partial_{\mathcal S}Y|\le B_{\mathcal S}(Y)\le \Delta\,|\partial_{\mathcal S}Y|$ (Lemma~\ref{lem:vertex-edge-profiles}).

\begin{lemma}[Exact edge split in one block]\label{lem:split-exact-proof}
Let $U\subset\Gamma$ be finite of size $n:=|U|$, and $0\le M\le n$.
Define
\[
F(U,M):=\Big\{(f,x)\in H^{(\Gamma)}\times \Gamma:\ x\in U,\ \supp(f)\subset U,\ \|f\|_0\le M\Big\},\qquad
\Sigma(n,M):=\sum_{j=0}^{M}\binom{n}{j}q^{j}.
\]
Let
\[
E_\to(U):=\sum_{s\in\mathcal S_\Gamma}\#\{x\in U:\ xs\notin U\}.
\]
Then
\begin{equation}\label{eq:split-exact-corrected}
 B_{\mathcal S_\Gamma\cup\mathsf T}\big(F(U,M)\big)\ =\ E_\to(U)\,\Sigma(n,M)\ +\ t\,|U|\,\binom{n-1}{M}\,q^{M}.
\end{equation}
\end{lemma}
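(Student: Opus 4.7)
The plan is to split the directed boundary of $F(U,M)$ according to generator class,
\[
B_{\mathcal{S}}(F(U,M))\ =\ \sum_{s\in\mathcal{S}_\Gamma}\#\{(f,x)\in F(U,M):\,(f,x)s\notin F(U,M)\}\ +\ \sum_{\alpha\in\mathsf{T}}\#\{(f,x)\in F(U,M):\,(f,x)\alpha\notin F(U,M)\},
\]
and to evaluate each term by an elementary combinatorial count. No amenability is used; this is a purely algebraic identity in the semidirect product $H^{(\Gamma)}\rtimes\Gamma$.

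For the first sum the key observation is that right multiplication by a base generator $s\in\mathcal{S}_\Gamma$ acts as $(f,x)\cdot s=(f,xs)$, leaving the lamp configuration $f$ untouched. Consequently $(f,x)\cdot s\notin F(U,M)$ \emph{iff} $xs\notin U$; the support and weight constraints on $f$ are automatically preserved. For each such ``missing'' base edge $(x\to xs)$ the number of admissible $f$ with $\supp(f)\subset U$ and $\|f\|_0\le M$ is exactly $\Sigma(n,M)$ by definition, producing the first term $E_\to(U)\,\Sigma(n,M)$.

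For the second sum I would write each toggle as $\alpha=(h_\alpha\delta_e,e)$ with $h_\alpha\in H\setminus\{e_H\}$, so that $(f,x)\cdot\alpha=(f+h_\alpha\delta_x,x)$: only the lamp at the current site $x$ is altered, by a \emph{fixed} nontrivial $h_\alpha$. The base position stays in $U$ and the support stays inside $U$, so leaving $F(U,M)$ forces the weight to climb strictly above $M$. Since a single-site update changes $\|f\|_0$ by at most one, this happens \emph{iff} $\|f\|_0=M$ and $f(x)=e_H$ (so that the updated lamp value $h_\alpha$ is nontrivial). Crucially, this criterion is \emph{independent of the specific $h_\alpha$}, so each of the $t$ toggles contributes the same count; fixing $x\in U$ and imposing $f(x)=e_H$, $\|f\|_0=M$, $\supp(f)\subset U$ amounts to choosing a support of size $M$ in $U\setminus\{x\}$ (in $\binom{n-1}{M}$ ways) and assigning one of the $q$ nontrivial values to each of those sites, giving $\binom{n-1}{M}q^M$ configurations. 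Summing over $x\in U$ and then over $\alpha\in\mathsf{T}$ yields the second term $t\,|U|\binom{n-1}{M}q^M$, and adding the two contributions produces the claimed identity.

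The proof is really just bookkeeping; the one step to handle with care is the uniform reduction of the toggle boundary condition to the single combinatorial criterion ``blank-at-$x$ and weight-saturated,'' which is what decouples the count from the specific choice of $h_\alpha\in H\setminus\{e_H\}$ and yields the clean prefactor $t=|\mathsf{T}|$. All other verifications (that base steps never touch lamps, that toggle steps never shift the base or grow the support beyond $U$, and that no double counting occurs between the $\mathcal{S}_\Gamma$- and $\mathsf{T}$-contributions) are immediate from the semidirect-product law.
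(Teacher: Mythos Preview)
Your proof is correct and follows exactly the same approach as the paper's: split the directed boundary by generator class, observe that base steps $(f,x)\mapsto(f,xs)$ exit iff $xs\notin U$ (giving $E_\to(U)\,\Sigma(n,M)$), and that toggle steps exit iff $f(x)=e_H$ and $\|f\|_0=M$ (giving $t\,|U|\binom{n-1}{M}q^M$). The paper's proof is two lines; yours just unpacks the same bookkeeping in full. One cosmetic point: the additive notation $f+h_\alpha\delta_x$ implicitly treats $H$ as abelian; the lemma allows general finite $H$, so writing the update multiplicatively (e.g.\ $f'(x)=f(x)\cdot h_\alpha$) would be cleaner, though the case analysis and final count are unaffected.
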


\begin{proof}
A right-step by $s\in\mathcal S_\Gamma$ moves $x\mapsto xs$ and keeps $f$ unchanged, exiting iff $xs\notin U$; summing over $x$ and admissible $f$ gives the first term. For $\alpha\in\mathsf T$, toggling at $x\in U$ exits iff $f(x)=1_H$ and $\|f\|_0=M$; this gives the second term.
\end{proof}

\begin{lemma}[Exact count of ring-enlargement exits]\label{lem:ring-count}
Let $U\subset\Gamma$ be finite and let $U'\subset \mathcal S_\Gamma U$ with ring $R:=U'\setminus U$, $\delta:=|R|$.
For integers $m,\ell\ge0$ set
\[
Y_\ell:=\Big\{(f,x):\ x\in U',\ \mathrm{supp}(f)\subseteq U',\ \#(\textup{lit in }U)\le m,\ \#(\textup{lit in }R)\le \ell\Big\}.
\]
For each $\alpha\in\mathsf T$, let $\mathcal E^{\to}_{\mathrm{ring},\alpha}(Y_\ell)$ be the set of directed $\alpha$-edges from $Y_\ell$ that
\emph{exit} by toggling an \emph{unlit} ring site $x\in R$ when the ring budget is \emph{exactly} saturated, i.e. $\#(\textup{lit in }R)=\ell$.
Then
\[
B_{\mathrm{ring},\alpha}(Y_\ell):=|\mathcal E^{\to}_{\mathrm{ring},\alpha}(Y_\ell)|
\ =\ |R|\ \binom{\delta-1}{\ell}\ q^{\ell}\ \sum_{j=0}^{m}\binom{|U|}{j}q^j,
\qquad q:=|H|-1,
\]
and $B_{\mathrm{ring}}(Y_\ell):=\sum_{\alpha\in\mathsf T}B_{\mathrm{ring},\alpha}(Y_\ell)=t\cdot B_{\mathrm{ring},\alpha}(Y_\ell)$.
\end{lemma}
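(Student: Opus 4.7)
The plan is a direct enumeration of the prescribed edges; no analytic input is required, only careful bookkeeping. My first move is to describe a toggle edge precisely: for $\alpha \in \mathsf T$, the directed $\alpha$-edge out of $(f,x) \in Y_\ell$ is $(f,x) \mapsto (f \cdot \alpha\mathbf{1}_x, x)$, where $\alpha\mathbf{1}_x$ denotes the lamp configuration taking the value $\alpha$ at $x$ and $1_H$ elsewhere. Since $x \in U'$ the support of the destination still lies in $U'$; the destination leaves $Y_\ell$ iff the toggle strictly increases either $\#(\textup{lit in }U)$ past $m$ or $\#(\textup{lit in }R)$ past $\ell$. A \emph{ring} exit, as defined in the statement, is precisely the case $x \in R$, $f(x) = 1_H$ (so $f'(x) = \alpha \neq 1_H$ is lit), and the ring budget already saturated, i.e. $\#(\textup{lit in }R)(f) = \ell$ exactly; any other regime (for instance $f(x) \neq 1_H$, or $\#(\textup{lit in }R)(f) < \ell$) either keeps the ring count $\le \ell$ or strictly decreases it, hence does not exit.

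Next I would enumerate such $(f,x)$ for a fixed $\alpha$. The position $x \in R$ contributes $|R| = \delta$ choices. The condition $f(x) = 1_H$ fixes $f$ at $x$. On the remaining $\delta - 1$ ring sites we need exactly $\ell$ lit lamps, each carrying one of $q = |H|-1$ nontrivial values, giving $\binom{\delta-1}{\ell} q^\ell$ configurations. Independently, $f|_U$ is any admissible pattern with at most $m$ lit sites, contributing $\sum_{j=0}^{m} \binom{|U|}{j} q^j$ configurations (choose $j$ lit positions, then their colors). Since $\supp(f) \subseteq U'$ and $U' = U \sqcup R$, all other sites are forced to $1_H$. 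Because $U$ and $R$ are disjoint and the constraints on $f|_U$ and $f|_R$ factor independently, multiplying these cardinalities is legitimate and yields the claimed value of $B_{\mathrm{ring},\alpha}(Y_\ell)$.

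Finally, to pass from the per-toggle count to $B_{\mathrm{ring}}(Y_\ell)$, I would sum over $\alpha \in \mathsf T$. For distinct $\alpha, \alpha' \in \mathsf T$ the destinations $(f \cdot \alpha\mathbf{1}_x, x)$ and $(f \cdot \alpha'\mathbf{1}_x, x)$ differ, so the edge sets $\mathcal E^{\to}_{\mathrm{ring},\alpha}$ are pairwise disjoint, and within each $\alpha$-slice the directed edge is uniquely determined by its source. The per-toggle count is manifestly independent of $\alpha$, yielding $B_{\mathrm{ring}}(Y_\ell) = t \cdot B_{\mathrm{ring},\alpha}(Y_\ell)$. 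The only subtlety worth double-checking is the characterization of ring exits in the first paragraph -- cleanly distinguishing them from $U$-exits, from toggles that un-light a ring lamp, and from toggles that merely re-color a lit lamp -- but this is immediate once one writes out the cases for $(x \in U$ or $R) \times (f(x) = 1_H$ or not$)$ and tracks which counters change. I do not anticipate any genuine obstacle.
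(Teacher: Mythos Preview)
Your proposal is correct and follows the same approach as the paper: a direct enumeration over the choice of cursor $x\in R$, the ring pattern on $R\setminus\{x\}$ (exactly $\ell$ lit), and the $U$-pattern ($\le m$ lit), then multiply by $t$ for the $\alpha$-sum. Your write-up is more explicit than the paper's one-line proof in verifying that the ring-exit condition characterizes precisely these edges, but the underlying counting argument is identical.
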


\begin{proof}
Count choices of $x\in R$ (must be unlit), ring pattern (choose $\ell$ lit sites in $R\setminus\{x\}$ and values), and a $U$-pattern with $\le m$ lit; each yields one exiting directed edge for $\alpha$.
\end{proof}

\begin{definition}[Ring-step operator]\label{def:ring-step}
With $U\subset\Gamma$ finite and $U'\subset \mathcal S_\Gamma U$, define $R:=U'\setminus U$ and, for $m,\ell\ge0$,
\[
Y_\ell\ :=\ \Bigl\{(f,x): x\in U',\ \mathrm{supp}(f)\subseteq U',\ \#(\text{lit in }U)\le m,\ \#(\text{lit in }R)\le \ell\Bigr\}.
\]
For $\alpha\in\mathsf T$ define the $\alpha$-ring exits
\[
\mathcal E^{\to}_{\mathrm{ring},\alpha}(Y_\ell)\ :=\ \Bigl\{((f,x),\alpha)\in Y_\ell\times\{\alpha\}:\ x\in R,\ \text{$x$ is unlit in $f$, and $\#(\text{lit in }R)=\ell$}\Bigr\}.
\]
Define the \emph{single-step ring increment}
\[
\mathcal R(Y_\ell)\ :=\ Y_\ell\ \cup\ \bigl\{\, (f,x)\alpha\ :\ ((f,x),\alpha)\in \mathcal E^{\to}_{\mathrm{ring}}(Y_\ell)\,\bigr\},
\qquad \mathcal E^{\to}_{\mathrm{ring}}:=\bigsqcup_{\alpha\in\mathsf T}\mathcal E^{\to}_{\mathrm{ring},\alpha}.
\]
\end{definition}

\begin{lemma}[Exact count and tempered bound for a single ring step]\label{lem:ring-step-tempered}
With the notation of Definition \ref{def:ring-step} one has
\[
\bigl|\,\mathcal R(Y_\ell)\setminus Y_\ell\,\bigr|\ =\ \bigl|\mathcal E^{\to}_{\mathrm{ring}}(Y_\ell)\bigr|
\ \le\ B_{\mathcal S}(Y_\ell)\ \le\ \Delta\,|\partial_{\mathcal S}Y_\ell|.
\]
Consequently, the ring step is tempered in the TF(ii) sense with constant $B=\Delta$ (vertex normalization).
\end{lemma}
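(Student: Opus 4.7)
The plan is to verify the three chained relations in the display separately and then assemble the tempered conclusion. For the first equality, $|\mathcal R(Y_\ell)\setminus Y_\ell|=|\mathcal E^{\to}_{\mathrm{ring}}(Y_\ell)|$, I would exhibit a bijection between ring-exit directed edges and new vertices. Surjectivity is built into the defining union in Definition~\ref{def:ring-step}. That the image lies outside $Y_\ell$ follows from the budget-saturation clause: if $((f,x),\alpha)\in\mathcal E^{\to}_{\mathrm{ring}}(Y_\ell)$, then $x\in R$ is unlit in $f$ with the ring count equal to $\ell$ \emph{exactly}, so after the toggle the ring count becomes $\ell+1$ and thus $(f,x)\alpha\notin Y_\ell$. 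For injectivity, I use that the lamplighter product with a toggle preserves the base coordinate, so equality $(f_1,x_1)\alpha_1=(f_2,x_2)\alpha_2$ gives $x_1=x_2=x$; since both $f_i$ are unlit at $x$, the resulting value at $x$ equals the toggle value $h_{\alpha_i}$, forcing $h_{\alpha_1}=h_{\alpha_2}$, hence $\alpha_1=\alpha_2$ (the toggles in $\mathsf T$ being distinct elements of $G$), and then $f_1=f_2$ off the common $x$.

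For the second inequality, $|\mathcal E^{\to}_{\mathrm{ring}}(Y_\ell)|\le B_{\mathcal S}(Y_\ell)$, the observation above already shows each ring-exit tuple is a directed edge $((f,x),\alpha)\in Y_\ell\times\mathcal S$ with $(f,x)\alpha\notin Y_\ell$, so it is counted by the directed edge boundary $B_{\mathcal S}(Y_\ell)$; distinct tuples are distinct directed edges by construction. The third inequality, $B_{\mathcal S}(Y_\ell)\le \Delta\,|\partial_{\mathcal S}Y_\ell|$, is the general vertex--edge comparison from Lemma~\ref{lem:vertex-edge-profiles}, imported verbatim. Concatenating the three yields $|\mathcal R(Y_\ell)\setminus Y_\ell|\le \Delta\,|\partial_{\mathcal S}Y_\ell|$, which is exactly the TF(ii) increment bound with $B=\Delta$ for the single step $Y_\ell\mapsto\mathcal R(Y_\ell)$ in vertex normalization.

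The only mild subtlety is the injectivity step: it depends on the convention that distinct toggles in $\mathsf T$ correspond to distinct elements of $G$ (equivalently, have distinct target values in $H$), so that $h_{\alpha_1}=h_{\alpha_2}$ forces $\alpha_1=\alpha_2$. This is automatic once $\mathcal S=\mathcal S_\Gamma\cup\mathsf T$ is treated as a genuine symmetric generating set of $G$, as throughout this subsection. If one preferred to avoid even this mild assumption, the evaluation map remains surjective onto new vertices, so the weaker inequality $|\mathcal R(Y_\ell)\setminus Y_\ell|\le |\mathcal E^{\to}_{\mathrm{ring}}(Y_\ell)|$ is immediate, and the tempered estimate with $B=\Delta$ still follows without change. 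No part of the argument is genuinely hard; the structural work has already been absorbed into the exact edge-count bookkeeping of Lemmas~\ref{lem:split-exact-proof} and~\ref{lem:ring-count}.
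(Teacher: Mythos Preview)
Your proposal is correct and follows the same approach as the paper: establish the bijection $((f,x),\alpha)\mapsto (f,x)\alpha$ from $\mathcal E^{\to}_{\mathrm{ring}}(Y_\ell)$ onto $\mathcal R(Y_\ell)\setminus Y_\ell$, then invoke Lemma~\ref{lem:vertex-edge-profiles}. The paper's proof simply asserts this map is a bijection in one line, whereas you spell out the surjectivity, the ``lands outside $Y_\ell$'' check via the saturated ring budget, and the injectivity argument using the toggle structure; your added discussion of the mild convention on distinct toggles is accurate and your fallback to the inequality $|\mathcal R(Y_\ell)\setminus Y_\ell|\le|\mathcal E^{\to}_{\mathrm{ring}}(Y_\ell)|$ is a nice safety net, though unnecessary here since $\mathcal S$ is a genuine set.
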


\begin{proof}
The map $((f,x),\alpha)\mapsto (f,x)\alpha$ is a bijection from $\mathcal E^{\to}_{\mathrm{ring}}(Y_\ell)$ onto
$\mathcal R(Y_\ell)\setminus Y_\ell$. The inequalities follow from Lemma~\ref{lem:vertex-edge-profiles}.
\end{proof}

\begin{lemma}[Tempered base enlargement]\label{lem:base-step-tempered}
Let $U\subset\Gamma$ be finite, set $U'\subset \mathcal S_\Gamma U$ and $R:=U'\setminus U$, and fix $m\ge0$.
Define the base-first enlargement with ring lamps forbidden
\[
Y^{\rm base}\ :=\ \Bigl\{(f,x): x\in U',\ \mathrm{supp}(f)\subseteq U',\ \#(\text{lit in }U)\le m,\ \#(\text{lit in }R)=0\Bigr\}
\]
and
\[
F(U,m):=\Bigl\{(f,x): x\in U,\ \mathrm{supp}(f)\subseteq U,\ \#(\text{lit in }U)\le m\Bigr\}.
\]
Then
$
\bigl|Y^{\rm base}\setminus F(U,m)\bigr|\ \le\ E_\to(U)\,\Sigma(|U|,m)
$
and $E_\to(U)\,\Sigma(|U|,m)\le B_{\mathcal S_\Gamma\cup\mathsf T}\big(F(U,m)\big)
\le \Delta\,|\partial_{\mathcal S}F(U,m)|$, so the base step is tempered with the same $B=\Delta$.
\end{lemma}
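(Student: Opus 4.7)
The plan is to verify three chained inequalities. \emph{First}, I would compute $|Y^{\rm base}\setminus F(U,m)|$ exactly, using the constraint that ring lamps are forbidden in $Y^{\rm base}$: since $\supp(f)\cap R=\emptyset$, any $f$ appearing in $Y^{\rm base}$ automatically has $\supp(f)\subseteq U'\setminus R=U$, so the only way for $(f,x)\in Y^{\rm base}$ to fail to lie in $F(U,m)$ is $x\notin U$, i.e.\ $x\in R$. Thus $Y^{\rm base}\setminus F(U,m)$ is in bijection with pairs $(f,x)$ with $x\in R$ and $f$ any admissible configuration on $U$, giving the exact count $|R|\cdot \Sigma(|U|,m)$.

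\emph{Second}, I would upgrade this to the claimed bound by showing $|R|\le E_\to(U)$: every $y\in R\subseteq \mathcal S_\Gamma U\setminus U$ can be written as $y=xs$ for some $x\in U$, $s\in\mathcal S_\Gamma$ with $xs\notin U$, so the map from directed $\mathcal S_\Gamma$-exit pairs to ring vertices is surjective onto $R$. This yields $|Y^{\rm base}\setminus F(U,m)|\le E_\to(U)\,\Sigma(|U|,m)$, the first stated inequality.

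\emph{Third}, for the middle inequality I would invoke the exact edge split of Lemma~\ref{lem:split-exact-proof}: its first summand is precisely $E_\to(U)\,\Sigma(|U|,m)$ and its second summand $t\,|U|\binom{n-1}{m}q^{m}$ is nonnegative, giving $E_\to(U)\,\Sigma(|U|,m)\le B_{\mathcal S_\Gamma\cup\mathsf T}(F(U,m))$. The last inequality $B_{\mathcal S}(F(U,m))\le \Delta\,|\partial_{\mathcal S}F(U,m)|$ is then the pointwise vertex–edge comparison from Lemma~\ref{lem:vertex-edge-profiles}. Chaining the three estimates proves the tempered bound, with the interpretation that the base-enlargement increment is dominated by (a constant multiple of) the vertex boundary of the previous snapshot $F(U,m)$, so the step is TF(ii)-tempered with the same constant $B=\Delta$ as the ring step (Lemma~\ref{lem:ring-step-tempered}).

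No serious obstacle is expected here: all the combinatorial heavy lifting has already been encapsulated in Lemmas~\ref{lem:split-exact-proof} and~\ref{lem:ring-count}. The only conceptually important check is that the zero ring-lamp budget in $Y^{\rm base}$ collapses the ring degrees of freedom, so that $|Y^{\rm base}\setminus F(U,m)|$ scales as $|R|\cdot\Sigma(|U|,m)$ rather than as $|R|\cdot\Sigma(|U'|,m)$; this is what keeps the comparison linear in $E_\to(U)$ and therefore bounded by the boundary of the previous set rather than of the enlarged one.
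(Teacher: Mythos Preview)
Your proposal is correct and follows essentially the same approach as the paper, whose proof is a one-line pointer to the exact split \eqref{eq:split-exact-corrected} and Lemma~\ref{lem:vertex-edge-profiles}. You have simply unpacked the details: the exact count $|Y^{\rm base}\setminus F(U,m)|=|R|\cdot\Sigma(|U|,m)$ via the zero-ring-lamp constraint, the bound $|R|\le E_\to(U)$ via surjectivity of the exit-pair map, and then the two cited lemmas for the remaining inequalities.
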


\begin{proof}
Count cursor exits across the base caps exactly as in \eqref{eq:split-exact-corrected} and apply Lemma~\ref{lem:vertex-edge-profiles}.
\end{proof}

\begin{remark}[Monotonicity and bookkeeping across steps]
Each base or ring single step replaces $Y$ by $Y\cup E$, where $E$ is the image of a set of exiting \emph{directed} edges under the action map, so the chain is nested by construction, and
\[
|Y^{\rm new}\setminus Y|=|\mathcal E^{\to}_{\rm ring/base}(Y)|\le B_{\mathcal S}(Y)\le \Delta\,|\partial_{\mathcal S}Y|.
\]
\end{remark}

\begin{theorem}[Tempered chain for lamplighters: single-step construction]\label{thm:lamplighter-TFii}
Let $G=H^{(\Gamma)}\rtimes\Gamma$ with generators $\mathcal S=\mathcal S_\Gamma\cup\mathsf T$,
and fix a nested exhaustion $U_1\subset U_2\subset\cdots$ of $\Gamma$ with $U_{k+1}\subset \mathcal S_\Gamma U_k$.
Fix $\theta\in(0,1)$ and set $M_k:=\lfloor \theta |U_k|\rfloor$.
Construct a nested chain $(F_n)$ by concatenating, for each $k$, one base step
$F(U_k,M_k)\mapsto Y^{\rm base}$ and then $M_{k+1}-M_k$ ring steps.
Then
\[
|F_{n+1}\setminus F_n|\ \le\ \Delta\,|\partial_{\mathcal S}F_n|\qquad\text{for all }n.
\]
\end{theorem}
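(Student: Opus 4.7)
The plan is to reduce the theorem to a uniform per-step inequality: each \emph{atomic} step in the construction---a base step or one of the intervening ring steps---has the form $F_{n+1}=F_n\cup T_n$ with $T_n\subseteq (\mathcal SF_n)\setminus F_n$, so that $|F_{n+1}\setminus F_n|\le B_{\mathcal S}(F_n)\le \Delta\,|\partial_{\mathcal S}F_n|$ by the vertex-edge comparison (Lemma~\ref{lem:vertex-edge-profiles}). Everything else is bookkeeping, and the uniform constant $B=\Delta$ drops out of the concatenation with no further loss.

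First I will decompose the chain into its atomic pieces. Between levels $k$ and $k+1$ the chain performs exactly one base step $F(U_k,M_k)\mapsto Y^{\rm base}$, followed by $M_{k+1}-M_k$ applications of the ring operator $\mathcal R$ of Definition~\ref{def:ring-step}. Nestedness $F_n\subset F_{n+1}$ is automatic and requires only a single observation: every atomic step is a union of $F_n$ with a set of newly reached vertices, since both $\mathcal R$ and the base construction add only images of outgoing directed edges of $F_n$.

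Next I will apply the two tempered-increment lemmas in succession. For a base step, Lemma~\ref{lem:base-step-tempered} gives $|Y^{\rm base}\setminus F(U_k,M_k)|\le E_\to(U_k)\,\Sigma(|U_k|,M_k)\le B_{\mathcal S}(F(U_k,M_k))\le \Delta\,|\partial_{\mathcal S}F(U_k,M_k)|$. For each ring step, Lemma~\ref{lem:ring-step-tempered} gives the bijective identity $|F_{n+1}\setminus F_n|=|\mathcal E^{\to}_{\mathrm{ring}}(F_n)|$ and the subsequent bound $\le B_{\mathcal S}(F_n)\le \Delta\,|\partial_{\mathcal S}F_n|$. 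Concatenating these per-step estimates over $n$ produces the global \textup{(TF)(ii)} inequality with $B=\Delta$ stated in the theorem.

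The main care required---and the only real obstacle---is the \emph{collision-freeness} that underlies Lemma~\ref{lem:ring-step-tempered}'s identity between exit edges and newly created vertices; this is what prevents multiplicity loss and lets one bound the atomic increment directly by $B_{\mathcal S}(F_n)$. Concretely, in the standard lamplighter toggle set each $\alpha\in\mathsf T$ sets the lamp at the cursor to a uniquely determined value $h_\alpha\in H\setminus\{e\}$, so two distinct ring-toggle exits can only land on the same target $(f',x)$ if both cursors equal $x$, both sources agree with $f'$ off $x$ and are unlit at $x$, and both toggles realize the value $f'(x)$---forcing the two edges to coincide. Once this is verified, the precise combinatorial shape of $F_n$ after several ring steps need never be tracked: the atomic estimate relies only on the structural inclusion $T_n\subseteq(\mathcal SF_n)\setminus F_n$, and the concatenation yields $B=\Delta$ uniformly, with no dependence on the level $k$, on $\theta$, or on the choice of exhaustion $(U_k)$.
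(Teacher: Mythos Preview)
Your proposal is correct and follows essentially the same approach as the paper: the paper's proof is a one-liner citing Lemmas~\ref{lem:base-step-tempered} and~\ref{lem:ring-step-tempered} to bound each atomic link by $\Delta\,|\partial_{\mathcal S}F_n|$, which is exactly your decomposition into base and ring steps followed by concatenation. Your additional paragraph on collision-freeness is really an elaboration of the bijection claim inside the proof of Lemma~\ref{lem:ring-step-tempered} rather than part of the present theorem, but it is correct and does no harm.
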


\begin{remark}[Normalization]\label{rem:lamplighter-normalization}
In the lamplighter subsection we use the \emph{vertex} boundary in TF(ii) and the \emph{directed} edge count when exploiting exact splits. The comparison in Lemma~\ref{lem:vertex-edge-profiles} transfers bounds between them up to a factor $\Delta$.
\end{remark}

\begin{proof}
Each link is tempered with constant $\Delta$ by Lemmas \ref{lem:base-step-tempered} and \ref{lem:ring-step-tempered}.
\end{proof}

\begin{theorem}[F{\o}lner checkpoints for finite-lamp wreaths]\label{thm:lamplighter-folner-checkpoints}
Let $(\Gamma,\mathcal S_\Gamma)$ be finitely generated and amenable, let $H$ be finite nontrivial with $q:=|H|-1$ and toggles $\mathsf T$ ($t:=|\mathsf T|$), and consider $G:=H^{(\Gamma)}\rtimes \Gamma$ with generators $\mathcal S=\mathcal S_\Gamma\cup\mathsf T$.
Fix a F{\o}lner exhaustion $U_1\subset U_2\subset\cdots$ of $\Gamma$ with $U_{k+1}\subset \mathcal S_\Gamma U_k$ and $n_k:=|U_k|$.
Let $p:=q/(1+q)\in(0,1)$ and set $M_k:=\lceil p\,n_k\rceil$ and
\[
F_k^\star\ :=\ F(U_k,M_k).
\]
Then, in directed-edge normalization,
\[
\frac{B_{\mathcal S}(F_k^\star)}{|F_k^\star|}\ \longrightarrow\ 0\qquad (k\to\infty).
\]
\end{theorem}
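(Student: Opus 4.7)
The plan is to combine the exact edge split from Lemma~\ref{lem:split-exact-proof} with a binomial reinterpretation in which the choice $p=q/(1+q)$ makes $M_k=\lceil p\,n_k\rceil$ the median of $\mathrm{Bin}(n_k,p)$; this reduces the F{\o}lner claim on $G$ to a F{\o}lner claim on the base $\Gamma$ together with a standard local central limit estimate.

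First, I will count directly $|F_k^\star|=n_k\,\Sigma(n_k,M_k)$ (one cursor choice in $U_k$ times lamp configurations supported in $U_k$ with at most $M_k$ lit sites), while Lemma~\ref{lem:split-exact-proof} gives
\[
B_{\mathcal S}(F_k^\star)\ =\ E_\to(U_k)\,\Sigma(n_k,M_k)\ +\ t\,n_k\,\binom{n_k-1}{M_k}q^{M_k}.
\]
Dividing yields the clean decomposition
\[
\frac{B_{\mathcal S}(F_k^\star)}{|F_k^\star|}\ =\ \frac{E_\to(U_k)}{n_k}\ +\ t\,\frac{\binom{n_k-1}{M_k}\,q^{M_k}}{\Sigma(n_k,M_k)}.
\]
The first term is at most $|\mathcal S_\Gamma|\cdot|\partial_{\mathcal S_\Gamma}U_k|/|U_k|$ and vanishes by the F{\o}lner hypothesis on $(U_k)$ in $\Gamma$.

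For the second term, I will introduce $Z_n\sim\mathrm{Bin}(n,p)$ with $p=q/(1+q)\in(0,1)$. The identity $\binom{n}{j}q^j=(1+q)^n\,\mathbb P(Z_n=j)$ together with $\binom{n_k-1}{M_k}\le\binom{n_k}{M_k}$ recasts the ratio as at most $t\cdot\mathbb P(Z_{n_k}=M_k)/\mathbb P(Z_{n_k}\le M_k)$. Since $M_k=\lceil p\,n_k\rceil$ lies in the median set of $\mathrm{Bin}(n_k,p)$, the denominator is bounded below by $1/2$ for $k$ large, while Stirling at the mode yields $\mathbb P(Z_{n_k}=M_k)=O(n_k^{-1/2})$. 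Adding the two vanishing contributions proves $B_{\mathcal S}(F_k^\star)/|F_k^\star|\to 0$.

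The only nontrivial step I anticipate is the local bound $\mathbb P(Z_n=\lceil p\,n\rceil)=O(n^{-1/2})$: one must verify that the $O(1)$ offset $\lceil p\,n\rceil-p\,n$ does not perturb the Stirling exponent $(p^p(1-p)^{1-p})^{-n}$ away from its normalization, and that the median lower bound $\mathbb P(Z_{n_k}\le M_k)\ge 1/2-o(1)$ is uniform in $k$. Both are classical, but they are the steps I plan to write out carefully, both to confirm the $n_k^{-1/2}$ rate and to extract the explicit constant feeding into the checkpoint (TF)(i$_e$) control downstream in \Cref{thm:F-final}.
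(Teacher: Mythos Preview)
Your proposal is correct and follows essentially the same route as the paper: both invoke the exact edge split from Lemma~\ref{lem:split-exact-proof} to write $B_{\mathcal S}(F_k^\star)/|F_k^\star|$ as $E_\to(U_k)/n_k$ plus a toggle term, handle the first by the base F{\o}lner condition, and kill the second by the median/local-CLT interpretation of $\Sigma(n_k,M_k)$ at $p=q/(1+q)$. Your binomial recasting via $\binom{n}{j}q^j=(1+q)^n\,\mathbb P(Z_n=j)$ is exactly the mechanism the paper alludes to with ``median of $\mathrm{Bin}(n_k,p)$'' and ``local CLT/Stirling''; you have simply made it explicit.
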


\begin{proof}
From \eqref{eq:split-exact-corrected},
$
B_{\mathcal S}(F(U,M))=E_\to(U)\,\Sigma(n,M)+t\,n \binom{n-1}{M}q^M
$
and $|F(U,M)|=n\,\Sigma(n,M)$.
F{\o}lner of $(U_k)$ gives $E_\to(U_k)/n_k\to0$. With $M_k=\lceil p n_k\rceil$ and $p=q/(1+q)$ (the \emph{median} of $\mathrm{Bin}(n_k,p)$), the local CLT/Stirling gives
$
\binom{n_k-1}{M_k}q^{M_k}/\Sigma(n_k,M_k)=O(n_k^{-1/2})
$.
Hence the fraction tends to $0$. 
(See e.g.\ Erschler~\cite{Erschler2003,Erschler2006PTRF} for lamplighter profile asymptotics.)
\end{proof}

\begin{theorem}[Near-minimizer checkpoints and TF(i) on a subsequence]\label{thm:ers-checkpoints}
\emph{References:} Erschler~\cite[Thm.~1]{Erschler2003}, \cite[Thm.~1, Sec.~3]{Erschler2006PTRF}; see also \cite{SCZ18}.
Let $G=H^{(\Gamma)}\rtimes\Gamma$ as above, and checkpoints $F_k^{\star}:=F(U_k,M_k)$ with $M_k=\lfloor \theta |U_k|\rfloor$, $\theta\in(0,1)$.
There exists $A_0=A_0(H,\mathcal S_\Gamma,\mathcal S)\in(0,\infty)$ such that for all sufficiently large $k$,
\[
B_{\mathcal S}\big(F_k^{\star}\big)\ \le\ A_0\ \cdot\ I^\circ_{\edge}\big(|F_k^{\star}|\big).
\]
In particular, aligning checkpoints to record minima of the minorant and passing to vertex boundary gives a subsequence $\{k_j\}$ with
\[
|\partial_{\mathcal S}F_{k_j}^{\star}|\ \le\ A\, I^{\mathrm{incr}\,\circ}\big(|F_{k_j}^{\star}|\big),
\qquad A:=A_0\,\Delta.
\]
\end{theorem}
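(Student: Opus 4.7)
The plan is to combine the exact edge split of Lemma~\ref{lem:split-exact-proof} with Erschler's sharp isoperimetric lower bound for finite-lamp wreath products \cite{Erschler2003,Erschler2006PTRF} to identify the Hamming-ball checkpoints $F_k^\star=F(U_k,\lfloor\theta n_k\rfloor)$ as edge near-minimizers at their own cardinality. From Lemma~\ref{lem:split-exact-proof},
\[
B_{\mathcal S}(F_k^\star)=E_\to(U_k)\,\Sigma(n_k,M_k)+t\,n_k\binom{n_k-1}{M_k}q^{M_k},\qquad |F_k^\star|=n_k\,\Sigma(n_k,M_k).
\]
I will select the base exhaustion so that $U_k$ is itself a near-minimizer of the base profile, $E_\to(U_k)\le C_\Gamma\,I^\circ_{\edge,\Gamma}(n_k)$ (possible for amenable $(\Gamma,\mathcal S_\Gamma)$ at all sufficiently large scales); with $j_\Gamma(n):=I^\circ_{\edge,\Gamma}(n)/n$, the base contribution is then at most $C_\Gamma\,|F_k^\star|\,j_\Gamma(n_k)$. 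The toggle contribution is controlled by Stirling/local-CLT exactly as in Theorem~\ref{thm:lamplighter-folner-checkpoints} and yields $t\binom{n_k-1}{M_k}q^{M_k}/\Sigma(n_k,M_k)=O(n_k^{-1/2})$; this is absorbed into the base term whenever $j_\Gamma(n_k)\gtrsim n_k^{-1/2}$ (true for $\Gamma$ of subexponential growth), and otherwise dominated after rescaling a bounded constant.

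The matching lower bound is Erschler's edge-isoperimetric inequality: there exist $c,c'>0$ depending only on $(H,\mathcal S)$ such that for all sufficiently large finite $Y\subset G$,
\[
I^\circ_{\edge,G}(|Y|)\ \ge\ c\,|Y|\,j_\Gamma\big(\lfloor c'\log |Y|/\log |H|\rfloor\big).
\]
Since $\Sigma(n_k,M_k)=(1+q)^{n_k}\,P_{n_k,\theta}$ with $P_{n_k,\theta}$ at worst polynomially small, one has $\log|F_k^\star|=n_k\log|H|+O(\log n_k)$, so $n_k=\bigl(\log|F_k^\star|/\log|H|\bigr)\,(1+o(1))$. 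Combining the upper and lower bounds, and using polynomial comparability of $j_\Gamma$ across a bounded ratio of neighboring volumes, yields
\[
B_{\mathcal S}(F_k^\star)\ \le\ A_0\,I^\circ_{\edge,G}(|F_k^\star|),
\]
with $A_0=A_0(H,\mathcal S_\Gamma,\mathcal S)$ assembled from $c,c',C_\Gamma$ and a $\log|H|$-factor absorbing the base-volume mismatch. This is the first conclusion.

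For the second conclusion, pass to a subsequence $\{k_j\}$ for which $|F_{k_j}^\star|$ lies in the \emph{record set} $\mathcal V^\star:=\{v:I^\circ_{\edge}(v)=I^{\mathrm{incr}\,\circ}_{\edge}(v)\}$; since $I^{\mathrm{incr}\,\circ}_{\edge}$ is the nondecreasing minorant of $I^\circ_{\edge}$, $\mathcal V^\star$ is infinite. Alignment of $|F_k^\star|$ with $\mathcal V^\star$ is arranged either by refining $U_k$ inside a sufficiently dense F\o lner sequence, or by a small padding via Lemma~\ref{lem:padding} (each padded vertex shifts the volume by $1$ and the boundary by at most $\Delta$, which is absorbed into $A_0$). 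At such $k_j$ the first conclusion reads $B_{\mathcal S}(F_{k_j}^\star)\le A_0\,I^{\mathrm{incr}\,\circ}_{\edge}(|F_{k_j}^\star|)$, and conversion to vertex normalization via $|\partial_{\mathcal S}Y|\le B_{\mathcal S}(Y)$ together with $I^{\mathrm{incr}\,\circ}_{\edge}\le \Delta\,I^{\mathrm{incr}\,\circ}$ (Lemma~\ref{lem:vertex-edge-profiles}) yields $|\partial_{\mathcal S}F_{k_j}^\star|\le A_0\Delta\,I^{\mathrm{incr}\,\circ}(|F_{k_j}^\star|)$ with $A:=A_0\Delta$.

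The principal obstacle is the quantitative Erschler input: extracting a lower bound of the form $I^\circ_{\edge,G}(v)\gtrsim v\,j_\Gamma(c\log v/\log|H|)$ with constants tracked through $(H,\mathcal S_\Gamma,\mathcal S)$, rather than the merely qualitative F\o lner-function statement $\Fol_G\asymp \exp(\Fol_\Gamma)$. Erschler's lamp-density/entropy argument (projecting a putative minimizer onto its cursor track and bounding the conditional lamp entropy against the base profile) delivers this directly, but requires care in the interplay between the Hamming layer $M_k=\lfloor\theta n_k\rfloor$ and the base near-minimizer $U_k$; any bounded loss at this step reweights $A_0$ by a multiplicative factor depending only on $(H,\mathcal S_\Gamma,\mathcal S,\theta)$ and so does not affect the qualitative conclusion. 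A secondary technical issue is Step~3's alignment with $\mathcal V^\star$, which could in principle fail if the achievable cardinalities $\{|F_k^\star|\}$ were too sparse; the padding route above sidesteps this concern while preserving the constant $A_0$ up to a factor depending only on $\Delta$.
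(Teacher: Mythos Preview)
The paper itself does not give a proof of this theorem; it is stated as a cited external input (the ``References:'' line points to Erschler \cite{Erschler2003,Erschler2006PTRF}), and the surrounding text explicitly flags it as the ``non-constructive input'' isolated from the constructive TF(ii) mechanism. Your strategy---compute $B_{\mathcal S}(F_k^\star)$ via the exact split of Lemma~\ref{lem:split-exact-proof}, bound it by $|F_k^\star|\cdot j_\Gamma(n_k)$, and compare with Erschler's lower bound $I^\circ_{\edge,G}(v)\gtrsim v\,j_\Gamma(c\log v)$---is exactly how one unwinds Erschler's result in this setting, so in spirit you are reconstructing what the paper takes as a black box.

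There is, however, a genuine gap in your handling of the toggle term. Your claim that $t\binom{n_k-1}{M_k}q^{M_k}/\Sigma(n_k,M_k)=O(n_k^{-1/2})$ is only valid at the median $\theta=p:=q/(1+q)$ (the CLT regime invoked in Theorem~\ref{thm:lamplighter-folner-checkpoints}). For $\theta<p$ the sum $\Sigma(n_k,M_k)$ is dominated by its last term, so the ratio is $\Theta(1-\theta)$, not $o(1)$; then $B_{\mathcal S}(F_k^\star)\asymp|F_k^\star|$, while $I^\circ_{\edge,G}(|F_k^\star|)=o(|F_k^\star|)$ by amenability of $G$, and the conclusion fails outright. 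For $\theta>p$ the ratio is exponentially small and your argument goes through. So either the statement should be read with $\theta\ge p$ (consistent with Definition~\ref{def:lamplighter-checkpoints}, which fixes $\theta=p$), or your proof needs a different treatment of the sub-median range. A secondary point: your ``polynomial comparability of $j_\Gamma$ across a bounded ratio of neighboring volumes'' is an implicit regularity hypothesis on the base profile (essentially that $j_\Gamma$ is slowly varying) which you should state explicitly; it holds in the polynomial-growth bases the paper targets but is not automatic for arbitrary amenable $\Gamma$.
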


\begin{remark}[On the minorant alignment]
Since $I^{\mathrm{incr}\,\circ}(r)=\min_{s\ge r} I^\circ(s)$ is attained at some record scale $s(r)$, we may choose $|F^{\star}_k|$ near such scales; bounded padding/trimming changes $|\partial_{\mathcal S}|$ by at most $\Delta$ per vertex, absorbed in TF(ii).
\end{remark}

\begin{definition}[Checkpoints for TF]\label{def:checkpoint}
A family $\{E_j\}$ will be called \emph{checkpoints for \textup{(TF)}} with constant $A\ge1$ if
\[
|\partial_{\mathcal S}E_j|\ \le\ A\,I^{\mathrm{incr}\,\circ}(|E_j|)\qquad\text{for all $j$}.
\]
Equivalently, in directed-edge normalization,
$
B_{\mathcal S}(E_j)\le A_e\,I^{\mathrm{incr}\,\circ}_{\edge}(|E_j|)
$
for some $A_e\asymp_A 1$.
\end{definition}

\begin{remark}[Record-minimum alignment]\label{rem:checkpoint-record}
One may always thin a near-minimizer family to \emph{record minima} of the increasing minorant; these are canonical checkpoints.
\end{remark}

\begin{definition}[Canonical checkpoints for lamplighters]\label{def:lamplighter-checkpoints}
For $G=H^{(\Gamma)}\rtimes\Gamma$ with $|H|=h\ge2$, set $p:=\frac{h-1}{h}$.
Given a nested exhaustion $U_1\subset U_2\subset\cdots$ define
\[
F_k^\star\ :=\ F(U_k,M_k),\qquad M_k:=\left\lceil p\,|U_k|\right\rceil.
\]
By Theorem~\ref{thm:ers-checkpoints}, there exists a cofinal subsequence $\{k_j\}$ and $A_0<\infty$ such that
\[
B_{\mathcal S}(F_{k_j}^\star)\ \le\ A_0\,I^{\mathrm{incr}\,\circ}_{\edge}\bigl(|F_{k_j}^\star|\bigr),
\]
hence $\{F_{k_j}^\star\}$ is a family of \emph{checkpoints for \textup{(TF)}} with $A=A_0\Delta$.
\end{definition}

\begin{corollary}[Lamplighters: TF(ii) globally, TF(i) along checkpoints]\label{cor:lamplighter-TF-structure}
Under \Cref{thm:lamplighter-TFii,thm:ers-checkpoints}, the chain $(F_n)$ satisfies
\[
|F_{n+1}\setminus F_n|\ \le\ \Delta\,|\partial_{\mathcal S}F_n|\qquad(\forall n),
\]
and there is an infinite subsequence of indices $\{n_j\}$ (checkpoints) for which
\[
|\partial_{\mathcal S}F_{n_j}|\ \le\ A\, I^{\mathrm{incr}\,\circ}(|F_{n_j}|),\qquad A=A_0\,\Delta.
\]
\end{corollary}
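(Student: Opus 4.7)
The plan is to observe that the corollary combines its two named inputs essentially verbatim, with only a brief alignment argument. Since the global TF(ii) inequality $|F_{n+1}\setminus F_n|\le \Delta\,|\partial_{\mathcal S}F_n|$ is already the conclusion of Theorem~\ref{thm:lamplighter-TFii}, the entire remaining task will be to exhibit an infinite subsequence of indices $\{n_j\}$ in the chain $(F_n)$ at which TF(i) holds.

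First I would identify how the Erschler-type near-minimizers sit inside the single chain produced by the one-step construction. By definition, for each $k$ that chain executes one base step out of $F(U_k,M_k)=F_k^\star$ and then $M_{k+1}-M_k$ ring steps terminating at $F_{k+1}^\star$; hence there is a strictly increasing function $k\mapsto n(k)$ with $F_{n(k)}=F_k^\star$, so the Erschler checkpoints are actual members of the single chain rather than abstract sets. Choosing the same $\theta\in(0,1)$ for both Theorem~\ref{thm:lamplighter-TFii} and Theorem~\ref{thm:ers-checkpoints}, the ``In particular'' clause of the latter directly supplies a cofinal subsequence $k_j\to\infty$ with
\[
|\partial_{\mathcal S}F_{k_j}^\star|\ \le\ A_0\,\Delta\,I^{\mathrm{incr}\,\circ}\big(|F_{k_j}^\star|\big),
\]
already in vertex/minorant form. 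Setting $n_j:=n(k_j)$ then gives the asserted TF(i) bound with $A=A_0\Delta$ along this subsequence of $(F_n)$.

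The main (minor) obstacle will be checking that these two constructions can be carried out \emph{simultaneously} on the same nested chain, so that the subsequence realizing TF(i) is compatible with the tempered TF(ii) enlargement. This is handled by two observations, both of which I would state explicitly in the write-up: (a) the checkpoints of the one-step chain at levels $k$ are by definition the $F_k^\star$, so the indexing $n(k)$ is built into Theorem~\ref{thm:lamplighter-TFii}; and (b) the record-minimum alignment invoked by Theorem~\ref{thm:ers-checkpoints} via Remark~\ref{rem:checkpoint-record} can be enforced by thinning $\{k_j\}$ further without losing cofinality, since under amenability $I^{\mathrm{incr}\,\circ}$ has infinitely many record levels, and Lemma~\ref{lem:incr-Lip-right} absorbs any bounded residual volume mismatch into the same overall constant. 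All substantive estimates live inside Theorems~\ref{thm:lamplighter-TFii} and~\ref{thm:ers-checkpoints}; the corollary is their structural alignment.
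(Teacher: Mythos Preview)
Your proposal is correct and matches the paper's (implicit) approach: the corollary carries no separate proof in the paper and is stated as a direct combination of Theorems~\ref{thm:lamplighter-TFii} and~\ref{thm:ers-checkpoints}, exactly as you argue. Your identification of the checkpoints $F_k^\star=F(U_k,M_k)$ as actual members of the single-step chain at indices $n(k)$ is the only structural observation needed; observation~(b) on record alignment is slightly more than required since the ``In particular'' clause of Theorem~\ref{thm:ers-checkpoints} already delivers the vertex-normalized minorant bound, but it does no harm.
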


\begin{theorem}[Lamplighters satisfy the decoupled {\rm(TF)} scheme]\label{thm:lamplighter-decoupled-TF}
Let $G=H^{(\Gamma)}\rtimes\Gamma$ with finite $H$ and finitely generated $\Gamma$.
With the single-step chain $(F_n)$ of \Cref{thm:lamplighter-TFii} and the checkpoint bounds of \Cref{thm:ers-checkpoints},
one has {\rm TF}(ii) globally with constant $B=\Delta$, and {\rm TF}(i) along an infinite
subsequence of checkpoints with constant $A=A_0\,\Delta$.
\end{theorem}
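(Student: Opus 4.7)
The plan is straightforward: Theorem~\ref{thm:lamplighter-decoupled-TF} is essentially a packaging statement that assembles the two preceding results into the framework of Definitions~\ref{def:TF} and~\ref{def:TF-edge}. First I would fix the nested F{\o}lner exhaustion $U_1\subset U_2\subset\cdots$ of $\Gamma$ with $U_{k+1}\subset\mathcal S_\Gamma U_k$ and the thresholds $M_k:=\lceil p|U_k|\rceil$ with $p=(|H|-1)/|H|$ from Definition~\ref{def:lamplighter-checkpoints}. The single-step chain $(F_n)$ from Theorem~\ref{thm:lamplighter-TFii} is built by concatenating, for each $k$, one base step from $F(U_k,M_k)$ to the ring-unlit enlargement $Y^{\rm base}$ over $U_{k+1}$, followed by $M_{k+1}-M_k$ ring steps that raise the ring budget one unit at a time. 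By construction, among the indices $n$ of this chain there is a canonical subsequence $\{n_k\}$ at which $F_{n_k}=F(U_k,M_k)=F_k^\star$, i.e.\ the checkpoints of Definition~\ref{def:lamplighter-checkpoints} sit inside $(F_n)$.

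For the global TF(ii) bound I would simply note that every link of $(F_n)$ is either a base step (Lemma~\ref{lem:base-step-tempered}) or a ring step (Lemma~\ref{lem:ring-step-tempered}), and in both cases the single-step increment equals the size of an exiting directed-edge set, which is bounded by $B_{\mathcal S}(F_n)\le\Delta|\partial_{\mathcal S}F_n|$ via Lemma~\ref{lem:vertex-edge-profiles}. This is precisely the conclusion of Theorem~\ref{thm:lamplighter-TFii}, so TF(ii) holds along the entire chain with $B=\Delta$ in vertex normalization, without any selection of subsequence.

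For TF(i) along checkpoints I would invoke Theorem~\ref{thm:ers-checkpoints}: it produces a constant $A_0=A_0(H,\mathcal S_\Gamma,\mathcal S)$ and, after aligning the $F_k^\star$ to record minima of the increasing minorant as flagged in Remark~\ref{rem:checkpoint-record}, an infinite subsequence $\{k_j\}$ with $B_{\mathcal S}(F_{k_j}^\star)\le A_0\,I^{\circ}_{\edge}(|F_{k_j}^\star|)$. Combining this with $|\partial_{\mathcal S}Y|\le B_{\mathcal S}(Y)$ and $I^{\circ}_{\edge}\le\Delta\,I^{\circ}$ from Lemma~\ref{lem:vertex-edge-profiles}, followed by the monotone passage to the increasing minorant, yields $|\partial_{\mathcal S}F_{n_{k_j}}|\le A_0\Delta\cdot I^{\mathrm{incr}\,\circ}(|F_{n_{k_j}}|)$, i.e.\ TF(i) with $A=A_0\Delta$ along the subsequence $\{n_{k_j}\}$ inside $(F_n)$.

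The only delicate point is the bookkeeping that couples the three normalizations (directed edge versus vertex, exact versus increasing profile) consistently across the two inputs. In particular, the comparison $I^{\circ}_{\edge}\le\Delta\,I^{\circ}$ and the monotone passage $I^{\circ}_{\edge}\mapsto I^{\mathrm{incr}\,\circ}_{\edge}$ must be applied in the correct order so that the factor $\Delta$ accumulates only once. I would handle this by phrasing both steps in edge normalization first (where the statements of Theorems~\ref{thm:lamplighter-TFii} and~\ref{thm:ers-checkpoints} are cleanest) and converting to vertex normalization only at the final line, which produces the clean constants $(A,B)=(A_0\Delta,\Delta)$ as claimed. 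No additional analytic input beyond the two cited theorems and Lemma~\ref{lem:vertex-edge-profiles} is required.
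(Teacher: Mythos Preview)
Your proposal is correct and matches the paper's approach: the theorem is indeed a packaging statement, and the paper itself gives no separate proof beyond pointing to Theorems~\ref{thm:lamplighter-TFii} and~\ref{thm:ers-checkpoints} (via Corollary~\ref{cor:lamplighter-TF-structure}). One small clarification on your normalization bookkeeping: the phrase ``followed by the monotone passage to the increasing minorant'' is slightly misleading, since $I^\circ\ge I^{\mathrm{incr}\,\circ}$ in general; the correct step is that at record volumes one has $I^\circ_{\edge}=I^{\mathrm{incr}\,\circ}_{\edge}$, and then $I^{\mathrm{incr}\,\circ}_{\edge}\le\Delta\,I^{\mathrm{incr}\,\circ}$ gives the vertex bound---but you already invoke the record-minimum alignment, so the logic is sound.
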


\begin{remark}[F{\o}lner checkpoints]
By Theorem~\ref{thm:lamplighter-folner-checkpoints}, the checkpoint family $F_k^\star$ is a F{\o}lner sequence in $G$.
\end{remark}

\begin{remark}[On constructivity]
The expansion mechanism (TF-ii) is explicit via single steps. The profile control (TF-i) relies on deep results for wreath products; we isolate this non-constructive input.
\end{remark}

\subsection{Balloon chains (amenable graphs), a no-go observation for Cayley graphs, and a question}\label{subsec:balloon-nogo-question}

We record a simple, self-contained \emph{graph} example where the exact-vs-increasing isoperimetric profile ratio blows up, and then explain why the \emph{mechanism} of that example does not port verbatim to Cayley graphs. We conclude with a concrete question for amenable groups.

Throughout this subsection we work in the \emph{undirected edge} normalization
\[
B(Y)\ :=\ \tfrac12\sum_{s\in\mathcal S}|sY\triangle Y|,\qquad
I^\circ(r)\ :=\ \inf\{\,B(Y):\ |Y|=r\,\},\qquad
I^{\mathrm{incr}\,\circ}(r)\ :=\ \inf_{s\ge r} I^\circ(s).
\]
Conversion to vertex or directed-edge boundary costs only a fixed multiplicative factor by Lemma~\ref{lem:vertex-edge-profiles}.

\paragraph{A balloon-chain amenable graph with unbounded ratio.}
Let $\{E_k\}_{k\ge1}$ be a sequence of finite, connected, $d$-regular graphs ($d\ge3$) with a uniform edge Cheeger constant $h_0>0$:
\[
|\partial_{E_k} A|\ \ge\ h_0\,\min\{|A|,\,|E_k|-|A|\}\qquad(\emptyset\neq A\subsetneq E_k)\, .
\]
Write $N_k:=|E_k|$ and assume $N_{k+1}\ge 3N_k$ (e.g.\ pass to a sparse subsequence). Pick $v_k\in V(E_k)$ and connect the $E_k$'s by single \emph{bridge} edges $\{v_k,v_{k+1}\}$, obtaining a connected graph $G$ of maximum degree $d+1$. Set
\[
S_n\ :=\ \bigcup_{k=1}^n E_k,\qquad n\ge1.
\]
Then $|S_n|=\sum_{k=1}^n N_k$ and $B(S_n)=1$.

\begin{proposition}[Amenability]\label{prop:amenable-balloon}
$G$ is amenable: $\displaystyle \frac{B(S_n)}{|S_n|}\to 0$.
\end{proposition}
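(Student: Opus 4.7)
The plan is to exhibit $(S_n)$ as a concrete F{\o}lner sequence by combining the two features that the construction was designed to provide: the undirected boundary $B(S_n)$ is a universal constant (namely $1$), while $|S_n|$ grows at least geometrically. Both facts are essentially visible in the setup, so the proof reduces to assembling them.

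First I would identify $B(S_n)$ explicitly. By the undirected normalization, $B(S_n)$ counts edges of $G$ with exactly one endpoint in $S_n=\bigcup_{k=1}^n E_k$. Edges of $G$ fall into two classes: internal edges of some blob $E_k$, and bridge edges $\{v_k,v_{k+1}\}$. Internal edges of $E_k$ with $k\le n$ have both endpoints in $S_n$, while internal edges of $E_k$ with $k>n$ have both endpoints in $G\setminus S_n$; neither contributes. Among the bridges, only $\{v_n,v_{n+1}\}$ straddles $S_n$ and its complement, while $\{v_k,v_{k+1}\}$ for $k<n$ lies inside $S_n$ and for $k>n$ lies outside. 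Therefore $B(S_n)=1$, as already recorded.

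Next I would lower-bound $|S_n|$. The hypothesis $N_{k+1}\ge 3 N_k$ iterates to $N_n\ge 3^{n-1}N_1$, and hence
\[
|S_n|\ =\ \sum_{k=1}^n N_k\ \ge\ N_n\ \ge\ 3^{n-1}N_1.
\]
Combining the two steps,
\[
\frac{B(S_n)}{|S_n|}\ \le\ \frac{1}{3^{n-1}N_1}\ \xrightarrow[n\to\infty]{}\ 0,
\]
which is the claimed amenability.

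The main obstacle, if one can speak of one here, is conceptual rather than technical: the proposition is deliberately cheap, because its role is to certify that $G$ is amenable while setting the stage for the genuinely substantive observation of the subsection, namely that along intermediate volumes $r\in (|S_n|,\,|S_n|+N_{n+1})$ the uniform edge Cheeger lower bound $h_0$ on $E_{n+1}$ forces $I^{\circ}(r)$ to jump, so that the exact profile grows while $I^{\mathrm{incr}\,\circ}(r)$ is pinned at $1$ by the checkpoints $S_n$. That disparity, not the present F{\o}lner certificate, is where the unbounded ratio will come from.
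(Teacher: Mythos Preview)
Your proof is correct and takes essentially the same approach as the paper: the paper's proof is the one-line ``Immediate from $B(S_n)=1$ and $|S_n|\to\infty$,'' and you have simply unpacked both assertions (the edge-by-edge identification of the single crossing bridge, and the geometric growth of $|S_n|$) in full detail.
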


\begin{proof}
Immediate from $B(S_n)=1$ and $|S_n|\to\infty$.
\end{proof}

\begin{theorem}[Unbounded profile ratio on an amenable graph]\label{thm:graph-blowup}
There is $c=c(h_0)>0$ such that for all $n$ and all
\[
r\ \in\ \Big[\,|S_{n-1}|+\tfrac13 N_n,\ \ |S_{n-1}|+\tfrac23 N_n\,\Big],
\]
one has
\[
I^\circ(r)\ \ge\ c\,N_n\qquad\text{and}\qquad I^{\mathrm{incr}\,\circ}(r)\ \le\ 1.
\]
Hence $\sup_r I^\circ(r)/I^{\mathrm{incr}\,\circ}(r)=\infty$.
\end{theorem}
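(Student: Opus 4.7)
The upper bound on $I^{\mathrm{incr}\,\circ}(r)$ is trivial: each initial union $S_n$ meets its complement only through the single bridge $\{v_n,v_{n+1}\}$, hence $B(S_n)=1$ for every $n$, and for any $r$ choosing $n$ with $|S_n|\ge r$ gives $I^{\mathrm{incr}\,\circ}(r)\le I^\circ(|S_n|)\le 1$. All the work is in the matching lower bound $I^\circ(r)\gtrsim N_n$, which I would reduce to a subset--sum dichotomy: any $Y$ with small $B(Y)$ must be, on each balloon $E_k$, either almost empty or almost full, and this forces $|Y|$ to be close to some subset sum $\sum_{k\in J}N_k$; but for $r$ in the middle--third window of $E_n$, every such subset sum lies at distance $\gtrsim N_n$ from $r$.

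To implement this, fix $Y$ with $|Y|=r$ and let $Y_k:=Y\cap E_k$, $m_k:=\min\{|Y_k|,\,N_k-|Y_k|\}$. Splitting $B(Y)$ into intra--balloon and bridge pieces and using the uniform edge--Cheeger inequality on each $E_k$ gives $B(Y)\ge h_0\sum_k m_k$. Writing $|Y_k|=\chi_k N_k+(1-2\chi_k)m_k$ with $\chi_k\in\{0,1\}$ indicating the near--full case, and summing over $k$, I obtain the key inequality
\[
\bigl|\,r-T\,\bigr|\ \le\ \sum_k m_k\ \le\ B(Y)/h_0,\qquad T:=\sum_{\chi_k=1}N_k,
\]
so it suffices to bound from below, uniformly in $Y$, the distance from $r$ to every subset sum of $\{N_k\}$. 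I handle this by partitioning on $m:=\max J$: if $m<n$ then $T\le|S_{n-1}|<r-N_n/3$; if $m=n$ then $T\in[N_n,\,N_n+|S_{n-1}|]$ and $T-r\ge N_n/3-|S_{n-1}|$; if $m\ge n+1$ then $T\ge N_{n+1}\ge 3N_n\gg r$. The bare hypothesis $N_{k+1}\ge 3N_k$ only yields $|S_{n-1}|\le N_n/2$, which is not enough to separate the case $m=n$ from the window, so I sparsify the balloon subsequence further (explicitly permitted by the theorem's ``pass to a sparse subsequence'' clause) so that $|S_{n-1}|\le N_n/6$. With this calibration, all three cases give $|r-T|\ge N_n/6$, hence $B(Y)\ge h_0 N_n/6$, i.e.\ $I^\circ(r)\ge (h_0/6)N_n$, and the ratio $I^\circ(r)/I^{\mathrm{incr}\,\circ}(r)$ is unbounded as $n\to\infty$.

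The only subtle point I foresee, and what I would call the main obstacle, is the sharpness of the subset--sum gap in the middle case $m=n$. The window $[|S_{n-1}|+N_n/3,\,|S_{n-1}|+2N_n/3]$ must lie strictly inside the gap $(|S_{n-1}|,\,N_n)$ between consecutive ``summable plateaus'' so that both $T\le|S_{n-1}|$ and $T\ge N_n$ are quantitatively far from $r$; without the mild sparsification to $|S_{n-1}|\le N_n/6$ this can fail outright (e.g.\ $r=N_n$ becomes admissible and is matched exactly by $T=N_n$, giving a spurious $B(Y)=O(1)$ attained by $Y=E_n$). Once the sparsification is fixed, the rest of the argument is routine: the edge--Cheeger inequality applied balloon--by--balloon, the bookkeeping of the $m_k$, and the three--case partition are all explicit with tracked constants depending only on $h_0$ and on the chosen sparsification rate, and no amenability input beyond $B(S_n)=1$ is used.
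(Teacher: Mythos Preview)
Your proposal is correct and, in fact, more careful than the paper's own argument. Both start from $B(Y)\ge h_0\sum_k m_k$ with $m_k=\min\{A_k,N_k-A_k\}$; the paper then cases directly on $A_n$, asserting that ``$A_n\ge\tfrac13 N_n$ implies $\min\{A_n,N_n-A_n\}\ge\tfrac13 N_n$'' and handling only the complementary case $A_n<\tfrac13 N_n$ via the tail $\sum_{k>n}A_k$. The first assertion is false when $A_n>\tfrac23 N_n$, and this gap is not cosmetic: under the bare hypothesis $N_{k+1}\ge 3N_k$ the statement is actually false, since with $N_k=3^k$ the volume $r=N_n$ lies in the middle-third window (for $n\ge 2$) while $Y=E_n$ has $B(Y)=2$.

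Your subset-sum reformulation $|r-T|\le\sum_k m_k$, $T=\sum_{\chi_k=1}N_k$, makes this obstruction transparent and handles all three regimes $\max J<n$, $=n$, $>n$ uniformly. Your fix---sparsifying so that $|S_{n-1}|\le N_n/6$ (equivalently $N_{k+1}\ge 7N_k$)---is exactly what is needed to push the dangerous subset sum $T=N_n$ above the window and secure $|r-T|\ge N_n/6$ in every case. Your reading of the ``pass to a sparse subsequence'' clause as permission to sparsify further is slightly generous (that parenthetical merely explains how to obtain $N_{k+1}\ge 3N_k$), but the mathematical point is right: it is the \emph{construction} that needs a stronger growth rate, and your version supplies it with the constant $c=h_0/6$.
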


\begin{proof}
For the minorant, use $I^{\mathrm{incr}\,\circ}(r)\le I^\circ(|S_n|)\le B(S_n)=1$ when $r\le |S_n|$.
For the lower bound fix $Y\subset V(G)$ with $|Y|=r$ and write $Y_k:=Y\cap E_k$, $A_k:=|Y_k|$. Since bridges only add nonnegative contributions,
\[
B(Y)\ \ge\ \sum_{k\ge1} |\partial_{E_k}Y_k|\ \ge\ h_0\sum_{k\ge1}\min\{A_k,N_k-A_k\}.
\]
Let $\Delta:=r-|S_{n-1}|\in[\tfrac13N_n,\tfrac23N_n]$. If $A_n\ge \tfrac13 N_n$, then $\min\{A_n,N_n-A_n\}\ge\tfrac13N_n$. Otherwise $A_n<\tfrac13N_n$ and
\[
\sum_{k>n} A_k\ \ge\ \Delta-A_n\ \ge\ \tfrac13N_n-A_n.
\]
Because $N_{n+1}\ge 3N_n$ we have $\sum_{k>n}A_k < \tfrac12 N_{n+1}\le \tfrac12 N_k$ for all $k>n$, so $\min\{A_k,N_k-A_k\}=A_k$ for $k>n$, whence
\[
\sum_{k>n}\min\{A_k,N_k-A_k\}\ \ge\ \tfrac13 N_n - A_n.
\]
In either case $\sum_{k\ge n}\min\{A_k,N_k-A_k\}\ge \tfrac13 N_n$, giving $B(Y)\ge (h_0/3)\,N_n$ and the claim.
\end{proof}

\begin{remark}[Context]\label{rem:graph-blowup-context}
The balloon-chain graph is intentionally non-vertex-transitive. It cleanly exposes the spike/plateau mechanism behind large oscillations between $I^\circ$ and $I^{\mathrm{incr}\,\circ}$ in a transparent setting.
\end{remark}

\paragraph{A no-go observation for Cayley graphs.}
The balloon mechanism relies on two features that do not carry over verbatim to Cayley graphs:
(i) extremely \emph{sparse} volumes at which $I^\circ$ is uniformly tiny (here, $B(S_n)=1$), and
(ii) extremely \emph{cheap interfaces} (single edges) separating huge regions.
In Cayley graphs, two elementary facts constrain how ``spiky'' $I^\circ$ can be between such cheap volumes.

\begin{lemma}[Left-Lipschitz trimming]\label{lem:left-lip-trim}
Let $X$ be a bounded-degree graph with maximum degree $\Delta$. For any finite $E\subset V(X)$ and any $t\in\{0,1,\dots,|E|\}$ there exists $E'\subseteq E$ with $|E'|=|E|-t$ and
\[
B(E')\ \le\ B(E)\ +\ \Delta\,t.
\]
Consequently, if $I^\circ(s_0)\le B_0$ for some $s_0$, then for every $r\le s_0$,
\[
I^\circ(r)\ \le\ B_0\ +\ \Delta\,(s_0-r).
\]
\end{lemma}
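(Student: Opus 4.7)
\medskip
\noindent\textbf{Proof proposal.} The plan is to establish the statement by reducing to a single-vertex trimming identity and iterating, then applying the resulting bound to an almost-minimizer at the larger scale.

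The first step would be the one-vertex accounting identity. Fix $v\in E$ and write
\[
d_{\mathrm{in}}(v):=|\{u\in E\setminus\{v\}:\ u\sim v\}|,\qquad
d_{\mathrm{out}}(v):=|\{u\in V\setminus E:\ u\sim v\}|.
\]
Removing $v$ converts every edge $\{v,u\}$ with $u\in E\setminus\{v\}$ from an internal edge into a boundary edge (endpoint $u$ is in $E'$, endpoint $v$ is not), and removes every edge $\{v,u\}$ with $u\in V\setminus E$ from the boundary (neither endpoint lies in $E'$). This gives the exact change formula
\[
B(E\setminus\{v\})-B(E)\ =\ d_{\mathrm{in}}(v)-d_{\mathrm{out}}(v)\ \le\ d_{\mathrm{in}}(v)\ \le\ \deg_X(v)\ \le\ \Delta,
\]
which is the single-vertex form of the claim.

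The second step is a trivial induction. Starting from $E^{(0)}:=E$, choose any $v_k\in E^{(k)}$ and set $E^{(k+1)}:=E^{(k)}\setminus\{v_k\}$ for $k=0,\dots,t-1$. Each step contributes at most $\Delta$ to the boundary by the previous display, and the $\Delta$-bound remains uniformly valid along the evolving sequence; no greedy or clever choice is needed. Hence $|E^{(t)}|=|E|-t$ and $B(E^{(t)})\le B(E)+\Delta t$, which is the claim with $E':=E^{(t)}$. For the consequence, fix $r\le s_0$ and $\varepsilon>0$, pick a set $E$ of size $s_0$ with $B(E)\le B_0+\varepsilon$, trim down to $E'$ of size $r$, take an infimum over $E$, and let $\varepsilon\downarrow 0$.

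There is no genuine technical obstacle here; the argument is purely combinatorial bookkeeping, whose punchline is the universal one-step overhead of $\Delta$ coming directly from the bounded-degree hypothesis. The conceptual content worth flagging is the asymmetry in the one-vertex identity $d_{\mathrm{in}}-d_{\mathrm{out}}$: trimming a deeply interior vertex is the worst case ($+\Delta$), while peeling a vertex with many outside neighbors is free or even beneficial. This asymmetry is precisely what rules out, in bounded-degree Cayley graphs, the plateau-plus-spike mechanism of Theorem~\ref{thm:graph-blowup}: a cheap volume $s_0$ with $I^\circ(s_0)\le B_0$ automatically pins $I^\circ(r)$ down by $B_0+\Delta(s_0-r)$ for every smaller $r$, so no tall spike can sit between a flat minorant and a cheap volume above.
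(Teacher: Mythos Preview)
Your proof is correct and follows essentially the same approach as the paper's: delete vertices one by one, observing that each deletion can create at most $\Delta$ new broken edges. Your version is more detailed in that you write out the exact change formula $B(E\setminus\{v\})-B(E)=d_{\mathrm{in}}(v)-d_{\mathrm{out}}(v)$, whereas the paper simply asserts the $\le\Delta$ bound in one line; and your $\varepsilon$-argument for the consequence is slightly more cautious than the paper's direct appeal to a minimizer (which exists since $B$ takes nonnegative integer values), but both are fine.
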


\begin{proof}
Delete vertices one by one; removing a vertex can introduce at most $\Delta$ new broken edges, so the boundary increases by at most $\Delta$ at each deletion. The consequence follows by applying the bound to a minimizer at size $s_0$.
\end{proof}

\begin{corollary}[No verbatim ``balloon plateau'' in Cayley graphs]\label{cor:nogo-plateau}
Let $X$ be a Cayley graph of degree $\Delta$. If along a sequence $(s_k)$ one has $I^\circ(s_k)\le B_0$ (with $B_0$ independent of $k$), then for each $k$ and all $r\in[s_k-\lfloor B_0/\Delta\rfloor,\,s_k]$,
$
I^\circ(r)\le 2B_0.
$
Thus arbitrarily long gaps of the form $[s_k,s_{k+1})$ cannot be ``expensive'' all the way up to $s_{k+1}$: near each cheap point $s_k$ there is a uniformly wide interval of uniformly cheap volumes.
\end{corollary}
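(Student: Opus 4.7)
The proof plan is straightforward: Corollary~\ref{cor:nogo-plateau} is essentially a one-step corollary of the left-Lipschitz trimming lemma (Lemma~\ref{lem:left-lip-trim}) applied at each ``cheap'' volume $s_k$, with the interval length $\lfloor B_0/\Delta\rfloor$ chosen so that the linear trimming penalty $\Delta\,(s_k-r)$ cannot exceed $B_0$.

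The approach is to simply unfold the lemma at the points of the hypothesized cheap subsequence. First, I would fix an index $k$ and invoke the ``consequence'' form of Lemma~\ref{lem:left-lip-trim} with $s_0:=s_k$ and the bound $B_0$ furnished by the hypothesis $I^\circ(s_k)\le B_0$; this yields
\[
I^\circ(r)\ \le\ B_0\ +\ \Delta\,(s_k-r)\qquad\text{for every integer }r\le s_k.
\]
Next, I would restrict $r$ to the stated window $r\in[s_k-\lfloor B_0/\Delta\rfloor,\,s_k]$. On this window $s_k-r$ is a nonnegative integer bounded above by $\lfloor B_0/\Delta\rfloor$, so $\Delta\,(s_k-r)\le \Delta\,\lfloor B_0/\Delta\rfloor\le B_0$, and the displayed inequality gives $I^\circ(r)\le 2B_0$, as claimed. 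Because the argument is performed independently at each $k$, the same window $[s_k-\lfloor B_0/\Delta\rfloor,s_k]$ is produced around every cheap point, which is exactly the ``uniformly wide interval of uniformly cheap volumes'' referred to in the statement.

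There is no real analytic obstacle: all the content sits in Lemma~\ref{lem:left-lip-trim} (the left-Lipschitz trimming bound with additive cost $\Delta$ per deleted vertex), which is itself an immediate consequence of $\Delta$-regularity. The only minor bookkeeping point to verify carefully is the integer rounding in $\lfloor B_0/\Delta\rfloor$, together with the observation that deletion is carried out one vertex at a time so that the boundary increases by at most $\Delta$ per step (not $\Delta$ per removed pair, and without any cross-terms from previously removed vertices). What the corollary adds conceptually, in contrast to Theorem~\ref{thm:graph-blowup}, is the \emph{obstruction} aspect: in the balloon-chain graph one could have a single cheap volume $|S_n|$ with $B=1$ \emph{but} no Cayley-type trimming bound is available there, since deleting vertices from $S_n$ inside $E_n$ cuts into a $d$-regular Cheeger-expander piece and \emph{can} add $\Theta(h_0\,t)$ boundary. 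Thus the step worth highlighting in the writeup is not the algebra, but the fact that bounded degree alone (valid in Cayley graphs) is what forces the conclusion and rules out the balloon-style plateau mechanism verbatim.
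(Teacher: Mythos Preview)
Your proof is correct and matches the paper's one-line argument exactly: apply Lemma~\ref{lem:left-lip-trim} with $t=s_k-r\le \lfloor B_0/\Delta\rfloor$, so $\Delta t\le B_0$ and hence $I^\circ(r)\le B_0+\Delta t\le 2B_0$.

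One small inaccuracy in your closing commentary: the left-Lipschitz trimming bound \emph{is} available on the balloon-chain graph (Lemma~\ref{lem:left-lip-trim} holds on any bounded-degree graph, and the balloon chain has maximum degree $d+1$). The reason the corollary gives no obstruction there is not that the bound fails, but that $B_0=1$ and $\Delta=d+1\ge 4$, so the window width $\lfloor B_0/\Delta\rfloor=0$ and the ``uniformly wide interval'' degenerates to the single point $\{s_k\}$. The Cheeger constant $h_0$ governs a \emph{lower} bound on the boundary increase when trimming into an expander, not the upper bound; the $\Delta t$ upper bound still holds but is simply too coarse to be informative when $B_0<\Delta$.
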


\begin{proof}
Apply Lemma~\ref{lem:left-lip-trim} with $t\le B_0/\Delta$.
\end{proof}

\begin{remark}[What this does and does not say]\label{rem:what-nogo-says}
Corollary~\ref{cor:nogo-plateau} does \emph{not} rule out large oscillations of $I^\circ$ in Cayley graphs; it only shows that the exact ``single-edge plateau with arbitrarily long expensive ramps'' of the balloon chain cannot occur verbatim. In particular, if cheap volumes are dense (as in many amenable Cayley graphs), the left-Lipschitz control significantly limits how steep spikes can be patched between them. For several amenable Cayley families (virtually nilpotent groups; finite-lamp lamplighters $H\wr \mathbb Z$; semidirects $\mathbb Z^d\rtimes_A\mathbb Z$ with hyperbolic $A$; finite products/extensions), two-sided asymptotics or our (TF) scheme give \emph{bounded} ratios $I^\circ/I^{\mathrm{incr}\,\circ}$ (see \S\ref{sec:bounded-ratio} and references such as \cite{Erschler2003}).
\end{remark}

\paragraph{A question.}
Motivated by the toy model above and the constraints just discussed, we isolate a clear target for future work.

\begin{question}\label{q:amenable-cayley-ratioblowup}
Does there exist a finitely generated \emph{amenable} group whose Cayley graph satisfies
\[
\sup_{r\ge1}\ \frac{I^\circ(r)}{I^{\mathrm{incr}\,\circ}(r)}\ =\ \infty\ ?
\]
\end{question}

\begin{remark}[Known positive and negative evidence]\label{rem:evidence}
For classical amenable families, the ratio is known or shown here to be \emph{bounded}: virtually nilpotent groups (two-sided Wulff asymptotics), finite-lamp lamplighters over $\mathbb Z$ (two-sided $r/\log r$ bounds, e.g.\ \cite{Erschler2003}), and hyperbolic semidirects $\mathbb Z^d\rtimes_A\mathbb Z$ (our \textup{(TF)} interleaving). The balloon chain shows that unbounded ratios occur easily in non-vertex-transitive amenable graphs. Whether this can happen in the Cayley world remains, to our knowledge, open.
\end{remark}

\subsection{New verifications and closure properties for {\rm(TF)}}\label{sec:new-TF}

Unless stated otherwise, in this subsection we work with the \emph{undirected} edge boundary
\[
B_{\mathcal S}(Y)\ :=\ \tfrac12\sum_{s\in\mathcal S}|sY\triangle Y|.
\]
Comparisons to directed/vertex boundaries cost a universal factor by Lemma~\ref{lem:vertex-edge-profiles}. All statements in this subsection use this normalization unless an ``edge'' subscript $(\cdot)_{\edge}$ is present.

\subsubsection{Stability under finite index and finite extensions}

\begin{proposition}[Finite index invariance]\label{prop:TF-finite-index}
Let $H\le G$ be of finite index and equip $G,H$ with Cayley sets $\mathcal S_G,\mathcal S_H$ with
$\mathcal S_H\subseteq \mathcal S_G^m$ and $\mathcal S_G\subseteq \mathcal S_H^n$.
Then $G$ has {\rm(TF)} iff $H$ has {\rm(TF)}. Moreover, if $H$ has {\rm(TF)} with $(A_e,B)$ (in edge normalization), then $G$ has {\rm(TF)} with constants depending only on $(A_e,B,m,n,[G:H],\Delta_G,\Delta_H)$.
\end{proposition}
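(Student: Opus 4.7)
The plan is to tie the two directions of the proposition together via a two-sided profile comparison $I^{\circ}_{\edge,G}(kr)\asymp I^{\circ}_{\edge,H}(r)$, with $k:=[G:H]$, and then construct explicit tempered chains on each side by coset saturation and majority-coset slicing respectively. First I would fix a transversal $T=\{t_1,\dots,t_k\}$ for $G/H$ with $t_1=e$ and, by Lemma~\ref{lem:chg-gen}, pass to a Schreier-type normalization so that the coset identity $ts=t'\cdot h(t,s)$ with $s\in\mathcal S_G$ places $h(t,s)$ in a fixed finite symmetric subset $H_0\subset H$ whose elements have $\mathcal S_H$-word length bounded in terms of $m$, and conversely so that each $s\in\mathcal S_G$ is realized by an $\mathcal S_H$-word of length at most $n$ (up to a transversal offset). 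This identification matches $G$-edges $(ft\to fts)$ with $H$-edges $(f\to fh(t,s))$ in a bounded-multiplicity way, from which the two-sided profile comparison on exact profiles follows; the analogous bound on the increasing minorants is then obtained by taking infima and invoking Lemma~\ref{lem:incr-Lip-right}, with all constants depending only on $(k,m,n,\Delta_G,\Delta_H)$.

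For the direction $H\Rightarrow G$: given a (TF) chain $(F_n)$ for $H$ with edge constants $(A_e,B)$, I would set $\tilde F_n:=F_nT$, a nested chain of size $k|F_n|$. The Schreier identification yields two-sided bounds $c_0\,B_{\mathcal S_H}(F_n)\le B_{\mathcal S_G}(\tilde F_n)\le C_1\,B_{\mathcal S_H}(F_n)$ with constants depending only on $(k,m,n,\Delta_G,\Delta_H)$, the lower bound coming from the canonical $H$-edge lift at $t=e$ and the upper bound from bounded-multiplicity counting across cosets. Combining these with the identity $|\tilde F_{n+1}\setminus\tilde F_n|=k|F_{n+1}\setminus F_n|\le kB\,B_{\mathcal S_H}(F_n)$ delivers (TF)(ii) for $G$, while the upper bound combined with the profile comparison delivers (TF)(i$_e$) for $G$ with explicit constant $C_1C_2A_e$.

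For the direction $G\Rightarrow H$: given $(E_n)$ a (TF) chain for $G$ with edge constants $(A_e',B')$, a pigeonhole over cosets selects $t_n\in T$ with $|E_nt_n^{-1}\cap H|\ge |E_n|/k$; after passing to a subsequence so that $t_n=e$ is constant (right-translation preserves all boundary and profile quantities), I would set $F_n:=E_n\cap H$. The slicing inequality $B_{\mathcal S_H}(F_n)\le C_3\,B_{\mathcal S_G}(E_n)$, again via bounded Schreier words, combined with the profile comparison then gives (TF)(i$_e$) for $H$. The \textbf{main obstacle} is restoring nestedness, since $(F_n)$ inherits only approximate containment from $(E_n)$ after subsequencing. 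I would resolve this by monotonizing $\hat F_n:=\bigcup_{m\le n}F_m$ and controlling each increment via $|\hat F_n\setminus\hat F_{n-1}|\le|E_n\setminus E_{n-1}|\le B'\,B_{\mathcal S_G}(E_{n-1})$, then converting back through the slicing inequality to obtain (TF)(ii) for $H$; the nontrivial point is verifying that the monotonization does not inflate the $H$-boundary beyond a controlled multiple of $B_{\mathcal S_H}(\hat F_{n-1})$, which requires a coset-saturation argument paralleling the one in the $H\Rightarrow G$ direction. If this direct nestification proves fragile, the clean fallback is to extract record minima of the $H$-minorant along $(F_n)$ to obtain nested near-minimizers in the sense of Definition~\ref{def:NNM}, and then invoke Theorem~\ref{thm:NNM-alone-to-TF} to promote NNM to full (TF) on $H$, routing through amenability at the cost of a slightly weaker quantitative dependence.
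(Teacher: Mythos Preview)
Your $H\Rightarrow G$ direction matches the paper's proof essentially line for line: coset-saturate a nested (TF) chain in $H$, use Lemma~\ref{lem:chg-gen}-type bounds to get two-sided comparability of edge boundaries, and compare increasing minorants. The one difference is that the paper invokes the QI comparison Theorem~\ref{thm:new-QI-invariance} as a black box for $I^{\mathrm{incr}\,\circ}_{\edge,H}\asymp I^{\mathrm{incr}\,\circ}_{\edge,G}$, whereas you rebuild this by hand via Schreier words; both routes are fine.

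For $G\Rightarrow H$ you over-engineer. The paper simply says ``restrict to a coset,'' and indeed the direct slice $F_n:=E_n\cap H$ (no translation, no pigeonhole, no subsequencing) already works: nestedness is inherited for free from $(E_n)$, and the F{\o}lner property forces $|F_n|\sim|E_n|/k$ for large $n$, so no coset selection is needed. Your ``main obstacle'' (restoring nestedness after monotonization) is self-inflicted by the pigeonhole/translate step. What does require care is the increment bound: one chains $|F_{n+1}\setminus F_n|\le|E_{n+1}\setminus E_n|\le B\,B_{\mathcal S_G}(E_n)\le BA_e\,I^{\mathrm{incr}\,\circ}_{\edge,G}(|E_n|)\asymp I^{\mathrm{incr}\,\circ}_{\edge,H}(|F_n|)\le B_{\mathcal S_H}(F_n)$, closing (TF)(ii) for $H$; your slicing inequality $B_{\mathcal S_H}(F_n)\lesssim B_{\mathcal S_G}(E_n)$ then gives (TF)(i$_e$). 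The NNM fallback is sound but unnecessary.

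One convention slip: under the paper's right-action edge convention, it is \emph{left} translation that preserves $B_{\mathcal S}$, so ``right-translation preserves all boundary quantities'' is false as stated, and your saturation should read $TF_n$ rather than $F_nT$. (The paper's own proof has a matching left/right wobble in its choice of transversal, so this is cosmetic on both sides.)
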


\begin{proof}
(\emph{TF(ii).}) Fix a right transversal $T\subset G$ for $H\backslash G$, $|T|=[G:H]=:q$. Assume $(F_n)_{n\ge1}$ in $H$ are nested and satisfy
$|F_{n+1}\setminus F_n|\le B\,B_{\mathcal S_H}(F_n)$. Set
\[
E_n\ :=\ \bigcup_{t\in T} tF_n\ \subset G.
\]
Then $E_n\subset E_{n+1}$, $|E_n|=q\,|F_n|$, and $|E_{n+1}\setminus E_n|=q\,|F_{n+1}\setminus F_n|$.
By bounded change of generating sets ($\mathcal S_H\subseteq \mathcal S_G^m$, $\mathcal S_G\subseteq \mathcal S_H^n$) and bounded coset overlaps, there exist constants $c_1,c_2\in(0,\infty)$ depending only on $(m,n,\Delta_G,\Delta_H,q)$ such that
\[
c_1\,B_{\mathcal S_H}(F_n)\ \le\ B_{\mathcal S_G}(E_n)\ \le\ c_2\,B_{\mathcal S_H}(F_n).
\]
Hence
\[
|E_{n+1}\setminus E_n|\ =\ q\,|F_{n+1}\setminus F_n|\ \le\ q\,B\,B_{\mathcal S_H}(F_n)\ \le\ (qB/c_1)\,B_{\mathcal S_G}(E_n),
\]
so TF(ii) holds in $G$ with $B_G:=qB/c_1$.

(\emph{TF(i$_e$).}) Suppose $B_{\mathcal S_H}(F_n)\le A_e\,I^{\mathrm{incr}\,\circ}_{\edge,H}(|F_n|)$ for all $n$.
Then $B_{\mathcal S_G}(E_n)\le c_2 A_e\,I^{\mathrm{incr}\,\circ}_{\edge,H}(|F_n|)$.
By quasi-isometry invariance of profiles (Theorem~\ref{thm:new-QI-invariance} applied to the Cayley graphs of $H$ and $G$), there exist $a,b>0$ depending only on $(m,n,\Delta_G,\Delta_H,q)$ such that
\[
I^{\mathrm{incr}\,\circ}_{\edge,H}(r)\ \le\ a\,I^{\mathrm{incr}\,\circ}_{\edge,G}(b\,r)\qquad\forall r\ge1.
\]
With $|F_n|=|E_n|/q$ we get
\[
B_{\mathcal S_G}(E_n)\ \le\ c_2 a A_e\ I^{\mathrm{incr}\,\circ}_{\edge,G}\bigl(b\,|E_n|/q\bigr).
\]
Finally, by the right-Lipschitz property of the increasing minorant (Lemma~\ref{lem:incr-Lip-right} in vertex normalization; the edge version is analogous), changing the argument by a fixed factor changes the value by at most a fixed multiplicative/additive constant. Absorbing these numerical factors yields
\[
B_{\mathcal S_G}(E_n)\ \le\ A'_e\,I^{\mathrm{incr}\,\circ}_{\edge,G}(|E_n|),
\]
with $A'_e$ depending only on $(A_e,m,n,\Delta_G,\Delta_H,q)$. The converse direction ($G\Rightarrow H$) is analogous by restricting to a coset and using the other generator inclusion.
\end{proof}

\begin{proposition}[Finite extensions]\label{prop:TF-finite-extension}
Let $1\to F\to G \xrightarrow{\pi} Q\to 1$ be exact with $F$ finite. Then $G$ has {\rm(TF)} iff $Q$ has {\rm(TF)}. Constants change by factors depending only on $|F|$ and the generating sets.
\end{proposition}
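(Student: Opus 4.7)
The plan is to realize $\pi:G\to Q$ as a $|F|$-to-$1$ Cayley covering with respect to compatible generating sets, and to transfer {\rm(TF)} chains through $\pi$: lifting one way, kernel-saturating then projecting the other. Specifically, fix a symmetric $\mathcal S_F\subset F$ generating $F$, lift each $s\in\mathcal S_Q$ to some $\widetilde s\in G$, and set $\mathcal S_G:=\mathcal S_F\cup\widetilde{\mathcal S}_Q$. With this choice every directed edge of $\mathrm{Cay}(G,\mathcal S_G)$ is either \emph{vertical} (from $\mathcal S_F$, staying in an $F$-coset) or \emph{horizontal} (from $\widetilde{\mathcal S}_Q$, projecting bijectively onto an edge of $\mathrm{Cay}(Q,\mathcal S_Q)$). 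By Lemma~\ref{lem:chg-gen} any other symmetric generating set on $G$ or $Q$ alters all boundary quantities and profiles by at most a uniformly bounded multiplicative factor, absorbed in the final constants.

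For the $Q\Rightarrow G$ direction, given a chain $(F_n)$ in $Q$ with {\rm(TF)} constants $(A_e^Q,B^Q)$, I would lift to $E_n:=\pi^{-1}(F_n)$. Since $E_n$ is saturated over $F$-cosets, vertical edges are never broken, and each horizontal boundary edge of $F_n$ lifts to exactly $|F|$ broken edges of $E_n$, giving the clean identities
\[
|E_n|=|F|\,|F_n|,\qquad B_{\mathcal S_G}(E_n)=|F|\,B_{\mathcal S_Q}(F_n),\qquad |E_{n+1}\setminus E_n|=|F|\,|F_{n+1}\setminus F_n|.
\]
Nestedness and {\rm TF(ii)} are preserved with the same constant. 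For {\rm TF(i$_e$)}, the profile comparison of Theorem~\ref{thm:new-QI-invariance} applied to the finite-kernel cover yields $I^{\mathrm{incr}\,\circ}_{\edge,G}(|F|r)\asymp |F|\,I^{\mathrm{incr}\,\circ}_{\edge,Q}(r)$ up to a factor $C=C(|F|,\Delta_G,\Delta_Q)$, whence full {\rm(TF)} on $G$ follows with constants depending only on $(A_e^Q,B^Q,|F|,\Delta_G,\Delta_Q)$.

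For the reverse $G\Rightarrow Q$ direction, the plan is to first \emph{kernel-saturate} and then project. Given $(E_n)$ with {\rm(TF)} in $G$, set $\widetilde E_n:=\pi^{-1}(\pi(E_n))$ and $F_n:=\pi(E_n)$, so $\widetilde E_n=\pi^{-1}(F_n)$. Every vertex added by saturation lies within word-distance $D:=\operatorname{diam}_{\mathcal S_G}(F)$ of $E_n$, so both $|\widetilde E_n\setminus E_n|$ and the change in boundary are dominated by a fixed multiple of $B_{\mathcal S_G}(E_n)$; in particular $B_{\mathcal S_G}(\widetilde E_n)\asymp B_{\mathcal S_G}(E_n)$ with ratio depending only on $(|F|,D,\Delta_G)$. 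The identities of the previous paragraph read backward now give $B_{\mathcal S_Q}(F_n)=|F|^{-1}B_{\mathcal S_G}(\widetilde E_n)$ and $|F_{n+1}\setminus F_n|\le |F|^{-1}|\widetilde E_{n+1}\setminus\widetilde E_n|$, from which {\rm TF(ii)} and {\rm TF(i$_e$)} for $(F_n)$ on $Q$ follow via Theorem~\ref{thm:new-QI-invariance} and Lemma~\ref{lem:incr-Lip-right}.

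The main obstacle I anticipate is the \emph{quantitative} control in this saturation step: one must verify that the \emph{increments} $|\widetilde E_{n+1}\setminus\widetilde E_n|$ remain tempered against $B_{\mathcal S_G}(\widetilde E_n)$, not merely that each quantity is separately comparable to its unsaturated counterpart. This reduces to a double-counting argument in which a vertex newly appearing in $\widetilde E_{n+1}\setminus\widetilde E_n$ is charged to a boundary vertex of $E_{n+1}$ within distance $D$, with multiplicity at most $|F|$; applying {\rm TF(ii)} on $G$ at step $n$ and the Lipschitz control of Proposition~\ref{prop:tail-trim} and Lemma~\ref{lem:incr-Lip-right} to absorb small index shifts, one recovers {\rm TF(ii)} on $Q$ with constants tracked through the finite parameters $(|F|,D,\Delta_G,\Delta_Q)$ only. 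Everything else is bookkeeping.
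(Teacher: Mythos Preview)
Your approach matches the paper's: lift full $\pi$-preimages for $Q\Rightarrow G$, project for $G\Rightarrow Q$, and transfer profile bounds via the QI comparison (Theorem~\ref{thm:new-QI-invariance}). The $Q\Rightarrow G$ direction is correct and essentially identical to the paper's argument.

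There is, however, a genuine gap in the $G\Rightarrow Q$ step. You assert that saturation gives the two-sided comparison $B_{\mathcal S_G}(\widetilde E_n)\asymp B_{\mathcal S_G}(E_n)$, but only the direction $B_{\mathcal S_G}(\widetilde E_n)\lesssim B_{\mathcal S_G}(E_n)$ holds in general. Consider $G=\Z\times\Z/2$ with axis generators and $E_n=\{0,\dots,n-1\}\times\{0\}$: then $B_{\mathcal S_G}(E_n)\asymp n$ (all vertical edges broken) while $B_{\mathcal S_G}(\widetilde E_n)=4$. Your double-counting charges each vertex of $\widetilde E_{n+1}\setminus\widetilde E_n$ to a vertex of $E_{n+1}\setminus E_n$, giving $|\widetilde E_{n+1}\setminus\widetilde E_n|\le|F|\,|E_{n+1}\setminus E_n|\le |F|\,B\,B_{\mathcal S_G}(E_n)$; but tempering against $B_{\mathcal S_G}(\widetilde E_n)$ then requires precisely the false direction of the comparability.

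The fix is to route through {\rm TF(i$_e$)} on $G$ rather than through the saturated boundary. One has
\[
|F_{n+1}\setminus F_n|\ \le\ |E_{n+1}\setminus E_n|\ \le\ B\,B_{\mathcal S_G}(E_n)\ \le\ B A_e\,I^{\mathrm{incr}\,\circ}_{\edge,G}(|E_n|)\ \lesssim\ I^{\mathrm{incr}\,\circ}_{\edge,Q}(|F_n|)\ \le\ B_{\mathcal S_Q}(F_n),
\]
using in turn the projection bound, {\rm TF(ii)} and {\rm TF(i$_e$)} on $G$, the QI profile comparison together with $|E_n|\le|F|\,|F_n|$, and the trivial minorant inequality. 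For {\rm TF(i$_e$)} on $Q$, the direct bound $B_{\mathcal S_Q}(F_n)\le B_{\mathcal S_G}(E_n)$ (each broken $(q,qs)$ lifts to at least one broken horizontal edge of $E_n$) feeds into the same chain. So the argument closes, but only by invoking {\rm TF(i$_e$)} to control the {\rm TF(ii)} increment, not by the boundary comparability you claimed.
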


\begin{proof}
(\emph{TF(ii).}) If $(\widehat F_n)$ in $Q$ witnesses TF(ii) with constant $B_Q$, choose a section $s:Q\to G$ and define
\[
E_n\ :=\ s(\widehat F_n)\,F\ \subset G.
\]
Then $E_n\subset E_{n+1}$, $|E_n|=|F|\,|\widehat F_n|$, and $|E_{n+1}\setminus E_n|=|F|\,|\widehat F_{n+1}\setminus \widehat F_n|$. Each broken edge in $Q$ lifts to at most $|F|$ broken edges in $G$, and conversely $B_{\mathcal S_G}(E_n)\asymp_{|F|} B_{\mathcal S_Q}(\widehat F_n)$ (bounded change of generators between the quotient and the lifted set). Thus
\[
|E_{n+1}\setminus E_n|\ \le\ |F|\,B_Q\,B_{\mathcal S_Q}(\widehat F_n)\ \lesssim_{|F|}\ B_{\mathcal S_G}(E_n),
\]
proving TF(ii) in $G$. The reverse direction projects a TF(ii) chain in $G$ to $Q$ and uses the same comparability.

(\emph{TF(i$_e$).}) If $B_{\mathcal S_Q}(\widehat F_n)\le A_e\, I^{\mathrm{incr}\,\circ}_{\edge,Q}(|\widehat F_n|)$, then
\[
B_{\mathcal S_G}(E_n)\ \lesssim_{|F|}\ A_e\, I^{\mathrm{incr}\,\circ}_{\edge,Q}(|\widehat F_n|).
\]
By quasi-isometry invariance of the increasing minorant between $G$ and $Q$ (Theorem~\ref{thm:new-QI-invariance}), there exists $a>0$ such that
$
I^{\mathrm{incr}\,\circ}_{\edge,Q}(r)\ \le\ a\,I^{\mathrm{incr}\,\circ}_{\edge,G}(r)
$
up to a bounded change of scale (again absorbed via right-Lipschitz). Hence
$
B_{\mathcal S_G}(E_n)\ \le\ A'_e\,I^{\mathrm{incr}\,\circ}_{\edge,G}(|E_n|)
$
with $A'_e$ depending only on $(A_e,|F|)$ and the generators. The converse direction is similar.
\end{proof}

\subsubsection{Direct products}

\begin{proposition}[Products and {\rm(TF)} increments]\label{prop:product-TF}
For the product generating set on $G_1\times G_2$ and finite $U_i\subset G_i$,
\[
\partial(U_1\times U_2)\ \subset\ (\partial U_1)\times U_2\ \cup\ U_1\times(\partial U_2).
\]
Consequently, if $U_i$ have {\rm(TF)} increment bounds $B_i$, then $U_1\times U_2$ has an increment bound of the form $B\lesssim B_1+B_2$ (after a fixed normalization conversion).
\end{proposition}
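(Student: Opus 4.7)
The plan is to split the proposition into the set-theoretic inclusion and the increment estimate, which are logically independent, and to handle each by a short direct argument.

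For the inclusion, I would fix $(y_1,y_2)\in\partial_{\mathcal S}(U_1\times U_2)$. By Definition~\ref{def:boundaries-prelim}, there exist $(x_1,x_2)\in U_1\times U_2$ and $s\in\mathcal S:=(\mathcal S_1\times\{e_2\})\cup(\{e_1\}\times\mathcal S_2)$ with $(y_1,y_2)=(x_1,x_2)\cdot s$ and $(y_1,y_2)\notin U_1\times U_2$. Because each product generator moves exactly one coordinate, either $(y_1,y_2)=(x_1 s_1,x_2)$ with $y_2=x_2\in U_2$, or $(y_1,y_2)=(x_1,x_2 s_2)$ with $y_1=x_1\in U_1$. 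In the first case the failure $(y_1,y_2)\notin U_1\times U_2$ forces $y_1\notin U_1$, so $y_1\in\partial_{\mathcal S_1}U_1$ and $(y_1,y_2)\in(\partial_{\mathcal S_1}U_1)\times U_2$. The second case is symmetric and yields the other half of the union.

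For the increment bound I would first record the directed-edge product identity
\[
B_{\mathcal S}(U_1\times U_2)\ =\ B_{\mathcal S_1}(U_1)\,|U_2|\ +\ |U_1|\,B_{\mathcal S_2}(U_2),
\]
which follows by partitioning the pairs $((x_1,x_2),s)$ counted in $B_{\mathcal S}$ according to whether $s$ lies in $\mathcal S_1\times\{e_2\}$ or in $\{e_1\}\times\mathcal S_2$ (each product generator moves exactly one coordinate). Given nested chains $(F_n^{(i)})$ in $G_i$ witnessing (TF)(ii) with constants $B_i$, I would then interleave them into the nested sequence
\[
F_{2n}\ :=\ F_n^{(1)}\times F_n^{(2)},\qquad F_{2n+1}\ :=\ F_{n+1}^{(1)}\times F_n^{(2)}.
\]
Each elementary step alters only one factor, so the increments are Cartesian slabs:
\[
F_{2n+1}\setminus F_{2n}\ =\ (F_{n+1}^{(1)}\setminus F_n^{(1)})\times F_n^{(2)},\qquad F_{2n+2}\setminus F_{2n+1}\ =\ F_{n+1}^{(1)}\times(F_{n+1}^{(2)}\setminus F_n^{(2)}).
\]
Applying the factorwise (TF)(ii) bound to each slab and dominating by the corresponding summand of the product identity above gives
\[
|F_{2n+1}\setminus F_{2n}|\ \le\ B_1\,B_{\mathcal S_1}(F_n^{(1)})\,|F_n^{(2)}|\ \le\ B_1\,B_{\mathcal S}(F_{2n}),
\]
and analogously $|F_{2n+2}\setminus F_{2n+1}|\le B_2\,B_{\mathcal S}(F_{2n+1})$. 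Hence the interleaved chain satisfies (TF)(ii) on $G_1\times G_2$ with increment constant $\max\{B_1,B_2\}\le B_1+B_2$.

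The only real subtlety, and the source of the implicit factor hidden in ``$\lesssim$'', is normalization bookkeeping: the vertex inclusion yields the product-type bound $|\partial(U_1\times U_2)|\le|\partial U_1||U_2|+|U_1||\partial U_2|$, whereas the clean additive statement $B\le B_1+B_2$ is genuinely a directed-edge phenomenon relying on the product identity. Converting between the vertex and directed-edge normalizations costs only a universal factor depending on $\Delta_1,\Delta_2$ via Lemma~\ref{lem:vertex-edge-profiles}, which is absorbed into the ``$\lesssim$''. I do not anticipate any conceptual obstacle beyond this unit-tracking; the entire argument is combinatorial and local to each interleaving step.
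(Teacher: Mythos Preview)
Your proof is correct. The inclusion is handled exactly as one would expect, and the increment argument via the directed-edge product identity is clean.

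The paper itself states the proposition without a dedicated proof and defers the substance to the subsequent Theorem~\ref{thm:TF-product}, where it uses the \emph{synchronized} product chain $E_n:=F^{(1)}_n\times F^{(2)}_n$ rather than your interleaved chain. With the synchronized chain one must decompose
\[
E_{n+1}\setminus E_n\ \subset\ \big(F^{(1)}_{n+1}\setminus F^{(1)}_n\big)\times F^{(2)}_{n+1}\ \cup\ F^{(1)}_n\times\big(F^{(2)}_{n+1}\setminus F^{(2)}_n\big),
\]
and the factor $|F^{(2)}_{n+1}|$ (rather than $|F^{(2)}_n|$) in the first term forces a growth control $|F^{(2)}_{n+1}|\le(1+B_2\Delta_2)|F^{(2)}_n|$, which is where the paper's ``up to a factor depending only on $\Delta_1,\Delta_2$'' comes from. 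Your interleaving sidesteps this entirely: each step alters only one factor, so the relevant slab cardinality is matched \emph{exactly} by a summand of the product identity for $B_{\mathcal S}$ at the \emph{current} set, yielding $B=\max\{B_1,B_2\}$ with no extraneous degree factor in the edge normalization. The trade-off is that your chain has twice as many steps, which is harmless for (TF)(ii) but would require a separate check if one later wanted (TF)(i$_e$) at every index rather than at checkpoints.
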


\begin{theorem}[Direct products: TF(ii) unconditionally; TF(i$_e$) under NNM]\label{thm:TF-product}
Let $(G_i,\mathcal S_i)$, $i=1,2$, satisfy {\normalfont(TF)} in edge normalization with constants $(A^{(i)}_e,B^{(i)})$.
Equip $G:=G_1\times G_2$ with the product generating set
$
\mathcal S:=\big(\mathcal S_1\times\{e\}\big)\cup\big(\{e\}\times\mathcal S_2\big).
$
Then:

\smallskip
\noindent\emph{(a) TF(ii), unconditionally.}
If $(F^{(i)}_n)$ witness TF(ii) with $B^{(i)}$, then
\[
E_n:=F^{(1)}_n\times F^{(2)}_n
\]
is nested in $G$ and satisfies
\[
|E_{n+1}\setminus E_n|
\ \le\ B\,B_{\mathcal S}(E_n),\qquad
B \ \lesssim\ B^{(1)}+B^{(2)}.
\]
Indeed, in the (undirected) edge normalization one has the exact split
\[
B_{\mathcal S}(E_n)=B_{\mathcal S_1}(F^{(1)}_n)\,|F^{(2)}_n|
\ +\ |F^{(1)}_n|\,B_{\mathcal S_2}(F^{(2)}_n),
\]
since each product generator acts in exactly one coordinate.

\smallskip
\noindent\emph{(b) TF(i$_e$) along a cofinal subsequence, assuming NNM in each factor.}
Assume each $(G_i,\mathcal S_i)$ admits nested near-minimizers (Definition~\ref{def:NNM}) with
\[
B_{\mathcal S_i}\bigl(W^{(i)}_u\bigr)\ \le\ C^{(i)}_0\,I^{\mathrm{incr}\,\circ}_{\edge,G_i}(u)\qquad(u\gg1).
\]
Let $\mathcal R_i:=\{\,u:\ I^{\mathrm{incr}\,\circ}_{\edge,G_i}(u)=I^{\circ}_{\edge,G_i}(u)\,\}$ be the record volumes of the factor minorants. Then along any cofinal sequence $(u_j,v_j)\in\mathcal R_1\times\mathcal R_2$ and with
\[
Z_j\ :=\ W^{(1)}_{u_j}\times W^{(2)}_{v_j},
\]
one has
\[
B_{\mathcal S}(Z_j)\ \le\ \big(C^{(1)}_0+C^{(2)}_0\big)\ I^{\mathrm{incr}\,\circ}_{\edge,G}\bigl(|Z_j|\bigr).
\]
In particular, $G$ satisfies {\rm TF(i$_e$)} along a cofinal subsequence with $A_e\le C^{(1)}_0+C^{(2)}_0$.
\end{theorem}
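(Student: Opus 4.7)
I treat the two parts separately: part (a) is an almost automatic consequence of the exact product edge split asserted in the statement (which itself follows because each product generator lies in exactly one $\mathcal S_i$-block and hence acts in a single coordinate), combined with a standard telescoping identity for nested products. Part (b) splits cleanly into an \emph{upper bound} on $B_{\mathcal S}(Z_j)$, which is immediate from the exact split and the factor NNM property at records, and a matching \emph{lower bound} on $I^{\mathrm{incr}\,\circ}_{\edge,G}(u_j v_j)$, which is the genuinely nontrivial piece and will rest on a two-axis slicing plus a Loomis--Whitney-type marginal count.

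\textbf{Plan for (a).} First I would use nestedness of the factor chains to get the telescoping identity
\[
|E_{n+1}\setminus E_n|\ =\ |F^{(1)}_{n+1}\setminus F^{(1)}_n|\cdot|F^{(2)}_{n+1}|\ +\ |F^{(1)}_n|\cdot|F^{(2)}_{n+1}\setminus F^{(2)}_n|.
\]
Applying TF(ii) in each factor bounds each increment by $B^{(i)}\,B_{\mathcal S_i}(F^{(i)}_n)$. The only extra point to handle is that $|F^{(2)}_{n+1}|$ exceeds $|F^{(2)}_n|$; but from TF(ii) and the trivial $B_{\mathcal S_2}(F^{(2)}_n)\le\Delta_2\,|F^{(2)}_n|$ one immediately has $|F^{(2)}_{n+1}|\le (1+B^{(2)}\Delta_2)\,|F^{(2)}_n|$, and the exact split $B_{\mathcal S}(E_n)=B_{\mathcal S_1}(F^{(1)}_n)|F^{(2)}_n|+|F^{(1)}_n|B_{\mathcal S_2}(F^{(2)}_n)$ turns these comparisons into $|E_{n+1}\setminus E_n|\le C(\Delta_1,\Delta_2)\,(B^{(1)}+B^{(2)})\,B_{\mathcal S}(E_n)$. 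Nothing deep is needed here; it is a three-line calculation.

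\textbf{Plan for (b), upper bound.} The exact edge split gives $B_{\mathcal S}(Z_j)=B_{\mathcal S_1}(W^{(1)}_{u_j})\,v_j+u_j\,B_{\mathcal S_2}(W^{(2)}_{v_j})$, and the NNM hypothesis applied at the record volumes $(u_j,v_j)\in\mathcal R_1\times\mathcal R_2$ (where $I^{\mathrm{incr}\,\circ}_{\edge,G_i}(u_j)=I^\circ_{\edge,G_i}(u_j)$) yields
\[
B_{\mathcal S}(Z_j)\ \le\ C^{(1)}_0\,I^\circ_{\edge,G_1}(u_j)\,v_j\ +\ u_j\,C^{(2)}_0\,I^\circ_{\edge,G_2}(v_j).
\]
To match this against $(C^{(1)}_0+C^{(2)}_0)\,I^{\mathrm{incr}\,\circ}_{\edge,G}(u_j v_j)$ I need the inequality
\[
I^{\mathrm{incr}\,\circ}_{\edge,G}(u_j v_j)\ \gtrsim\ I^\circ_{\edge,G_1}(u_j)\,v_j\ +\ u_j\,I^\circ_{\edge,G_2}(v_j)
\]
up to factor-independent constants, which is a tensorization-of-isoperimetry statement. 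The route I propose is a two-axis slicing: writing horizontal/vertical slices $Y_{g_2}:=\{g_1:(g_1,g_2)\in Y\}$ and $Y^{g_1}:=\{g_2:(g_1,g_2)\in Y\}$, counting $\mathcal S_1$- and $\mathcal S_2$-edges separately gives
\[
B_{\mathcal S}(Y)\ \ge\ \sum_{g_2:\,Y_{g_2}\ne\emptyset} I^\circ_{\edge,G_1}\!\bigl(|Y_{g_2}|\bigr)\ +\ \sum_{g_1:\,Y^{g_1}\ne\emptyset} I^\circ_{\edge,G_2}\!\bigl(|Y^{g_1}|\bigr).
\]
The marginals satisfy the trivial Loomis--Whitney bound $\#\{g_2:Y_{g_2}\ne\emptyset\}\cdot\#\{g_1:Y^{g_1}\ne\emptyset\}\ge|Y|\ge u_j v_j$, so at least one marginal meets the target size $u_j$ or $v_j$; monotonicity of $I^{\mathrm{incr}\,\circ}_{\edge,G_i}$ and the record identity then extract from the slicing bound a full factor of either $v_j\,I^\circ_{\edge,G_1}(u_j)$ or $u_j\,I^\circ_{\edge,G_2}(v_j)$.

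\textbf{Main obstacle.} The honest difficulty is that the slicing bound above, combined with the marginal Loomis--Whitney count, naturally delivers only the \emph{larger} of the two summands $v_j I^\circ_{\edge,G_1}(u_j)$ and $u_j I^\circ_{\edge,G_2}(v_j)$, not their sum. To recover both summands one needs a rectangular-floor dichotomy at the record scale: either the column marginal supports $\gtrsim v_j$ columns of height $\gtrsim u_j$ (in which case both terms appear from the horizontal slicing), or the symmetric statement for rows holds, or a small sparse-correction argument reallocates boundary charge between the two marginals. Making this rigorous will likely require a separate product-isoperimetric lemma in the spirit of Bobkov--Houdré adapted to the increasing minorant. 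A safer alternative, sufficient for the applications invoked downstream (Theorem~\ref{thm:A} for $\Z^d$ and Carnot lattices, \Cref{def:lamplighter-checkpoints} for lamplighters), is to restrict to a cofinal subsequence of balanced records where the factor NNM shapes carry matching Wulff asymptotics, so that the rectangular floor is automatic and one only needs monotonicity of $I^{\mathrm{incr}\,\circ}_{\edge,G_i}$ plus the exact split; this weaker form is already enough to produce the asserted TF(i$_e$) along a cofinal subsequence with $A_e\le C^{(1)}_0+C^{(2)}_0$.
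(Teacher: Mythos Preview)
Your part~(a) is correct and matches the paper: the exact product edge split is stated in the theorem itself, and your telescoping plus the $|F^{(2)}_{n+1}|\le(1+B^{(2)}\Delta_2)|F^{(2)}_n|$ absorption is the natural way to fill in the $\lesssim$.

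For part~(b) your instinct is sharper than the paper's. You correctly identify that the crux is a \emph{lower} bound on $I^{\mathrm{incr}\,\circ}_{\edge,G}(u_jv_j)$. The paper's proof (Lemma~\ref{lem:rect-minorant-product}) goes the other way: after bounding $B_{\mathcal S}(Z_j)$ above by $C^{(1)}_0 v_j I^\circ_{\edge,G_1}(u_j)+C^{(2)}_0 u_j I^\circ_{\edge,G_2}(v_j)$, it derives the \emph{upper} bound $I^{\mathrm{incr}\,\circ}_{\edge,G}(u_jv_j)\le v_j I^\circ_{\edge,G_1}(u_j)+u_j I^\circ_{\edge,G_2}(v_j)$ by testing with the product of exact factor minimizers, and then asserts that ``combining the two displays yields the claim.'' But both displayed inequalities point the same direction, so they cannot be combined to bound $B_{\mathcal S}(Z_j)$ by a multiple of $I^{\mathrm{incr}\,\circ}_{\edge,G}(u_jv_j)$; this is exactly the gap you worried about in your own plan.

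Worse, the lower bound you seek is \emph{false} for unbalanced record pairs, so the theorem as stated (``along any cofinal sequence'') is too strong. Take $G_1=G_2=\mathbb Z$ with the standard generators: every volume is a record, $C_0^{(i)}=1$, and intervals are exact minimizers. With $u_j=j$, $v_j=j^{100}$ one has $|Z_j|=j^{101}$ and $B_{\mathcal S}(Z_j)=2j^{100}+2j$, whereas $I^{\mathrm{incr}\,\circ}_{\edge,\mathbb Z^2}(j^{101})\asymp j^{101/2}$; the ratio is unbounded, so no fixed $A_e$ works. Your ``balanced records'' restriction is therefore not merely a convenience but a \emph{necessary} hypothesis. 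Under such balance (e.g.\ when both factors have two-sided Wulff asymptotics of the same order, as in the paper's actual applications), the product of exact minimizers already witnesses $I^{\mathrm{incr}\,\circ}_{\edge,G}(u_jv_j)\asymp v_j I^\circ_{\edge,G_1}(u_j)+u_j I^\circ_{\edge,G_2}(v_j)$ and your argument closes; no slicing or Loomis--Whitney is needed.
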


\begin{lemma}[Checkpoint tensoring for products]\label{lem:rect-minorant-product}
Let $G=G_1\times G_2$ with the product generating set and assume nested near-minimizers in each factor:
$B_{\mathcal S_i}(W^{(i)}_u)\le C^{(i)}_0\,I^{\mathrm{incr}\,\circ}_{\edge,G_i}(u)$ for large $u$.
Let $\mathcal R_i$ be the record volumes of $I^{\mathrm{incr}\,\circ}_{\edge,G_i}$. For any cofinal sequence $(u_j,v_j)\in\mathcal R_1\times\mathcal R_2$, the product sets
\[
Z_j\ :=\ W^{(1)}_{u_j}\times W^{(2)}_{v_j}
\]
satisfy
\[
B_{\mathcal S}(Z_j)\ \le\ \big(C^{(1)}_0+C^{(2)}_0\big)\ I^{\mathrm{incr}\,\circ}_{\edge,G}\bigl(u_j v_j\bigr).
\]
\end{lemma}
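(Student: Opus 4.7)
\emph{The plan.} I would attempt to combine the unconditional product edge-split recorded in \Cref{thm:TF-product}(a) with the NNM hypothesis applied coordinate-wise at the factor records, and then to close by a product-isoperimetric lower bound for $I^{\mathrm{incr}\,\circ}_{\edge,G}$ at volume $u_j v_j$. The first two moves are essentially bookkeeping; the closing step is the genuine product-isoperimetry statement, and it is where the argument has to do real work.

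\emph{Upper bound from NNM at records.} By the product edge-split, valid for any product set,
\[
B_{\mathcal S}(Z_j)\ =\ B_{\mathcal S_1}\bigl(W^{(1)}_{u_j}\bigr)\,v_j\ +\ u_j\,B_{\mathcal S_2}\bigl(W^{(2)}_{v_j}\bigr),
\]
since each product generator moves in exactly one coordinate. Because $u_j\in\mathcal R_1$ and $v_j\in\mathcal R_2$, the record identification $I^{\mathrm{incr}\,\circ}_{\edge,G_i}=I^{\circ}_{\edge,G_i}$ at those arguments plugs into NNM in each factor to give
\[
B_{\mathcal S}(Z_j)\ \le\ C^{(1)}_0\,I^{\circ}_{\edge,G_1}(u_j)\,v_j\ +\ C^{(2)}_0\,u_j\,I^{\circ}_{\edge,G_2}(v_j),
\]
which is where the coefficient $C^{(1)}_0+C^{(2)}_0$ will ultimately come from by term-by-term domination. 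This step is unconditional in $(u_j,v_j)$.

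\emph{What the closing step requires.} To absorb the right-hand side into $(C^{(1)}_0+C^{(2)}_0)\,I^{\mathrm{incr}\,\circ}_{\edge,G}(u_j v_j)$, I would have to establish a product-isoperimetric lower bound of the shape: for every $N\ge u_j v_j$ and every finite $W\subset G_1\times G_2$ of volume $N$,
\[
B_{\mathcal S}(W)\ \ge\ \max\bigl\{I^{\circ}_{\edge,G_1}(u_j)\,v_j,\ u_j\,I^{\circ}_{\edge,G_2}(v_j)\bigr\}.
\]
The natural tool is slicing: writing $W^y:=\{x\in G_1:(x,y)\in W\}$ and $W_x:=\{y\in G_2:(x,y)\in W\}$, the exact split yields $B_{\mathcal S}(W)=\sum_y B_{\mathcal S_1}(W^y)+\sum_x B_{\mathcal S_2}(W_x)\ge \sum_y I^{\circ}_{\edge,G_1}(|W^y|)+\sum_x I^{\circ}_{\edge,G_2}(|W_x|)$, and one would then combine record monotonicity of the factor profiles above $u_j,v_j$ with the volume budget $\sum_y|W^y|=\sum_x|W_x|=N\ge u_j v_j$ and a discrete Jensen/concavity step to pass from these slice sums to the rectangular quantity above.

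\emph{The main obstacle.} The decisive difficulty is precisely this slice-to-rectangle passage, and I do not see a route that works for \emph{arbitrary} cofinal $(u_j,v_j)\in\mathcal R_1\times\mathcal R_2$ without an additional balancing input. In the balanced regime, where $u_j$ and $v_j$ lie in a common scale window, record monotonicity together with the sublinear shape of $I^{\circ}_{\edge,G_i}$ above the records forces the slice sums to dominate the rectangular quantity. For strongly unbalanced cofinal pairs this simply fails. A minimal test case is $G_1=G_2=\Z$ with $\mathcal S=\{\pm1\}$: then $I^{\circ}_{\edge,\Z}\equiv 2$, every $u\in\N$ is a record ($\mathcal R_1=\mathcal R_2=\N$), and $C^{(i)}_0=1$. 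Along the cofinal sequence $(u_j,v_j)=(j,j^2)$ one has $B_{\mathcal S}(Z_j)=2(u_j+v_j)\asymp j^2$, while $I^{\mathrm{incr}\,\circ}_{\edge,\Z^2}(u_j v_j)\asymp\sqrt{u_j v_j}\asymp j^{3/2}$, so the stated inequality misses by a factor $\asymp j^{1/2}\to\infty$. The failure is structural: a thin rectangle is not an approximate minimizer in a well-rounded planar graph. Any proof of the lemma as worded must therefore either pass to a cofinal sub-subsequence on which $u_j\asymp v_j$ (so that $Z_j$ is a near-optimal rectangle at volume $u_j v_j$ and the product-minorant comparison is direct), or strengthen the record hypothesis so that $u_j v_j$ itself lies in the records of $I^{\mathrm{incr}\,\circ}_{\edge,G}$. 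Reconciling such a balancing step with the blanket ``any cofinal sequence'' clause inherited from \Cref{thm:TF-product}(b) is where I expect the real work to lie; without such a strengthening, the rectangular upper bound produced by NNM overshoots the $G$-minorant along unbalanced cofinal sequences.
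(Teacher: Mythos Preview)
Your analysis is correct, and in fact sharper than the paper's own proof. The paper argues exactly as you do for the first two moves (product edge-split, then NNM at the records to bound $B_{\mathcal S}(Z_j)$ by $C^{(1)}_0 v_j\,I^{\circ}_{\edge,G_1}(u_j)+C^{(2)}_0 u_j\,I^{\circ}_{\edge,G_2}(v_j)$). For the closing step, the paper writes the \emph{upper} bound
\[
I^{\mathrm{incr}\,\circ}_{\edge,G}(u_jv_j)\ \le\ v_j\,I^{\circ}_{\edge,G_1}(u_j)+u_j\,I^{\circ}_{\edge,G_2}(v_j)
\]
(by testing the product of exact factor minimizers) and then asserts that ``combining the two displays yields the claim.'' But this inequality goes the wrong way: to deduce $B_{\mathcal S}(Z_j)\le (C^{(1)}_0+C^{(2)}_0)\,I^{\mathrm{incr}\,\circ}_{\edge,G}(u_jv_j)$ one needs a \emph{lower} bound on the product minorant by the rectangular quantity, not an upper bound. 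So the paper's combination step is vacuous as written.

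Your $\Z\times\Z$ counterexample with $(u_j,v_j)=(j,j^2)$ confirms that the lemma is false under the blanket ``any cofinal sequence'' hypothesis: $B_{\mathcal S}(Z_j)\asymp j^2$ while $I^{\mathrm{incr}\,\circ}_{\edge,\Z^2}(j^3)\asymp j^{3/2}$. Thus the statement genuinely requires a balancing constraint on $(u_j,v_j)$ (or an alignment of $u_jv_j$ with records of the product minorant), exactly as you diagnosed; no slice argument can rescue the unbalanced case. The downstream uses in the paper (e.g.\ the budgeted interleaving in Corollary~\ref{cor:TF-product-full}) would still go through once one restricts to a balanced cofinal subfamily, but the lemma and its proof as stated need repair.
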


\begin{proof}
For the product generating set,
\[
B_{\mathcal S}(A_1\times A_2)\ =\ B_{\mathcal S_1}(A_1)\,|A_2|\ +\ |A_1|\,B_{\mathcal S_2}(A_2).
\]
Hence
\[
B_{\mathcal S}(Z_j)\ \le\ C^{(1)}_0\,v_j\,I^{\mathrm{incr}\,\circ}_{\edge,G_1}(u_j)\ +\ C^{(2)}_0\,u_j\,I^{\mathrm{incr}\,\circ}_{\edge,G_2}(v_j).
\]
Since $u_j\in\mathcal R_1$, $v_j\in\mathcal R_2$, we have
$I^{\mathrm{incr}\,\circ}_{\edge,G_i}(u_j)=I^{\circ}_{\edge,G_i}(u_j)$ and similarly for $v_j$.
Now upper-bound the product minorant by testing with exact minimizers at $u_j,v_j$:
\[
I^{\mathrm{incr}\,\circ}_{\edge,G}(u_jv_j)\ \le\ I^{\circ}_{\edge,G}(u_jv_j)\ \le\ v_j\,I^{\circ}_{\edge,G_1}(u_j)\ +\ u_j\,I^{\circ}_{\edge,G_2}(v_j),
\]
using the product boundary identity. Combining the two displays yields the claim.
\end{proof}

\begin{corollary}[Direct products satisfy full {\normalfont(TF)} under NNM]\label{cor:TF-product-full}
In the setting of Theorem~\ref{thm:TF-product}(b), assume each factor admits nested near-minimizers (NNM), and let $(u_j,v_j)\in\mathcal R_1\times\mathcal R_2$ be a cofinal checkpoint sequence as in Lemma~\ref{lem:rect-minorant-product}. Define the \emph{product checkpoints}
\[
W_{r_j}\ :=\ W^{(1)}_{u_j}\times W^{(2)}_{v_j},\qquad r_j:=u_jv_j.
\]
Then there exists a single nested chain $(F_m)_{m\ge m_0}$ in $G$ (built by the confined step $\mathcal T_{r_{j+1}}$ between successive levels $r_j$; see Definition~\ref{def:confined-step} and Lemma~\ref{lem:interleave-tfii}) such that:
\begin{align*}
&\textup{(TF(ii))} && |F_{m+1}\setminus F_m|\ \le\ B\,B_{\mathcal S}(F_m)\quad\text{with }B=1\ \text{(edge normalization)},\\
&\textup{(TF(i$_e$))} && B_{\mathcal S}(F_m)\ \le\ A_e\, I^{\mathrm{incr}\,\circ}_{\edge,G}(|F_m|)\quad\text{for all sufficiently large }m,
\end{align*}
with
\[
A_e\ =\ \big(C^{(1)}_0+C^{(2)}_0\big)\ +\ \big(\big(C^{(1)}_0+C^{(2)}_0\big)\,\Delta_{G}+1\big)\,\vartheta,
\]
for any fixed $\vartheta\in(0,1]$ chosen in the budgeted-level schedule (Definition~\ref{def:budget-levels}). In particular, $G$ satisfies full \textup{(TF)}.

\end{corollary}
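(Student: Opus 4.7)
The plan is to apply the interleaving machinery of Proposition~\ref{prop:tfi-every-step} (and Theorem~\ref{thm:interleave-cases}) directly, with the product checkpoints $W_{r_j} = W^{(1)}_{u_j} \times W^{(2)}_{v_j}$ playing the role of nested near-minimizers in $G$. First I would fix cofinal nondecreasing record sequences $(u_j)\subset \mathcal R_1$ and $(v_j)\subset \mathcal R_2$, so that $W_{r_j}\subset W_{r_{j+1}}$ follows from nestedness of the two factor NNM chains. Lemma~\ref{lem:rect-minorant-product} then supplies the (cofinal) NNM-style bound
\[
B_{\mathcal S}(W_{r_j})\ \le\ C_0\, I^{\mathrm{incr}\,\circ}_{\edge,G}(r_j),\qquad C_0 := C^{(1)}_0+C^{(2)}_0,
\]
so the $(W_{r_j})$ constitute a cofinal nested near-minimizer family in $G$ with constant $C_0$. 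This cofinal version of NNM is exactly what the interleaving argument uses (cf.\ the remark following Definition~\ref{def:NNM}).

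Next I would run the budgeted-level scheme of Definition~\ref{def:budget-levels} on the product checkpoints. After thinning or enriching the cofinal sequence so that the schedule $r_{j+1}-r_j\le \vartheta\, I^{\mathrm{incr}\,\circ}_{\edge,G}(r_j)$ holds (right-Lipschitz control from Lemma~\ref{lem:incr-Lip-right} makes this achievable), I would, at each level, start from $F_m=W_{r_j}$ and iterate the confined step $\mathcal T_{r_{j+1}}$ of Definition~\ref{def:confined-step} until $F_m$ saturates to $W_{r_{j+1}}$; then advance to the next level. This yields a single nested chain $(F_m)_{m\ge m_0}$ interpolating the product checkpoints. Clause TF(ii) with $B=1$ is then immediate from Lemma~\ref{lem:interleave-tfii}, since every confined step adjoins at most $|\partial_{\mathcal S}F_m|$ vertices and the reset to the next checkpoint costs nothing (we only switch labels once $F_m$ already equals $W_{r_{j+1}}$). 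Clause TF(i$_e$) at every step is Proposition~\ref{prop:tfi-every-step} applied verbatim with constant $C_0$: tail trimming gives $|\partial F_m|\le |\partial W_{r_{j+1}}|+\mathrm{Def}(F_m;r_{j+1})$, the NNM bound gives $|\partial W_{r_{j+1}}|\le C_0\Delta_G\, I^{\mathrm{incr}\,\circ}_{\edge,G}(r_{j+1})$, and the right-Lipschitz chain together with the budget $\mathrm{Def}(F_m;r_{j+1})\le \vartheta\, I^{\mathrm{incr}\,\circ}_{\edge,G}(|F_m|)$ collapses to the stated formula $A_e=C_0+(C_0\Delta_G+1)\vartheta$.

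The main technical obstacle is the density of the checkpoint set: NNM in the factors only supplies checkpoints at \emph{cofinal} volumes $r_j=u_jv_j$, so it is not automatic that the budget $r_{j+1}-r_j\le \vartheta\, I^{\mathrm{incr}\,\circ}_{\edge,G}(r_j)$ can be met with the available $(u_j,v_j)$. Two ingredients handle this. First, right-Lipschitz control of $I^{\mathrm{incr}\,\circ}_{\edge,G}$ (Lemma~\ref{lem:incr-Lip-right}) ensures the allowed window grows as the minorant grows, so the budget is genuinely expanding. Second, I would refine the cofinal grid in $\mathcal R_1\times\mathcal R_2$ by a diagonal procedure: advance whichever of $u_j$ or $v_j$ is the more abundant record, so that the product increment stays within budget, and wherever one factor's records are too sparse, absorb the gap by interleaved confined sub-levels in $G$ (the added deficit is bounded by $\vartheta\cdot I^{\mathrm{incr}\,\circ}_{\edge,G}$ via the schedule). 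Once density is secured the rest is routine bookkeeping; the only place where one must be careful is to verify that all constants appearing in the refinement depend solely on $(C^{(1)}_0,C^{(2)}_0,\Delta_{G_1},\Delta_{G_2})$ and the fixed $\vartheta$, so that no cross-growth between the factors inflates $A_e$ beyond the stated value.
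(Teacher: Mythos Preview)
Your proposal is correct and follows essentially the same route as the paper: use Lemma~\ref{lem:rect-minorant-product} to certify the product checkpoints $W_{r_j}=W^{(1)}_{u_j}\times W^{(2)}_{v_j}$ as cofinal nested near-minimizers with constant $C_0=C^{(1)}_0+C^{(2)}_0$, then run the confined-step interleaving of Definition~\ref{def:confined-step}/Lemma~\ref{lem:interleave-tfii} between successive levels and invoke Proposition~\ref{prop:tfi-every-step} under the budgeted schedule to obtain TF(ii) with $B=1$ and TF(i$_e$) at every step with the stated $A_e$. The paper's proof is more terse, simply asserting that Proposition~\ref{prop:tfi-every-step} ``applies verbatim (using the checkpoint inequality only at the selected levels $r_j$)''; your identification of the checkpoint-density issue (whether the cofinal $r_j=u_jv_j$ can be made to respect the budget $r_{j+1}-r_j\le\vartheta\,I^{\mathrm{incr}\,\circ}_{\edge,G}(r_j)$) is a legitimate point the paper does not spell out, and your proposed diagonal refinement is a reasonable way to handle it.
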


\begin{proof}
By Lemma~\ref{lem:rect-minorant-product} the product checkpoints $W_{r_j}$ satisfy
$
B_{\mathcal S}(W_{r_j})\le (C^{(1)}_0+C^{(2)}_0)\, I^{\mathrm{incr}\,\circ}_{\edge,G}(r_j)
$
along a cofinal subsequence of volumes. Between successive levels $r_j<r_{j+1}$, evolve by the \emph{confined} canonical step $\mathcal T_{r_{j+1}}$ (Definition~\ref{def:confined-step}); by Lemma~\ref{lem:interleave-tfii} this produces a nested chain $(F_m)$ satisfying TF(ii) with $B=1$, and TF(i$_e$) at each checkpoint index $m$ with the checkpoint constant for $W_{r_j}$.

Now run the \emph{budgeted-level} schedule (Definition~\ref{def:budget-levels}) with any fixed $\vartheta\in(0,1]$. Proposition~\ref{prop:tfi-every-step} applies verbatim (using the checkpoint inequality only at the selected levels $r_j$) and yields, for all intermediate steps $m$ on level $r_j$,
\[
B_{\mathcal S}(F_m)\ \le\ \Big((C^{(1)}_0+C^{(2)}_0)\ +\ \big((C^{(1)}_0+C^{(2)}_0)\,\Delta_{G}+1\big)\,\vartheta\Big)\ I^{\mathrm{incr}\,\circ}_{\edge,G}\big(|F_m|\big).
\]
This proves TF(i$_e$) eventually (hence TF(i) in vertex normalization, with constants multiplied by $\Delta_{G}$ via Lemma~\ref{lem:vertex-edge-profiles}), while TF(ii) holds globally with $B=1$. Thus $(F_m)$ witnesses full \textup{(TF)} for $G$.
\end{proof}

\subsection{Wreath products over {\rm(TF)} bases (finite lamps)}

\begin{theorem}[Finite-lamp wreaths over {\rm(TF)} bases: decoupled scheme]\label{thm:TF-wreath-over-TF}
Let $(\Gamma,\mathcal S_\Gamma)$ satisfy {\rm(TF)} with constants $(A^{\Gamma}_e,B_\Gamma)$ and let $H$ be finite nontrivial with toggles $\mathsf T$ ($t:=|\mathsf T|$, $q:=|H|-1$). For $G:=H^{(\Gamma)}\rtimes \Gamma$ with $\mathcal S=\mathcal S_\Gamma\cup\mathsf T$ there exists a nested chain $(F_n)$ such that
\[
\text{{\rm TF}(ii) holds globally with } B\le C_H\big(B_\Gamma+1\big),
\]
and 
\[
\text{{\rm TF}(i$_e$) holds along an infinite subsequence with } A_e\le C_H\,A^\Gamma_e,
\]
where $C_H$ depends only on $|H|$ and $t$. If, in addition, $(G,\mathcal S)$ admits nested near-minimizers at all large volumes, then full {\rm(TF)} holds with the same type of bounds on $(A_e,B)$ up to $C_H$.
\end{theorem}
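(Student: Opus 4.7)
The plan is to lift a (TF) F{\o}lner chain $(U_k)$ in the base $\Gamma$ to a nested chain in $G=H^{(\Gamma)}\rtimes\Gamma$ by the single-step recipe of Theorem~\ref{thm:lamplighter-TFii} and Theorem~\ref{thm:ers-checkpoints}, but with the base enlargement steps \emph{guided} by the base's own (TF) rather than by a generic tempered expansion; this is what buys the improved constants. Concretely, I would first extract $(U_k)$ nested with $|U_{k+1}\setminus U_k|\le B_\Gamma\, B_{\mathcal S_\Gamma}(U_k)$ for all $k$ and $B_{\mathcal S_\Gamma}(U_{k_j})\le A^\Gamma_e\, I^{\mathrm{incr}\,\circ}_{\edge,\Gamma}(|U_{k_j}|)$ along a subsequence. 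Setting $n_k:=|U_k|$, $M_k:=\lceil p\,n_k\rceil$ with $p:=q/(1+q)$, and $F^\star_k:=F(U_k,M_k)$, I would then interleave, between successive checkpoints, one base-ring enlargement $F^\star_k\mapsto F(U_{k+1},M_k)$ (new cursor positions in $R_k:=U_{k+1}\setminus U_k$ keeping the same $U_k$-only lamp patterns) followed by $M_{k+1}-M_k$ lamp-ring single steps over $U_{k+1}$ in the sense of Definition~\ref{def:ring-step}.

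For TF(ii) with constant $C_H(B_\Gamma+1)$, each lamp-ring single step is tempered with a constant depending only on $\Delta_G$ (Lemma~\ref{lem:ring-step-tempered}), and a full lamp-ring block contributes a factor depending only on $|H|$ and $t$. The only place the base enters is the base-ring enlargement: the exact split of Lemma~\ref{lem:split-exact-proof} gives
\[
|F(U_{k+1},M_k)\setminus F^\star_k|\ =\ |R_k|\,\Sigma(n_k,M_k),\qquad
B_{\mathcal S}(F^\star_k)\ \ge\ B_{\mathcal S_\Gamma}(U_k)\,\Sigma(n_k,M_k),
\]
so by $|R_k|\le B_\Gamma\,B_{\mathcal S_\Gamma}(U_k)$ this increment is $\le B_\Gamma\,B_{\mathcal S}(F^\star_k)$. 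Summing the two contributions yields TF(ii) with the advertised constant.

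For TF(i$_e$) along the checkpoint subsequence inherited from $(U_{k_j})$, the same exact split gives
\[
B_{\mathcal S}(F^\star_{k_j})\ =\ B_{\mathcal S_\Gamma}(U_{k_j})\,\Sigma(n_{k_j},M_{k_j})\ +\ t\,n_{k_j}\binom{n_{k_j}-1}{M_{k_j}}q^{M_{k_j}}.
\]
The first summand is $\le A^\Gamma_e\,I^{\mathrm{incr}\,\circ}_{\edge,\Gamma}(n_{k_j})\,\Sigma(n_{k_j},M_{k_j})$ by the base's TF(i$_e$); the second is $O(n_{k_j}^{-1/2})\cdot n_{k_j}\,\Sigma(n_{k_j},M_{k_j})$ by the local CLT at the Bernoulli mode $p$, and is absorbed into $C_H$ times the first. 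To translate the base minorant into the wreath minorant at volume $|F^\star_{k_j}|=n_{k_j}\Sigma(n_{k_j},M_{k_j})$, I would invoke the Erschler-type lower bound $I^{\mathrm{incr}\,\circ}_{\edge,G}(n\Sigma)\gtrsim I^{\mathrm{incr}\,\circ}_{\edge,\Gamma}(n)\,\Sigma$ underlying Theorem~\ref{thm:ers-checkpoints}, obtaining $A_e\le C_H\,A^\Gamma_e$. The final ``moreover'' clause is then immediate: if $(G,\mathcal S)$ admits nested near-minimizers at all large volumes, Proposition~\ref{prop:decoupled-TF} upgrades the chain from TF(ii) globally plus cofinal NNM checkpoints to full (TF) with constants of the same order.

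The hard part will be pinning down the wreath-minorant comparison at the precise scale $n\Sigma(n,\lceil pn\rceil)$ with a constant depending only on $|H|,t$ and $A^\Gamma_e$: over $H\wr\mathbb Z$ this is classical Erschler, but over a general (TF) base one must reopen the Erschler lower bound to trace how the base's increasing minorant enters the weighted binomial averaging and verify that no additional factor in $\Gamma$ (beyond what fits inside $C_H$) leaks through. Once this comparison is in place, both clauses are direct bookkeeping from the exact split and the base's (TF).
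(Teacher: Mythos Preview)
The paper states Theorem~\ref{thm:TF-wreath-over-TF} without an explicit proof, so your route via the single-step expansion of Theorem~\ref{thm:lamplighter-TFii} together with Erschler-type checkpoints is the natural one, and your TF(ii) bookkeeping is essentially correct: the base-ring increment is $|R_k|\,\Sigma(n_k,M_k)$, the exact split gives $B_{\mathcal S}(F^\star_k)\ge B_{\mathcal S_\Gamma}(U_k)\,\Sigma(n_k,M_k)$, and the base's TF(ii) yields the factor $B_\Gamma$ on that step, while each lamp-ring step is tempered with constant~$1$ in edge normalization by Lemma~\ref{lem:ring-step-tempered}.

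There is, however, a genuine gap in your TF(i$_e$) argument, specifically in the absorption of the toggle term. With $M_k=\lceil pn_k\rceil$ at the Bernoulli mode $p=q/(1+q)$, the local CLT gives $t\,n_k\binom{n_k-1}{M_k}q^{M_k}\asymp t\sqrt{n_k}\,\Sigma(n_k,M_k)$, and you claim this is absorbed into $C_H$ times the first summand $A^\Gamma_e\,I^{\mathrm{incr}\,\circ}_{\edge,\Gamma}(n_k)\,\Sigma$. That requires $\sqrt{n_k}\lesssim I^{\mathrm{incr}\,\circ}_{\edge,\Gamma}(n_k)$, which fails already for $\Gamma=\mathbb Z$ (where $I^{\mathrm{incr}\,\circ}_{\edge,\Gamma}\equiv 2$). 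Concretely, for $H\wr\mathbb Z$ one has $B_{\mathcal S}(F^\star_k)\asymp \sqrt{n_k}\,|H|^{n_k}$ while $I^{\mathrm{incr}\,\circ}_{\edge,G}(|F^\star_k|)\asymp |H|^{n_k}$, so no finite $C_H$ works at this choice of $M_k$. The fix is to take $M_k=n_k$ (full lamps over $U_k$): then $\binom{n_k-1}{n_k}=0$ kills the toggle term exactly, $B_{\mathcal S}(F^\star_k)=B_{\mathcal S_\Gamma}(U_k)\,|H|^{n_k}$, and the whole TF(i$_e$) claim reduces cleanly to the Erschler-type comparison $I^{\mathrm{incr}\,\circ}_{\edge,G}(n|H|^n)\gtrsim_H I^{\mathrm{incr}\,\circ}_{\edge,\Gamma}(n)\,|H|^n$ that you already flag as the hard part. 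With this change your outline goes through; as written, the checkpoint parameter is wrong and the absorption step fails.
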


\subsection{Semidirect products $ \Z^d\rtimes_A\Z$: $A$--covariant layer-nested sets, zero interior drift, and tempered F{\o}lner chains}
\label{sec:TF-semidirect}

We consider
\[
G=\Z^d\rtimes_A \Z,\qquad (x,k)\cdot(y,\ell)=(x+A^k y,\ k+\ell),
\]
with the \emph{left} Cayley generating set
\[
\mathcal S\ :=\ \{\,s_1^{\pm1},\dots,s_d^{\pm1},t^{\pm1}\,\},\qquad
s_i\cdot(x,k)=(x+e_i,k),\qquad t\cdot(x,k)=(Ax,k+1).
\]
Undirected edge boundary is used throughout this subsection.

\subsubsection{$A$--covariant layer-nested sets and exact cancellation of interior drift}

\begin{definition}[$A$--covariant layer-nested set]\label{def:A-cov-stack}
Let $K\subset\R^d$ be a bounded measurable set and let $R>0$, $T\in\N$.
Set
\[
X_0:=\bigl((R K)\cap\Z^d\bigr),\qquad X_k:=A^k X_0\quad(-T\le k\le T),\qquad
F_{K,R,T}:=\bigcup_{k=-T}^T \bigl(X_k\times\{k\}\bigr)\ \subset G.
\]
\end{definition}

\begin{lemma}[Nestedness of $A$--covariant layer-nested sets]\label{lem:nested-stacks}
If $R_1\le R_2$ and $T(\cdot)$ is nondecreasing, then $F_{K,R_1,T(R_1)}\subseteq F_{K,R_2,T(R_2)}$.
\end{lemma}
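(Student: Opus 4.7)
The plan is to verify the containment layer by layer, exploiting the three-fold structure of $F_{K,R,T}$: the range of vertical indices $k$, the $A^k$-pushforward of each layer, and the monotonicity of the base footprint in $R$. First I would compare the index sets: because $T(\cdot)$ is nondecreasing and $R_1\le R_2$, one has $T(R_1)\le T(R_2)$, so $\{-T(R_1),\dots,T(R_1)\}\subseteq\{-T(R_2),\dots,T(R_2)\}$. In particular every layer $X_k^{(R_1)}\times\{k\}$ that appears in $F_{K,R_1,T(R_1)}$ has a partner layer $X_k^{(R_2)}\times\{k\}$ (at the same height $k$) in $F_{K,R_2,T(R_2)}$, so the containment reduces to a layer-wise check at each common $k$.

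Next I would fix such a common index $k$ and reduce the layer containment to the base $k=0$. Writing the $k$-th layer of $F_{K,R,T}$ as $A^k X_0^{(R)}\times\{k\}$ with $X_0^{(R)}=(RK)\cap\Z^d$, and using that $A\in\mathrm{GL}(d,\Z)$ acts as a bijection of $\Z^d$, the inclusion $A^k X_0^{(R_1)}\subseteq A^k X_0^{(R_2)}$ is equivalent to $X_0^{(R_1)}\subseteq X_0^{(R_2)}$. Thus it suffices to establish the base monotonicity $(R_1 K)\cap\Z^d\subseteq(R_2 K)\cap\Z^d$, i.e.\ $R_1 K\subseteq R_2 K$.

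Finally, under the standing hypothesis of Theorem~\ref{thm:G-final} that $K$ is a convex body (which we take to contain the origin, recentering $K$ first if necessary and absorbing the $O(1)$ translation into the boundary/F{\o}lner constants of Theorem~\ref{thm:G-final}), the dilation monotonicity follows from star-shapedness about $0$: for any $y=R_1\kappa$ with $\kappa\in K$, writing $y=R_2\cdot((R_1/R_2)\kappa)$ exhibits $y$ as a scaling of a point in $K$, since $(R_1/R_2)\kappa\in K$ by convexity and $0\in K$. The only point requiring care is this monotone-dilation property of the base footprint, and there is no genuine analytic obstacle; if one wished to weaken the convex-body assumption it would suffice to fix a base point in the interior of $K$ and measure dilations relative to that point (equivalently, to assume $K$ is star-shaped about $0$).
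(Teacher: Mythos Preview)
Your argument is correct and is exactly the natural proof. The paper does not give a proof of this lemma at all; it is stated and treated as immediate from Definition~\ref{def:A-cov-stack}. Your layer-by-layer verification (range of $k$ via $T(R_1)\le T(R_2)$; reduction of $A^kX_0^{(R_1)}\subseteq A^kX_0^{(R_2)}$ to $X_0^{(R_1)}\subseteq X_0^{(R_2)}$ by bijectivity of $A^k$ on $\Z^d$; then $R_1K\subseteq R_2K$) is the only reasonable route.

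You are also right to flag the one genuine hypothesis hidden here: the inclusion $R_1K\subseteq R_2K$ requires $K$ to be star-shaped about the origin (in particular $0\in K$). In Definition~\ref{def:A-cov-stack} $K$ is merely ``bounded measurable'', so the lemma as literally stated is not true in that generality; but in the only place the lemma is used (Theorem~\ref{thm:G-final}) $K$ is a convex body, and the implicit convention is that $0\in K$. Your remark about recentering is the correct way to handle this, though strictly speaking it is an assumption rather than a reduction: translating $K$ changes the sets $F_{K,R,T}$, so one should simply posit $0\in K$ from the outset rather than ``absorb'' a translation.
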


\begin{lemma}[Zero interior vertical drift]\label{lem:zero-drift-left}
For every $A\in GL(d,\Z)$ and $F_{K,R,T}$ as above, the vertical boundary across each interior interface vanishes:
\[
|\,X_k\ \triangle\ A^{-1}X_{k+1}\,|=0\qquad(-T\le k<T).
\]
Hence the total vertical boundary is \emph{only the two caps}:
\[
B_{\rm vert}\bigl(F_{K,R,T}\bigr)\ =\ |X_{-T}|\ +\ |X_T|\ =\ 2\,|X_0|.
\]
\end{lemma}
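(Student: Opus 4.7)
\emph{Plan.} The proof is a short algebraic computation that exploits the exact $A$-covariance built into Definition~\ref{def:A-cov-stack}. My first step is to unfold the definition: $X_{k+1}=A^{k+1}X_0$, so
\[
A^{-1}X_{k+1}\;=\;A^{-1}A^{k+1}X_0\;=\;A^{k}X_0\;=\;X_k,
\]
which gives $|X_k\triangle A^{-1}X_{k+1}|=0$ \emph{as a set equality}, not just numerical agreement. This is the entire content of the vanishing: the layers $X_k$ have been chosen to transform \emph{equivariantly} under $A$, so applying $A^{-1}$ to a layer produces the layer below exactly on the nose.

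Next I will translate this into the undirected vertical edge count. With the \emph{left} action $t\cdot(x,k)=(Ax,k+1)$, a $t$-edge at height $k$ with tail $(x,k)$ has head $(Ax,k+1)$. Under the convention $-T\le k<T$, both endpoints are inside the stack $F_{K,R,T}$ iff $x\in X_k$ \emph{and} $Ax\in X_{k+1}$, and the second condition rewrites as $x\in A^{-1}X_{k+1}=X_k$ by the previous step. Hence the two membership conditions are equivalent: no $t$-edge across an interior interface can be broken. This gives exactly $|X_k\triangle A^{-1}X_{k+1}|=0$ interior cut contributions for each $-T\le k<T$, so the total vertical boundary comes entirely from the two caps at heights $\pm T$.

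At the top cap, $t$-edges with tail $(x,T)$, $x\in X_T$, exit because $(Ax,T+1)\notin F_{K,R,T}$; each such tail contributes exactly one undirected broken $t$-edge, giving $|X_T|$ edges. Symmetrically, $t^{-1}$-edges at height $-T$ contribute $|X_{-T}|$ edges (with tail $(x,-T)$, $x\in X_{-T}$, and head $(A^{-1}x,-T-1)$). Since $A\in GL(d,\Z)$ acts as a bijection on $\Z^d$, one has $|X_k|=|A^k X_0|=|X_0|$ for every $k$, hence $B_{\rm vert}(F_{K,R,T})=|X_T|+|X_{-T}|=2|X_0|$.

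There is essentially no obstacle here beyond bookkeeping; the only points that require care are (i) matching the \emph{left} Cayley convention to the formula $A^{-1}X_{k+1}$ (a right-action convention would produce $AX_{k+1}$ and the same cancellation after relabeling), and (ii) making sure each undirected interior $t$-edge is counted once when asserting ``zero vertical boundary across each interior interface.'' The latter is automatic because we only count broken edges and have shown that no $t$-edge between levels $k$ and $k+1$ can be broken. The underlying mechanism---that the vertical generator $t$ acts on the fibres by the same matrix that defines the fibres---is what makes the layer-nested construction an \emph{exact} cancellation rather than an approximate one, and this is what will drive the F{\o}lner property in part (1) of \Cref{thm:G-final} by reducing the boundary to horizontal boundaries of the $X_k$ plus the two $O(R^d)$ caps.
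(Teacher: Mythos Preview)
Your proof is correct. The paper states this lemma without proof, treating the identity $A^{-1}X_{k+1}=A^{-1}A^{k+1}X_0=A^kX_0=X_k$ and the cap count $|X_T|+|X_{-T}|=2|X_0|$ as immediate from Definition~\ref{def:A-cov-stack} and the bijectivity of $A\in GL(d,\Z)$; your write-up is exactly this computation together with the edge-level bookkeeping for the left action $t\cdot(x,k)=(Ax,k+1)$.
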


\begin{lemma}[Uniform slice sizes]\label{lem:slice-sizes}
For all $k$, $|X_k|=|X_0|=|(R K)\cap\Z^d|$. In particular,
\[
|F_{K,R,T}|=(2T+1)\,|X_0|\ \asymp_{K}\ (2T+1)\,R^d.
\]
\end{lemma}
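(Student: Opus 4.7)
The proof will be short and proceeds in three independent steps. First I will use that $A\in GL(d,\Z)$, so $A^k$ is a $\Z$-linear automorphism of $\Z^d$ for every $k\in\Z$; hence the map $x\mapsto A^k x$ restricts to a bijection $X_0\to X_k$, giving $|X_k|=|X_0|$ at once. No analytic estimate on $A$ is used here, just invertibility over $\Z$.

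Next, I will note that the slices $X_k\times\{k\}$ for distinct $k$ are pairwise disjoint in $G=\Z^d\rtimes_A\Z$, since they sit in different $\Z$-fibers. Therefore
\[
|F_{K,R,T}|\ =\ \sum_{k=-T}^T |X_k\times\{k\}|\ =\ \sum_{k=-T}^T |X_k|\ =\ (2T+1)\,|X_0|,
\]
purely by additivity of cardinality.

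Finally, for the asymptotic $|X_0|=|(RK)\cap\Z^d|\asymp_K R^d$, I will invoke the standard lattice-point count for a bounded measurable body with boundary of measure zero (in Theorem~G one has $K$ convex with piecewise-$C^1$ boundary, which is more than enough): namely, $R^{-d}\,|(RK)\cap\Z^d|\to \mathrm{vol}(K)$ as $R\to\infty$, with Riemann-sum error $O(R^{d-1})$ controlled by the Minkowski content of $\partial K$. Thus for all $R\ge R_0(K)$,
\[
c_1(K)\,R^d\ \le\ |X_0|\ \le\ c_2(K)\,R^d,
\]
which is the $\asymp_K R^d$ asserted. Combining with the previous display yields the claimed formula for $|F_{K,R,T}|$.

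There is no real obstacle here: the three steps are independent and elementary, and the constants implicit in $\asymp_K$ depend only on $K$ (specifically on $\mathrm{vol}(K)$ and the $(d-1)$-Minkowski content of $\partial K$), not on $A$ or $R$. The only point worth flagging is that the bound $|X_0|\ge c_1 R^d$ requires $K$ to have positive $d$-dimensional Lebesgue measure; this is implicit in the setup of Theorem~G, where $K$ is a convex body (hence has nonempty interior whenever $\mathrm{vol}(K)>0$). I will record this hypothesis explicitly at the beginning of the proof so that the constants $c_1,c_2$ in ~\Cref{thm:G-final}(ii) match the ones used downstream in the $B$--estimate of ~\Cref{thm:G-final}(iii).
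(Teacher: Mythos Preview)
Your proof is correct, and the paper in fact states this lemma without proof (it is treated as immediate). Your three steps---$A^k$ is a bijection of $\Z^d$ since $A\in GL(d,\Z)$, disjointness of the fibers, and the standard lattice-point count for a convex body---are exactly the intended justification.
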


\begin{lemma}[Unit-thickness annulus bound]\label{lem:annulus-lattice}
If $K\subset\R^d$ is a bounded convex body with piecewise $C^1$ boundary, there exists $C_K<\infty$ such that for all $R\ge1$,
\[
\bigl|X_0(R+1)\setminus X_0(R)\bigr|
\ \le\ C_K\,R^{d-1},\qquad X_0(R):=(RK)\cap\Z^d.
\]
\end{lemma}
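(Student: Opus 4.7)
The plan is a standard tube-plus-cube counting argument specialized to the dilate family $RK$. First I reduce to a volume estimate on a thin annulus. Observing that $X_0(R+1)\setminus X_0(R)\subset \bigl((R+1)K\setminus RK\bigr)\cap\Z^d$, and translating $K$ so that $0\in\operatorname{int}(K)$ (which alters $X_0(R)$ only by an $O(R^{d-1})$ error absorbed into $C_K$), convexity of $K$ gives $RK\subset(R+1)K$. Thus it suffices to bound the lattice count of the set-theoretic annulus $(R+1)K\setminus RK$ by $C_K R^{d-1}$.

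Next I convert that lattice count to a Lebesgue volume via the elementary inclusion $|B\cap\Z^d|\le \operatorname{vol}(B+[0,1]^d)$, valid for any bounded $B\subset\R^d$ (each counted lattice point $x$ hosts a disjoint cube $x+[0,1)^d$ inside $B+[0,1]^d$). The enlarged set $\bigl((R+1)K\setminus RK\bigr)+[0,1]^d$ sits inside the Euclidean tube of radius $\rho:=\diam(K)+\sqrt d$ around the scaled boundary $R\,\partial K$: indeed, for $x\in(R+1)K\setminus RK$ the ray from $0$ through $x$ meets $R\,\partial K$ at some $y$ with $|x-y|\le\diam(K)$, and the unit-cube padding contributes at most $\sqrt d$.

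The main step is the tube estimate. On the $C^1$-smooth part of $\partial K$, the Weyl/Minkowski tube formula gives
\[
\operatorname{vol}\bigl\{z\in\R^d:\ \operatorname{dist}(z,R\,\partial K)\le\rho\bigr\}
\ \le\ 2\rho\,\mathcal H^{d-1}(R\,\partial K)\ +\ O(\rho^2\,R^{d-2})
\ \le\ C\,\rho\,R^{d-1},
\]
so fixing $\rho=\diam(K)+\sqrt d$ yields the required bound. The only real care point is the piecewise $C^1$ regularity of $\partial K$: near the singular strata the tube map may self-overlap and the outer normal can fail to be defined. I handle this by partitioning $\partial K$ into finitely many closed $C^1$ pieces, applying the tube estimate to each, and bounding the union by the sum of tube volumes; the $\rho$-neighborhoods of the singular strata have $\mathcal H^{d-2}$-finite content, contributing only $O(R^{d-2})$, which is absorbed.

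I do not expect a serious obstacle beyond bookkeeping the constant $C_K$, which ends up depending only on $d$, $\diam(K)$, $\mathcal H^{d-1}(\partial K)$, and the number and $C^1$-norms of the smooth pieces of $\partial K$. A cleaner alternative is to invoke Federer's theory of sets of positive reach, since each smooth facet of a convex body has bounded reach, packaging the tube formula uniformly on the facets; either route produces the stated $R^{d-1}$ rate with an explicit, geometry-only constant.
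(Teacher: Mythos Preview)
The paper states this lemma without proof, treating it as a standard lattice-point/tube estimate, so there is no argument to compare against. Your approach---bound the lattice count by the volume of the unit-cube thickening, then trap that thickening in a fixed-width tube around $R\,\partial K$ and invoke the Minkowski/Weyl tube bound---is the correct standard route and yields the stated $R^{d-1}$ rate.

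One quibble: the translation step (``translating $K$ so that $0\in\operatorname{int}(K)$ alters $X_0(R)$ only by $O(R^{d-1})$'') is not justified as written, since replacing $K$ by $K-v$ shifts $RK$ by $Rv$ and $(R+1)K$ by $(R+1)v$, which are \emph{different} translations, so the relationship between the two annuli is not immediate. Fortunately the translation is unnecessary: for any $x\in(R+1)K$ one has $\tfrac{R}{R+1}x\in RK$ and $|x-\tfrac{R}{R+1}x|=\tfrac{|x|}{R+1}\le \sup_{k\in K}|k|=:M_K$, so $(R+1)K\setminus RK$ always sits in the $M_K$-tube around $\partial(RK)$, regardless of whether $0\in K$. (In the paper's application, $0\in\operatorname{int}(K)$ is in any case implicit---it is needed for the nestedness in Lemma~\ref{lem:nested-stacks}.) With this small repair your argument is complete.
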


\subsubsection{Horizontal boundary: cofactor growth and the logarithmic height regime}

\begin{lemma}[Horizontal boundary at level $k$]\label{lem:hor-one-level}
Let $K\subset\R^d$ be a convex body with piecewise $C^1$ boundary. There exists $C_K<\infty$ such that for all $R\ge1$ and $k\in\Z$,
\[
B_{\rm hor}(X_k)\ \le\ C_K\,R^{d-1}\,\big\|\operatorname{cof}(A^k)\big\|,
\]
where $\operatorname{cof}(\cdot)$ is the cofactor matrix, and $\|\cdot\|$ is any operator norm on $\R^{d\times d}$.
\end{lemma}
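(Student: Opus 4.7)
\medskip
\noindent\textbf{Proof proposal for Lemma~\ref{lem:hor-one-level}.}

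\emph{Setup.} Since $A^k\in GL(d,\Z)$ is a lattice automorphism, I will start by writing
$X_k=A^k X_0=A^k(RK)\cap\Z^d=:L_k\cap\Z^d,$
where $L_k:=A^k(RK)$ is a convex body with piecewise $C^1$ boundary (as an affine image of $RK$). The plan is to reduce the axis-aligned lattice edge count $B_{\rm hor}(X_k)$ to the $(d{-}1)$-dimensional surface area of $\partial L_k$, and then transfer that surface area back to $\partial(RK)$ by the linear change of variables $A^k$; the surface-Jacobian identity produces precisely a factor of $\operatorname{cof}(A^k)$, which is what drives the stated bound.

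\emph{Cauchy projection and cofactor transfer.} For each $i\in\{1,\dots,d\}$, convexity of $L_k$ ensures that each $e_i$-axis line meets $L_k\cap\Z^d$ in a (possibly empty) contiguous lattice interval, so that its contribution to the undirected $(\pm e_i)$-part of $B_{\rm hor}$ is $2$ or $0$. Hence
\[
B_{\rm hor}(X_k)\ =\ 2\sum_{i=1}^d N_i(L_k),\qquad N_i(L_k)\ :=\ \#\{\,e_i\text{-axis lattice lines meeting }L_k\,\}.
\]
I will bound $N_i(L_k)$ by a discrete-to-continuum inequality $N_i(L_k)\le \operatorname{vol}_{d-1}(\pi_i L_k)+c_d\,\rho_i(L_k)$ valid for convex bodies, where $\rho_i$ is a lower-order correction controlled by intrinsic volumes. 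Cauchy's projection formula gives $\operatorname{vol}_{d-1}(\pi_i L_k)=\tfrac12\int_{\partial L_k}|\nu\cdot e_i|\,d\mathcal H^{d-1}$. Pushing the integral back via $y\mapsto A^k y$, using the surface Jacobian $d\mathcal H^{d-1}(A^ky)=|\det A^k|\cdot\|(A^k)^{-T}\nu(y)\|\,d\mathcal H^{d-1}(y)$ and the normal transformation $\nu_{L_k}(A^ky)\parallel(A^k)^{-T}\nu(y)$, a single factor of $\|(A^k)^{-T}\nu\|$ cancels, and the identity $|\det A^k|\,(A^k)^{-T}=\pm\operatorname{cof}(A^k)$ yields the key equality
\[
\int_{\partial L_k}|\nu\cdot e_i|\,d\mathcal H^{d-1}\ =\ \int_{\partial(RK)}\bigl|\operatorname{cof}(A^k)\,\nu_{RK}\cdot e_i\bigr|\,d\mathcal H^{d-1}.
\]
Summing in $i$ and using $\sum_{i=1}^d|Mv\cdot e_i|=\|Mv\|_1\le c_d\,\|M\|\,|v|$ (equivalence of matrix norms on $\R^d$), the leading-order contribution is bounded by $c_d\,\|\operatorname{cof}(A^k)\|\cdot R^{d-1}\,\mathcal H^{d-1}(\partial K)$, which fits the target.

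\emph{Main obstacle: absorbing the discrete remainder.} The delicate point is to show that the lower-order term $\rho_i(L_k)$ is always controlled by the target $R^{d-1}\|\operatorname{cof}(A^k)\|$ with a uniform constant $C_K$. I will argue via a tubular/intrinsic-volume estimate: the broken axis edges lie inside $\Z^d\cap(\partial L_k+[-\tfrac12,\tfrac12]^d)$, whose cardinality is, for convex $L_k$, dominated by $\mathcal H^{d-1}(\partial L_k)+c_d\sum_{j=0}^{d-2}\mathcal V_j(L_k)$ (Steiner/intrinsic-volume formula for convex bodies). The $j$-th intrinsic volume of $L_k$ is at most $R^j$ times a polynomial in $\|A^k\|$ of degree $j$, which I expect to bound by $R^{d-1}\|\operatorname{cof}(A^k)\|$ using $|\det A^k|=1$ and singular-value inequalities of the form $\|A^k\|^{\alpha}\cdot\|(A^k)^{-1}\|^{\beta}\ge 1$; in the complementary small-$R$ regime where this rearrangement is wasteful, the trivial bound $B_{\rm hor}(X_k)\le 2d\,|X_k|=2d\,|X_0|\le 2d|K|R^d$ already fits inside $C_K R^{d-1}\|\operatorname{cof}(A^k)\|$ since $R\le\|\operatorname{cof}(A^k)\|$ there. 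The hardest bookkeeping will be to choose a single $C_K$ that works across all $R\ge1$ and $k$, which I expect to handle by splitting into these two regimes and taking the maximum constant.
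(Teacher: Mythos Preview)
Your continuum computation via Cauchy projections and the surface-Jacobian/cofactor change of variables is correct and matches the paper's anisotropic-perimeter identity $\Per_\tau(A^kE)=\int_{\partial E}\|(\operatorname{cof}A^k)\nu\|\,d\mathcal H^{d-1}$. The problem is in your discrete remainder step. By discretizing on the distorted body $L_k=A^k(RK)$, the lower-order intrinsic volumes $\mathcal V_j(L_k)$ acquire $A^k$-dependence, and your stated bound ``$\mathcal V_j(L_k)\lesssim R^j\cdot(\text{poly of degree }j\text{ in }\|A^k\|)$'' is too coarse for the two-regime split to close when $d\ge4$. Concretely, take singular values $(\sigma_1,\sigma_2,\sigma_3,\sigma_4)=(M^{3},M^{-1},M^{-1},M^{-1})$: then $\|\operatorname{cof}(A^k)\|=M$, your $j{=}2$ remainder bound reads $M^{6}R^{2}$, the trivial bound $R^4\le C\,R^{3}M$ covers only $R\le CM$, while controlling the $j{=}2$ term needs $R\ge M^{5}/C$, leaving the range $R\in(CM,\,M^{5}/C)$ uncovered. (The \emph{true} $\mathcal V_2(L_k)\asymp e_2(\sigma)R^2\asymp M^{2}R^{2}$ would save you, but that requires the elementary-symmetric-polynomial bound on intrinsic volumes under linear maps, not the crude $\|A^k\|^j$ estimate your singular-value inequality ``$\|A^k\|^\alpha\|(A^k)^{-1}\|^\beta\ge1$'' can reach.)

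The paper's proof idea avoids this by discretizing on the \emph{fixed} body $RK$ rather than on $L_k$ (``lattice approximation of $RK$''), so that the discrete-to-continuum error carries no $A^k$-dependence at all. Explicitly: since $A^k\in GL(d,\Z)$ is a lattice bijection, $B_{\rm hor}(X_k)=\sum_i B_{v_i}(X_0)$ where $v_i:=A^{-k}e_i\in\Z^d$ and $B_v(\cdot)$ is the $v$-directional edge boundary. Telescoping each $v_i$ along an axis lattice path and using the triangle inequality for symmetric differences gives $B_{v_i}(X_0)\le\|v_i\|_1\,\max_{l}B_{e_l}(X_0)$; by convexity of $K$, each $e_l$-line meets $RK$ in an interval and $B_{e_l}(X_0)\le C_K\,R^{d-1}$ for all $R\ge1$ with a constant depending only on $K$. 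Since $\sum_i\|A^{-k}e_i\|_1\le d^{2}\|A^{-k}\|=d^{2}\|\operatorname{cof}(A^k)\|$, the lemma follows in one line with no regime splitting.
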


\begin{proof}[Proof idea]
Let $\Per_{\tau}$ denote the anisotropic perimeter associated with the axis stencil at height $k$. If $E\subset\R^d$ is smooth, then
$
\Per_{\tau}(A^k E)=\int_{\partial E}\|(\operatorname{cof}A^k)n\|\,d\mathcal H^{d-1}
\le \|\operatorname{cof}A^k\|\Per_{\tau}(E).
$
Discretizing via the standard BV-graph perimeter comparison (coarea + lattice approximation of $RK$) yields
$B_{\rm hor}(X_k)\lesssim \|\operatorname{cof}A^k\|\,R^{d-1}$ with the implicit constant depending only on $K$ and the stencil.
\end{proof}

\begin{lemma}[Two-sided cofactor growth]\label{lem:cof-growth}
Let $A\in GL(d,\Z)$. For every
$
\Lambda\ >\ \max\{\rho(A),\,\rho(A^{-1})\}
$
there exists $C_\Lambda<\infty$ such that
\[
\|\operatorname{cof}(A^k)\|\ \le\ C_\Lambda\,\Lambda^{|k|}\qquad(k\in\Z).
\]
\end{lemma}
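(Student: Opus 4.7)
\medskip
\noindent\textbf{Proof plan for Lemma~\ref{lem:cof-growth}.}
The strategy is to reduce the cofactor growth to a pair of Gelfand-type estimates for the powers of $A$ and $A^{-1}$, exploiting the key arithmetic fact that $A\in GL(d,\Z)$ has $|\det A|=1$.

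\medskip
\noindent\emph{Step 1: Reduce cofactor to an inverse power.}
For any invertible $M\in\R^{d\times d}$ one has the classical identity $\operatorname{cof}(M)=\det(M)\cdot (M^{-1})^{\top}$. Applying this with $M=A^k$ and using $|\det(A^k)|=|\det A|^{|k|}=1$ (since $A\in GL(d,\Z)$ forces $\det A\in\{\pm1\}$), I get $\operatorname{cof}(A^k)=\pm (A^{-k})^{\top}$ and hence
\[
\|\operatorname{cof}(A^k)\|\ =\ \|(A^{-k})^{\top}\|\ \le\ c_d\,\|A^{-k}\|
\]
for any submultiplicative matrix norm $\|\cdot\|$, where $c_d$ is a fixed equivalence constant between $\|\cdot\|$ and its transpose (in the operator $2$-norm one may take $c_d=1$).

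\medskip
\noindent\emph{Step 2: Two Gelfand estimates, one per sign of $k$.}
Split $\|A^{-k}\|$ by the sign of $k$. For $k\ge 0$ write $A^{-k}=(A^{-1})^{k}$; Gelfand's formula applied to $A^{-1}$ gives $\limsup_{k\to\infty}\|(A^{-1})^{k}\|^{1/k}=\rho(A^{-1})$, so for any $\Lambda_1>\rho(A^{-1})$ there is $C_1<\infty$ with $\|(A^{-1})^k\|\le C_1\,\Lambda_1^{k}$ for all $k\ge0$. Symmetrically, for $k\le 0$ one has $A^{-k}=A^{|k|}$, and Gelfand applied to $A$ yields $\|A^m\|\le C_2\,\Lambda_2^{m}$ for every $\Lambda_2>\rho(A)$ and all $m\ge0$.

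\medskip
\noindent\emph{Step 3: Combine and absorb constants.}
Fix $\Lambda>\max\{\rho(A),\rho(A^{-1})\}$ and apply the previous step with $\Lambda_1=\Lambda_2=\Lambda$ to obtain constants $C_1,C_2<\infty$ such that $\|A^{-k}\|\le \max\{C_1,C_2\}\,\Lambda^{|k|}$ for all $k\in\Z$ (the case $k=0$ is trivial). Setting $C_\Lambda:=c_d\,\max\{C_1,C_2\}$ and combining with Step 1 gives the desired bound $\|\operatorname{cof}(A^k)\|\le C_\Lambda\,\Lambda^{|k|}$.

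\medskip
\noindent\emph{On difficulty.}
There is essentially no obstacle: once the cofactor identity is used to dispose of $|\det A^k|=1$, the statement is a standard application of Gelfand's formula in both directions. The only item to be slightly careful about is the norm equivalence on transposes (absorbed into $c_d$), and the fact that one cannot in general take $\Lambda=\max\{\rho(A),\rho(A^{-1})\}$ itself (non-diagonalizable $A$ with eigenvalues on a single shell produce polynomial prefactors $|k|^{d-1}$, which is exactly why the statement is formulated with a strict inequality $\Lambda>\max\{\rho(A),\rho(A^{-1})\}$).
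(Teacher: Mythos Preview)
Your proof is correct and follows essentially the same approach the paper indicates: the paper does not give a formal proof of this lemma, but the immediately following Remark~\ref{rem:cof-identity} records exactly your Step~1 reduction ($\operatorname{cof}(A^k)=(\det A)^k(A^{-k})^\top$ with $\det A=\pm1$) and notes that the growth of $\|\operatorname{cof}(A^k)\|$ is governed by $\|A^{-k}\|$, which is precisely what you then bound via Gelfand's formula on both $A$ and $A^{-1}$.
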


\begin{remark}[Cofactor identity for $GL(d,\Z)$]\label{rem:cof-identity}
For all $k\in\Z$, $\operatorname{cof}(A^k)=(\det A)^k (A^{-k})^{\top}$. Since $\det A=\pm1$ for $A\in GL(d,\Z)$, the growth of $\|\operatorname{cof}(A^k)\|$ is governed by $\|A^{-k}\|$, which justifies the choice of $\Lambda>\max\{\rho(A),\rho(A^{-1})\}$ in Lemma~\ref{lem:cof-growth}.
\end{remark}

\begin{corollary}[Total horizontal boundary]\label{cor:hor-total}
There exist $C_{A,K}<\infty$ and $\Lambda(A)>\max\{\rho(A),\rho(A^{-1})\}$ such that
\[
B_{\rm hor}\bigl(F_{K,R,T}\bigr)
\ \le\ C_{A,K}'\,R^{d-1}\,\Lambda(A)^{\,T}.
\]
\end{corollary}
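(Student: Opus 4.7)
The plan is to reduce Corollary~\ref{cor:hor-total} to a bookkeeping step that glues the per-level estimate of Lemma~\ref{lem:hor-one-level} with the uniform cofactor growth of Lemma~\ref{lem:cof-growth}. First I would observe that the horizontal generators $s_i^{\pm1}$ fix the $\Z$-coordinate (since $s_i\cdot(x,k)=(x+e_i,k)$), hence no horizontal edge in the Cayley graph crosses between two distinct levels $k\ne k'$. Consequently the horizontal edge boundary splits exactly as a sum of intra-level contributions,
\[
B_{\rm hor}\bigl(F_{K,R,T}\bigr)\ =\ \sum_{k=-T}^{T}B_{\rm hor}(X_k),
\]
which is the decomposition we will estimate termwise.

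Next, I would apply Lemma~\ref{lem:hor-one-level} to each slice $X_k=A^kX_0$, giving
\[
B_{\rm hor}(X_k)\ \le\ C_K\,R^{d-1}\,\|\operatorname{cof}(A^k)\|,
\]
with a constant $C_K$ depending only on $K$ and the fixed axis stencil (hence independent of $k$ and $R$). Choosing any fixed
\[
\Lambda(A)\ >\ \max\{\rho(A),\rho(A^{-1})\}\ =\ \rho(\operatorname{cof}A)
\]
(cf.\ Remark~\ref{rem:cof-identity}, since $\operatorname{cof}(A^k)=(\det A)^k(A^{-k})^{\top}$ and $\det A=\pm 1$ for $A\in GL(d,\Z)$), Lemma~\ref{lem:cof-growth} supplies a constant $C_\Lambda<\infty$ with
\[
\|\operatorname{cof}(A^k)\|\ \le\ C_\Lambda\,\Lambda(A)^{|k|}\qquad(k\in\Z).
\]

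Finally, I would sum these bounds along the stack. Since $\Lambda(A)>1$,
\[
\sum_{k=-T}^{T}\Lambda(A)^{|k|}\ =\ 1+2\sum_{k=1}^{T}\Lambda(A)^k
\ \le\ \frac{2\Lambda(A)}{\Lambda(A)-1}\ \Lambda(A)^{T},
\]
so absorbing the geometric-series constant into the prefactor yields
\[
B_{\rm hor}\bigl(F_{K,R,T}\bigr)\ \le\ C_K\,C_\Lambda\,\frac{2\Lambda(A)}{\Lambda(A)-1}\ R^{d-1}\,\Lambda(A)^{T},
\]
which is the desired estimate with $C_{A,K}':=C_K C_\Lambda\cdot 2\Lambda(A)/(\Lambda(A)-1)$ (depending only on $A,K,\Lambda$) and the stated $\Lambda(A)$.

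There is no real obstacle here: the corollary is essentially a direct combination of the two lemmas together with a geometric-series summation. The only point worth double-checking is the level-wise splitting of the horizontal boundary, which is automatic from our left Cayley convention (horizontal generators act trivially on the $\Z$-coordinate); had we instead taken generators that couple base and exponent, this step would have required additional care to track cross-level terms.
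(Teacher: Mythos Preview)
Your proof is correct and is exactly the intended argument: the paper states this as an unproved corollary of Lemmas~\ref{lem:hor-one-level} and~\ref{lem:cof-growth}, and your level-wise splitting (justified by the fact that $s_i\cdot(x,k)=(x+e_i,k)$ leaves the $\Z$-coordinate fixed) followed by the geometric-series summation is precisely how one combines them.
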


We now choose the height to grow \emph{logarithmically} in $R$:
\[
T(R):=\big\lfloor \alpha\,\log R\big\rfloor,\qquad 0<\alpha<\frac{1}{\log \Lambda(A)}.
\]

\begin{proposition}[Log-height layer-nested sets are F{\o}lner and have tempered increments]\label{prop:log-height-folner-tempered}
For $F_R:=F_{K,R,T(R)}$ one has:
\begin{enumerate}
\item (F{\o}lner) \quad $\displaystyle \frac{B_{\mathcal S}(F_R)}{|F_R|}\ \xrightarrow[R\to\infty]{}\ 0.$
\item \textup{(TF)(ii)} \quad There exists $B<\infty$ such that for all large $R$,
$
|\,F_{R+1}\setminus F_R\,|\ \le\ B\,B_{\mathcal S}(F_R).
$
\end{enumerate}
\end{proposition}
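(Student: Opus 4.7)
The plan is to exploit the exact boundary split $B_{\mathcal S}(F_R)=B_{\rm vert}(F_R)+B_{\rm hor}(F_R)$ and the logarithmic-height choice $T(R)=\lfloor\alpha\log R\rfloor$, so that both clauses reduce to bookkeeping with the lemmas already in hand: \ref{lem:slice-sizes}, \ref{lem:zero-drift-left}, \ref{lem:annulus-lattice}, and \ref{cor:hor-total}. The single numerical input that drives everything is the strict inequality $\alpha\log\Lambda(A)<1$, which pins $B_{\rm hor}(F_R)$ strictly below the order $R^d$ of the vertical caps.

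For clause (1) I would first read off $|F_R|\asymp R^d\log R$ from \ref{lem:slice-sizes} and $B_{\rm vert}(F_R)=2|X_0(R)|\asymp R^d$ from \ref{lem:zero-drift-left}. Then \ref{cor:hor-total} at height $T=T(R)$ gives
\[
B_{\rm hor}(F_R)\ \le\ C_{A,K}'\,R^{d-1}\,\Lambda(A)^{T(R)}\ \le\ C\,R^{d-1+\alpha\log\Lambda(A)},
\]
which is $o(R^d)$ by the choice of $\alpha$. Summing the two pieces and dividing by $|F_R|\asymp R^d\log R$ yields a ratio of order $1/\log R$, establishing F{\o}lner.

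For clause (2) I would split into two cases according to whether $T(R+1)=T(R)$ or $T(R+1)=T(R)+1$. The heart of the per-level estimate is that $A^k\colon\Z^d\to\Z^d$ is a bijection, so the annular increment at level $k$ has the same cardinality as at level $0$, namely $|X_k(R+1)\setminus X_k(R)|=|X_0(R+1)\setminus X_0(R)|\le C_K R^{d-1}$ by \ref{lem:annulus-lattice}. In the flat case one sums $2T(R)+1=O(\log R)$ such annuli, obtaining $(2T(R)+1)C_K R^{d-1}\le C_K R^d$ once $R$ is large enough that $2T(R)+1\le R$; in the jump case one adds two entirely new layers of size $|X_0(R+1)|\le c_2 R^d$ on top of the annular growth, still $O(R^d)$. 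In either case $|F_{R+1}\setminus F_R|\le C\,R^d$, while $B_{\mathcal S}(F_R)\ge B_{\rm vert}(F_R)\ge 2c_1 R^d$, so one gets a uniform ratio. The explicit value $B=(4c_2+c_3)/(2c_1)$ stated in \cref{thm:G-final}(iii) falls out of this arithmetic.

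The main \emph{analytic} content has already been absorbed into the earlier lemmas: \ref{lem:zero-drift-left} (exact cancellation of interior vertical drift, which is precisely what allows $T(R)$ to grow at all without spoiling amenability), \ref{lem:annulus-lattice} (discrete boundary regularity of $K$), and \ref{cor:hor-total} (the BV-style cofactor bound via the $\operatorname{cof}(A^k)=(\det A)^k(A^{-k})^\top$ identity). The only \emph{genuine} step is the observation that $A^k$-invariance of counting measure on $\Z^d$ collapses per-level annulus counts to the base level; without this collapse the $(2T+1)$ factor would be multiplied by an unbounded operator norm and the argument would fail. The remaining obstacle, if any, is mildly delicate nesting: one must verify that $R\mapsto T(R)$ is nondecreasing (immediate for the floor of $\alpha\log R$) so that \ref{lem:nested-stacks} applies and the sequence $(F_R)$ really is a nested chain; everything else is $O(\cdot)$ arithmetic.
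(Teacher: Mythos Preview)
Your proposal is correct and follows essentially the same approach as the paper's proof: split $B_{\mathcal S}(F_R)$ into vertical caps and horizontal boundary, use \Cref{lem:zero-drift-left}, \Cref{lem:slice-sizes}, and \Cref{cor:hor-total} for clause~(1), and for clause~(2) distinguish the cases $T(R{+}1)=T(R)$ versus $T(R{+}1)=T(R){+}1$ with the annulus bound of \Cref{lem:annulus-lattice} and the bijection $A^k$ collapsing per-level counts. Your write-up is in fact slightly more explicit than the paper's terse proof (e.g.\ you flag the nestedness check via \Cref{lem:nested-stacks} and spell out why the $A^k$-invariance of counting measure is the linchpin for the increment estimate).
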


\begin{proof}
(1) Vertical: $B_{\rm vert}(F_R)=2|X_0(R)|\asymp_K R^d$ and $|F_R|\asymp_K (\log R)R^d$, so $B_{\rm vert}/|F_R|\to0$.
Horizontal: $B_{\rm hor}(F_R)\lesssim R^{d-1}\Lambda^{T(R)}=R^{-1+\alpha\log\Lambda}/\log R\to0$.

(2) If $T(R+1)=T(R)$ the increment is $(2T(R)+1)\,|X_0(R+1)\setminus X_0(R)|\lesssim R^{d-1}\log R$.
If $T(R+1)=T(R)+1$, two new slices add $\lesssim R^d$. Since $B_{\mathcal S}(F_R)\ge B_{\rm vert}(F_R)\asymp R^d$, both cases give the bound.
\end{proof}

\begin{theorem}[Tempered F{\o}lner chains for $\Z^d\rtimes_A \Z$]\label{thm:TFii-Folner-semidirect}
Let $A\in GL(d,\Z)$ and $K\subset\R^d$ convex with piecewise $C^1$ boundary. Then the sets
\[
E_R\ :=\ F_{K,R,T(R)},\qquad T(R)=\lfloor \alpha\log R\rfloor,\ \ 0<\alpha<1/\log\Lambda(A),
\]
form a nested F{\o}lner sequence and satisfy the tempered increment bound {\rm(TF)(ii)} in the undirected edge normalization:
\[
\frac{B_{\mathcal S}(E_R)}{|E_R|}\ \xrightarrow[R\to\infty]{}\ 0,\qquad
|E_{R+1}\setminus E_R|\ \le\ B\cdot B_{\mathcal S}(E_R)\ \ \text{for all large $R$}.
\]
\end{theorem}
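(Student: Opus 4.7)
The plan is to package together three facts already established in \S\ref{sec:TF-semidirect}: nestedness of the $A$-covariant layer-nested family, the F{\o}lner property, and the tempered-increment bound \textup{(TF)(ii)}; the theorem is essentially a restatement of Lemma~\ref{lem:nested-stacks} together with Proposition~\ref{prop:log-height-folner-tempered}, specialized to the logarithmic height schedule $T(R)=\lfloor\alpha\log R\rfloor$ with $\alpha\log\Lambda(A)<1$. No new analytic ingredient is required; the task is to assemble the pieces with a tracked constant $B$.

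First I would verify nestedness: since $R\mapsto R$ is increasing and $R\mapsto T(R)$ is nondecreasing, Lemma~\ref{lem:nested-stacks} immediately yields $E_R\subseteq E_{R+1}$. Next, for the F{\o}lner property I would split $B_{\mathcal S}(E_R)=B_{\rm vert}(E_R)+B_{\rm hor}(E_R)$. The vertical part collapses to the two caps by Lemma~\ref{lem:zero-drift-left}, giving $B_{\rm vert}(E_R)=2|X_0(R)|\asymp_K R^d$. The horizontal part is controlled by Corollary~\ref{cor:hor-total}, yielding $B_{\rm hor}(E_R)\lesssim R^{d-1}\Lambda(A)^{T(R)}\le C\,R^{d-1+\alpha\log\Lambda(A)}$. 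Since $|E_R|\asymp_K (\log R)\,R^d$ by Lemma~\ref{lem:slice-sizes}, both $B_{\rm vert}/|E_R|\lesssim 1/\log R$ and $B_{\rm hor}/|E_R|\lesssim R^{-1+\alpha\log\Lambda(A)}/\log R$ tend to $0$, the last one crucially using the choice $\alpha\log\Lambda(A)<1$.

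For \textup{(TF)(ii)} I would split into two cases depending on whether the height jumps. If $T(R+1)=T(R)$, then $E_{R+1}\setminus E_R=\bigsqcup_{|k|\le T(R)} A^k\bigl(X_0(R+1)\setminus X_0(R)\bigr)\times\{k\}$; since $A^k$ is a bijection on $\Z^d$, Lemma~\ref{lem:annulus-lattice} gives $|E_{R+1}\setminus E_R|\le (2T(R)+1)\,C_K R^{d-1}\lesssim R^{d-1}\log R$. If $T(R+1)=T(R)+1$, two additional slices of size $\asymp_K R^d$ are appended, so $|E_{R+1}\setminus E_R|\lesssim R^d$. In both cases $B_{\mathcal S}(E_R)\ge B_{\rm vert}(E_R)\asymp_K R^d$, so the ratio $|E_{R+1}\setminus E_R|/B_{\mathcal S}(E_R)$ is bounded by a universal constant $B=B(K,A,\mathcal S,\alpha)$.

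The conceptually load-bearing step is Lemma~\ref{lem:zero-drift-left}: the $A$-covariant construction $X_k=A^k X_0$ forces the discrete vertical drift $X_k\triangle A^{-1}X_{k+1}$ to be empty on every interior interface, leaving only two caps contributing to $B_{\rm vert}$. Without this exact cancellation, $B_{\rm vert}(E_R)$ would scale like $(2T(R)+1)R^d\asymp R^d\log R$ and the F{\o}lner ratio would fail to vanish, forcing one to track interior-to-interior cofactor distortion. Once this drift cancellation is granted (and it is, unconditionally on $A\in GL(d,\Z)$), the remaining work is lattice-point counting plus the choice of $\alpha$; I do not foresee any further obstacle.
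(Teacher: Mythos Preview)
Your proposal is correct and follows exactly the paper's approach: the theorem is indeed just the packaging of Lemma~\ref{lem:nested-stacks} (nestedness) with Proposition~\ref{prop:log-height-folner-tempered} (F{\o}lner and \textup{(TF)(ii)}), and your reconstruction of the latter's proof---the vertical/horizontal split via Lemma~\ref{lem:zero-drift-left} and Corollary~\ref{cor:hor-total}, and the two-case analysis for the increment according to whether $T$ jumps---matches the paper verbatim. Your identification of Lemma~\ref{lem:zero-drift-left} as the load-bearing ingredient is also exactly the point the paper emphasizes.
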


\begin{lemma}[Explicit increment size]\label{lem:ER-increment}
With $E_R=F_{K,R,T(R)}$ and $T(R)=\lfloor \alpha\log R\rfloor$ as above, there exists $C=C(A,K)$ such that for all large $R$,
\[
|E_{R+1}\setminus E_R|\ \le\ C\big(R^{d-1}\log R+R^d\,\mathbf 1_{\{T(R+1)=T(R)+1\}}\big)
\ \le\ C'\,R^d.
\]
\end{lemma}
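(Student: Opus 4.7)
\medskip
\noindent\emph{Proof plan for Lemma~\ref{lem:ER-increment}.}
The plan is to split $E_{R+1}\setminus E_R$ into a layer-wise contribution on the heights common to both stacks and (when applicable) a contribution from two new cap slices, then estimate each piece using the $A$-covariance $X_k=A^k X_0$ and the annulus bound from Lemma~\ref{lem:annulus-lattice}.

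First, using Definition~\ref{def:A-cov-stack} I would write $E_R=\bigsqcup_{|k|\le T(R)} X_k(R)\times\{k\}$ with $X_k(R)=A^kX_0(R)$ and $X_0(R)=(RK)\cap\Z^d$. By Lemma~\ref{lem:nested-stacks} the sequence $(E_R)$ is nested, so
\[
E_{R+1}\setminus E_R\ =\ \underbrace{\bigsqcup_{|k|\le T(R)} \bigl(X_k(R{+}1)\setminus X_k(R)\bigr)\times\{k\}}_{\text{common heights}}\ \sqcup\ \underbrace{\bigsqcup_{|k|=T(R){+}1,\,T(R{+}1)=T(R){+}1} X_k(R{+}1)\times\{k\}}_{\text{new caps (if any)}}.
\]

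Next, for each common height $|k|\le T(R)$, since $A^k\in GL(d,\Z)$ acts as a bijection on $\Z^d$,
\[
|X_k(R{+}1)\setminus X_k(R)|\ =\ |A^k(X_0(R{+}1)\setminus X_0(R))|\ =\ |X_0(R{+}1)\setminus X_0(R)|\ \le\ C_K\,R^{d-1}
\]
by Lemma~\ref{lem:annulus-lattice}. Summing over the $2T(R)+1\le 2\alpha\log R+1$ common heights gives a contribution $\le C_K(2\alpha\log R+1)\,R^{d-1}\le C_1\,R^{d-1}\log R$. This is the log-height factor in the stated bound.

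Finally, if $T(R{+}1)=T(R){+}1$ then two new cap slices are added at heights $k=\pm(T(R){+}1)$; by Lemma~\ref{lem:slice-sizes} each has cardinality $|X_0(R{+}1)|\le C_K(R{+}1)^d\le C_2\,R^d$, contributing at most $2C_2\,R^d$ in total. If instead $T(R{+}1)=T(R)$, this term is absent, matching the indicator $\mathbf 1_{\{T(R{+}1)=T(R){+}1\}}$. Combining the two contributions yields the first inequality with $C:=\max\{C_1,2C_2\}$; the second inequality $\le C'R^d$ follows because $R^{d-1}\log R=o(R^d)$ as $R\to\infty$, so for large $R$ the log-height term is absorbed into $C'R^d$.

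There is no substantive obstacle here: the argument is pure bookkeeping once the $A$-covariance $X_k=A^kX_0$ is used to reduce each common-layer annulus to a single lattice annulus of $(RK)\setminus(R{-}1)K$ in $\Z^d$. The only mild care needed is to verify that $T(R{+}1)-T(R)\in\{0,1\}$ for all large $R$, which is automatic since $T(R)=\lfloor\alpha\log R\rfloor$ is nondecreasing with integer jumps of size at most one.
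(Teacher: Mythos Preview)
Your proof is correct and follows essentially the same approach as the paper: split the increment into annulus contributions on the common heights (bounded via Lemma~\ref{lem:annulus-lattice} and the bijectivity of $A^k$ on $\Z^d$) plus, when $T$ jumps, the two new cap slices of size $\asymp R^d$. Your version is slightly more detailed (making the $A$-covariance reduction and the $T(R{+}1)-T(R)\in\{0,1\}$ check explicit), but the argument is the same.
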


\begin{proof}
If $T(R+1)=T(R)$, the difference is a unit-thickness annulus within each of $(2T(R)+1)$ slices: the bound follows from Lemma~\ref{lem:annulus-lattice}. If $T(R+1)=T(R)+1$, we add two slices of size $\asymp |X_0(R+1)|\asymp R^d$, plus the annulus cost; combine to conclude.
\end{proof}

\begin{definition}[$L^1$-profile]\label{def:j1}  
Let $X$ be a bounded-degree graph. Its $L^1$-isoperimetric (Nash) profile $j_1(v)$ is the best constant $J$ such that for every finite $E\subset V_X$ with $|E|\le v$ and every function $f:V_X\to\mathbb R$ supported in $E$,
\[
\sum_{x\in E} |f(x)-m(f,E)|\ \le\ J\ \sum_{\{x,y\}\in E^1} |f(x)-f(y)|,
\]
where $m(f,E)$ is a median of $f$ on $E$ and $E^1$ is the (undirected) edge set of $X$. (See \cite[Ch.~5]{SaloffCoste2002Aspects}.)
\end{definition}

\begin{lemma}[Nash/$L^1$ profile vs.\ edge isoperimetry]\label{lem:j-to-Iedge}
Let $X$ be a bounded-degree graph with maximum degree $\Delta$, and let $j_1(v)$ be its $L^1$-(Nash) profile as in Definition~\ref{def:j1}. There exist constants $c_1,c_2\in(0,\infty)$ depending only on~$\Delta$ such that for all $v\ge1$,
\begin{equation}\label{eq:Nash-vs-Iedge}
c_1\ \sup_{1\le u\le v}\ \frac{u}{\,I^\circ_{\edge}(u)\,}
\;\;\le\;\;
j_1(v)
\;\;\le\;\;
c_2\ \sup_{1\le u\le v}\ \frac{u}{\,I^\circ_{\edge}(u)\,}.
\end{equation}
In particular,
\begin{equation}\label{eq:Iedge-lower-from-j1}
I^\circ_{\edge}(v)\ \ge\ \frac{c_1\,v}{\,j_1(v)\,}\qquad\text{for all }v\ge1.
\end{equation}
\end{lemma}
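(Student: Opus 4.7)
The lemma records the standard duality between the $L^1$-Nash profile and the isoperimetric profile on bounded-degree graphs. The plan is to handle the two directions separately---coarea for the upper bound, indicator testing for the lower---and then to extend the volume window from $u\le v/2$ to $u\le v$ via a short dyadic argument based on subadditivity.

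For the upper bound on $j_1(v)$, given any admissible pair $(E,f)$ I would replace $f$ by $f-m(f,E)$ and split $f-m=g_+-g_-$ with $g_\pm\ge 0$ and $|\mathrm{supp}(g_\pm)|\le |E|/2\le v$. The discrete coarea identities
\[
\sum_x g_\pm(x)=\int_0^{\infty}\!\bigl|\{g_\pm>t\}\bigr|\,dt,\qquad
\sum_{\{x,y\}\in E^1}\!|g_\pm(x)-g_\pm(y)|=\int_0^{\infty}\!\bigl|\partial\{g_\pm>t\}\bigr|\,dt,
\]
combined with the pointwise bound $|\partial U_t|\ge |U_t|/\sup_{1\le u\le v}(u/I^\circ_{\edge}(u))$ valid for every superlevel set of size in $[1,v]$, yield the bound for each sign after integrating in $t$. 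Summing the $\pm$ parts and using the edgewise identity $|g_+(x)-g_+(y)|+|g_-(x)-g_-(y)|=|f(x)-f(y)|$ gives $j_1(v)\le \sup_{1\le u\le v} u/I^\circ_{\edge}(u)$, i.e.\ the right-hand inequality with $c_2=1$.

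For the lower bound, I would test the Poincar\'e inequality on indicator pairs. For each $1\le u\le v/2$, I pick $A$ attaining $|\partial A|=I^\circ_{\edge}(u)$ (Lemma~\ref{lem:attainment-profiles}) and a disjoint $A'$ of the same size, set $E:=A\cup A'$ and $f:=\chi_A$. Then $|E|=2u\le v$, the minimizing median value $m\in[0,1]$ gives $\sum_x|f(x)-m|=u$, and $\sum_{\{x,y\}}|f(x)-f(y)|=|\partial A|=I^\circ_{\edge}(u)$, whence $j_1(v)\ge u/I^\circ_{\edge}(u)$. Taking the supremum over $u\le v/2$ produces the preliminary inequality $j_1(v)\ge \sup_{1\le u\le v/2} u/I^\circ_{\edge}(u)$.

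The main obstacle is extending this preliminary estimate to the full range $u\le v$. I plan to argue by subadditivity of $I^\circ_{\edge}$ (Proposition~\ref{prop:subadd-corrected}): for any near-optimal $u^*\le v$, writing $u^*=u_1+u_2$ with $u_1=\lfloor u^*/2\rfloor$ and $u_2=\lceil u^*/2\rceil$ gives $I^\circ_{\edge}(u^*)\le I^\circ_{\edge}(u_1)+I^\circ_{\edge}(u_2)$, so $\min_i I^\circ_{\edge}(u_i)\le\tfrac12 I^\circ_{\edge}(u^*)$ and the corresponding $u_i\le v/2+O(1)$ satisfies $u_i/I^\circ_{\edge}(u_i)\gtrsim u^*/I^\circ_{\edge}(u^*)$. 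A one-step Lipschitz correction (Proposition~\ref{prop:Lip}) absorbs the parity offset for odd $v$ at a cost depending only on $\Delta$, yielding the comparison $\sup_{u\le v} u/I^\circ_{\edge}(u)\le K(\Delta)\sup_{u\le v/2} u/I^\circ_{\edge}(u)$ and hence $c_1=1/K(\Delta)$. The consequence \eqref{eq:Iedge-lower-from-j1} is the specialization $u=v$ of the lower inequality after rearrangement. The hardest part is precisely this dyadic step: subadditivity is a one-sided estimate, and the parity correction is the only place where the degree bound $\Delta$ genuinely enters the constants.
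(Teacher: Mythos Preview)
The paper does not give a self-contained argument here: it simply cites the two-sided bound as the classical discrete Maz'ya inequality (Varopoulos--Saloff-Coste--Coulhon, Saloff-Coste) and reads off \eqref{eq:Iedge-lower-from-j1} from the trivial inequality $\sup_{u\le v}u/I^\circ_{\edge}(u)\ge v/I^\circ_{\edge}(v)$. So your plan is doing strictly more than the paper. Two points, one technical and one structural.

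\emph{Technical (upper bound).} Your edgewise identity $|g_+(x)-g_+(y)|+|g_-(x)-g_-(y)|=|f(x)-f(y)|$ requires $g_\pm=(f-m)_\pm$ as functions on all of $X$; but then one of $g_\pm$ equals $|m|$ identically on $X\setminus E$ and is not finitely supported, so your coarea identity $\sum_x g_\pm=\int_0^\infty|\{g_\pm>t\}|\,dt$ is not valid for that piece. If instead you extend $g_\pm$ by $0$ off $E$, the coarea holds but the edgewise identity fails on $\partial E$ (there it reads $|f(x)-m|$ versus $|f(x)|$). This is repairable---the discrepancy is at most $|m|\cdot|\partial E|$, which can be absorbed into $\|\nabla f\|_1$ with a constant---but the plan as written has a gap at exactly the step you flagged as routine.

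\emph{Structural (lower bound, the ``dyadic step'').} Your passage from $\sup_{u\le v/2}$ to $\sup_{u\le v}$ invokes Proposition~\ref{prop:subadd-corrected}, whose proof uses translation in an infinite group to separate two competitors. The lemma, however, is stated for an arbitrary bounded-degree graph, where subadditivity of $I^\circ_{\edge}$ need not hold: glue a size-$v$ expander to an infinite expander by a single edge, and you get $I^\circ_{\edge}(v)=1$ while $I^\circ_{\edge}(u)\asymp u$ for all $u\le v/2$, so $\sup_{u\le v}u/I^\circ_{\edge}(u)\ge v$ but $\sup_{u\le v/2}u/I^\circ_{\edge}(u)=O(1)$. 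In that same example the median-based $j_1(v)$ of Definition~\ref{def:j1} stays $O(1)$ (Poincar\'e on the expander), so the lower inequality in \eqref{eq:Nash-vs-Iedge} actually fails. In other words, your subadditivity-based dyadic argument is the right route in the Cayley setting where the lemma is used (Theorem~\ref{thm:new-hyp-global}), but the stated generality of the lemma is too broad for the median-based $j_1$; the references the paper cites work with the $\|f\|_1$-based Nash profile (cf.\ Lemma~\ref{lem:j-to-Iedge-A}), for which the comparison is an exact equality via coarea and indicator testing, with no dyadic step at all.
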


\begin{proof}[Proof and references]
The two-sided equivalence \eqref{eq:Nash-vs-Iedge} is classical (a discrete Maz'ya inequality); see
Varopoulos-Saloff-Coste-Coulhon \cite[Thm.~VI.3]{VaropoulosSaloffCosteCoulhon1992} and
Saloff-Coste \cite[Ch.~5-6]{SaloffCoste2002Aspects}.
Since $\sup_{u\le v} u/I^\circ_{\edge}(u)\ge v/I^\circ_{\edge}(v)$, the lower bound in \eqref{eq:Nash-vs-Iedge} implies \eqref{eq:Iedge-lower-from-j1}.
\end{proof}

\begin{remark}[Normalization]
The lemma is written for the \emph{directed} edge profile $I^\circ_{\edge}$. In the undirected normalization, all displayed inequalities hold with an absolute factor~$2$. Passing to vertex boundary costs a factor depending only on~$\Delta$ (Lemma~\ref{lem:vertex-edge-profiles}). All such constants can be absorbed into $c_1,c_2$.
\end{remark}

\begin{corollary}\label{cor:log-lb-from-j1}
If $j_1(v)\asymp \log v$ (e.g.\ for $G=\Z^d\rtimes_A\Z$ with $A$ hyperbolic), then
\[
I^\circ_{\edge}(v)\ \gtrsim\ \frac{v}{\log v}\,.
\]
\end{corollary}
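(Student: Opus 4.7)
The plan is to deduce the corollary directly from the hypothesis combined with the lower bound \eqref{eq:Iedge-lower-from-j1} of Lemma~\ref{lem:j-to-Iedge}, and then to justify the parenthetical example $G=\Z^d\rtimes_A\Z$ with $A$ hyperbolic by invoking the classical two-sided Nash-profile asymptotics for exponentially-growing solvable groups. There is essentially no obstacle in the corollary itself: once Lemma~\ref{lem:j-to-Iedge} is in hand, the desired bound is a one-line substitution.

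First I would carry out the main step. By Lemma~\ref{lem:j-to-Iedge},
\[
I^\circ_{\edge}(v)\ \ge\ \frac{c_1\,v}{j_1(v)}\qquad(v\ge1),
\]
with $c_1=c_1(\Delta)>0$ depending only on the maximum degree (here $\Delta=2d+2$). The hypothesis $j_1(v)\asymp \log v$ means there exist constants $0<a\le b<\infty$ and $v_0\ge 2$ such that $a\log v\le j_1(v)\le b\log v$ for all $v\ge v_0$. Substituting the upper bound $j_1(v)\le b\log v$ gives
\[
I^\circ_{\edge}(v)\ \ge\ \frac{c_1}{b}\cdot\frac{v}{\log v}\qquad(v\ge v_0),
\]
which is the claimed $\gtrsim v/\log v$ bound; the finitely many small volumes $1\le v<v_0$ can be absorbed into the implicit constant, using $I^\circ_{\edge}(v)\ge 1$ for $v\ge1$ on a connected Cayley graph.

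Second, to justify the example $G=\Z^d\rtimes_A\Z$ with $A\in \mathrm{GL}(d,\Z)$ hyperbolic, I would cite the two-sided Nash/$L^1$-profile asymptotic $j_1(v)\asymp \log v$, which is classical for polycyclic groups of exponential growth with such semidirect structure (see Pittet-Saloff-Coste and Coulhon-Saloff-Coste heat-kernel estimates, equivalent to $L^1$-profile bounds via \cite[Ch.~5-6]{SaloffCoste2002Aspects}). The upper bound $j_1(v)\lesssim \log v$ follows from an explicit F{\o}lner construction (e.g.\ the logarithmic layer-nested sets $E_R$ from Theorem~\ref{thm:TFii-Folner-semidirect}: these have $|E_R|\asymp R^d\log R$ and produce test functions with the correct $L^1$-Poincar\'e ratio); the matching lower bound $j_1(v)\gtrsim \log v$ is where the hyperbolicity of $A$ enters, via the absence of any eigenvalue on the unit circle, forcing exponential drift on every F{\o}lner set and yielding the logarithmic obstruction.

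The only genuinely nontrivial step is this last lower bound on $j_1$, and I would not reprove it here: it is the $L^1$-analogue of the heat-kernel decay $p_{2n}(e,e)\asymp \exp(-n^{1/3})$ known for such semidirects, which via the Coulhon-Saloff-Coste correspondence is equivalent to $I^\circ_{\edge}(v)\asymp v/\log v$. Thus the corollary is the ``easy direction'' of a two-sided asymptotic, and all the work is packaged into Lemma~\ref{lem:j-to-Iedge} and the cited $j_1$ asymptotics.
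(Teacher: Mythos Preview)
Your proposal is correct and takes essentially the same approach as the paper: the corollary is an immediate consequence of the bound $I^\circ_{\edge}(v)\ge c_1 v/j_1(v)$ from Lemma~\ref{lem:j-to-Iedge}, and the paper (in the proof of Theorem~\ref{thm:new-hyp-global}) invokes exactly the same classical $j_1(v)\asymp\log v$ asymptotic for hyperbolic semidirects, citing \cite{VaropoulosSaloffCosteCoulhon1992,SaloffCoste2002Aspects}. Your additional discussion of the parenthetical example is accurate and slightly more detailed than the paper's citation.
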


\begin{theorem}[Hyperbolic semidirects: global near--minimality up to constants]\label{thm:new-hyp-global}
Let $G=\Z^d\rtimes_A\Z$ with $A$ hyperbolic and $\mathcal S$ finite symmetric. Then there exists $A_e>0$ such that for all large $R$,
\[
B_{\mathcal S}(E_R)\ \le\ A_e\, I^{\circ}_{\edge}(|E_R|).
\]
\end{theorem}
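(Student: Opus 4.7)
The plan is to combine the already-established upper bound $B_{\mathcal S}(E_R)\asymp R^d$ with the lower bound $I^\circ_{\edge}(v)\gtrsim v/\log v$ supplied by Corollary~\ref{cor:log-lb-from-j1}, evaluated at $v=|E_R|\asymp R^d\log R$. Granting both inputs, the claimed near-minimality reduces to the elementary computation
\[
\frac{B_{\mathcal S}(E_R)}{I^\circ_{\edge}(|E_R|)}\ \lesssim\ \frac{R^d}{R^d\log R\,/\,\log(R^d\log R)}\ \asymp\ 1,
\]
since $\log(R^d\log R)=d\log R+\log\log R\asymp\log R$. So the entire task is to justify the two ingredients with constants depending only on $(A,K,\mathcal S)$.

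For the upper bound I would simply unpack the constructions of this subsection. The vertical boundary is, by the zero-interior-drift Lemma~\ref{lem:zero-drift-left}, exactly the two caps: $B_{\rm vert}(E_R)=2|X_0(R)|\asymp_K R^d$. The horizontal boundary is controlled by Corollary~\ref{cor:hor-total} combined with the logarithmic choice $T(R)=\lfloor\alpha\log R\rfloor$ with $0<\alpha<1/\log\Lambda(A)$, giving $B_{\rm hor}(E_R)\lesssim R^{d-1}\Lambda(A)^{T(R)}=R^{d-1+\alpha\log\Lambda(A)}=o(R^d)$. Summing the two contributions yields $B_{\mathcal S}(E_R)\asymp R^d$. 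Converting between undirected and directed edges (and between two finite symmetric generating sets on $G$) costs only a universal factor by Lemma~\ref{lem:vertex-edge-profiles} and a bounded-change-of-generators comparison, so this estimate is robust at the order we need.

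For the lower bound on $I^\circ_{\edge}$, hyperbolicity of $A$ enters. By Corollary~\ref{cor:log-lb-from-j1}, it is enough to show that the $L^1$-Nash profile satisfies $j_1(v)\lesssim\log v$ for large $v$, i.e.\ $I^\circ_{\edge}(v)\gtrsim v/\log v$. For polycyclic groups of exponential growth such as hyperbolic $\Z^d\rtimes_A\Z$, this is classical: it can be extracted from Varopoulos-Saloff-Coste-Coulhon and Saloff-Coste~\cite{VaropoulosSaloffCosteCoulhon1992, SaloffCoste2002Aspects} via the standard heat-kernel/Nash-inequality dictionary, or equivalently from known isoperimetric lower bounds of order $v/\log v$ for such hyperbolic semidirects. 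A self-contained alternative is a horocyclic pigeonhole: two-sided cofactor growth $\|\operatorname{cof}(A^{\pm k})\|\gtrsim\Lambda(A)^{|k|}$ forces any candidate F{\o}lner set of volume $v$ to spread across at least $\gtrsim\log v$ vertical layers (otherwise the horizontal boundary alone exceeds $v$), matching precisely what our layer-nested family saturates.

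The hard part will be the careful matching of normalizations: the cited profile lower bound must be used in the directed-edge normalization and transported to the specific left generating set $\mathcal S$ fixed at the start of~\S\ref{sec:TF-semidirect}. Neither issue is substantive---Lemma~\ref{lem:vertex-edge-profiles} gives the vertex/edge conversion up to the degree~$\Delta$, while quasi-isometry invariance of the increasing minorant together with its right-Lipschitz property (Lemma~\ref{lem:incr-Lip-right}) absorbs any bounded change of generators---but the constants need to be tracked so that everything collapses into a single $A_e=A_e(A,K,\mathcal S)<\infty$ independent of $R$. Once this bookkeeping is done, the displayed ratio bound above yields the theorem, and combined with Theorem~\ref{thm:G-final}\textup{(iii)} this produces the unconditional full \textup{(TF)} conclusion stated in Corollary~\ref{cor:G-hyp-unconditional}.
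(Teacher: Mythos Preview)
Your proposal is correct and follows essentially the same approach as the paper: both combine the upper bound $B_{\mathcal S}(E_R)\asymp R^d$ (from the vertical caps plus $o(R^d)$ horizontal cost) with the lower bound $I^\circ_{\edge}(v)\gtrsim v/\log v$ obtained from $j_1(v)\asymp\log v$ for polycyclic groups of exponential growth (citing \cite{VaropoulosSaloffCosteCoulhon1992,SaloffCoste2002Aspects}), and then evaluate at $v=|E_R|\asymp R^d\log R$. Your additional remarks on normalization bookkeeping and the horocyclic-pigeonhole heuristic are fine elaborations but not needed for the core argument.
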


\begin{proof}
For polycyclic groups of exponential growth such as $G=\Z^d\rtimes_A\Z$ with hyperbolic $A$, the $L^1$-profile satisfies $j_1(v)\asymp \log v$; see, e.g., \cite[Thm.~VI.3]{VaropoulosSaloffCosteCoulhon1992} and \cite[Ch.~6]{SaloffCoste2002Aspects}. By Lemma~\ref{lem:j-to-Iedge} this implies $I^\circ_{\edge}(v)\gtrsim v/\log v$. On the other hand, for our sets $E_R$ we have $|E_R|\asymp R^d\log R$ and $B_{\mathcal S}(E_R)\asymp R^d$ by Theorem~\ref{thm:TFii-Folner-semidirect} and Lemma~\ref{lem:ER-increment}. Therefore
\[
B_{\mathcal S}(E_R)\ \lesssim\ \frac{|E_R|}{\log |E_R|}\ \lesssim\ I^\circ_{\edge}(|E_R|),
\]
as claimed (absorbing implicit constants into $A_e$).
\end{proof}

\subsubsection{Layer-nested class: lower bound and order-optimality}

\begin{definition}[A-covariantly layer-nested sets]\label{def:layer-nested}
A finite set $Y\subset \mathbb Z^d\rtimes_A\mathbb Z$ is \emph{$A$-covariantly layer-nested} if it is of the form
$
Y=\bigsqcup_{t=-T}^{T} \big(X_t\times\{t\}\big)
$
for some $T\in\mathbb N$ and finite $X_t\subset\mathbb Z^d$, and if the \emph{pulled-back} slices
\[
X_t^\ast\ :=\ A^{-t}X_t\ \subset\ \Z^d
\]
satisfy symmetric nesting:
$
X_{t+1}^\ast\subseteq X_t^\ast\ \text{for }t\ge0,\quad
X_{t-1}^\ast\subseteq X_t^\ast\ \text{for }t\le0.
$
\emph{Remark.} The $A$-covariant sets $E_R=F_{K,R,T(R)}$ are in this class, since $X_t=A^t X_0$ so $X_t^\ast=X_0$ for all $t$.
\end{definition}

\begin{lemma}[Lower bound in the $A$-covariantly layer-nested class]\label{lem:layer-nested-lb}
Let $Y$ be $A$-covariantly layer-nested with slices $\{X_t\}_{t=-T}^{T}$ and pulled-backs $X_t^\ast=A^{-t}X_t$. Then
\[
B_{\mathcal S}(Y)\ \ge\ B_{\rm vert}(Y)\ =\ 2\,|X_0^\ast|\ \ge\ \frac{2}{\,2T+1\,}\,|Y|.
\]
Moreover, there exists $c_h=c_h(A,\mathcal S)>0$ such that the horizontal part obeys
\[
B_{\rm hor}(Y)\ \ge\ c_h\ \sum_{t=-T}^{T}\Per_{\tau_{A,t}}(X_t^\ast),
\]
where $\tau_{A,t}$ is the anisotropic gauge on $\R^d$ induced by the pulled-back axis stencil $\{\pm A^{-t}e_i\}_{i=1}^d$.
\end{lemma}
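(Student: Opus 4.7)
The plan is to split the boundary as $B_{\mathcal S}(Y) = B_{\rm vert}(Y) + B_{\rm hor}(Y)$ along the $t$-generator and the $\{s_i^{\pm 1}\}$ generators, and to analyse each contribution in the \emph{pulled-back} slices $X_t^{\ast} = A^{-t} X_t$. Since $A \in GL(d,\Z)$ forces $\det A = \pm 1$ and hence $A^{-t} \in GL(d,\Z)$ for every $t \in \Z$, each $A^{-t}$ is a lattice isomorphism of $\Z^d$: it preserves cardinalities and sends the axis stencil $\{\pm e_i\}$ bijectively to the integer stencil $\{\pm A^{-t} e_i\}$.

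For the vertical part, I will first observe that the $t$-generator sends $(x,t)$ to $(Ax,t+1)$, and under pullback this is the \emph{identity} on $\Z^d$-labels because $A^{-(t+1)}(Ax) = A^{-t}x$. Hence a vertical edge between consecutive levels $t$ and $t+1$ is broken exactly when $A^{-t}x$ lies in $X_t^{\ast} \triangle X_{t+1}^{\ast}$, so the interface cost equals $|X_t^{\ast} \triangle X_{t+1}^{\ast}|$. The symmetric nesting of Definition~\ref{def:layer-nested} reduces these to one-sided set differences, and summing along each side telescopes as
\[
\sum_{t=0}^{T-1}|X_t^{\ast}\setminus X_{t+1}^{\ast}| + \sum_{t=-T}^{-1}|X_{t+1}^{\ast}\setminus X_t^{\ast}| \;=\; \bigl(|X_0^{\ast}|-|X_T^{\ast}|\bigr) + \bigl(|X_0^{\ast}|-|X_{-T}^{\ast}|\bigr).
\]
Adding the two cap contributions $|X_T^{\ast}|$ and $|X_{-T}^{\ast}|$ from the extremal levels collapses the total to $2|X_0^{\ast}|$. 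Finally, nesting yields $|X_t^{\ast}| \le |X_0^{\ast}|$ for all $t$, while cardinality preservation under pullback gives $|Y| = \sum_t |X_t^{\ast}|$, so $|X_0^{\ast}| \ge |Y|/(2T+1)$, proving the first inequality.

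For the horizontal part, I will decompose $B_{\rm hor}(Y) = \sum_{t=-T}^T B_{\rm hor}(X_t)$, where $B_{\rm hor}(X_t)$ is the axis-edge boundary of $X_t \subset \Z^d$. The lattice isomorphism $y = A^{-t}x$ (Lemma~\ref{lem:GLdZ-iso} applied with $B = A^t$) identifies $B_{\rm hor}(X_t)$ with the $\{\pm A^{-t}e_i\}$-edge boundary of $X_t^{\ast}$. I will then invoke a directional slicing/flux argument in the spirit of Lemma~\ref{lem:dir-diff-quot} and Theorem~\ref{thm:quant-stencil}: for a unimodular integer stencil $\{\pm v_1, \ldots, \pm v_d\}$, the discrete zonoid perimeter dominates the continuum anisotropic perimeter with gauge $\xi \mapsto \sum_i |\langle \xi, v_i\rangle|$ evaluated on the unit-cube thickening, up to a universal multiplicative loss; the BV comparison between the thickened and original set absorbs the remaining mismatch into the same constant. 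Applying this per level with $v_i = A^{-t}e_i$ (so that the gauge is exactly $\tau_{A,t}$) and summing in $t$ will yield $B_{\rm hor}(Y) \ge c_h \sum_{t=-T}^T \Per_{\tau_{A,t}}(X_t^{\ast})$.

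The main obstacle will be to extract a constant $c_h = c_h(A,\mathcal S) > 0$ that is genuinely uniform in $t$, because the pulled-back stencils $\{\pm A^{-t}e_i\}$ can become arbitrarily oblique and have Euclidean length growing geometrically in $|t|$. The cleanest workaround is to absorb all of the stencil geometry into the gauge $\tau_{A,t}$ itself, so that only the \emph{universal} discrete-to-continuum flux comparison across standard unit faces enters the multiplicative loss; this comparison depends on the integer-primitivity of the $v_i$ (automatic here since $\{A^{-t}e_i\}$ is a $\Z$-basis) but not on the Euclidean scale of the stencil, which delivers the required uniform $c_h$.
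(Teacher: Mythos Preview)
Your vertical argument is correct and essentially identical to the paper's: pullback by $A^{-t}$ turns the interface $t\to t{+}1$ into a comparison of $X_t^{\ast}$ and $X_{t+1}^{\ast}$, the nesting reduces the symmetric differences to one-sided ones, the telescope collapses to $2|X_0^{\ast}|$, and the max-slice bound gives $|X_0^{\ast}|\ge|Y|/(2T+1)$.

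For the horizontal part you rightly flag the uniformity-in-$t$ issue, but your proposed resolution has a gap. If $\Per_{\tau_{A,t}}(X_t^{\ast})$ is read---as your phrase ``unit-cube thickening'' suggests---as the continuum $\tau_{A,t}$-perimeter of the \emph{axis} voxelization $(X_t^{\ast})^\square$, then \emph{no} $t$-uniform $c_h>0$ exists. Take $X_t^{\ast}=\{0\}$: the $\{\pm A^{-t}e_i\}$-edge boundary is $2d$ regardless of $t$, whereas $\Per_{\tau_{A,t}}([0,1]^d)=2\sum_{i,j}|(A^{-t})_{ji}|$ grows exponentially in $|t|$ for hyperbolic $A$. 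So ``absorbing the stencil geometry into the gauge'' does not neutralise the $t$-dependence when the thickening remains axis-aligned; your claim that the flux comparison is scale-free in this sense is false.

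The paper's own proof is equally terse here (it invokes ``coarea + calibration for the pulled-back stencil'' with no further detail), and in fact the horizontal bound is never used downstream---Proposition~\ref{prop:G-NNM-layer} relies only on the vertical part. The interpretation under which the claim holds with $c_h=1$ is to read $\Per_{\tau_{A,t}}(X_t^{\ast})$ as $\Per_{\tau_{A,t}}\bigl(A^{-t}(X_t^\square)\bigr)$, i.e.\ thicken in the \emph{original} coordinates and then pull back. By the change-of-variables formula for anisotropic perimeter under the unimodular map $L=A^{-t}$ (so $|\det L|=1$ and $\tau_{A,t}(L^{-T}\nu)=\sum_i|\nu_i|$), this equals $\int_{\partial(X_t^\square)}\sum_i|\nu_i|\,d\mathcal H^{d-1}=B_{\rm hor}(X_t)$ exactly, via the axis identity of Lemma~\ref{lem:disc-cal}.
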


\begin{proof}
For each vertical edge, $(x,t)\leftrightarrow (Ax,t+1)$ lies inside $Y$ iff $x\in X_t$ and $Ax\in X_{t+1}$. In pulled-back coordinates $u=A^{-t}x$, this is $u\in X_t^\ast\cap X_{t+1}^\ast$. Hence the number of broken vertical edges across the interface $t\to t+1$ equals $|X_t^\ast\setminus X_{t+1}^\ast|$ for $t\ge0$ (by nesting), and similarly $|X_{t+1}^\ast\setminus X_t^\ast|$ for $t<0$. Summing telescopically and adding the two caps gives
\[
B_{\rm vert}(Y)\ =\ \big(|X_0^\ast|-|X_T^\ast|\big)\ +\ \big(|X_0^\ast|-|X_{-T}^\ast|\big)\ +\ |X_T^\ast|+|X_{-T}^\ast|\ =\ 2\,|X_0^\ast|.
\]
Since $X_0^\ast$ is the largest pulled-back slice by nesting, $|X_0^\ast|\ge \frac1{2T+1}\sum_{t=-T}^T |X_t^\ast| = \frac{|Y|}{2T+1}$, proving the displayed vertical bound. The horizontal inequality is the standard discrete anisotropic perimeter lower bound for each layer (coarea + calibration for the pulled-back stencil), summed over $t$; the constant depends only on $(A,\mathcal S)$.
\end{proof}

\begin{proposition}[Logarithmic layer-nested sets are near-minimizers in the $A$-covariantly layer-nested class]\label{prop:G-NNM-layer}
Let $E_R=F_{K,R,T(R)}$ with $T(R)=\lfloor \alpha\log R\rfloor$, $0<\alpha<1/\log\Lambda(A)$. There exists $A_e(A,\mathcal S)\in(0,\infty)$ such that for all large $R$,
\[
B_{\mathcal S}(E_R)\ \le\ A_e\ \inf\Big\{\,B_{\mathcal S}(Y):\ Y\ \text{$A$-covariantly layer-nested},\ |Y|=|E_R|\,\Big\}.
\]
\end{proposition}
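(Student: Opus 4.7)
\emph{Plan.} The plan is to derive a lower bound of order $R^d$ on $B_{\mathcal S}(Y)$ for every $A$-covariantly layer-nested $Y$ with $|Y|=V:=|E_R|\asymp R^d\log R$, matching the upper bound $B_{\mathcal S}(E_R)\asymp R^d$ recorded in \Cref{thm:G-final}. By Lemma~\ref{lem:layer-nested-lb} the cost decomposes as
\[
B_{\mathcal S}(Y)\ \ge\ 2|X_0^\ast|\ +\ c_h\sum_{|t|\le T_Y}\Per_{\tau_{A,t}}(X_t^\ast),
\]
with nestedness forcing $v_t:=|X_t^\ast|\le v_0$ and hence $v_0\ge V/(2T_Y+1)$. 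The argument is a height dichotomy calibrated to the logarithmic scale $T(R)=\lfloor\alpha\log R\rfloor$ built into $E_R$.

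\emph{Shallow regime.} Choose a threshold $C_1=C_1(A,K,\alpha)$ to be specified. If $T_Y\le C_1\log R$, then the vertical term alone closes the gap: $2|X_0^\ast|\ge 2V/(2C_1\log R+1)\gtrsim V/\log R\asymp R^d$, so $B_{\mathcal S}(Y)\gtrsim R^d$ with constants depending only on $(A,K,\alpha)$, matching $B_{\mathcal S}(E_R)\asymp R^d$.

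\emph{Tall regime.} If $T_Y>C_1\log R$, the vertical bound alone is insufficient and we must harvest an $R^d$ lower bound from the horizontal contribution. The key estimate is an exponential-in-$|t|$ penalty on $\Per_{\tau_{A,t}}(X_t^\ast)$: using the Cauchy identity
\[
\Per_{\tau_{A,t}}(E^\ast)\ =\ \sum_{i=1}^d \|A^{-t}e_i\|\,\cdot\,2\,\bigl|\pi_{\hat u_i^\perp}E^\ast\bigr|,\qquad \hat u_i:=A^{-t}e_i/\|A^{-t}e_i\|,
\]
combined with $\|A^{-t}e_i\|\asymp\Lambda(A)^{|t|}$ in the expanding direction (Lemma~\ref{lem:cof-growth}) and a projection lower bound derived from the nesting constraint, one targets the estimate $\Per_{\tau_{A,t}}(X_t^\ast)\gtrsim \Lambda(A)^{|t|}\,v_t^{(d-1)/d}$. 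Summing over $|t|\le T_Y$ and minimizing the resulting functional $\min\{\sum_{|t|\le T}\Lambda(A)^{|t|}v_t^{(d-1)/d}:\ \sum v_t=V,\ v_t\le v_0\}$ by a standard Lagrangian calculation shows the minimizer is concentrated at the height scale $T^\ast\asymp\log R/\log\Lambda(A)$---precisely the scaling of $T(R)$---with value $\asymp V/\log V\asymp R^d$. Combining the two regimes yields $B_{\mathcal S}(Y)\gtrsim R^d$ uniformly, and the proposition follows with $A_e=A_e(A,\mathcal S,K)$.

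\emph{Main obstacle.} The technical heart is the projection lower bound $|\pi_{\hat u_{i^\ast}^\perp}X_t^\ast|\gtrsim v_t^{(d-1)/d}$ (up to a bounded-aspect factor of $X_0^\ast$) uniformly in $t$. The danger is that the $\tau_{A,t}$-Wulff optimizer $A^{-t}B_{\ell^\infty}$ achieves the smaller projection $\asymp v_t^{(d-1)/d}/\Lambda(A)^{|t|}$, cancelling the stencil weight; what must therefore be shown is that the $A$-covariant nestedness $X_{t+1}^\ast\subseteq X_t^\ast$ obstructs simultaneous adaptation across all levels. For hyperbolic $A$, this should follow from a non-fitting lemma: the stretched Wulff bodies $\{A^{-t}B_{\ell^\infty}\}_{|t|\le T_Y}$ rotate in the eigenbasis of $A$ in mutually incompatible directions, forcing any common base $X_0^\ast$ containing subsets adapted to each level to be large in every horizontal direction; a Loomis--Whitney estimate in the rotated frame then converts this into the desired projection bound. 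Making this non-fitting lemma quantitative (with explicit dependence on $A$ and on the aspect of $X_0^\ast$) is the main hurdle; once it is in place, the Lagrangian optimization in the tall regime and the merging with the shallow case are routine.
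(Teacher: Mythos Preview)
The paper's own proof is much shorter: it invokes only the vertical half of Lemma~\ref{lem:layer-nested-lb}, namely $B_{\mathcal S}(Y)\ge 2|E_R|/(2T_Y+1)$ for a competitor with $2T_Y+1$ layers, and then compares with $B_{\mathcal S}(E_R)=(1+o(1))\cdot 2|E_R|/(2T(R)+1)$. Read literally, this comparison is only valid when $T_Y\le T(R)$; the paper's text conflates the two heights and does not address competitors with $T_Y>T(R)$, for which the vertical bound alone drops below $R^d$. So your shallow/tall dichotomy is exactly right, and in flagging the tall regime you have gone further than the paper's displayed argument.

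For the tall regime, however, your target inequality $\Per_{\tau_{A,t}}(X_t^\ast)\gtrsim\Lambda(A)^{|t|}\,v_t^{(d-1)/d}$ is \emph{false} as a uniform bound. Since $A^{-t}\in\mathrm{GL}(d,\Z)$, the pulled-back stencil $\{\pm A^{-t}e_i\}$ is a unimodular basis, so by Lemma~\ref{lem:GLdZ-iso} its discrete Wulff constant is exactly $2d$ for every $t$; the adapted shape $X_t^\ast=A^{-t}\bigl(\text{cube of volume }v_t\bigr)$ achieves $\Per_{\tau_{A,t}}\approx 2d\,v_t^{(d-1)/d}$ with no $\Lambda^{|t|}$ factor---precisely the cancellation you yourself flag in the obstacle paragraph. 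What actually blocks tall competitors is the nesting $X_t^\ast\subseteq X_0^\ast$: the adapted shapes at different levels are stretched by $\Lambda^{|t|}$ along incompatible eigendirections and cannot all sit inside a small $X_0^\ast$. A more direct route than your projection/Loomis--Whitney plan is to note that once $\|A^{-t}e_i\|$ exceeds $\mathrm{diam}(X_0^\ast)$ (which happens for $|t|\gtrsim\log_\Lambda\mathrm{diam}(X_0^\ast)$), every $u\in X_t^\ast$ has $u\pm A^{-t}e_i\notin X_0^\ast\supseteq X_t^\ast$, so $B_{\rm hor}(X_t)\ge 2v_t$ at those levels; summing and playing the diameter of $X_0^\ast$ off against its volume then closes the tall case. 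In short, your sketch has the right architecture but rests on an incorrect per-level bound; the non-fitting mechanism you identify is indeed the crux, and neither your sketch nor the paper's proof supplies it in full.
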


\begin{proof}
By Lemma~\ref{lem:layer-nested-lb}, for any competitor $Y$ with $|Y|=|E_R|$ and $2T+1$ layers we have
\[
B_{\mathcal S}(Y)\ \ge\ \frac{2}{\,2T+1\,}\,|Y|\ =\ \frac{2}{\,2T+1\,}\,|E_R|.
\]
For $E_R$, the vertical boundary is exactly $2|X_0|=\frac{2}{2T+1}|E_R|$ by Lemma~\ref{lem:zero-drift-left}, while the horizontal boundary satisfies $B_{\rm hor}(E_R)=o(R^d)$ by Corollary~\ref{cor:hor-total} with our choice of $T(R)$. Since $|E_R|\asymp R^d\log R$ and $(2T+1)^{-1}\asymp 1/\log R$, we have $B_{\rm hor}(E_R)=o\big(|E_R|/(2T+1)\big)$. Therefore
\[
B_{\mathcal S}(E_R)\ =\ \frac{2}{\,2T+1\,}|E_R|\ +\ o\left(\frac{|E_R|}{2T+1}\right)
\ \le\ (1+o(1))\ \inf\{B_{\mathcal S}(Y): Y\ \text{$A$-covariantly layer-nested},\ |Y|=|E_R|\}.
\]
Absorbing the $o(1)$ into a fixed constant for large $R$ gives the claim.
\end{proof}

\begin{remark}[Why this class]\label{rem:A-cov-layer-motivation}
The $A$-covariant nesting compares like with like: it tests against families whose slices become nested \emph{after} conjugating by $A^{-t}$, which is the natural frame for the vertical edges $(x,t)\leftrightarrow (Ax,t+1)$. In this class the vertical contribution has the exact formula $B_{\rm vert}=2|X_0^\ast|$, and the benchmark lower bound depends only on the total volume and the height $(2T+1)$. Our sets $E_R$ belong to this class and achieve the height-dominant lower bound up to $o(1)$, which is precisely what one expects in the logarithmic regime.
\end{remark}

\subsection{Quasi--isometry invariance and a constructive route from bounded ratio to {\normalfont(TF)}}\label{sec:new-QI-TF}

We write $I^\circ(\cdot)$ for the exact \emph{vertex} profile of a bounded--degree graph $X$
and $I^{\mathrm{incr}\,\circ}$ for its greatest nondecreasing minorant. Throughout this subsection we work in the vertex normalization; constants in comparisons depend only on quasi--isometry data and maximum degrees.

\paragraph{Roadmap.}
This subsection has three logically independent parts:
\begin{enumerate}
\item \emph{Quasi--isometry (QI) stability:} we prove a clean, additive-free comparison for the \emph{increasing minorant} under QI and deduce a robust transfer of \textup{BRP} (in a scaled sense).  
\item \emph{An auxiliary ``local attainability'' tool:} a parametrized $(\mathrm{LA}_\kappa)$ notion yields tempered increments \textup{TF(ii)} without any near--minimizer input. This tool is \emph{not used} in the main (NNM--only) interleaving, but can be helpful when near--minimizers are known only sparsely.
\item \emph{Main construction:} from \textup{(NNM)} alone we construct a single nested chain with \emph{full} \textup{(TF)} (under amenability). This yields the characterization result in this class. No $(\mathrm{LA}_\kappa)$ is required for Theorem~\ref{thm:new-TF-equivalence}.
\end{enumerate}

\begin{definition}[Bounded ratio property (BRP)]\label{def:new-BRP}
A bounded--geometry graph $X$ has the \emph{bounded ratio property} if
\[
\sup_{r\ge1}\ \frac{I^\circ_X(r)}{I^{\mathrm{incr}\,\circ}_X(r)}\ <\ \infty.
\]
\end{definition}

\begin{definition}[Quasi--isometries]\label{def:new-QI}
A map $f:V_X\to V_Y$ is a \emph{quasi--isometry} if there exist $\lambda\ge1$, $\epsilon\ge0$, $R_0\ge0$, and a coarse inverse $g:V_Y\to V_X$ such that
\[
\lambda^{-1}d_X(x,x')-\epsilon\ \le\ d_Y\big(f(x),f(x')\big)\ \le\ \lambda\, d_X(x,x')+\epsilon,\qquad
d_X(gf(x),x)\le R_0,\quad d_Y(fg(y),y)\le R_0.
\]
\end{definition}

\begin{lemma}[Controlled thickening and boundary under QI]\label{lem:new-thicken-boundary}
Let $f\in\mathrm{QI}(X,Y)$ with data $(\lambda,\epsilon,R_0)$, let $R\in\N$. For every finite $A\subset V_X$ define
$B:=N_R(f(A))\subset V_Y$. Then there are constants $c_V,c_V',c_\partial$ depending only on the QI data and degrees such that
\[
|B|\le c_V\,|A|,\qquad |A|\le c_V'\,|B|,\qquad |\partial_Y B|\ \le\ c_\partial\ |\partial_X A|.
\]
\end{lemma}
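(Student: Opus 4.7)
The plan is to reduce all three estimates to counting lemmas in bounded-degree graphs, using only the QI data $(\lambda,\epsilon,R_0)$ and the maximum ball sizes $V_X(r),V_Y(r)$, which are finite functions of $r$ and the degrees. The two volume comparisons will be direct. For $|B|\le c_V|A|$, I decompose $B=\bigcup_{a\in A}N_R(f(a))$ so that $|B|\le V_Y(R)|A|$. For the reverse bound $|A|\le c'_V|B|$, I apply the coarse inverse $g$ to $B$ and use the inequality $d_X(gf(a),a)\le R_0$, which gives $A\subset N_{R_0}(g(B))$ and hence $|A|\le V_X(R_0)|B|$.

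The substantive work is the boundary estimate, which I plan to carry out by pulling $\partial_Y B$ back to $X$ through $g$, landing inside a thin shell near $\partial_X A$, and then counting with bounded multiplicity. Two preliminary facts will drive this. First, $g$ has multiplicity at most $V_Y(2R_0)$: if $g(y_1)=g(y_2)$, then $d_Y(y_1,y_2)\le d_Y(y_1,fg(y_1))+d_Y(fg(y_2),y_2)\le 2R_0$, so each $g$-fibre sits inside a single $Y$-ball of radius $2R_0$. Second, for any $A\subset V_X$ and $M\ge 1$, the shell lemma $N_M(A)\setminus A\subset N_{M-1}(\partial_X A)$ holds, as one sees by examining the penultimate vertex on a shortest path from a point outside $A$ into $A$.

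With these in hand, for $y\in\partial_Y B$ one has $d_Y(y,f(A))=R+1$ exactly (outer shell of $B$). Choosing $a\in A$ realising this distance and applying the QI upper bound on $f$ together with $d_X(gf(a),a)\le R_0$ will yield
\[
d_X(g(y),A)\ \le\ \lambda(R+1)+\epsilon+R_0\ =:\ M.
\]
As long as $R+1>R_0$, we also get $g(y)\notin A$: otherwise $fg(y)\in f(A)$ would give $d_Y(y,f(A))\le R_0<R+1$, a contradiction. Combining with the shell lemma places $g(y)\in N_{M-1}(\partial_X A)$, and the multiplicity bound then yields
\[
|\partial_Y B|\ \le\ V_Y(2R_0)\cdot V_X(M-1)\cdot |\partial_X A|,
\]
as required.

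The subtle point I expect to have to address is the small-$R$ regime $R<R_0$, in which a boundary vertex of $B$ may pull back into the interior of $A$, breaking the shell containment. The plan for this is to pass to $R^{\ast}:=\max(R,R_0)$, apply the argument above to $N_{R^{\ast}}(f(A))$, and then relate $|\partial_Y N_R(f(A))|$ to $|\partial_Y N_{R^{\ast}}(f(A))|$ via a routine layer-by-layer comparison of concentric spheres in the bounded-degree graph $Y$; this only modifies $c_\partial$ by a factor depending on $R,R_0,\Delta_Y$. The final explicit constants are $c_V=V_Y(R)$, $c'_V=V_X(R_0)$, and (up to the above correction in the small-$R$ case) $c_\partial=V_Y(2R_0)\,V_X(M-1)$, all depending only on the QI data, $R$, and the maximum degrees.
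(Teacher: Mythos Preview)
Your core argument---pull $\partial_Y B$ back through $g$, land in $N_M(A)\setminus A\subset N_{M-1}(\partial_X A)$ via the shell lemma, and count with the fibre bound $|g^{-1}(x)|\le V_Y(2R_0)$---is correct and is the same mechanism the paper's (terse) sketch uses. The paper phrases it as associating a broken $X$-edge to each broken $Y$-edge; your vertex--shell version is equivalent and considerably more explicit. One cosmetic point: you invoke $d_X(g(y),g(y'))\le\lambda\, d_Y(y,y')+\epsilon$, but Definition~\ref{def:new-QI} only equips $f$ with those constants. The coarse inverse $g$ inherits an upper Lipschitz bound with additive constant $\lambda(2R_0+\epsilon)$ rather than $\epsilon$, so $M$ should be $\lambda(R+1)+\lambda(2R_0+\epsilon)+R_0$; this only shifts the value of $c_\partial$.

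The genuine gap is your small-$R$ repair. You want $|\partial_Y N_R(f(A))|\le C\,|\partial_Y N_{R^\ast}(f(A))|$ for $R<R^\ast$, via a ``layer-by-layer comparison of concentric spheres''. But the sphere inequality in a bounded-degree graph runs the \emph{other} way: writing $S_r=\{y:d_Y(y,f(A))=r\}$, every $y\in S_{r+1}$ has a neighbour in $S_r$, so $|S_{r+1}|\le\Delta_Y|S_r|$ and hence $|\partial_Y N_{R^\ast}|\le\Delta_Y^{R^\ast-R}|\partial_Y N_R|$, not the reverse. A tree that branches heavily at depth $R+1$ from $f(A)$ and then narrows to a single ray by depth $R^\ast+1$ makes $|\partial_Y N_R|/|\partial_Y N_{R^\ast}|$ arbitrarily large, so no constant depending only on $(R,R_0,\Delta_Y)$ exists. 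The clean remedy is simply to take $R\ge R_0$ from the outset: the lemma is only ever applied (e.g.\ in Theorem~\ref{thm:new-QI-invariance}) with $R$ chosen freely, the constants already depend on $R$ through $V_Y(R)$ and $M$, and the paper's own sketch tacitly relies on the same assumption (its ``pull back a broken $Y$-edge to a broken $X$-edge'' step fails for exactly the same reason when $R+1\le R_0$).
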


\begin{proof}
Volume bounds: balls have uniformly bounded growth in bounded-degree graphs, and $g$ maps $B$ into an $R'$-thickening of $A$, giving the two inequalities with $c_V,c_V'$. Boundary bound: for each $Y$-edge broken by $B$, connect its endpoints to $f(A)$ by paths of length $\le R$ and pull back via $g$; this associates to each broken $Y$-edge a broken $X$-edge with uniformly bounded preimage multiplicity depending only on $(\lambda,\epsilon,R_0)$ and the degrees, yielding $c_\partial$.
\end{proof}

\begin{lemma}[Boundary of a thickening]\label{lem:new-thickening-same-graph}
Let $G$ be a bounded--degree graph (degree $\Delta$). Then for every finite $S\subset V(G)$ and $R\in\N$,
\[
|\partial N_R(S)|\ \le\ C_{\Delta,R}\,|\partial S|\qquad\text{with }C_{\Delta,R}\ :=\ 2\,\Delta^2\,(\Delta-1)^{R-1}.
\]
\end{lemma}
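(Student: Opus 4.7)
The plan is a one-step ``pull-back along geodesics'' argument: each boundary vertex of the thickening will be charged to a boundary vertex of $S$ at graph-distance exactly $R$, and the charging map will have uniformly bounded fibres thanks to the volume of a sphere of radius $R$ in a graph of maximum degree~$\Delta$.

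First, I will unwind the definitions. Since $N_R(S)=\{x:\ d(x,S)\le R\}$, a vertex $y$ lies in $\partial N_R(S)$ iff $y\notin N_R(S)$ and $y$ is adjacent to some $x\in N_R(S)$; equivalently $d(y,S)=R+1$. Fix for each such $y$ a geodesic $y=v_0,v_1,\dots,v_{R+1}$ with $v_{R+1}\in S$, and set $\Phi(y):=v_R$. Because the path is a geodesic of length $R+1$, $v_R\notin S$ (otherwise $d(y,S)\le R$), while $v_R$ is adjacent to $v_{R+1}\in S$. Hence $\Phi(y)\in N_1(S)\setminus S=\partial S$, and $d(y,\Phi(y))=R$. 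This defines a map $\Phi:\partial N_R(S)\to\partial S$.

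Next, I will bound the fibres of $\Phi$. If $\Phi(y)=z$, then $y$ lies on the sphere of radius~$R$ around $z$, so
\[
|\Phi^{-1}(z)|\ \le\ \bigl|\{y\in V(G):\ d(y,z)=R\}\bigr|.
\]
In a graph of maximum degree $\Delta$, the standard tree-expansion estimate gives
\[
\bigl|\{y:\ d(y,z)=R\}\bigr|\ \le\ \Delta(\Delta-1)^{R-1}\qquad(R\ge1),
\]
proved by induction on $R$: a vertex at distance $R$ from $z$ is a neighbor of a vertex at distance $R-1$, each such vertex has at most $\Delta-1$ neighbors that do not retreat toward $z$, and the initial sphere at radius $1$ has size $\le\Delta$. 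Summing over $z\in\partial S$,
\[
|\partial N_R(S)|\ \le\ \sum_{z\in\partial S}|\Phi^{-1}(z)|\ \le\ \Delta(\Delta-1)^{R-1}\,|\partial S|,
\]
which is strictly sharper than the stated $C_{\Delta,R}=2\Delta^2(\Delta-1)^{R-1}$; the $2\Delta$-slack in the stated constant comfortably absorbs the low-degree/small-$R$ corner cases (e.g.\ $\Delta\in\{1,2\}$, where one may interpret $(\Delta-1)^{R-1}$ by convention).

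There is no real obstacle here; the only subtlety is being careful that the penultimate vertex of a geodesic ending in $S$ indeed lies in the outer vertex boundary $\partial S$, which holds by the geodesic property. The same scheme, with the obvious edge-boundary replacement, also yields the analogous statement for directed-edge boundaries, and in fact Lemma~\ref{lem:new-thicken-boundary} is a QI-version of the same pull-back principle, which is why the two proofs share the same skeleton.
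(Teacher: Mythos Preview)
Your argument is correct and follows essentially the same approach as the paper's proof: both project a boundary element of $N_R(S)$ back along a shortest path to a boundary element of $S$, then bound the fibre multiplicity by the number of paths (equivalently, the sphere volume) of length $R$ in a degree-$\Delta$ graph. The paper's one-line sketch phrases this in terms of broken edges and nonbacktracking paths of length $\le R$, while you work directly at the vertex level; your version is in fact slightly cleaner and recovers the sharper constant $\Delta(\Delta-1)^{R-1}$.
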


\begin{proof}
Every edge broken by $N_R(S)$ projects along a shortest path of length $\le R$ to a broken edge of $S$; the number of such lifts is bounded by the number of nonbacktracking paths of length $\le R$ times a degree factor, which is $\le 2\Delta^2(\Delta-1)^{R-1}$.
\end{proof}

\begin{theorem}[QI comparison for minorants and BRP transfer]\label{thm:new-QI-invariance}
Let $X$ and $Y$ be bounded--degree graphs that are quasi--isometric. Then there exist $a,b\in(0,\infty)$ and $A\ge1$ depending only on the QI data and degrees such that, for all $r\in\N$,
\begin{equation}\label{eq:QI-minorant}
I^{\mathrm{incr}\,\circ}_Y(a r)\ \le\ A\, I^{\mathrm{incr}\,\circ}_X(r)
\qquad\text{and}\qquad
I^{\mathrm{incr}\,\circ}_X(b r)\ \le\ A\, I^{\mathrm{incr}\,\circ}_Y(r).
\end{equation}
Consequently:
\begin{enumerate}
\item[(i)] (\emph{Scaled BRP transfer}) If $X$ has \textup{BRP} with constant $C_X$, then
\[
\sup_{r\ge1}\ \frac{I^\circ_Y(r)}{\,I^{\mathrm{incr}\,\circ}_Y(\kappa r)\,}\ \le\ C_Y
\]
for some $\kappa\ge1$ and $C_Y<\infty$ depending only on the QI data, degrees, and $C_X$.
\item[(ii)] If, in addition, $I^{\mathrm{incr}\,\circ}_Y$ is doubling in the sense that $I^{\mathrm{incr}\,\circ}_Y(2r)\le D\,I^{\mathrm{incr}\,\circ}_Y(r)$ for some $D<\infty$ and all large $r$ (this holds in the classes used elsewhere in the paper), then $Y$ has \textup{BRP} in the original sense:
\[
\sup_{r\ge1}\ \frac{I^\circ_Y(r)}{\,I^{\mathrm{incr}\,\circ}_Y(r)\,}\ <\ \infty .
\]
\end{enumerate}
\end{theorem}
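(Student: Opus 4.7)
The proof proceeds in three steps, corresponding to equation~\eqref{eq:QI-minorant} and parts (i) and (ii). Part (i) carries the bulk of the work.

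\emph{Step~1 (minorant comparison \eqref{eq:QI-minorant}).} The approach is to transfer approximate minimizers across the QI by thickening, using Lemma~\ref{lem:new-thicken-boundary}. Fix $r\ge 1$ and $\epsilon>0$, pick $s\ge r$ with $I^\circ_X(s)\le I^{\mathrm{incr}\,\circ}_X(r)+\epsilon$, and take $U\subset V_X$ with $|U|=s$ and $|\partial_X U|\le I^\circ_X(s)+\epsilon$. The set $V:=N_R(f(U))\subset V_Y$ then satisfies $|V|\ge s/c_V'\ge r/c_V'$ and $|\partial_Y V|\le c_\partial\,|\partial_X U|$. Hence $I^{\mathrm{incr}\,\circ}_Y(r/c_V')\le I^\circ_Y(|V|)\le c_\partial(I^{\mathrm{incr}\,\circ}_X(r)+2\epsilon)$; sending $\epsilon\to 0$ delivers the first half of \eqref{eq:QI-minorant} with $a=1/c_V'$ and $A=c_\partial$. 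The second half is symmetric, using the coarse inverse $g$ in place of $f$.

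\emph{Step~2 (scaled BRP transfer, part (i)).} Given $r\ge 1$, the goal is to exhibit $C\subset V_Y$ with $|C|=r$ and $|\partial_Y C|\le C_Y\,I^{\mathrm{incr}\,\circ}_Y(\kappa r)$. My construction combines BRP on $X$ with a left-Lipschitz extension and QI thickening. Using BRP, I anchor at a scale $s_0\asymp r$ with $|\partial_X U_{s_0}|\le C_X\,I^{\mathrm{incr}\,\circ}_X(s_0)$, then adjoin vertices one at a time to obtain a nested chain $U_{s_0}\subset\cdots\subset U_{s_1}$ of length $s_1-s_0\le I^{\mathrm{incr}\,\circ}_X(s_0)/\Delta_X$. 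By Proposition~\ref{prop:Lip}, $|\partial_X U_s|\le (C_X+1)\,I^{\mathrm{incr}\,\circ}_X(s)$ throughout this window. Thickening yields nested $V_s:=N_R(f(U_s))$ with increments $|V_{s+1}|-|V_s|$ bounded by a ball-growth constant $C_B=C_B(Y,R)$, and $|\partial_Y V_s|\le c_\partial(C_X+1)\,I^{\mathrm{incr}\,\circ}_X(s)$. Tuning the anchor so that $r\in[|V_{s_0}|,|V_{s_1}|]$, I select $s^\star$ with $|V_{s^\star}|\in[r-C_B,r+C_B]$ and trim or pad to exactly size $r$ using Lemmas~\ref{lem:left-lip-trim} and~\ref{lem:padding} at cost $\le\Delta_Y C_B=O(1)$. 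Since $s^\star\asymp r$, Step~1 gives $I^{\mathrm{incr}\,\circ}_X(s^\star)\le A\,I^{\mathrm{incr}\,\circ}_Y(s^\star/b)\le A\,I^{\mathrm{incr}\,\circ}_Y(\kappa r)$ for a suitable $\kappa=\kappa(c_V,c_V',b)\ge 1$; absorbing the $O(1)$ correction (trivial for the finitely many small $r$) yields the scaled BRP with $C_Y$ and $\kappa$ depending only on the QI data, the degrees, and $C_X$. Part (ii) then follows by iterating doubling: $I^{\mathrm{incr}\,\circ}_Y(\kappa r)\le D^{\lceil\log_2\kappa\rceil}\,I^{\mathrm{incr}\,\circ}_Y(r)$, which combined with Step~2 upgrades scaled BRP to full BRP on $Y$ with constant $C_Y\,D^{\lceil\log_2\kappa\rceil}$.

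\emph{Main obstacle.} The delicate point in Step~2 is balancing two constraints on the growth window $[s_0,s_1]$: it must be wide enough for the thickened sizes $\{|V_s|\}$ to cover a neighborhood of $r$ up to $O(1)$ gaps, so that the trim-pad correction is negligible; and short enough that the Lipschitz-growth term $\Delta_X(s-s_0)$ stays comparable to $I^{\mathrm{incr}\,\circ}_X(s)$. Both are simultaneously satisfiable when $I^{\mathrm{incr}\,\circ}_X(s_0)/\Delta_X\gtrsim C_B(c_V-1/c_V')$, a condition that holds at all large scales in the amenable classes treated elsewhere in the paper (virtually nilpotent groups, semidirects $\Z^d\rtimes_A\Z$, finite-lamp wreath products, and finite products thereof). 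To remove any residual growth hypothesis one can instead refresh the anchor logarithmically many times across $[r/c_V,c_V' r]$, tracking the constants through finitely many restarts; this is where the bulk of any fully quantitative bookkeeping would live.
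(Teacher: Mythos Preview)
Your Step~1 and Step~3 are correct and match the paper's approach (in Step~1 you are actually more careful: you use the lower volume bound $|V|\ge s/c_V'$ to get $a=1/c_V'$, whereas the paper's displayed line invokes the upper bound $|B|\le c_V s$, which is the wrong direction for a minorant inequality---your version is the right one).

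For Step~2, however, you take a far more elaborate route than the paper. The paper's argument for (i) is essentially a one-liner: thicken a single $X$-minimizer $A$ of size $\lceil r/c_V\rceil$ to obtain $B\subset Y$ with $|B|\le r+c_V$, write $I^\circ_Y(r)\le |\partial_Y B|\le c_\partial C_X\,I^{\mathrm{incr}\,\circ}_X(\lceil r/c_V\rceil)$ directly, and convert the right-hand side to $I^{\mathrm{incr}\,\circ}_Y(\kappa r)$ via the already-established minorant comparison~\eqref{eq:QI-minorant}. Your nested-chain machinery (anchoring at $s_0$, Lipschitz extension, controlling increments $|V_{s+1}|-|V_s|\le C_B$, selecting $s^\star$) is not needed for the paper's strategy.

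That said, your ``Main obstacle'' paragraph correctly puts a finger on a genuine subtlety that the paper's short argument glosses over: the thickened set has $|B|\in[r/(c_Vc_V'),\,r+c_V]$, so when $|B|<r$ the padding cost to reach size exactly $r$ is $\Delta_Y(r-|B|)$, potentially of order $r$, which dominates $I^{\mathrm{incr}\,\circ}_Y(\kappa r)$ in the amenable regime. The paper's displayed step $I^\circ_Y(r)\le|\partial_Y B|$ is not justified in that case. Your scheme attempts to work around this by producing a chain of thickened sets with $O(1)$ increments, but as you acknowledge, the window $[|V_{s_0}|,|V_{s_1}|]$ is too short (length $\lesssim I^{\mathrm{incr}\,\circ}_X(s_0)=o(s_0)$ in the amenable case) to be guaranteed to contain $r$, given the $\Theta(r)$ uncertainty in where $|V_{s_0}|$ lands. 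Your sketched ``logarithmic restart'' fix is not detailed enough to close this: at each restart you switch to an unrelated minimizer, breaking nestedness and hence increment control. So both your approach and the paper's leave the same gap in part~(i); you diagnose it more honestly, but your more elaborate construction does not resolve it.
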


\begin{proof}
For \eqref{eq:QI-minorant}: Fix $R$ and $f\in\mathrm{QI}(X,Y)$ as in Lemma~\ref{lem:new-thicken-boundary}. Given $r$, choose $s\ge r$ and $A\subset X$ with $|A|=s$ and $|\partial_X A|=I^\circ_X(s)$; set $B:=N_R(f(A))$. Then $|B|\le c_V s$ and $|\partial_Y B|\le c_\partial I^\circ_X(s)$. Taking the infimum over $s\ge r$ yields
\[
I^{\mathrm{incr}\,\circ}_Y(c_V r)\ \le\ c_\partial\, I^{\mathrm{incr}\,\circ}_X(r).
\]
This gives the first inequality in \eqref{eq:QI-minorant} with $a=c_V$ and $A=c_\partial$. The second follows by symmetry using the coarse inverse $g$ (with possibly different constants), which we absorb into $(b,A)$.

For (i): let $C_X$ witness BRP on $X$, i.e.\ $I^\circ_X\le C_X\,I^{\mathrm{incr}\,\circ}_X$. Fix $r$ and apply Lemma~\ref{lem:new-thicken-boundary} to a minimizer $A$ of size $\lceil r/c_V\rceil$ in $X$ to get a set $B\subset Y$ with $|B|\le r+c_V$ and
\[
I^\circ_Y(r)\ \le\ |\partial_Y B|\ \le\ c_\partial\,I^\circ_X\big(\lceil r/c_V\rceil\big)\ \le\ c_\partial C_X\,I^{\mathrm{incr}\,\circ}_X \big(\lceil r/c_V\rceil\big).
\]
By \eqref{eq:QI-minorant}, $I^{\mathrm{incr}\,\circ}_X(\lceil r/c_V\rceil)\ \le\ A\,I^{\mathrm{incr}\,\circ}_Y(\kappa r)$ for some $\kappa\ge1$ depending only on the QI data (absorbing the rounding and the additive $c_V$ via monotonicity of the minorant). Combining,
\[
I^\circ_Y(r)\ \le\ (c_\partial C_X A)\, I^{\mathrm{incr}\,\circ}_Y(\kappa r).
\]
Taking the supremum in $r$ proves (i) with $C_Y:=c_\partial C_X A$.

For (ii): if $I^{\mathrm{incr}\,\circ}_Y$ is doubling, then $I^{\mathrm{incr}\,\circ}_Y(\kappa r)\le D^{\lceil \log_2 \kappa\rceil} I^{\mathrm{incr}\,\circ}_Y(r)$ for large $r$, and the finitely many small $r$ are absorbed into the constant. Hence $\sup_r I^\circ_Y(r)/I^{\mathrm{incr}\,\circ}_Y(r)<\infty$.
\end{proof}

\subsubsection{An auxiliary local attainability tool (not used in the main interleaving)}

\begin{definition}[Local Attainability with factor $\kappa$]\label{def:new-LA-kappa}
Let $\kappa\in[1,\infty)$. A bounded--degree graph $X$ satisfies \emph{$(\mathrm{LA}_\kappa)$} if there exists $C_{\mathrm{LA}}\in(0,\infty)$ such that for every $r\in\N$ there exists
\[
s\ \in\ \bigl[r,\ r+C_{\mathrm{LA}}\,I^{\mathrm{incr}\,\circ}_X(r)\bigr]\qquad\text{with}\qquad
I^\circ_X(s)\ \le\ \kappa\,I^{\mathrm{incr}\,\circ}_X(r).
\]
\end{definition}

\begin{lemma}[BRP $\Rightarrow$ $(\mathrm{LA}_\kappa)$ with $\kappa=C$]\label{lem:BRP-to-LA}
If $X$ has \emph{BRP} with constant $C$ (i.e.\ $I^\circ(r)\le C\,I^{\mathrm{incr}\,\circ}(r)$ for all $r$), then $X$ satisfies $(\mathrm{LA}_\kappa)$ with $\kappa=C$ (and any $C_{\mathrm{LA}}>0$): take $s=r$.
\end{lemma}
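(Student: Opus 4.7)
The plan is to verify both clauses of $(\mathrm{LA}_\kappa)$ by the tautological choice $s:=r$ suggested in the statement. Fixing an arbitrary $r\in\N$ and any $C_{\mathrm{LA}}>0$, I will first observe that the window condition $s\in[r,\ r+C_{\mathrm{LA}}\,I^{\mathrm{incr}\,\circ}_X(r)]$ holds trivially, since the interval contains its left endpoint (the right endpoint is $\ge r$ because $I^{\mathrm{incr}\,\circ}_X(r)\ge 0$). Then I will invoke the BRP hypothesis at the single value $r$ to conclude
\[
I^\circ_X(s)\ =\ I^\circ_X(r)\ \le\ C\,I^{\mathrm{incr}\,\circ}_X(r)\ =\ \kappa\,I^{\mathrm{incr}\,\circ}_X(r),
\]
which is precisely the second clause of Definition~\ref{def:new-LA-kappa} with $\kappa=C$. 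Since $r$ was arbitrary, $(\mathrm{LA}_\kappa)$ holds.

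There is no genuine obstacle: the lemma records $(\mathrm{LA}_\kappa)$ as a formal weakening of BRP, obtained by allowing the extra ``slack'' $s>r$ but not being forced to use it. The only point worth flagging is the quantifier order in Definition~\ref{def:new-LA-kappa} -- the constant $C_{\mathrm{LA}}$ is existentially quantified before $r$ -- so the phrase ``any $C_{\mathrm{LA}}>0$'' in the lemma means that the witness $s=r$ happens to satisfy the window constraint regardless of which positive $C_{\mathrm{LA}}$ one fixes; in particular, one may take $C_{\mathrm{LA}}=1$ once and for all. The substance of the lemma lies not in its difficulty but in its role in the implication chain \emph{BRP} $\Rightarrow(\mathrm{LA}_\kappa)\Rightarrow$ \textup{TF(ii)}: together with the QI comparison of minorants in Theorem~\ref{thm:new-QI-invariance}, it will let one feed any class with a known BRP into the $(\mathrm{LA}_\kappa)$-based tempered-increment route without further verification.
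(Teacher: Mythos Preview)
Your proof is correct and follows exactly the approach indicated in the paper: the lemma's statement already contains the entire argument (``take $s=r$''), and you have simply unpacked this tautological verification of both clauses of Definition~\ref{def:new-LA-kappa}. Your additional commentary on quantifier order and the lemma's role in the implication chain is accurate but not required for the proof itself.
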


\begin{lemma}[One-step padding]\label{lem:new-padding}
Let $X$ have maximum degree $\Delta$. Given a finite $E\subset V_X$ and $t\in\N$, there exists $E'\supset E$ with $|E'|=|E|+t$ and
\[
|\partial E'|\ \le\ |\partial E|\ +\ \Delta\,t.
\]
\end{lemma}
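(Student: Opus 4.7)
The plan is to iterate the obvious one-vertex step $t$ times. At each step I add a single vertex $x\notin E$ and control the boundary change by a clean set-theoretic inclusion; summing the increments across the $t$ steps produces the additive cost $\Delta t$. Because $V_X$ is infinite (or at least $|V_X|\ge |E|+t$ in the finite case), such $x$ always exists, so nothing prevents the induction.

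Concretely, I would proceed by induction on $t$. The base case $t=0$ takes $E'=E$. For the inductive step, suppose $F\subset V_X$ is a finite set with $|F|\ge|E|$ and $|\partial F|\le |\partial E|+\Delta\,(|F|-|E|)$; pick any $x\in V_X\setminus F$ and set $F^+:=F\cup\{x\}$. The key observation is the pointwise inclusion
\[
\partial F^{+}\ \subseteq\ \bigl(\partial F\setminus\{x\}\bigr)\ \cup\ \bigl(N(x)\setminus F^{+}\bigr),
\]
where $N(x)$ is the neighborhood of $x$ in $X$. Indeed, every vertex of $\partial F^{+}$ either was already in $\partial F$ and survives (note $x$ itself is no longer in $\partial F^{+}$ since $x\in F^{+}$), or it is a new neighbor, and new neighbors of $F^{+}$ relative to $F$ can only arise among neighbors of $x$ that are outside $F^{+}$. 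Since $|N(x)\setminus F^{+}|\le\deg(x)\le\Delta$, this gives
\[
|\partial F^{+}|\ \le\ |\partial F|\ +\ \Delta,
\]
which closes the induction. After $t$ steps one obtains the desired $E'\supset E$ with $|E'|=|E|+t$ and $|\partial E'|\le |\partial E|+\Delta t$.

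There is essentially no hard step here; the only thing to be careful about is the bookkeeping in the inclusion above (in particular that $x$ itself drops out of the boundary after it is added). If one wanted a tighter, $\Delta{-}1$-per-step bound by preferring $x\in\partial F$ whenever $\partial F\ne\emptyset$, one could, but the stated inequality $+\Delta t$ already follows from the trivial choice of $x$ and requires no extra hypothesis on $X$ beyond bounded degree.
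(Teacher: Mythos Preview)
Your proof is correct and takes exactly the same approach as the paper: add vertices one by one, observing that each addition increases the vertex boundary by at most $\Delta$. The paper's proof is the one-line version of what you wrote; your explicit inclusion $\partial F^{+}\subseteq(\partial F\setminus\{x\})\cup(N(x)\setminus F^{+})$ and the induction are just a careful spelling-out of the same argument.
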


\begin{proof}
Add vertices one by one; each addition increases the boundary by at most $\Delta$.
\end{proof}

\begin{lemma}[Minorant vs.\ boundary]\label{lem:minorant-vs-boundary}
For every finite $E$ one has
\[
I^{\mathrm{incr}\,\circ}(|E|)\ \le\ I^\circ(|E|)\ \le\ |\partial E| .
\]
\end{lemma}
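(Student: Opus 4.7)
The plan is to unfold the two profile definitions in order; there is no structural difficulty here, only bookkeeping.

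First, I will establish the right-hand inequality $I^\circ(|E|)\le |\partial E|$. By Definition~\ref{def:profiles-vertex-edge} (vertex normalization), $I^\circ(r)=\inf\{|\partial_{\mathcal S}Y|:\ Y\subset\Gamma\ \text{finite},\ |Y|=r\}$. Since the given set $E$ is a finite competitor with $|E|=|E|$, it enters the infimum defining $I^\circ(|E|)$, and hence $I^\circ(|E|)\le |\partial_{\mathcal S}E|=|\partial E|$.

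Next, I will establish the left-hand inequality $I^{\mathrm{incr}\,\circ}(|E|)\le I^\circ(|E|)$. By definition, $I^{\mathrm{incr}\,\circ}(r):=\inf_{s\ge r}I^\circ(s)$, and taking $s=r$ in the infimum yields $I^{\mathrm{incr}\,\circ}(r)\le I^\circ(r)$ for every $r\ge1$; specializing to $r=|E|$ gives the claim. Chaining the two bounds, $I^{\mathrm{incr}\,\circ}(|E|)\le I^\circ(|E|)\le |\partial E|$, as required. The only ``obstacle'' is notational: one must be consistent with the vertex normalization throughout (the analogous statement in directed-edge normalization holds with $|\partial E|$ replaced by $B_{\mathcal S}(E)$, and follows by the same definitional chase together with Lemma~\ref{lem:vertex-edge-profiles}).
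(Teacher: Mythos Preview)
Your proof is correct and takes essentially the same approach as the paper, which simply writes ``By definition of the infima.'' You have unpacked exactly what that means: $E$ is a competitor in the infimum defining $I^\circ(|E|)$, and $s=|E|$ is a competitor in the infimum defining $I^{\mathrm{incr}\,\circ}(|E|)$.
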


\begin{proof}
By definition of the infima.
\end{proof}

\begin{proposition}[$(\mathrm{LA}_\kappa)$ yields tempered increments and checkpoints]\label{prop:LAk-to-TFii}
If $X$ satisfies \emph{$(\mathrm{LA}_\kappa)$}, then there exists a nested sequence $(F_n)$ with
$
|F_{n+1}\setminus F_n|\ \le\ C_{\mathrm{LA}}\,|\partial F_n|
$
for all $n$; and there is a cofinal sequence of volumes $(s_j)$ and minimizers $E_j$ with $|E_j|=s_j$ such that
$
I^\circ(s_j)\ \le\ \kappa\, I^{\mathrm{incr}}(s_j)
$.
\end{proposition}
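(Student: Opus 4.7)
The plan is to run a recursive padding scheme driven by $(\mathrm{LA}_\kappa)$. I will start with any finite nonempty $F_1$ and, at stage $n$ with $r_n:=|F_n|$, invoke $(\mathrm{LA}_\kappa)$ at argument $r_n$ to select a target volume $s_n\in[r_n,\,r_n+C_{\mathrm{LA}}\,I^{\mathrm{incr}\,\circ}(r_n)]$ satisfying $I^\circ(s_n)\le \kappa\,I^{\mathrm{incr}\,\circ}(r_n)$. Then I will pad $F_n$ up to volume $s_n$ by single-vertex accretions from the outer boundary (Lemma~\ref{lem:new-padding}), calling the result $F_{n+1}$; nestedness is immediate from the padding procedure.

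The tempered-increment bound then reduces to a short calculation: by the $(\mathrm{LA}_\kappa)$ interval and Lemma~\ref{lem:minorant-vs-boundary},
\[
|F_{n+1}\setminus F_n|\ =\ s_n-r_n\ \le\ C_{\mathrm{LA}}\,I^{\mathrm{incr}\,\circ}(r_n)\ \le\ C_{\mathrm{LA}}\,|\partial F_n|,
\]
since $I^{\mathrm{incr}\,\circ}(r_n)\le I^\circ(r_n)\le|\partial F_n|$. For the checkpoint clause I will not use the padded sets $F_{n+1}$ themselves (which generally need not minimize at their own size), but instead select, via the attainment Lemma~\ref{lem:attainment-profiles}, an exact vertex-minimizer $E_n$ of size $s_n$; the $(\mathrm{LA}_\kappa)$ output together with monotonicity of the minorant then gives $|\partial E_n|=I^\circ(s_n)\le \kappa\,I^{\mathrm{incr}\,\circ}(s_n)$, so any cofinal subsequence $(s_{n_j})$ supplies the required checkpoints.

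The one delicate point, which I expect to be the main (and only) obstacle, is ensuring that the recursion actually escapes to infinity so that the checkpoint volumes are cofinal. For infinite $X$ one has $I^{\mathrm{incr}\,\circ}(r)\ge 1$ for every $r\ge 1$ (finite proper nonempty subsets always carry at least one boundary vertex), so picking $s_n$ to be the largest integer in the $(\mathrm{LA}_\kappa)$ interval forces strict growth as soon as $C_{\mathrm{LA}}\ge 1$. In the degenerate range $C_{\mathrm{LA}}\in(0,1)$ I would safeguard by interleaving, whenever the $(\mathrm{LA}_\kappa)$ step adds no vertex, a single canonical expansion $F_n\mapsto F_n\cup\partial F_n$ from Lemma~\ref{lem:canonical-tempered}; this is tempered with constant $1$ and strictly grows $F_n$, so the final chain remains tempered with constant $\max\{C_{\mathrm{LA}},1\}$. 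No further structural input is required, which is consistent with the role of $(\mathrm{LA}_\kappa)$ in the paper as an auxiliary tool yielding TF(ii) without any nested near-minimizer hypothesis.
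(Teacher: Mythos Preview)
Your proposal is correct and follows essentially the same approach as the paper: invoke $(\mathrm{LA}_\kappa)$ at $r_n=|F_n|$ to get $s_n$, pad via Lemma~\ref{lem:new-padding}, and bound the increment through Lemma~\ref{lem:minorant-vs-boundary}. Your treatment is in fact more careful than the paper's terse proof, as you explicitly address cofinality of the $s_n$ and spell out the monotonicity step $I^{\mathrm{incr}\,\circ}(r_n)\le I^{\mathrm{incr}\,\circ}(s_n)$ needed for the checkpoint inequality, both of which the paper leaves implicit.
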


\begin{proof}
Given $F_n$ with $r_n:=|F_n|$, pick $s_n$ by $(\mathrm{LA}_\kappa)$ and pad using Lemma~\ref{lem:new-padding}. Then
$|F_{n+1}\setminus F_n|\le C_{\mathrm{LA}} I^{\mathrm{incr}}(r_n)\le C_{\mathrm{LA}}|\partial F_n|$ by Lemma~\ref{lem:minorant-vs-boundary}.
Iterating $(\mathrm{LA}_\kappa)$ yields the checkpoints.
\end{proof}

\begin{remark}[On usage]\label{rem:LA-not-used}
We will \emph{not} use $(\mathrm{LA}_\kappa)$ in the main TF construction below; it is recorded as a flexible device to obtain TF(ii) even when near-minimizers are only sparsely available.
\end{remark}

\subsubsection{Interleaving from (NNM) alone: full {\normalfont(TF)} under amenability}

Recall \textup{(NNM)} (Definition~\ref{def:NNM}): there exist $C_0<\infty$, $r_0$ and a nested family $(W_r)_{r\ge r_0}$ with $|W_r|=r$ such that, in edge normalization,
$
B_{\mathcal S}(W_r)\le C_0\, I^{\mathrm{incr}\,\circ}_{\edge}(r)
$.
By Lemma~\ref{lem:vertex-edge-profiles}, in vertex normalization
\begin{equation}\label{eq:NNM-vertex}
|\partial W_r|\ \le\ C_0\,\Delta\, I^{\mathrm{incr}\,\circ}(r)\qquad(r\ge r_0).
\end{equation}

\begin{definition}[Budgeted levels]\label{def:budget-levels-new}
Fix $\vartheta\in(0,1]$. Define a level schedule by $r_{j+1}:=r_j+\lfloor \vartheta\,I^{\mathrm{incr}\,\circ}(r_j)\rfloor$.
On level $j$, evolve by the confined step $F\mapsto \mathcal T_{r_{j+1}}(F)$
(Definition~\ref{def:confined-step}) until $F=W_{r_{j+1}}$, then move to level $j+1$.
\end{definition}

\begin{lemma}[Interleaved smoothing: no $(\mathrm{LA}_\kappa)$ needed]\label{lem:interleave-tfii-new}
The chain of Definition~\ref{def:budget-levels-new} is nested and satisfies \textup{TF(ii)} with $B=1$:
\[
|F_{m+1}\setminus F_m|\ \le\ |\partial F_m|\qquad(\text{all }m).
\]
Moreover, along any step with $F\subset W_{r'}$ one has the deficit control
\[
\Def(F;r')\ :=\ r'-|F|\ \le\ \vartheta\, I^{\mathrm{incr}\,\circ}(|F|),
\]
and at each checkpoint $F=W_{r'}$,
$
|\partial F|\ \le\ C_0\Delta\, I^{\mathrm{incr}\,\circ}(|F|).
$
\end{lemma}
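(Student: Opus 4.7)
\textbf{Proof plan for Lemma~\ref{lem:interleave-tfii-new}.}
The plan is to verify the four claims directly from the confined-step recipe in Definition~\ref{def:confined-step} together with the budgeted schedule in Definition~\ref{def:budget-levels-new}; nothing beyond (NNM) and the right-Lipschitz property of $I^{\mathrm{incr}\,\circ}$ (Lemma~\ref{lem:incr-Lip-right}) is required.

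First I would dispose of nestedness and TF(ii) in one move. By construction every iterate $F_m$ lies inside the current level set $W_{r'}$ (with $r'=r_{j+1}$ on level $j$), and the confined step produces
\[
F_{m+1}\ =\ \mathcal T_{r'}(F_m)\ =\ (F_m\cup \partial_{\mathcal S}F_m)\cap W_{r'}.
\]
Thus $F_m\subseteq F_{m+1}\subseteq W_{r'}$, which gives nestedness (including across level boundaries, since $W_{r_j}\subseteq W_{r_{j+1}}$ by nestedness of the NNM family). Moreover $F_{m+1}\setminus F_m\subseteq \partial_{\mathcal S}F_m$, whence $|F_{m+1}\setminus F_m|\le |\partial F_m|$, i.e.\ TF(ii) with $B=1$.

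Next I would establish the deficit control. On level $j$ we have $r'=r_{j+1}$ and, since the chain has already reached (and grown from) the checkpoint $W_{r_j}$, the monotonicity of $|F_m|$ gives $|F|\ge r_j$. Therefore
\[
\Def(F;r')\ =\ r_{j+1}-|F|\ \le\ r_{j+1}-r_j\ =\ \lfloor \vartheta\,I^{\mathrm{incr}\,\circ}(r_j)\rfloor\ \le\ \vartheta\,I^{\mathrm{incr}\,\circ}(r_j)\ \le\ \vartheta\,I^{\mathrm{incr}\,\circ}(|F|),
\]
where the last inequality uses monotonicity of $I^{\mathrm{incr}\,\circ}$. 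This is exactly the stated deficit bound, and it is the key budgeting statement that will feed into the TF(i) estimate at intermediate steps (via the proof of Proposition~\ref{prop:tfi-every-step}).

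Finally, the checkpoint inequality is immediate from (NNM). When $F=W_{r'}$, convert the edge-normalized (NNM) bound $B_{\mathcal S}(W_{r'})\le C_0\,I^{\mathrm{incr}\,\circ}_{\edge}(r')$ to the vertex normalization via Lemma~\ref{lem:vertex-edge-profiles} to get
\[
|\partial_{\mathcal S}F|\ =\ |\partial W_{r'}|\ \le\ C_0\,\Delta\,I^{\mathrm{incr}\,\circ}(r')\ =\ C_0\,\Delta\,I^{\mathrm{incr}\,\circ}(|F|),
\]
as required. The main potential obstacle is a purely bookkeeping one: making sure the level-transition step (``relabel $F_m=W_{r_j}$ as being on level $j+1$'') does not break the single-step TF(ii) bookkeeping. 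This is why the scheme keeps such transitions as no-ops (no vertex is added at the transition itself), so the displayed inequality $|F_{m+1}\setminus F_m|\le |\partial F_m|$ is trivially satisfied with equality $0=0$ at transition indices, while all growth happens through confined steps that are already accounted for above.
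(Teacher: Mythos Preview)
Your proposal is correct and follows essentially the same approach as the paper: the paper defers nestedness and $B=1$ to the earlier Lemma~\ref{lem:interleave-tfii} (which you spell out directly via the confined step), proves the deficit bound by the identical chain $\Def(F;r')\le r_{j+1}-r_j\le \vartheta\,I^{\mathrm{incr}\,\circ}(r_j)\le \vartheta\,I^{\mathrm{incr}\,\circ}(|F|)$ using only monotonicity, and reads off the checkpoint inequality from the vertex-normalized (NNM) bound~\eqref{eq:NNM-vertex}. Your extra remark on level transitions as no-ops is sound bookkeeping; the reference to Lemma~\ref{lem:incr-Lip-right} in your plan is not actually needed here (only monotonicity of $I^{\mathrm{incr}\,\circ}$ is used).
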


\begin{proof}
Nestedness and $B=1$ are as in Lemma~\ref{lem:interleave-tfii}. For the deficit bound, note that on level $j$ we have $r'=r_{j+1}=r_j+\lfloor\vartheta I^{\mathrm{incr}}(r_j)\rfloor$ and $|F|\in[r_j,r_{j+1}]$, hence
$
\Def(F;r')\le r_{j+1}-r_j\le \vartheta\,I^{\mathrm{incr}}(r_j)\le \vartheta\,I^{\mathrm{incr}}(|F|).
$
The checkpoint bound is \eqref{eq:NNM-vertex}.
\end{proof}

\begin{proposition}[TF(i) at every step from (NNM) alone]\label{prop:tfi-from-NNM-alone}
In the chain of Definition~\ref{def:budget-levels-new}, for every step $F$ (not only at checkpoints) one has
\[
|\partial F|\ \le\ \bigl(C_0\Delta+(C_0\Delta^2+1)\,\vartheta\bigr)\, I^{\mathrm{incr}\,\circ}(|F|).
\]
\end{proposition}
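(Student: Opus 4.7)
The plan is to bound $|\partial F|$ by comparing the current set to the checkpoint set $W_{r'}$ it is confined to, where $r':=r_{j+1}$ is the next scheduled volume on the level that contains $F$. By construction of the confined step $\mathcal T_{r'}$ in Definition~\ref{def:budget-levels-new}, every step of the chain at level $j$ satisfies $F\subset W_{r'}$; set
\[
\Def(F;r')\ :=\ r'-|F|\ \ge\ 0.
\]
The first move is to trim $W_{r'}$ down to $F$ one vertex at a time. In the vertex normalization, Proposition~\ref{prop:tail-trim} is proved at the \emph{set} level: deleting a single vertex $x$ from a set $Y_0$ gives $\partial(Y_0\setminus\{x\})\subseteq\partial Y_0\cup\{x\}$, hence the outer boundary grows by at most~$1$ per deletion. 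Iterating $\Def(F;r')$ times yields the key starting inequality
\[
|\partial F|\ \le\ |\partial W_{r'}|\ +\ \Def(F;r').
\]

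Next I would inject the two structural ingredients already in place. The \textup{(NNM)} hypothesis in vertex normalization \eqref{eq:NNM-vertex} supplies $|\partial W_{r'}|\le C_0\Delta\,I^{\mathrm{incr}\,\circ}(r')$, and the right-Lipschitz estimate for the increasing minorant (Lemma~\ref{lem:incr-Lip-right}) gives $I^{\mathrm{incr}\,\circ}(r')\le I^{\mathrm{incr}\,\circ}(|F|)+\Delta\,\Def(F;r')$ since $r'\ge|F|$. Plugging both into the trimming bound produces
\[
|\partial F|\ \le\ C_0\Delta\ I^{\mathrm{incr}\,\circ}(|F|)\ +\ (C_0\Delta^2+1)\,\Def(F;r').
\]
Finally, the budgeted schedule was engineered so that $r_{j+1}-r_j\le \vartheta\,I^{\mathrm{incr}\,\circ}(r_j)$, and monotonicity in the minorant translates this into the scheduled deficit control $\Def(F;r')\le \vartheta\,I^{\mathrm{incr}\,\circ}(|F|)$ already recorded in Lemma~\ref{lem:interleave-tfii-new}. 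Substituting this last bound gives exactly the advertised constant $C_0\Delta+(C_0\Delta^2+1)\vartheta$.

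I do not expect any real obstacle: the schedule in Definition~\ref{def:budget-levels-new} was chosen precisely to absorb the $\Delta\,\Def$ term created by the Lipschitz minorant, and the trimming step is set-level rather than profile-level so that it applies to \emph{every} intermediate $F_m$ and not only to the checkpoints. The one piece of bookkeeping that must be handled correctly is the choice of normalization: the $+1$-per-deletion bound is valid in the vertex normalization, whereas the corresponding edge-normalization estimate costs $+\Delta$ per deletion and would not match the stated constant. Working throughout in vertex normalization (with edge counts only entering via the NNM input through the factor $\Delta$) closes the argument cleanly.
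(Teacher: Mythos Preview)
Your proposal is correct and follows essentially the same approach as the paper: tail-trim from $W_{r'}$ down to $F$ (set-level, one vertex at a time), apply the vertex-normalized NNM bound \eqref{eq:NNM-vertex} to $|\partial W_{r'}|$, use the right-Lipschitz Lemma~\ref{lem:incr-Lip-right} to shift $I^{\mathrm{incr}\,\circ}(r')$ back to $I^{\mathrm{incr}\,\circ}(|F|)$, and absorb the resulting $\Def(F;r')$ via the scheduled deficit bound from Lemma~\ref{lem:interleave-tfii-new}. Your remark about working in vertex normalization so that the trimming cost is $+1$ per deletion (rather than $+\Delta$) is exactly the right bookkeeping point.
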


\begin{proof}
Fix a step with $F\subset W_{r'}$. Tail trimming (Prop.~\ref{prop:tail-trim}) gives
$
|\partial F|\le |\partial W_{r'}|+\Def(F;r').
$
By \eqref{eq:NNM-vertex}, $|\partial W_{r'}|\le C_0\Delta\,I^{\mathrm{incr}}(r')$.
By Lemma~\ref{lem:incr-Lip-right}, $I^{\mathrm{incr}}(r')\le I^{\mathrm{incr}}(|F|)+\Delta\,\Def(F;r')$.
Combine and use Lemma~\ref{lem:interleave-tfii-new} to bound $\Def(F;r')\le \vartheta I^{\mathrm{incr}}(|F|)$, obtaining
\[
|\partial F|\ \le\ C_0\Delta\bigl(I^{\mathrm{incr}}(|F|)+\Delta\,\Def(F;r')\bigr)+\Def(F;r')
\ \le\ \bigl(C_0\Delta+(C_0\Delta^2+1)\,\vartheta\bigr)\, I^{\mathrm{incr}}(|F|).
\]
\end{proof}

\begin{theorem}[Full {\normalfont(TF)} from {\normalfont(NNM)} under amenability]\label{thm:NNM-alone-to-TF}
Assume $X$ admits \textup{(NNM)} as above and is amenable, i.e.\ $I^{\mathrm{incr}}(r)/r\to0$.
Let $(F_m)$ be the chain of Definition~\ref{def:budget-levels-new}. Then:
\[
|F_{m+1}\setminus F_m|\ \le\ |\partial F_m|\quad(\textup{TF(ii) with }B=1),\qquad
\frac{|\partial F_m|}{|F_m|}\ \longrightarrow\ 0,
\]
and there exists $A<\infty$ (e.g.\ $A=C_0\Delta+(C_0\Delta^2+1)\vartheta$) such that
\[
|\partial F_m|\ \le\ A\,I^{\mathrm{incr}}(|F_m|)\quad\text{for every $m$}.
\]
Hence $(F_m)$ is a nested F{\o}lner sequence satisfying full \textup{(TF)}.
\end{theorem}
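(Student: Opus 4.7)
The plan is to present this theorem as the direct synthesis of the two preparatory statements just proved, with amenability entering only at the very last step. Clause TF(ii) with $B=1$, together with the fact that $(F_m)$ is nested, is exactly the content of Lemma~\ref{lem:interleave-tfii-new}: each elementary move is either a confined tempered step $F\mapsto \mathcal T_{r_{j+1}}(F)$ (whose increment lies inside $\partial F$) or a jump to the next checkpoint $W_{r_{j+1}}$, and both respect $|F_{m+1}\setminus F_m|\le |\partial F_m|$. The uniform TF(i) estimate
\[
|\partial F_m|\ \le\ \bigl(C_0\Delta+(C_0\Delta^2+1)\vartheta\bigr)\,I^{\mathrm{incr}\,\circ}(|F_m|),
\]
valid at \emph{every} step (not merely at checkpoints), is exactly Proposition~\ref{prop:tfi-from-NNM-alone}. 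Thus two of the three stated conclusions follow by citation with the claimed explicit constant $A$.

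The only remaining content is the Følner convergence $|\partial F_m|/|F_m|\to 0$. Dividing the per-step TF(i) bound by $|F_m|$ reduces the task to $I^{\mathrm{incr}\,\circ}(|F_m|)/|F_m|\to 0$, which combines amenability (by definition $I^{\mathrm{incr}\,\circ}(r)/r\to 0$) with $|F_m|\to\infty$. The latter is the only genuine verification: since $I^{\mathrm{incr}\,\circ}(r)\ge 1$ for every $r\ge 1$ in an infinite Cayley graph (any nonempty finite set has nontrivial vertex boundary), choosing $\vartheta=1$ in the budget schedule of Definition~\ref{def:budget-levels-new} guarantees $r_{j+1}-r_j=\lfloor\vartheta\,I^{\mathrm{incr}\,\circ}(r_j)\rfloor\ge 1$, so the checkpoint volumes $r_j$ strictly increase; on each level the confined tempered steps drive $F$ up to $W_{r_{j+1}}$ in finitely many moves, whence $|F_m|\to\infty$. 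For $\vartheta\in(0,1)$ the same conclusion holds once $I^{\mathrm{incr}\,\circ}$ is eventually $\ge 1/\vartheta$, which is the typical amenable regime; the residual case of bounded $I^{\mathrm{incr}\,\circ}$ corresponds to (virtually) two-ended groups, where full \textup{(TF)} along a tubular Følner exhaustion is already supplied by Proposition~\ref{prop:ratio-basic}(b) and Remark~\ref{rem:two-ended-explicit}, rendering the NNM machinery unnecessary in that branch.

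The main obstacle is, honestly, only presentational: the theorem is a packaging of prior work, and the one genuine subtlety is the low-$\vartheta$ edge case just discussed. I expect the cleanest writeup to fix $\vartheta=1$ (admissible, since $\vartheta\in(0,1]$), so that level advancement is automatic and the explicit constant $A=C_0\Delta+(C_0\Delta^2+1)\vartheta$ is unchanged; alternatively one may phrase the theorem as ``for some admissible $\vartheta$.'' Once $|F_m|\to\infty$ is recorded, every claim follows by a direct appeal to Lemma~\ref{lem:interleave-tfii-new}, Proposition~\ref{prop:tfi-from-NNM-alone}, and amenability, with no further computations required.
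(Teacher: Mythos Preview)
Your proposal is correct and follows essentially the same approach as the paper: cite Lemma~\ref{lem:interleave-tfii-new} for TF(ii), Proposition~\ref{prop:tfi-from-NNM-alone} for the uniform TF(i) bound, and divide by $|F_m|$ using amenability for the F{\o}lner conclusion. You are actually more careful than the paper on one point: the paper's proof tacitly assumes $|F_m|\to\infty$ when invoking amenability, whereas you explicitly verify that the level schedule advances (and flag the low-$\vartheta$ edge case where $\lfloor\vartheta\,I^{\mathrm{incr}\,\circ}(r_j)\rfloor$ could vanish).
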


\begin{proof}
TF(ii) is Lemma~\ref{lem:interleave-tfii-new}. The TF(i) bound is Proposition~\ref{prop:tfi-from-NNM-alone}.
Since $I^{\mathrm{incr}}(|F_m|)/|F_m|\to0$ by amenability, the TF(i) bound implies $|\partial F_m|/|F_m|\to0$.
\end{proof}

\begin{theorem}[Characterization under {\normalfont(NNM)}]\label{thm:new-TF-equivalence}
Let $X$ be a bounded-degree Cayley graph that admits \textup{(NNM)} and is amenable. Then $X$ satisfies full \textup{(TF)} (Theorem~\ref{thm:NNM-alone-to-TF}), and consequently $X$ has \textup{BRP} (Theorem~\ref{thm:TF-bound-corrected}). In particular, within this class, \textup{(TF)} and \textup{BRP} are equivalent properties (both hold).
\end{theorem}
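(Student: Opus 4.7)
The plan is to treat Theorem~\ref{thm:new-TF-equivalence} as a packaging corollary of the two preceding results, with the only care being the normalization conversion from \textup{(NNM)} (stated in edge normalization) to the profiles appearing in \textup{(TF)} (stated in vertex normalization). First I would verify the hypotheses of Theorem~\ref{thm:NNM-alone-to-TF}: the input \textup{(NNM)} supplies a nested family $(W_r)_{r\ge r_0}$ with $B_{\mathcal S}(W_r) \le C_0\, I^{\mathrm{incr}\,\circ}_{\edge}(r)$, and the pointwise comparison of Lemma~\ref{lem:vertex-edge-profiles} yields display~\eqref{eq:NNM-vertex}, namely $|\partial W_r| \le C_0\Delta\, I^{\mathrm{incr}\,\circ}(r)$, at the cost of a fixed factor~$\Delta$. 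Amenability supplies $I^{\mathrm{incr}\,\circ}(r)/r \to 0$, which is the ingredient that turns the \textup{TF(i)} bound into the F{\o}lner conclusion.

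Next I would run the budgeted-level scheme of Definition~\ref{def:budget-levels-new} with any fixed $\vartheta \in (0,1]$. Lemma~\ref{lem:interleave-tfii-new} produces the nested chain $(F_m)$ satisfying \textup{TF(ii)} with $B=1$; Proposition~\ref{prop:tfi-from-NNM-alone} supplies \textup{TF(i)} at \emph{every} step with explicit constant $A := C_0\Delta + (C_0\Delta^2 + 1)\vartheta$; and combining these with amenability forces $|\partial F_m|/|F_m| \to 0$. Thus $(F_m)$ is a nested F{\o}lner sequence witnessing full \textup{(TF)}, which is exactly Theorem~\ref{thm:NNM-alone-to-TF}. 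With full \textup{(TF)} in hand, Theorem~\ref{thm:TF-bound-corrected} applies verbatim and yields
\[
\sup_{r\ge 1}\ \frac{I^\circ(r)}{I^{\mathrm{incr}\,\circ}(r)}\ \le\ A+\Delta A B\ =\ A(1+\Delta),
\]
i.e.\ \textup{BRP}, with all constants tracked through the triple $(C_0,\Delta,\vartheta)$.

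The ``equivalence'' clause is then bookkeeping: within the stated class both \textup{(TF)} and \textup{BRP} hold by the composition above, so they coincide as properties of this class (both are true). I do not anticipate any genuine obstacle; the only substantive point is the edge-to-vertex normalization conversion used to feed \textup{(NNM)} into the interleaving machinery, and this is absorbed into the factor $C_0 \mapsto C_0\Delta$ via Lemma~\ref{lem:vertex-edge-profiles} without introducing any further hidden constants. All real work has already been done in Theorems~\ref{thm:NNM-alone-to-TF} and~\ref{thm:TF-bound-corrected}; Theorem~\ref{thm:new-TF-equivalence} simply records their composition and the resulting tracked constants.
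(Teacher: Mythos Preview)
Your proposal is correct and takes essentially the same approach as the paper: the theorem is a packaging corollary whose proof consists of invoking Theorem~\ref{thm:NNM-alone-to-TF} (NNM $+$ amenability $\Rightarrow$ full (TF)) followed by Theorem~\ref{thm:TF-bound-corrected} ((TF) $\Rightarrow$ BRP), exactly as the statement itself indicates. The paper does not supply a separate proof beyond these references, so your explicit unpacking of the constants and the edge-to-vertex conversion is more detailed than what the paper records.
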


\begin{remark}[On the role of $(\mathrm{LA}_\kappa)$]\label{rem:LA-role}
The interleaving in Theorem~\ref{thm:NNM-alone-to-TF} uses only \textup{(NNM)} (plus amenability) and does \emph{not} invoke $(\mathrm{LA}_\kappa)$. The latter is kept as an auxiliary device that independently yields TF(ii) and cofinal checkpoints when \textup{(NNM)} is known only sparsely; it is not needed for the main equivalence in the (NNM)+amenable class.
\end{remark}

\begin{remark}[Why (NNM)? Conceptual gain over direct (TF)]\label{rem:why-nnm}
The (NNM) hypothesis isolates the purely geometric content: existence of near--minimizers (often along a cofinal subsequence) in the most convenient normalization (directed edges). By contrast, \textup{(TF)} demands a single \emph{nested, tempered} chain that is near--minimal \emph{at every step}. Our interleaving scheme turns any (NNM) input into a constructive chain satisfying full \textup{(TF)} with explicit constants; in particular, we obtain tempered F{\o}lner sets without further analytic input.

In the classes of interest, (NNM) is typically easier to verify than (TF) itself:
\begin{itemize}
\item \textbf{Abelian/Carnot:} discrete Wulff calibrations yield nested Wulff samplers with sharp anisotropic constants, i.e.\ (NNM) for all large volumes.
\item \textbf{Hyperbolic semidirects} $\Z^d\rtimes_A\Z$: $A$--covariant layer-nested sets provide near--minimality (up to constants) at the natural logarithmic heights; $L^1$--profile bounds give the required lower envelope.
\item \textbf{Lamplighters:} Erschler's profile asymptotics furnish canonical checkpoint volumes (median lamp budgets) at which the exact edge boundary is within a fixed factor of the profile, i.e.\ (NNM) along a cofinal subsequence.
\end{itemize}
Moreover, (NNM) is stable under finite index/finite extensions and admits a \emph{checkpoint tensoring} principle for products, so it propagates through natural constructions where building a global tempered chain directly is delicate. Thus (NNM) serves as a geometric black box: once verified, our interleaving ensures a single nested, tempered, near--minimal chain witnessing full \textup{(TF)}.
\end{remark}

\begin{remark}[Checklist to verify (NNM)]\label{rem:nnm-checklist}
A practical route to (NNM) is: (i) an anisotropic perimeter lower bound via calibration/BV for the chosen stencil; (ii) explicit samplers (Wulff approximants, covariant sets, median--budget slices) with $B_{\mathcal S}(\cdot)$ matching that lower bound up to a fixed factor; (iii) volume interpolation (padding/trimming) and record--minimum alignment to populate a cofinal family; (iv) nestedness by scale (dilations/layer-nested sets). This produces nested near--minimizers with tracked constants in edge normalization, which our scheme then upgrades to full \textup{(TF)}.
\end{remark}

\subsection{Towards all amenable Cayley graphs: a Gap-NNM program}

We outline a route to \emph{full} {\normalfont(TF)} (hence a bounded profile ratio) for amenable Cayley graphs based on two inputs developed earlier:
(i) a checkpoint chain chosen among \emph{record minima} of the increasing minorant (see Remark~\ref{rem:checkpoint-record}) or among nested near-minimizers when available (Definition~\ref{def:NNM}, Rem.~\ref{rem:NNM-scope}); and
(ii) the confined canonical step and its tempered control (Definition~\ref{def:confined-step}, Lemma~\ref{lem:canonical-tempered}), as well as the interleaving scheme (Lemma~\ref{lem:interleave-tfii}, Prop.~\ref{prop:tfi-every-step}).

\paragraph{Standing notation.}
Let $X=\Cay(\Gamma,\mathcal S)$, $\Delta:=|\mathcal S|$.
Write $I^\circ(\cdot)$ for the exact \emph{vertex} profile and $I^{\mathrm{incr}\,\circ}$ for its increasing minorant.
A checkpoint chain $\{W_{r_k}\}$ consists of finite sets with $|W_{r_k}|=r_k$, $|\partial W_{r_k}|=I^\circ(r_k)$ and $W_{r_k}\subset W_{r_{k+1}}$ (cf.\ Remark~\ref{rem:checkpoint-record}).
Set the \emph{gap} $\Gap(k):=r_{k+1}-r_k$.

\paragraph{Gap hypotheses for a checkpoint chain.}
We consider the following properties:
\begin{itemize}
\item[(C1)] \emph{Uniform gap:} there exists $G<\infty$ such that $\Gap(k)\le G\,I^{\mathrm{incr}\,\circ}(r_k)$ for all large $k$.
\item[(C2)] \emph{Positive density of good gaps:} there exist $C>0$ and $\eta>0$ such that
\[
\liminf_{m\to\infty}\frac{1}{m}\big|\{1\le k\le m:\ \Gap(k)\le C\,I^{\mathrm{incr}\,\circ}(r_k)\}\big|\ \ge\ \eta.
\]
\item[(C3)] \emph{Integrated gap:} there exists $C_{\rm int}<\infty$ such that for all large $K$ and all $M\ge1$,
\[
\sum_{k=K}^{K+M-1}\Gap(k)\ \le\ C_{\rm int}\,\sum_{k=K}^{K+M-1} I^{\mathrm{incr}\,\circ}(r_k).
\]
\end{itemize}
These are logically related only in the forward directions used below: (C1)$\Rightarrow$(C2) and (C1)$\Rightarrow$(C3) (with constants depending on $G$). In general, (C2) and (C3) are independent (neither implies the other), and neither implies (C1).

\paragraph{Consequences.}
If either (C2) or (C3) holds for some checkpoint chain, then combining the padding estimate (Lemma~\ref{lem:padding}) from the last good checkpoint with the confined/interleaving steps (Definition~\ref{def:confined-step}, Lemma~\ref{lem:canonical-tempered}, Lemma~\ref{lem:interleave-tfii}, Prop.~\ref{prop:tfi-every-step}) yields {\normalfont(TF)} with explicit constants $(A,B)$ in the vertex normalization; in particular, Theorem~\ref{thm:TF-bound-corrected} then gives a uniform bound on $I^\circ/I^{\mathrm{incr}\,\circ}$.

\paragraph{Model families: how a gap hypothesis is verified using earlier constructions.}
In the principal classes treated elsewhere, the gap hypotheses can be checked \emph{from the explicit near-minimizer families constructed there} together with trimming/padding; the $\Delta$-Lipschitz control $|I^\circ(r+1)-I^\circ(r)|\le\Delta$ (Prop.~\ref{prop:Lip}) is used inside these arguments but is not the sole input.

\begin{itemize}
\item \textbf{Virtually nilpotent / Carnot lattices.}
Sharp Wulff samplers (Theorems~\ref{thm:WulffAbelian}, \ref{thm:WulffCarnot}) provide nested near-minimizers at all large volumes with $I^\circ(r)\asymp r^{(Q-1)/Q}$.
Choosing checkpoints cofinal in this nested family and applying the interleaving scheme (Lemma~\ref{lem:interleave-tfii}, Prop.~\ref{prop:tfi-every-step}) yields an \emph{integrated} control of the padding cost between checkpoints; hence {\normalfont(TF)} with tracked $(A,B)$ (cf.\ Theorem~\ref{thm:interleave-cases}) and bounded ratio.
\item \textbf{Hyperbolic semidirects $\Z^d\rtimes_A\Z$.}
Logarithmic layer-nested sets $\{E_R\}$ from \S\ref{sec:TF-semidirect} form a nested near-minimizer sequence at a log-spaced set of volumes with $B(E_R)\asymp |E_R|/\log|E_R|$ (Theorems~\ref{thm:TFii-Folner-semidirect}, \ref{thm:new-hyp-global}, Prop.~\ref{prop:G-NNM-layer}). Taking checkpoints along this sequence and padding within each window gives an \emph{integrated} bound; hence {\normalfont(TF)} with explicit constants and bounded ratio.
\item \textbf{Finite-lamp wreath products over polynomial-growth bases.}
For lamplighters, the single-step expansion (Theorem~\ref{thm:lamplighter-TFii}) provides global tempered increments, and the checkpoint family $\{F_k^\star\}$ (Theorems~\ref{thm:lamplighter-folner-checkpoints}, \ref{thm:ers-checkpoints}) sits on the $v/\log v$ scale. Padding from $F_k^\star$ to intermediate volumes and concatenating with the single-steps yields an \emph{integrated} control across windows; thus one recovers the {\normalfont(TF)} structure recorded in Corollary~\ref{cor:lamplighter-TF-structure} and Theorem~\ref{thm:lamplighter-decoupled-TF}, and hence a bounded ratio via Theorem~\ref{thm:TF-bound-corrected}.
\end{itemize}

\begin{remark}[Vertical Lipschitz versus horizontal spacing]
The bound $|I^\circ(r+1)-I^\circ(r)|\le\Delta$ (Prop.~\ref{prop:Lip}) controls \emph{vertical} oscillations of the exact profile. By itself it does not preclude long horizontal deserts between record minima. In the families above, spacing of checkpoints (in the sense of (C2)-(C3)) comes from the \emph{explicit} near-minimizer chains (Wulff samplers, logarithmic layer-nested sets, lamplighter checkpoints) combined with trimming/padding, rather than from the Lipschitz estimate alone.
\end{remark}

\paragraph{Strategies toward general amenable Cayley graphs.}
Three complementary routes target (C2) or (C3).
\begin{itemize}
\item \textbf{Finite-volume parametric stabilization.}
On each finite induced subgraph, inclusion-minimal minimizers of $B+\lambda|\cdot|$ are laminar in $\lambda$ with equal-size optimality at breakpoints (standard submodular calculus).
A local stabilization of such minimizers along an exhaustion would produce a laminar checkpoint chain on $\Gamma$ and yield an \emph{integrated} gap bound by the same padding argument.
\item \textbf{Quantitative BV/coarea levels.}
Construct a function with small total variation on large sets whose level sets realize $|\partial E|\lesssim I^{\mathrm{incr}}(|E|)$ at a positive density of levels; choosing checkpoints among these levels gives (C2) and hence {\normalfont(TF)}.
\item \textbf{F{\o}lner function control.}
A doubling property for the F{\o}lner function, $\Fol(\varepsilon/2)\le D\,\Fol(\varepsilon)$ for small $\varepsilon$, would imply a short-desert phenomenon for record minima, i.e.\ (C1) with $G=O(D)$, and thus {\normalfont(TF)} with $(A,B)=(1+\Delta G,\,1)$.
\end{itemize}

\paragraph{Permanence.}
The product construction and boundary splitting (Theorem~\ref{thm:TF-product}), together with finite index and finite extension stability (Prop.~\ref{prop:TF-finite-index}, Prop.~\ref{prop:TF-finite-extension}), preserve the gap-driven {\normalfont(TF)} conclusions above (with constants scaled by bounded factors). Consequently, bounded profile ratio passes to these closures by Theorem~\ref{thm:TF-bound-corrected}.

\section{Analytic Consequences: spectral profile, heat kernel, and mixing}
\label{sec:analytic}
\noindent\textbf{Input used here.}
All analytic bounds depend on a single isoperimetric input-an $L^1$ profile of order $r^{(Q-1)/Q}$ with tracked constant.
In our classes this input is supplied either by discrete Wulff (Abelian/Carnot) or, constructively, by the shear-produced (NNM) combined with the interleaving (which yields the same order at the relevant scales).

Throughout this section we work on the undirected Cayley graph associated to a \emph{symmetric} finite generating set $\mathcal S$; in particular the graph is $\Delta$-regular with $\Delta=|\mathcal S|$ and the (continuous-time) simple random walk generator is $L=I-P$ with $P=\frac{1}{\Delta}A$ self-adjoint.

For a finite $U\subset\Gamma$, let $\lambda_1^{\rm D}(U)$ be the first Dirichlet eigenvalue of the continuous-time simple random walk (generator $L=I-P$ with $P=\frac1\Delta A$) killed outside $U$, and define the Dirichlet Faber--Krahn profile
\[
\lambda_1^{\rm D}(r)\ :=\ \inf\big\{\lambda_1^{\rm D}(U):\ |U|=r\big\}.
\]
Sections~\ref{sec:sharp-wulff} and \ref{sec:gamma-quant} provide (in virtually nilpotent/Carnot settings) sharp discrete Wulff isoperimetry and quantitative samplers; Section \ref{sec:bounded-ratio} records the (TF) mechanism for bounding $I^\circ/I^{\mathrm{incr}\,\circ}$.

\begin{lemma}[Vertex/edge comparison]\label{lem:B-vertex-edge}
For every finite $Y\subset\Gamma$ on the $\Delta$-regular Cayley graph,
\[
|\partial_{\mathcal S}(Y)|\ \le\ |\partial^{\mathrm{edge}}_{\mathcal S}(Y)|\ \le\ \Delta\,|\partial_{\mathcal S}(Y)|.
\]
Here $|\partial^{\mathrm{edge}}_{\mathcal S}(Y)|$ counts \emph{undirected} edges with exactly one endpoint in $Y$.
Consequently, $I^{\circ}_{\mathrm{vertex}}(r)\ \le\ I^{\circ}_{\mathrm{edge}}(r)\ \le\ \Delta\,I^{\circ}_{\mathrm{vertex}}(r)$ and $h_{\mathrm{edge}} \asymp h_{\mathrm{vertex}}$.
\end{lemma}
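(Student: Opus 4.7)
The plan is to establish both inequalities at the level of finite sets and then pass to infima by monotonicity. The argument is a purely combinatorial bookkeeping that rests on two structural features of the setup: the symmetry of $\mathcal S$ (so every outer boundary vertex has an inside neighbor), and the $\Delta$-regularity of the Cayley graph (so the outside endpoint of a cut edge has at most $\Delta$ incidences).

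For the lower bound $|\partial_{\mathcal S}(Y)| \le |\partial^{\mathrm{edge}}_{\mathcal S}(Y)|$, I would construct an explicit injection $\Phi: \partial_{\mathcal S}(Y) \to \partial^{\mathrm{edge}}_{\mathcal S}(Y)$. Fix a total order on $\mathcal S$. Given $x \in \partial_{\mathcal S}(Y)$, we have $x \in \mathcal S Y\setminus Y$, so there exists $s\in\mathcal S$ with $s^{-1}x \in Y$ (this uses $\mathcal S=\mathcal S^{-1}$). Let $s(x)$ be the smallest such generator and set $\Phi(x) := \{x,\ s(x)^{-1}x\}$. Since $x\notin Y$ while $s(x)^{-1}x\in Y$, this is indeed an undirected cut edge. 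Injectivity follows because every cut edge has a unique endpoint outside $Y$, so $\Phi(x)=\Phi(x')$ forces $x=x'$.

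For the upper bound $|\partial^{\mathrm{edge}}_{\mathcal S}(Y)| \le \Delta\,|\partial_{\mathcal S}(Y)|$, I would account for cut edges by their outside endpoint: each cut edge $e$ has a unique endpoint $v_e\in Y^c$, and necessarily $v_e \in \partial_{\mathcal S}(Y)$ since its other endpoint lies in $Y$. By $\Delta$-regularity, each vertex is incident to at most $\Delta$ edges, so each $v\in\partial_{\mathcal S}(Y)$ can be $v_e$ for at most $\Delta$ cut edges. Summing over $v\in\partial_{\mathcal S}(Y)$ yields the claim.

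For the consequences, taking $\inf$ over $|Y|=r$ on both sides of each set-level inequality preserves the order and gives the profile comparison $I^\circ_{\mathrm{vertex}}(r) \le I^\circ_{\mathrm{edge}}(r) \le \Delta\,I^\circ_{\mathrm{vertex}}(r)$; dividing by $|Y|$ before the infimum yields the analogous two-sided bound for the Cheeger constants, so $h_{\mathrm{edge}} \asymp h_{\mathrm{vertex}}$. There is no real obstacle here: the lemma is a standard graph-theoretic identity that I record explicitly so the analytic consequences of this section (Nash, Faber-Krahn, mixing) can be stated interchangeably in either normalization with only a $\Delta$-factor bookkeeping.
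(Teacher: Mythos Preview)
Your proof is correct and follows essentially the same combinatorial bookkeeping as the paper. The paper does not spell out a proof at this point, but the identical comparison is proved earlier (Lemma~\ref{lem:vertex-edge-profiles}) via a single surjective map $\Phi:\{(y,s):sy\notin Y\}\to\partial_{\mathcal S}Y$, $\Phi(y,s)=sy$, whose surjectivity gives the lower bound and whose fiber size $\le\Delta$ gives the upper bound; your injection for the lower bound and outside-endpoint counting for the upper bound are the dual formulations of the same argument.
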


\begin{remark}
All analytic bounds stated with vertex boundary translate to edge boundary up to factors of $\Delta$ via Lemma \ref{lem:B-vertex-edge}; exponents are unchanged. \emph{In borderline dimension $Q=2$ the classical spectral/mixing estimates carry a logarithmic correction; see, e.g., \cite{VaropoulosSaloffCosteCoulhon1992,SaloffCoste2002Aspects}.}
\end{remark}

\medskip

\noindent\textbf{Organization and scope.}
The analytic narrative used here is: 
\emph{(i)} isoperimetry $\Rightarrow$ Cheeger/Faber--Krahn and Nash on $\Gamma$; 
\emph{(ii)} two-sided on-diagonal heat kernel bounds (upper from Nash, lower from (VD)+(PI));
\emph{(iii)} \emph{conditional} spectral-profile mixing bounds on finite quotients, assuming a \emph{global} FK (or spectral profile) input on the quotient. We emphasize that our descent arguments yield only \emph{local} FK on quotients (for sets with diameter $\ll\inj(G)$). Global FK on $G$ is an additional hypothesis which generally fails to follow from local injectivity alone.
When available, \emph{Wulff-coded} constants from \Cref{thm:WulffAbelian} enter only through a single isoperimetric constant $C_{\mathrm{iso}}$.
When we track generating-set dependence explicitly it appears only via $\Delta=|\mathcal S|$ and the Wulff constant $h_S$; other implicit constants (e.g.\ from (VD)+(PI)) depend only on coarse-geometric data and we do not attempt to encode them in $(\Delta,h_S)$.

\begin{theorem}[Cheeger, Faber--Krahn and Nash from isoperimetry]\label{thm:Nash}
Define the global vertex Cheeger constant
\[
h_\circ\ :=\ \inf_{\varnothing\neq Z\subset\Gamma}\frac{|\partial_{\mathcal S} Z|}{|Z|}\ \in[0,\Delta],
\]
and, for a finite $Y\subset\Gamma$, the localized Cheeger constant
\[
h_\circ(Y)\ :=\ \inf_{\varnothing\neq Z\subseteq Y}\frac{|\partial_{\mathcal S} Z|}{|Z|}.
\]
Let $B_{\mathcal S}(Z):=|\{(z,s)\in Z\times\mathcal S:\ sz\notin Z\}|$ denote the (inside-to-outside) directed edge boundary.
\emph{On an undirected Cayley graph, each crossing edge $(z,sz)$ with $z\in Z$, $sz\notin Z$ corresponds to a unique such pair, hence}
\begin{equation}\label{eq:dir-equals-edge}
B_{\mathcal S}(Z)\ =\ |\partial^{\mathrm{edge}}_{\mathcal S}(Z)|.
\end{equation}
Then for every finite $Y\subset\Gamma$,
\begin{equation}\label{eq:dir-cheeger}
\lambda_1^{\rm D}(Y)\ \ge\ \frac{1}{2\,|\mathcal S|^2}\,h_\circ(Y)^2\ \ge\ \frac{1}{2\,|\mathcal S|^2}\,h_\circ^2.
\end{equation}
Consequently $\lambda_1^{\rm D}(r)\ge h_\circ^2/(2|\mathcal S|^2)$.

If, for some $Q\ge1$ and $C_{\mathrm{iso}}>0$, the isoperimetric profile satisfies
\begin{equation}\label{eq:iso-up-to-r}
|\partial_{\mathcal S} Z|\ \ge\ C_{\mathrm{iso}}\ |Z|^{\frac{Q-1}{Q}}\qquad\text{for all }Z\subset\Gamma\text{ with }|Z|\le r,
\end{equation}
then the Dirichlet Faber--Krahn profile obeys
\begin{equation}\label{eq:fk-from-iso-scale-r}
\lambda_1^{\rm D}(r)\ \ge\ \frac{C_{\mathrm{iso}}^2}{2\,|\mathcal S|^2}\,r^{-2/Q}.
\end{equation}
In particular, on Abelian groups ($Q=r_1$), \Cref{thm:WulffAbelian} and Lemma \ref{lem:B-vertex-edge} yield $C_{\mathrm{iso}}\ge c_{\mathrm{ab}}(\mathcal S)/|\mathcal S|$, hence
\[
\lambda_1^{\rm D}(r)\ \ge\ \frac{c_{\mathrm{ab}}(\mathcal S)^2}{2\,|\mathcal S|^4}\,r^{-2/r_1}.
\]
Moreover, whenever \eqref{eq:iso-up-to-r} holds at all relevant scales, one has the Nash inequality
\begin{equation}\label{eq:nash}
\|f\|_2^{2+4/Q}\ \le\ C(Q)\,\Big(\tfrac{|\mathcal S|}{C_{\mathrm{iso}}}\Big)^{2}\,\mathcal{E}(f,f)\,\|f\|_1^{4/Q}
\qquad\text{for finitely supported }f.
\end{equation}
\end{theorem}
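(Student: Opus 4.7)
The plan is to derive all four statements from a single co-area chain applied to $g:=f^{2}$, combined with elementary Cauchy--Schwarz manipulations for the Cheeger/Faber--Krahn part and a Federer--Fleming passage from the $L^{1}$ isoperimetric input to an $L^{Q/(Q-1)}$ Sobolev inequality for Nash. Identity \eqref{eq:dir-equals-edge} is immediate once one observes that on the undirected Cayley graph the map $(z,s)\mapsto\{z,sz\}$ is a bijection between directed inside-to-outside pairs and undirected cut edges: for each cut edge exactly one of its two orientations has its tail in $Z$. This lets me use $B_{\mathcal S}$ and $|\partial^{\mathrm{edge}}_{\mathcal S}|$ interchangeably in the co-area computation.

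For \eqref{eq:dir-cheeger} I would take $f\ge 0$ supported in $Y$ and bound the $\ell^{1}$-total variation of $g=f^{2}$ in two ways. Expanding $|g(xs)-g(x)|=|f(xs)-f(x)|(f(xs)+f(x))$ and applying Cauchy--Schwarz first in $x$ and then in $s$, using $\sum_{x,s}(f(xs)-f(x))^{2}=2|\mathcal S|\,\mathcal{E}(f,f)$ together with $(f(xs)+f(x))^{2}\le 2(f(xs)^{2}+f(x)^{2})$, yields the upper bound
\[
\sum_{s\in\mathcal S}\sum_{x}|g(xs)-g(x)|\ \le\ 2\sqrt{2}\,|\mathcal S|\,\|f\|_{2}\,\sqrt{\mathcal{E}(f,f)}.
\]
The co-area identity rewrites the same sum as $2\int_{0}^{\infty}B_{\mathcal S}(\{g>t\})\,dt$, and since every super-level set is contained in $Y$ the localized Cheeger hypothesis gives $B_{\mathcal S}(\{g>t\})\ge h_{\circ}(Y)\,|\{g>t\}|$; integrating and using $\int_{0}^{\infty}|\{g>t\}|\,dt=\|g\|_{1}=\|f\|_{2}^{2}$ produces the lower bound $2h_{\circ}(Y)\|f\|_{2}^{2}$. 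Comparing the two and dividing by $\|f\|_{2}^{2}$ gives \eqref{eq:dir-cheeger} with the stated constant. The scale-$r$ Faber--Krahn bound \eqref{eq:fk-from-iso-scale-r} then follows with no extra work: if $|Y|\le r$ then all $\{g>t\}\subset Y$ satisfy $|\{g>t\}|\le r$, so \eqref{eq:iso-up-to-r} plus $B_{\mathcal S}(Z)\ge|\partial_{\mathcal S}(Z)|$ gives $B_{\mathcal S}(\{g>t\})\ge C_{\mathrm{iso}}\,r^{-1/Q}|\{g>t\}|$, i.e., the localized Cheeger constant on sets of size $\le r$ is at least $C_{\mathrm{iso}}r^{-1/Q}$, and \eqref{eq:dir-cheeger} delivers $\lambda_{1}^{\mathrm D}(r)\ge C_{\mathrm{iso}}^{2}r^{-2/Q}/(2|\mathcal S|^{2})$.

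For Nash \eqref{eq:nash} I would use \eqref{eq:iso-up-to-r} at all scales. Co-area on $|f|$ gives $\sum_{s,x}\bigl||f|(xs)-|f|(x)\bigr|=2\int_{0}^{\infty}B_{\mathcal S}(\{|f|>t\})\,dt\ge 2C_{\mathrm{iso}}\int_{0}^{\infty}|\{|f|>t\}|^{(Q-1)/Q}\,dt$, and Minkowski's integral inequality applied to $|f|=\int_{0}^{\infty}\mathbf{1}_{\{|f|>t\}}\,dt$ bounds the right-hand integral below by $\|f\|_{Q/(Q-1)}$, so I obtain the Federer--Fleming $L^{1}$-Sobolev inequality $\|f\|_{Q/(Q-1)}\le(2C_{\mathrm{iso}})^{-1}\|\nabla f\|_{1}$. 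Applying this to $g=f^{2}$, bounding $\|\nabla g\|_{1}$ by the Cauchy--Schwarz estimate $2\sqrt{2}\,|\mathcal S|\,\|f\|_{2}\sqrt{\mathcal{E}(f,f)}$ from the previous paragraph, and using $\|g\|_{Q/(Q-1)}=\|f\|_{2Q/(Q-1)}^{2}$ yields the $L^{2}$ Sobolev bound
\[
\|f\|_{2Q/(Q-1)}^{2}\ \le\ \frac{\sqrt{2}\,|\mathcal S|}{C_{\mathrm{iso}}}\,\|f\|_{2}\,\sqrt{\mathcal{E}(f,f)}.
\]
Interpolating $\|f\|_{2}$ between $L^{1}$ and $L^{2Q/(Q-1)}$ by H\"older at the exponent $\theta=1/(Q+1)$ (so that $1-\theta=Q/(Q+1)$) isolates $\|f\|_{2}^{(Q+2)/(Q+1)}$ on the left, and raising the result to the $(Q+1)$-th power and then to $2/Q$ produces Nash with exponent $2+4/Q$ and constant of order $(|\mathcal S|/C_{\mathrm{iso}})^{2}$.

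The main obstacle is purely bookkeeping: threading the powers of $|\mathcal S|$ and $C_{\mathrm{iso}}$ cleanly through the two Cauchy--Schwarz steps, the Federer--Fleming passage, and the H\"older interpolation, and verifying that the final Nash constant has the prefactor $(|\mathcal S|/C_{\mathrm{iso}})^{2}$ with a dimensional factor $C(Q)$ independent of $\mathcal S$. Conceptually the argument is the standard Coulhon--Saloff-Coste isoperimetry $\Leftrightarrow$ Faber--Krahn $\Leftrightarrow$ Nash equivalence \cite{VaropoulosSaloffCosteCoulhon1992,SaloffCoste2002Aspects}; the only novelty is that all isoperimetric inputs required here-namely $h_{\circ}$, localized $h_{\circ}(Y)$, and the scale-$r$ power-law bound \eqref{eq:iso-up-to-r}-are supplied with tracked, Wulff-coded constants by Sections~\ref{sec:sharp-wulff}--\ref{sec:bounded-ratio}, so the final constants inherit the explicit dependence on $\Delta$ and $h_{S}$ as claimed.
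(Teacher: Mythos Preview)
Your proposal is correct and follows essentially the same route as the paper: the paper's proof is a brief sketch that invokes the standard Cheeger inequality $\lambda_1^{\rm D}(Y)\ge\Phi_Y^2/2$ for the normalized Dirichlet conductance $\Phi_Y$ (citing Dodziuk and Mohar), uses $B_{\mathcal S}(Z)\ge|\partial_{\mathcal S}Z|$ to get $\Phi_Y\ge h_\circ(Y)/|\mathcal S|$, and then defers Nash to the literature as ``standard from scale-invariant Faber--Krahn''. Your co-area\,+\,Cauchy--Schwarz argument on $g=f^2$ is precisely the standard proof of that Cheeger inequality, and your Federer--Fleming route (isoperimetry $\Rightarrow$ $L^1$-Sobolev $\Rightarrow$ $L^2$-Sobolev via $g=f^2$ $\Rightarrow$ Nash by H\"older interpolation) is one of the two classical equivalent paths to Nash; the paper simply quotes the other (FK $\Rightarrow$ Nash) without details, so your version is more self-contained but not genuinely different.
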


\begin{proof}[Sketch]
Let $\Phi_Y:=\min_{\emptyset\neq Z\subseteq Y}\frac{B_{\mathcal S}(Z)}{|\mathcal S|\,|Z|}$ be the (normalized) Dirichlet conductance on $Y$. Cheeger's inequality for the \emph{normalized} Dirichlet Laplacian (equivalently, for the continuous-time generator $L=I-P$ on a $\Delta$-regular graph) gives $\lambda_1^{\rm D}(Y)\ge \Phi_Y^2/2$; see \cite{Dodziuk1984Graphs,Mohar1989IsoperimetricGraphs}. Since $B_{\mathcal S}(Z)\ge |\partial_{\mathcal S}Z|$, we have $\Phi_Y\ge h_\circ(Y)/|\mathcal S|$, yielding \eqref{eq:dir-cheeger}. If \eqref{eq:iso-up-to-r} holds up to scale $r$, then for $|Y|=r$ one gets $\Phi_Y\ge (C_{\mathrm{iso}}/|\mathcal S|)\,r^{-1/Q}$ and \eqref{eq:fk-from-iso-scale-r} follows. The Nash inequality \eqref{eq:nash} is standard from scale-invariant Faber-Krahn; see \cite{VaropoulosSaloffCosteCoulhon1992,SaloffCoste2002Aspects,BakryGentilLedoux2014}. If \eqref{eq:iso-up-to-r} holds only beyond some large scale $r_0$, one obtains a Nash inequality that yields the corresponding \emph{long-time} ultracontractivity and heat-kernel upper bounds (see \Cref{thm:hk-2sides}).
\end{proof}

\begin{theorem}[Two-sided on-diagonal heat kernel]\label{thm:hk-2sides}
Let $(\Gamma,\mathcal S)$ have polynomial growth of homogeneous dimension $Q\ge1$. 
\emph{(Lower bound)} Assume volume doubling \textup{(VD)} and a scale-invariant Poincar\'e inequality \textup{(PI)} (these hold in virtually nilpotent groups).
\emph{(Upper bound)} Assume, in addition, the (possibly large-scale) isoperimetry
\[
|\partial_{\mathcal S} Y|\ \ge\ C_{\mathrm{iso}}\,|Y|^{(Q-1)/Q}
\qquad\text{for all sufficiently large finite }Y\subset\Gamma.
\]
Let $V(r)=|B(e,r)|$ be the volume growth. Then for the continuous-time simple random walk there exist thresholds
\[
t_{\mathrm{low}}=t_{\mathrm{low}}(\mathrm{VD},\mathrm{PI})\,,\qquad
t_{\mathrm{up}}=t_{\mathrm{up}}(Q,\text{scale of isoperimetry})
\]
and constants
\[
c_-=c_-(\mathrm{VD},\mathrm{PI})>0,\qquad C_+=C_+(Q)>0
\]
such that for all $t\ge t_0:=\max\{t_{\mathrm{low}},t_{\mathrm{up}}\}$,
\begin{equation}\label{eq:hk-2sides}
\frac{c_-(\mathrm{VD},\mathrm{PI})}{V(\sqrt t)}
\ \le\ p_t(e,e)\ \le\
C_+(Q)\,\Big(\frac{|\mathcal S|}{C_{\mathrm{iso}}}\Big)^{Q}\,t^{-Q/2}.
\end{equation}
If \eqref{eq:iso-up-to-r} holds at all scales, one may take $t_{\mathrm{up}}=0$ and the upper bound holds for all $t>0$. In particular, since $C_{\mathrm{iso}}\ge c_{\mathrm{ab}}(\mathcal S)/|\mathcal S|$ by \Cref{thm:WulffAbelian} and Lemma \ref{lem:B-vertex-edge},
\[
p_t(e,e)\ \le\ C_+(Q)\,\Big(\frac{|\mathcal S|^2}{c_{\mathrm{ab}}(\mathcal S)}\Big)^{Q}\,t^{-Q/2}\qquad(t\ge t_0).
\]
\end{theorem}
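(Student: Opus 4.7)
The plan is to prove the two inequalities by entirely separate mechanisms: the upper bound is an abstract consequence of the Nash inequality \eqref{eq:nash} already produced from the isoperimetric hypothesis, while the lower bound follows from the Delmotte parabolic Harnack package under (VD)+(PI). I would first collect the isoperimetric and volume-growth inputs into a single display, then carry out the two halves independently and finally substitute the Wulff-coded constant $c_{\mathrm{ab}}(\mathcal S)/|\mathcal S|$ for $C_{\mathrm{iso}}$ to obtain the last displayed inequality.

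For the upper bound I would start from the Nash inequality \eqref{eq:nash}, i.e.\
\[
\|f\|_2^{2+4/Q}\ \le\ C_N\ \mathcal E(f,f)\ \|f\|_1^{4/Q}\qquad\text{with }C_N=C(Q)\,(|\mathcal S|/C_{\mathrm{iso}})^2,
\]
for $f$ finitely supported. Setting $u(t):=\|P_t f\|_2^2$ for $\|f\|_1\le 1$ and using $\frac{d}{dt}u(t)=-2\mathcal E(P_tf,P_tf)$ together with the Markov contraction $\|P_tf\|_1\le\|f\|_1$, Nash's differential inequality $-u'(t)\ge (2/C_N)\,u(t)^{1+2/Q}$ integrates to $u(t)\le (C_N Q/(4t))^{Q/2}$, which is the $L^1\!\to\!L^2$ bound. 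Symmetry of $P_t$ and the semigroup identity $\|P_t\|_{1\to\infty}\le \|P_{t/2}\|_{1\to 2}^2$ then yield $p_t(e,e)\le \|P_t\|_{1\to\infty}\le C_+(Q)\,(|\mathcal S|/C_{\mathrm{iso}})^Q\,t^{-Q/2}$, which is the claimed upper bound. When the isoperimetric hypothesis \eqref{eq:iso-up-to-r} holds only for $|Y|\ge r_0$, the Nash inequality only controls functions at the ``large volume scale''; I would incorporate the short-time defect via a standard truncation/perturbation (adding an $\varepsilon\|f\|_2^2$ term and absorbing it for $t$ beyond a threshold $t_{\mathrm{up}}(Q,r_0)$), which yields the stated threshold behaviour.

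For the lower bound I would invoke the theorem of Delmotte (and, equivalently, Grigoryan-Saloff-Coste in the manifold setting) stating that for a $\Delta$-regular graph satisfying volume doubling and a scale-invariant $L^2$-Poincar\'e inequality on balls, the continuous-time simple random walk satisfies the parabolic Harnack inequality and, in particular, the two-sided Gaussian heat-kernel bounds
\[
\frac{c_-}{V(\sqrt t)}\exp\Big(-C_1\frac{d(x,y)^2}{t}\Big)\ \le\ p_t(x,y)\ \le\ \frac{C_-'}{V(\sqrt t)}\exp\Big(-c_1\frac{d(x,y)^2}{t}\Big),
\]
valid for $t\ge t_{\mathrm{low}}(\mathrm{VD},\mathrm{PI})$; setting $x=y=e$ gives the on-diagonal lower bound $p_t(e,e)\ge c_-/V(\sqrt t)$. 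Since (VD) and (PI) are known to hold on every virtually nilpotent Cayley graph (see, e.g., Saloff-Coste), no further verification is required, and the constants $c_-,t_{\mathrm{low}}$ depend only on the doubling and Poincar\'e constants, not on the specific generating set beyond~$\Delta$.

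The main obstacle, and the one to be careful about, is the \emph{tracking of constants} through Nash's iteration. The abstract Nash-to-ultracontractivity step produces an absolute prefactor depending only on $Q$, so provided $C_N$ is carried faithfully (and no ``Markov bookkeeping'' factor of $\Delta$ is absorbed at the wrong place), the dependence on $|\mathcal S|/C_{\mathrm{iso}}$ is exactly the quadratic one inside $C_N$, producing the $Q$-th power in the final exponent; one should also check that our normalization $\mathcal E(f,f)=\frac{1}{2\Delta}\sum_{x\sim y}(f(x)-f(y))^2$ in Definition~\ref{def:RW-Lap} is the one compatible with $L=I-P$, which it is. A secondary, but purely bookkeeping, issue is the handling of the two thresholds $t_{\mathrm{low}},t_{\mathrm{up}}$: when \eqref{eq:iso-up-to-r} is only available at all \emph{sufficiently large} scales, one combines the Nash inequality at large volumes with a short-time defect estimate to obtain the upper bound for $t\ge t_{\mathrm{up}}$; (VD)+(PI) give a matching threshold for the lower bound; the final constants $t_0=\max\{t_{\mathrm{low}},t_{\mathrm{up}}\}$, $c_-$ and $C_+$ then depend only on the stated data. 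The Wulff-coded substitution $C_{\mathrm{iso}}\ge c_{\mathrm{ab}}(\mathcal S)/|\mathcal S|$ comes directly from \Cref{thm:WulffAbelian} together with Lemma~\ref{lem:B-vertex-edge} and is inserted into the already-proved upper bound to yield the final displayed inequality.
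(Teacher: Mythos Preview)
Your proposal is correct and follows essentially the same route as the paper: the upper bound via Nash $\Rightarrow$ ultracontractivity (the differential inequality you write out is exactly the mechanism the paper invokes, cf.\ also Lemma~\ref{lem:ultra-quant}), and the lower bound via (VD)+(PI) $\Rightarrow$ parabolic Harnack $\Rightarrow$ Gaussian on-diagonal lower bounds. Your explicit naming of Delmotte and your handling of the large-scale threshold $t_{\mathrm{up}}$ are more detailed than the paper's sketch but not substantively different.
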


\begin{proof}[Proof sketch]
The upper bound follows from \Cref{thm:Nash} via the standard Nash $\Rightarrow$ ultracontractivity route:
$\|P_t\|_{1\to\infty}\le C(Q)\,(\frac{|\mathcal S|}{C_{\mathrm{iso}}})^{Q} t^{-Q/2}$; when the isoperimetry holds only beyond a large scale $r_0$, this yields the estimate for $t\gtrsim r_0^2$. The lower bound follows from (VD)+(PI) $\Rightarrow$ parabolic Harnack, hence Gaussian on-diagonal lower bounds \cite{VaropoulosSaloffCosteCoulhon1992,SaloffCoste2002Aspects}.
\end{proof}

\subsection{Descent to finite quotients: intrinsic locality windows}

\begin{definition}[Local injectivity radius]\label{def:inj}
For a quotient $\pi:\Gamma\to G=\Gamma/N$ of Cayley graphs with the induced generating set, define $\inj(G)$ to be the largest $R\in\mathbb N\cup\{\infty\}$ such that for every $x\in\Gamma$ the restriction $\pi|_{B_\Gamma(x,R)}$ is a \emph{graph isomorphism} onto its image (i.e., it is bijective on vertices in the ball and preserves adjacency among vertices in the ball). We refer to this as the \emph{local graph-isomorphism radius}.
\end{definition}

\begin{lemma}[Covering by lifts under large injectivity]\label{lem:geom-cover}
Let $G=\Gamma/N$ be a quotient with injectivity radius $\inj(G)$ as in Definition \ref{def:inj}. If $W\subset G$ is connected and $\diam(W)\le R\le \inj(G)$, then there exists a connected $\widetilde W\subset\Gamma$ with $\pi|_{\widetilde W}:\widetilde W\to W$ a graph isomorphism of induced subgraphs (in particular, $|\widetilde W|=|W|$ and $\lambda_1^{\rm D,\Gamma}(\widetilde W)=\lambda_1^{\rm D,G}(W)$).
\end{lemma}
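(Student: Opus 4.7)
The plan is to build $\widetilde W$ as the preimage of $W$ under a local inverse of $\pi$ defined on a ball in $\Gamma$ of radius $R$. First I would fix any basepoint $w_0\in W$ and choose an arbitrary lift $\tilde w_0\in\pi^{-1}(w_0)$. Since $R\le \inj(G)$, the definition of the local graph-isomorphism radius (Definition~\ref{def:inj}) gives that $\pi$ restricted to $B_\Gamma(\tilde w_0,R)$ is injective on vertices and preserves adjacency among those vertices. Using that $\pi$ is a graph homomorphism of Cayley graphs and that $\mathcal S$ is a symmetric generating set pushed forward to generators of $G$, each step of a walk of length $\le R$ starting from $w_0$ in $G$ lifts uniquely (via the local inverse of $\pi$) to a step in $\Gamma$ starting from $\tilde w_0$. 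This shows that the image of $B_\Gamma(\tilde w_0,R)$ under $\pi$ is exactly $B_G(w_0,R)$, and hence $\pi|_{B_\Gamma(\tilde w_0,R)}$ is a graph isomorphism of induced subgraphs onto $B_G(w_0,R)$.

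Next I would use the diameter hypothesis $\diam(W)\le R$ together with $w_0\in W$ to conclude $W\subseteq B_G(w_0,R)$, and define
\[
\widetilde W\ :=\ \bigl(\pi|_{B_\Gamma(\tilde w_0,R)}\bigr)^{-1}(W)\ \subset\ \Gamma.
\]
By construction $\pi|_{\widetilde W}:\widetilde W\to W$ is a bijection on vertices. Adjacency-preservation in both directions is inherited from the ball-level isomorphism: if $u,v\in\widetilde W$ then $u\sim v$ in $\Gamma$ iff $\pi(u)\sim\pi(v)$ in $G$, and the latter is equivalent to adjacency in the induced subgraph on $W$. This promotes $\pi|_{\widetilde W}$ to an isomorphism of induced subgraphs, so in particular $|\widetilde W|=|W|$. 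Connectedness of $\widetilde W$ then follows by lifting paths: given any two vertices of $W$ joined by a path of length $\le R$ in the induced subgraph on $W$, the path lies in $B_G(w_0,R)$ and lifts, edge by edge, to a path in $B_\Gamma(\tilde w_0,R)$ whose vertices lie in $\widetilde W$.

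Finally, the equality of Dirichlet eigenvalues is a formal consequence: the Dirichlet Laplacian on a finite induced subgraph depends only on the combinatorial isomorphism type of that subgraph (together with the degree normalization $\Delta$, which is identical on $\Gamma$ and on $G$ since $\mathcal S$ and its image have the same cardinality when counted with multiplicity, cf.\ Remark~\ref{rem:quotients}). Since the two Dirichlet forms are intertwined by the vertex bijection $\pi|_{\widetilde W}$, their Rayleigh quotients coincide on the isomorphic function spaces, giving $\lambda_1^{\mathrm D,\Gamma}(\widetilde W)=\lambda_1^{\mathrm D,G}(W)$.

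The only mildly delicate point is the equality $\pi(B_\Gamma(\tilde w_0,R))=B_G(w_0,R)$ in the first paragraph; this is really where Definition~\ref{def:inj} is used, and it is precisely the content of local covering. Because the definition is formulated as a graph-isomorphism (rather than just a set bijection), both the forward inclusion (via the homomorphism property) and the reverse inclusion (via local path lifting) are immediate, and no extra input about $\inj(G)$ is required. This makes the construction essentially routine; I do not expect any substantive obstacle.
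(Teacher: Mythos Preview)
Your proposal is correct and follows essentially the same approach as the paper: pick a basepoint $w_0\in W$, lift it to $\tilde w_0$, use the ball-level graph isomorphism from Definition~\ref{def:inj} to define $\widetilde W$ as the local preimage of $W\subset B_G(w_0,R)$, and deduce the eigenvalue equality from the induced-subgraph isomorphism. The paper's proof is a terse three-line version of exactly this; your only superfluous remark is the ``length $\le R$'' qualifier in the connectedness argument (paths in the induced subgraph on $W$ may be longer, but since all their vertices lie in $W\subset B_G(w_0,R)$ they still lift edge by edge).
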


\begin{proof}
Pick $w_0\in W$ and lift to $\tilde w_0\in\Gamma$. Since $W\subset B_G(w_0,R)$ and $\pi$ is a graph isomorphism on balls of radius $R$, there is a unique induced-subgraph lift $\widetilde W\subset B_\Gamma(\tilde w_0,R)$ with $\pi|_{\widetilde W}$ a graph isomorphism. The Dirichlet operator on $W$ is $I-(1/\Delta)A_W$ acting on $\ell^2(W)$; induced-subgraph isomorphism implies $\lambda_1^{\rm D,\Gamma}(\widetilde W)=\lambda_1^{\rm D,G}(W)$.
\end{proof}

\begin{lemma}[Quotient descent for balls]\label{lem:quotient-descent}
Assume $\lambda_1^{\rm D}(U)\ge \kappa\,|U|^{-2/Q}$ for all induced $U\subset\Gamma$ with $\diam(U)\le R$. If $G=\Gamma/N$ satisfies $\inj(G)\ge R$, then for every induced $W\subset G$ with $\diam(W)\le R$,
\[
\lambda_1^{\rm D,G}(W)\ \ge\ \kappa\,|W|^{-2/Q}.
\]
\end{lemma}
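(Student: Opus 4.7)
The plan is to reduce the Faber--Krahn estimate on $G$ to the hypothesized estimate on $\Gamma$ by using the quotient map $\pi$ as a local graph isomorphism on a single $R$-ball covering $W$. The crucial observation is that the Dirichlet operator on an induced subgraph of a $\Delta$-regular Cayley graph is determined by the induced adjacency alone: internal edges are captured by the induced subgraph, and the boundary multiplicity at each $w$ is forced to be $\Delta-\deg_W(w)$ by regularity of the ambient graph. Hence an induced-subgraph isomorphism automatically preserves the full Dirichlet form and spectrum, and the problem collapses to exhibiting such a lift.

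First I would pick any $w_0\in W$, a lift $\tilde w_0\in\Gamma$, and set $\widetilde W:=\pi^{-1}(W)\cap B_\Gamma(\tilde w_0,R)$. Since $\diam_G(W)\le R$ we have $W\subset B_G(w_0,R)$, and since $\inj(G)\ge R$, Definition~\ref{def:inj} gives that $\pi$ restricts to a graph isomorphism $B_\Gamma(\tilde w_0,R)\to B_G(w_0,R)$, under which $\widetilde W$ is mapped bijectively onto $W$ with all induced adjacencies preserved. This is exactly the situation of Lemma~\ref{lem:geom-cover} (applied componentwise if $W$ is disconnected, but now from a common basepoint), so $|\widetilde W|=|W|$ and $\lambda_1^{\rm D,\Gamma}(\widetilde W)=\lambda_1^{\rm D,G}(W)$ by the induced-subgraph isomorphism together with $\Delta$-regularity of both $\Gamma$ and $G$.

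Finally I would observe that $\widetilde W\subset B_\Gamma(\tilde w_0,R)$ is an induced subgraph of $\Gamma$ whose diameter in the intrinsic (ball) metric coincides with $\diam_G(W)\le R$ via the isomorphism (and is at most $2R$ in the ambient $\Gamma$-metric), so the Faber--Krahn hypothesis applies to $\widetilde W$ and yields $\lambda_1^{\rm D,\Gamma}(\widetilde W)\ge \kappa|\widetilde W|^{-2/Q}=\kappa|W|^{-2/Q}$; combining with the previous spectral identification proves the claim. The only bookkeeping subtlety is the diameter convention: depending on whether $\diam(U)$ in the hypothesis is read as ambient $\Gamma$-diameter or intrinsic subgraph diameter, one may need to state the hypothesis at scale $2R$ or replace $R$ by $R/2$ in the injectivity assumption. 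There is no substantive obstacle here; the entire content of the lemma is carried by the local graph-isomorphism clause of Definition~\ref{def:inj}, and the argument is soft and purely algebraic once that lift is in place.
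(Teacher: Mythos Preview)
Your proof is correct and follows essentially the same approach as the paper, which simply invokes Lemma~\ref{lem:geom-cover} to lift $W$ and then applies the FK hypothesis on the lift. Your observation about the ambient $\Gamma$-diameter of $\widetilde W$ potentially being as large as $2R$ is a legitimate bookkeeping subtlety that the paper's one-line proof also glosses over (and which is harmless in the applications, where the FK bound holds at all scales).
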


\begin{proof}
Apply Lemma \ref{lem:geom-cover} to lift $W$ isometrically and use the FK bound on $\widetilde W$.
\end{proof}

\begin{corollary}[Abelian tori: local FK]\label{cor:torus-FK}
Let $G=\Z_m^d$ with standard generators. There exists $\kappa(d)>0$ such that for all $U\subset G$ with $\diam(U)\le m/3$,
\[
\lambda_1^{\rm D,G}(U)\ \ge\ \kappa(d)\,|U|^{-2/d}.
\]
\end{corollary}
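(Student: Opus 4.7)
The plan is to descend a Faber--Krahn bound from the ambient lattice $\Gamma=\Z^d$ (with standard generators $\mathcal S=\{\pm e_1,\dots,\pm e_d\}$) to the torus via Lemma~\ref{lem:quotient-descent}, using the Abelian Wulff inequality as the isoperimetric input.

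First, I would upgrade the Wulff inequality on $\Z^d$ to a global Faber--Krahn estimate. Theorem~\ref{thm:WulffAbelian} applied to $\underline{S}_{\mathrm{hor}}=\mathcal S$ yields the directed-edge bound $B_{\mathcal S}(Y)\ge c_{\mathrm{ab}}(\mathcal S)\,|Y|^{(d-1)/d}$ for \emph{every} finite $Y\subset\Z^d$, and Lemma~\ref{lem:B-vertex-edge} converts this into the vertex-normalized bound $|\partial_{\mathcal S}Y|\ge (c_{\mathrm{ab}}(\mathcal S)/|\mathcal S|)\,|Y|^{(d-1)/d}$. Feeding this into Theorem~\ref{thm:Nash} with $Q=d$ and $C_{\mathrm{iso}}=c_{\mathrm{ab}}(\mathcal S)/|\mathcal S|=c_{\mathrm{ab}}/(2d)$ gives, for every finite $U\subset\Z^d$,
\[
\lambda_1^{\rm D,\Z^d}(U)\ \ge\ \kappa_0(d)\,|U|^{-2/d},\qquad \kappa_0(d)\ :=\ \frac{c_{\mathrm{ab}}(\mathcal S)^{2}}{2\,(2d)^{4}}.
\]
This is exactly the hypothesis of Lemma~\ref{lem:quotient-descent} (with $R=\infty$ on the ambient side).

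Next, I would verify the local injectivity input. The word metric on $\Z^d$ induced by $\mathcal S$ is the $\ell^1$ metric, and the kernel of $\pi:\Z^d\to\Z_m^d$ is $m\Z^d$. Any two points $x,y$ in a ball $B_{\Z^d}(x_0,R)$ satisfy $\|x-y\|_1\le 2R$, so $\pi$ is injective on that ball whenever $2R<m$; under the same condition, two lifts are adjacent in $\Z^d$ iff their images are adjacent in $\Z_m^d$ (edges of $\Z_m^d$ do not ``wrap around'' inside a ball of radius $<m/2$), so $\pi$ restricts to a graph isomorphism of induced subgraphs. Hence
\[
\inj(\Z_m^d)\ \ge\ \big\lfloor (m-1)/2\big\rfloor\ \ge\ \lfloor m/3\rfloor\qquad(m\ge 3),
\]
and the finitely many small values of $m$ can be absorbed into the constant $\kappa(d)$.

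Finally, given $W\subset\Z_m^d$ with $\diam(W)\le m/3$, I would apply Lemma~\ref{lem:quotient-descent} with $R=\lfloor m/3\rfloor\le\inj(\Z_m^d)$. By Lemma~\ref{lem:geom-cover}, $W$ lifts to an induced subgraph $\widetilde W\subset\Z^d$ with $|\widetilde W|=|W|$ and $\lambda_1^{\rm D,\Z_m^d}(W)=\lambda_1^{\rm D,\Z^d}(\widetilde W)\ge\kappa_0(d)\,|W|^{-2/d}$, yielding the claim with $\kappa(d):=\kappa_0(d)$ (possibly adjusted for small $m$). The only step requiring mild care is confirming that the graph-isomorphism definition of $\inj(\cdot)$ in Definition~\ref{def:inj} is exactly what Lemma~\ref{lem:geom-cover} consumes; once that bookkeeping is in place the rest is essentially automatic, which is why this is stated as a corollary rather than as a theorem.
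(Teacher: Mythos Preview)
Your proof is correct and follows essentially the same route as the paper's sketch: discrete Wulff isoperimetry on $\Z^d$ (Theorem~\ref{thm:WulffAbelian}) combined with Theorem~\ref{thm:Nash} to get the Faber--Krahn bound on the ambient lattice, verification that $\inj(\Z_m^d)\gtrsim m/2$, and then Lemma~\ref{lem:quotient-descent} to descend. You have simply written out more of the bookkeeping (the vertex/edge conversion, the explicit injectivity radius, the small-$m$ caveat) than the paper's sketch does.
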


\begin{proof}[Sketch]
For the standard generators, balls of radius $R<\lfloor m/2\rfloor$ in $G$ are isomorphic to the corresponding balls in $\Z^d$, so $\inj(G)\ge \lfloor m/2\rfloor-1$. Combine the discrete Wulff (axis-aligned) isoperimetry on $\Z^d$ (\Cref{thm:WulffAbelian}) with \Cref{thm:Nash}, then use Lemma \ref{lem:quotient-descent} on the torus for $\diam(U)\le m/3\le \inj(G)$.
\end{proof}

\begin{proposition}[Componentwise-local descent to quotients]\label{prop:quotient-window}
Assume $(\Gamma,\mathcal S)$ has polynomial growth of homogeneous dimension $Q\ge1$.
Fix $R\ge1$ and suppose that for all induced $W\subset\Gamma$ with $\diam(W)\le R$ one has
$
\lambda_1^{\rm D}(W)\ \ge\ \kappa\,|W|^{-2/Q}.
$
Let $G=\Gamma/N$ be a finite quotient with injectivity radius $\inj(G)\ge R$.
Then for every $U\subset G$ whose connected components $\{U_i\}$ all satisfy $\diam(U_i)\le R$,
\[
\lambda_1^{\rm D,G}(U)\ \ge\ \kappa\,|U|^{-2/Q}.
\]
\end{proposition}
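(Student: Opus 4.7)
The plan is to reduce the estimate for the possibly disconnected set $U$ to the componentwise case already covered by Lemma~\ref{lem:quotient-descent}, and then to use only the trivial monotonicity $|U_i|\le|U|$ together with the sign of the exponent $-2/Q<0$.

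First I would decompose $U=\bigsqcup_i U_i$ into its connected components as induced subgraphs of $G$. Since there are no edges of $G$ joining distinct components and the killed generator $L^{\rm D}_U$ acts only through edges of the induced subgraph on $U$, the operator block-decomposes as $L^{\rm D}_U=\bigoplus_i L^{\rm D}_{U_i}$ on $\ell^2(U)=\bigoplus_i\ell^2(U_i)$. Consequently
\[
\lambda_1^{\rm D,G}(U)\ =\ \min_i\,\lambda_1^{\rm D,G}(U_i),
\]
so it suffices to bound each $\lambda_1^{\rm D,G}(U_i)$ from below.

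Next, for each component $U_i$, the hypothesis $\diam(U_i)\le R\le\inj(G)$ places $U_i$ inside a ball on which $\pi$ is a graph isomorphism. I would apply Lemma~\ref{lem:quotient-descent} (which packages Lemma~\ref{lem:geom-cover} together with the assumed Faber--Krahn bound on $\Gamma$ at diameters $\le R$) to obtain
\[
\lambda_1^{\rm D,G}(U_i)\ \ge\ \kappa\,|U_i|^{-2/Q}\qquad\text{for every }i.
\]
Finally, since $U_i\subseteq U$ gives $|U_i|\le|U|$, and since $r\mapsto r^{-2/Q}$ is nonincreasing on $(0,\infty)$, one has $|U_i|^{-2/Q}\ge|U|^{-2/Q}$. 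Combining,
\[
\lambda_1^{\rm D,G}(U)\ =\ \min_i\lambda_1^{\rm D,G}(U_i)\ \ge\ \kappa\,\min_i |U_i|^{-2/Q}\ \ge\ \kappa\,|U|^{-2/Q},
\]
as required.

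There is essentially no genuine obstacle here: the only step that is not bookkeeping is the lift-and-isomorphism statement, which is already isolated in Lemmas~\ref{lem:geom-cover} and~\ref{lem:quotient-descent}. The single point to handle with care is that ``$U$ with all components of diameter $\le R$'' is genuinely weaker than ``$\diam(U)\le R$'' (the set itself may have arbitrarily large diameter in $G$), so one must invoke the block-diagonal structure of $L^{\rm D}_U$ rather than trying to lift $U$ as a whole via Lemma~\ref{lem:geom-cover}. Once this decomposition is in place, the monotonicity of $r\mapsto r^{-2/Q}$ does the rest and the constant $\kappa$ transfers without loss.
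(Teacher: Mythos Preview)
Your proof is correct and follows essentially the same approach as the paper: decompose into components, use that $\lambda_1^{\rm D,G}(U)=\min_i\lambda_1^{\rm D,G}(U_i)$, lift each $U_i$ via the local graph-isomorphism (Lemmas~\ref{lem:geom-cover}/\ref{lem:quotient-descent}) to get $\lambda_1^{\rm D,G}(U_i)\ge\kappa\,|U_i|^{-2/Q}$, and conclude via $\max_i|U_i|\le|U|$. The paper's proof is just a more compressed version of what you wrote.
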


\begin{proof}
Each $U_i$ lifts isometrically to $\widetilde U_i\subset\Gamma$; 
$\lambda_1^{\rm D,G}(U_i)=\lambda_1^{\rm D,\Gamma}(\widetilde U_i)\ge \kappa\,|U_i|^{-2/Q}$. Since the Dirichlet first eigenvalue on a disconnected union is the minimum over components,
\[
\lambda_1^{\rm D,G}(U)=\min_i \lambda_1^{\rm D,G}(U_i)\ \ge\ \kappa\,\min_i |U_i|^{-2/Q}
\ =\ \kappa\,(\max_i |U_i|)^{-2/Q}\ \ge\ \kappa\,|U|^{-2/Q},
\]
since $\max_i |U_i|\le |U|$.
\end{proof}

\begin{remark}[Scope of descent]\label{rem:scope-descent}
The componentwise local descent above is the \emph{only} general consequence of injectivity we use. In particular, a \emph{global} FK profile on $G$,
\[
\lambda_1^{\rm D,G}(U)\ \gtrsim\ |U|^{-2/Q}\qquad(1\le |U|\le |G|/2),
\]
does \emph{not} follow from local injectivity and must be supplied separately on a case-by-case basis (e.g.\ from direct isoperimetry or an explicit spectral profile on $G$).
\end{remark}

\medskip

\begin{lemma}[Relaxation-time lower bound]\label{lem:relax-lower}
For continuous-time, reversible Markov chains on a finite state space,
\[
t_{\mathrm{mix}}(1/4)\ \ge\ \frac{\log 2}{\lambda_1}.
\]
This is sharp up to constants; see, e.g., \cite[Sec.~12.3]{LevinPeresWilmer2009}.
\end{lemma}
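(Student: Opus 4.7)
The plan is to execute the standard spectral lower bound for total-variation mixing, leveraging reversibility to produce an honest real eigenfunction and then testing it against the dual characterization of total variation. First, I will use reversibility to make $L=I-P$ self-adjoint in $\ell^2(\pi)$, so that it admits a real eigenbasis. Picking an eigenfunction $f$ with $Lf=\lambda_1 f$, orthogonal to constants (i.e.\ $\pi(f)=0$), I will normalize it to $\|f\|_\infty=1$ (possible since the state space is finite) and pick a state $x_0$ with $|f(x_0)|=1$.

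Next I will use the semigroup identity $P_t f=e^{-\lambda_1 t}f$ together with the variational formula
\[
\|\mu-\nu\|_{\mathrm{TV}}=\tfrac12\sup_{\|g\|_\infty\le 1}\bigl|\mu(g)-\nu(g)\bigr|,
\]
applied to $g:=f$ and the measures $\mu:=P_t(x_0,\cdot)$, $\nu:=\pi$. This gives
\[
\|P_t(x_0,\cdot)-\pi\|_{\mathrm{TV}}\ \ge\ \tfrac12\bigl|P_t f(x_0)-\pi(f)\bigr|\ =\ \tfrac12 e^{-\lambda_1 t}|f(x_0)|\ =\ \tfrac12 e^{-\lambda_1 t}.
\]
Setting $t:=t_{\mathrm{mix}}(1/4)$ and using the definition of mixing time at threshold $1/4$ yields
$\tfrac14\ge \tfrac12 e^{-\lambda_1 t}$, equivalently $\lambda_1 t\ge \log 2$, which is the claim.

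There is no substantive obstacle: the only bookkeeping is to ensure that one may take $f$ real and bounded (both automatic on finite state spaces under reversibility), and that the TV dual formula is applied with the correct factor of $\tfrac12$. The same scheme, with $g$ replaced by $f/\sqrt{\mathrm{Var}_\pi(f)}$ and a $\chi^2$-distance lower bound, would give the analogous statement for the $\ell^2$-mixing time $t^{(2)}_{\mathrm{mix}}$ appearing in Theorem~H, but that refinement is not needed here.
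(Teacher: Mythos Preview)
Your argument is correct and is the standard spectral lower bound; the paper does not supply its own proof here but simply cites \cite[Sec.~12.3]{LevinPeresWilmer2009}, and your proposal is essentially the argument found there.
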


\begin{theorem}[Conditional mixing under quotient FK]\label{thm:mixing}
Let $G=\Gamma/N$ be a finite quotient of $(\Gamma,\mathcal S)$ with $|G|=N$ and degree $\Delta=|\mathcal S|$.
Assume the \emph{global quotient} Faber--Krahn profile
\begin{equation}\label{eq:FK-input-quotient}
\lambda_1^{\rm D,G}(U)\ \ge\ \kappa\,|U|^{-2/Q}\qquad(1\le |U|\le N/2),
\end{equation}
for some $Q\ge1$. Then the continuous-time simple random walk on $G$ satisfies:

\medskip
\noindent\textbf{Upper bound (spectral profile, $L^\infty$).} There exists $C_{\uparrow}(Q)>0$ such that the uniform mixing time obeys
\[
\tau_{\infty}^{(G)}(1/4)\ \le\ \frac{C_{\uparrow}(Q)}{\kappa}\times
\begin{cases}
N^{2/Q}, & Q>2,\\[2pt]
N, & Q=2,\\[2pt]
N^{2}, & Q=1.
\end{cases}
\]
By monotonicity of distances, the total-variation mixing time satisfies $t_{\mathrm{mix}}^{(G)}(1/4)\le \tau_{\infty}^{(G)}(1/4)$ and thus inherits the same asymptotic upper bounds.

\emph{Caveat for $Q=2$.} The FK-based profile yields $\tau_\infty\lesssim N/\kappa$ (no logarithm). For the simple random walk on the two-dimensional torus, the \emph{total-variation} mixing time is $\asymp N\log N$; obtaining the extra $\log$ requires refined spectral/separation information beyond the FK-level profile (e.g., explicit Fourier analysis or evolving-sets), while the $L^\infty$ profile on product tori yields $\tau_\infty\asymp N$.

\medskip
\noindent\textbf{Lower bounds.} Always, by relaxation, $t_{\mathrm{mix}}^{(G)}(1/4)\ge (\log 2)/\lambda_1(G)$.
Moreover, if the quotient family inherits \textup{(VD)}+\textup{(PI)} uniformly at local scales (e.g.\ virtually nilpotent quotients with inherited generators and uniform local graph-isomorphism radius), then the on-diagonal lower bound from \Cref{thm:hk-2sides} holds up to times comparable to the square of the local injectivity radius; in particular, whenever the family has $V(r)\asymp r^Q$ up to $r\asymp N^{1/Q}$, one gets the structural lower bound
\[
t_{\mathrm{mix}}^{(G)}(1/4)\ \gtrsim\ N^{2/Q}.
\]

\medskip
\noindent\emph{Wulff-coded prefactor.} If the isoperimetry \eqref{eq:iso-up-to-r} holds on the \emph{quotient} up to $r=N/2$ with constant $C_{\mathrm{iso}}$, then by \Cref{thm:Nash} one can take $\kappa\asymp (C_{\mathrm{iso}}/\Delta)^2$. (For $Q=2$, a sharp $N\log N$ \emph{total-variation} upper bound on tori requires a refined input beyond FK/Nash.)
\end{theorem}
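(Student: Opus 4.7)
The plan is to extract the upper bound from the spectral-profile method of Goel--Montenegro--Tetali, and to obtain the lower bounds by combining the standard relaxation inequality with the on-diagonal heat-kernel comparison of \Cref{thm:hk-2sides}. First I would repackage hypothesis \eqref{eq:FK-input-quotient} as a spectral profile $\Lambda(v):=\inf\{\lambda_1^{\mathrm D,G}(U):\ 1\le |U|\le v\}\ge \kappa\,v^{-2/Q}$ valid on $1\le v\le N/2$; the Goel--Montenegro--Tetali inequality for continuous-time reversible chains with uniform stationary law then yields the $L^2\to L^\infty$ (uniform) mixing bound
\[
\tau_\infty^{(G)}(1/4)\ \le\ \int_{4}^{4N}\frac{2\,dv}{v\,\Lambda(v)}\ \le\ \frac{2}{\kappa}\int_{4}^{4N} v^{2/Q-1}\,dv,
\]
and direct integration yields the three stated regimes: $Q>2$ gives $O(N^{2/Q}/\kappa)$, $Q=2$ gives $O(N/\kappa)$, and $Q=1$ gives $O(N^2/\kappa)$. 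The TV upper bound follows from monotonicity $t_{\mathrm{mix}}\le \tau_\infty$, and the Wulff-coded prefactor $\kappa\asymp (C_{\mathrm{iso}}/\Delta)^2$ is read off from \Cref{thm:Nash}.

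For the lower bounds I would use two independent routes. The first is the universal relaxation inequality $t_{\mathrm{mix}}\ge (\log 2)/\lambda_1(G)$ (Lemma~\ref{lem:relax-lower}), which applies whenever $\lambda_1(G)$ is accessible. The structural lower bound $t_{\mathrm{mix}}^{(G)}(1/4)\gtrsim N^{2/Q}$ will come from an on-diagonal heat-kernel comparison: uniform (VD)+(PI) at local scales, together with the injectivity-radius descent (Lemma~\ref{lem:quotient-descent}), lets me apply the lower half of \Cref{thm:hk-2sides} on $G$ to obtain $p_t^{(G)}(e,e)\gtrsim V(\sqrt t)^{-1}\asymp t^{-Q/2}$ for $t$ up to the order of the square of the local injectivity radius (which is $\asymp N^{2/Q}$ when $V(r)\asymp r^Q$ holds up to $r\asymp N^{1/Q}$). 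Since TV-mixing forces $p_t(e,e)\le 2/N$, solving $t^{-Q/2}\lesssim 1/N$ produces the claim.

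The hard part will be two-fold. The substantive obstacle is the borderline case $Q=2$: the FK-based profile (and hence the Goel--Montenegro--Tetali integral) produces $O(N/\kappa)$ \emph{without} the extra $\log N$ that is sharp for TV mixing on $\Z_m^2$; this gap is inherent, since pure FK/Nash cannot manufacture a logarithmic correction without finer spectral (Fourier) or evolving-sets input. I would therefore state the $Q=2$ bound at the FK-optimal rate and flag the caveat rather than attempt to recover $N\log N$ from isoperimetry alone. The second, more routine obstacle is bookkeeping of constants: the $4$'s and $2$'s inside the profile integral depend on normalization choices (lazy vs.\ non-lazy, discrete vs.\ continuous time, the role of $\pi_{\min}=1/N$), and I would fix the continuous-time convention of Definition~\ref{def:RW-Lap} at the outset so that the advertised prefactors $C_\uparrow(Q)$ are unambiguous.
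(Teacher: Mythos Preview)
Your upper-bound argument via the Goel--Montenegro--Tetali spectral-profile integral is exactly the paper's route (the paper writes the integral in the density variable $s=v/N\in[\pi_*,1/2]$ rather than the volume variable $v$, but this is just a change of variables, and the paper likewise only proves the upper bound explicitly).

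Your structural lower-bound argument has a gap: ``TV-mixing forces $p_t(e,e)\le 2/N$'' is false. Total-variation distance $\le 1/4$ only gives $p_t(e,e)-\pi(e)\le 1/4$ (take $A=\{e\}$ in the TV supremum), hence $p_t(e,e)\le 1/4+1/N$; combining this with $p_t(e,e)\gtrsim t^{-Q/2}$ yields only $t\gtrsim 1$, not $t\gtrsim N^{2/Q}$. Your argument would work verbatim for $\tau_\infty$ (since $d_\infty(t)\le 1/4$ does force $p_t(e,e)\le 5/(4N)$), but $\tau_\infty\ge t_{\rm mix}$ is the wrong inequality for a TV lower bound. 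The clean fix is to route through the spectral gap: under (VD)+(PI) up to radius $R\asymp N^{1/Q}$, a tent test function on a ball of that radius gives $\lambda_1(G)\lesssim R^{-2}\asymp N^{-2/Q}$, and then Lemma~\ref{lem:relax-lower} yields $t_{\rm mix}\ge (\log 2)/\lambda_1\gtrsim N^{2/Q}$.
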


\begin{proof}[Proof of the upper bound]
Use the spectral profile inequality (e.g.\ \cite{GoelMontenegroTetali2006,MorrisPeres2005}):
for $\pi_*:=1/N$ on a regular Cayley graph,
\[
\tau_{\infty}(1/4)\ \le\ \int_{\pi_*}^{1/2}\frac{ds}{s\,\Lambda(s)}\,,\qquad
\Lambda(s):=\min_{|U|\le sN}\lambda_1^{\rm D,G}(U).
\]
Under \eqref{eq:FK-input-quotient}, $\Lambda(s)\ge \kappa\,(sN)^{-2/Q}$, hence
$
\frac{1}{s\,\Lambda(s)}\ \le\ \frac{N^{2/Q}}{\kappa}\,s^{2/Q-1}.
$
Integrating $s^{2/Q-1}$ from $1/N$ to $1/2$ yields the displayed cases. In two dimensions, this gives $\tau_\infty\lesssim N/\kappa$. The sharp $N\log N$ for \emph{total variation} on square tori requires refined spectral/separation input beyond the FK-level profile.
\end{proof}

\begin{remark}[On the descent hypothesis]
The key assumption \eqref{eq:FK-input-quotient} is \emph{not} automatic for arbitrary quotients. What one always obtains from injectivity is the \emph{componentwise-local} version: if $\inj(G)\ge R$ and each connected component of $U$ has diameter $\le R$, then $\lambda_1^{\rm D,G}(U)\ge \kappa |U|^{-2/Q}$ by Proposition \ref{prop:quotient-window}.
Natural families where useful versions of the bound do hold include:
\begin{itemize}
\item[(i)] Abelian tori $G=\Z_m^d$ with the standard generators: \emph{locally} (i.e.\ for $\diam(U)\ll m$) by Corollary \ref{cor:torus-FK}; in dimension $d=2$, global spectral/mixing behavior carries the standard logarithmic correction and one should not expect a uniform (scale-free) FK bound at all sizes.
\item[(ii)] Nilpotent (box-like) quotients where isoperimetry (or FK) is established directly on $G$.
\end{itemize}
Throughout this subsection, $\inj(G)$ is the local graph-isomorphism radius from Definition \ref{def:inj}.
\end{remark}

\begin{corollary}[Abelian tori: mixing (special case)]\label{cor:torus-mixing}
Let $G=\Z_m^d$ with the standard generators and $N=m^d$. For the continuous-time simple random walk:
\[
t_{\mathrm{mix}}^{(G)}(1/4)\ \asymp\
\begin{cases}
m^2, & d\ge 3\quad(\text{equivalently } N^{2/d}),\\[2pt]
m^2\log m, & d=2\quad(\text{equivalently } N\log N).
\end{cases}
\]
Here $\asymp$ is in the sense of matching upper and lower bounds up to $d$-dependent constants. The upper bounds follow either from explicit spectral analysis or from the spectral-profile method with the known profile on product tori; the lower bounds follow from relaxation and on-diagonal heat-kernel lower bounds.
\end{corollary}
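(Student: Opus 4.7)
The plan is to handle upper and lower bounds separately and, in both dimensions, to exploit that the torus with standard generators admits explicit spectral data going beyond the generic FK/Nash-level inputs developed elsewhere in this section. For the upper bound in dimensions $d\ge 3$, I would first upgrade the \emph{local} Faber--Krahn bound of Corollary~\ref{cor:torus-FK} to the \emph{global} profile \eqref{eq:FK-input-quotient} on $G=\Z_m^d$ itself. Running the proof of Theorem~\ref{thm:WulffAbelian} with $G$ as the ambient lattice (or equivalently the discrete Loomis--Whitney/column argument directly on the torus) yields $|\partial_{\mathcal S}U|\gtrsim \min\{|U|^{(d-1)/d},\,m^{d-1}\}$ for every $U\subset G$; when $|U|\le N/2$ and $d\ge 3$ the first term dominates, so one obtains $\kappa\asymp 1$ in \eqref{eq:FK-input-quotient} with $Q=d$. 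Feeding this into Theorem~\ref{thm:mixing} produces $\tau_\infty^{(G)}(1/4)\lesssim N^{2/d}=m^2$, whence $t_{\mathrm{mix}}^{(G)}(1/4)\lesssim m^2$.

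In dimension $d=2$ the FK profile alone gives only $\tau_\infty\lesssim N=m^2$, missing the logarithm, so to capture it I would substitute explicit Fourier analysis tailored to the torus. The eigenvalues of $L=I-P$ on $\Z_m^2$ are $1-\tfrac12\sum_{i=1}^{2}\cos(2\pi k_i/m)$ for $k\in\Z_m^2$, from which both $\lambda_1(G)\asymp 1/m^2$ and the on-diagonal heat kernel $p_t(e,e)\asymp (t\wedge m^2)^{-1}$ are read off at the level of constants. A standard character-sum/$L^2$ decomposition (as in Levin--Peres--Wilmer, Ch.~5) then yields the sharp total-variation bound $t_{\mathrm{mix}}^{(G)}(1/4)\lesssim m^2\log m$; alternatively, one can run evolving sets with this on-diagonal kernel as input to get the same upper bound without Fourier diagonalization.

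For the lower bounds, Lemma~\ref{lem:relax-lower} combined with the explicit torus spectrum $\lambda_1(G)\asymp 1/m^2$ gives $t_{\mathrm{mix}}^{(G)}(1/4)\gtrsim m^2$ in every dimension. The extra $\log m$ at $d=2$ I would extract from the reversibility identity $\|P_t\delta_e-\pi\|_2^2=p_{2t}(e,e)-1/N$: Theorem~\ref{thm:hk-2sides} gives $p_t(e,e)\gtrsim 1/t$ for $t\lesssim m^2$ (using that virtually abelian groups satisfy (VD)+(PI)), so forcing $\|P_t\delta_e-\pi\|_2^2$ below any fixed fraction of $1/N=1/m^2$ requires $t\gtrsim m^2\log m$; a Cauchy--Schwarz passage from $L^2$ to TV, together with a vertex-transitivity averaging, then transfers this into the matching TV lower bound. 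The main obstacle is precisely the $d=2$ logarithmic factor: it is genuinely out of reach of any FK/Nash-level input developed in this section (as the caveat after Theorem~\ref{thm:mixing} already flags), and forces a direct Fourier or local-CLT computation specific to the torus. Once that classical ingredient is imported, the remaining work is bookkeeping against Theorem~\ref{thm:mixing}, Theorem~\ref{thm:hk-2sides}, and the explicit spectrum of the torus walk.
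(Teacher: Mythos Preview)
Your overall strategy---spectral profile or explicit Fourier for the upper bounds, relaxation plus on-diagonal heat kernel for the lower bounds---is exactly what the paper's one-line sketch indicates, and your treatment of $d\ge 3$ is fine. The genuine gap is the $d=2$ lower bound. From $p_t(e,e)\gtrsim 1/t$ (valid for $t\lesssim m^2$) and the identity $\|P_t\delta_e-\pi\|_2^2=p_{2t}(e,e)-1/N$, forcing the latter below a fixed multiple of $1/N$ only requires $t\gtrsim m^2$, not $m^2\log m$: once $t\asymp m^2$ the return probability is already of order $1/N$, and thereafter $p_{2t}(e,e)-1/N$ decays like $e^{-ct/m^2}$, so reaching any fixed threshold costs only an additional $O(m^2)$. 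Moreover, your proposed Cauchy--Schwarz passage from $L^2$ to TV points the wrong way: one has $\|P_t\delta_e-\pi\|_{\mathrm{TV}}\le \tfrac12\sqrt N\,\|P_t\delta_e-\pi\|_2$, which bounds TV \emph{above}, so an $\ell^2$ lower bound cannot be converted into a TV lower bound by this device, with or without vertex-transitivity averaging.

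This is not a patchable oversight. The paper itself records $\tau_\infty^{(G)}\asymp N=m^2$ on the $2$-torus (caveat after Theorem~\ref{thm:mixing}), and since $t_{\mathrm{mix}}^{\mathrm{TV}}\le \tau_\infty$ always, no heat-kernel or $\ell^2$ argument can yield a TV lower bound of order $m^2\log m$. The $\log$ in the $d=2$ line is in tension with $\tau_\infty\asymp m^2$, and neither the paper's hint (``relaxation and on-diagonal heat-kernel lower bounds'') nor your mechanism can supply it; you should flag this rather than try to close the gap.
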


\subsection{Quantitative Poisson boundary consequences with Wulff--coded constants}
\label{sec:poisson-quant}

In this section we isolate consequences for random walks that make explicit the dependence on the
\emph{generating--set data} $(h_S,\Delta)$ supplied by our discrete Wulff theory and the analytic estimates of \S\ref{sec:analytic}.
Throughout, $\mu$ is a symmetric, finitely supported, generating probability measure on $(\Gamma,\mathcal S)$ with \emph{full support} on $\mathcal S$ (i.e.\ $\mu(s)>0$ for all $s\in\mathcal S$),
and $P_\mu f(x)=\sum_{s\in\mathcal S}\mu(s) f(sx)$ is the associated Markov operator (in continuous time we use $P_t^\mu=e^{-t(I-P_\mu)}$).  
We write
\[
\theta_\mu\ :=\ \Delta\cdot \min_{s\in\mathcal S}\mu(s)\ \in (0,1],
\]
so that the Dirichlet forms are comparable as
\begin{equation}\label{eq:form-comp}
\mathcal E_\mu(f,f)\ :=\ \tfrac12\sum_{x\in\Gamma}\sum_{s\in\mathcal S}\mu(s)\big(f(x)-f(sx)\big)^2\ \ge\ \theta_\mu\,\mathcal E_{\mathrm{SRW}}(f,f),
\end{equation}
where $\mathcal E_{\mathrm{SRW}}(f,f)=\tfrac1{2\Delta}\sum_{x,s}\big(f(x)-f(sx)\big)^2$. We set
\[
K_{\mathrm{Nash}}(\Gamma,\mathcal S,\mu)\ :=\ \frac{C_0}{\theta_\mu}\,\Big(\frac{\Delta}{h_S}\Big)^{2},
\]
where $C_0\in(0,\infty)$ is the absolute constant from  ~\cref{thm:intro-H}\textup{(H3)} (the SRW Nash inequality; equivalently, the Nash inequality for $P=\frac1\Delta A$).  
In what follows, $Q$ denotes the homogeneous dimension in the virtually nilpotent case (so $Q=d+2m$).

\subsubsection{Nash $\implies$ ultracontractivity with tracked constants}

We first record an ultracontractive bound with its \emph{full} dependence on $(h_S,\Delta,\theta_\mu)$.

\begin{lemma}[Ultracontractivity with Wulff--coded constants]\label{lem:ultra-quant}
Assume the Nash inequality of  ~\cref{thm:intro-H}\textup{(H3)} in the form
\[
\|f\|_2^{\,2+4/Q}\ \le\ K_{\mathrm{Nash}}(\Gamma,\mathcal S,\mu)\,\mathcal E_\mu(f,f)\,\|f\|_1^{\,4/Q}.
\]
Let $P_t^\mu:=e^{-t(I-P_\mu)}$ be the continuous-time semigroup. Then there exists a constant
$C_Q\in(1,\infty)$ depending \emph{only} on $Q$ such that
\begin{equation}\label{eq:ultra-quant}
\|P_t^\mu\|_{1\to\infty}\ \le\ C_Q\,K_{\mathrm{Nash}}(\Gamma,\mathcal S,\mu)^{\,Q/2}\,t^{-Q/2}
\ =\ C_Q\,C_0^{Q/2}\,\theta_\mu^{-Q/2}\,\Big(\frac{\Delta}{h_S}\Big)^{Q}\,t^{-Q/2}\qquad(t>0).
\end{equation}
\end{lemma}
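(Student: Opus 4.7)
The plan is the standard Nash-to-ultracontractivity argument (in the spirit of Nash's original derivation; see also Carlen--Kusuoka--Stroock and \cite{VaropoulosSaloffCosteCoulhon1992}), executed with all constants tracked. The key bookkeeping observation is that $C_0$ and $\theta_\mu$ enter \emph{only} through $K_{\mathrm{Nash}}$, while the exponent $Q$ enters through a separable ODE; hence the final prefactor depends only on $Q$.

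First I would reduce to $\|f\|_1 = 1$ by $L^1$-scaling and set $u(t) := \|P_t^\mu f\|_2^2$. Symmetry of $\mu$ makes $P_\mu$ self-adjoint on $\ell^2(\Gamma)$, and Markov contractivity gives $\|P_t^\mu f\|_1 \le 1$ for all $t \ge 0$. The spectral identity for the symmetric generator $I - P_\mu$ yields $u'(t) = -2\,\mathcal E_\mu(P_t^\mu f, P_t^\mu f)$. Feeding $P_t^\mu f$ into the hypothesized Nash inequality produces the differential inequality
\[
u(t)^{\,1 + 2/Q}\ \le\ K_{\mathrm{Nash}}\,\mathcal E_\mu(P_t^\mu f, P_t^\mu f)\ =\ -\,\frac{K_{\mathrm{Nash}}}{2}\,u'(t).
\]
This is separable: rearranging gives $\tfrac{d}{dt}\bigl[u(t)^{-2/Q}\bigr]\ \ge\ 4/(Q\,K_{\mathrm{Nash}})$, and integrating (discarding the nonnegative initial term) delivers $u(t)\ \le\ \bigl(Q\,K_{\mathrm{Nash}}/(4t)\bigr)^{Q/2}$. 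Taking the supremum over $\|f\|_1 = 1$ this reads $\|P_t^\mu\|_{1\to 2}^2 \le (Q\,K_{\mathrm{Nash}}/(4t))^{Q/2}$.

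To pass from $L^1\to L^2$ to $L^1\to L^\infty$, I would invoke self-adjointness in the form $\|P_t^\mu\|_{2\to\infty} = \|P_t^\mu\|_{1\to 2}$ and the semigroup factorization
\[
\|P_t^\mu\|_{1\to\infty}\ \le\ \|P_{t/2}^\mu\|_{2\to\infty}\,\|P_{t/2}^\mu\|_{1\to 2}\ =\ \|P_{t/2}^\mu\|_{1\to 2}^{\,2}\ \le\ \Big(\frac{Q\,K_{\mathrm{Nash}}}{2t}\Big)^{\!Q/2},
\]
which is \eqref{eq:ultra-quant} with an explicit $C_Q := (Q/2)^{Q/2}$. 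Substituting the definition $K_{\mathrm{Nash}} = (C_0/\theta_\mu)(\Delta/h_S)^2$ yields the displayed dependence on $(\theta_\mu,\Delta,h_S)$.

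There is no real obstacle: the ODE step is classical and self-adjointness is immediate from symmetry of $\mu$. The one point that needs a line of justification -- and which is the only place where $\theta_\mu$ actually enters -- is that the Nash inequality of  ~\cref{thm:intro-H}\textup{(H3)}, originally stated for $\mathcal E_{\mathrm{SRW}}$, transfers to $\mathcal E_\mu$ with constant inflated only by $\theta_\mu^{-1}$. This is precisely the pointwise form comparison \eqref{eq:form-comp}, absorbed into $K_{\mathrm{Nash}}(\Gamma,\mathcal S,\mu)$, and requires the full-support assumption $\mu(s)>0$ for all $s\in\mathcal S$ to make $\theta_\mu>0$.
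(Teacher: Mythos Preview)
Your proof is correct and follows essentially the same route as the paper's: differentiate $u(t)=\|P_t^\mu f\|_2^2$, feed the Nash inequality into the resulting differential inequality, integrate the separable ODE to get the $L^1\to L^2$ bound, and then use self-adjointness plus the semigroup factorization $P_t=P_{t/2}P_{t/2}$ to upgrade to $L^1\to L^\infty$. You are in fact slightly more explicit than the paper, which leaves the dimensional constant as an unspecified $c_Q$ and then sets $C_Q=\max\{c_Q,2^{Q/2}c_Q\}$; your computation gives $C_Q=(Q/2)^{Q/2}$ directly (note this dips below $1$ only when $Q<2$, which is harmless since one may always enlarge $C_Q$).
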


\begin{proof}
Set $u(t):=\|P_t^\mu f\|_2^2$. Then
$u'(t)=-2\,\mathcal E_\mu(P_t^\mu f,P_t^\mu f)\le -(2/K_{\mathrm{Nash}})\,\\|P_t^\mu f\|_2^{2+4/Q}\,\|f\|_1^{-4/Q}$
by Nash and $\|P_t^\mu f\|_1\le\|f\|_1$. This yields
$\|P_t^\mu f\|_2^2\le c_Q\,K_{\mathrm{Nash}}^{\,Q/2}\,t^{-Q/2}\,\|f\|_1^2$
for a dimensional constant $c_Q$. By self-adjointness and
$\|P_t^\mu\|_{2\to2}\le1$, we obtain
$\|P_t^\mu\|_{1\to2}\le (c_Q K_{\mathrm{Nash}}^{Q/2})^{1/2}\,t^{-Q/4}$ and
$\|P_t^\mu\|_{2\to\infty}\le (c_Q K_{\mathrm{Nash}}^{Q/2})^{1/2}\,t^{-Q/4}$. Hence
$\|P_{2t}^\mu\|_{1\to\infty}\le c_Q\,K_{\mathrm{Nash}}^{Q/2}\,t^{-Q/2}$,
which implies \eqref{eq:ultra-quant} with $C_Q:=\max\{c_Q,2^{Q/2}c_Q\}$.
\end{proof}

\begin{remark}[Discrete time from continuous time]\label{rem:discrete-ultra}
From the Poissonization identity $P_t^\mu=e^{-t}\sum_{k\ge0}\frac{t^k}{k!}(P_\mu)^k$,
positivity of $(P_\mu)^k$ and Stirling yield, for $n\in\N$,
\[
\|(P_\mu)^n\|_{1\to\infty}\ \le\ e^n\,\frac{n!}{n^n}\ \|P_n^\mu\|_{1\to\infty}
\ \le\ C_{\mathrm{St}}\,\sqrt{n}\ \|P_n^\mu\|_{1\to\infty},
\]
where $C_{\mathrm{St}}$ is universal. Combining with \eqref{eq:ultra-quant} (at $t=n$) gives
\begin{equation}\label{eq:ultra-discrete}
\|(P_\mu)^n\|_{1\to\infty}\ \le\ C'_Q\,K_{\mathrm{Nash}}(\Gamma,\mathcal S,\mu)^{\,Q/2}\,n^{-(Q-1)/2}
\ =\ C'_Q\,C_0^{Q/2}\,\theta_\mu^{-Q/2}\,\Big(\frac{\Delta}{h_S}\Big)^{Q}\,n^{-(Q-1)/2}.
\end{equation}
In particular, $p_n(e)=\mu^{*n}(e)\le \|(P_\mu)^n\|_{1\to\infty}$ satisfies the same bound. 
For SRW on virtually nilpotent groups one may improve the decay to the sharp $n^{-Q/2}$ using known heat-kernel bounds; we do not need this refinement here.
\end{remark}

\subsubsection{Quantitative Liouville on virtually nilpotent groups}

\begin{theorem}[Liouville with explicit entropy bound]\label{thm:liouville-quant}
Let $\Gamma$ be virtually nilpotent with homogeneous dimension $Q$, and let $\mu$ be symmetric,
finitely supported and generating. Then the Poisson boundary of $(\Gamma,\mu)$ is trivial.
More precisely, for all $n\ge2$,
\begin{equation}\label{eq:entropy-quant}
H(\mu^{*n})\ :=\ -\sum_{g}\mu^{*n}(g)\log\mu^{*n}(g)
\ \le\ \frac{Q-1}{2}\,\log n\ +\ Q\,\log\Big(\frac{\Delta}{h_S}\Big)\ +\ \frac{Q}{2}\log\Big(\frac{1}{\theta_\mu}\Big)\ +\ C_{\mathrm{ent}}(\Gamma,\mathcal S),
\end{equation}
where $C_{\mathrm{ent}}(\Gamma,\mathcal S)=\log(C''_Q\,C_0^{Q/2})$ and
$C''_Q$ depends only on $Q$. In particular, the asymptotic entropy
$h:=\lim_{n\to\infty} \frac{1}{n}H(\mu^{*n})=0$.
\end{theorem}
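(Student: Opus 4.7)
The plan is to reduce the statement to an $O(\log n)$ upper bound on $H(\mu^{\ast n})$ with constants matching the form in \eqref{eq:entropy-quant}, and then invoke the entropy criterion for the Poisson boundary. First, I would feed the explicit value $K_{\mathrm{Nash}}(\Gamma,\mathcal S,\mu)=(C_0/\theta_\mu)(\Delta/h_S)^2$ into the discrete ultracontractive bound of Remark~\ref{rem:discrete-ultra}, obtaining the pointwise density estimate
\[
\mu^{\ast n}(g)\ \le\ \|(P_\mu)^n\|_{1\to\infty}\ \le\ C'_Q\,C_0^{Q/2}\,\theta_\mu^{-Q/2}\,\Big(\tfrac{\Delta}{h_S}\Big)^{Q}\,n^{-(Q-1)/2}
\]
uniformly in $g\in\Gamma$ and in all $n\ge 2$. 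Taking logarithms of this bound produces precisely the three advertised terms $\frac{Q-1}{2}\log n$, $Q\log(\Delta/h_S)$ and $\frac{Q}{2}\log(1/\theta_\mu)$, with the remaining numerical factor $\log(C'_Q\,C_0^{Q/2})$ absorbed into $C_{\mathrm{ent}}$.

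Second, I would convert this $\ell^{\infty}$ control into an entropy bound of the correct size by combining it with the polynomial support of $\mu^{\ast n}$. Since $\mathrm{supp}(\mu)$ is finite of word-diameter $R_\mu<\infty$, one has $\mathrm{supp}(\mu^{\ast n})\subseteq B(e,nR_\mu)$, and virtual nilpotency gives $|B(e,nR_\mu)|\lesssim n^{Q}$; the plain Jensen estimate $H(p)\le\log|\mathrm{supp}(p)|$ then produces $H(\mu^{\ast n})\le Q\log n+O(1)$, which already suffices for the qualitative conclusion. To recover the sharper constant $\frac{Q-1}{2}$ of \eqref{eq:entropy-quant}, I would split the walk into a Gaussian core $\{\,d(e,X_n)\le R\sqrt{n}\,\}$ and its tail, apply the ultracontractive bound on the core (where $|B(e,R\sqrt{n})|\le CR^Q n^{Q/2}$), and control the tail using Varopoulos-type Gaussian deviation $\Pr(d(e,X_n)>R\sqrt{n})\le e^{-cR^2}$ together with the crude lower bound $\mu^{\ast n}(g)\ge (\min_{s}\mu(s))^{nR_\mu}$ on its support; optimizing in $R$ contributes only $O(1)$ to the entropy after the decomposition.

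Third, once \eqref{eq:entropy-quant} is established, the asymptotic entropy
\[
h(\mu)\ :=\ \lim_{n\to\infty}\frac{H(\mu^{\ast n})}{n}
\]
exists by standard subadditivity of the entropy sequence for a symmetric generating measure, and the bound $H(\mu^{\ast n})=O(\log n)$ forces $h(\mu)=0$. The entropy criterion (Avez--Derriennic--Kaimanovich--Vershik) then gives the triviality of the Poisson boundary of $(\Gamma,\mu)$, which is the Liouville conclusion of the theorem.

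The main technical obstacle is controlling the Gaussian-tail contribution to $H(\mu^{\ast n})$ without spoiling the explicit Wulff-coded constants $(\Delta,h_S,\theta_\mu)$: a direct invocation of a Gaussian heat-kernel bound typically introduces extra implicit constants depending on word-metric geometry. The plan is to bypass this by using only the ultracontractive constant from Lemma~\ref{lem:ultra-quant}, iterating Nash on the evolving semigroup to extract both on-diagonal decay and an $L^2$ form of Gaussian concentration, so that all numerical constants remain in the prescribed shape and only $(\Delta,h_S,\theta_\mu,Q,C_0)$ appear in the final entropy bound.
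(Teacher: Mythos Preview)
Your route diverges from the paper's. The paper's proof is essentially a one-liner: from the discrete ultracontractive bound $\|\mu^{*n}\|_\infty\le\alpha$ it asserts that among probability vectors with $\|p\|_\infty\le\alpha$ the entropy is maximized by the near-uniform vector on $\lceil1/\alpha\rceil$ points, concluding $H(\mu^{*n})\le\log(1/\alpha)+1$. You instead supplement the $\ell^\infty$ bound with polynomial growth of the support: first via the crude Jensen estimate $H\le\log|\operatorname{supp}(\mu^{*n})|\le Q\log n+O(1)$ (which already yields $h=0$ and Liouville), and then via a Gaussian core/tail split to chase the sharper constant $(Q-1)/2$.

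There is a genuine gap in your refinement, and it is the same issue that afflicts the paper's shortcut. An $\ell^\infty$ \emph{upper} bound on a probability vector gives a \emph{lower} bound on its entropy, not an upper bound: if $p_g\le\alpha$ then $\log(1/p_g)\ge\log(1/\alpha)$, so $H(p)\ge\log(1/\alpha)$; and the uniform measure on any $N\ge1/\alpha$ points satisfies the constraint with entropy $\log N$, unbounded above. Thus ``applying the ultracontractive bound on the core'' cannot push the core entropy contribution below the pure volume estimate $\log|B(e,R\sqrt n)|\approx\tfrac{Q}{2}\log n$. Your core/tail decomposition therefore delivers $H(\mu^{*n})\le\tfrac{Q}{2}\log n+o(\log n)$, not $\tfrac{Q-1}{2}\log n$; the former is in fact the correct order (for simple random walk on $\mathbb Z^d$ one has $H(\mu^{*n})\sim\tfrac d2\log n$), which suggests the theorem's leading constant $(Q-1)/2$ is itself optimistic. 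Your qualitative argument is sound and more careful than the paper's; the specific constant $(Q-1)/2$ is not reachable by either method as written.
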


\begin{proof}
By \eqref{eq:ultra-discrete}, $p_n(e)\le A_Q\,\theta_\mu^{-Q/2}\,(\Delta/h_S)^Q\,n^{-(Q-1)/2}$ with
$A_Q:=C'_Q C_0^{Q/2}$. Hence $\|\mu^{*n}\|_\infty\le A_Q\,\theta_\mu^{-Q/2}\,(\Delta/h_S)^Q\,n^{-(Q-1)/2}$.
Among probability vectors with $\|\cdot\|_\infty\le \alpha$, the entropy is maximized (up to an additive constant $\le\log 2$) by the near-uniform vector on $\lceil 1/\alpha\rceil$ points, giving
$H(\mu^{*n})\le \log \lceil 1/\alpha\rceil\le \log(1/\alpha)+1$.
Applying this with $\alpha=A_Q\,\theta_\mu^{-Q/2}\,(\Delta/h_S)^Q\,n^{-(Q-1)/2}$ yields
\[
H(\mu^{*n})\ \le\ \frac{Q-1}{2}\,\log n\ +\ Q\,\log\Big(\frac{\Delta}{h_S}\Big)\ +\ \frac{Q}{2}\log\Big(\frac{1}{\theta_\mu}\Big)\ +\ \log A_Q\ +\ 1,
\]
which gives \eqref{eq:entropy-quant} after absorbing universal constants into $C_{\mathrm{ent}}(\Gamma,\mathcal S)$. Finally, the Kaimanovich-Vershik entropy criterion (for finitely supported measures) implies trivial Poisson boundary from $H(\mu^{*n})=o(n)$.
\end{proof}

\begin{remark}[What is tracked]\label{rem:what-tracked}
The right-hand side of \eqref{eq:entropy-quant} isolates the dependence on the generating set \emph{only through}
$\Delta$ and the sharp Wulff constant $h_S$. The dependence on the step distribution $\mu$ enters only through the single comparability parameter $\theta_\mu\in(0,1]$ from \eqref{eq:form-comp}. All other constants ($C_0,C_Q',C''_Q$) are \emph{dimensional}
(depending on $Q$ but independent of $\Gamma$ within the virtually nilpotent class) and arise from the Nash$\Rightarrow$ultracontractivity argument.
\end{remark}

\subsubsection{Volumetric strips from logarithmic layer-nested sets in  $\Z^d\rtimes_A\Z$}

We record a \emph{purely geometric} by-product of Theorem~\ref{thm:TFii-Folner-semidirect} that is useful for strip/ray
criteria in Poisson boundary theory.  It cleanly separates the volumetric input (from our layer-nested sets) from the
probabilistic step (which we do not reproduce).

\begin{proposition}[Subexponential volumetrics of layer-nested sets]\label{prop:sd-strips-volume}
Let $G=\Z^d\rtimes_A\Z$ and $E_R$ be the logarithmic layer-nested sets of Theorem~\ref{thm:TFii-Folner-semidirect}. There exist
$c_1,c_2,c_3>0$ (depending only on $(A,\mathcal S,K)$) such that for all $R\ge2$,
\[
c_1\,R^d\log R\ \le\ |E_R|\ \le\ c_2\,R^d\log R,\qquad
B_{\mathcal S}(E_R)\ \le\ c_3\, R^d.
\]
Consequently, for any $n\in\N$, the family $S_n:=E_{\,\lfloor n\rfloor}$ satisfies
$
|S_n|\le C(A,\mathcal S)\,n^d\log n=\exp(o(n))
$
and the boundary size is $O(n^d)$, both with tracked constants from Theorem~\ref{thm:TFii-Folner-semidirect}.
\end{proposition}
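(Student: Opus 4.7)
The statement is essentially a bookkeeping exercise assembling three ingredients already established for the logarithmic layer-nested sets $E_R=F_{K,R,T(R)}$: the slice-size formula, the zero-drift identity for vertical boundary, and the cofactor-growth horizontal bound. The plan is to separately verify the volume and boundary estimates, then translate to the claimed strip-type consequence.

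First, I will treat the volume bound. By Lemma~\ref{lem:slice-sizes} one has the exact identity $|E_R|=(2T(R)+1)\,|X_0(R)|$, and standard lattice-point counting for the convex body $K$ with piecewise $C^1$ boundary gives two-sided bounds $c_1'R^d\le |X_0(R)|\le c_2'R^d$ depending only on $K$. Combined with $T(R)=\lfloor\alpha\log R\rfloor$, for $R\ge 2$ one has $2T(R)+1\asymp\log R$ (the lower bound uses $\log R\ge\log 2>0$, the upper bound is immediate), which yields both inequalities $c_1 R^d\log R\le |E_R|\le c_2 R^d\log R$ with constants depending only on $(A,\mathcal S,K)$ through $\alpha$ and $K$.

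Next, for the boundary bound, I split $B_{\mathcal S}(E_R)=B_{\rm vert}(E_R)+B_{\rm hor}(E_R)$. The vertical part is handled directly by Lemma~\ref{lem:zero-drift-left}: because the interfaces $X_k\triangle A^{-1}X_{k+1}$ are empty, only the two caps contribute and $B_{\rm vert}(E_R)=2|X_0(R)|\le 2c_2'R^d$. The horizontal part is the place where the logarithmic choice of $T$ pays off: Corollary~\ref{cor:hor-total} gives $B_{\rm hor}(E_R)\le C_{A,K}'\,R^{d-1}\Lambda(A)^{T(R)}$, and since $T(R)\le \alpha\log R$ one has $\Lambda(A)^{T(R)}\le R^{\alpha\log\Lambda(A)}$. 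The condition $\alpha\log\Lambda(A)<1$ then ensures $d-1+\alpha\log\Lambda(A)<d$, so $B_{\rm hor}(E_R)\le C_{A,K}'\,R^d$ uniformly in $R\ge 2$. Adding the two parts produces $c_3$.

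Finally, the stated consequence is immediate: taking $S_n:=E_{\lfloor n\rfloor}$ and substituting $R=\lfloor n\rfloor$ into the two bounds yields $|S_n|\le c_2 n^d\log n$ and $B_{\mathcal S}(S_n)\le c_3 n^d$; since $\log(n^d\log n)=d\log n+\log\log n=o(n)$, the subexponential bound $|S_n|\le\exp(o(n))$ follows. There is no genuine obstacle here; the only point requiring mild care is ensuring that all constants pick up dependence only on $(A,\mathcal S,K)$ and that the $\log R$ factor is bounded below for $R\ge 2$, which is what motivates the restriction $R\ge 2$ in the statement.
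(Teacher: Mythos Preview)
Your proof is correct and follows essentially the same approach as the paper: both invoke Lemma~\ref{lem:slice-sizes} for the volume identity $|E_R|=(2T(R)+1)|X_0(R)|$, Lemma~\ref{lem:zero-drift-left} for the vertical boundary (two caps only), and Corollary~\ref{cor:hor-total} with the constraint $\alpha\log\Lambda(A)<1$ to control the horizontal boundary by $O(R^d)$. The paper's proof is slightly terser but the logic is identical.
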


\begin{proof}
By Lemma~\ref{lem:slice-sizes} (uniform slice sizes), Lemma~\ref{lem:zero-drift-left} (vertical boundary equals caps), and Corollary~\ref{cor:hor-total} (horizontal boundary bound), together with $T(R)=\lfloor\alpha\log R\rfloor$ from Proposition~\ref{prop:log-height-folner-tempered}, we have
$|E_R|=(2T(R)+1)\,|X_0(R)|$ with $|X_0(R)|\asymp R^d$ and $B_{\mathcal S}(E_R)=B_{\rm vert}(E_R)+B_{\rm hor}(E_R)$ with
$B_{\rm vert}(E_R)=2|X_0(R)|\asymp R^d$ and $B_{\rm hor}(E_R)=O(R^{d-1+\alpha\log\Lambda(A)})=o(R^d)$ by $\alpha\log\Lambda(A)<1$. This gives the claimed bounds.
\end{proof}

\begin{remark} 
Many strip/ray criteria (e.g.\ Kaimanovich's) require a family of sets whose cardinalities grow subexponentially in time. Proposition~\ref{prop:sd-strips-volume} provides exactly this volumetric input with \emph{explicit} constants for the canonical layer-nested sets $E_R$; any additional probabilistic verification (e.g.\ that typical sample paths spend enough time in such strips under additional moment/drift assumptions) can be taken from the existing boundary literature for semidirect products.  We do not invoke those results here.
\end{remark}

\appendix

\section{Tempered Property A and Quantitative Coarse Embeddings (Edge Normalization)}
\label{sec:tempered-A}

\noindent
We record a quantitative upgrade of Property~A that interacts directly with our Tempered F{\o}lner (TF) hypothesis and yields explicit coarse embeddings into Hilbert space with tracked compression. Throughout this appendix we work with a \emph{finite symmetric} generating set $\mathcal S$ and in the \emph{edge} normalization: for $Y\subset\Gamma$,
\[
B_{\mathcal S}(Y)\ :=\ \#\{\text{undirected Cayley edges with one endpoint in $Y$ and one in $Y^c$}\}.
\]
(Equivalently, $B_{\mathcal S}(Y)$ counts \emph{inside$\to$outside} directed pairs $(y,s)$ with $y\in Y$, $s\in\mathcal S$, $sy\notin Y$; for symmetric $\mathcal S$ these two counts coincide.)
When needed, we pass to the \emph{vertex} normalization via the standard comparison (recall Lemma~\ref{lem:B-vertex-edge} in the main text):
\begin{equation}\label{eq:vertex-edge-compare}
|\partial_{\mathcal S}(Y)|\ \le\ B_{\mathcal S}(Y)\ \le\ \Delta\,|\partial_{\mathcal S}(Y)|,
\qquad \Delta:=|\mathcal S|.
\end{equation}
Throughout we view $\mathrm{Cay}(\Gamma,\mathcal S)$ via \emph{left multiplication} $y\mapsto sy$.

\begin{definition}[Isoperimetric profiles and increasing minorant]\label{def:profiles-incr-hull}
For $v\in\mathbb N$ define the \emph{equal-size} edge and vertex profiles
\[
I^{\circ}_{\rm edge}(v):=\inf\{B_{\mathcal S}(Y):|Y|=v\},\qquad
I^{\circ}_{\rm vertex}(v):=\inf\{|\partial_{\mathcal S}Y|:|Y|=v\}.
\]
We extend $I^{\circ}$ piecewise-constantly to $[1,\infty)$ by $I^{\circ}(r):=I^{\circ}(\lfloor r\rfloor)$ and define the increasing minorants (greatest nondecreasing functions dominated by the exact profiles) by
\[
I^{\mathrm{incr}\,\circ}_{\rm edge}(r):=\inf_{t\ge r} I^{\circ}_{\rm edge}(t),\qquad
I^{\mathrm{incr}\,\circ}_{\rm vertex}(r):=\inf_{t\ge r} I^{\circ}_{\rm vertex}(t).
\]
By \eqref{eq:vertex-edge-compare}, $I^{\circ}_{\rm edge}(v)\ge I^{\circ}_{\rm vertex}(v)$ and hence $I^{\mathrm{incr}\,\circ}_{\rm edge}\ge I^{\mathrm{incr}\,\circ}_{\rm vertex}$, while $I^{\mathrm{incr}\,\circ}_{\rm edge}\le \Delta\,I^{\mathrm{incr}\,\circ}_{\rm vertex}$.
\end{definition}

\subsection{Tempered Property A}
\begin{definition}[Tempered Property~A]\label{def:tempered-A-edge}
A bounded-geometry metric space $(X,d)$ has \emph{tempered Property~A with modulus $R(\eps)$} if for all sufficiently small $\eps>0$ there exist finitely supported probability measures $a_x^\eps\in\ell^1(X)$ such that
\[
\supp(a_x^\eps)\subset B(x,R(\eps)),\qquad
\sup_{d(x,y)\le1}\|a_x^\eps-a_y^\eps\|_1\le \eps.
\]
If $R(\eps)\le C\,\eps^{-\alpha}$ we say $X$ has \emph{strong tempered-A$_\alpha$}. \emph{Convention:} by replacing $R$ with an equivalent minorant we may (and do) assume $R(\varepsilon)$ is nonincreasing in $\varepsilon$.
\end{definition}

\begin{remark}[Relation to standard Property~A]
On graph metrics (e.g.\ Cayley graphs) the step-1 control implies the usual step-$R$ control by chaining along a nearest-neighbor path of length $R$:
$
\sup_{d(x,y)\le R}\|a_x^\eps-a_y^\eps\|_1\le R\,\eps.
$
More generally, on bounded-geometry spaces one may pass to a uniformly locally finite $1$--skeleton (or a geodesic discretization) without changing the large-scale class, and the same chaining estimate applies.
Thus tempered Property~A implies standard Property~A (with a linear loss in the oscillation parameter).
\end{remark}

\subsection{{\rm(TF)} in edge form and canonical witnesses}
We state {\rm(TF)} in edge normalization (equivalent to the vertex version up to factors of $\Delta$ by \eqref{eq:vertex-edge-compare}).

\begin{definition}[{\rm(TF)} (edge form)]\label{def:TF-edge-A}
A Cayley graph $(\Gamma,\mathcal S)$ has \emph{Tempered F{\o}lner (TF)} if there exist finite sets $F_n\subset\Gamma$ and constants $A_e,B\ge1$ such that:
\begin{enumerate}
\item[\emph{(F{\o}lner)}] $\displaystyle \frac{B_{\mathcal S}(F_n)}{|F_n|}\xrightarrow[n\to\infty]{}0$;
\item[\emph{(i$_e$)}] $B_{\mathcal S}(F_n)\ \le\ A_e\, I^{\mathrm{incr}\,\circ}_{\rm edge}(|F_n|)$ for all $n$;
\item[\emph{(ii)}] $|F_{n+1}\triangle F_n|\ \le\ B\cdot B_{\mathcal S}(F_n)$ for all $n$.
\end{enumerate}
Here $I^{\mathrm{incr}\,\circ}_{\rm edge}$ is the increasing \emph{minorant} of the \emph{edge} isoperimetric profile. (If the family is nested, $F_n\subset F_{n+1}$, then (ii) implies $|F_{n+1}\setminus F_n|\le B\,B_{\mathcal S}(F_n)$.)
\end{definition}

\begin{lemma}[Normalization equivalence for {\normalfont(TF)}]\label{lem:TF-norm-eq}
Let $(\Gamma,\mathcal S)$ be finitely generated with $\Delta:=|\mathcal S|$ and
$B_{\mathcal S}(Y)$ the edge boundary (inside$\to$outside count, equal to undirected crossing edges).
Then:
\begin{enumerate}
\item If {\normalfont(TF)} holds in edge normalization with constants $(A_e,B)$, i.e.
\[
B_{\mathcal S}(F_n)\ \le\ A_e\,I^{\mathrm{incr}\,\circ}_{\rm edge}(|F_n|),\qquad
|F_{n+1}\triangle F_n|\ \le\ B\,B_{\mathcal S}(F_n),
\]
then {\normalfont(TF)} holds in vertex normalization with constants
$
A\ =\ A_e\,\Delta,\ \ B'\ =\ B\,\Delta,
$
since $|\partial_{\mathcal S}Y|\le B_{\mathcal S}(Y)\le \Delta\,|\partial_{\mathcal S}Y|$ and $I^{\mathrm{incr}\,\circ}_{\rm edge}\le \Delta\,I^{\mathrm{incr}\,\circ}_{\rm vertex}$.
\item Conversely, if {\normalfont(TF)} holds in vertex normalization with $(A,B)$, then in edge normalization it holds with
$
A_e=A\,\Delta,\ \ B'=B,
$
because $B_{\mathcal S}(Y)\le \Delta\,|\partial_{\mathcal S}Y|$ and $I^{\mathrm{incr}\,\circ}_{\rm edge}\ge I^{\mathrm{incr}\,\circ}_{\rm vertex}$.
\end{enumerate}
\end{lemma}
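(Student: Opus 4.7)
The lemma is a bookkeeping exercise built on two inputs that have already been established: the pointwise sandwich
\[
|\partial_{\mathcal S}Y|\ \le\ B_{\mathcal S}(Y)\ \le\ \Delta\,|\partial_{\mathcal S}Y|
\]
from \eqref{eq:vertex-edge-compare} (Lemma~\ref{lem:B-vertex-edge}) and the resulting profile sandwich
\[
I^{\circ}_{\rm vertex}(v)\ \le\ I^{\circ}_{\rm edge}(v)\ \le\ \Delta\,I^{\circ}_{\rm vertex}(v),
\]
which propagates to the increasing minorants by taking $\inf_{t\ge r}$ of a nondecreasing-scaled inequality:
$I^{\mathrm{incr}\,\circ}_{\rm vertex}\le I^{\mathrm{incr}\,\circ}_{\rm edge}\le \Delta\,I^{\mathrm{incr}\,\circ}_{\rm vertex}$. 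The first step of the proof will be to record these two chains and then feed them into each clause of Definition~\ref{def:TF-edge-A}.

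For direction (1) I will start from the edge-TF hypotheses with constants $(A_e,B)$ on a sequence $(F_n)$. Clause (i$_e$) combined with $|\partial_{\mathcal S}F_n|\le B_{\mathcal S}(F_n)$ on the left and $I^{\mathrm{incr}\,\circ}_{\rm edge}(|F_n|)\le \Delta\,I^{\mathrm{incr}\,\circ}_{\rm vertex}(|F_n|)$ on the right yields
$|\partial_{\mathcal S}F_n|\le A_e\Delta\,I^{\mathrm{incr}\,\circ}_{\rm vertex}(|F_n|)$, giving the vertex constant $A=A_e\Delta$. Clause (ii) combined with $B_{\mathcal S}(F_n)\le \Delta\,|\partial_{\mathcal S}F_n|$ yields $|F_{n+1}\triangle F_n|\le B\Delta\,|\partial_{\mathcal S}F_n|$, giving $B'=B\Delta$. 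The F{\o}lner condition transfers immediately because $B_{\mathcal S}(F_n)/|F_n|\to 0$ forces $|\partial_{\mathcal S}F_n|/|F_n|\to 0$ via $|\partial_{\mathcal S}F_n|\le B_{\mathcal S}(F_n)$.

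Direction (2) is symmetric but uses the other end of each sandwich. From a vertex-TF sequence $(F_n)$ with constants $(A,B)$, clause (i) gives $|\partial_{\mathcal S}F_n|\le A\,I^{\mathrm{incr}\,\circ}_{\rm vertex}(|F_n|)\le A\,I^{\mathrm{incr}\,\circ}_{\rm edge}(|F_n|)$; multiplying the left-hand side by $\Delta$ via $B_{\mathcal S}(F_n)\le \Delta\,|\partial_{\mathcal S}F_n|$ yields $B_{\mathcal S}(F_n)\le A\Delta\,I^{\mathrm{incr}\,\circ}_{\rm edge}(|F_n|)$, so $A_e=A\Delta$. For clause (ii), $|F_{n+1}\triangle F_n|\le B\,|\partial_{\mathcal S}F_n|\le B\,B_{\mathcal S}(F_n)$, hence $B'=B$ (no extra factor of $\Delta$ is needed in this direction). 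The F{\o}lner property again transfers through the same sandwich.

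There is no real obstacle: the content lies entirely in tracking which side of the sandwich is used where, and in noting that taking the increasing minorant preserves the scaling constant $\Delta$. The only point that warrants explicit mention in the write-up is that passing from $I^{\circ}$ to $I^{\mathrm{incr}\,\circ}$ does not degrade the constants, because $\inf_{t\ge r}(\Delta f(t))=\Delta\inf_{t\ge r}f(t)$ for any nonnegative $f$. With that remark in place, the two displayed lines of constants in the statement follow by direct substitution.
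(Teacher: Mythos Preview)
Your proof is correct and follows exactly the same approach as the paper: the paper does not give a separate proof for this lemma but embeds the justification in the ``since'' and ``because'' clauses of the statement itself, using precisely the vertex--edge sandwich \eqref{eq:vertex-edge-compare} and the induced profile comparison from Definition~\ref{def:profiles-incr-hull}. Your write-up is simply a more explicit unpacking of that same bookkeeping, with the helpful additional remarks about the F{\o}lner clause and the preservation of the scaling constant under $\inf_{t\ge r}$.
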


\begin{lemma}[Bounded change of generators: profile comparability]\label{lem:chg-gen}
Let $\mathcal S,\mathcal T$ be two finite symmetric generating sets of $\Gamma$.
Assume there exist $m,n\in\N$ with $\mathcal S\subset \mathcal T^m$ and $\mathcal T\subset \mathcal S^n$.
Then there exist constants $c_1,c_2\in(0,\infty)$ depending only on $(\mathcal S,\mathcal T,m,n)$ such that
for all finite $Y\subset\Gamma$,
\[
c_1\,B_{\mathcal T}(Y)\ \le\ B_{\mathcal S}(Y)\ \le\ c_2\,B_{\mathcal T}(Y),
\]
and hence, for all $r$,
\[
c_1\,I^{\circ}_{{\rm edge},\mathcal T}(r)\ \le\ I^{\circ}_{{\rm edge},\mathcal S}(r)\ \le\ c_2\,I^{\circ}_{{\rm edge},\mathcal T}(r),
\qquad
c_1\,I^{\mathrm{incr}\,\circ}_{{\rm edge},\mathcal T}(r)\ \le\ I^{\mathrm{incr}\,\circ}_{{\rm edge},\mathcal S}(r)\ \le\ c_2\,I^{\mathrm{incr}\,\circ}_{{\rm edge},\mathcal T}(r).
\]
\end{lemma}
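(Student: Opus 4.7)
The plan is to reduce the profile and minorant comparisons to a pointwise edge-boundary comparison, and to obtain the latter by a direct path-lifting argument. First, for each $s\in\mathcal S$ I will fix once and for all a word $s=t_{s,m}\cdots t_{s,1}$ of length exactly $m$ in $\mathcal T$, using symmetry of $\mathcal T$ to pad by trivial pairs $t\,t^{-1}$ when the natural expression has length less than $m$; symmetrically, I will fix a word of length $n$ in $\mathcal S$ for each $t\in\mathcal T$. These fixed words play the role of ``lifts'' between the two Cayley graphs.

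Next I will establish the upper pointwise bound $B_{\mathcal S}(Y)\le c_2\,B_{\mathcal T}(Y)$ with $c_2:=|\mathcal S|\cdot m$. Given a finite $Y\subset\Gamma$ and a directed $\mathcal S$-boundary pair $(y,s)$ (i.e.\ $y\in Y$, $sy\notin Y$), the lifting word produces the $\mathcal T$-path
\[
y_0:=y,\quad y_1:=t_{s,1}y,\quad \ldots,\quad y_m:=sy,
\]
which starts in $Y$ and ends outside; let $i=i(y,s)$ be the smallest index with $y_{i-1}\in Y$ and $y_i\notin Y$, and set $\phi(y,s):=(y_{i-1},t_{s,i})$, a directed $\mathcal T$-boundary pair. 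The key observation is that the multiplicity of $\phi$ at a fixed target $(x,t)$ is at most $|\mathcal S|\cdot m$: for each pair $(s,i)$ occurring in the fixed words with $t_{s,i}=t$, the base point $y=(t_{s,i-1}\cdots t_{s,1})^{-1}x$ is uniquely determined, and $i$ ranges over $\{1,\dots,m\}$, $s$ over $\mathcal S$. Summing gives $B_{\mathcal S}(Y)\le |\mathcal S|\cdot m\cdot B_{\mathcal T}(Y)$. The reverse inequality will follow by exchanging the roles of $\mathcal S$ and $\mathcal T$ and invoking the second hypothesis $\mathcal T\subset\mathcal S^n$, yielding $c_1:=(|\mathcal T|\cdot n)^{-1}$.

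Finally, the profile and minorant inequalities will follow mechanically: from $c_1B_{\mathcal T}(Y)\le B_{\mathcal S}(Y)\le c_2B_{\mathcal T}(Y)$ at every finite $Y$, taking the infimum over $|Y|=r$ gives the two-sided bracket on $I^{\circ}_{\mathrm{edge}}$, and then taking $\inf_{t\ge r}$ preserves the two-sided bound for $I^{\mathrm{incr}\,\circ}_{\mathrm{edge}}$. I do not expect a hard step here: the only item requiring care is the multiplicity bookkeeping for $\phi$, which must depend only on $(|\mathcal S|,|\mathcal T|,m,n)$ and not on $Y$. The argument is the edge-boundary analogue of the classical bi-Lipschitz equivalence of word metrics under bounded change of generating sets, lifted from the distance function to the boundary functional.
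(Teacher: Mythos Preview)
Your argument is correct and in fact takes a somewhat different route from the paper's own sketch. The paper passes through \emph{vertex} boundaries: it observes that any $y\in\partial_{\mathcal S}Y$ lies within $\mathcal T$-distance $m$ of $\partial_{\mathcal T}Y$, hence $\partial_{\mathcal S}Y\subset N_m^{(\mathcal T)}(\partial_{\mathcal T}Y)$, and then invokes the bounded-degree neighborhood bound $|N_m^{(\mathcal T)}(U)|\le \Delta_{\mathcal T}(\Delta_{\mathcal T}-1)^{m-1}|U|$, before converting back to edge boundary via the vertex--edge comparison. This yields $c_2=\Delta_{\mathcal S}\,\Delta_{\mathcal T}(\Delta_{\mathcal T}-1)^{m-1}$, i.e.\ constants that are exponential in $m,n$. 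Your direct path-lifting on directed boundary pairs bypasses the vertex detour entirely and the multiplicity bookkeeping gives the much sharper $c_2=|\mathcal S|\cdot m$ and $c_1=(|\mathcal T|\cdot n)^{-1}$, which are only linear in $m,n$. Both arguments depend on the same underlying idea (replacing an $\mathcal S$-step by a bounded-length $\mathcal T$-path and locating the first exit), but your version keeps the counting at the edge level throughout, which is both more elementary and quantitatively stronger. One trivial remark: padding to length exactly $m$ by $tt^{-1}$ pairs only handles even defects; it is simpler just to take words of length $\le m$, since your multiplicity bound $|\mathcal S|\cdot m$ already accounts for all positions $i\le m$.
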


\begin{proof}[Sketch]
Fix representative words over $\mathcal T$ of length $\le m$ for each $s\in\mathcal S$. If $y\in\partial_{\mathcal S}Y$ then $d_{\mathcal T}(y,(\Gamma\setminus Y))\le m$, hence $y\in N_m^{(\mathcal T)}(\partial_{\mathcal T}Y)$. In a bounded-degree graph,
$|N_m^{(\mathcal T)}(U)|\le C(m,\Delta_{\mathcal T})\,|U|$ with $C(m,\Delta_{\mathcal T})\le \Delta_{\mathcal T}(\Delta_{\mathcal T}-1)^{m-1}$. Therefore
\[
|\partial_{\mathcal S}Y|\ \le\ C(m,\Delta_{\mathcal T})\,|\partial_{\mathcal T}Y|.
\]
Using $B_{\mathcal S}(Y)\le \Delta_{\mathcal S}\,|\partial_{\mathcal S}Y|$ and $|\partial_{\mathcal T}Y|\le B_{\mathcal T}(Y)$ gives
\[
B_{\mathcal S}(Y)\ \le\ \Delta_{\mathcal S}\,C(m,\Delta_{\mathcal T})\,B_{\mathcal T}(Y).
\]
The converse inequality follows symmetrically from $\mathcal T\subset\mathcal S^n$. This proves the stated comparability with
$c_2=\Delta_{\mathcal S}C(m,\Delta_{\mathcal T})$ and $c_1=(\Delta_{\mathcal T}C(n,\Delta_{\mathcal S}))^{-1}$.
\end{proof}

\medskip
\noindent\textbf{Per-generator directed counts.}
For $s\in \mathcal S$ and finite $F\subset\Gamma$, set
\[
D_s(F):=\#\{y\in F:\ sy\notin F\}.
\]
Then
\begin{equation}\label{eq:per-gen-identities}
B_{\mathcal S}(F)=\sum_{s\in\mathcal S} D_s(F),
\qquad
|F\triangle sF|=D_s(F)+D_{s^{-1}}(F)\ \le\ 2\,B_{\mathcal S}(F).
\end{equation}

\begin{lemma}[Canonical {\rm(TF)} witnesses for A (left-action version)]\label{lem:canonical-A-edge}
Let $F_n\subset\Gamma$ be finite and define
\[
a_x^{(n)}:=\frac{1}{|F_n|}\,\mathbf 1_{\,x\,F_n}\in\ell^1(\Gamma).
\]
Then, for every $x\in\Gamma$ and $s\in\mathcal S$,
\begin{equation}\label{eq:edge-variation-left}
\|a_x^{(n)}-a_{s x}^{(n)}\|_1
\;=\; \frac{|F_n\triangle sF_n|}{|F_n|}
\;=\; \frac{D_s(F_n)+D_{s^{-1}}(F_n)}{|F_n|}
\;\le\; \frac{2\,B_{\mathcal S}(F_n)}{|F_n|}\,,
\end{equation}
and $\supp(a_x^{(n)})\subset B\big(x,R_n\big)$ with
\[
R_n:=\mathrm{rad}(F_n):=\max\{d(e,g):g\in F_n\}.
\]
\end{lemma}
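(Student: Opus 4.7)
The plan is to reduce $\|a_x^{(n)} - a_{sx}^{(n)}\|_1$ to a combinatorial symmetric-difference count, exploit the translation-invariance built into the Cayley graph structure (which, in the convention adopted throughout the appendix, makes the group act by graph automorphisms on one side), and then convert the resulting boundary-type quantity into per-generator counts using identity \eqref{eq:per-gen-identities}.

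First, since both $a_x^{(n)}$ and $a_{sx}^{(n)}$ are normalized indicators of sets of cardinality $|F_n|$, the $\ell^1$ distance collapses to a symmetric-difference count:
\[
\|a_x^{(n)}-a_{sx}^{(n)}\|_1 \;=\; \frac{|\,xF_n\triangle sxF_n\,|}{|F_n|}.
\]
Next, I would invoke the fact that, in the left-action Cayley graph convention used here, the appropriate one-sided translation (by $x^{-1}$) is a bijection of $\Gamma$ that sends $xF_n$ to $F_n$ and $sxF_n$ to $sF_n$, whence $|xF_n\triangle sxF_n|=|F_n\triangle sF_n|$. This yields the first equality of \eqref{eq:edge-variation-left}.

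For the second equality, I would substitute $|F_n\triangle sF_n|=D_s(F_n)+D_{s^{-1}}(F_n)$ from \eqref{eq:per-gen-identities}; this is direct bookkeeping: the contribution $y\in sF_n\setminus F_n$ (write $y=sx'$ with $x'\in F_n$, $sx'\notin F_n$) is counted by $D_s(F_n)$, and symmetrically $F_n\setminus sF_n=\{y\in F_n:\ s^{-1}y\notin F_n\}$ has cardinality $D_{s^{-1}}(F_n)$. The final inequality $D_s+D_{s^{-1}}\le 2B_{\mathcal S}(F_n)$ is immediate because each $D_{s'}$ is a nonnegative summand in $B_{\mathcal S}(F_n)=\sum_{s'\in\mathcal S}D_{s'}(F_n)$ by \eqref{eq:per-gen-identities}. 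For the support bound, since $xF_n$ is a translate of $F_n$ and the chosen convention makes this translation a graph isometry, every $xg\in xF_n$ satisfies $d(x,xg)=d(e,g)\le R_n$, giving $\supp(a_x^{(n)})\subset B(x,R_n)$.

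\textbf{Main obstacle.} The only technically delicate step is the translation-invariance identity $|xF_n\triangle sxF_n|=|F_n\triangle sF_n|$. Verifying it amounts to checking that the specific translation convention in force (the side on which the group acts on indicator supports versus the side on which $\mathcal S$ labels Cayley edges) renders one and the same group element simultaneously a measure-preserving bijection on vertices and an intertwiner taking the pair $(xF_n,sxF_n)$ to $(F_n,sF_n)$; a mismatched convention would introduce a conjugation $s\mapsto x^{-1}sx$ on the right-hand side, breaking uniformity in $x$. Once the convention is pinned down so that this holds, the remaining steps are routine.
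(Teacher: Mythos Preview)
Your proposal mirrors the paper's proof step for step: reduce $\|a_x^{(n)}-a_{sx}^{(n)}\|_1$ to $|xF_n\triangle sxF_n|/|F_n|$, claim this equals $|F_n\triangle sF_n|/|F_n|$ by a one-sided translation, invoke \eqref{eq:per-gen-identities} for the second equality and the inequality, and finish the support bound via $d(x,xg)=d(e,g)$. The paper's argument is equally terse, citing only ``left-invariance of the Cayley metric'' at the translation step.

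Your instinct to isolate that step as the crux is right, and in fact your conjugation worry is \emph{realized} under the literal conventions declared at the top of the appendix (left Cayley edges $y\sim sy$ together with $a_x=\mathbf 1_{xF_n}/|F_n|$): left-multiplying by $x^{-1}$ sends $sxF_n$ to $(x^{-1}sx)F_n$, not to $sF_n$, and the middle equality already fails in the free group $F_2$ (with $F_n=\{e,a\}$, $x=b$, $s=a$ one gets $|xF_n\triangle sxF_n|=4$ but $|F_n\triangle sF_n|=2$). The support bound $d(x,xf)=d(e,f)$ fails for the same reason, since the left-action Cayley metric is \emph{right}-invariant ($d(x,y)=|yx^{-1}|$), so $d(x,xf)=|xfx^{-1}|$. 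The standard remedy is to use a consistent pairing of the two sides: either keep $a_x=\mathbf 1_{xF_n}$ but work in the right Cayley graph and compare $a_x$ with $a_{xs}$ (then left-translate by $x^{-1}$ and the metric is left-invariant), or keep the left Cayley graph but set $a_x=\mathbf 1_{F_nx}$ and right-translate by $x^{-1}$. Under either convention your argument, and the paper's, go through verbatim; as written, neither does in a non-abelian group.
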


\begin{proof}
For $s\in\mathcal S$ (so $d(x,sx)=1$), left-invariance of the Cayley metric gives
\[
\|a_x^{(n)}-a_{s x}^{(n)}\|_1
= \frac{|xF_n\triangle s x F_n|}{|F_n|}
= \frac{|F_n\triangle sF_n|}{|F_n|}.
\]
The identity $|F\triangle sF|=D_s(F)+D_{s^{-1}}(F)$ is immediate from the definitions, and the inequality follows from \eqref{eq:per-gen-identities}. The support claim follows since $d(x,x f)=d(e,f)\le R_n$.
\end{proof}

\begin{lemma}[Anchoring/telescoping bounds]\label{lem:anchoring}
Let $(X,d)$ have tempered Property~A witnessed by $\{a_x^{(n)}\}_x$ at scales $(\varepsilon_n,R_n)$, i.e.\ $\sup_{d(x,y)\le 1}\|a_x^{(n)}-a_y^{(n)}\|_1\le\varepsilon_n$ and $\supp(a_x^{(n)})\subset B(x,R_n)$. Then for all $x,y\in X$,
\[
\|a_x^{(n)}-a_y^{(n)}\|_1\ \le\ d(x,y)\,\varepsilon_n,
\qquad
\|\,\sqrt{a_x^{(n)}}-\sqrt{a_y^{(n)}}\,\|_2^2\ \le\ d(x,y)\,\varepsilon_n.
\]
In particular, for a fixed basepoint $x_0$,
\[
\sum_{n\ge1} w_n^2\,\|\,\sqrt{a_x^{(n)}}-\sqrt{a_{x_0}^{(n)}}\,\|_2^2\ \le\ d(x,x_0)\,\sum_{n\ge1} w_n^2\,\varepsilon_n,
\]
so if $\sum_n w_n^2\,\varepsilon_n<\infty$, then $\bigoplus_n w_n\big(\sqrt{a_x^{(n)}}-\sqrt{a_{x_0}^{(n)}}\big)$ converges in $\ell^2$ for every $x$.
\end{lemma}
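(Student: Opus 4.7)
The plan is a two-step chaining argument followed by a standard Mazur-map comparison. First, for the $\ell^1$ bound, I would fix $x,y\in X$ and choose a finite sequence $x=x_0,x_1,\dots,x_k=y$ with $k=\lceil d(x,y)\rceil$ and $d(x_{i-1},x_i)\le1$; on a Cayley graph (or after passing to the $1$-skeleton of a bounded-geometry discretization, as in the remark following Definition~\ref{def:tempered-A-edge}), such a chain is just a nearest-neighbor path along a geodesic. The triangle inequality in $\ell^1$ and the step-$1$ oscillation bound then give
\[
\|a_x^{(n)}-a_y^{(n)}\|_1 \ \le\ \sum_{i=1}^{k}\|a_{x_{i-1}}^{(n)}-a_{x_i}^{(n)}\|_1 \ \le\ k\,\varepsilon_n \ \le\ d(x,y)\,\varepsilon_n,
\]
where the final inequality uses $k\le d(x,y)$ on integer metrics (and a harmless $+1$ otherwise, absorbed by adjusting $\varepsilon_n$ by a universal factor if the metric is not integer-valued).

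Second, for the $\ell^2$ bound I would invoke the Mazur-map inequality: if $f,g\in\ell^1(X)$ are nonnegative, then pointwise
\[
(\sqrt{f(z)}-\sqrt{g(z)})^2 \ =\ f(z)+g(z)-2\sqrt{f(z)g(z)} \ \le\ f(z)+g(z)-2\min\{f(z),g(z)\} \ =\ |f(z)-g(z)|,
\]
since $\sqrt{fg}\ge\min\{f,g\}$. Summing over $z$ yields $\|\sqrt{f}-\sqrt{g}\|_2^2\le\|f-g\|_1$, which applied to $f=a_x^{(n)}$, $g=a_y^{(n)}$ and combined with the first step gives $\|\sqrt{a_x^{(n)}}-\sqrt{a_y^{(n)}}\|_2^2\le d(x,y)\,\varepsilon_n$.

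For the final claim, I would just apply the preceding inequality with $y=x_0$ and multiply by $w_n^2$:
\[
\sum_{n\ge1} w_n^2\,\|\sqrt{a_x^{(n)}}-\sqrt{a_{x_0}^{(n)}}\|_2^2 \ \le\ d(x,x_0)\sum_{n\ge1} w_n^2\,\varepsilon_n.
\]
Convergence in $\ell^2$ of $\bigoplus_n w_n(\sqrt{a_x^{(n)}}-\sqrt{a_{x_0}^{(n)}})$ under $\sum_n w_n^2\varepsilon_n<\infty$ then follows from Cauchy completeness of $\ell^2$ applied to the partial sums, since the bound above is uniform in the truncation.

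There is no real obstacle here; the only point that requires a line of care is the passage from the step-$1$ oscillation hypothesis to the step-$k$ bound when the ambient space is not a graph (so ``nearest-neighbor chain of length $d(x,y)$'' needs a discretization). As noted after Definition~\ref{def:tempered-A-edge}, bounded-geometry allows us to pass to a uniformly locally finite $1$-skeleton on which the chaining is literal; alternatively, one may take $k=\lceil d(x,y)\rceil$ and absorb a factor of~$2$ into $\varepsilon_n$, which is harmless since only the linear scaling in $d(x,y)$ matters for all subsequent applications in this appendix.
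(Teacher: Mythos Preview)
Your proposal is correct and matches the paper's approach: the paper states Lemma~\ref{lem:anchoring} without proof and then uses exactly your two ingredients in the proof of Theorem~\ref{thm:quant-embed-edge} (chaining along a nearest-neighbor path of length $d(x,y)$, and the pointwise inequality $(\sqrt{u}-\sqrt{v})^2\le|u-v|$). Your handling of the non-graph case via the $1$-skeleton remark is also consistent with the paper's treatment after Definition~\ref{def:tempered-A-edge}.
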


\begin{lemma}[From $L^1$-profile to edge profile]\label{lem:j-to-Iedge-A}
Let $(\Gamma,\mathcal S)$ be a finitely generated group with edge boundary $B_{\mathcal S}$ (undirected normalization).
Let $j_{\Gamma,1}(v)$ denote the $L^1$ isoperimetric profile in the sense of \cite{Tessera2013RMI}, i.e.
\[
j_{\Gamma,1}(v)\ :=\ \sup_{\substack{A\subset\Gamma\\|A|\le v}}\ \sup_{f\in L^1(A),\,f\neq0}\ 
\frac{\|f\|_1}{\sum_{s\in\mathcal S}\|f-\lambda(s)f\|_1}\,.
\]
Then for all $v\ge1$,
\begin{equation}\label{eq:j-to-Iedge}
I^{\circ}_{\rm edge}(v)\ \ge\ \frac{v}{2\,j_{\Gamma,1}(v)}\,.
\end{equation}
\end{lemma}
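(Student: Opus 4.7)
The plan is to prove the inequality by testing the defining $L^1$ Poincaré ratio in $j_{\Gamma,1}(v)$ against indicator functions of near-minimizers for the edge profile. Fix $v\ge 1$ and any finite $Y\subset\Gamma$ with $|Y|=v$. Since $A:=Y$ is admissible in the definition of $j_{\Gamma,1}(v)$ (it satisfies $|A|\le v$) and $f:=\mathbf 1_Y\in L^1(A)\setminus\{0\}$, the definition immediately gives the one-function lower bound
\[
j_{\Gamma,1}(v)\ \ge\ \frac{\|\mathbf 1_Y\|_1}{\sum_{s\in\mathcal S}\|\mathbf 1_Y-\lambda(s)\mathbf 1_Y\|_1}.
\]

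The next step is to identify both sides with geometric quantities. The numerator is $\|\mathbf 1_Y\|_1=|Y|=v$. For the denominator, I will use the left regular representation convention $[\lambda(s)f](x)=f(s^{-1}x)$, so that $\lambda(s)\mathbf 1_Y=\mathbf 1_{sY}$, and therefore $\|\mathbf 1_Y-\lambda(s)\mathbf 1_Y\|_1=|Y\triangle sY|$. Invoking the per-generator identity \eqref{eq:per-gen-identities} from the main text, $|Y\triangle sY|=D_s(Y)+D_{s^{-1}}(Y)$, and then summing over $s\in\mathcal S$ and using that $s\mapsto s^{-1}$ is a bijection of the symmetric set $\mathcal S$, one obtains
\[
\sum_{s\in\mathcal S}\|\mathbf 1_Y-\lambda(s)\mathbf 1_Y\|_1\ =\ \sum_{s\in\mathcal S}\bigl(D_s(Y)+D_{s^{-1}}(Y)\bigr)\ =\ 2\sum_{s\in\mathcal S}D_s(Y)\ =\ 2\,B_{\mathcal S}(Y).
\]

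Substituting into the displayed lower bound for $j_{\Gamma,1}(v)$ yields $j_{\Gamma,1}(v)\ge v/\bigl(2B_{\mathcal S}(Y)\bigr)$, equivalently $B_{\mathcal S}(Y)\ge v/\bigl(2j_{\Gamma,1}(v)\bigr)$. Taking the infimum over all $Y\subset\Gamma$ with $|Y|=v$ on the left gives \eqref{eq:j-to-Iedge}. There is no serious obstacle: the whole argument is a one-test-function Cheeger-type computation, and the only point requiring care is that the two distinct conventions in play (the left regular representation on one side, the inside-to-outside directed count $B_{\mathcal S}$ on the other) match up correctly, which is precisely the content of \eqref{eq:per-gen-identities} combined with symmetry of $\mathcal S$. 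The factor $2$ in the bound is exactly the one produced by this convention-matching and cannot be improved at this level of generality (for a non-symmetric generating set the inequality would carry the factor $1$, not $2$).
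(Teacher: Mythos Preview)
Your proof is correct and follows essentially the same approach as the paper: test the defining ratio for $j_{\Gamma,1}(v)$ against the indicator $\mathbf 1_Y$ of a set of size $v$, compute $\sum_{s\in\mathcal S}\|\mathbf 1_Y-\lambda(s)\mathbf 1_Y\|_1=2B_{\mathcal S}(Y)$ via the per-generator identities, and then take the infimum over $Y$. The paper's version is slightly terser but otherwise identical.
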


\begin{proof}
For any finite $A\subset\Gamma$,
\[
\sum_{s\in\mathcal S}\|\mathbf 1_A-\lambda(s)\mathbf 1_A\|_1
=\sum_{s\in\mathcal S}|A\triangle sA|
=\sum_{s\in\mathcal S}\big(D_s(A)+D_{s^{-1}}(A)\big)
=2\,B_{\mathcal S}(A).
\]
Hence, with $f=\mathbf 1_A$,
$
j_{\Gamma,1}(|A|)\ \ge\ \frac{\|f\|_1}{\sum_{s}\|f-\lambda(s)f\|_1}
\ \ge\ \frac{|A|}{2\,B_{\mathcal S}(A)}.
$
Taking the supremum over $|A|=v$ yields $j_{\Gamma,1}(v)\ge v/\big(2\,I^{\circ}_{\rm edge}(v)\big)$, which is \eqref{eq:j-to-Iedge}.
\end{proof}

\subsection{{\rm(TF)} $\Rightarrow$ tempered Property A}
\begin{proposition}[{\rm(TF)} $\Rightarrow$ tempered-A (edge)]\label{prop:TF-to-A-edge}
Assume {\rm(TF)} in the form of Definition~\ref{def:TF-edge-A}. Then the families $\{a_x^{(n)}\}$ from Lemma~\ref{lem:canonical-A-edge} witness tempered Property~A with
\[
\varepsilon_n\ :=\ \frac{2\,B_{\mathcal S}(F_n)}{|F_n|}\xrightarrow[n\to\infty]{}0,\qquad
R_n\ :=\ \mathrm{rad}(F_n),
\]
i.e.\ $\|a_x^{(n)}-a_y^{(n)}\|_1\le\varepsilon_n$ for $d(x,y)\le1$ and $\supp(a_x^{(n)})\subset B(x,R_n)$. 
\emph{Remark:} only the F{\o}lner clause $\varepsilon_n\to0$ is used to produce tempered-A; the additional {\rm(TF)} clauses \emph{(i$_e$)} and \emph{(ii)} are not needed here.

If, in addition, there exists $Q\ge1$ and constants $c,C>0$ such that
\begin{equation}\label{eq:poly-growth-edge}
c\,R_n^Q\ \le\ |F_n|\ \le\ C\,R_n^Q,\qquad c\,R_n^{Q-1}\ \le\ B_{\mathcal S}(F_n)\ \le\ C\,R_n^{Q-1},
\end{equation}
then $\varepsilon_n\asymp R_n^{-1}$ and hence $\Gamma$ has \emph{strong tempered-A$_1$} (namely $R(\eps)\lesssim \eps^{-1}$).
\end{proposition}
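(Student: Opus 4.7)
The plan is to read off tempered Property~A directly from Lemma~\ref{lem:canonical-A-edge}, and then to quantify the modulus under the polynomial growth hypothesis \eqref{eq:poly-growth-edge}. First, I will take $a_x^{(n)} := |F_n|^{-1}\mathbf 1_{xF_n}$ as in Lemma~\ref{lem:canonical-A-edge}. Each $a_x^{(n)}$ is a finitely supported probability measure on $\Gamma$ with $\supp(a_x^{(n)}) = xF_n \subset B(x,R_n)$ by the definition $R_n := \mathrm{rad}(F_n)$. The step-$1$ oscillation is controlled by \eqref{eq:edge-variation-left}: for every $x\in\Gamma$ and $s\in\mathcal S$,
\[
\|a_x^{(n)}-a_{sx}^{(n)}\|_1 = \frac{|F_n\triangle sF_n|}{|F_n|} = \frac{D_s(F_n)+D_{s^{-1}}(F_n)}{|F_n|} \le \frac{2\,B_{\mathcal S}(F_n)}{|F_n|} =: \varepsilon_n,
\]
and any pair $x,y$ with $d(x,y)\le 1$ either coincides or satisfies $y=sx$ for some $s\in\mathcal S$, so $\sup_{d(x,y)\le 1}\|a_x^{(n)}-a_y^{(n)}\|_1 \le \varepsilon_n$.

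Next, I will note that the Følner clause of Definition~\ref{def:TF-edge-A} is exactly the statement $B_{\mathcal S}(F_n)/|F_n|\to 0$, so $\varepsilon_n\to 0$. This alone shows that $\{a_x^{(n)}\}$ witnesses tempered Property~A in the sense of Definition~\ref{def:tempered-A-edge} with modulus $R(\varepsilon_n):=R_n$ and oscillation $\varepsilon_n$. Observe, as flagged in the statement, that clauses \emph{(i$_e$)} and \emph{(ii)} of (TF) play no role in this step: only the F{\o}lner property of $(F_n)$ is invoked.

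Finally, I will upgrade the modulus under \eqref{eq:poly-growth-edge}. Plugging the two-sided bounds into the definition of $\varepsilon_n$ gives
\[
\varepsilon_n = \frac{2\,B_{\mathcal S}(F_n)}{|F_n|} \le \frac{2C\,R_n^{Q-1}}{c\,R_n^{Q}} = \frac{2C}{c}\,R_n^{-1},
\]
and similarly $\varepsilon_n \ge (2c/C)R_n^{-1}$, so $\varepsilon_n \asymp R_n^{-1}$. Inverting (and using the convention that we pass to a nonincreasing minorant for $R(\varepsilon)$) yields $R_n \le \kappa\,\varepsilon_n^{-1}$ with $\kappa := 2C/c$; interpolating between the discrete scales $\varepsilon_n$ by monotonicity produces a modulus $R(\varepsilon)\le \kappa'\,\varepsilon^{-1}$ valid for all small $\varepsilon>0$, which is precisely strong tempered-A$_1$. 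No genuine obstacle arises here: the argument is a direct bookkeeping of the canonical witnesses of Lemma~\ref{lem:canonical-A-edge} combined with the quantitative per-scale comparison supplied by \eqref{eq:poly-growth-edge}. The only point that merits care is the conversion between the per-generator symmetric-difference count $|F_n\triangle sF_n|$ and the two-sided edge boundary $B_{\mathcal S}(F_n)$, which is handled cleanly by identity \eqref{eq:per-gen-identities}.
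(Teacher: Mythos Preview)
Your proof is correct and follows essentially the same approach as the paper: invoke Lemma~\ref{lem:canonical-A-edge} for the support and step-$1$ oscillation bounds, use the F{\o}lner clause for $\varepsilon_n\to0$, and then plug \eqref{eq:poly-growth-edge} into the definition of $\varepsilon_n$ to get $\varepsilon_n\asymp R_n^{-1}$. The paper's proof is more terse but records exactly these steps.
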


\begin{proof}
The variation/support statements follow from Lemma~\ref{lem:canonical-A-edge}; the F{\o}lner clause gives $\varepsilon_n\to0$. Under \eqref{eq:poly-growth-edge}, $\varepsilon_n=2 B_{\mathcal S}(F_n)/|F_n|\asymp R_n^{-1}$, so $R_n\asymp \varepsilon_n^{-1}$.
\end{proof}

\begin{remark}[Where \eqref{eq:poly-growth-edge} holds]\label{rem:wulff-edge}
In virtually nilpotent (Carnot) groups, for $F_n$ chosen as the discrete Wulff samplers built in \S\ref{sec:sharp-wulff}, the asymptotics \eqref{eq:poly-growth-edge} hold with the homogeneous dimension $Q$ and constants depending only on $(\Gamma,\mathcal S)$.
\end{remark}

\subsection{Quantitative Hilbert embeddings from tempered-A}
\begin{theorem}[Quantitative coarse embedding with explicit compression]\label{thm:quant-embed-edge}
Let $(X,d)$ be of bounded geometry and have tempered Property~A with witnesses at scales $(\varepsilon_n,R_n)$. Define $v_x^{(n)}:=\sqrt{a_x^{(n)}}\in\ell^2(X)$ and choose weights $w_n>0$ with $\sum_n w_n^2\,\varepsilon_n<\infty$ (for instance, $w_n^2:=1/(n^2\varepsilon_n)$ for indices with $\varepsilon_n>0$; indices with $\varepsilon_n=0$ can be discarded). Then
\[
\Phi(x):=\bigoplus_{n\ge1} w_n\big(v_x^{(n)}-v_{x_0}^{(n)}\big):\ X\longrightarrow \mathcal H:=\bigoplus_{n\ge1}\ell^2(X)
\]
is a coarse embedding with
\[
\|\Phi(x)-\Phi(y)\|^2\ \le\ \sum_n w_n^2\,\varepsilon_n\quad\text{if }d(x,y)\le1,\qquad
\|\Phi(x)-\Phi(y)\|\ \le\ \Big(\sum_n w_n^2\,\varepsilon_n\Big)^{1/2}\,d(x,y),
\]
and
\[
\|\Phi(x)-\Phi(y)\|^2\ \ge\ \sum_{\,2R_n< d(x,y)} 2\,w_n^2.
\]
If $R(\eps)\le C\,\eps^{-\alpha}$ (strong tempered-A$_\alpha$), choose $\varepsilon_n=2^{-n}$ and $w_n^2=2^{n}/n^2$. Then for all sufficiently large $d(x,y)$,
\[
\ \ \|\Phi(x)-\Phi(y)\|\ \gtrsim\ \frac{d(x,y)^{\,\frac{1}{2\alpha}}}{\log d(x,y)}\ .
\]
\emph{Quantitative counting.} With $R_n\le C\,2^{\alpha n}$, if $m:=\big\lfloor \alpha^{-1}\log_2\frac{d(x,y)}{2C}\big\rfloor_+$ then $2R_n<d(x,y)$ for all $n\le m$. Hence
\[
\|\Phi(x)-\Phi(y)\|^2\ \ge\ \sum_{n\le m} 2\,\frac{2^n}{n^2}\ \asymp\ \frac{2^{m}}{m^2}\ \asymp\ \frac{d(x,y)^{1/\alpha}}{\big(\log d(x,y)\big)^2},
\]
giving the stated lower bound after taking square roots.
\end{theorem}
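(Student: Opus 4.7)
The plan is to assemble three independent pieces: a Hellinger-type pointwise bound that turns the $\ell^1$-oscillation of the measures $a_x^{(n)}$ into the $\ell^2$-oscillation of their square roots; a disjoint-support observation at scales where the witnesses no longer overlap; and a geometric-series estimate that converts the polynomial modulus $R(\varepsilon)\le C\varepsilon^{-\alpha}$ into an explicit Hilbert compression. Each piece is standard individually; the content of the theorem is that a single choice of weights realizes convergence \emph{and} a nontrivial compression simultaneously.

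First I would handle the Lipschitz upper bound coordinate-wise. Since $a_x^{(n)}$ and $a_y^{(n)}$ are probability measures, the elementary pointwise inequality $(\sqrt{a}-\sqrt{b})^2\le |a-b|$ gives $\|v_x^{(n)}-v_y^{(n)}\|_2^2\le \|a_x^{(n)}-a_y^{(n)}\|_1$. For $d(x,y)\le 1$ this is at most $\varepsilon_n$; summing with weights $w_n^2$ yields the stated bound $\|\Phi(x)-\Phi(y)\|^2\le \sum_n w_n^2\varepsilon_n$, which is finite by hypothesis. For general distances, chaining along a Cayley geodesic and applying the triangle inequality in $\ell^1$ upgrades this to $\|v_x^{(n)}-v_y^{(n)}\|_2^2\le d(x,y)\varepsilon_n$ (this is exactly Lemma~\ref{lem:anchoring}), and summing gives the linear bound $\|\Phi(x)-\Phi(y)\|\le \bigl(\sum_n w_n^2\varepsilon_n\bigr)^{1/2}d(x,y)$. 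Convergence of $\Phi(x)$ as a vector in $\bigoplus_n \ell^2(X)$ follows from the same estimate applied with $y=x_0$.

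Next I would prove the lower bound by disjoint supports. If $d(x,y)>2R_n$, then $B(x,R_n)\cap B(y,R_n)=\varnothing$, so $v_x^{(n)}$ and $v_y^{(n)}$ have disjoint supports in $\ell^2(X)$. Since each $v_x^{(n)}$ is a unit vector (because $\sum_z a_x^{(n)}(z)=1$), Pythagoras gives $\|v_x^{(n)}-v_y^{(n)}\|_2^2 = \|v_x^{(n)}\|_2^2+\|v_y^{(n)}\|_2^2 = 2$; multiplying by $w_n^2$ and summing over all such $n$ yields the claimed lower bound $\|\Phi(x)-\Phi(y)\|^2 \ge \sum_{2R_n<d(x,y)} 2w_n^2$.

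For the quantitative compression, set $\varepsilon_n:=2^{-n}$ and $w_n^2:=2^n/n^2$: convergence of $\sum_n w_n^2\varepsilon_n=\sum_n 1/n^2$ is immediate, so the upper half of the theorem applies. Under the hypothesis $R(\varepsilon)\le C\varepsilon^{-\alpha}$, witnesses at scale $\varepsilon_n=2^{-n}$ satisfy $R_n\le C\cdot 2^{\alpha n}$, and the condition $2R_n<d(x,y)$ becomes $n\le m:=\lfloor \alpha^{-1}\log_2(d(x,y)/(2C))\rfloor_+$. The partial sum $\sum_{n\le m} 2^{n+1}/n^2$ is geometric and, for $m\ge m_0$ absolute, comparable to its last term $2^{m+1}/m^2\asymp d(x,y)^{1/\alpha}/(\log d(x,y))^2$; taking square roots gives the stated lower bound $\|\Phi(x)-\Phi(y)\|\gtrsim d(x,y)^{1/(2\alpha)}/\log d(x,y)$ valid for all sufficiently large $d(x,y)$.

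The only genuine obstacle is bookkeeping: one must check that the weights admissible for \emph{convergence} are simultaneously large enough to produce a nontrivial lower bound. The exponential weight $w_n^2=2^n/n^2$ is essentially forced, since the exponential scale sequence $\varepsilon_n=2^{-n}$ is what allows $R_n$ to reach a given $d(x,y)$ with only logarithmically many scales, and the $1/n^2$ damping is the minimal polynomial correction ensuring $\sum w_n^2\varepsilon_n<\infty$; it reappears as the $(\log d)^{-2}$ loss in the compression. The ``sufficiently large $d(x,y)$'' clause precisely reflects the regime $m\ge m_0$ in which the geometric tail dominates the bounded initial terms.
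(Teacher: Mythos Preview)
Your proof is correct and follows essentially the same approach as the paper: the Hellinger pointwise inequality $(\sqrt{a}-\sqrt{b})^2\le|a-b|$ for the upper bound, disjoint supports plus Pythagoras for the lower bound, and the same geometric-series counting for the compression. One minor difference: you chain coordinate-wise in $\ell^1$ (via Lemma~\ref{lem:anchoring}) to get $\|v_x^{(n)}-v_y^{(n)}\|_2^2\le d(x,y)\,\varepsilon_n$, which after summing actually yields the stronger bound $\|\Phi(x)-\Phi(y)\|\le(\sum_n w_n^2\varepsilon_n)^{1/2}\sqrt{d(x,y)}$ rather than the linear bound you then claim; the paper instead chains directly in $\mathcal H$ to obtain the linear bound as stated. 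Since $\sqrt{d}\le d$ for $d\ge1$, your route proves the theorem (and a bit more), so this is only an expository wrinkle.
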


\begin{proof}
\emph{Well-definedness.}
By Lemma~\ref{lem:anchoring}, for each $x$,
\[
\sum_{n\ge1} w_n^2\,\|v_x^{(n)}-v_{x_0}^{(n)}\|_2^2\ \le\ d(x,x_0)\,\sum_{n\ge1} w_n^2\,\varepsilon_n\ <\ \infty,
\]
so $\Phi(x)$ is well-defined.

\emph{Upper bounds.}
For $d(x,y)\le1$, $\|v_x^{(n)}-v_y^{(n)}\|_2^2\le \|a_x^{(n)}-a_y^{(n)}\|_1\le\varepsilon_n$ (since $(\sqrt{u}-\sqrt{v})^2 \le |u-v|$ pointwise). Summing over $n$ gives the first bound. For general $x,y$, chain along a nearest-neighbor path of length $d(x,y)$ and apply the triangle inequality in $\mathcal H$ to obtain
\[
\|\Phi(x)-\Phi(y)\|\ \le\ \sum_{k=0}^{d(x,y)-1}\|\Phi(z_k)-\Phi(z_{k+1})\|
\ \le\ d(x,y)\Big(\sum_n w_n^2\,\varepsilon_n\Big)^{1/2}.
\]

\emph{Lower bound.}
If $d(x,y)> 2R_n$, the supports of $a_x^{(n)}$ and $a_y^{(n)}$ are disjoint, hence $\|v_x^{(n)}-v_y^{(n)}\|_2^2=2$. Summing over $n$ with $2R_n< d(x,y)$ gives the stated lower bound.

\emph{Compression under strong tempered-A$_\alpha$.}
The explicit counting is given in the theorem statement.
\end{proof}

\begin{remark}[Compression optimality and the log loss]\label{rem:compression-optimality}
The lower bound $\|\Phi(x)-\Phi(y)\|\gtrsim d(x,y)^{1/(2\alpha)}/\log d(x,y)$ is a \emph{uniform} output of the tempered-A construction. It is not claimed to be sharp in every group. Two guiding cases:
(i) For Abelian groups $\mathbb Z^d$, a direct linear embedding into a Euclidean subspace yields compression exponent $1$ (no logarithm) with constants depending on $d$; see, e.g., \cite{NaorPeres2008IMRN}.
(ii) For non-Abelian nilpotent groups (e.g.\ the discrete Heisenberg group), Hilbert compression exponent $>1/2$ is impossible; known upper bounds match $1/2$ up to lower-order losses \cite{Tessera2009CMH,LeeNaor2006FOCS,NaorPeres2008IMRN}. In this regime, the bound above is within a logarithmic factor of the best possible exponent, and the log factor is a standard artefact of multi-scale square-root embeddings governed by $\sum w_n^2\varepsilon_n<\infty$. For background on obstructions to $L_1$ embeddings and related differentiation phenomena, see \cite{CheegerKleiner2006Annals}.
\end{remark}

\begin{corollary}[Shaving the log with heavier weights]
Under strong tempered-A$_\alpha$, the same construction with $\varepsilon_n=2^{-n}$ and $w_n^2=2^n/n^{1+\eta}$ ($\eta>0$) yields
\[
\|\Phi(x)-\Phi(y)\|\ \gtrsim\ \frac{d(x,y)^{\,\frac{1}{2\alpha}}}{\big(\log d(x,y)\big)^{\frac{1+\eta}{2}}}
\]
for all large $d(x,y)$. Letting $\eta\downarrow0$ gives $d^{1/(2\alpha)}/(\log d)^{1/2+o(1)}$.
\end{corollary}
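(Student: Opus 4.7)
The plan is to rerun the construction of Theorem~\ref{thm:quant-embed-edge} with the new weight sequence $w_n^2 = 2^n/n^{1+\eta}$ against $\varepsilon_n = 2^{-n}$, since only the tail estimate in the lower bound changes. First, I would verify the summability hypothesis: $\sum_n w_n^2 \varepsilon_n = \sum_n n^{-(1+\eta)} < \infty$ for every $\eta>0$, so Lemma~\ref{lem:anchoring} produces a well-defined $\Phi$ and the same Lipschitz upper bound $\|\Phi(x)-\Phi(y)\| \le C_\eta\, d(x,y)$ with $C_\eta^2 = \sum_n n^{-(1+\eta)}$. This step is automatic once $\eta>0$, but it is the reason the schedule cannot be pushed to $\eta=0$ within this framework.

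For the lower bound, the disjoint-support inequality from Theorem~\ref{thm:quant-embed-edge} still applies: $\|\Phi(x)-\Phi(y)\|^2 \ge 2 \sum_{2R_n < d(x,y)} w_n^2$. Under strong tempered-A$_\alpha$ we may take $R_n \le C\,2^{\alpha n}$, so the summation range is $n \le m$ with $m := \lfloor \alpha^{-1}\log_2(d(x,y)/(2C))\rfloor$, yielding
\[
\|\Phi(x)-\Phi(y)\|^2 \;\ge\; 2\sum_{n=1}^m \frac{2^n}{n^{1+\eta}}.
\]
The one computational step is the tail equivalence $\sum_{n=1}^m 2^n/n^{1+\eta} \asymp_\eta 2^m/m^{1+\eta}$, a standard geometric-vs-polynomial comparison in which the last term dominates the partial sum up to a multiplicative constant depending only on $\eta$. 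Substituting $2^m \asymp d(x,y)^{1/\alpha}$ and $m \asymp \log d(x,y)$ gives $\|\Phi(x)-\Phi(y)\|^2 \gtrsim d(x,y)^{1/\alpha}/(\log d(x,y))^{1+\eta}$, and taking square roots yields the claim.

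No substantive obstacle arises; the argument is a direct recalibration of the previous theorem's counting lemma with the replacement $n^2 \mapsto n^{1+\eta}$. The only quantitative subtlety is that both the Lipschitz constant $C_\eta$ and the implicit constant in the tail estimate blow up as $\eta\downarrow 0$ (via $\sum_n n^{-(1+\eta)} \sim \eta^{-1}$), which is precisely why one cannot set $\eta=0$ to remove the logarithmic loss outright, and why the limiting statement degrades to $d^{1/(2\alpha)}/(\log d)^{1/2+o(1)}$ rather than $d^{1/(2\alpha)}/(\log d)^{1/2}$. Sharper control would require a finer weight schedule (e.g.\ replacing $n^{1+\eta}$ by slowly varying functions such as $n\,(\log n)^{1+\eta}$), at the cost of correspondingly weaker upper-bound constants; this is the natural limitation of square-root/multi-scale embeddings, not of the (TF) input.
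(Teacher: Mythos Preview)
Your proposal is correct and follows exactly the intended route: the paper states this corollary without a separate proof, treating it as an immediate recalibration of the counting in Theorem~\ref{thm:quant-embed-edge} with $n^2$ replaced by $n^{1+\eta}$, which is precisely what you carry out. Your additional commentary on the blowup of constants as $\eta\downarrow0$ is apt and explains cleanly why the limiting statement is $d^{1/(2\alpha)}/(\log d)^{1/2+o(1)}$ rather than $d^{1/(2\alpha)}/(\log d)^{1/2}$.
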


\begin{corollary}[Iterated-log compression via lacunary scales]
Under strong tempered-A$_\alpha$, choose a super-geometric  $\varepsilon_n$ and weights $w_n^2:=1/(n^2\varepsilon_n)$. Then:
\begin{enumerate}
\item With $\varepsilon_n=e^{-n^2}$, one gets $\ \|\Phi(x)-\Phi(y)\|\ \gtrsim\ d(x,y)^{1/(2\alpha)}/(\log d(x,y))^{1/2}$.
\item With $\varepsilon_n=e^{-e^n}$, one gets $\ \|\Phi(x)-\Phi(y)\|\ \gtrsim\ d(x,y)^{1/(2\alpha)}/\log\log d(x,y)$.
\item More generally, using $k$-fold iterated exponentials yields a denominator $\log^{(k)} d(x,y)$.
\end{enumerate}
In each case $\sum_n w_n^2\varepsilon_n=\sum_n 1/n^2<\infty$, so the Lipschitz bound is preserved; by Lemma~\ref{lem:anchoring} the map $\Phi$ is well-defined for all $x$.
\end{corollary}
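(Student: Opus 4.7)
The plan is to apply Theorem~\ref{thm:quant-embed-edge} verbatim, with the only nontrivial step being the counting of how many scales $n$ contribute to the disjoint-support lower bound for a given displacement $d:=d(x,y)$. First I would verify the hypothesis on weights: for each of the three choices one has $w_n^2\varepsilon_n = 1/n^2$, so $\sum_n w_n^2\varepsilon_n=\sum_n n^{-2}<\infty$. This guarantees that $\Phi$ is well-defined on $X$ by Lemma~\ref{lem:anchoring} and that the Lipschitz bound from Theorem~\ref{thm:quant-embed-edge} (coefficient $(\sum_n w_n^2\varepsilon_n)^{1/2}$) is preserved.

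Next I would invoke the lower bound
\[
\|\Phi(x)-\Phi(y)\|^2\ \ge\ \sum_{2R_n< d}\,2w_n^2
\]
from Theorem~\ref{thm:quant-embed-edge}, use the strong tempered-A$_\alpha$ bound $R_n\le C\varepsilon_n^{-\alpha}$ to rewrite the threshold $2R_n<d$ as $\varepsilon_n^{-\alpha}<d/(2C)$, and compute the largest admissible index $m=m(d)$ in each case by inverting the corresponding super-geometric decay of $\varepsilon_n$. Concretely, writing the threshold in the form $\varepsilon_m^{-1}\le (d/(2C))^{1/\alpha}$:
\begin{itemize}
\item $\varepsilon_n=e^{-n^2}$ gives $m\asymp (\log d)^{1/2}$;
\item $\varepsilon_n=e^{-e^n}$ gives $m\asymp \log\log d$;
\item $\varepsilon_n^{-1}=\exp^{(k)}(n)$ gives $m\asymp \log^{(k)}d$.
\end{itemize}
Since $\varepsilon_n$ decays super-geometrically in each case, the quotient $w_n^2/w_{n-1}^2=\varepsilon_{n-1}/\varepsilon_n\cdot((n-1)/n)^2$ tends to $\infty$, so the partial sum $\sum_{n\le m}2w_n^2$ is dominated (up to a constant depending only on the decay rate) by its last term $2w_m^2=2/(m^2\varepsilon_m)$.

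Finally I would evaluate $2w_m^2=2/(m^2\varepsilon_m)$ in each case using $\varepsilon_m^{-1}\asymp (d)^{1/\alpha}$ (by the definition of $m$) to conclude:
\[
\|\Phi(x)-\Phi(y)\|^2\ \gtrsim\ \frac{d^{1/\alpha}}{m^2},
\qquad
\|\Phi(x)-\Phi(y)\|\ \gtrsim\ \frac{d^{1/(2\alpha)}}{m},
\]
which with the three choices of $m$ above yields the denominators $(\log d)^{1/2}$, $\log\log d$ and $\log^{(k)}d$ in (1), (2), (3) respectively.

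The only step requiring a minor check is the last-term-dominates claim; this is a routine geometric-series argument from the super-geometric decay of $\varepsilon_n$ (the ratio $\varepsilon_{n-1}/\varepsilon_n$ diverges, e.g.\ $e^{2n-1}$ in case (1) and $e^{e^n-e^{n-1}}$ in case (2)), and absorbs only universal constants. I expect no structural obstacle: the corollary is essentially a bookkeeping exercise on top of Theorem~\ref{thm:quant-embed-edge}, the main point being that replacing the geometric schedule $\varepsilon_n=2^{-n}$ by a super-geometric one trades the $(\log d)$ factor in the denominator for progressively smaller iterated logarithms, at no cost to the Lipschitz side because the weight-decay condition $\sum w_n^2\varepsilon_n<\infty$ is invariant under the reweighting $w_n^2=1/(n^2\varepsilon_n)$.
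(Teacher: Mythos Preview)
The paper gives no proof of this corollary beyond the closing sentence on well-definedness and the Lipschitz side (which you correctly reproduce). Your approach to the lower bound---invoke the disjoint-support estimate from Theorem~\ref{thm:quant-embed-edge}, count admissible scales via $R_n\le C\varepsilon_n^{-\alpha}$, and use last-term domination of $\sum_{n\le m}w_n^2$---is the natural reading and clearly matches the paper's implicit intent.

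There is, however, a genuine quantitative slip in the step ``$\varepsilon_m^{-1}\asymp d^{1/\alpha}$ by the definition of $m$.'' In a super-geometric schedule the gaps between consecutive $\varepsilon_n^{-1}$ are enormous, so for generic $d$ the largest admissible $m$ can have $\varepsilon_m^{-1}\ll d^{1/\alpha}$. In case~(1), $m=\lfloor\sqrt L\rfloor$ with $L\asymp\alpha^{-1}\log d$, and one only gets $e^{m^2}\ge e^{L-2\sqrt L}\asymp d^{1/\alpha}e^{-c\sqrt{\log d}}$; the conclusion thus carries an extra factor $e^{-c\sqrt{\log d}}$, sub-polylogarithmic but not a constant. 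In case~(2) the damage is worse: with $D\asymp d^{1/\alpha}$ and $m=\lfloor\log\log D\rfloor$, when $D$ sits just below $e^{e^{m+1}}$ one has $e^{e^m}\asymp D^{1/e}$, so the leading exponent degrades from $1/(2\alpha)$ to $1/(2e\alpha)$; the same mechanism affects~(3). This is not repairable by a sharper count---it is the price of lacunarity---and suggests the corollary's bounds hold as stated only along the subsequence of displacements $d$ that land near a scale threshold, or else should be read with the weaker exponents. You should flag this rather than assert the asymptotic equivalence.
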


\subsection{A tracked $\sqrt{t}/\log(1+t)$ compression from a {\normalfont(TF)} chain}
We now specialize to Cayley graphs and build a \emph{deterministic} embedding with \emph{explicit} constants that depend on no growth hypothesis beyond the F{\o}lner condition (hence do not require strong tempered-A$_\alpha$).

\begin{proposition}[{\normalfont(TF)} $\Rightarrow$ quantitative $\ell^2$-compression with tracked constants (edge)]\label{prop:TF-to-compression-quant-edge}
Let $X=\mathrm{Cay}(\Gamma,\mathcal S)$ be a Cayley graph (edge normalization) and assume $X$ has {\normalfont(TF)} in the sense of Definition~\ref{def:TF-edge-A}. Write
\[
\delta_n\ :=\ \frac{B_{\mathcal S}(F_n)}{|F_n|}\xrightarrow[n\to\infty]{}0,\qquad
R_n\ :=\ \mathrm{rad}(F_n)\ \xrightarrow[n\to\infty]{}\ \infty.
\]
There exists a subsequence $(n_j)_{j\ge1}$ such that
\begin{equation}\label{eq:dyadic-calibration}
\delta_{n_j}\ \le\ 2^{-(j+4)}\qquad\text{and}\qquad 2^{\,j}\ \le\ R_{n_j}\ <\ 2^{\,j+1}\,.
\end{equation}
For $x\in\Gamma$ set $\mu^{j}_x:=|F_{n_j}|^{-1}\,\mathbf 1_{xF_{n_j}}$ and define
\[
\psi_j(x)\ :=\ \sqrt{\mu^{j}_x}-\sqrt{\mu^{j}_e}\in\ell^2(\Gamma),\qquad
\alpha_j^2\ :=\ \frac{6}{\pi^2}\,\frac{2^{\,j}}{j^2}\,,
\qquad
\Phi(x)\ :=\ \bigoplus_{j=1}^{\infty} \alpha_j\,\psi_j(x).
\]
Then for all $x,y\in\Gamma$ with $t=d(x,y)$ (natural logarithm),
\begin{equation}\label{eq:tracked-compression}
\frac{\sqrt{3}\,\ln 2}{2\sqrt{2}\,\pi}\ \cdot\ \frac{\sqrt{t}}{\,1+\log(1+t)\,}\ \ \le\ \ \|\Phi(x)-\Phi(y)\|_2\ \ \le\ \ \frac{1}{2\sqrt{2}}\ \sqrt{t}\,.
\end{equation}
In particular, $X$ coarsely embeds into Hilbert space with a \emph{tracked} lower compression
\[
\rho(t)\ \ge\ \frac{\sqrt{3}\,\ln 2}{2\sqrt{2}\,\pi}\ \frac{\sqrt{t}}{\,1+\log(1+t)\,}\,.
\]
\end{proposition}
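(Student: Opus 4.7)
The strategy is threefold: calibrate a subsequence of (TF) witnesses to the dyadic grid, bound the Lipschitz constant by a single sum against the calibration, and extract the lower bound from orthogonal supports at the largest admissible scale.

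For the calibration, I would exploit that (TF) forces $\delta_n\to 0$ and $R_n\to\infty$. Since radii are integer-valued on a bounded-geometry Cayley graph, I would pick indices $n_j$ so that $R_{n_j}\in[2^j,2^{j+1})$ and $\delta_{n_j}\le 2^{-(j+4)}$ hold simultaneously for every sufficiently large $j$; any dyadic level that happens to be empty is collapsed into the next nonempty level, which only rescales the weights $\alpha_j$ by bounded factors and preserves the constants in \eqref{eq:tracked-compression}. For the Lipschitz bound, Lemma~\ref{lem:canonical-A-edge} gives $\|\mu^j_x-\mu^j_y\|_1\le 2\delta_{n_j}$ when $d(x,y)=1$, and the $L^1$-triangle inequality along a shortest path upgrades this to $\|\mu^j_x-\mu^j_y\|_1\le 2t\delta_{n_j}$ for $t=d(x,y)$. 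Using the pointwise inequality $(\sqrt a-\sqrt b)^2\le|a-b|$ I get $\|\psi_j(x)-\psi_j(y)\|_2^2\le 2t\delta_{n_j}\le t\cdot 2^{-(j+3)}$, so summing against $\alpha_j^2=\tfrac{6}{\pi^2}\tfrac{2^j}{j^2}$ telescopes cleanly:
\[
\|\Phi(x)-\Phi(y)\|_2^2\ \le\ \sum_{j\ge1}\alpha_j^2\cdot t\cdot 2^{-(j+3)}\ =\ \frac{t}{8}\cdot\frac{6}{\pi^2}\sum_{j\ge1}\frac{1}{j^2}\ =\ \frac{t}{8},
\]
which is the claimed Lipschitz bound $\|\Phi(x)-\Phi(y)\|_2\le\sqrt t/(2\sqrt 2)$.

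For the lower bound, the key observation is support disjointness: since $\mathrm{supp}(\mu^j_x)=xF_{n_j}\subset B(x,R_{n_j})$, any $z\in xF_{n_j}\cap yF_{n_j}$ forces $t\le 2R_{n_j}$. Therefore whenever $2R_{n_j}<t$ the square roots $\sqrt{\mu^j_x}$ and $\sqrt{\mu^j_y}$ are orthogonal in $\ell^2$, giving $\|\psi_j(x)-\psi_j(y)\|_2^2=2$. The calibration $R_{n_j}<2^{j+1}$ guarantees this for every integer $j$ with $1\le j\le m(t):=\lfloor\log_2 t\rfloor-2$, so keeping only the largest such scale and using $2^{m(t)}\ge t/8$ together with $m(t)\le(1+\log(1+t))/\log 2$ yields
\[
\|\Phi(x)-\Phi(y)\|_2^2\ \ge\ 2\alpha_{m(t)}^2\ =\ \frac{12}{\pi^2}\cdot\frac{2^{m(t)}}{m(t)^2}\ \ge\ \frac{3t(\log 2)^2}{2\pi^2\bigl(1+\log(1+t)\bigr)^2},
\]
which after taking square roots exceeds the target constant $\sqrt3\log 2/(2\sqrt 2\pi)$ by a factor of $2$. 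The residual regime $1\le t\le 7$ (where $m(t)$ is degenerate) is handled uniformly by noting that at the single scale $j=1$ the norm $\|\psi_1(x)-\psi_1(y)\|_2$ is bounded below by a positive constant depending only on $F_{n_1}$, and this slack is absorbed into the explicit prefactor $1/2$ present in the stated constant.

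I expect the principal obstacle to be not the Lipschitz computation (which telescopes against $\sum_j j^{-2}=\pi^2/6$) nor the orthogonality step (automatic once supports are disjoint), but the simultaneous dyadic calibration of $R_n$ and $\delta_n$: the raw (TF) hypothesis does not enforce any relation between the scale of the F\o lner set and the rate of its boundary decay. Keeping the constants in \eqref{eq:tracked-compression} fully tracked amounts almost entirely to producing this reindexing without losing a factor beyond the $1/2$ slack already built into the denominator $1+\log(1+t)$ and the target prefactor.
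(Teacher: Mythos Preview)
Your proposal is correct and follows essentially the same approach as the paper's own proof: dyadic calibration of the F{\o}lner witnesses, Lipschitz upper bound via $\ell^1$ chaining plus the pointwise square-root inequality telescoping against $\sum_{j\ge1} j^{-2}=\pi^2/6$, and lower bound from orthogonality of supports at scales with $2R_{n_j}<t$. The only cosmetic difference is that the paper sums over \emph{all} disjoint-support scales $j\le J$ before lower-bounding the sum by its last term, whereas you keep only the single largest scale directly; your observation that the achieved constant exceeds the stated prefactor by a factor of $2$ is correct and matches the paper's own final display.
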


\begin{proof}
Since $\delta_n\to0$ and $R_n\to\infty$, a diagonal choice yields a subsequence $(n_j)$ with \eqref{eq:dyadic-calibration}. For neighbors $d(x,y)=1$,
\[
\|\mu^j_x-\mu^j_y\|_1\ =\ \frac{|F_{n_j}\triangle sF_{n_j}|}{|F_{n_j}|}\ \le\ \frac{2 B_{\mathcal S}(F_{n_j})}{|F_{n_j}|}\ =\ 2\delta_{n_j}\ \le\ 2^{-(j+3)}.
\]
By chaining along a geodesic of length $t=d(x,y)$ and using $(\sqrt{u}-\sqrt{v})^2\le |u-v|$ pointwise,
\begin{equation}\label{eq:Lipschitz-scale-j}
\|\psi_j(x)-\psi_j(y)\|_2^2\ \le\ \|\mu^j_x-\mu^j_y\|_1\ \le\ t\,2^{-(j+3)}.
\end{equation}
Summing with the weights $\alpha_j$ gives the Lipschitz upper bound:
\[
\|\Phi(x)-\Phi(y)\|_2^2\ \le\ t\sum_{j\ge1}\alpha_j^2\,2^{-(j+3)}
\ =\ \frac{t}{8}\cdot \frac{6}{\pi^2}\sum_{j\ge1}\frac{1}{j^2}
\ =\ \frac{t}{8},
\]
hence the right-hand inequality in \eqref{eq:tracked-compression}.

For the lower bound, if $2R_{n_j}<t$ then the supports of $\mu^j_x$ and $\mu^j_y$ are disjoint, hence $\|\psi_j(x)-\psi_j(y)\|_2^2=\|\sqrt{\mu^j_x}-\sqrt{\mu^j_y}\|_2^2=2$. Let $J$ be the largest index with $2^{J+2}\le t$. By \eqref{eq:dyadic-calibration}, for all $j\le J$ one has $2R_{n_j}\le 2^{j+2}\le 2^{J+2}\le t$, so
\[
\|\Phi(x)-\Phi(y)\|_2^2\ \ge\ \sum_{j=1}^{J} \alpha_j^2\cdot 2
\ =\ \frac{12}{\pi^2}\sum_{j=1}^{J}\frac{2^{\,j}}{j^2}
\ \ge\ \frac{12}{\pi^2}\cdot \frac{2^{\,J}}{2J^2}
\ =\ \frac{6}{\pi^2}\cdot \frac{2^{\,J}}{J^2}.
\]
Since $2^{J+2}\le t<2^{J+3}$, we have $2^{J}\ge t/8$ and $J\le \log_2(1+t)$. Therefore
\[
\|\Phi(x)-\Phi(y)\|_2^2\ \ge\ \frac{6}{\pi^2}\cdot \frac{t}{8\,J^2}
\ \ge\ \frac{6}{\pi^2}\cdot \frac{t}{8\,(1+\log_2(1+t))^2}
\ =\ \frac{3\,(\ln 2)^2}{2\pi^2}\cdot \frac{t}{(1+\log(1+t))^2}.
\]
Taking square roots yields the left-hand inequality in \eqref{eq:tracked-compression}.
\end{proof}

\subsection{Uniform control for box spaces}
\begin{definition}[Uniform tempered-A]\label{def:uniform-tempered-A}
A family $\{X_i\}_{i\in I}$ has \emph{uniform tempered-A (edge)} with modulus $R(\eps)$ if for each $\eps$ there exist witnesses $a_{x,i}^\eps$ on $X_i$ with $\sup_{d(x,y)\le1}\|a_{x,i}^\eps-a_{y,i}^\eps\|_1\le\eps$ and $\supp(a_{x,i}^\eps)\subset B(x,R(\eps))$, and $R(\eps)$ is independent of $i$ (we may take $R$ nonincreasing in $\varepsilon$ uniformly in $i$).
\end{definition}

\begin{proposition}[Box spaces]\label{prop:box-space-edge}
Let $\Gamma$ be residually finite with a chain of finite-index normal subgroups $\{N_i\}$, and set $X_i:=\Gamma/N_i$ with the quotient generating sets (degrees are uniformly bounded). Suppose that, for each $i$, there exists a family $\{F^{(i)}_n\}_n\subset X_i$ satisfying {\rm(TF)} in edge form with constants $A_e,B$ \emph{independent of $i$}, and with growth bounds \eqref{eq:poly-growth-edge} holding uniformly in $i$ for some $Q\ge1$. Then the coarse disjoint union $\bigsqcup_i X_i$ has \emph{uniform strong tempered-A$_1$} and admits a coarse embedding into Hilbert with compression $\rho(t)\gtrsim t^{1/2}/\log t$ (uniform constants). In the construction of $\Phi$ we fix a basepoint $x_{0,i}\in X_i$ for each component. Moreover, by applying Definition~\ref{def:uniform-tempered-A} at a dyadic schedule $\varepsilon_k=2^{-k}$ with the uniform modulus $R(\varepsilon)$, we obtain witnesses and weights that are uniform in $i$; Theorem~\ref{thm:quant-embed-edge} then applies componentwise with constants independent of $i$. (As usual for a box space with $\cap_i N_i=\{e\}$, $\operatorname{diam}(X_i)\to\infty$.)
\end{proposition}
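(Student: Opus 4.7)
The plan is to apply Proposition~\ref{prop:TF-to-A-edge} componentwise with uniform constants (which is exactly what \eqref{eq:poly-growth-edge} buys), repackage the canonical left-translate witnesses into a single uniform family in the sense of Definition~\ref{def:uniform-tempered-A}, and then run the Hilbert embedding of Theorem~\ref{thm:quant-embed-edge} with the dyadic schedule used in Proposition~\ref{prop:TF-to-compression-quant-edge}. A separate ``component-label'' coordinate absorbs the between-component pairs of the coarse disjoint union.

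First I would fix, for each $i$, the (TF) chain $(F^{(i)}_n)_n$ in $X_i$ and the canonical witnesses $a_{x,i}^{(n)}:=|F_n^{(i)}|^{-1}\mathbf 1_{xF_n^{(i)}}$ of Lemma~\ref{lem:canonical-A-edge}. By Proposition~\ref{prop:TF-to-A-edge}, these satisfy $\|a_{x,i}^{(n)}-a_{y,i}^{(n)}\|_1\le \varepsilon_n^{(i)}:=2B_{\mathcal S}(F_n^{(i)})/|F_n^{(i)}|$ for $d(x,y)\le1$ and $\supp(a_{x,i}^{(n)})\subset B(x,R_n^{(i)})$ with $R_n^{(i)}=\mathrm{rad}(F_n^{(i)})$. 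The assumed growth bounds \eqref{eq:poly-growth-edge} (uniform in $i$) give $\varepsilon_n^{(i)}\asymp (R_n^{(i)})^{-1}$ with constants independent of $i$; selecting for each $\varepsilon=2^{-k}$ an index $n=n_i(k)$ with $\varepsilon_n^{(i)}\le 2^{-k}$ and $R_n^{(i)}\le C\cdot 2^{k}$ (uniform $C$) produces an $i$-independent modulus $R(\varepsilon)\le C\varepsilon^{-1}$, which is exactly uniform strong tempered-A$_1$ for the family $\{X_i\}$.

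Next I would assemble the embedding. Fix a basepoint $x_{0,i}\in X_i$ per component and weights $w_k^2=2^k/k^2$, $\varepsilon_k=2^{-k}$, and define on each $X_i$
\[
\Phi_1(x)\ :=\ \bigoplus_{k\ge1} w_k\,\bigl(\sqrt{a_{x,i}^{(n_i(k))}}-\sqrt{a_{x_{0,i},i}^{(n_i(k))}}\bigr),\qquad x\in X_i.
\]
For within-component pairs, Theorem~\ref{thm:quant-embed-edge} (or directly the dyadic calculus in the proof of Proposition~\ref{prop:TF-to-compression-quant-edge}) gives $\|\Phi_1(x)-\Phi_1(y)\|\lesssim d(x,y)^{1/2}$ and compression $\gtrsim d(x,y)^{1/2}/\log d(x,y)$, with constants that depend only on the uniform modulus and therefore not on $i$. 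For between-component pairs $x\in X_i$, $y\in X_j$ with $i\ne j$, the supports of the $a_{\cdot,i}^{(k)}$ and $a_{\cdot,j}^{(k)}$ lie in disjoint pieces of $\ell^2\big(\bigsqcup_i X_i\big)$, so $\Phi_1$ already provides a lower bound $\|\Phi_1(x)-\Phi_1(y)\|^2\ge \|\Phi_1(x)\|^2+\|\Phi_1(y)\|^2$; to certify the required compression one adds a second coordinate $\Phi_2$ that isometrically sends $X_i\mapsto \lambda_i\,e_i\in\ell^2(\mathbb N)$ with $\lambda_i$ chosen so that $|\lambda_i-\lambda_j|\asymp \mathrm{dist}(X_i,X_j)^{1/2}/\log\mathrm{dist}(X_i,X_j)$ (possible because $\mathrm{dist}(X_i,X_j)\to\infty$ by the box-space structure). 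Setting $\Phi:=\Phi_1\oplus\Phi_2$ yields the desired coarse embedding of $\bigsqcup_i X_i$ with compression $\rho(t)\gtrsim t^{1/2}/\log t$ and uniform constants.

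The hard part will be purely bookkeeping in the last step: choosing the labelling weights $\lambda_i$ so that $\Phi_2$ is both $1$-Lipschitz on the coarse disjoint union (trivial, since distinct components lie at distance exactly $\mathrm{dist}(X_i,X_j)$ in the chosen coarse structure) and delivers the same $t^{1/2}/\log t$ compression on cross-component pairs as $\Phi_1$ does on within-component pairs. This is a standard coarse-disjoint-union construction in the spirit of Roe and Nowak-Yu, and the analytic content genuinely resides in the uniform tempered-A statement proved in step one; nothing new beyond Proposition~\ref{prop:TF-to-A-edge} and Theorem~\ref{thm:quant-embed-edge} is required, other than keeping constants tracked through the dyadic calibration \eqref{eq:dyadic-calibration}.
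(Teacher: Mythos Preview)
Your approach matches the paper's: apply Proposition~\ref{prop:TF-to-A-edge} uniformly in $i$ to get uniform strong tempered-$A_1$, then feed this into Theorem~\ref{thm:quant-embed-edge} componentwise with the dyadic schedule. The paper's proof is a one-line pointer to exactly these two results and leaves the cross-component bookkeeping implicit; you have correctly identified that $\Phi_1$ alone cannot control compression when $x=x_{0,i}$ and $y=x_{0,j}$, and that a component-label coordinate is needed.

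One small correction in your $\Phi_2$: if you send $X_i\mapsto \lambda_i e_i$ with $(e_i)$ orthonormal, then for $i\ne j$ you get $\|\Phi_2(x)-\Phi_2(y)\|=(\lambda_i^2+\lambda_j^2)^{1/2}$, not $|\lambda_i-\lambda_j|$. Either drop the orthogonality and send $X_i\mapsto \lambda_i\in\R$ (so the difference really is $|\lambda_i-\lambda_j|$), or keep orthogonal directions and choose $\lambda_i\asymp \rho(\mathrm{dist}(X_i,\text{base}))$ so that $(\lambda_i^2+\lambda_j^2)^{1/2}$ already dominates $\rho(\mathrm{dist}(X_i,X_j))$. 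Either fix is routine; as you say, the analytic content is entirely in the uniform tempered-$A_1$ step.
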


\begin{proof}
Apply Proposition~\ref{prop:TF-to-A-edge} uniformly in $i$, then Theorem~\ref{thm:quant-embed-edge}.
\end{proof}

\begin{remark}[Operator-algebraic context]\label{rem:operator-algebra-context}
Property~A and coarse embeddability into Hilbert (uniform embeddability) were introduced to attack major problems in topology and operator algebras: the coarse Baum-Connes conjecture and, via controlled K-theory, the Novikov conjecture \cite{Yu2000CoarseBC}. For broader background see Higson-Roe \cite{HigsonRoe2000} and Nowak-Yu \cite{NowakYu-LargeScaleGeometry}. The quantitative, scale-calibrated witnesses developed here can be inserted into those frameworks to track constants (e.g.\ quantitative K-theory or stability thresholds) and, more importantly for this paper, they mesh with the isoperimetric-lag machinery to deliver two distinct outputs-bounded profile ratios and constructive embeddings-with one geometric input ({\rm TF}).
\end{remark}

\begin{remark}[Scope of quantitative embeddings and the role of the step-2 analysis]
The results of this appendix take a Tempered F{\o}lner (TF) chain as input and produce tempered Property~A
and a quantitative coarse embedding into Hilbert with tracked constants (uniformly on box spaces).
We obtain such TF input, with explicit constants, in several nonabelian families by different mechanisms:
(i) in all virtually nilpotent groups (any step), via discrete Wulff samplers;
(ii) in hyperbolic semidirects $\Z^d\rtimes_A\Z$, via log-height sets and the $L^1$ profile $j_1(v)\asymp\log v$;
(iii) in finite-lamp lamplighters $H^{(\Gamma)}\rtimes\Gamma$, via an explicit base+ring expansion and Erschler's checkpoints;
and then by quantitative permanence under finite index, finite extensions, and direct products.
The step-2 shear/curl-fit block is indispensable in the first nonabelian nilpotent regime: it supplies a
\emph{local} TF chain with per-edge control in lattices of step-2 Carnot groups, where naive layered averages fail
due to commutator shear. Once TF is available by any of these engines, the embedding theorems of Appendix~\ref{sec:tempered-A}
apply verbatim, with constants that track the generating set and the structural data of the group.
\end{remark}

\section{Left-Right Homogeneous-Space Profile vs.\ Cayley Profile (Edge Normalization)}
\label{app:left-right-vs-left}

\noindent
In this appendix we quantify the relation between the (edge) isoperimetric profile of a finitely generated group $(\Gamma,\mathcal S)$ on its left Cayley graph and the isoperimetric profile of $\Gamma$ viewed as a homogeneous space under the right/left action of $\Gamma\times\Gamma$.
We work throughout with \emph{edge} boundary. For the \emph{left/right Cayley graphs} we count each \emph{uncolored} undirected edge once. For the $\Sigma_+$-graph introduced below we use a \emph{colored} edge boundary: each color $\sigma\in\Sigma_+$ defines its own adjacency and we count an undirected $\sigma$-edge once \emph{per color}. (Thus, a ``colored edge'' is an unordered pair together with its color label; distinct colors are different edges even if they connect the same two vertices.) This convention makes the colored-sum identity exact. When needed, vertex/edge comparisons are recovered by the standard inequalities
\[
|\partial_{\mathcal S}Y|\ \le\ B_{\mathcal S}(Y)\ \le\ \Delta_{\mathcal S}\,|\partial_{\mathcal S}Y|,\qquad 
\Delta_{\mathcal S}:=|\mathcal S|,
\]
(see Lemma~\ref{lem:B-vertex-edge}; here and below $B_{\mathcal S}$ denotes the \emph{left} edge boundary $B^{\rm L}_{\mathcal S}$). For the $\Sigma_+$-graph the (colored) degree is constant and equals $\Delta_{\Sigma}:=|\Sigma_+|$, and we similarly have
\begin{equation}\label{eq:vertex-edge-sigma}
|\partial_{\Sigma_+}(Y)|\ \le\ B_{\Sigma}(Y)\ \le\ \Delta_{\Sigma}\,|\partial_{\Sigma_+}(Y)|.
\end{equation}
Asymptotic Wulff constants and sharp growth exponents for virtually nilpotent groups are recalled in Sections~\ref{sec:sharp-wulff} and~\ref{sec:analytic}; we use them only to state asymptotic corollaries below.

\noindent\textbf{Bridge to the main results.}
The $(\Gamma\times\Gamma)$-homogeneous-space graph on $\Gamma$ generated by left and right steps is the natural bi-invariant analogue of a Cayley graph; it appears when one mixes left and right regular actions (e.g., in random walks, bimodule constructions, or bi-invariant smoothing). The colored-sum identity below shows that its edge boundary \emph{splits exactly} as a sum of left and right Cayley boundaries. Two concrete payoffs that plug back into the paper are:
\begin{itemize}
  \item[(i)] \emph{Analytic constants (Section~\ref{sec:analytic}).} In polynomial growth, the left-right profile has the same sharp exponent and exactly \emph{twice} the Wulff constant (Cor.~\ref{cor:bi-invariant-upgrade} below). Feeding this into Theorem~\ref{thm:Nash} improves the Faber--Krahn prefactor by a factor $4\left(\frac{|\mathcal S|}{|\Sigma_+|}\right)^{2}$ (equal to $4$ when $\mathcal S$ has no involutions), hence strengthens Nash/heat-kernel/mixing constants with no change of exponents.
  \item[(ii)] \emph{Tempered A and {\rm(TF)} (Section~\ref{sec:tempered-A}).} Since $B_\Sigma(Y)=B_{\mathcal S}^{\rm L}(Y)+B_{\mathcal S}^{\rm L}(Y^{-1})$, {\rm(TF)} witnesses built from Wulff samplers automatically transfer to $\Sigma$ with the same radius scale, giving uniform tempered-A for the bi-invariant geometry (Remark~\ref{rem:TF-temperedA-Sigma}).
\end{itemize}

\noindent\textbf{Setup and notation}
Let $\mathcal S=\mathcal S^{-1}\subset \Gamma$ be a finite symmetric generating set, and assume $e\notin\mathcal S$.
To avoid double counting of undirected edges by inverse \emph{within each side} (left or right), fix a subset $\mathcal S_+\subset\mathcal S$ containing exactly one representative from each inverse pair $\{s,s^{-1}\}$. (The constructions below are independent of the particular choice of $\mathcal S_+$; see Remark~\ref{rem:counting-conv}.)
Write $B_{\mathcal S}^{\rm L}(Y)$ (resp. $B_{\mathcal S}^{\rm R}(Y)$) for the edge boundary of $Y\subset\Gamma$ in the \emph{left} (resp. \emph{right}) Cayley graph:
\[
B_{\mathcal S}^{\rm L}(Y):=\#\big\{\{g,sg\}: g\in Y,\ s\in\mathcal S,\ sg\notin Y\big\},\qquad
B_{\mathcal S}^{\rm R}(Y):=\#\big\{\{g,gs\}: g\in Y,\ s\in\mathcal S,\ gs\notin Y\big\}.
\]
We view $\Gamma$ as a homogeneous space of $\Gamma\times\Gamma$ under the action
$
(g_1,g_2)\cdot x := g_1\,x\,g_2^{-1}.
$
In particular, the color $(s,e)$ maps $x\mapsto s x$ and the color $(e,s)$ maps $x\mapsto x s^{-1}$.
Let
\[
\Sigma_+\ :=\ \{(s,e)\,:\,s\in\mathcal S_+\}\ \cup\ \{(e,s)\,:\,s\in\mathcal S_+\}\ \subset \Gamma\times\Gamma,
\]
so that the $\Sigma_+$-adjacency on $\Gamma$ is the disjoint union (by color and inverse-pair choice) of left- and right-$\mathcal S$-adjacencies.  Denote by
\[
B_{\Sigma}(Y)\ :=\ \sum_{\sigma\in\Sigma_+}\ \#\big\{\text{undirected $\sigma$-edges with one endpoint in $Y$ and one in $Y^c$}\big\}
\]
the (colored) edge boundary in this homogeneous-space graph.  Finally, define the exact edge profiles
\begin{align*}
I^{\circ,{\rm L}}_{\rm edge}(r)\ & :=\ \inf\big\{B_{\mathcal S}^{\rm L}(Y): |Y|=r\big\},\\
I^{\circ,{\rm R}}_{\rm edge}(r)\ & :=\ \inf\big\{B_{\mathcal S}^{\rm R}(Y): |Y|=r\big\},\\
I^{\circ}_\Sigma(r)\ & :=\ \inf\big\{B_\Sigma(Y): |Y|=r\big\}.
\end{align*}
For brevity we sometimes write $\Sigma$ for $\Sigma_+$ when no confusion can arise.

\begin{remark}[Counting convention; directed and uncolored alternatives; independence of $\mathcal S_+$]
\label{rem:counting-conv}
The choice of $\mathcal S_+$ ensures each undirected left/right boundary edge is counted exactly once \emph{within that side} among the corresponding colors. For $\Sigma_+$ we \emph{intend} to count per color: if a single pair $\{x,y\}$ is both a left- and a right-$\mathcal S$ adjacency (with possibly different colors), it contributes twice to $B_\Sigma$ (once for each side). Equivalently, one can keep the full symmetric $\mathcal S$ and count \emph{directed} edges; then $B_\Sigma^\to(Y)=2\big(B^{\rm L}_\mathcal S(Y)+B^{\rm R}_\mathcal S(Y)\big)$, and all identities below hold after dividing by $2$.
If instead one collapses colors to a single uncolored graph on $\Gamma$, only the inequality
\[
B^{\rm simple}_\Sigma(Y)\ \le\ B^{\rm L}_{\mathcal S}(Y)+B^{\rm R}_{\mathcal S}(Y)
\]
holds in general (with possible strict inequality when overlaps occur).
Finally, the quantity $B_\Sigma(Y)$ is independent of the particular choice of representatives $\mathcal S_+$: changing representatives only relabels colors, and each undirected left/right $\mathcal S$-edge is represented exactly once on its side in the colored sum.
\end{remark}

\subsection{Right-left inversion and the colored-edge identity}
\begin{lemma}[Right-left inversion]\label{lem:right-left-inversion}
The inversion map $\iota:\Gamma\to\Gamma$, $\iota(x)=x^{-1}$, is a graph isomorphism between the right and left Cayley graphs:
\[
\{g,gs\}\ \longleftrightarrow\ \{g^{-1},(gs)^{-1}\}\ =\ \{g^{-1},s^{-1}g^{-1}\}\qquad(s\in\mathcal S).
\]
Consequently $B_{\mathcal S}^{\rm R}(Y)=B_{\mathcal S}^{\rm L}(Y^{-1})$ for every $Y\subset\Gamma$.
\end{lemma}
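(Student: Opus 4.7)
The plan is to exhibit the inversion $\iota:x\mapsto x^{-1}$ as the required graph isomorphism and then derive the boundary identity as an immediate corollary. The key algebraic input is the symmetry $\mathcal S=\mathcal S^{-1}$: it converts a right multiplication by $s\in\mathcal S$ (applied before inversion) into a left multiplication by $s^{-1}\in\mathcal S$ (applied after inversion). Since $\iota$ is a set-theoretic involution on $\Gamma$, bijectivity on vertices is automatic, and the burden is only to check compatibility on edges and then do the corresponding bookkeeping on boundaries.

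First I would verify the edge correspondence displayed in the lemma. For $g\in\Gamma$ and $s\in\mathcal S$,
\[
\iota(\{g,gs\})\ =\ \{g^{-1},(gs)^{-1}\}\ =\ \{g^{-1},s^{-1}g^{-1}\},
\]
which is precisely the undirected left-$\mathcal S$ edge at $g^{-1}$ with generator $s^{-1}\in\mathcal S$; conversely, every left-$\mathcal S$ edge $\{h,th\}$ with $t\in\mathcal S$ is the $\iota$-image of the right-$\mathcal S$ edge $\{h^{-1},h^{-1}t^{-1}\}$. Taken together, this gives a bijection between the undirected edge sets of the right and left Cayley graphs, which combined with the vertex bijection establishes the graph isomorphism. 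Next I would push this through to boundaries: since $\iota(Y)=Y^{-1}$ and $\iota(Y^c)=(Y^{-1})^c$, the image of any right-edge crossing the cut $(Y,Y^c)$ is a left-edge crossing the cut $(Y^{-1},(Y^{-1})^c)$, and conversely. Counting undirected edges (each side governed by the same $\mathcal S_+$ convention, so inverse pairs are not double-counted) yields $B_{\mathcal S}^{\rm R}(Y)=B_{\mathcal S}^{\rm L}(Y^{-1})$.

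There is no substantial obstacle; the only point requiring care is the alignment of the undirected-edge counting conventions on the two sides. Specifically, one must check that the edge bijection sends the representative right-$\mathcal S_+$ edge at $g$ with generator $s\in\mathcal S_+$ to exactly one representative left-$\mathcal S_+$ edge at $g^{-1}$, rather than an inverse-pair duplicate. This is immediate because $\{s,s^{-1}\}$ is a single inverse pair on both sides, so the bijection descends cleanly from $\mathcal S$ to any choice of representatives $\mathcal S_+$. A fully symmetric alternative is to work in the directed-edge normalization throughout: the map $(g,s)\mapsto(g^{-1},s^{-1})$ is then a literal bijection between directed right boundary pairs of $Y$ and directed left boundary pairs of $Y^{-1}$, and dividing by the universal factor $2$ recovers the undirected identity without any further convention-checking.
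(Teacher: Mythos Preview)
Your proposal is correct and follows essentially the same approach as the paper: verify that $\iota$ sends each right-$\mathcal S$ edge $\{g,gs\}$ to the left-$\mathcal S$ edge $\{g^{-1},s^{-1}g^{-1}\}$ (using $\mathcal S=\mathcal S^{-1}$), observe that this is a bijection of edge sets, and conclude that boundary edges are preserved bijectively. The paper's proof is much terser (two sentences), while you add useful detail on the $\mathcal S_+$ convention and the directed-edge alternative, but the underlying argument is identical.
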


\begin{proof}
If $\{g,gs\}$ is a right-$\mathcal S$ edge then $\{g^{-1},(gs)^{-1}\}=\{g^{-1},s^{-1}g^{-1}\}$ is a left-$\mathcal S$ edge (we used $\mathcal S=\mathcal S^{-1}$).  Boundary edges are preserved bijectively under inversion, proving the equality of counts.
\end{proof}

\begin{corollary}[Left and right profiles coincide]\label{cor:left-right-profiles-equal}
For every integer $r\ge 1$,
$
I^{\circ,{\rm R}}_{\rm edge}(r)=I^{\circ,{\rm L}}_{\rm edge}(r).
$
\end{corollary}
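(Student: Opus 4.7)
The plan is to use the inversion bijection $\iota(x)=x^{-1}$ as a volume-preserving involution on finite subsets of $\Gamma$ and pair it with the edge-count identity of Lemma~\ref{lem:right-left-inversion}. First I would note that $\iota$ is a bijection $\Gamma\to\Gamma$ with $\iota^2=\mathrm{id}$, so the map $Y\mapsto Y^{-1}$ is a bijection on the collection $\{Y\subset\Gamma:|Y|=r\}$ for each fixed $r\ge 1$.

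Next I would apply Lemma~\ref{lem:right-left-inversion} in both directions. Given any finite $Y$ with $|Y|=r$, the lemma yields $B^{\mathrm R}_{\mathcal S}(Y)=B^{\mathrm L}_{\mathcal S}(Y^{-1})$, and since $|Y^{-1}|=r$ this gives
\[
I^{\circ,\mathrm L}_{\mathrm{edge}}(r)\ \le\ B^{\mathrm L}_{\mathcal S}(Y^{-1})\ =\ B^{\mathrm R}_{\mathcal S}(Y).
\]
Taking the infimum over $Y$ with $|Y|=r$ on the right-hand side produces $I^{\circ,\mathrm L}_{\mathrm{edge}}(r)\le I^{\circ,\mathrm R}_{\mathrm{edge}}(r)$. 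The reverse inequality is obtained by the symmetric argument: for any $Z$ with $|Z|=r$, applying the lemma to $Y:=Z^{-1}$ gives $B^{\mathrm R}_{\mathcal S}(Z^{-1})=B^{\mathrm L}_{\mathcal S}(Z)$, hence $I^{\circ,\mathrm R}_{\mathrm{edge}}(r)\le B^{\mathrm L}_{\mathcal S}(Z)$, and taking the infimum over $Z$ yields $I^{\circ,\mathrm R}_{\mathrm{edge}}(r)\le I^{\circ,\mathrm L}_{\mathrm{edge}}(r)$.

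There is no real obstacle here: the whole content sits in Lemma~\ref{lem:right-left-inversion}, which has already been proved, and the corollary is a one-line inf-swapping argument. The only point to check is that the convention of counting each undirected edge once (fixed by the choice of representatives $\mathcal S_+$) is preserved under $\iota$, but this is automatic because $\iota$ sends the undirected edge $\{g,gs\}$ to the undirected edge $\{g^{-1},s^{-1}g^{-1}\}$ bijectively, so each uncolored edge on one side is counted exactly once on the other side.
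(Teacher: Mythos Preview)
Your proof is correct and follows essentially the same approach as the paper: apply Lemma~\ref{lem:right-left-inversion} to get $B^{\rm R}_{\mathcal S}(Y)=B^{\rm L}_{\mathcal S}(Y^{-1})$ together with $|Y|=|Y^{-1}|$, then take infima over $|Y|=r$. The paper compresses both directions into a single line (``taking infima over $|Y|=r$ on both sides''), whereas you spell out each inequality separately, but the content is identical.
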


\begin{proof}
By Lemma~\ref{lem:right-left-inversion}, $B^{\rm R}_{\mathcal S}(Y)=B^{\rm L}_{\mathcal S}(Y^{-1})$ and $|Y|=|Y^{-1}|$. Taking infima over $|Y|=r$ on both sides yields the claim.
\end{proof}

\begin{proposition}[Colored-sum identity]\label{prop:colored-sum}
For every finite $Y\subset\Gamma$,
\begin{equation}\label{eq:colored-sum}
B_\Sigma(Y)\ =\ B_{\mathcal S}^{\rm L}(Y)\ +\ B_{\mathcal S}^{\rm R}(Y)\ =\ B_{\mathcal S}^{\rm L}(Y)\ +\ B_{\mathcal S}^{\rm L}(Y^{-1}).
\end{equation}
\end{proposition}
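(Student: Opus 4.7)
\medskip
\noindent\textbf{Proof plan.}
The plan is to decompose $B_\Sigma(Y)$ along colors, identify each color class as either a left- or a right-Cayley edge set, and then invoke the inverse-pair transversal $\mathcal S_+$ together with Lemma~\ref{lem:right-left-inversion}. Concretely, since the action is $(g_1,g_2)\cdot x=g_1 x g_2^{-1}$, the color $(s,e)$ with $s\in\mathcal S_+$ acts as left multiplication $x\mapsto sx$, while the color $(e,s)$ acts as right multiplication $x\mapsto x s^{-1}$ (noting $s^{-1}\in\mathcal S$ by symmetry). By the colored counting convention of Remark~\ref{rem:counting-conv},
\[
B_\Sigma(Y)\ =\ \sum_{\sigma\in\Sigma_+}\#\bigl\{\text{undirected $\sigma$-edges across }Y\,|\,Y^c\bigr\}
\ =\ \underbrace{\sum_{s\in\mathcal S_+}L_s(Y)}_{\text{left colors}}\ +\ \underbrace{\sum_{s\in\mathcal S_+}R_s(Y)}_{\text{right colors}},
\]
where $L_s(Y)$ (resp.\ $R_s(Y)$) counts undirected left- (resp.\ right-) $\{g,sg\}$ (resp.\ $\{g,gs^{-1}\}$) edges with exactly one endpoint in $Y$.

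\medskip
\noindent First I will verify that $\sum_{s\in\mathcal S_+} L_s(Y)=B^{\rm L}_{\mathcal S}(Y)$. The point is that every undirected left-$\mathcal S$ edge has the form $\{g,tg\}$ for some $t\in\mathcal S$, and $\{g,tg\}=\{tg,t^{-1}(tg)\}$, so the pair $\{t,t^{-1}\}$ is associated to a unique unordered $\mathcal S$-edge through any given pair of vertices. Since $\mathcal S_+$ selects exactly one representative from each inverse pair, the sum $\sum_{s\in\mathcal S_+}L_s(Y)$ counts each undirected left-$\mathcal S$ edge across $Y\,|\,Y^c$ exactly once, matching the definition of $B^{\rm L}_{\mathcal S}(Y)$. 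An analogous argument (using that $\mathcal S_+^{-1}$ is likewise a transversal of inverse pairs, and that $x\mapsto xs^{-1}$ with $s\in\mathcal S_+$ enumerates right-$\mathcal S$ steps by $s^{-1}\in\mathcal S$) yields $\sum_{s\in\mathcal S_+}R_s(Y)=B^{\rm R}_{\mathcal S}(Y)$. Combining gives the first equality in \eqref{eq:colored-sum}; the second equality is then immediate from Lemma~\ref{lem:right-left-inversion}.

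\medskip
\noindent\textbf{Main obstacle.} This is essentially a bookkeeping identity rather than a structural result, so no real obstacle arises; the only point requiring care is the colored-counting convention (as flagged in Remark~\ref{rem:counting-conv}): one must track that a single unordered pair $\{x,y\}$ that happens to be simultaneously a left-$\mathcal S$ and right-$\mathcal S$ adjacency contributes twice to $B_\Sigma$ (once via its left color, once via its right color), which is exactly what the per-color definition of $B_\Sigma$ prescribes and what makes the additive identity \eqref{eq:colored-sum} hold with equality rather than only ``$\le$.'' If instead one had used an uncolored simple graph on $\Gamma$, only the inequality recorded in Remark~\ref{rem:counting-conv} would survive.
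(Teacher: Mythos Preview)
Your proof is correct and follows essentially the same approach as the paper: decompose $B_\Sigma$ by color into left- and right-$\mathcal S$ contributions, observe that the transversal $\mathcal S_+$ ensures each undirected edge is counted once on its side, and invoke Lemma~\ref{lem:right-left-inversion} for the second equality. Your version is more verbose (the paper dispatches it in two sentences), but the added discussion of why the colored convention is needed to get equality rather than $\le$ is a helpful elaboration of the point already flagged in Remark~\ref{rem:counting-conv}.
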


\begin{proof}
By construction the $\Sigma_+$-edges are the disjoint union (by color) of left-$\mathcal S$ edges and right-$\mathcal S$ edges, each counted exactly once on its side because we sum over $\mathcal S_+$. Summing the two contributions gives the first equality; the second is Lemma~\ref{lem:right-left-inversion}.
\end{proof}

\subsection{Two-sided comparison of profiles}
\begin{theorem}[Two-sided comparison]\label{thm:2sided}
For every integer $r\ge1$,
\begin{equation}\label{eq:profile-2sided}
\quad 2\,I^{\circ,{\rm L}}_{\rm edge}(r)\ \le\ I^{\circ}_{\Sigma}(r)\ \le\ I^{\circ,{\rm L}}_{\rm edge}(r)\ +\ \Delta_{\mathcal S}\,r. 
\end{equation}
Moreover, if there exists a left-minimizer $Y_r$ with $Y_r^{-1}$ also left-minimizing (e.g.\ any inversion-symmetric minimizer), then
$
I^{\circ}_{\Sigma}(r)\ \le\ 2\,I^{\circ,{\rm L}}_{\rm edge}(r).
$
\end{theorem}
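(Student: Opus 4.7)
The plan is to derive both bounds directly from the colored-sum identity \eqref{eq:colored-sum} together with the equality of left and right profiles in Corollary~\ref{cor:left-right-profiles-equal}, and then observe that the extra additive term in the upper bound collapses under an inversion-symmetry hypothesis on minimizers.

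For the lower bound, I will take an arbitrary finite $Y\subset\Gamma$ with $|Y|=r$, apply Proposition~\ref{prop:colored-sum} to write $B_\Sigma(Y)=B^{\rm L}_{\mathcal S}(Y)+B^{\rm L}_{\mathcal S}(Y^{-1})$, and use $|Y^{-1}|=|Y|=r$ to bound each summand from below by $I^{\circ,{\rm L}}_{\rm edge}(r)$. Taking the infimum over $|Y|=r$ on the left yields $I^{\circ}_\Sigma(r)\ge 2\,I^{\circ,{\rm L}}_{\rm edge}(r)$.

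For the upper bound, I will select a left-edge minimizer $Y_r$ with $|Y_r|=r$ and $B^{\rm L}_{\mathcal S}(Y_r)=I^{\circ,{\rm L}}_{\rm edge}(r)$ (which exists by finiteness of the profile attainment argument used for Lemma~\ref{lem:attainment-profiles}, adapted to the edge normalization). Applying \eqref{eq:colored-sum} to $Y_r$ gives $B_\Sigma(Y_r)=B^{\rm L}_{\mathcal S}(Y_r)+B^{\rm L}_{\mathcal S}(Y_r^{-1})$. I then bound the inverted term trivially by degree, $B^{\rm L}_{\mathcal S}(Z)\le \Delta_{\mathcal S}\,|Z|$ for any finite $Z$, applied to $Z=Y_r^{-1}$ with $|Z|=r$. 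This yields $B_\Sigma(Y_r)\le I^{\circ,{\rm L}}_{\rm edge}(r)+\Delta_{\mathcal S}\,r$, and taking the infimum over $|Y|=r$ (using $Y_r$ as a candidate) gives the claimed upper bound.

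For the \emph{moreover} clause, if one can choose the left-minimizer $Y_r$ so that $Y_r^{-1}$ is again a left-minimizer (for instance, any $Y_r$ with $Y_r^{-1}=g\,Y_r\,h$ for some $g,h\in\Gamma$, since left and right translations preserve $B^{\rm L}_{\mathcal S}$ only up to the fact that left translation exactly preserves left edges and right translation by $h$ permutes edges by $y\mapsto yh$ preserving counts), then $B^{\rm L}_{\mathcal S}(Y_r^{-1})=I^{\circ,{\rm L}}_{\rm edge}(r)$ and the computation above gives $B_\Sigma(Y_r)=2\,I^{\circ,{\rm L}}_{\rm edge}(r)$, matching the lower bound. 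The main conceptual point is not the proof itself, which is essentially one-line applications of \eqref{eq:colored-sum} and the degree bound, but rather that the gap $\Delta_{\mathcal S}\,r$ in the upper bound is of linear order while Wulff-type profiles grow like $r^{(Q-1)/Q}$: the bound is therefore useful only when one can upgrade to the \emph{moreover} case by exhibiting (approximately) inversion-symmetric near-minimizers. The hard part is not this theorem but the subsequent verification (used in the asymptotic refinement in the main text) that for $S$ symmetric and Wulff samplers in virtually nilpotent/Carnot settings, inversion symmetry holds up to lower-order sampling error, yielding $I^{\circ}_\Sigma(r)=2\,I^{\circ,{\rm L}}_{\rm edge}(r)+o(r^{(Q-1)/Q})$.
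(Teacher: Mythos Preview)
Your proof is correct and follows essentially the same approach as the paper: both the lower and upper bounds are one-line applications of the colored-sum identity \eqref{eq:colored-sum}, with the upper bound invoking attainment of the infimum and the trivial degree bound $B^{\rm L}_{\mathcal S}(Z)\le\Delta_{\mathcal S}|Z|$ (the paper phrases this as bounding $B^{\rm R}_{\mathcal S}(Y)$ directly, which is equivalent via Lemma~\ref{lem:right-left-inversion}). Your parenthetical about $Y_r^{-1}=gY_rh$ is slightly garbled in wording but correct in substance: both left and right translations preserve $B^{\rm L}_{\mathcal S}$ exactly, so any such $Y_r$ indeed has $Y_r^{-1}$ left-minimizing.
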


\begin{proof}
\emph{Lower bound.} For any $Y$ with $|Y|=r$, \eqref{eq:colored-sum} and Lemma~\ref{lem:right-left-inversion} give
\[
B_\Sigma(Y)=B_{\mathcal S}^{\rm L}(Y)+B_{\mathcal S}^{\rm L}(Y^{-1})\ \ge\ I^{\circ,{\rm L}}_{\rm edge}(r)+I^{\circ,{\rm L}}_{\rm edge}(r)=2I^{\circ,{\rm L}}_{\rm edge}(r).
\]
Taking the infimum over such $Y$ yields $I^{\circ}_{\Sigma}(r)\ge2I^{\circ,{\rm L}}_{\rm edge}(r)$.

\smallskip
\emph{Upper bound.} The set $\{B^{\rm L}_{\mathcal S}(Y):|Y|=r\}\subset\{0,1,\dots,\Delta_{\mathcal S} r\}$ is a nonempty subset of $\mathbb N$, hence has a minimum by well-ordering; choose $Y$ with $|Y|=r$ attaining $B^{\rm L}_{\mathcal S}(Y)=I^{\circ,{\rm L}}_{\rm edge}(r)$.
Then
\[
B_\Sigma(Y)=I^{\circ,{\rm L}}_{\rm edge}(r)+B_{\mathcal S}^{\rm R}(Y)\ \le\ I^{\circ,{\rm L}}_{\rm edge}(r)+\Delta_{\mathcal S}\,|Y|
\ =\ I^{\circ,{\rm L}}_{\rm edge}(r)+\Delta_{\mathcal S} r,
\]
since at each $g\in Y$ there are at most $\Delta_{\mathcal S}$ right-$\mathcal S$ edges exiting $Y$.  Therefore $I^{\circ}_\Sigma(r)\le I^{\circ,{\rm L}}_{\rm edge}(r)+\Delta_{\mathcal S} r$.

\smallskip
\emph{Symmetric minimizers.} If one can choose $Y_r$ with $B_{\mathcal S}^{\rm L}(Y_r)=I^{\circ,{\rm L}}_{\rm edge}(r)$ and $B_{\mathcal S}^{\rm L}(Y_r^{-1})=I^{\circ,{\rm L}}_{\rm edge}(r)$ (e.g.\ $Y_r=Y_r^{-1}$), then
$B_\Sigma(Y_r)=2I^{\circ,{\rm L}}_{\rm edge}(r)$, whence $I^{\circ}_{\Sigma}(r)\le2I^{\circ,{\rm L}}_{\rm edge}(r)$.
\end{proof}

\begin{remark}[Sharpness, attainability, and the source of asymmetry]
The lower bound is attained whenever a left-minimizer can be chosen inversion-symmetric (for instance on $\mathbb Z^d$ one can use origin-centered Wulff approximants).  The upper bound $I^{\circ}_\Sigma(r)\le I^{\circ,{\rm L}}_{\rm edge}(r)+\Delta_{\mathcal S} r$ is \emph{optimal in general}: no uniform improvement to $2I^{\circ,{\rm L}}_{\rm edge}(r)$ is possible without an extra symmetry hypothesis, since $B_{\mathcal S}^{\rm R}(Y)=B_{\mathcal S}^{\rm L}(Y^{-1})$ need not be close to $B_{\mathcal S}^{\rm L}(Y)$ for an arbitrary left-minimizer $Y$.
Conceptually, the gap measures the failure of the collection of volume-$r$ left-minimizers to be stable under inversion: if inversion does not preserve the minimizing family, $\min\{B^{\rm L}_\mathcal S(\cdot)\}$ and $\min\{B^{\rm L}_\mathcal S((\cdot)^{-1})\}$ are realized at different sets and the colored sum cannot collapse to $2I^{\circ,{\rm L}}_{\rm edge}(r)$. In nilpotent settings with even anisotropy (Sections~\ref{sec:sharp-wulff}, \ref{sec:gamma-quant}), calibrated Wulff samplers are naturally inversion-symmetric and the gap closes.
\end{remark}

\subsection{Asymptotics in virtually nilpotent classes}
Let $\Gamma$ have polynomial growth of homogeneous dimension $Q$ and let $c_{\rm W}(\mathcal S)$ denote the sharp Wulff constant for the left Cayley graph (Section~\ref{sec:sharp-wulff}).  Then
\[
I^{\circ,{\rm L}}_{\rm edge}(r)\ =\ c_{\rm W}(\mathcal S)\,r^{(Q-1)/Q}\,(1+o(1))\qquad(r\to\infty),
\]
and, choosing (asymptotically) inversion-symmetric Wulff samplers, Theorem~\ref{thm:2sided} yields the homogeneous-space profile
\begin{equation}\label{eq:Sigma-asympt}
I^{\circ}_{\Sigma}(r)\ =\ 2\,c_{\rm W}(\mathcal S)\,r^{(Q-1)/Q}\,(1+o(1)).
\end{equation}
Thus, in the virtually nilpotent setting, passing from the left Cayley profile to the $\Gamma\times\Gamma$-homogeneous-space profile doubles the sharp constant while preserving the exponent.

\begin{corollary}[Bi-invariant walk: Wulff-coded upgrade]\label{cor:bi-invariant-upgrade}
Assume the setting of Section~\ref{sec:analytic} with polynomial growth $Q\ge1$. Replacing $\mathcal S$ by the bi-invariant generator $\Sigma_+$ yields a (colored) degree bound
\[
|\Sigma_+|\ =\ 2\,|\mathcal S_+|\ =\ |\mathcal S|+|I|,\qquad I:=\{s\in\mathcal S:\ s=s^{-1}\}.
\]
In particular $|\Sigma_+|=|\mathcal S|$ whenever $\mathcal S$ has no involutions (e.g., the standard $\pm e_i$ generators). By \eqref{eq:Sigma-asympt}, the isoperimetric constant doubles asymptotically. Consequently, the Faber--Krahn lower bound of Theorem~\ref{thm:Nash} improves by the factor
\[
4\Big(\tfrac{|\mathcal S|}{|\Sigma_+|}\Big)^{2}
\;=\;4\Big(\tfrac{|\mathcal S|}{|\mathcal S|+|I|}\Big)^{2},
\]
which equals $4$ when $I=\varnothing$. Equivalently,
\[
\lambda_1^{\rm D}(r)\ \ge\ \frac{(2C_{\mathrm{iso}})^2}{2\,|\Sigma_+|^2}\,r^{-2/Q}
\;=\; 4\left(\frac{|\mathcal S|}{|\Sigma_+|}\right)^{2}\times \frac{C_{\mathrm{iso}}^2}{2\,|\mathcal S|^2}\,r^{-2/Q},
\]
and the Nash/heat-kernel/mixing constants in Section~\ref{sec:analytic} improve accordingly (exponents unchanged). The proofs in Section~\ref{sec:analytic} apply verbatim to our colored multigraph, as they use only the Dirichlet form with counting measure and a uniform degree bound.
\end{corollary}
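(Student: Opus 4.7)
The corollary is essentially a bookkeeping exercise that combines three ingredients already in hand: the colored-sum identity and its asymptotic consequence \eqref{eq:Sigma-asympt}, an elementary count of $|\Sigma_+|$, and the fact that Theorem~\ref{thm:Nash} depends on the generating set only through the pair $(C_{\mathrm{iso}},\Delta)$. First I would compute the degree. Partition $\mathcal S$ into involutions $I=\{s\in\mathcal S: s=s^{-1}\}$ and non-involution inverse-pairs. Every non-involution pair contributes one element to $\mathcal S_+$, while each involution contributes itself. Thus $|\mathcal S_+|=(|\mathcal S|-|I|)/2+|I|=(|\mathcal S|+|I|)/2$, and since $\Sigma_+$ consists of left-colored and right-colored copies of $\mathcal S_+$, $|\Sigma_+|=2|\mathcal S_+|=|\mathcal S|+|I|$. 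In particular $|\Sigma_+|=|\mathcal S|$ when $I=\varnothing$.

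Next I would read off the isoperimetric input on $\Sigma_+$. By \eqref{eq:Sigma-asympt}, in the virtually nilpotent setting there exist Wulff-type samplers (inversion-symmetric up to lower-order error) for which
\[
I^{\circ}_{\Sigma}(r)\ =\ 2\,c_{\mathrm W}(\mathcal S)\,r^{(Q-1)/Q}\,(1+o(1)),
\]
so that for the $\Sigma_+$-graph the analog of \eqref{eq:iso-up-to-r} holds asymptotically with constant $C_{\mathrm{iso}}^{\Sigma}=2\,C_{\mathrm{iso}}$ (same exponent $(Q-1)/Q$). The key transfer point is that the Dirichlet form on the colored multigraph decomposes as a sum over colors, each color contributing a left- or right-regular Dirichlet form on $\Gamma$; the (colored) degree is the constant $|\Sigma_+|$, and each undirected color-labeled edge carries unit conductance. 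Consequently, the hypotheses behind Theorem~\ref{thm:Nash}---a uniform finite degree, a symmetric reversible generator, and an equal-size isoperimetric profile of the stated order---are all met with $\Delta$ replaced by $|\Sigma_+|$ and $C_{\mathrm{iso}}$ replaced by $2C_{\mathrm{iso}}$.

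Substituting into the Dirichlet Faber--Krahn bound \eqref{eq:fk-from-iso-scale-r} yields
\[
\lambda_1^{\mathrm D}(r)\ \ge\ \frac{(2C_{\mathrm{iso}})^2}{2\,|\Sigma_+|^2}\,r^{-2/Q}\ =\ 4\Big(\frac{|\mathcal S|}{|\Sigma_+|}\Big)^{2}\ \cdot\ \frac{C_{\mathrm{iso}}^2}{2\,|\mathcal S|^2}\,r^{-2/Q},
\]
which is exactly the displayed improvement (reducing to the factor $4$ when $I=\varnothing$). The Nash inequality \eqref{eq:nash} inherits the same prefactor $(|\Sigma_+|/(2C_{\mathrm{iso}}))^2$ in place of $(|\mathcal S|/C_{\mathrm{iso}})^2$, and the ultracontractive/heat-kernel/mixing estimates of Theorem~\ref{thm:hk-2sides} and Theorem~\ref{thm:mixing} then propagate the improvement with exponents unchanged, since their proofs use the Nash/Faber--Krahn pair as a black box.

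The only nontrivial checkpoint is the transfer of Theorem~\ref{thm:Nash}'s proof to the colored multigraph, and I expect this to be the main (mild) obstacle: one must verify that Cheeger's inequality and the standard Nash$\Rightarrow$ultracontractivity machinery apply with the \emph{colored} edge boundary $B_\Sigma$ in place of the simple edge boundary. This is standard once one observes that the colored graph is a bounded-degree weighted graph with unit edge conductances and constant vertex degree $|\Sigma_+|$, so the normalized Dirichlet form $\mathcal E_\Sigma(\mathbf 1_U,\mathbf 1_U)=B_\Sigma(U)/|\Sigma_+|$ satisfies the same Cheeger-type inequality used in Theorem~\ref{thm:Nash}; no new analytic input is required.
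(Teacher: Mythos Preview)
Your proposal is correct and matches the paper's approach: the corollary has no separate proof in the paper, with the justification embedded in the statement itself (the final sentence about Section~\ref{sec:analytic} applying verbatim since only the Dirichlet form and a uniform degree bound are used). You have simply expanded this implicit reasoning, correctly computing $|\Sigma_+|$ via the involution count and substituting $(2C_{\mathrm{iso}},|\Sigma_+|)$ for $(C_{\mathrm{iso}},|\mathcal S|)$ in \eqref{eq:fk-from-iso-scale-r}.
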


\begin{remark}[Transfer of TF/tempered-A to $\Sigma$]\label{rem:TF-temperedA-Sigma}
If the {\rm(TF)} witnesses in Section~\ref{sec:tempered-A} are built from (asymptotically) inversion-symmetric Wulff samplers-as in the virtually nilpotent/Carnot setting-then $B_\Sigma(F_n)=B_{\mathcal S}^{\rm L}(F_n)+B_{\mathcal S}^{\rm L}(F_n^{-1})\sim 2\,B_{\mathcal S}^{\rm L}(F_n)$ and $\mathrm{rad}(F_n)$ is unchanged. Hence the same families certify {\rm(TF)} for $\Sigma_+$ (up to constant factors), and the tempered-A/quantitative coarse embeddings of Section~\ref{sec:tempered-A} carry over with the same radius scale and logarithmic compression.
\end{remark}

\subsection{Consequences and variants}
\begin{itemize}
\item \emph{Cheeger constants.} If the (left) edge Cheeger constant satisfies $B_{\mathcal S}^{\rm L}(Y)\ge h\,|Y|$, then by \eqref{eq:colored-sum} we get $B_\Sigma(Y)\ge 2h\,|Y|$, hence $I^{\circ}_\Sigma(r)\ge 2 h r$; the homogeneous-space graph is ``at least twice as nonamenable.'' (For the uncolored-collapsed model one only has $h^{\rm simple}_\Sigma\ge h$.)
\item \emph{Vertex normalization.} All statements above translate to vertex boundary up to factors of the relevant degree via $ |\partial_{\mathcal S}(Y)|\le B_{\mathcal S}(Y)\le \Delta_{\mathcal S}\,|\partial_{\mathcal S}(Y)|$ and \eqref{eq:vertex-edge-sigma} (with $\Delta_{\mathcal S}$ replaced by $\Delta_{\Sigma}=|\Sigma_+|$ for the $\Sigma_+$-graph).
\item \emph{Weighted colorings.} One may weight left and right colors differently (say, with weights $\alpha,\beta>0$); the identity becomes $B_{\alpha,\beta}(Y)=\alpha\,B_{\mathcal S}^{\rm L}(Y)+\beta\,B_{\mathcal S}^{\rm L}(Y^{-1})$, and the bounds scale accordingly. If the corresponding random walk chooses colors with total masses $\alpha$ (left) and $\beta$ (right), then the Dirichlet form rescales by $\alpha+\beta$ and the degree parameter in the Nash/Faber--Krahn normalization is $(\alpha+\beta)\,|\mathcal S_+|=\tfrac{\alpha+\beta}{2}\,|\Sigma_+|$.
\end{itemize}

\noindent\textbf{Summary.}
The homogeneous-space \emph{colored} edge boundary for the $(\Gamma\times\Gamma)$-action with generator set $\Sigma_+$ splits \emph{exactly} as the sum of left and right Cayley boundaries, so that the corresponding profile satisfies the robust two-sided estimate \eqref{eq:profile-2sided}.  In virtually nilpotent classes the asymptotics \eqref{eq:Sigma-asympt} hold with the sharp Wulff constant from the left Cayley profile, yielding a clean and explicit relation between the two geometries and slotting directly into the quantitative analytic bounds of Section~\ref{sec:analytic} and the tempered coarse-embedding framework of Section~\ref{sec:tempered-A}.

\section{Technical Appendix: discrete BV/coarea and anisotropic counts}

\noindent\textbf{Conventions.}
We adopt $\N=\{0,1,2,\dots\}$. We work on the base grid $\Z^{r_1}$ with a finite, \emph{oriented} horizontal stencil
$S_{\rm hor}\subset \Z^{r_1}\setminus\{0\}$: for each undirected pair $\{\pm v\}$ we include exactly one
representative $v\in S_{\rm hor}$.
For $v\in S_{\rm hor}$ write the forward difference $(\Delta_v f)(u):=f(u+v)-f(u)$.
A \emph{$v$-edge} is the undirected edge $\{u,u+v\}$, recorded with the chosen orientation $u\to u+v$,
so that every undirected edge is counted exactly once.
For $E\subset\Z^{r_1}$ let $\partial^{(v)}(E)$ be the set of $v$-edges with exactly one endpoint in $E$, and
\[
\Per_{\rm grid}(E):=\sum_{v\in S_{\rm hor}}|\partial^{(v)}(E)|
\qquad\text{(undirected edge perimeter via the fixed orientation)}.
\]
When an orthotropic anisotropy with weights $w_i>0$ is present and we take the axis
orientation $S_{\rm hor}=\{e_i\}_{i=1}^{r_1}$, we also use the \emph{weighted} discrete perimeter
\begin{equation}\label{eq:weighted-grid-per}
\Per_{\rm grid}^{(w)}(E):=\sum_{i=1}^{r_1} w_i\,\big|\partial^{(e_i)}(E)\big|.
\end{equation}
Unless explicitly stated otherwise, every finitely supported function on a subset of $\Z^{r_1}$
is extended by $0$ to all of $\Z^{r_1}$ before taking differences or applying coarea.
All horizontal perimeter counts here coincide (up to fixed factors depending only on degrees) with the
undirected edge boundary used in \S\ref{sec:bounded-ratio}-\ref{sec:analytic}.

\medskip
\noindent\textbf{Morphological notation.}
Write $Q_\infty:=[-1,1]^{r_1}$ and $B_\infty(\rho):=[-\rho,\rho]^{r_1}$ for the $\ell^\infty$ unit cube and ball.
For $A\subset\R^{r_1}$ and $t\ge0$ the (continuum) $\ell^\infty$ inner parallel is $A\ominus tQ_\infty:=\{x:\ x+tQ_\infty\subset A\}$.
For $E\subset\Z^{r_1}$ and $m\in\N$, the (discrete) $\ell^\infty$ inner parallel set is
\[
E^{(-m)}\ :=\ \big\{u\in\Z^{r_1}:\ u+k\in E\ \text{for all}\ k\in \Z^{r_1}\cap B_\infty(m)\big\}.
\]
We also use the \emph{discrete} $\ell^\infty$ distance to the boundary
\[
\dist_\infty^{\rm disc}(u,\partial E)\ :=\ \max\big\{m\in\N:\ u+k\in E\ \text{for all}\ k\in \Z^{r_1}\cap B_\infty(m)\big\},
\]
so that $\{u:\ \dist_\infty^{\rm disc}(u,\partial E)\ge m\}=E^{(-m)}$.
For $F\subset\Z^{r_1}$ we set $F^\square:=\bigcup_{u\in F}(u+[0,1]^{r_1})$.

\begin{lemma}[Discrete coarea on the base grid]\label{lem:coarea}
Let $\ell:\Z^{r_1}\to\N$ have finite support. Then for each $v\in S_{\rm hor}$,
\[
\sum_{u\in\Z^{r_1}} \big|\Delta_v \ell(u)\big|
\ =\ \int_0^\infty \Big|\partial^{(v)}\big(\{\ell>t\}\big)\Big|\,dt,
\]
and summing over $v$ yields
\[
\sum_{v\in S_{\rm hor}}\sum_{u\in\Z^{r_1}} \big|\Delta_v \ell(u)\big|
\ =\ \int_0^\infty \Per_{\rm grid}\big(\{\ell>t\}\big)\,dt .
\]
\end{lemma}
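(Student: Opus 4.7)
The plan is a direct layer-cake argument. First, I would establish a pointwise (in $u$) identity by writing the integer-valued function via its superlevel decomposition: for any $a,b\in\N$,
\[
|b-a|\ =\ \int_0^\infty \big|\mathbf 1_{\{b>t\}}-\mathbf 1_{\{a>t\}}\big|\,dt .
\]
This is immediate by splitting into cases $a\le b$ and $a>b$: if $a\le b$, the integrand equals $1$ precisely on $t\in[a,b)$ and $0$ otherwise, so the integral equals $b-a$; the case $a>b$ is symmetric. Applied with $a=\ell(u)$ and $b=\ell(u+v)$, this yields
\[
\big|\Delta_v\ell(u)\big|\ =\ \int_0^\infty\Big|\mathbf 1_{\{\ell(u+v)>t\}}-\mathbf 1_{\{\ell(u)>t\}}\Big|\,dt\qquad(u\in\Z^{r_1}).
\]

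Next I would sum this identity over $u\in\Z^{r_1}$. Since $\ell$ has finite support, only finitely many terms are nonzero on the left and the integrand on the right is uniformly bounded and supported in $t\in[0,\max\ell)$, so Tonelli applies and I may interchange the sum and the integral:
\[
\sum_{u\in\Z^{r_1}}\big|\Delta_v\ell(u)\big|
\ =\ \int_0^\infty\ \sum_{u\in\Z^{r_1}}\Big|\mathbf 1_{E_t}(u+v)-\mathbf 1_{E_t}(u)\Big|\,dt,\qquad E_t:=\{\ell>t\}.
\]

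Now I would identify the inner sum with the boundary count. For fixed $t$, the summand $|\mathbf 1_{E_t}(u+v)-\mathbf 1_{E_t}(u)|$ equals $1$ iff exactly one of $u,u+v$ lies in $E_t$, and $0$ otherwise. By the standing orientation convention, each $v$-edge is recorded exactly once as the ordered pair $u\to u+v$, so the inner sum equals $|\partial^{(v)}(E_t)|$. This establishes the first displayed identity. Summing over $v\in S_{\rm hor}$ and interchanging the finite sum with the integral (again by Tonelli, or just by linearity of the integral over a common domain $[0,\max\ell)$) gives the second display, with the left side becoming $\sum_{v}\sum_u |\Delta_v\ell(u)|$ and the right side becoming $\int_0^\infty \Per_{\rm grid}(\{\ell>t\})\,dt$.

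The main (and only) obstacle is purely bookkeeping: making sure the orientation convention $u\to u+v$ is used consistently so that each undirected $v$-edge of $\partial^{(v)}(E_t)$ is counted exactly once on both sides. Once this is fixed, the argument is a routine application of the layer-cake representation and Fubini/Tonelli, with no regularity or integrability subtleties beyond the finite support of $\ell$.
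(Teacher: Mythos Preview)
Your proof is correct and follows essentially the same layer-cake route as the paper. The only cosmetic difference is that the paper first writes the discrete sum $|a-b|=\sum_{k=1}^{K}|\mathbf 1_{\{a\ge k\}}-\mathbf 1_{\{b\ge k\}}|$ and then converts to the integral via $\{\ell>t\}=\{\ell\ge k\}$ for $t\in(k-1,k]$, whereas you go straight to the integral identity; this is a matter of presentation, not substance.
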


\begin{proof}
Fix $v\in S_{\rm hor}$ and set $K:=\max_{u}\ell(u)<\infty$. For $a,b\in\N$ we have the pointwise
\emph{layer-cake identity}
\begin{equation}\label{eq:lc}
|a-b|\ =\ \sum_{k=1}^{K}\Big|\,\mathbf 1_{\{a\ge k\}}-\mathbf 1_{\{b\ge k\}}\,\Big|.
\end{equation}
If $a\ge b$, then the summand equals $1$ for $b<k\le a$ and $0$ otherwise, so the sum is $a-b$.
Applying \eqref{eq:lc} with $a=\ell(u+v)$, $b=\ell(u)$ and summing over $u$ gives
\[
\sum_{u}\big|\Delta_v\ell(u)\big|
=\sum_{k=1}^{K}\ \sum_{u}\Big|\,\Delta_v \mathbf 1_{\{\ell\ge k\}}(u)\,\Big|
=\sum_{k=1}^{K}\Big|\partial^{(v)}\{\ell\ge k\}\Big|.
\]
Since $\{\ell>t\}=\{\ell\ge k\}$ for $t\in(k-1,k]$, the integral identity follows by integrating the step function
$t\mapsto \big|\partial^{(v)}\{\ell>t\}\big|$ over $(0,\infty)$ and summing in $k$. Sum over $v$.
\end{proof}

\begin{lemma}[Discrete divergence theorem for finite stencils]\label{lem:disc-div}
Let $U\subset\Z^{r_1}$ be finite and $S_{\rm hor}\subset\Z^{r_1}\setminus\{0\}$ a fixed finite, oriented stencil
(one representative per undirected pair).
For each $v\in S_{\rm hor}$ and each oriented $v$-edge $(x\to x+v)$ with $x,x+v\in U$,
assign a weight $X_v(x,x+v)\in\R$ (set $X_v(x,x+v):=0$ if the edge is missing).
Define
\[
\Div X(u)\ :=\ \sum_{v\in S_{\rm hor}}\Big(X_v(u-v,u)\ -\ X_v(u,u+v)\Big)\qquad(u\in U).
\]
Then for every $E\subset U$,
\begin{equation}\label{eq:disc-div-correct}
\sum_{u\in E}\Div X(u)
\ =\ \sum_{v\in S_{\rm hor}}\ \sum_{\substack{(x,y)=(u,u+v):\\ x\notin E,\ y\in E}} X_v(x,y)
\ -\ \sum_{v\in S_{\rm hor}}\ \sum_{\substack{(x,y)=(u,u+v):\\ x\in E,\ y\notin E}} X_v(x,y),
\end{equation}
i.e.\ (inward flux across $\partial E$) minus (outward flux). The sums on the right run over all ordered pairs
$(x,y)=(u,u+v)$ with $u\in U$; missing edges are included via the convention $X_v=0$.
\end{lemma}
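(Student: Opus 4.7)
The identity is essentially a finite Abel summation (a discrete Stokes/divergence formula), so I would proceed by straightforward reindexing rather than by invoking any auxiliary machinery. First I would fix $v\in S_{\rm hor}$ and isolate the $v$-contribution to the left-hand side,
\[
\Sigma_v\ :=\ \sum_{u\in E}\Big(X_v(u-v,u)\ -\ X_v(u,u+v)\Big),
\]
with the convention $X_v=0$ on any ordered pair $(x,y)=(x,x+v)$ where either endpoint lies outside $U$. Then the sum over $v\in S_{\rm hor}$ gives $\sum_{u\in E}\Div X(u)$.

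Next I would reindex the first piece by the substitution $u=x+v$, writing each term as a sum over oriented $v$-edges $(x,y)$ with $y=x+v$:
\[
\sum_{u\in E} X_v(u-v,u)\ =\ \sum_{(x,y):\,y=x+v,\ y\in E} X_v(x,y),\qquad
\sum_{u\in E} X_v(u,u+v)\ =\ \sum_{(x,y):\,y=x+v,\ x\in E} X_v(x,y).
\]
Splitting each of these two sums according to whether the \emph{other} endpoint lies in $E$ or not yields four pieces; the two pieces supported on internal edges ($x\in E$ and $y\in E$) appear with opposite signs and cancel, while the two pieces on boundary edges (exactly one endpoint in $E$) survive:
\[
\Sigma_v\ =\ \sum_{\substack{(x,y)=(u,u+v):\\ x\notin E,\ y\in E}} X_v(x,y)\ -\ \sum_{\substack{(x,y)=(u,u+v):\\ x\in E,\ y\notin E}} X_v(x,y).
\]
Summing this over $v\in S_{\rm hor}$ gives exactly \eqref{eq:disc-div-correct}.

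Finally I would note that the zero-extension convention handles the edge cases cleanly: any ordered pair $(x,y)=(u,u+v)$ with $u\in U$ but $u+v\notin U$ (or vice versa) contributes $0$ to both sides, so the sums on the right may be taken unrestricted over pairs with $u\in U$ without affecting the value. The only thing requiring any care is keeping the orientation bookkeeping consistent, since $S_{\rm hor}$ contains only one representative of each undirected pair; this is what ensures that each undirected boundary edge is counted exactly once on the right-hand side. There is no real obstacle here—the statement is a discrete analogue of the divergence theorem, and the proof is a one-line telescoping once the indices are matched.
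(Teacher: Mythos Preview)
Your proof is correct. The paper actually states this lemma without proof (it is followed only by a remark on sign conventions), so there is nothing to compare against; your telescoping/reindexing argument is the standard one and is exactly what the paper implicitly relies on.
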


\begin{remark}[Sign convention]
If one prefers $\Div$ to denote net \emph{outflow}, set
$\Div_{\rm out}X(u):=\sum_{v}\big(X_v(u,u+v)-X_v(u-v,u)\big)=-\Div X(u)$,
and the right-hand side of \eqref{eq:disc-div-correct} flips sign accordingly.
\end{remark}

\subsection{Discrete anisotropic calibration: scope, identity, and limits}

The face-flux construction below is \emph{only} needed and used for the nearest-neighbor axis stencil and orthotropic
anisotropy. In that case one has an exact combinatorial identity (no error term).

\begin{lemma}[Axis/orthotropic identity]\label{lem:disc-cal}
Assume $S_{\rm hor}=\{e_i\}_{i=1}^{r_1}$ (nearest-neighbor axis stencil).
Let $w_i>0$ and consider the orthotropic anisotropy on $\R^{r_1}$ with dual ball
\[
\bigl\{\zeta\in\R^{r_1}: \tau^\circ(\zeta)\le 1\bigr\}=\bigcap_{i=1}^{r_1}\{\,|\langle \zeta,e_i\rangle|\le w_i\,\}
\quad\Longleftrightarrow\quad \tau(\nu)=\sum_{i=1}^{r_1} w_i\,|\nu_i|.
\]
Then for every finite $E\subset\Z^{r_1}$,
\begin{equation}\label{eq:disc-cal-claim-axis}
\Per_{\rm grid}^{(w)}(E)\ =\ \Per_{\tau}(E^\square),
\qquad E^\square:=\bigcup_{u\in E}(u+[0,1]^{r_1}).
\end{equation}
\end{lemma}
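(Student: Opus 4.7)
My plan is to set up a bijective correspondence between the axis-aligned unit faces comprising $\partial^* E^\square$ and the edges comprising $\partial^{(e_i)}(E)$, and then compute $\Per_\tau(E^\square)$ face by face using the explicit value $\tau(\pm e_i)=w_i$.

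First, I would observe that $E^\square$ is a finite union of closed unit cubes, hence a polyhedral set of finite perimeter whose reduced boundary $\partial^* E^\square$ is contained in the union of axis-aligned hyperplanes $\{x_i=k\}$, $k\in\Z$, $1\le i\le r_1$. Inside such a hyperplane, the $(r_1-1)$-face $F$ shared by the adjacent cubes $u+[0,1]^{r_1}$ and $(u+e_i)+[0,1]^{r_1}$ has $\mathcal H^{r_1-1}(F)=1$, and an interior point of $F$ has density $1$ (if both $u,u+e_i\in E$), density $0$ (if neither is in $E$), or density $\tfrac12$ (if exactly one is in $E$) with respect to $E^\square$. Thus $F\subset\partial^*E^\square$ if and only if exactly one of $u,u+e_i$ is in $E$, i.e.\ iff $(u,u+e_i)\in\partial^{(e_i)}(E)$. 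This produces, for each $i$, a bijection between the unit-area faces of $\partial^*E^\square$ orthogonal to $e_i$ and the elements of $\partial^{(e_i)}(E)$; by axis-alignment, every face of $\partial^*E^\square$ arises uniquely this way.

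Second, I would compute. At each face orthogonal to $e_i$ the measure-theoretic unit normal is $\pm e_i$, and the support-function formula $\tau(\nu)=\sum_j w_j|\nu_j|$ gives $\tau(\pm e_i)=w_i$. Integrating and summing over $i$,
\[
\Per_\tau(E^\square)
=\int_{\partial^* E^\square}\tau(\nu)\,d\mathcal H^{r_1-1}
=\sum_{i=1}^{r_1} w_i\,|\partial^{(e_i)}(E)|
=\Per_{\rm grid}^{(w)}(E),
\]
since each matching face contributes exactly $w_i\cdot 1$. Taking $w_i\equiv 1$ recovers the unweighted statement.

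The main obstacle is the measure-theoretic bookkeeping at shared faces, since $E^\square$ uses closed cubes and adjacent cubes overlap on their common $(r_1-1)$-face. I would handle this either by (a) replacing $[0,1]^{r_1}$ with the half-open cube $[0,1)^{r_1}$, making the union disjoint while keeping $E^\square$ unchanged up to an $\mathcal H^{r_1-1}$-null (hence $\Per_\tau$-null) set, or (b) invoking the standard reduced-boundary calculus for polyhedral sets (Federer), which gives the face enumeration directly. Either route reduces the proof to the combinatorial face/edge bijection above; no further analytic input is required and the identity is exact.
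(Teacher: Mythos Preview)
Your proof is correct and takes essentially the same approach as the paper: both set up the bijection between axis-orthogonal unit faces of $\partial E^\square$ and the edges in $\partial^{(e_i)}(E)$, then use $\tau(\pm e_i)=w_i$ to sum. Your version is more detailed on the measure-theoretic bookkeeping (reduced boundary, density at shared faces, closed versus half-open cubes), while the paper's proof is a three-sentence sketch that takes this bijection for granted.
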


\begin{proof}
The boundary $\partial E^\square$ is a union of unit $(r_1-1)$-faces, each orthogonal to some $\pm e_i$,
bijectively corresponding to the $e_i$-edges crossing $\partial E$. Each such face has Euclidean area $1$ and
$\tau$-cost $w_i$. Therefore $\Per_\tau(E^\square)=\sum_{i=1}^{r_1}w_i\,|\partial^{(e_i)}(E)|
=\Per_{\rm grid}^{(w)}(E)$.
\end{proof}

\begin{remark}[Dual formula and optional calibration]
By $\ell^1$-$\ell^\infty$ duality on the axis stencil,
\begin{equation}\label{eq:disc-TV-dual-weighted}
\Per_{\rm grid}^{(w)}(E)\ =\ \sup_{\;|Y_{e_i}(e)|\le w_i}\ \sum_{u\in\Z^{r_1}} \mathbf{1}_E(u)\,\Div Y(u),
\end{equation}
with $\Div$ as in Lemma~\ref{lem:disc-div}.
A standard face-flux sampling from a smooth $\zeta$ with $\tau^\circ(\zeta)\le 1$ gives
$\Per_{\rm grid}^{(w)}(E)\ge \Per_\tau(E^\square)$; the reverse inequality is the combinatorial identity above,
so equality holds.
\end{remark}

\begin{remark}[Scope and limitations]\label{rmk:scope-cal}
The identity Lemma~\ref{lem:disc-cal} assumes the axis stencil and orthotropic anisotropy
($\tau(\nu)=\sum_i w_i|\nu_i|$), for which faces of $E^\square$ align with the stencil and the pointwise weights
follow directly. For general finite stencils and polyhedral (crystalline) anisotropies, one needs an oblique cell
decomposition or a bounded redistribution of flux across axis faces; we do not pursue that here.
\end{remark}

\subsection{Vertical counts, tapered profiles, and scalings}

\begin{lemma}[Vertical edge terms are lower order]\label{lem:vertical-lower-correct}
Let $G$ be a step-$2$ Carnot group with horizontal rank $d=r_1$ and $m$ central directions; $Q=d+2m$.
Assume the generating set contains the unit steps along each central direction (so ``vertical'' = central edges),
and that each base point $u\in R\subset\Z^d$ carries a \emph{contiguous} column in each central direction.
Assume moreover that the central fiber over $u$ is the rectangular block
$\prod_{k=1}^m\{0,\dots,\ell_k(u)-1\}$ with heights $\ell_k(u)\in\N$.

\smallskip

\noindent\emph{(a) Heisenberg case $m=1$.}
For any column family over $R\subset\Z^d$ with height function $\ell:R\to\N$,
the total vertical (central) boundary equals
\[
\mathcal V\ =\ 2\#\{u\in R:\ \ell(u)\ge 1\}\ \le\ 2|R|.
\]
In particular, if every column is nonempty ($\ell(u)\ge1$ for all $u$), then $\mathcal V=2|R|$.
If $\mathrm{diam}(R)\asymp\rho$ and $|R|\asymp \rho^d$, then $\mathcal V\asymp \rho^{d}=o(\rho^{Q-1})$
(since $Q-1=d+1$).

\smallskip
\noindent\emph{(b) General two-step ($m\ge2$) under uniform height bounds.}
Assume $R\subset\Z^d$ has $\mathrm{diam}(R)\asymp \rho$ and $|R|\asymp \rho^d$, and there is $H\asymp\rho^2$ such that $0\le \ell_j(u)\le H$ for all $u$ and $j=1,\dots,m$.
Then the vertical boundary
\[
\mathcal V(\bm\ell)\ =\ \sum_{u\in R}\sum_{j=1}^m 2\mathbf 1_{\{\ell_j(u)\ge 1\}}\prod_{k\neq j}\ell_k(u)
\]
obeys $\ \mathcal V(\bm\ell)\ \le 2m|R|H^{m-1}\ \asymp\ \rho^{d+2(m-1)}\ =\ \rho^{Q-2}=o(\rho^{Q-1})$.
\end{lemma}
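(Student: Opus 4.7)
The plan is to reduce both parts to a direct combinatorial count of central-direction boundary edges, exploiting the assumed rectangular (or contiguous) structure of the central fibers, and then to match the resulting polynomial rate against the prescribed scales $|R|\asymp\rho^{d}$ and $H\asymp\rho^{2}$.

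For part (a), I would first observe that in a step-$2$ Carnot lattice every central generator lies in the center of $\Gamma$, so right-multiplication by it preserves the base coordinate $u\in\Z^d$. Consequently every vertical (central-direction) edge connects points sharing the same base point, and the vertical boundary decomposes as a disjoint union of single-column contributions. Under the assumption that the fiber above $u$ is the contiguous integer interval $\{0,1,\dots,\ell(u)-1\}$, every interior vertical step stays inside the fiber, so the only central-direction exits are the two at the extreme heights $\ell(u)-1\to \ell(u)$ and $0\to -1$. These exist precisely when $\ell(u)\ge 1$, giving the exact identity $\mathcal V=2\#\{u\in R:\ell(u)\ge 1\}\le 2|R|$. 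The asymptotic conclusion is then immediate from $Q-1=d+1>d$, so $\mathcal V=O(\rho^{d})=o(\rho^{Q-1})$.

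For part (b), I would run an analogous bookkeeping over the $m$ central directions. Fix $u\in R$ and a central direction $j\in\{1,\dots,m\}$. Inside the rectangular block $B(u)=\prod_{k=1}^{m}\{0,\dots,\ell_k(u)-1\}$, every internal $j$-edge remains in $B(u)$; the $j$-edges crossing out of $B(u)$ are exactly those emanating from points with $h_j\in\{0,\ell_j(u)-1\}$, of which there are precisely $2\prod_{k\neq j}\ell_k(u)$ provided $\ell_j(u)\ge 1$ (and $0$ otherwise, a case already absorbed by the indicator, since then the product is zero or the block is empty). Summing over $u\in R$ and $j\in\{1,\dots,m\}$ yields the displayed exact formula for $\mathcal V(\bm\ell)$. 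Using the uniform bound $\ell_k(u)\le H$ gives $\prod_{k\neq j}\ell_k(u)\le H^{m-1}$, whence $\mathcal V(\bm\ell)\le 2m|R|H^{m-1}$. Substituting $|R|\asymp \rho^{d}$ and $H\asymp \rho^{2}$ produces $\mathcal V(\bm\ell)\le C_m\,\rho^{d+2(m-1)}=C_m\,\rho^{Q-2}$, and $Q-2<Q-1$ yields the $o(\rho^{Q-1})$ conclusion.

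The main (and fairly mild) obstacle is to justify rigorously the step underlying both parts, namely that a central generator preserves the base projection $\pi:\Gamma\to\Z^d$ of Section~\ref{sec:new-step2-shear}. This is where the step-$2$ hypothesis is used: an element of the second stratum $V_2$ lies in the center of the Lie algebra, so its exponential commutes with the horizontal lifts, and therefore right-multiplication by it fixes the base coordinate in the Malcev/coordinate presentation already in force in this appendix. Once that observation is in place, both statements reduce to counting faces of a (contiguous interval, respectively rectangular) block and to the elementary monomial scaling $\rho^d\cdot\rho^{2(m-1)}=\rho^{Q-2}$.
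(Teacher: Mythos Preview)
Your proposal is correct and follows essentially the same approach as the paper's proof: for (a), observe that a central generator preserves the base coordinate and that a contiguous column contributes exactly two boundary edges (top and bottom) iff nonempty; for (b), count $c_j$-boundary edges of a rectangular block as twice the orthogonal cross-section, then bound by $H^{m-1}$. The paper's version is slightly terser and omits the explicit justification that central generators fix the base projection, which you correctly supply.
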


\begin{proof}
(a) With a central generator $c$, the fiber over $u$ contributes exactly two $c$-edges across its top and bottom iff it is nonempty, i.e.\ iff $\ell(u)\ge1$, independent of height. Summing in $u$ gives the formula.

(b) For direction $c_j$, the number of boundary $c_j$-edges over $u$ equals
$2$ times the cardinality of the $(m-1)$-dimensional cross-section orthogonal to $c_j$, i.e.
$2\mathbf 1_{\{\ell_j(u)\ge 1\}}\prod_{k\ne j}\ell_k(u)$ for rectangular fibers. Sum over $u,j$ and use $\prod_{k\ne j}\ell_k(u)\le H^{m-1}$.
\end{proof}

\medskip
\noindent\textbf{Two structural facts for convex samplers.}
We isolate two basic lemmas that will be used repeatedly below.

\begin{lemma}[Discrete $\ell^\infty$ erosion equals continuum erosion for convex samplers]\label{lem:digital-equals-cont}
Let $K\subset\R^d$ be a nonempty convex set and $E:=\Z^d\cap K$. For every $m\in\N$,
\[
E^{(-m)}\ =\ \Z^d\cap\big(K\ominus mQ_\infty\big).
\]
Equivalently, $u\in E^{(-m)}$ iff the whole cube $u+[-m,m]^d$ is contained in $K$.
\end{lemma}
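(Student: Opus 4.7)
\medskip

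The plan is to unfold definitions and show set equality in both directions, with convexity of $K$ entering only in the nontrivial inclusion.

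First I would rewrite each side explicitly. By the definition of $E^{(-m)}$ given in the excerpt, $u\in E^{(-m)}$ iff $u+k\in E=\Z^d\cap K$ for every $k\in\Z^d\cap B_\infty(m)$; since $u+k\in\Z^d$ automatically, this is the same as requiring $u+k\in K$ for every lattice point $k\in[-m,m]^d$. On the other side, $u\in\Z^d\cap(K\ominus mQ_\infty)$ is by definition the condition $u+mQ_\infty=u+[-m,m]^d\subset K$. So the claim reduces to the equivalence
\[
\bigl(\forall\,k\in\Z^d\cap[-m,m]^d:\ u+k\in K\bigr)\ \Longleftrightarrow\ u+[-m,m]^d\subset K.
\]

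The inclusion $\Leftarrow$ is immediate: if the whole continuous cube $u+[-m,m]^d$ lies in $K$, then in particular each of its (finitely many) lattice points lies in $K$, hence in $E$. No convexity is used here.

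For the reverse inclusion $\Rightarrow$, the key observation is that since $m\in\N$, the $2^d$ vertices of $[-m,m]^d$, namely $\{-m,m\}^d$, are themselves integer points in $\Z^d\cap[-m,m]^d$. By hypothesis, $u+v\in K$ for every $v\in\{-m,m\}^d$. Since $[-m,m]^d$ equals the convex hull of its vertex set $\{-m,m\}^d$, translating by $u$ gives
\[
u+[-m,m]^d\ =\ \mathrm{conv}\bigl(\{u+v:v\in\{-m,m\}^d\}\bigr)\ \subset\ K,
\]
where the inclusion is the convexity of $K$. This completes the proof.

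There is no real obstacle here; the only conceptual point worth flagging is the role of $m\in\N$, which guarantees that the extreme points of the continuous cube $[-m,m]^d$ are lattice points and hence already in the finite testing set $\Z^d\cap B_\infty(m)$. Note in particular that only the $2^d$ corners of the testing set are actually used; the interior lattice points are redundant. The statement would fail for non-integer $m$, where the extreme points of $[-m,m]^d$ are not in $\Z^d$ and one cannot recover the continuous cube from lattice data by convexity alone.
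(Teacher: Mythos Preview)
Your proof is correct and follows essentially the same approach as the paper: both directions unfold the definitions, and the nontrivial inclusion uses that the $2^d$ vertices of $[-m,m]^d$ are integer points (since $m\in\N$) and applies convexity of $K$ to their convex hull. Your additional remark on the necessity of $m\in\N$ is a nice clarification not present in the paper.
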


\begin{proof}
($\subset$) If $u\in E^{(-m)}$, then $u+k\in E\subset K$ for all $k\in\Z^d\cap[-m,m]^d$. In particular, all the
$2^d$ vertices of $u+[-m,m]^d$ lie in $K$. By convexity of $K$, the convex hull of these vertices is
$u+[-m,m]^d\subset K$. Hence $u\in \Z^d\cap(K\ominus mQ_\infty)$.

($\supset$) If $u\in \Z^d\cap(K\ominus mQ_\infty)$, then $u+[-m,m]^d\subset K$, so in particular
$u+k\in K$ for all $k\in\Z^d\cap[-m,m]^d$. Thus $u+k\in E$ for all such $k$, i.e.\ $u\in E^{(-m)}$.
\end{proof}

\begin{lemma}[Fiber monotonicity of grid perimeter under $\ell^\infty$ erosions]\label{lem:fiber-monotonicity}
Let $K\subset\R^d$ be convex, $E=\Z^d\cap K$, and $E^{(-m)}$ as above. For each coordinate direction $e_i$,
\[
\big|\partial^{(e_i)}(E^{(-m)})\big|\ \le\ \big|\partial^{(e_i)}(E)\big|\qquad\text{for all }m\in\N.
\]
In particular $\Per_{\rm grid}(E^{(-m)})\le \Per_{\rm grid}(E)$.
\end{lemma}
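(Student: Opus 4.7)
The plan is to reduce the inequality to a per-direction, fiberwise count. I will fix a coordinate direction $e_i$ and slice along integer lines $\ell$ parallel to $e_i$; since $|\partial^{(e_i)}(F)|=\sum_{\ell}|\partial^{(e_i)}(F)\cap\ell|$ for any $F\subset\Z^d$, it suffices to control the total contribution over all such $\ell$ and then sum over $i$. The substantive case is $K$ bounded (equivalently, $\Per_{\mathrm{grid}}(E)<\infty$, which is the convex-sampler setting of the preceding lemmas); when $\Per_{\mathrm{grid}}(E)=+\infty$ the inequality is trivially $+\infty\le+\infty$.

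Two structural inputs drive the fiber analysis. First, $K\ominus mQ_\infty=\bigcap_{y\in mQ_\infty}(K-y)$ is an intersection of translates of the convex set $K$ and is therefore itself convex; combined with Lemma~\ref{lem:digital-equals-cont}, this yields $E^{(-m)}=\Z^d\cap(K\ominus mQ_\infty)$. Second, for any convex set $C\subset\R^d$ and any line $\ell$ parallel to $e_i$, $C\cap\ell$ is an interval, so $I:=E\cap\ell_\Z$ and $J:=E^{(-m)}\cap\ell_\Z$ are integer intervals, with $J\subset I$ coming from $E^{(-m)}\subset E$. In the bounded case both fibers are bounded (or empty), since $K\ominus mQ_\infty\subset K$.

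A bounded nonempty integer interval contributes exactly $2$ boundary $e_i$-edges on its fiber (the two ``end edges''), while an empty fiber contributes $0$. Since $J\subset I$ forces every $e_i$-fiber nonempty for $E^{(-m)}$ to be nonempty for $E$, I obtain
\[
|\partial^{(e_i)}(E^{(-m)})|\;=\;2\,\#\{\text{nonempty }e_i\text{-fibers of }E^{(-m)}\}\;\le\;2\,\#\{\text{nonempty }e_i\text{-fibers of }E\}\;=\;|\partial^{(e_i)}(E)|,
\]
and summing over $i\in\{1,\dots,d\}$ yields $\Per_{\mathrm{grid}}(E^{(-m)})\le\Per_{\mathrm{grid}}(E)$.

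The main obstacle---and the reason for the bounded-$K$ reduction---is that \emph{per-fiber} monotonicity can genuinely fail when $K$ is unbounded: on a line where $E\cap\ell_\Z=\Z$ (contributing $0$), the fiber $E^{(-m)}\cap\ell_\Z$ can be a bounded integer interval (contributing $2$), because the structuring element $mQ_\infty$ trims the fiber at places where the transverse profile of $K$ is only partially present near the ``caps.'' In such configurations one would verify the \emph{summed} per-direction inequality by a bookkeeping argument that matches the ``lost'' boundary edges on fiber $\ell$ of $E$ with the ``gained'' edges on a nearby fiber of $E^{(-m)}$; but the bounded-case argument above already covers the convex-sampler applications relevant to this paper, and I will present only that case.
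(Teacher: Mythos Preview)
Your argument is correct and follows essentially the same route as the paper's: fix $e_i$, slice $\Z^d$ into integer lines parallel to $e_i$, use convexity of $K$ (and of $K\ominus mQ_\infty$, via Lemma~\ref{lem:digital-equals-cont}) to see that each fiber of $E$ and of $E^{(-m)}$ is an integer interval, note that a bounded nonempty interval contributes exactly $2$ boundary $e_i$-edges while an empty one contributes $0$, and conclude from $E^{(-m)}\subset E$ that the set of nonempty fibers only shrinks. The paper's proof is the same counting, phrased as ``shrinks the interval by $m$ at each end (or annihilates it)'' rather than via the convexity of $K\ominus mQ_\infty$.

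One remark on your final paragraph: you are being more explicit than the paper about the bounded-$K$ restriction, which is appropriate---the paper's ``$0$ if empty and $2$ otherwise'' tacitly assumes finite fibers as well. Your reduction via ``$\Per_{\mathrm{grid}}(E)=+\infty$ makes the inequality trivial'' handles the summed statement but not the per-direction one; however, the worry you raise (a fiber that is all of $\Z$ for $E$ becoming a bounded interval for $E^{(-m)}$) does not in fact occur for convex $K$: if $K$ contains a full line in direction $e_i$ then so does $K\ominus mQ_\infty$ whenever the latter meets that line (this follows from the recession-cone/cylinder structure of convex sets containing a line, at least for closed $K$). Since all applications in the appendix take $K=\rho W_S$ bounded, your restriction is harmless and your proof is complete for the intended scope.
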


\begin{proof}
Fix $i$ and decompose $\Z^d$ into axis-parallel lines $L_y:=\{(y,t): t\in\Z\}$ where $y\in\Z^{d-1}$ collects the
coordinates orthogonal to $e_i$. Since $K$ is convex, $K\cap(\{y\}\times\R)$ is an interval (possibly empty).
Hence $E\cap L_y=\{(y,t): a_y\le t\le b_y\}\cap\Z$ is a (possibly empty) integer interval; the number of
$e_i$-boundary edges contributed by $L_y$ equals $0$ if empty and $2$ otherwise.
By Lemma~\ref{lem:digital-equals-cont}, $E^{(-m)}=\Z^d\cap (K\ominus mQ_\infty)$; intersecting with $L_y$
shrinks the interval by $m$ at each end (or annihilates it), so $E^{(-m)}\cap L_y$ is either empty or an
integer interval strictly contained in $E\cap L_y$. Therefore the number of $e_i$-boundary edges along $L_y$ is
$\le$ that for $E$. Summing over $y$ yields the claim for $e_i$, and then for $\Per_{\rm grid}$ by summing in $i$.
\end{proof}

We next construct a height profile with the correct total mass and a controlled base BV via a rigorous coarea computation.

\begin{lemma}[Uniform inner parallel estimates for convex samplers]\label{lem:sampler-parallels}
\emph{From here through Proposition \ref{prop:height-taper-corrected} we assume the axis stencil $S_{\rm hor}=\{e_i\}_{i=1}^d$.}
Let $W_S\subset\R^d$ be a fixed convex Wulff body, $K_\rho:=\rho W_S$, $E_\rho=\Z^{d}\cap K_\rho$ its grid sampler
with respect to $S_{\rm hor}$, and let $E_\rho^{(-m)}$ denote the discrete inner parallel set in the $\ell^\infty$
metric. There exist constants $c\in(0,1)$ and $C_1,C_2,\tilde c_\infty>0$ (depending only on $(d,W_S)$)
such that for all integers $m$ with $0\le m\le c\rho$:

\smallskip
\noindent\emph{(a) Exact identification of discrete erosions.}
\[
E_\rho^{(-m)}\ =\ \Z^d\cap\big(K_\rho\ominus mQ_\infty\big).
\]

\noindent\emph{(b) Volume comparison with continuum inner parallels.}
\[
\Big||E_\rho^{(-m)}|\ -\ |K_\rho\ominus mQ_\infty|\Big|\ \le\ C_1\rho^{d-1}.
\]
Consequently,
\[
|E_\rho^{(-m)}|\ =\ |K_\rho|\ -\ m\tilde c_\infty\rho^{d-1}\ +\ O\big(m^2\rho^{d-2}\big)\ +\ O(\rho^{d-1}),
\]
uniformly for $m\le c\rho$, where $\tilde c_\infty:={\rm Per}_{\ell^\infty}(W_S)$ and the
$O(\cdot)$ constants depend only on $(d,W_S)$.

\noindent\emph{(c) Uniform perimeter control.}
\[
\Per_{\rm grid}\big(E_\rho^{(-m)}\big)\ \le\ \Per_{\rm grid}(E_\rho)\ \le\ C_2\rho^{d-1}.
\]
\end{lemma}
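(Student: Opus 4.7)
The plan is to prove the three parts in sequence, with each leveraging a structural fact already in the appendix and a standard convex-geometry input.

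For part (a), I would invoke Lemma~\ref{lem:digital-equals-cont} directly: since $W_S$ is convex, so is $K_\rho=\rho W_S$, and that lemma gives $E_\rho^{(-m)}=\Z^d\cap(K_\rho\ominus mQ_\infty)$ unconditionally on $m$. No size restriction on $m$ is needed for this identification; the constant $c$ only enters when we need the eroded body to be nondegenerate.

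For part (b), the first estimate is a standard lattice point versus volume discrepancy for a convex body: for any bounded convex $A\subset\R^d$ one has $\big||\Z^d\cap A|-|A|\big|\le C_d\,\mathcal H^{d-1}(\partial A)$ (Gauss/Minkowski style counting; each mismatch is associated to a unit cube meeting $\partial A$, and the set of such cubes fits in a tubular neighborhood of the boundary of controlled volume). I would apply this to $A=K_\rho\ominus mQ_\infty$, whose boundary has $(d-1)$-area $\le C(W_S)\,\rho^{d-1}$ for $m\le c\rho$ with $c$ chosen strictly below the $\ell^\infty$ in-radius of $W_S$. For the asymptotic expansion I would use the Steiner-type formula for the inner parallel with respect to the cube gauge: for $K$ convex and $m\le$ in-radius,
\[
|K\ominus mQ_\infty|\ =\ |K|\ -\ m\,\mathrm{Per}_{\ell^\infty}(K)\ +\ R_2(m,K),
\]
where the remainder $R_2$ is bounded by a mixed-volume sum in $m^2,\ldots,m^d$. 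Scaling $K=K_\rho=\rho W_S$ and using $\mathrm{Per}_{\ell^\infty}(\rho W_S)=\rho^{d-1}\mathrm{Per}_{\ell^\infty}(W_S)=:\tilde c_\infty\,\rho^{d-1}$ together with the bound $R_2=O(m^2\rho^{d-2})$ for $m\le c\rho$ yields the stated expansion, the $O(\rho^{d-1})$ absorbing the lattice discrepancy from the first step.

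For part (c), the left inequality $\mathrm{Per}_{\rm grid}(E_\rho^{(-m)})\le \mathrm{Per}_{\rm grid}(E_\rho)$ is exactly Lemma~\ref{lem:fiber-monotonicity} applied to the convex set $K_\rho$. For the right inequality I would use the axis-line decomposition $\Z^d=\bigsqcup_{y\in\Z^{d-1}}L_y$ with $L_y=\{(y,t):t\in\Z\}$: by convexity of $K_\rho$, each intersection $E_\rho\cap L_y$ is an integer interval (possibly empty), contributing at most $2$ to $|\partial^{(e_i)}(E_\rho)|$. The number of nonempty lines equals $|\pi_i(E_\rho)|$, the lattice projection onto the hyperplane $e_i^\perp$, which is bounded by $C(W_S)\,\rho^{d-1}$ since $\pi_i(E_\rho)\subset \Z^{d-1}\cap \pi_i(K_\rho)$ and $\pi_i(K_\rho)$ is a convex body of diameter $\asymp\rho$ in $\R^{d-1}$. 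Summing over $i$ gives the bound with $C_2$ depending only on $(d,W_S)$.

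The main obstacle is the quantitative control of the remainder $R_2(m,K_\rho)$ in the Steiner-type inner-parallel expansion when $\partial W_S$ is merely convex (not $C^{1,1}$). For a general convex body the expansion is a finite polynomial in $m$ with coefficients given by mixed volumes of $W_S$ and $Q_\infty$, and restricting to $m\le c\rho$ with $c$ strictly below the $\ell^\infty$ in-radius of $W_S$ ensures the inner parallel is nonempty and the formula is valid without regularity assumptions. I would absorb the higher-order terms $\binom{d}{k}m^k\rho^{d-k}V(W_S[d-k];Q_\infty[k])$ for $k\ge 2$ into $O(m^2\rho^{d-2})$ uniformly for $m\le c\rho$, which is the clean form required downstream. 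All constants $c,C_1,C_2,\tilde c_\infty$ depend only on $(d,W_S)$, as they should.
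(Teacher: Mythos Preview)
Your proposal is correct and follows essentially the same route as the paper: part~(a) is exactly Lemma~\ref{lem:digital-equals-cont}, part~(c) combines Lemma~\ref{lem:fiber-monotonicity} with the same axis-line decomposition and projection count, and part~(b) uses the Steiner/mixed-volume expansion for the inner parallel just as the paper does. The only cosmetic difference is in the first step of~(b): you invoke the Gauss-type lattice-point discrepancy $\big||\Z^d\cap A|-|A|\big|\lesssim\mathcal H^{d-1}(\partial A)$ directly, while the paper sandwiches $(E_\rho^{(-m)})^\square$ between $K_\rho\ominus mQ_\infty$ and its Minkowski $[0,1]^d$-dilation and reads off the same $O(\rho^{d-1})$ bound; these are interchangeable standard arguments.
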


\begin{proof}
(a) This is Lemma~\ref{lem:digital-equals-cont} with $K=K_\rho$.

(b) Since $(E_\rho^{(-m)})^\square$ is the union of $|E_\rho^{(-m)}|$ unit cubes,
$|(E_\rho^{(-m)})^\square|=|E_\rho^{(-m)}|$ (overlaps occur only on $(d - 1)$-dimensional faces).
Moreover, $K_\rho\ominus mQ_\infty\subset (E_\rho^{(-m)})^\square\subset (K_\rho\ominus mQ_\infty)\oplus [0,1]^d$.
Therefore
\[
0\ \le\ |(E_\rho^{(-m)})^\square|\ -\ |K_\rho\ominus mQ_\infty|\ \le\ |(K_\rho\ominus mQ_\infty)\oplus [0,1]^d|\ -\ |K_\rho\ominus mQ_\infty|.
\]
For convex bodies, $|A\oplus[0,1]^d|-|A|$ is $O({\rm Per}_{\ell^\infty}(A))$; for $A=K_\rho\ominus mQ_\infty$
with $m\le c\rho$, ${\rm Per}_{\ell^\infty}(A)\asymp \rho^{d-1}$ uniformly in $m$. Hence the right-hand side is
$O(\rho^{d-1})$, yielding the displayed bound with some $C_1$.
The polynomial (Steiner/mixed-volume) expansion for $|K_\rho\ominus mQ_\infty|$ has linear term
$-{\rm Per}_{\ell^\infty}(W_S)\rho^{d-1}m$, giving the consequent expansion with $\tilde c_\infty$ as stated.

(c) The first inequality is Lemma~\ref{lem:fiber-monotonicity}. For the second, fix $i$ and decompose
$\Z^d$ into lines $L_y:=\{(y,t):t\in\Z\}$ parallel to $e_i$. Since $K_\rho$ is convex, each $E_\rho\cap L_y$
is an (integer) interval or empty, hence contributes exactly $2$ $e_i$-boundary edges when nonempty and $0$ otherwise.
Therefore
\[
\big|\partial^{(e_i)}(E_\rho)\big|=2\#\{y\in\Z^{d-1}:\ L_y\cap E_\rho\neq\emptyset\}
\ \le\ 2\#\big(\Z^{d-1}\cap (\pi_i(K_\rho)\oplus[0,1]^{d-1})\big)\ \lesssim\ \rho^{d-1},
\]
where $\pi_i$ is projection orthogonal to $e_i$ and the last bound follows from the $(d - 1)$-dimensional
volume growth of $\pi_i(K_\rho)$ and bounded overlap of unit cubes. Summing in $i$ yields
$\Per_{\rm grid}(E_\rho)\le C_2\rho^{d-1}$ with $C_2$ depending only on $(d,W_S)$.
\end{proof}

\begin{proposition}[Explicit tapered profile on Wulff samplers (Heisenberg)]\label{prop:height-taper-corrected}
Let $m=1$ ($Q=d+2$). For $E_\rho=\rho W_S\cap\Z^{d}$ (grid sampler of a fixed convex Wulff body $W_S$),
fix constants $c_*,\kappa>0$ with $\kappa/c_*\le c$ from Lemma~\ref{lem:sampler-parallels}, and set
$H=\kappa\rho^{2}$, $L_\rho:=\lfloor c_*\rho\rfloor$, $M_\rho:=\lfloor H/L_\rho\rfloor\le c\rho$.
Define the discrete taper
\[
\ell(u)\ :=\ \Big\lfloor \min\big\{H,\ L_\rho\dist_\infty^{\rm disc}\big(u,\partial E_\rho\big)\big\}\Big\rfloor,
\qquad\text{extended by $0$ on }\Z^d\setminus E_\rho.
\]
Then
\begin{equation}\label{eq:taper-vol-correct}
\sum_{u\in E_\rho}\ell(u)\ \asymp\ \rho^Q,
\end{equation}
and
\begin{equation}\label{eq:taper-BV-correct}
\sum_{v\in S_{\mathrm{hor}}}\ \sum_{u\in\Z^{d}} \big|\Delta_v \ell(u)\big|\ \lesssim\ H\Per_{\rm grid}(E_\rho)\ \asymp\ \rho^{Q-1}.
\end{equation}
All implicit constants depend only on $(d,W_S)$ and the choices of $c_*,\kappa$.
\end{proposition}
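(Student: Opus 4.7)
The plan is to reduce both claims to a direct application of the discrete coarea formula (Lemma~\ref{lem:coarea}) after identifying the superlevel sets of the integer-valued taper $\ell$ with the discrete $\ell^\infty$ inner parallels of the convex sampler $E_\rho$. Once this identification is in hand, the volume bound follows from the Steiner-type expansion in Lemma~\ref{lem:sampler-parallels}(b) and the BV bound from the uniform perimeter control in Lemma~\ref{lem:sampler-parallels}(c).

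First I would fix integer levels. Since $d_\infty^{\rm disc}(u,\partial E_\rho)\in\N$ and $L_\rho,H\in\R$, for each $k\in\N$ I set $m_k:=\lceil (k+1)/L_\rho\rceil$ and check that
\[
\{\ell\ge k+1\}\ =\ E_\rho^{(-m_k)}\qquad\text{for every }k\ \text{with}\ k+1\le \lfloor H\rfloor,
\]
with $\{\ell\ge k+1\}=\emptyset$ for larger $k$. The calibration $H=\kappa\rho^2$, $L_\rho=\lfloor c_*\rho\rfloor$, and the standing hypothesis $\kappa/c_*\le c$ ensure $m_k\le M_\rho+1\le c\rho$ for all relevant $k$, placing every erosion inside the regime where Lemma~\ref{lem:sampler-parallels} applies.

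Next, for \eqref{eq:taper-vol-correct} I write $\sum_u\ell(u)=\sum_{k=0}^{\lfloor H\rfloor-1}|E_\rho^{(-m_k)}|$. The upper bound $\lesssim H\cdot|E_\rho|\asymp \rho^2\cdot\rho^d=\rho^Q$ is immediate. For the lower bound, Lemma~\ref{lem:sampler-parallels}(b) gives
\[
|E_\rho^{(-m_k)}|\ \ge\ |K_\rho|-\tilde c_\infty m_k\rho^{d-1}-O(m_k^2\rho^{d-2})-O(\rho^{d-1})\ \ge\ \tfrac12|K_\rho|\ \gtrsim\ \rho^d,
\]
provided $\kappa/c_*$ (hence the maximal $m_k\le M_\rho$) is chosen small enough, which is exactly the freedom afforded by $\kappa/c_*\le c$. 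Summing over the $\asymp H$ layers yields $\sum_u\ell(u)\gtrsim H\rho^d\asymp\rho^{Q}$. For \eqref{eq:taper-BV-correct} I apply Lemma~\ref{lem:coarea} to $\ell$ (extended by $0$), rewriting
\[
\sum_{v\in S_{\rm hor}}\sum_{u}|\Delta_v\ell(u)|\ =\ \sum_{k=0}^{\lfloor H\rfloor-1}\Per_{\rm grid}\bigl(E_\rho^{(-m_k)}\bigr).
\]
By Lemma~\ref{lem:sampler-parallels}(c), each summand is bounded by $\Per_{\rm grid}(E_\rho)\le C_2\rho^{d-1}$, and there are $\lfloor H\rfloor\asymp \rho^2$ terms, so the total is $\lesssim H\cdot\rho^{d-1}\asymp\rho^{Q-1}$.

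The main obstacle is not any single estimate but the joint calibration: I need the erosion indices $m_k$ to stay inside the uniform regime $m\le c\rho$ of Lemma~\ref{lem:sampler-parallels} for \emph{every} layer $k\le \lfloor H\rfloor-1$, while simultaneously keeping the cumulative Steiner error in the volume lower bound strictly subleading with respect to $|K_\rho|\asymp\rho^d$. Both constraints reduce to a single smallness condition on $\kappa/c_*$, which is precisely what the hypothesis $\kappa/c_*\le c$ (with $c$ from Lemma~\ref{lem:sampler-parallels}) encodes. Everything else in the argument is bookkeeping of floor and ceiling functions, which is why I expect no genuine difficulty beyond this calibration step.
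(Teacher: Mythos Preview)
Your proposal is correct and follows essentially the same route as the paper: identify the superlevel sets of $\ell$ with the discrete $\ell^\infty$ erosions $E_\rho^{(-m)}$, apply the coarea lemma for the BV bound, and use Lemma~\ref{lem:sampler-parallels}(c) to uniformly control the layer perimeters. The only cosmetic difference is in the volume lower bound: the paper argues pointwise that $\ell(u)\ge H-L_\rho$ on the single core $E_\rho^{(-M_\rho)}$ and then invokes $|E_\rho^{(-M_\rho)}|\asymp\rho^d$ directly, whereas you sum $|E_\rho^{(-m_k)}|$ over all $\asymp H$ layers and lower-bound each via the Steiner expansion; both are straightforward consequences of Lemma~\ref{lem:sampler-parallels}(b).
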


\begin{proof}
\emph{Level sets.} By the definition of $\dist_\infty^{\rm disc}$,
\[
\{\ell\ge k\}=\{u\in E_\rho:\ \dist_\infty^{\rm disc}(u,\partial E_\rho)\ge k/L_\rho\}
=E_\rho^{(-\lceil k/L_\rho\rceil)}.
\]

\emph{Volume.} Since $L_\rho M_\rho\le H<L_\rho(M_\rho+1)$,
\[
\ell(u)\ \ge\ \lfloor L_\rho M_\rho\rfloor\ \ge\ H-L_\rho\qquad\text{for every }u\in E_\rho^{(-M_\rho)}.
\]
Therefore
\[
(H-L_\rho)\big|E_\rho^{(-M_\rho)}\big|\ \le\ \sum_{u\in E_\rho}\ell(u)\ \le\ H|E_\rho|.
\]
By Lemma~\ref{lem:sampler-parallels}(a,b), for $M_\rho\le c\rho$ we have $|E_\rho|,\ |E_\rho^{(-M_\rho)}|\asymp \rho^d$.
Since $H=\kappa\rho^2$ and $L_\rho\asymp\rho$, the lower bound is $\gtrsim \rho^{d+2}$ and the upper bound is
$\lesssim \rho^{d+2}$, proving \eqref{eq:taper-vol-correct}.

\emph{Base variation via discrete coarea.} By Lemma~\ref{lem:coarea},
\begin{align*}
\sum_{v}\sum_u|\Delta_v\ell(u)|
& = \sum_{k=1}^{H}\Per_{\rm grid}\big(\{\ell\ge k\}\big)
= \sum_{k=1}^{H}\Per_{\rm grid}\Big(E_\rho^{(-\lceil k/L_\rho\rceil)}\Big) \\
& \ =\ L_\rho\sum_{m=1}^{M_\rho}\Per_{\rm grid}\big(E_\rho^{(-m)}\big)\ +\ O\big(L_\rho \Per_{\rm grid}(E_\rho)\big).
\end{align*}
By Lemmas~\ref{lem:sampler-parallels}(c) and \ref{lem:fiber-monotonicity},
$\Per_{\rm grid}(E_\rho^{(-m)})\le \Per_{\rm grid}(E_\rho)\lesssim \rho^{d-1}$ uniformly in $m\le M_\rho$, hence
\[
\sum_{v}\sum_u|\Delta_v\ell(u)|\ \le\ L_\rho M_\rho \Per_{\rm grid}(E_\rho)\ +\ O\big(L_\rho \Per_{\rm grid}(E_\rho)\big).
\]
Since $L_\rho M_\rho\in[H-L_\rho,H]$ and $L_\rho\lesssim \rho\ll H\asymp \rho^2$, we obtain
\[
\sum_{v}\sum_u|\Delta_v\ell(u)|\ \le\ C H\Per_{\rm grid}(E_\rho)\ \asymp\ \rho^{Q-1},
\]
as claimed in \eqref{eq:taper-BV-correct}.
\end{proof}

\begin{remark}[Scaling summary]
In the Heisenberg case ($m=1$, $Q=d+2$), for tapered stacks above a Wulff sampler,
\begin{align*}
\mathcal V\ \asymp\ \rho^{d}, & \quad 
\sum_{v}\sum_{u}|\Delta_v\ell(u)|\ \lesssim\ \rho^{Q-1},\\
 & \quad \text{(shear term, arising in the semidirect-product stacks of \S\ref{sec:TF-semidirect})}\ \lesssim\ \rho^{Q-1}.
\end{align*}
Thus the $\rho^{Q-1}$ scale is dictated by the (aggregated) base BV (via coarea) and by shear; vertical terms are strictly lower order. For $m\ge2$, under the hypotheses of Lemma~\ref{lem:vertical-lower-correct}(b), one has $\mathcal V(\bm\ell)=O(\rho^{Q-2})$.
\end{remark}

\subsection{Quantitative anisotropic isoperimetry on the base}

\begin{definition}[Fraenkel asymmetry]
Let $W\subset\R^{r_1}$ be the Wulff shape of a convex, even anisotropy $\tau$. For measurable $E$ with $|E|=|W|$, define
\[
\mathcal A_\tau(E)\ :=\ \inf_{z\in\R^{r_1}} \frac{|E\Delta (z+W)|}{|W|}.
\]
\end{definition}

\begin{theorem}[Quantitative anisotropic isoperimetry in $\R^{r_1}$]
There exists $\kappa=\kappa(\tau)>0$ such that for all measurable $E\subset\R^{r_1}$ with $|E|=|W|$,
\[
\Per_\tau(E)\ -\ \Per_\tau(W)\ \ge\ \kappa \mathcal A_\tau(E)^2.
\]
\end{theorem}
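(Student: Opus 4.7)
The plan is to follow the mass-transportation strategy of Figalli--Maggi--Pratelli for the sharp anisotropic quantitative isoperimetric inequality. After translating so that $|E\triangle W|/|W|$ attains its infimum in $z$, I may write $\mathcal A_\tau(E)=|E\triangle W|/|W|$ and set $\delta:=\Per_\tau(E)-\Per_\tau(W)\ge 0$. By standard approximation/truncation, it suffices to treat bounded sets of finite perimeter with $|E|=|W|$, for which a Brenier map is available.

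First, I would invoke Brenier's theorem to obtain a convex $\varphi:\R^{r_1}\to\R$ whose gradient $T:=\nabla\varphi$ transports $\mathbf 1_E\,dx$ to $\mathbf 1_W\,dx$. Let $h:=h_W$ be the support function of $W$, so $\tau=h_W$ on $\R^{r_1}$ and $y\cdot \nu\le h(\nu)$ for every $y\in W$, $\nu\in\R^{r_1}$. The anisotropic Gauss--Green identity, followed by AM--GM on the Jacobian $DT$ (a symmetric positive-semidefinite matrix, by convexity of $\varphi$) with $\det DT=1$ a.e.\ on $E$, yields
\begin{equation*}
r_1|W|\ =\ \int_E r_1(\det DT)^{1/r_1}\,dx\ \le\ \int_E \operatorname{div} T\,dx\ =\ \int_{\partial^*E} T\cdot\nu_E\,d\mathcal H^{r_1-1}\ \le\ \int_{\partial^*E} h(\nu_E)\,d\mathcal H^{r_1-1}\ =\ \Per_\tau(E).
\end{equation*}
This recovers the anisotropic isoperimetric inequality $\Per_\tau(W)\le \Per_\tau(E)$ and identifies $\delta$ with the sum of two nonnegative slacks: the AM--GM slack $\delta_1:=\int_E\bigl(\tfrac{\operatorname{tr}DT}{r_1}-(\det DT)^{1/r_1}\bigr)\,dx$, and the trace slack $\delta_2:=\int_{\partial^*E}\bigl(h(\nu_E)-T\cdot\nu_E\bigr)\,d\mathcal H^{r_1-1}$.

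Next, I would quantify both slacks. Strict concavity of $A\mapsto (\det A)^{1/r_1}$ on positive-semidefinite symmetric matrices at $A=\mathrm{Id}$ gives a quadratic lower bound $\delta_1\ge c(\tau)\int_E \|DT-\mathrm{Id}\|^2\,dx$, while the trace slack is already a first-order anisotropic defect. From these two functional controls, the asymmetry is recovered by an anisotropic slicing/trace inequality: integrating $|T(x)-x|$ along transport rays of $\varphi$ across $\partial^*E$ and using the gauge structure yields the 1D estimate
\begin{equation*}
|E\triangle W|^2\ \le\ C(\tau)\,\Per_\tau(W)\,\|T-\mathrm{Id}\|_{L^1(E)}.
\end{equation*}
Interpolating the $L^1$-norm with the $L^2$-bound coming from $\delta_1$, and using $\|T-\mathrm{Id}\|_{L^1(\partial^*E)}\le\delta_2+\text{lower order}$, one obtains $|E\triangle W|^2\le C(\tau)\,\delta\,|W|^2$, which is exactly the desired $\delta\ge\kappa(\tau)\mathcal A_\tau(E)^2$ after absorbing constants.

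The main obstacle is the \emph{anisotropic trace/slicing} step: in the Euclidean--isotropic proof of Fusco--Maggi--Pratelli, spherical symmetry reduces the trace control to a 1D rearrangement, a tool which is unavailable for general even convex $\tau$. The FMP mass-transport approach sidesteps this by slicing along transport rays and invoking a weighted 1D Poincar\'e inequality; the real work is showing that the weights (involving $h$ and the boundary curvature of $W$) satisfy uniform doubling along almost every ray. A robust alternative, which I would have as a backup, is the Cicalese--Leonardi \emph{selection principle}: a penalized variational problem plus regularity theory for $\Lambda$-minimizers of $\Per_\tau$ reduces the general inequality to the case where $E$ is $C^{1,1/2}$-close to $W$, and there the strict positivity of the second variation of $\Per_\tau$ at $W$ (modulo translations, since $W$ is the unique minimizer up to translation) delivers the quadratic gap directly. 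In both routes the exponent $2$ is sharp, as witnessed by smooth normal perturbations $E_t=W+t\,\zeta$ with $\int\zeta\,d\mathcal H^{r_1-1}=0$, for which $\delta\asymp t^2$ and $\mathcal A_\tau(E_t)\asymp t$.
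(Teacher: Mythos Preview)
The paper does not give its own proof of this theorem: it is stated as a known result, with the subsequent remark pointing to standard references (Schneider's convex-body monograph and Maggi's book on sets of finite perimeter). The result is the sharp quantitative anisotropic isoperimetric inequality of Figalli--Maggi--Pratelli, and your proposal correctly identifies and outlines their mass-transportation proof as the route, together with the Cicalese--Leonardi selection principle as the natural alternative.

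Your high-level decomposition into the AM--GM slack $\delta_1$ and the trace slack $\delta_2$ is right, and you correctly flag the anisotropic trace step as the place where real work happens. A few of the intermediate quantitative claims are stated more cleanly than they actually are in the FMP argument: the bound $\delta_1\ge c(\tau)\int_E\|DT-\mathrm{Id}\|^2$ does not hold globally in that form (one only gets a weighted version with denominators involving $\operatorname{tr}DT$), and the passage from $\delta_2$ to $\|T-\mathrm{Id}\|_{L^1(\partial^*E)}$ is not ``$\le\delta_2+\text{lower order}$'' but rather goes through a pointwise inequality $h(\nu_E)-T\cdot\nu_E\ge c(\tau)\,\mathrm{dist}_\tau(T,\partial W)$ coming from uniform convexity of the gauge in a suitable sense. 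The actual FMP route to the asymmetry bound uses a Poincar\'e/trace inequality on $W$ (pushed forward by $T$) rather than the ``transport-ray slicing plus interpolation'' you sketch. None of this is a fundamental obstruction---your plan and backup are the two canonical proofs---but the specific chain of inequalities you wrote would need to be reworked to match either one precisely.
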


\begin{remark}[Scope]
We use quantitative anisotropic stability \emph{only on the horizontal Euclidean base} $\R^{r_1}$.
No Carnot-group quantitative isoperimetry is required; the BV+shear reduction pushes all quantitative inputs to the base. For standard references on perimeter monotonicity for inner parallel sets and quantitative anisotropic stability, see e.g.\ \cite[Chs.5 \& 14]{Schneider2014Convex,Maggi2012Sets}.
\end{remark}

\section{A quantitative Lindenstrauss theorem along thick F{\o}lner shapes, and a qualitative variant in general}
\label{subsec:QLind-rev}

We recall the (Shulman-)temperedness condition for F{\o}lner sequences.

\begin{definition}[Tempered subsequence]\label{def:tempered-subseq-rev}
Let $G$ be a countable group and $(F_n)$ a sequence of finite subsets. A subsequence $(E_k)$ is \emph{tempered} if there exists $T<\infty$ such that
\begin{equation}\label{eq:Shulman-tempered-rev}
\big|\big(\textstyle\bigcup_{j<k} E_j^{-1}\big)E_k\big|\ \le\ T|E_k|\qquad\text{for all }k.
\end{equation}
\end{definition}

We also record a geometric constant of the Cayley graph.

\begin{definition}[Ball doubling constant at factor $2$]\label{def:D2}
For a Cayley graph $(\Gamma,\mathcal S)$ with word metric and balls $B(e,R)$, set
\[
D_2(\Gamma,\mathcal S)\ :=\ \sup_{R\ge1}\ \frac{|B(e,2R)|}{|B(e,R)|}\ \in\ [1,\infty].
\]
If $(\Gamma,\mathcal S)$ has polynomial growth of homogeneous dimension $Q$, then $D_2(\Gamma,\mathcal S)<\infty$ and $D_2(\Gamma,\mathcal S)\le C(\Gamma,\mathcal S) 2^Q$.
\end{definition}

\begin{theorem}[Quantitative tempered subsequence under ball-thickness]\label{thm:quant-tempered-poly}
Let $(\Gamma,\mathcal S)$ be a finitely generated group with Cayley metric. Let $(F_n)_{n\ge1}$ be a \emph{nested} F{\o}lner chain and write $R_n:=\max\{d(e,g):g\in F_n\}$. Assume there exists a constant $c_{\mathrm{fill}}\in(0,1]$ such that
\begin{equation}\label{eq:fill-wrt-balls}
c_{\mathrm{fill}}\;|B(e,R_n)|\ \le\ |F_n|\qquad\text{for all $n$ large enough.}
\end{equation}
Let $E_k:=F_{n_k}$ be any dyadic-radius subsequence with $R_{n_k}\in[2^k,2^{k+1})$. Then $(E_k)$ is tempered in the sense of \eqref{eq:Shulman-tempered-rev} with the \emph{explicit} constant
\begin{equation}\label{eq:T-explicit-correct}
T\ \le\ \frac{D_2(\Gamma,\mathcal S)}{ c_{\mathrm{fill}}}.
\end{equation}
Consequently, for every probability-preserving action $\Gamma\curvearrowright(X,\mu)$ and every $f\in L^1(X)$ the ergodic averages
\[
A_{E_k} f(x)\ :=\ \frac{1}{|E_k|}\sum_{g\in E_k} f(g\cdot x)
\]
converge $\mu$-almost surely to $\mathbb E[f|\mathcal I_\Gamma]$, and the maximal operator $Mf:=\sup_k |A_{E_k}f|$ satisfies the weak-type $(1,1)$ bound
\[
\mu\{Mf>\lambda\}\ \le\ C_{\max}(T)\frac{\|f\|_{L^1}}{\lambda}\qquad(\lambda>0),
\]
where $C_{\max}(T)$ depends only on $T$. In particular, in polynomial growth $D_2(\Gamma,\mathcal S)\le C(\Gamma,\mathcal S) 2^Q$.
\end{theorem}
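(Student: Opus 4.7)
The plan is to reduce the Shulman temperedness inequality \eqref{eq:Shulman-tempered-rev} to a single application of ball doubling, exploiting the nestedness of $(F_n)$ and the symmetry of the generating set. First I would observe that since $(F_n)$ is nested and $(n_k)$ is strictly increasing, $E_j=F_{n_j}\subset F_{n_k}=E_k$ for every $j<k$, so $\bigcup_{j<k}E_j^{-1}\subset E_k^{-1}$. Because $\mathcal S=\mathcal S^{-1}$ the word metric is inversion-symmetric, $d(e,g)=d(e,g^{-1})$, hence $E_k^{-1}\subset B(e,R_{n_k})$. Multiplying on the right by $E_k\subset B(e,R_{n_k})$ and using the triangle inequality in the Cayley graph yields the clean one-ball bound
\[
\Bigl|\bigcup_{j<k}E_j^{-1}\,E_k\Bigr|\ \le\ \bigl|B(e,R_{n_k})\cdot B(e,R_{n_k})\bigr|\ \le\ \bigl|B(e,2R_{n_k})\bigr|.
\]

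The quantitative step is then immediate. The $(R,2R)$-doubling of Definition~\ref{def:D2} gives $|B(e,2R_{n_k})|\le D_2(\Gamma,\mathcal S)\,|B(e,R_{n_k})|$, while the fill hypothesis \eqref{eq:fill-wrt-balls} supplies $|E_k|\ge c_{\mathrm{fill}}\,|B(e,R_{n_k})|$ for all sufficiently large $k$ (ensured since $R_{n_k}\ge 2^k\to\infty$ forces $n_k\to\infty$). Dividing these two bounds produces the explicit temperedness constant $T\le D_2(\Gamma,\mathcal S)/c_{\mathrm{fill}}$ in \eqref{eq:T-explicit-correct}, uniformly in $k$ once $k$ is beyond the fill-threshold index; the finitely many initial indices can be absorbed either by enlarging $T$ slightly or by shifting the start of the subsequence. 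The role of the dyadic spacing $R_{n_k}\in[2^k,2^{k+1})$ is purely to guarantee that radii escape to infinity so that $(E_k)$ inherits the F{\o}lner property from $(F_n)$ and the fill condition is eventually active; the temperedness estimate itself uses only nestedness, symmetry, doubling, and fill.

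Once temperedness is established, the a.s.\ convergence $A_{E_k}f\to\mathbb E[f\mid\mathcal I_\Gamma]$ for $f\in L^1(X,\mu)$ is an immediate citation of Lindenstrauss's pointwise ergodic theorem for tempered F{\o}lner sequences, and the weak-type $(1,1)$ maximal inequality is the standard companion statement, whose constant $C_{\mathrm{max}}(T)$ depends only on $T$ through the Vitali/Besicovitch-type covering lemma embedded in Lindenstrauss's proof. The polynomial-growth consequence $D_2(\Gamma,\mathcal S)\le C(\Gamma,\mathcal S)\,2^Q$ is then a one-line application of $|B(e,R)|\asymp R^Q$. The whole argument is short; the only non-routine ingredient is the black-boxed Lindenstrauss machinery, and the main care needed is to quote a form of his maximal inequality whose constant is a function of the Shulman parameter $T$ alone (rather than of the specific sequence), so that our explicit $T\le D_2/c_{\mathrm{fill}}$ translates into a genuinely tracked bound on $C_{\mathrm{max}}$.
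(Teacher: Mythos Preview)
Your proposal is correct and follows essentially the same approach as the paper: both use nestedness to get $\bigcup_{j<k}E_j^{-1}\subset E_k^{-1}$, then the containment $E_k^{-1}E_k\subset B(e,2R_{n_k})$, and finally combine ball doubling with the fill hypothesis before invoking Lindenstrauss. Your write-up is slightly more explicit about the inversion-symmetry step and the role of the dyadic spacing, but the argument is the same.
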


\begin{proof}
Since $F_{n_j}\subset F_{n_k}$ for $j<k$, we have
$
\big(\bigcup_{j<k}F_{n_j}^{-1}\big)F_{n_k}\subset F_{n_k}^{-1}F_{n_k}\subset B(e,2R_{n_k}).
$
Therefore
\[
\Big|\Big(\bigcup_{j<k}F_{n_j}^{-1}\Big)F_{n_k}\Big|\ \le\ |B(e,2R_{n_k})|
\ \le\ D_2(\Gamma,\mathcal S) |B(e,R_{n_k})|
\ \le\ \frac{D_2(\Gamma,\mathcal S)}{c_{\mathrm{fill}}} |F_{n_k}|,
\]
by \eqref{eq:fill-wrt-balls}. This yields \eqref{eq:T-explicit-correct}; the $L^1$ maximal inequality and a.e.\ convergence are Lindenstrauss' theorem for tempered sequences.
\end{proof}

\begin{remark}[Application to virtually nilpotent groups]\label{rem:thick-Wulff-correct}
In virtually nilpotent groups, our Wulff samplers $W_R$ (indexed by intrinsic radius $R$) satisfy two-sided metric control: there exist $a\in(0,1]$, $b\in[1,\infty)$ and $R_0$ such that for all $R\ge R_0$,
\[
B(e,aR)\ \subset\ W_R\ \subset\ B(e,bR).
\]
Taking $F_n=W_{R_n}$, the lower thickness \eqref{eq:fill-wrt-balls} holds with
\[
c_{\mathrm{fill}}\ =\ \inf_{R\ge R_0}\ \frac{|B(e,aR)|}{|B(e,R)|}\ =\ \frac{1}{\ \sup_{R\ge R_0}\ \frac{|B(e,R)|}{|B(e,aR)|}\ }\ \ge\ \frac{1}{D_{a^{-1}}(\Gamma,\mathcal S)},
\]
where $D_{\lambda}(\Gamma,\mathcal S):=\sup_{R\ge1}|B(e,\lambda R)|/|B(e,R)|$ for $\lambda\ge1$. Plugging into \eqref{eq:T-explicit-correct} gives the quantitative bound
\[
T\ \le\ \frac{D_2(\Gamma,\mathcal S)}{c_{\mathrm{fill}}}\ \le\ D_2(\Gamma,\mathcal S)D_{a^{-1}}(\Gamma,\mathcal S),
\]
an explicit constant depending only on $(\Gamma,\mathcal S)$ and the Wulff thickness $a$. In particular, if $|B(e,R)|\asymp R^Q$, then $D_{\lambda}\le C\lambda^Q$ and hence $T\le C'(\Gamma,\mathcal S)(2/a)^Q$.
\end{remark}

\begin{example}[$\mathbb Z^d$ with cubes: exact constant]\label{ex:Zd-box}
Let $\Gamma=\mathbb Z^d$ with the standard axis generating set. For cubes $C_R:=[-R,R]^d$ one has $C_R^{-1}C_R=[-2R,2R]^d$ and, for the dyadic subsequence $R_k=2^k$,
\[
\Big|\Big(\bigcup_{j<k} C_{R_j}^{-1}\Big)C_{R_k}\Big|\ \le\ |C_{R_k}^{-1}C_{R_k}|\ =\ (4R_k+1)^d,\qquad |C_{R_k}|=(2R_k+1)^d.
\]
Thus
\[
T\ \le\ \sup_{R\ge0}\Big(\frac{4R+1}{2R+1}\Big)^{d}\ =\ 2^d,
\]
so the cube averages along $R_k=2^k$ are tempered with the \emph{exact} constant $T=2^d$, yielding a weak-$(1,1)$ maximal inequality with constant $C_{\max}(2^d)$ and a.e.\ convergence for every action.
\end{example}

\medskip

We now record the general qualitative variant, which requires no thickness and applies in all amenable groups (including the exponential-growth classes treated elsewhere in this paper).

\begin{theorem}[Qualitative tempered subsequence (Shulman-Lindenstrauss)]\label{thm:qual-tempered-general}
Let $(F_n)$ be any F{\o}lner sequence in a countable amenable group $\Gamma$. There exists a tempered subsequence $(E_k)$ in the sense of \eqref{eq:Shulman-tempered-rev} (with a universal finite constant $T_0$) such that for every p.m.p.\ action $\Gamma\curvearrowright(X,\mu)$ and every $f\in L^1(X)$,
\[
A_{E_k} f(x)\ \longrightarrow\ \mathbb E[f|\mathcal I_\Gamma](x)\qquad\text{for }\mu\text{-a.e.\ }x,
\]
and the associated maximal operator is of weak-type $(1,1)$ with a constant depending only on $T_0$.
\end{theorem}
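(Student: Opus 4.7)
\smallskip
\noindent\emph{Proof plan.} The argument is the classical two-step Shulman--Lindenstrauss route, which applies with no geometric thickness hypothesis. Step one is to extract a Shulman-tempered subsequence $(E_k)$ by a diagonal induction using only the F{\o}lner property. Step two is to invoke Lindenstrauss' pointwise ergodic theorem, which asserts that tempered F{\o}lner sequences enjoy a.e.\ convergence and a weak-$(1,1)$ maximal inequality whose constant depends only on the Shulman constant $T$. The universality of $T_0$ will come for free from the first step.

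\smallskip
\noindent\emph{The inductive extraction.} Without loss of generality we may assume $e\in F_n$ for every $n$ (otherwise translate each $F_n$, which preserves the F{\o}lner property and does not affect \eqref{eq:Shulman-tempered-rev}). Set $E_1:=F_{n_1}$ for some $n_1$, and, having chosen $n_1<n_2<\dots<n_{k-1}$, form the finite set $K_k:=\bigcup_{j<k} E_j^{-1}$. By the F{\o}lner property applied to $K_k$, there exists $n_k>n_{k-1}$ with
\[
|K_k F_{n_k}\triangle F_{n_k}|\ \le\ 2^{-k} |F_{n_k}|,
\]
and we set $E_k:=F_{n_k}$. Since $e\in K_k$ (as $e\in E_j$ for each $j<k$), $E_k\subset K_k E_k$ and therefore
\[
\Big|\Big(\bigcup_{j<k} E_j^{-1}\Big)E_k\Big|\ =\ |K_k E_k|\ =\ |E_k|+|K_k E_k\setminus E_k|\ \le\ (1+2^{-k})|E_k|\ \le\ 2|E_k|.
\]
Thus \eqref{eq:Shulman-tempered-rev} holds with the universal constant $T_0:=2$, independent of the group $\Gamma$ and of the chosen F{\o}lner sequence $(F_n)$.

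\smallskip
\noindent\emph{Invoking Lindenstrauss.} With $(E_k)$ Shulman-tempered with constant $T_0$, Lindenstrauss' maximal and pointwise ergodic theorems \cite{Lindenstrauss2001Pointwise} apply verbatim: for every p.m.p.\ action $\Gamma\curvearrowright(X,\mu)$ and every $f\in L^1(X,\mu)$, the averages $A_{E_k}f(x)$ converge $\mu$-a.e.\ to $\mathbb E[f\mid \mathcal I_\Gamma](x)$, and the maximal operator $Mf=\sup_k|A_{E_k}f|$ satisfies $\mu\{Mf>\lambda\}\le C_{\max}(T_0)\|f\|_{L^1}/\lambda$, with $C_{\max}(T_0)$ depending only on $T_0$ (hence only on the Shulman constant chosen above, i.e., on no further data).

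\smallskip
\noindent\emph{Where the real content lies.} The construction step is essentially formal: the only input is the diagonal use of F{\o}lner-ness against growing finite sets $K_k$. The genuine difficulty, which we import as a black box, is Lindenstrauss' transfer/covering argument establishing the weak-$(1,1)$ bound for tempered sequences (a Vitali-type covering along the tail $\{E_k,E_{k+1},\ldots\}$ that avoids Besicovitch-type obstructions in nonabelian settings). Nothing in Lindenstrauss' argument requires growth or thickness hypotheses, so the conclusion is fully general and the constant $T_0$ is universal, in contrast with the quantitative Theorem~\ref{thm:quant-tempered-poly} where $T$ is computed in terms of $D_2(\Gamma,\mathcal S)$ and a geometric thickness of the sets $F_n$.
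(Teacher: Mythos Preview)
Your proof is correct and is precisely the standard Shulman--Lindenstrauss argument. The paper does not supply its own proof of this statement; it records the result under the attribution ``(Shulman--Lindenstrauss)'' and treats it as known, so your write-up is exactly the proof the paper is implicitly invoking.
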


\begin{remark}[Hyperbolic semidirects]\label{rem:hyperb-semidirect-qual}
For $\Gamma=\mathbb Z^d\rtimes_A\mathbb Z$ with $A$ hyperbolic, our logarithmic layer-nested sets $E_R$ form a nested F{\o}lner family of \S\ref{sec:TF-semidirect}. They are \emph{thin} in balls (polynomial size inside exponentially growing balls), so \eqref{eq:fill-wrt-balls} does not hold and one cannot hope for a group-geometric bound like \eqref{eq:T-explicit-correct}. Nevertheless, by Theorem~\ref{thm:qual-tempered-general} there exists a \emph{tempered subsequence} of the stacks (or of any nested thinning thereof), which yields the $L^1$ pointwise ergodic theorem and weak-$(1,1)$ maximal inequality along that subsequence (with a universal temperedness constant).
\end{remark}

\paragraph{Two applications (with constants where available).}
\begin{itemize}
\item[(A)] \textbf{Virtually nilpotent groups.}  
Take the Wulff samplers as in Remark~\ref{rem:thick-Wulff-correct} and dyadically thin them by radius. By Theorem~\ref{thm:quant-tempered-poly}, the resulting sequence is tempered with
\[
T\ \le\ D_2(\Gamma,\mathcal S)D_{a^{-1}}(\Gamma,\mathcal S)\ \le\ C(\Gamma,\mathcal S)(2/a)^Q.
\]
Therefore the Wulff-averaging maximal operator is weak-$(1,1)$ with constant $C_{\max}(T)$ depending only on $(\Gamma,\mathcal S)$; a fortiori, $L^p$ bounds for $p>1$ follow with tracked constants. On finite quotients, the same constants apply uniformly.

\item[(B)] \textbf{$\mathbb Z^d$ with cubes.}  
By Example~\ref{ex:Zd-box}, the dyadic cubes are tempered with the \emph{exact} constant $T=2^d$, hence the weak-$(1,1)$ maximal constant is $C_{\max}(2^d)$. This gives a clean benchmark for quantitative comparisons with other averaging shapes (e.g.\ Wulff or $\ell^1$ balls).
\end{itemize}

\subsection*{Acknowledgements} The author is thankful to Shubhabrata Das for numerous discussions on geometric group theory. This work started when the author was visiting the Max Planck Institute for Mathematics Bonn in 2022 on a sabbatical and was continued thereafter at the Indian Institute of Technology Bombay. The author is deeply indebted to both MPIM Bonn and IIT Bombay for providing ideal working conditions. 

\bibliographystyle{alpha}
\bibliography{references}

\newcommand{\etalchar}[1]{$^{#1}$}
\begin{thebibliography}{AKLD09}

\bibitem[AC04]{AlicandroCicalese2004}
Roberto Alicandro and Marco Cicalese.
\newblock A general integral representation result for the continuum limit of discrete energies with superlinear growth.
\newblock {\em SIAM Journal on Mathematical Analysis}, 36(1):1--37, 2004.

\bibitem[AFP00]{AFP00}
Luigi Ambrosio, Nicola Fusco, and Diego Pallara.
\newblock {\em Functions of bounded variation and free discontinuity problems}.
\newblock Oxford Mathematical Monographs. The Clarendon Press, Oxford University Press, New York, 2000.

\bibitem[AKLD09]{AmbrosioKleinerLeDonne2009JGA}
Luigi Ambrosio, Bruce Kleiner, and Enrico Le~Donne.
\newblock Rectifiability of sets of finite perimeter in {C}arnot groups: existence of a tangent hyperplane.
\newblock {\em J. Geom. Anal.}, 19(3):509--540, 2009.

\bibitem[BC07]{BraidesCicalese2007}
Andrea Braides and Marco Cicalese.
\newblock Surface energies in nonconvex discrete systems.
\newblock {\em Mathematical Models and Methods in Applied Sciences}, 17(7):985--1037, 2007.

\bibitem[BGL14]{BakryGentilLedoux2014}
Dominique Bakry, Ivan Gentil, and Michel Ledoux.
\newblock {\em Analysis and Geometry of Markov Diffusion Operators}, volume 348 of {\em Grundlehren der mathematischen Wissenschaften}.
\newblock Springer, 2014.

\bibitem[Bra02]{Braides2002Gamma}
Andrea Braides.
\newblock {\em {$\Gamma$}-Convergence for Beginners}, volume~22 of {\em Oxford Lecture Series in Mathematics and its Applications}.
\newblock Oxford University Press, Oxford, 2002.

\bibitem[BT95]{BollobasThomason1995}
B{\'e}la Bollob{\'a}s and Andrew Thomason.
\newblock Projections of bodies and hereditary properties of hypergraphs.
\newblock {\em Bull. London Math. Soc.}, 27(5):417--424, 1995.

\bibitem[Bus82]{Buser1982Isoperimetric}
Peter Buser.
\newblock A note on the isoperimetric constant.
\newblock {\em Annales Scientifiques de l'\'Ecole Normale Sup\'erieure}, 15(2):213--230, 1982.

\bibitem[Che70]{Cheeger1970LowerBound}
Jeff Cheeger.
\newblock A lower bound for the smallest eigenvalue of the {L}aplacian.
\newblock In {\em Problems in analysis ({S}ympos. in honor of {S}alomon {B}ochner, {P}rinceton {U}niv., {P}rinceton, {N}.{J}., 1969)}, pages 195--199. Princeton Univ. Press, Princeton, NJ, 1970.

\bibitem[CJK{\etalchar{+}}18]{ConleyKechrisTuckerDrobSewardFolner}
Clinton~T. Conley, Steve~C. Jackson, David Kerr, Andrew~S. Marks, Brandon Seward, and Robin~D. Tucker-Drob.
\newblock F\o lner tilings for actions of amenable groups.
\newblock {\em Math. Ann.}, 371(1-2):663--683, 2018.

\bibitem[CK10]{CheegerKleiner2006Annals}
Jeff Cheeger and Bruce Kleiner.
\newblock Differentiating maps into ${L}^{1}$ and the geometry of {BV} functions.
\newblock {\em Annals of Mathematics}, 171(2):1347--1385, 2010.

\bibitem[CM20]{ComiMagnani2020AdvMath}
Giovanni~E. Comi and Valentino Magnani.
\newblock The {G}auss-{G}reen theorem in stratified groups.
\newblock {\em Adv. Math.}, 360:106916, 85, 2020.

\bibitem[DGN07]{DGN07}
Donatella Danielli, Nicola Garofalo, and Duy~Minh Nhieu.
\newblock Sub-{Riemannian} calculus on hypersurfaces in {Carnot} groups.
\newblock {\em Advances in Mathematics}, 215(1):292--378, 2007.

\bibitem[DH17]{DownarowiczHuczekZhang2017DQ}
Tomasz Downarowicz and Dawid Huczek.
\newblock Dynamical quasitilings of amenable groups.
\newblock {\em arXiv preprint}, 2017.

\bibitem[Dod84]{Dodziuk1984Graphs}
Jozef Dodziuk.
\newblock Difference equations, isoperimetric inequality and transience of certain random walks.
\newblock {\em Transactions of the American Mathematical Society}, 284(2):787--794, 1984.

\bibitem[Ers03]{Erschler2003}
Anna Erschler.
\newblock On isoperimetric profiles of finitely generated groups.
\newblock {\em Geometriae Dedicata}, 100:157--171, 2003.

\bibitem[Ers06]{Erschler2006PTRF}
Anna Erschler.
\newblock Isoperimetry for wreath products of {M}arkov chains and multiplicity of selfintersections of random walks.
\newblock {\em Probab. Theory Related Fields}, 136(4):560--586, 2006.

\bibitem[FM91]{FonsecaMueller91}
Irene Fonseca and Stefan M\"uller.
\newblock A uniqueness proof for the {W}ulff theorem.
\newblock {\em Proc. Roy. Soc. Edinburgh Sect. A}, 119(1-2):125--136, 1991.

\bibitem[FS82]{FollandStein1982}
G.~B. Folland and Elias~M. Stein.
\newblock {\em Hardy spaces on homogeneous groups}, volume~28 of {\em Mathematical Notes}.
\newblock Princeton University Press, Princeton, NJ; University of Tokyo Press, Tokyo, 1982.

\bibitem[GMT06]{GoelMontenegroTetali2006}
Sharad Goel, Ravi Montenegro, and Prasad Tetali.
\newblock Mixing time bounds via the spectral profile.
\newblock {\em Electron. J. Probab.}, 11:no. 1, 1--26, 2006.

\bibitem[Gro08a]{Gromov2008Entropy}
Mikhail Gromov.
\newblock Entropy and isoperimetry for linear and non-linear group actions.
\newblock {\em Groups, Geometry, and Dynamics}, 2(4):499--593, 2008.

\bibitem[Gro08b]{Gromov2008AuthorPDF}
Mikhail Gromov.
\newblock Entropy and isoperimetry for linear and non-linear group actions (author's pdf), 2008.
\newblock Available at IHES website.

\bibitem[HR00]{HigsonRoe2000}
Nigel Higson and John Roe.
\newblock {\em Analytic {$K$}-Homology}.
\newblock Oxford Mathematical Monographs. Oxford University Press, 2000.

\bibitem[LM05]{LM05}
Gian~Paolo Leonardi and Simon Masnou.
\newblock On the isoperimetric problem in the {Heisenberg} group $\mathbb{H}^n$.
\newblock {\em Annali di Matematica Pura ed Applicata (4)}, 184(4):533--553, 2005.

\bibitem[LN06]{LeeNaor2006FOCS}
James~R. Lee and Assaf Naor.
\newblock ${L}_p$ metrics on the {H}eisenberg group and the {G}oemans-{L}inial conjecture.
\newblock In {\em 47th Annual IEEE Symposium on Foundations of Computer Science (FOCS 2006), Berkeley, California, USA, 21--24 October 2006}, pages 99--108. IEEE Computer Society, 2006.

\bibitem[LPW09]{LevinPeresWilmer2009}
David~A. Levin, Yuval Peres, and Elizabeth~L. Wilmer.
\newblock {\em Markov Chains and Mixing Times}.
\newblock AMS Textbooks. American Mathematical Society, 2009.
\newblock 2nd ed. 2017 with updates by Peres and Wilmer.

\bibitem[LR03]{LR03}
Gian~Paolo Leonardi and S{\'e}verine Rigot.
\newblock Isoperimetric sets on {Carnot} groups.
\newblock {\em Houston Journal of Mathematics}, 29(3):609--637, 2003.

\bibitem[Mag12]{Maggi2012Sets}
Francesco Maggi.
\newblock {\em Sets of Finite Perimeter and Geometric Variational Problems: An Introduction to Geometric Measure Theory}, volume 135 of {\em Cambridge Studies in Advanced Mathematics}.
\newblock Cambridge University Press, 2012.

\bibitem[Moh89]{Mohar1989IsoperimetricGraphs}
Bojan Mohar.
\newblock Isoperimetric numbers of graphs.
\newblock {\em Journal of Combinatorial Theory, Series B}, 47(3):274--291, 1989.

\bibitem[MP05]{MorrisPeres2005}
Ben Morris and Yuval Peres.
\newblock Evolving sets, mixing and heat kernel bounds.
\newblock {\em Probability Theory and Related Fields}, 133:245--266, 2005.

\bibitem[MR09]{MR09}
Roberto Monti and Matthieu Rickly.
\newblock Convex isoperimetric sets in the {Heisenberg} group.
\newblock {\em Annali della Scuola Normale Superiore di Pisa. Classe di Scienze (5)}, 8(2):391--415, 2009.

\bibitem[MSC01]{MontiSerraCassano2001CV}
Roberto Monti and Francesco Serra~Cassano.
\newblock Surface measures in {C}arnot--{C}arath{\'e}odory spaces.
\newblock {\em Calc. Var. Partial Differential Equations}, 13(3):339--376, 2001.

\bibitem[NP08]{NaorPeres2008IMRN}
Assaf Naor and Yuval Peres.
\newblock Embeddings of discrete groups and the speed of random walks.
\newblock {\em International Mathematics Research Notices}, 2008(3):Art. ID rnm113, 34 pp., 2008.

\bibitem[NY12]{NowakYu-LargeScaleGeometry}
Piotr~W. Nowak and Guoliang Yu.
\newblock {\em Large Scale Geometry}.
\newblock EMS Textbooks in Mathematics. European Mathematical Society (EMS), Z\"urich, 2012.

\bibitem[OW87]{OrnsteinWeiss1987Quasitiling}
Donald~S. Ornstein and Benjamin Weiss.
\newblock Entropy and isomorphism theorems for actions of amenable groups.
\newblock {\em Journal d'Analyse Math{\'e}matique}, 48:1--141, 1987.

\bibitem[Pan82]{Pansu1982CRAS}
Pierre Pansu.
\newblock Une in{\'e}galit{\'e} isop{\'e}rim{\'e}trique sur le groupe de {H}eisenberg.
\newblock {\em C. R. Acad. Sci. Paris S\'er. I}, 295:127--130, 1982.

\bibitem[SC02]{SaloffCoste2002Aspects}
Laurent Saloff-Coste.
\newblock {\em Aspects of Sobolev-Type Inequalities}, volume 289 of {\em London Mathematical Society Lecture Note Series}.
\newblock Cambridge University Press, 2002.

\bibitem[Sch14]{Schneider2014Convex}
Rolf Schneider.
\newblock {\em Convex Bodies: The Brunn--Minkowski Theory}, volume 151 of {\em Encyclopedia of Mathematics and its Applications}.
\newblock Cambridge University Press, Cambridge, second expanded edition edition, 2014.

\bibitem[SCZ18]{SCZ18}
Laurent Saloff-Coste and Tianyi Zheng.
\newblock Isoperimetric profiles and random walks on some permutation wreath products.
\newblock {\em Revista Matem{\'a}tica Iberoamericana}, 34(2):481--540, 2018.

\bibitem[Tay78]{Taylor1978Crystalline}
Jean~E. Taylor.
\newblock Crystalline variational problems.
\newblock {\em Bull. Amer. Math. Soc.}, 84(4):568--588, 1978.

\bibitem[Tes11]{Tessera2009CMH}
Romain Tessera.
\newblock Asymptotic isoperimetry on groups and uniform embeddings into {B}anach spaces.
\newblock {\em Comment. Math. Helv.}, 86(3):499--535, 2011.

\bibitem[Tes13]{Tessera2013RMI}
Romain Tessera.
\newblock Isoperimetric profile and random walks on locally compact solvable groups.
\newblock {\em Revista Matem{\'a}tica Iberoamericana}, 29(2):715--737, 2013.

\bibitem[VSCC92]{VaropoulosSaloffCosteCoulhon1992}
N.~Th. Varopoulos, Laurent Saloff-Coste, and Thierry Coulhon.
\newblock {\em Analysis and Geometry on Groups}, volume 100 of {\em Cambridge Tracts in Mathematics}.
\newblock Cambridge University Press, 1992.

\bibitem[Yu00]{Yu2000CoarseBC}
Guoliang Yu.
\newblock The coarse {B}aum-{C}onnes conjecture for spaces which admit a uniform embedding into {H}ilbert space.
\newblock {\em Invent. Math.}, 139(1):201--240, 2000.

\bibitem[Zie95]{Ziegler1995}
G\"unter~M. Ziegler.
\newblock {\em Lectures on polytopes}, volume 152 of {\em Graduate Texts in Mathematics}.
\newblock Springer-Verlag, New York, 1995.

\end{thebibliography}

\end{document}